\DeclareFontFamily{U}{rsfs}{} 
\DeclareFontShape{U}{rsfs}{n}{it}{<->
	rsfs10}{} \DeclareSymbolFont{mscr}{U}{rsfs}{n}{it}
\DeclareSymbolFontAlphabet{\scr}{mscr}
\def\mathscr{\scr}
\def\C{\mathbb{C}}
\def\Q{\mathbb{Q}}
\def\Z{\mathbb{Z}}
\DeclareMathOperator{\At}{At}
\DeclareMathOperator{\Hom}{Hom}
\DeclareMathOperator{\Eu}{Eu}
\DeclareMathOperator{\Ext}{Ext}
\DeclareMathOperator{\Ker}{Ker}
\DeclareMathOperator{\Per}{Per}
\DeclareMathOperator{\Quot}{Quot}
\DeclareMathOperator{\Spec}{Spec}
\DeclareMathOperator{\Pic}{Pic}
\DeclareMathOperator{\ch}{ch}
\DeclareMathOperator{\stabS}{\mathbf{S}}
\DeclareMathOperator{\stabT}{\mathbf{T}}
\DeclareMathOperator{\at}{at}
\DeclareMathOperator{\vir}{vir}
\DeclareMathOperator{\vdim}{vdim}
\DeclareMathOperator{\aone}{\mathbb{A}^1}
\DeclareMathOperator{\pone}{\mathbb{P}^1}
\DeclareMathOperator{\Ob}{Ob}
\DeclareMathOperator{\ob}{ob}
\DeclareMathOperator{\teq}{\mathfrak{t}}
\DeclarePairedDelimiter\abs{\lvert}{\rvert}
\newcommand{\chexc}{e}  
\newcommand{\globm}[1]{\Gamma_m(#1)} 
\newcommand{\Xhat}{\widehat{X}}
\newcommand{\chat}{\widehat{c}}
\newcommand{\dbcoh}{D^b_{\operatorname{coh}}}
\newcommand{\Poin}{\mathcal{P}oin}
\newcommand{\Sheafhom}{\mathcal{H}om}
\newcommand{\Sheafext}{\mathcal{E}xt}
\newcommand{\Picstack}{\mathcal{P}ic}
\newcommand{\detL}{\mathscr{L}}
\newcommand{\bchi}{\boldsymbol\chi}
\newcommand{\vnor}[2]{\mathfrak{N}(#1,#2)}
\newcommand{\vmor}[2]{\mathfrak{M}(#1,#2)}
\newcommand{\Homflag}[1]{F(#1)} 
\newcommand{\zerostable}[1][]{\mathcal{M}^0_{#1}}
\DeclareMathOperator{\onestable}{\mathcal{M}^1}
\DeclareMathOperator{\zeronestablechat}{\mathcal{M}^{0,1}_{\widehat{c}}}
\newcommand{\zeronestable}[1][]{\mathcal{M}^{0,1}_{#1}}
\newcommand{\flag}[2][]{\mathcal{N}^{#2}_{#1}}
\DeclareMathOperator{\flagzero}{\mathcal{N}^{0}}
\DeclareMathOperator{\flagell}{\mathcal{N}^ \ell}
\DeclareMathOperator{\flagellplusone}{\mathcal{N}^ {\ell+1}}
\DeclareMathOperator{\flagN}{\mathcal{N}^ N}
\DeclareMathOperator{\flagellss}{\mathcal{N}^{\ell,\ell+1}}
\theoremstyle{plain}
\newtheorem{theorem}{Theorem}[section]
\newtheorem{proposition}[theorem]{Proposition}
\newtheorem{lemma}[theorem]{Lemma}
\newtheorem{assumption}[theorem]{Assumption}
\newtheorem{corollary}[theorem]{Corollary}
\newtheorem{convention}[theorem]{Convention}
\theoremstyle{definition}
\newtheorem{definition}[theorem]{Definition}
\newtheorem{claim}[theorem]{Claim}
\newtheorem{remark}[theorem]{Remark}
\newtheorem{construction}[theorem]{Construction}
\newtheorem{example}[theorem]{Example}
\newtheorem{examples}[theorem]{Examples}
\newtheorem{conjecture}[theorem]{Conjecture}
\begin{document}


	\title{A blowup formula for virtual enumerative invariants on projective surfaces}

	\author{Nikolas Kuhn and Yuuji Tanaka}
	\date{}


	\maketitle



	
	
	

\begin{abstract}
    We apply virtual localization to the problem of finding blowup formulae for virtual sheaf-theoretic invariants on a smooth projective surface. This leads to a general procedure that can be used to express virtual enumerative invariants on the blowup in terms of those on the original surface. We use an enhanced master space construction over the moduli spaces of $m$-stable sheaves introduced by Nakajima and Yoshioka. Our work extends their analogous results for the equivariant moduli spaces of framed sheaves on $\mathbb{P}^2$. In contrast to their work, we make no use of GIT methods and work with an arbitrary smooth complex projective surface, assuming only the absence of strictly semistable sheaves.  The main examples to keep in mind are Mochizuki's virtual analogue of the Donaldson invariant and the virtual $\chi_y$-genus of the moduli space of Gieseker semistable sheaves on the surface.
\end{abstract}

	\setcounter{tocdepth}{2}
	\tableofcontents

	\section{Introduction}

 Enumerative invariant theories on a smooth projective surface $X$ that are defined via its moduli spaces of semistable sheaves often have a surprisingly rich structure (at least conjecturally). In particular, an a priori infinite number of invariants coming from the infinitely many choices of Chern classes on $X$ can in many cases be expressed by just a finite collection of invariants on $X$. 
 Examples of this phenomenon include the Witten conjecture for the Donaldson invariants  (proven in \cite{GoNaYo3} for projective surfaces X with $H^1(X)=0$ and $p_g (X) >0$ and with preceding work in \cite{KrMr}, \cite{FiSt}) as well as the predicted modularity for virtual $\chi_y$-genera \cite{GoKo} and for virtual Segre and Verlinde numbers (see e.g. \cite{GoKo2} for recent progress).
    In this paper, we establish a general procedure to compute virtual sheaf-theoretic invariants on the blowup of $X$ in terms of those on $X$ itself, under the assumption that there are no strictly semistable sheaves in the moduli spaces. This extends Nakajima and Yoshioka's results in \cite{NaYo3} for the equivariant spaces of framed sheaves on $\mathbb{P}^2$ to the case of virtual enumerative invariants. We expect that our result can be useful in proving blowup formulae that match with the conjectural forms for several classes of enumerative invariants, such as the ones mentioned above. This can possibly be used to reveal modular properties behind those enumerative invariant theories, as has been demonstrated in e.g. \cite{KrMr}, \cite{FiSt}, \cite{Gott}, \cite{GoNaYo1}, and \cite{GoNaYo3} for the Donaldson theory. As a direct application of our result, we deduce a virtual analogue of the Friedman--Morgan blowup formula which allows us to conclude that the virtual and the classical invariants are equivalent in our setting.

	\paragraph{The virtual Donaldson invariants.}
	Let $X$ be a smooth projective surface and $H$ a fixed ample line bundle (or \emph{polarization}) on $X$. 
	Consider the graded algebra $\operatorname{Sym}(H^*(X,\mathbb{Z}))$, where the grading is given by taking $\sigma\in H^i(X,\mathbb{Z})$ to be in degree $i$. For the moment, let $c_1\in H^2(X,\Z)$ and $c_2\in H^4(X,\Z)$ be arbitrary algebraic classes. We write $\vdim:=4c_2-c_1^2-3\chi(\mathcal{O}_X)$. Then the {\it virtual Donaldson invariant}, or the {\it Donaldson--Mochizuki invariant} \cite{Moch}
	associated to $c_1,c_2$ is a map 
	\[D_{X,H,c_1,c_2}:\operatorname{Sym}(H^*(X,\mathbb{Z}))_{\vdim} \to \mathbb{Q}.\]
	Equivalently, one can bundle the different operators $D_{X,H,c_1,c_2}$ into a single map
	\[D_{X,H,c_1}:\operatorname{Sym}(H^*(X,\mathbb{Z})) \to \mathbb{Q}.\]
	Here, the restriction of $D_{X,H,c_1}$ to the degree $v$ part is equal to $D_{X,H,c_1,c_2}$ if $v=c_1^2-4c_2-3\chi(\mathcal{O}_X)$ for some integer value of $c_2$; and zero otherwise. 

	The invariants are defined by using the virtual fundamental class of the moduli space of Gieseker semistable sheaves of rank $2$ with fixed-determinant and with Chern classes $c_1$ and $c_2$. A precise definition for $\deg_H c_1$ odd will be given in \S \ref{subsec:donmoch}. For the general case, see Mochizuki \cite[\S 7.3]{Moch}.
	
	The classical Donaldson invariants \cite{Dona}, \cite{DoKr} are defined for a closed, oriented, simply-connected, smooth $4$-manifold with $b_2^+\geq 3$ and $b_2^+$ odd. They are defined by integration over a suitably compactified moduli space of anti-self-dual connections on a principal $SO(3)$-bundle with the place of the first and second Chern class taken by the second Stiefel--Whitney $w_2$ and the first Pontrjagin class $p_1$,  respectively. When $w_2=0$, the invariants are at first only defined for large enough value of $-p_1/4$ (the so-called \emph{stable range}), but then extended to the \emph{unstable range} by using a certain blowup formula (cf. Friedman and Morgan \cite[\S 3.8, Theorem 8.1]{FrMo}; see also Theorem \ref{th:blowupdon} for our analogue for the virtual invariants). When $b_2^+=1$, one can still define invariants, but they depend on a choice of a metric and there is an interesting wall-crossing behavior. 
	
	Consider now the underlying smooth 4-manifold of the complex projective surface $X$ and that the topological invariants are induced by algebraic classes (in the way of formulas \eqref{eq:chtotop1} and \eqref{eq:chtotop2}). We allow $b^+_2\geq 1$ but still assume $X$ to be simply-connected. 
	Then one can show that the classical invariants agree with the virtual ones whenever $c_2$ is large enough and if the polarization is generic in a precise sense, as described by G\"ottsche--Nakajima--Yoshioka  \cite[\S 1.1 and \S A]{GoNaYo1} and \cite[\S 2.2]{GoNaYo3}. 
In addition, they pointed out that the invariants agree for all values of $c_2$, if one is able to show that Friedmann--Morgan's blowup formula holds for Mochizuki's invariants. It is remarked in \cite{GoNaYo3} that the equivalence of classical and virtual invariants follows from the course of their proof of Witten's conjecture on $D=SW$ 
 for simply-connected complex projective surfaces with $p_g>0$ in \cite{GoNaYo3}. 
We give a new, more direct, proof for this equivalence by establishing the blowup formula for the virtual invariants. 
We obtain it, assuming that $\deg_H c_1$ is not divisible by two, but under no assumptions on the surface $X$, see Theorem \ref{th:blowupdon}.

	\paragraph{Virtual Euler characteristics and $\chi_y$-genera.}
	
	In \cite{Gott2}, G\"ottsche studied the generating series of Poincar\'e polynomials $p(X^{[n]},z):=\sum_i(-1)^i b_i(X^{[n]})z^n$ for the Hilbert schemes of points of the smooth projective surface $X$. He found the following formula:
	\[\sum_{n\geq 0} p(X^{[n]},z) t^n = \prod_{k\geq 1} \prod_{i=1}^4 (1-z^{2k-2+i}t^k)^{(-1)^{i+1} b_i(X)}. \]
	In particular, the generating series only depends on the Betti numbers of $X$ itself.  
	Specializing to $z=1$, we get the generating series for topological Euler characteristics
	\[\sum_{n\geq 0} \chi(X^{[n]}) q^n = \prod_{k\geq 1} (1-q^k)^{-\chi(X)}.\] 
	
	For higher ranks, one can consider the moduli space $M_{X,H}(r,c_1,n)$ of $H$-Gieseker semistable sheaves of a given rank and first and second Chern classes. Explicit results for Betti numbers of moduli spaces of semistable sheaves of rank $2$ on $\mathbb{P}^2$ and on ruled surfaces were obtained by Yoshioka \cite{Yosh1}, \cite{Yosh2}. A first result valid for rank $2$ moduli spaces on arbitrary surfaces was the blowup formula for the \emph{virtual Hodge polynomial}, a refinement of the Poincar\'e polynomial for complex algebraic schemes, proven by Li and Qin \cite{LiQi1},\cite{LiQi2}. 
	Let $h(Y,x_1,x_2)$ denote the virtual Hodge polynomial of a complex algebraic variety $Y$. Supppose that $c_1$ is chosen such that $\deg_H c_1$ is odd. Then consider the following generating series 
	
	\[Z_{X,H,c_1}(x_1,x_2,q):= \sum_{n} h(M_{X,H}(2,c_1,n),x_1,x_2) q^{n-c_1^2/4-3/4\chi(\mathcal{O}_X)}.\]
	
	Let $p:\Xhat\to X$ be the blowup of $X$ with exceptional divisor $C$. Let $\widehat{c}_1$ a fixed divisor class on $\Xhat$. It is well-known that for a small enough rational number $0<\epsilon_0\ll 1$, the moduli space $M_{\Xhat, H-\epsilon C}(2,\widehat{c}_1,n)$ is independent of $0<\epsilon\leq \epsilon_0$ (where $\epsilon_0$ may depend on $\widehat{c}_1,n$). We denote the moduli space obtained in this way for small enough choices of $\epsilon$ by  $M_{\Xhat, H_{\infty}}(2,\widehat{c}_1,n)$. Then we have the generating function $Z_{\Xhat,H_{\infty},\widehat{c}_1}(x_1,x_2,q)$. Li--Qin's formula now states that
	\[Z_{\Xhat,H_{\infty},p^*c_1-aC}(x_1,x_2,q) = Z_a(x_1x_2,q) Z_{X,H,c_1}(x_1,x_2,q)\]
	for $a=0,1$. Here the function $Z_a$ is given by 
	\begin{equation}\label{eq:chiyblowup}
		Z_a(x,q) = \frac{\sum_{n\in \Z} x^{\frac{(2n+a)^2-(2n+a)}{2}} q^{\frac{(2n+a)^2}{4}} }{\prod_{n\geq 1} (1-x^{2n}q^n )^2}. 
	\end{equation}
	
	More recently, G\"ottsche and Kool \cite{GoKo} studied a different generalization of the Euler characteristic, the {\it virtual $\chi_y$-genus} introduced by Ciocan-Fontanine--Kapranov \cite{CiKa} and Fantechi--G\"ottsche \cite{FaGo}, which is an invariant of a proper Deligne--Mumford stack $Y$ with perfect obstruction theory, defined as 
	\[\chi_y^{\vir}(Y):= \sum_{p\geq 0} y^p\chi^{\vir}(Y,\Omega_{Y}^{p,\vir}).\]
	For a smooth projective scheme with the trivial obstruction theory, it agrees with the usual $\chi_y$-genus, which can be obtained from the Hodge polynomial by setting $x_1=y,x_2=1$. Specializing to $y=-1$, the expression for the virtual $\chi_y$-genus recovers the \emph{virtual Euler characteristic} of $Y$, denoted $\chi^{\vir}(Y)$. 
	
	Whenever $\deg_H c_1$ is odd, the moduli space $M_{X,H}(2,c_1,n)$ carries a perfect obstruction theory. 
	In this case, we may consider the generating series 
	\[Z^{\vir}_{X,H,c_1}(y,q):=\sum_n \chi^{\vir}_{-y}(M_{X,H}(2,c_1,n)) q^{n-c_1^2/4-3/4\chi(\mathcal{O}_X)}.\]
	G\"ottsche and Kool conjecture an explicit expression for this generating series in terms of modular forms and finitely many \emph{Seiberg--Witten invariants} of the surface $X$ \cite[Conjecture 5.7]{GoKo} (note that they work with $\overline{\chi}^{\vir}_{-y}(Y) = y^{-\vdim(Y)/2} \chi^{\vir}_{-y}(Y)$). Assuming their conjecture, one gets the same blowup formula as for the virtual Hodge polynomial specialized to $x_1=y,x_2=1$:
	\[Z^{\vir}_{\Xhat,H_{\infty},p^*c_1-aC}(y,q) = Z_a(y,q)Z^{\vir}_{X,H,c_1}(y,q).\] 
	In particular, we have a relatively simple formula for the generating series of virtual Euler characteristics:
	\begin{equation}
		Z^{\vir}_{\Xhat,H_{\infty},p^*c_1-aC}(1,q) = \frac{\sum_{n\in \Z}q^{(n+\frac{a}{2})^2} }{\prod_{n\geq 1}(1-q^n)^2} Z^{\vir}_{X,H,c_1}(1,q). \label{eq:blowupeulerchar}
	\end{equation}
	
	\paragraph{Moduli spaces of sheaves on the blowup.}
	
	In a series of papers \cite{NaYo1},\cite{NaYo2},\cite{NaYo3}, Nakajima and Yoshioka used moduli spaces $M^0_{\chat}$ of (semi-)stable \emph{perverse coherent sheaves} on the blowup of a smooth projective surface in order to study a blowup formula for Donaldson invariants with fundamental matters on $\mathbb{P}^2$. 
	Here, we use $\chat\in \Z \oplus H^2(X,\Z)\oplus H^4(X,\Q)$ to denote the Chern character of the sheaves being parametrized. If $\chat=p^*c$ is the pull-back of a cohomology class $c=r+c_1+\ch_2$ with $\deg_H c_1$ odd, then the space $M^0_{\chat}$ is a moduli space for the class of sheaves on $\Xhat$ which are pull-backs of $H$-Gieseker stable sheaves on $X$. In this example, and more generally for any $\chat$ satisfying $\deg_{p^*H} \chat_1$ odd, the space $M^0_{\chat}$ differs from the moduli space $M_{X,H_{\infty}}(r,\chat_1,\chat_2)$ (where $\chat_2=\chat_1^2/2-\widehat{\ch}_2$). 
	Sheaves $E$ in $ M^0_{\chat}$ are allowed to have some torsion along the exceptional curve $C$, but satisfy the additional condition $\Hom_{\Xhat}(E,\mathcal{O}_C(-1))=0$. Roughly speaking, in $M_{X,H_{\infty}}(r,p^*\chat_1,\chat_2)$, all torsion sheaves are declared to be destabilizing subsheaves, while in $M^0_{\chat}$  subsheaves of the form $\mathcal{O}_C(-m'), m'\geq 1$ are not destabilizing, but so are those subsheaves $E'\subset E$ for which $E/E'\simeq \mathcal{O}_C(-m'), m'\geq 1$. 
	
	By varying which of the sheaves $\mathcal{O}_C(-m)$ are considered for destabilizing, one obtains a sequence of moduli spaces $M_{\chat}^m$ of $\emph{m}$-stable sheaves for $m\geq 0$ with the following properties:
	\begin{enumerate}[label=\arabic*)]
		\item One has $M^m_{\chat} = M^0_{\chat(-mC)}$, where we set $\chat(-mC):=\chat\cdot\ch(\mathcal{O}_{\Xhat}(-mC))$.  
		\item One has some control over the difference between $M^{m-1}_{\chat}$ and $M^{m}_{\chat}$. 
		\item Writing $\chat = p_*c-j\ch(\mathcal{O}_C(-1))$, one has a map $p_*:M^0_{\chat}\to M_X(r,c_1,c_2)$ given by pushforward of sheaves along $p:\Xhat\to X$. 
		\item For sufficiently large $m$ one has $M^m_{\chat}\simeq M_{\Xhat}(r,\chat_1,\chat_2)$.
	\end{enumerate}
	We may summarize this graphically as
	\begin{equation*}
		\begin{tikzcd}
			M^0_{\chat}\ar[r,dashed, leftrightarrow]\ar[d,"{p_*}"]&[-.8 em] M^1_{\chat}\ar[r,dashed,leftrightarrow]&\cdots\ar[r,dashed,leftrightarrow] & M^{m-1}_{\chat}\ar[r,dashed,leftrightarrow]&[-.5 em] M^{m}_{\chat}\ar[d,equal,shorten <= .5 ex,shorten >= .5ex] \ar[r,equal,shorten <= 1 em,shorten >= 1em] &[-.8 em] \cdots\\
			M_X(r,c_1,c_2)& & & & M_{\Xhat}(r,\chat_1,\chat_2) &
		\end{tikzcd}
	\end{equation*}
	All of the spaces $M^m_{\chat}$ have natural virtual classes and we may consider virtual integrations on them.

	\paragraph{Main results.} 
	We state the main results. For technical reasons, we work with the moduli \emph{stacks} of $\mathcal{M}_X(r,c_1,c_2)$ (also denoted $\mathcal{M}_X(c)$, where $c=r+c_1+c_1^2/2-c_2$) of $H$-Gieseker stable oriented sheaves on $X$  rather than with the coarse spaces $M_{X}(r,c_1,c_2)$. Similarly we consider the moduli stacks $\mathcal{M}^m_{\chat}$ (also denoted $\mathcal{M}^m(\chat)$, or $\mathcal{M}^m$ when $\chat$ is understood) of oriented $m$-stable sheaves on $\Xhat$. The same picture described in the above 
	holds for the moduli stacks in place of the coarse moduli spaces. We assume throughout that $r$ and $\deg_H c_1$ are coprime.
	  
	Our Theorems \ref{th:mainthwallcrossing}, \ref{th:compthm} and \ref{thm:structhm} are analogous to Nakajima--Yoshioka's results about moduli spaces of framed sheaves on $\mathbb{P}^2$, specifically to Theorem 1.5, Proposition 1.2 and Theorem 2.6 in their paper \cite{NaYo3} respectively. We largely follow their notation in stating these results.
	
		The main technical result in this paper is a {\it weak wall-crossing formula} giving an expression for the difference of virtual integrals on the spaces $\mathcal{M}^{m-1}_{\chat}$ and $\mathcal{M}^m_{\chat}$.
		To state it, let $\Phi=\Phi(-)$ be a Chow cohomology class on the moduli stack of coherent sheaves on $\Xhat$. More concretely, $\Phi$ should be any rule of the following form: For a finite type Artin stack $U$  and a $U$-flat coherent sheaf $\mathcal{E}$ on $\Xhat\times U$, it associates a Chow cohomology class $\Phi(\mathcal{E})$ on $U$, in a way that is compatible with pullback of sheaves along $U'\to U$. Let $C_m:=\mathcal{O}_C(-m-1)$ denote the coherent sheaf on $\Xhat$ which is the pushforward of the unique degree $-m-1$ line bundle on $C$ and set $e_m:=\ch(C_m)$. For an equivariant parameter $t$, let $e^t$ denote the trivial line bundle with first Chern class $t$ and let $\operatorname{Eu}_t$ denote the equivariant Euler class with respect to $t$. 
		\begin{theorem}[Weak wall-crossing formula for the blowup]\label{th:mainthwallcrossing}
			We have the following equality:
			\begin{gather*}
				\int_{[\mathcal{M}^{m+1}(\chat)]^{\vir}} \Phi(\mathcal{E}^{m+1}_{\chat})-\int_{[\mathcal{M}^{m} (\chat)]^{\vir}} \Phi(\mathcal{E}^m_{\chat})\\
				= \sum_{j=1}^{\infty}\frac{1}{j!}\int_{[\mathcal{M}^m(\chat-je_m)]^{\vir}}\underset{t_1=0}{\operatorname{Res}}\cdots \underset{t_j=0}{\operatorname{Res}}\, \Phi\left(\mathcal{E}^m_{\chat-je}\oplus \bigoplus_{i=1}^j C_m\otimes e^{-t_j} \right) \Psi^j_m(\mathcal{E}),
			\end{gather*}
			where
			\[\Psi^j_m(\mathcal{E}) =\frac{1}{j!}\frac{\prod_{1\leq i_1\neq i_2 \leq j} (t_{i_1}-t_{i_2}) }{\prod_{i=1}^j\left(\operatorname{Eu}_{t_i}(\mathfrak{N}(\mathcal{E},C_m) \otimes e^{-t_i})\cdot \operatorname{Eu}_{t_i}(\mathfrak{N}(C_m,\mathcal{E}) \otimes e^{t_i}) \right)},\]
			and where 
			\[\mathfrak{N}(\mathcal{E},\mathcal{F}):=R\Hom_{\pi_2}(\mathcal{E},\mathcal{F})[1]\]
			for any coherent sheaves $\mathcal{E},\mathcal{F}$ on $\Xhat\times \mathcal{M}^m(\chat-je_m)$.
		\end{theorem}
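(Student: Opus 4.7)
The plan is to prove this wall-crossing formula by the standard \emph{master space} strategy, exactly parallel to Nakajima--Yoshioka's approach in \cite{NaYo3}: construct an auxiliary moduli stack $\mathcal{N}^m$ interpolating between $\mathcal{M}^m(\chat)$ and $\mathcal{M}^{m+1}(\chat)$, equipped with a natural $\mathbb{C}^*$-action and a $\mathbb{C}^*$-equivariant perfect obstruction theory, and then apply Graber--Pandharipande virtual equivariant localization. The key observation is that integrating the non-equivariant class $\Phi$ against the equivariant virtual class on $\mathcal{N}^m$ gives an expression in which the $t$-dependent terms at different fixed loci must balance, which rearranges to the desired formula.

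First I would construct $\mathcal{N}^m$ as a proper moduli stack of pairs $(E,\lambda)$, where $E$ lies in a stratum interpolating $m$- and $(m+1)$-stability and $\lambda$ is an auxiliary linearization parameter (e.g.\ a section of a destabilizing line bundle tied to $C_m$); the $\mathbb{C}^*$-action scales $\lambda$ with weight $t$. Its fixed substack should decompose as
\[(\mathcal{N}^m)^{\mathbb{C}^*} = \mathcal{M}^{m+1}(\chat)\sqcup \mathcal{M}^m(\chat) \sqcup \bigsqcup_{j\geq 1} \mathcal{F}_j,\]
where the two boundary pieces correspond to $\lambda = 0$ and $\lambda = \infty$, and where $\mathcal{F}_j$ parametrizes split objects $E\oplus C_m^{\oplus j}$ with $E$ an $m$-stable sheaf of Chern character $\chat - je_m$, giving a canonical identification $\mathcal{F}_j \simeq \mathcal{M}^m(\chat - je_m)$ (after forgetting the split factors). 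The universal object on $\mathcal{N}^m$ restricts, along this isomorphism, to $\mathcal{E}^m_{\chat-je_m}\oplus \bigoplus_{i=1}^{j} C_m\otimes e^{-t_i}$, which accounts for the shape of the integrand on the right-hand side.

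Next I would compute the virtual normal bundle. On the boundary components this is a single scalar-weight piece that is absorbed by the residue normalization; on $\mathcal{F}_j$, the deformation theory of a split object $E\oplus C_m^{\oplus j}$ splits the moving part of the obstruction complex into the cross-terms $R\Hom(\mathcal{E},C_m)[1]\otimes e^{-t_i}$ and $R\Hom(C_m,\mathcal{E})[1]\otimes e^{t_i}$, i.e.\ $\mathfrak{N}(\mathcal{E},C_m)\otimes e^{-t_i}$ and $\mathfrak{N}(C_m,\mathcal{E})\otimes e^{t_i}$, together with the mutual $\operatorname{Ext}$'s between summands $C_m\otimes e^{-t_{i_1}}$ and $C_m\otimes e^{-t_{i_2}}$ carrying weight $t_{i_1}-t_{i_2}$; these latter produce the Vandermonde-type factor $\prod_{i_1\neq i_2}(t_{i_1}-t_{i_2})$ in $\Psi^j_m$. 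The overall symmetric factor $1/j!$ comes from the residual $S_j$-gerbe permuting the ordered splitting, which is why the residue formula admits the unordered sum indexed by $j$. Equivariant virtual localization then identifies $\int_{[\mathcal{N}^m]^{\vir}}\Phi(\mathcal{E}^{\mathcal{N}})$ with the sum of residues at $t=0$ of fixed-locus contributions; since $\Phi$ is non-equivariant, an appropriate degree count (or the non-existence of a $t^{-1}$ coefficient in the pushforward) forces this sum to vanish, and isolating the two boundary contributions yields the stated identity.

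The main obstacle is the construction of $\mathcal{N}^m$ itself and the verification of its properties: defining the correct moduli problem, showing it is represented by a (Deligne--Mumford or Artin) stack that is proper enough for localization, constructing a $\mathbb{C}^*$-equivariant perfect obstruction theory, and proving the decomposition of its fixed locus together with the identification of the restricted obstruction theory on each stratum. The boundary identifications are essentially formal, but showing that the obstruction theory of $\mathcal{N}^m$ restricts on $\mathcal{F}_j$ to the obstruction theory of $\mathcal{M}^m(\chat-je_m)$ together with precisely the virtual normal contribution giving $\Psi^j_m$ requires a careful deformation-theoretic analysis of the interplay between the stability parameter, the splitting, and the $\mathbb{C}^*$-weights. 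This is where the bulk of the technical work will lie.
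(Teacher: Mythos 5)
Your proposal correctly identifies the overall strategy (master space plus $\mathbb{C}^*$-equivariant virtual localization, as in Nakajima--Yoshioka), and you rightly anticipate that the fixed loci should involve split objects $E\oplus C_m^{\oplus j}$ whose cross-$\operatorname{Ext}$'s produce the $\mathfrak{N}(\mathcal{E},C_m)$ and $\mathfrak{N}(C_m,\mathcal{E})$ factors, and that the mutual $\operatorname{Ext}$'s between the $C_m$-summands give the Vandermonde factor. However, there is a genuine gap that blocks a direct implementation of your plan, and it is exactly the point the paper is organized around.

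You propose a single master space $\mathcal{N}^m$ whose $\mathbb{C}^*$-fixed locus contains all $\mathcal{F}_j\simeq \mathcal{M}^m(\chat-je_m)$ at once, parametrizing split objects $E\oplus C_m^{\oplus j}$. But such objects have automorphism group $\mathbb{C}^*\times GL(j)$ (acting by scalars on $E$ and arbitrarily on the $C_m^{\oplus j}$-factor), which is positive-dimensional of unbounded dimension as $j$ grows. The master space construction (Thaddeus, Kiem--Li) is only well-behaved --- in particular produces a \emph{proper Deligne--Mumford} stack to which one can apply Graber--Pandharipande localization --- when the semistable objects ``on the wall'' have at most one-dimensional stabilizer, i.e.\ when the properly polystable objects are $S\oplus T$ with both $S$ and $T$ simple. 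Allowing $S=C_m^{\oplus j}$ for $j>1$ violates this, so your $\mathcal{N}^m$ cannot be a proper DM stack and the localization formula is not available. Relatedly, a single master space carries a single $\mathbb{C}^*$-action and hence a single equivariant parameter $t$; the iterated residues $\operatorname{Res}_{t_1=0}\cdots\operatorname{Res}_{t_j=0}$ in the statement cannot come out of one localization.

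The paper's fix is the ``enhanced master space'': rather than working directly over the wall stack $\mathcal{M}^{m,m+1}$, they pass to a flag bundle $\mathcal{N}\to\mathcal{M}^{m,m+1}$ parametrizing a sheaf together with a full flag in $H^0(E\otimes L^m)$, and they define a chain of intermediate stability conditions $\mathcal{N}^0,\dots,\mathcal{N}^N$ interpolating between the flag bundles over $\mathcal{M}^m$ and $\mathcal{M}^{m+1}$. The flag data rigidifies the symmetry so that, for each adjacent pair $(\mathcal{N}^\ell,\mathcal{N}^{\ell+1})$, the polystable objects have only $\mathbb{C}^*\times\mathbb{C}^*$ automorphisms, and a bona fide proper DM master space $\mathcal{Z}$ can be constructed and localized. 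Crucially, at each step only the $k=1$ contributions survive (the $k\ge 2$ terms vanish for dimension reasons), and the multi-parameter residue formula in the theorem arises from carefully summing the $N$ single-residue wall-crossing terms over $\ell=0,\dots,N-1$, not from a single localization. This summation, with its combinatorics of subsets $\Lambda$, is where the $1/j!$ and the $j$ independent parameters $t_1,\dots,t_j$ come from. So the missing idea in your proposal is the reduction to one-dimensional automorphism groups via the flag enhancement, together with the recognition that the $j$-fold residue is a product of the iteration, not of the geometry of a single space.
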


		In particular, by a repeated application of Theorem \ref{th:mainthwallcrossing}, one may express a virtual integral over $\mathcal{M}_{\Xhat}(\chat)$ in terms of integrals over the spaces $\mathcal{M}^0(\chat-j_1e_1-\cdots-j_me_m)$ for some $m>0$ depending on $\chat$, and where $j_1,\ldots,j_n$ vary over the non negative integers. 
		In order to obtain blowup formulae, one needs a way to go from a space of the form $\mathcal{M}^0_{\chat}$ for varying $\chat$ back to the space $\mathcal{M}_X(r,c_1,n)$ for varying $n$. It turns out that the morphism $p_*$ is not as useful for this purpose, since pushing forward the virtual class along this morphism will introduce higher Chern classes of the universal sheaf. In the case $\chat=p^*c$ the morphism $p_*$ is an isomorphism and one directly gets an integral over a space $\mathcal{M}_X(r,c_1,n)$. In general, Nakajima and Yoshioka discovered the following method to get around this issue: Given a Chern character of the form $\chat=p^*c-j[C]$, we have an isomorphism $\mathcal{M}^0(\chat)=\mathcal{M}^1(\chat(C))$, where now $\chat(C)=p^*c+(r-j)[C]+j-r/2$. In particular, if $j\geq r$, one may apply Theorem \ref{th:mainthwallcrossing} again and proceed by induction on $j$. This introduces new wall-crossing terms, which one takes care of by a descending induction over the virtual dimension of the moduli spaces. The interesting case occurs when $0<j<r$. Then one needs to consider the following situation: 
		
		Set $\chat:= p^*c+j\chexc$ for $0<j<r$. Then there exists a proper moduli stack $\mathcal{Q}(\chat,j)$ defined in \S \ref{subsec:cohsys} and morphisms:  
		\begin{equation*}
					\begin{tikzcd}
						& \mathcal{Q}(\chat,j)\ar[dr,"q_2"]\ar[dl,"q_1"']& \\
						\mathcal{M}^1_{\chat}& &\mathcal{M}^1_{p^*c} 
					\end{tikzcd}
				\end{equation*}
		The point is that the moduli stack $\mathcal{M}^1(p^*c)$, after wall-crossing, can be directly related to a moduli space of sheaves on $X$. Nakajima and Yoshioka show that $q_2$ is a certain relative Grassmann bundle and that, when the moduli spaces are unobstructed, the map $q_1$ is birational. We show that one can recover part of this picture for virtual integrals. Let $\Phi$ be as in the setup for Theorem \ref{th:mainthwallcrossing}. Let $\mathcal{V}$ be the universal subbundle coming from the Grassmann bundle structure on $q_2$. Denote by $\mathcal{E}$ the universal bundle on $\mathcal{M}^1(\chat)$ and by $\mathcal{F}$ the one on $\mathcal{M}^1(p^*c)$.
		
		\begin{theorem}[Push-down formula]\label{th:compthm}
			The stack $\mathcal{Q}(\chat,j)$ has a natural virtual fundamental class $[\mathcal{Q}(\chat,j)]^{\vir}$ with the following properties
			\begin{enumerate}[label=\arabic*)]
				\item We have an equality of virtual integrals 
				\[\int_{ [\mathcal{M}^1(\chat)]^{\vir}} \Phi(\mathcal{E}) =\int_{[\mathcal{Q}(\chat,j)]^{\vir}}\Phi(q_2^*\mathcal{F}\oplus \mathcal{V}\otimes \mathcal{O}_C(-1)).\]
				\item  The class $[\mathcal{Q}(\chat,j)]^{\vir}$ agrees with the flat pullback of $[\mathcal{M}^1(p^*c)]^{\vir}$ along $q_2$.	
		\end{enumerate}
		\end{theorem}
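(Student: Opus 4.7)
The plan has two components, one per numbered assertion, both resting on a universal extension on $\mathcal{Q}(\chat,j)$. From its construction (\S\ref{subsec:cohsys}) as the relative Grassmannian of $j$-dimensional subspaces of the appropriate $\Ext$-bundle over $\mathcal{M}^1(p^*c)$, the pulled-back universal sheaf $q_1^*\mathcal{E}$ fits into a canonical short exact sequence
\begin{equation*}
0 \to \mathcal{V}\otimes \mathcal{O}_C(-1) \to q_1^*\mathcal{E} \to q_2^*\mathcal{F} \to 0
\end{equation*}
on $\Xhat\times\mathcal{Q}(\chat,j)$, with $\mathcal{V}$ the tautological subbundle. The standard $\aone$-deformation of an extension to its split form provides a $\mathcal{Q}(\chat,j)\times\aone$-flat sheaf on $\Xhat\times\mathcal{Q}(\chat,j)\times\aone$ restricting to $q_1^*\mathcal{E}$ at $t=1$ and to $q_2^*\mathcal{F}\oplus\mathcal{V}\otimes\mathcal{O}_C(-1)$ at $t=0$, so the pullback-compatibility of $\Phi$ combined with $\aone$-homotopy invariance of Chow cohomology yields the identity $\Phi(q_1^*\mathcal{E}) = \Phi\bigl(q_2^*\mathcal{F} \oplus \mathcal{V}\otimes\mathcal{O}_C(-1)\bigr)$ on $\mathcal{Q}(\chat,j)$.

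For assertion (2), the plan is to endow $\mathcal{Q}(\chat,j)$ with the perfect obstruction theory pulled back from $\mathcal{M}^1(p^*c)$ along $q_2$. Since the Grassmann bundle $q_2$ is smooth (in particular flat), the Behrend--Fantechi compatibility of virtual classes under flat pullback immediately yields $[\mathcal{Q}(\chat,j)]^{\vir} = q_2^*[\mathcal{M}^1(p^*c)]^{\vir}$.

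For assertion (1), the identity for $\Phi$ above combined with the projection formula along the proper morphism $q_1$ reduces the claim to the virtual pushforward statement $q_{1*}[\mathcal{Q}(\chat,j)]^{\vir} = [\mathcal{M}^1(\chat)]^{\vir}$. The plan is to construct a morphism of perfect obstruction theories $q_1^*\mathbb{E}_{\mathcal{M}^1(\chat)} \to \mathbb{E}_{\mathcal{Q}(\chat,j)}$ by applying $R\Sheafhom_{\pi_2}$ to the universal extension and exploiting the vanishing $\Hom_{\Xhat}(F, \mathcal{O}_C(-1)) = 0$ that follows from $1$-stability of $F$, and then invoke Manolache's virtual pushforward theorem together with the fact, established in \cite{NaYo3}, that $q_1$ is proper and generically an isomorphism on coarse moduli. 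The main obstacle is precisely establishing this compatibility of obstruction theories along $q_1$: the two theories have rather different origins, one from $R\Sheafhom_{\pi_2}(q_1^*\mathcal{E}, q_1^*\mathcal{E})$ and the other pulled back from $R\Sheafhom_{\pi_2}(\mathcal{F}, \mathcal{F})$ on $\mathcal{M}^1(p^*c)$ together with the relative tangent bundle of the Grassmann bundle $q_2$, and verifying that they fit into the distinguished triangle required by virtual pushforward demands a careful derived-categorical computation applying $R\Sheafhom_{\pi_2}(-,-)$ to the universal extension on both arguments, using the vanishings $\Ext^{\bullet}_{\Xhat}(\mathcal{O}_C(-1), \mathcal{O}_C(-1))$ and $\Hom_{\Xhat}(F,\mathcal{O}_C(-1))=0$ to identify the resulting spectral sequence contributions.
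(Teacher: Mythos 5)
There is a genuine gap at the final step of your plan for assertion (1), namely the appeal to ``$q_1$ is proper and generically an isomorphism on coarse moduli.'' Nakajima--Yoshioka only establish that $q_1$ is birational \emph{when the moduli spaces are unobstructed} (see the discussion in \S\ref{subsec:cohsys}), and in the virtual setting one precisely cannot assume this. What actually happens generically is the following (this is the content of the paper's key Lemma in \S\ref{subsec:compthm}): over a dense open $U\subset\mathcal{M}^1(\chat)$ where $R\Sheafhom_{\Xhat}(\mathcal{E},\mathcal{O}_C(-1))$ has locally free cohomology sheaves, $\Sheafhom$ has rank $j+s$ and $\Sheafext^1$ has rank $s$ for some $s\geq 0$ that is generally positive, and $q_1$ restricts to a Grassmann bundle $Gr(j,j+s)\to U$ of relative dimension $sj>0$. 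The pushforward identity $(q_1)_*(q_1)^!_{\Ob_{q_1}}=\mathrm{id}$ that Proposition \ref{prop:pullbackintegral} rests on is thus \emph{not} a consequence of generic birationality; it is an excess-intersection computation. The obstruction bundle over $U$ has rank $sj$, matching the fiber dimension, and its Euler class pushes forward to $1$ by the explicit identity $(q_1|_U)_*(c_j(\mathcal{S}^\vee)^{\cdot s}) = 1$ for the tautological subbundle $\mathcal{S}$ on the Grassmann bundle. Citing ``Manolache's virtual pushforward theorem'' does not substitute for this: what is available here is the functoriality of virtual pullbacks ($[\mathcal{Q}]^{\vir}=(q_1)^!_{\Ob_{q_1}}[\mathcal{M}^1(\chat)]^{\vir}$, which is Proposition \ref{prop:virpullfunc} applied to the compatibility diagram \eqref{eq:obstrdiagq}), but the degree-one statement $(q_1)_*(q_1)^!=\mathrm{id}$ still has to be computed by hand.

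Two smaller points. First, your universal extension has the roles of sub and quotient reversed: by Proposition \ref{prop:cohsystem}, the sequence reads $0\to q_2^*\mathcal{F}\to q_1^*\mathcal{E}\to\mathcal{V}\otimes\mathcal{O}_C(-1)\to 0$, with $\mathcal{F}$ the sub and the torsion piece the quotient. This is harmless for the $K$-theory/$\Phi$ identity but matters if you try to carry out the derived-categorical comparison you sketch. Second, your strategy of simply \emph{defining} $[\mathcal{Q}(\chat,j)]^{\vir}$ to be $q_2^*[\mathcal{M}^1(p^*c)]^{\vir}$ makes assertion (2) a tautology but shifts the full weight onto assertion (1); the paper instead constructs an intrinsic obstruction theory $\Ob_{\mathcal{Q}}$ on $\mathcal{Q}(\chat,j)$ (\S\ref{subsec:obstrquot}) via the Atiyah class of the exact sequence, and verifies compatibility with \emph{both} $q_1$ and $q_2$, which is what makes the theorem a non-trivial bridge between the two moduli spaces. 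Your sketch of applying $R\Sheafhom$ to the universal extension together with the vanishings from $1$-stability is the right raw material for that compatibility, but as you acknowledge, it is exactly this careful derived computation plus the generic excess-intersection identity that constitutes the proof; the present proposal does not close either gap.
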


		\paragraph{Blowup formula for virtual Donaldson invariants.}
		
		Using a slight variation of Theorems \ref{th:mainthwallcrossing} and \ref{th:compthm}, we obtain the following blowup formula for the Donaldson--Mochizuki invariants, which is a direct analogue of the one given by Friedman and Morgan \cite[\S 3.8, Theorem 8.1]{FrMo}. 
		\begin{theorem} \label{th:blowupdon} Let $c_1\in H^2(X,\Z)$ be an algebraic class with $\deg_H c_1$ odd. Let $\sigma_1,\ldots \sigma_n\in H^*(X,\mathbb{Q})$. We have the following relations
			\begin{enumerate}[label=(\arabic*)]  
				\item $D_{\widehat{X},p^*c_1}(\sigma_1\ldots\sigma_n) = D_{X,c_1}(\sigma_1,\ldots,\sigma_n)$,
				\item $D_{\widehat{X},p^*c_1}(\sigma_1\ldots\sigma_n [C]^{i}) = 0$, for $1\leq i\leq 3$,
				\item $D_{\widehat{X},p^*c_1}(\sigma_1\ldots\sigma_n[C]^{4}) = -2 D_{X,c_1}(\sigma_1,\ldots,\sigma_n)$. \label{th:blowupdon3}
			\end{enumerate}
		\end{theorem}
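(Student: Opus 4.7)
The plan is to reduce the computation of $D_{\widehat{X},p^*c_1}(\sigma\cdot [C]^i)$ for each $i\in\{0,1,2,3,4\}$ to virtual integrals on $\mathcal{M}_X(2,c_1,c_2)$ via iterated applications of Theorems \ref{th:mainthwallcrossing} and \ref{th:compthm}, and then match the resulting coefficients. The cases $i=1,3$ are automatic: the virtual dimension $4\widehat{c}_2-c_1^2-3\chi(\mathcal{O}_X)$ has fixed residue modulo $4$, and since $\deg(\sigma\cdot[C]^i)=\deg(\sigma)+2i$ must equal $\vdim$ for the invariant to be nonzero, no integer $\widehat{c}_2$ satisfies this for odd $i$.

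For the substantive cases $i\in\{0,2,4\}$, identify $\mathcal{M}_{\widehat{X}}(2,p^*c_1,\widehat{c}_2(i))=\mathcal{M}^m(\widehat{c}(i))$ for $m$ sufficiently large, where $\widehat{c}(i)$ has first Chern class $p^*c_1$. Theorem \ref{th:mainthwallcrossing} applied successively at the walls $k=m-1,\ldots,0$, with insertion corresponding to $\sigma\cdot[C]^i$, expresses the virtual integral as a main term on $\mathcal{M}^0(\widehat{c}(i))$ plus correction integrals on $\mathcal{M}^k(\widehat{c}(i)-j\, e_k)$ for $j\geq 1$. When $\widehat{c}(i)=p^*c$ the pushforward $p_*\colon\mathcal{M}^0(p^*c)\xrightarrow{\sim}\mathcal{M}_X(c)$ is an isomorphism and the universal sheaf on $\mathcal{M}^0(p^*c)\times\widehat{X}$ is a pull-back from $\mathcal{M}_X(c)\times X$; the projection formula combined with $p_*[C]=0$ yields that the Donaldson--Mochizuki class associated to $[C]$ vanishes on $\mathcal{M}^0(p^*c)$. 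The main term therefore gives $D_{X,c_1}(\sigma)$ for $i=0$ and vanishes for $i\geq 2$, so in cases (2) and (3) the entire contribution comes from the wall-crossing corrections.

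Each wall-crossing correction is an integral over $\mathcal{M}^k(\widehat{c}(i)-je_k)$ with first Chern class $p^*c_1-j[C]$, which I would evaluate recursively by alternating three moves: (a) further applications of Theorem \ref{th:mainthwallcrossing} to lower the stability parameter to $0$; (b) the shift isomorphism $\mathcal{M}^0(\widehat{c}')=\mathcal{M}^1(\widehat{c}'(C))$, which replaces a $(-j)[C]$-component of $\widehat{c}'_1$ by a $(2-j)[C]$-component; and (c) Theorem \ref{th:compthm} once the $[C]$-coefficient of $\widehat{c}'(C)_1$ lies in $\{0,1\}$, which expresses the integral as one over $\mathcal{Q}(\widehat{c}'(C),1)$ and, via property (2) of that theorem, ultimately as an integral over $\mathcal{M}^1(p^*c'')\simeq\mathcal{M}_X(c'')$. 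The residues accumulated at each stage are governed by the explicit class $\Psi^j_m(\mathcal{E})$.

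The main obstacle is the explicit evaluation of these iterated residues and the verification that the resulting coefficients sum to the claimed values. I expect the ``slight variation'' of Theorems \ref{th:mainthwallcrossing} and \ref{th:compthm} alluded to in the paper to streamline the computation by packaging the $[C]^i$-insertions into a generating series in an auxiliary variable whose residues admit a closed form, mirroring the generating-function manipulations of Nakajima--Yoshioka in \cite{NaYo3} for framed sheaves on $\mathbb{P}^2$. After all cancellations, the surviving contribution should be a scalar multiple of $D_{X,c_1,c_2}(\sigma)$ with multiplicities $1, 0, -2$ for $i=0, 2, 4$ respectively. The factor $-2$ in \ref{th:blowupdon3} should trace back to a local residue identity reflecting the contribution of a length-one torsion summand $\mathcal{O}_C(-1)$ along the exceptional curve.
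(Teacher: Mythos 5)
Your proposal correctly identifies the main tools (iterated wall-crossing via Theorem~\ref{th:mainthwallcrossing}, followed by the shift isomorphism $\mathcal{M}^0(\widehat{c}')\simeq\mathcal{M}^1(\widehat{c}'(C))$ and the push-down formula of Theorem~\ref{th:compthm}), and your parity argument for the odd cases $i=1,3$ is a cleaner way to dispatch them than the paper does (which just asserts those cases are easier). However, as written the proposal does not constitute a proof of the nontrivial case~(3), and the gap is precisely where you flag ``the main obstacle'': you have not found the mechanism that makes the iterated residue computation tractable.

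The missing observation is a dimension count. For the $[C]^4$ insertion, the ``background'' class $\alpha=\mu(\sigma_1)\cdots\mu(\sigma_n)$ has degree exactly $\vdim\mathcal{M}^m(p^*c)-4$. By Lemma~\ref{lem:mucrossing}, crossing a wall replaces each $\mu([C])$ by $\mu([C])$ plus terms of degree~$\leq 0$ in the equivariant parameter, so the wall-crossing term at $(\mathcal{M}^{m'},j)$ is an integral of a class of degree~$\leq\deg\alpha+4$ over a space of virtual dimension
\[
\vdim\mathcal{M}^{m'}\bigl(p^*c-je_{m'+1}\bigr)=\vdim\mathcal{M}^m(p^*c)-\bigl(j^2+4j(m'+\tfrac12)\bigr).
\]
The drop $j^2+4jm'+2j$ exceeds $4$ for every $(j,m')$ with $j\geq1$, $m'\geq0$ \emph{except} $(j,m')=(1,0)$, where it equals~$3$. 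Consequently all wall-crossing corrections vanish for degree reasons except the single contribution from $\mathcal{M}^0(p^*c-e)$ with $j=1$, and there is no tower of recursive residues to control at all. Your proposed recursive moves (a)--(c) and the generating-series packaging are not needed; one applies (b) and (c) exactly once, to that single surviving term, and the factor $-2$ drops out of the Grassmann-bundle pushforward identity $(q_2)_*(\xi^k\cap[\mathcal{Q}]^{\vir})=c_{k-1}(\Ext(\mathcal{O}_C(-1),\mathcal{F}))\cap[\mathcal{M}^1(p^*c+[pt])]^{\vir}$ applied to the degree-one term $-2\xi$. Without the dimension argument, your plan leaves an unbounded family of nonzero-looking correction integrals that you would have to show cancel, which is a genuinely harder and different problem than the one the paper actually solves.
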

		The proof will be given in \S \ref{subsec:donmoch}. 
		It follows from our assumption on $c_1$ that $H$ does not lie on a wall of type $c_1$ in the notation of \cite[\S 1.1]{GoNaYo1}. Therefore, the classical invariants agree up to a sign with the algebraic invariants as outlined there and we have as a consequence of Theorem \ref{th:blowupdon}:
		\begin{corollary}
			For any simply-connected projective surface and $\deg_H {c_1}$ odd, the virtual Donaldson invariant agrees with the classical one up to a global constant. 
		\end{corollary}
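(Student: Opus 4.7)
The plan is to combine Theorem \ref{th:blowupdon} with the known agreement of classical and virtual Donaldson invariants in the stable range. Since $\deg_H c_1$ is odd, $H$ does not lie on a wall of type $c_1$, so by \cite[\S 1.1, \S A]{GoNaYo1} and \cite[\S 2.2]{GoNaYo3} there is a threshold $N=N(X,H,c_1)$ such that for $c_2 \ge N$ the moduli space $M_{X,H}(2,c_1,c_2)$ is smooth of the expected dimension, the virtual class coincides with the ordinary fundamental class, and the virtual Donaldson invariant $D_{X,H,c_1,c_2}$ agrees with the classical Donaldson invariant $D^{cl}_{X,c_1,c_2}$ up to a global orientation-dependent sign $\varepsilon$.

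To reach small values of $c_2$, the idea is to pass to an iterated blowup $\pi : \widetilde{X} \to X$ at sufficiently many general points so that the pulled-back Chern data enters the stable range on $\widetilde{X}$. On the classical side, Friedman--Morgan's blowup formula \cite[\S 3.8, Theorem 8.1]{FrMo} reconstructs the classical invariants of $X$ in the unstable range from classical invariants on $\widetilde{X}$ in the stable range using exactly the relations $(1, 0, 0, 0, -2)$ on $\{1, [C_i], \dots, [C_i]^4\}$ at each exceptional divisor. On the virtual side, iterated application of Theorem \ref{th:blowupdon} gives the analogous reconstruction for the virtual invariants, with the same coefficients. Combining the three steps---virtual on $X$ equals reconstructed virtual on $\widetilde{X}$, equals $\varepsilon$ times classical on $\widetilde{X}$, equals $\varepsilon$ times classical on $X$---yields the claim, with $\varepsilon$ independent of $c_2$.

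The main (mild) obstacle is purely bookkeeping: one must verify that Theorem \ref{th:blowupdon} indeed reproduces \emph{verbatim} the coefficients in Friedman--Morgan's classical blowup formula on $\{1, [C], [C]^2, [C]^3, [C]^4\}$, and that iterated blowups at general points on the virtual side are compatible with the inversion used on the classical side. The sign $\varepsilon$ from the stable-range comparison on $\widetilde{X}$ then descends unchanged since both blowup formulae are linear in the invariants being reconstructed. Once the two formulae are placed side by side, the descending induction on $c_2$ is immediate.
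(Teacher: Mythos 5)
Your proposal is correct and follows essentially the same route as the paper, which gives only a one-line justification: because $\deg_H c_1$ is odd, $H$ is not on a wall of type $c_1$, so by \cite{GoNaYo1} and \cite{GoNaYo3} the classical and virtual invariants agree (up to sign) in the large-$c_2$ regime, and Theorem~\ref{th:blowupdon} then propagates this agreement to all $c_2$ since both sides satisfy the same Friedman--Morgan blowup recursion. The only small inaccuracies in your write-up are cosmetic: the blowup centers need not be general (a single reduced point per step suffices, and generality plays no role in Theorem~\ref{th:blowupdon}), and since $w_2 = c_1 \bmod 2 \neq 0$ here, the analytic invariant has no unstable range --- what is being extended by the blowup recursion is the agreement with the algebro-geometric/virtual definitions, not the definition of the classical invariant itself.
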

		
		\paragraph{Weak general structure theorem.}
		Using Theorem \ref{th:mainthwallcrossing}, one can express integrals over the moduli space of stable sheaves on $\Xhat$ in terms of integrals over moduli space of stable sheaves on $X$ with lower virtual dimension. Ideally, we would hope to use Theorem \ref{th:mainthwallcrossing} to prove formulas such as \eqref{eq:blowupeulerchar} for the virtual Euler characteristic and its generalization to the virtual $\chi_y$-genus. It is not clear how to do this for essentially two reasons: First, the terms appearing in our wall-crossing formula are complicated and it is unclear whether one can structure them in a neat way. Second, if one is only interested in invariants of a certain type, like the virtual Euler characteristic, the wall-crossing terms will usually be more general integrals over the moduli space that require various insertions. 
		However, for a wide range of choices of invariants one can restrict the possible insertions that one has to consider. This includes in particular the virtual $\chi_y$-genus, see Example \ref{ex:insertions} \ref{ex:insertions3}. 
		Let $\Phi$ be as in the setup of Theorem \ref{th:mainthwallcrossing} and suppose additionally that $\Phi$ can be evaluated for families of torsion free sheaves $X$ over a base $U$ in a way that is compatible with pullback along $p$. For a power series $P\in \Q[[\nu_2,\ldots,\nu_r]]$, let $P(\mathcal{E})$ be the expression obtained by replacing each occurence of $\nu_i$ in $P$ with $\ch_i(\mathcal{E})/[pt]$. Then we have
		
		 \begin{theorem}\label{thm:structhm} 
		 	Fix an admissible Chern class $\chat = p^*c-j\chexc$ for some $j\geq 0$ and where $c=r+c_1+c_2$. Suppose that $\Phi$ satisfies Assumption \ref{assum:goodphi}. Then there exist universal power series $\Omega_n\in \mathbb{Q}[[\nu_2,\ldots,\nu_r]]$ that depend only on $r,\Phi$ and $j$ which satisfy:
		 	
			\begin{equation}\label{eq:structeq}
				\int_{[\mathcal{M}_{\Xhat}(\chat)]^{\vir}} \Phi(\mathcal{E}) = \sum_{n=0}^{\infty} \int_{[\mathcal{M}_X(c+n[pt])]^{\vir}} \Phi(\mathcal{E})\Omega_n(\mathcal{E}).
			\end{equation}
		This holds for the fixed and non-fixed determinant spaces. 
		 \end{theorem}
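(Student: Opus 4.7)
The plan is to combine the weak wall-crossing formula (Theorem~\ref{th:mainthwallcrossing}) with the push-down formula (Theorem~\ref{th:compthm}) in a nested induction that alternates between wall-crossings and the identification $\mathcal{M}^0_{\chat'} = \mathcal{M}^1_{\chat'(C)}$. Since $\mathcal{M}_{\Xhat}(\chat) = \mathcal{M}^m(\chat)$ for all sufficiently large $m$, the starting point is iterated application of Theorem~\ref{th:mainthwallcrossing} to descend from $\mathcal{M}^m(\chat)$ down to $\mathcal{M}^0(\chat)$. This produces a finite sum of residue integrals over stacks of the form $\mathcal{M}^0(\chat - j_1 e_1 - \cdots - j_m e_m)$, each with an insertion modified tautologically by direct-summing $C_\ell \otimes e^{-t}$ factors and dividing by equivariant Euler classes of $\mathfrak{N}(\mathcal{E}, C_\ell)$ and $\mathfrak{N}(C_\ell, \mathcal{E})$.

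Next, I convert each surviving integral over $\mathcal{M}^0(\chat')$, written in the normal form $\chat' = p^*c' - j'\chexc$, back to one over a moduli stack on $X$. In the base case $j' = 0$, the morphism $p_*$ is an isomorphism $\mathcal{M}^0(p^*c') \xrightarrow{\sim} \mathcal{M}_X(c')$, and the insertion pulls back tautologically using that $\Phi$ is compatible with pullback along $p$ by the extended hypothesis. For $0 < j' < r$, combining the shift $\mathcal{M}^0(\chat') = \mathcal{M}^1(\chat'(C))$ with Theorem~\ref{th:compthm} rewrites the integral as a flat pullback from $\mathcal{M}^1(p^*c'')$ with an additional Grassmannian contribution $\mathcal{V} \otimes \mathcal{O}_C(-1)$; one more application of Theorem~\ref{th:mainthwallcrossing} to cross the $\mathcal{M}^0/\mathcal{M}^1$ wall, followed by the $j' = 0$ pushforward, completes the reduction. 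For $j' \geq r$, the same shift lowers $j'$ by $r$, allowing a further wall-crossing reduction; descending induction on $j'$ paired with descending induction on virtual dimension ensures termination.

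The universality of $\Omega_n$ follows because every piece introduced during the reduction—the sheaves $C_\ell, \mathcal{O}_C(-1), \mathcal{V}$ and the equivariant parameters $t_i$—has Chern character depending only on $r$ and the combinatorics of the reduction path, never on $X$, $c'$, or the polarization. Assumption~\ref{assum:goodphi} then guarantees that each intermediate insertion is expressible as a universal polynomial in the normalized components $\ch_i(\mathcal{E})/[\mathrm{pt}]$, so the accumulated contribution at point-class degree $n$ assembles into a single power series $\Omega_n \in \mathbb{Q}[[\nu_2, \ldots, \nu_r]]$ depending only on $r$, $\Phi$, and $j$. The main obstacles I anticipate are (a) verifying that Assumption~\ref{assum:goodphi} is preserved at every step of the reduction, i.e.\ that the push-down and residue operations never produce insertions outside the tautological framework in which $\Phi$ can be evaluated, and (b) establishing the finiteness required to make sense of $\Omega_n$ as a power series—equivalently, showing that for each fixed $n$ only finitely many tuples $(j_1, \ldots, j_m)$ and shift steps contribute at point-class degree $\leq n$. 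This finiteness should follow from the observation that every residue extraction strictly raises the point-class component of the ambient Chern character by a positive amount determined by $\ch(C_\ell)$.
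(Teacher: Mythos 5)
Your overall strategy — iterated wall-crossing down to $\mathcal{M}^0$, then alternating the twist $\mathcal{M}^0(\chat')=\mathcal{M}^1(\chat'(C))$, Theorem~\ref{th:compthm}, and further wall-crossings until one lands on $\mathcal{M}^0(p^*c+n[pt])\simeq \mathcal{M}_X(c+n[pt])$, with a descending induction on virtual dimension for termination — matches the paper's proof structure (its Lemmas~\ref{lem:wallcrossseries} and \ref{lem:powseriesreduce}). Your identification of the two obstacles (preservation of the insertion format, and finiteness at each point-class degree) is also on target, and your finiteness argument via the increase of the point-class degree under wall-crossing agrees with the paper's.

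There is, however, a genuine gap at the final step. You assert that Assumption~\ref{assum:goodphi} guarantees each intermediate insertion is ``expressible as a universal polynomial in the normalized components $\ch_i(\mathcal{E})/[\mathrm{pt}]$,'' but Assumption~\ref{assum:goodphi} only places the insertions in the larger ring generated by \emph{both} $\ch_i(\mathcal{E})/[\mathrm{pt}]$ \emph{and} $\ch_i(\mathcal{E})/[C]$, and the Euler-class terms $\Eu_{t}(\mathfrak{N}(\mathcal{E},C_m))$ and $\Eu_{t}(\mathfrak{N}(C_m,\mathcal{E}))$ produced by Theorem~\ref{th:mainthwallcrossing} necessarily involve the $[C]$-slant products. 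These do not simply vanish upon pushing forward to $X$: you must \emph{first} re-express them, and the theorem moreover requires that the output $\Omega_n$ lives in the much smaller ring $\Q[[\nu_2,\ldots,\nu_r]]$ — only point-class slant products, excluding $\nu_1$ and all $\nu_i$ with $i>r$. The paper supplies this in Lemma~\ref{lem:serieseliminate}: on $\mathcal{M}^1_{p^*c}$ the object $R\Hom_{\Xhat}(\mathcal{O}_C(-1),\mathcal{E})$ is a shift of a rank~$r$ vector bundle, which yields universal polynomial relations expressing $\ch_i(\mathcal{E})/[\mathrm{pt}]$ for $i>r$ and $\ch_i(\mathcal{E})/[C]$ in terms of the bounded set of generators; combining this with the vanishing of $\ch_i(\mathcal{E})/[C]$ and $\ch_1(\mathcal{E})/[\mathrm{pt}]$ on $\mathcal{M}^0_{p^*c}$ (the sheaf there is pulled back from $X$ and its determinant comes from the Poincar\'e bundle) allows one to collapse the insertion into $\Q[[\nu_2,\ldots,\nu_r]]$. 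Without this elimination argument, your proof produces a universal expression but in the wrong ring, and does not establish \eqref{eq:structeq} as stated.
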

		The proof will be given in \S \ref{subsec:structhm}.
		\begin{remark}
		In \cite[Theorem 2.6]{NaYo3}, Nakajima and Yoshioka prove the analogous result for multiplicative classes for moduli spaces of framed sheaves, but their proof should go through for more general $\Phi$ as considered here. Since in their case the integral takes values in the equivariant cohomology ring, it turns out that for many choices of $\Phi$ the $\Omega_n$ are uniquely determined by the equations \eqref{eq:structeq} when one lets $\chat_2$ vary over the integers. But at the same time, their $\Omega_n$ should agree with ours after forgetting the equivariant parameters. Therefore, explicit determination of the $\Omega_n$ in the framed setting should imply an explicit general formula.  
		\end{remark} 
		\begin{remark}
			\begin{enumerate}[label=\arabic*)]
				\item Suppose $r=1$ and for fixed $\Phi$ consider the generating series for the Hilbert schemes of points
				\[Z_{X}(z):=\sum_{n}z^n\int_{X^{[n]}}\Phi(\mathcal{I}_n),\]
				where $\mathcal{I}_n$ denotes the universal ideal sheaf. Note that the virtual and the usual fundamental class agree in this situation. 
				Then Theorem \ref{thm:structhm} gives the following relation:
				\[ Z_{\Xhat}(z)= A(z) Z_{X}(z)\]
				for some power series $A(z)$.
				This recovers a consequence of the stronger universality results for integrals over Hilbert schemes of points by Ellingsrud--G\"ottsche--Lehn \cite{ElGoLe}.
				\item For $r=2$, it follows from the proof that the $\Omega_n$ are unchanged when replacing $\Phi(\mathcal{E})$ by $\Phi(\mathcal{E})(\ch_2(\mathcal{E})/[pt])^k$. For the fixed determinant spaces, we can then consider the generating series 
				\[Y_{X,c_1}(z,w) :=\sum_{n\in \Z}\sum_{m=0}^{v(n)/2} z^{v(n)} w^{-2m} \int_{[\mathcal{M}_{X}(2,c_1,n)]^{\vir}} \Phi(\mathcal{E})(\ch_2(\mathcal{E})/[pt])^m,\]
				where $v(n)=4n-c_1^2-3\chi(\mathcal{O}_X)$.
				Then we have for $a=0,1$:
				\[Y_{\Xhat,p^*c_1-a[C]}(z,w) =Y_a(z,w)Y_{X,c_1}(z,w) \]
				for some power series $Y_0(z,w)$ and $Y_1(z,w)$ depending only on $\Phi$. 
			\end{enumerate}
		\end{remark}

		In view of the predicted formula for the virtual $\chi_y$-genus, we have the following natural
		\begin{conjecture}
			Let $\Phi$ be as in Example \ref{ex:insertions} \ref{ex:insertions3} and let $Z_a$ be as in \eqref{eq:chiyblowup}. Then we have 
			\[Y_a(z,w)=Y_a(z,0)=Z_a(y,z^4).\]
			Here the coefficients of $Y_a$ are implicitly taken to be polynomials in $y$. 
		\end{conjecture}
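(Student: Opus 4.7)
The conjecture has two parts: (A) the correction factor $Y_a(z,w)$ is independent of $w$, and (B) the resulting series $Y_a(z,0)$ agrees with the closed form $Z_a(y, z^4)$ from the G\"ottsche--Kool blowup formula for the virtual $\chi_y$-genus.

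For part (A), the plan is to leverage Theorem \ref{thm:structhm} together with the subsequent remark that, for $r=2$ with fixed determinant, the universal series $\Omega_n$ are unchanged when one replaces $\Phi(\mathcal{E})$ by $\Phi(\mathcal{E})(\ch_2(\mathcal{E})/[pt])^k$. Concretely, applying the structure theorem with $\Phi$ replaced by $\Phi\cdot(\ch_2/[pt])^m$ for each $m$, both $Y_{\Xhat,p^*c_1-a[C]}(z,w)$ and $Y_{X,c_1}(z,w)$ can be expressed as double sums against the \emph{same} family $\{\Omega_n\}$. Since the $\Omega_n$ depend only on $\Phi$, $r$ and $a$, and not on $c_2$ or on the power of $\ch_2/[pt]$, the ratio $Y_{\Xhat}/Y_X$ should factor through a formal manipulation in which the $w$-weighted sum over $m$ cancels between numerator and denominator. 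To make this precise, I would introduce the generating series $\Omega(z)=\sum_n z^{\alpha(n)}\Omega_n$ (with $\alpha$ the appropriate virtual-dimension shift), interpret $\Omega$ as a multiplication operator on the full generating series, and verify the factorization $Y_{\Xhat}=\Omega(z)\cdot Y_X$ at the level of formal power series with $\ch_2/[pt]$-insertions intact. This would directly yield $Y_a(z,w)=\Omega(z)=Y_a(z,0)$.

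For part (B), the strategy is universality: by Theorem \ref{thm:structhm} (and the formulation of $Y_a$), the series $Y_a(z,0)$ depends only on $r=2$, $a\in\{0,1\}$, and $\Phi$, not on $X$ or $c_1$. It therefore suffices to determine $Y_a(z,0)$ on any single test surface on which one has independent access to the virtual $\chi_y$-genera on both $X$ and $\Xhat$. Two natural approaches present themselves: (i) combine the wall-crossing formula of Theorem \ref{th:mainthwallcrossing} with the push-down formula of Theorem \ref{th:compthm} to express the generating series $Y_a(z,0)$ as an explicit residue-theoretic sum over the combinatorial data of successive $m$-stabilities, and recognize the resulting expression as $Z_a(y,z^4)$ by direct comparison with the structure of \eqref{eq:chiyblowup}; or (ii) import the analogous computation from Nakajima--Yoshioka \cite{NaYo3} on moduli of framed sheaves on $\widehat{\mathbb{P}^2}$, where the same wall-crossing architecture is used equivariantly, and show that the non-equivariant limit of their multiplicative contributions matches the series $Z_a$ appearing here.

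The hard part will be part (B). Even after reducing to a single surface via universality, the explicit evaluation of the residue integrals in Theorem \ref{th:mainthwallcrossing} for the virtual $\chi_y$-class insertions, summed over all the wall-crossings needed to pass from $\mathcal{M}^0_{\chat}$ back to a moduli space on $X$, is a substantial combinatorial task. In particular, one must identify a theta-function-like numerator and a product-form denominator in the output; the presence of the theta-series $\sum_{n\in\Z} x^{\frac{(2n+a)^2-(2n+a)}{2}}q^{\frac{(2n+a)^2}{4}}$ suggests that the induction on $j$ controlling the $0<j<r$ case of Theorem \ref{th:compthm} will need to be iterated in a way that produces a lattice sum, which is precisely the phenomenon that has resisted direct proof for the G\"ottsche--Kool conjecture itself. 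Part (A), by contrast, is essentially a formal consequence of the remark following Theorem \ref{thm:structhm} plus careful bookkeeping of virtual dimensions, and should be provable within the existing framework.
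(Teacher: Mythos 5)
The statement you are addressing is stated in the paper as a \emph{conjecture}: the authors do not prove it, and there is consequently no proof of it in the paper to compare against. Your text is likewise not a proof but a plan, and its crucial step is left unexecuted: for part (B) you yourself concede that the explicit evaluation of the residue terms of Theorem \ref{th:mainthwallcrossing} combined with the push-down formula of Theorem \ref{th:compthm}, or alternatively the non-equivariant specialization of the Nakajima--Yoshioka computation, is ``a substantial combinatorial task'' that has not been carried out. That task is precisely the open content of the conjecture (and, as the paper notes, is closely tied to the still-conjectural G\"ottsche--Kool blowup formula), so invoking universality to reduce to a test surface does not close the gap --- on the test surface you would still need either the unproved explicit wall-crossing evaluation or the unproved G\"ottsche--Kool formula as input.

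Part (A) also contains a genuine error: the $w$-independence of $Y_a(z,w)$ is \emph{not} a formal consequence of the remark following Theorem \ref{thm:structhm}. What the remark gives (via the invariance of the $\Omega_n$ under insertion of powers of $\ch_2(\mathcal{E})/[pt]$) is the factorization $Y_{\Xhat,p^*c_1-a[C]}(z,w)=Y_a(z,w)\,Y_{X,c_1}(z,w)$ in which, after expanding each $\Omega_n\in\Q[[\nu_2]]$ and absorbing the monomials $(\ch_2(\mathcal{E})/[pt])^{m+k}$ into the series on $X$, one finds
\[Y_a(z,w)\;=\;\sum_{n} z^{\alpha(n)}\,\Omega_n\big|_{\nu_2=w^2},\]
up to the appropriate dimension shift $\alpha(n)$. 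Hence $Y_a(z,w)=Y_a(z,0)$ is \emph{equivalent} to the assertion that each $\Omega_n$ is constant as a power series in $\nu_2$; no ``cancellation of the $w$-weighted sum between numerator and denominator'' occurs, because the $w$-dependence of $Y_a$ sits entirely in the higher $\nu_2$-coefficients of the $\Omega_n$, and determining those coefficients requires exactly the explicit wall-crossing/push-down computation you defer to part (B). So both halves of the conjecture remain open under your proposal, and the claim that part (A) is provable ``within the existing framework'' by bookkeeping alone is unfounded.
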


		\paragraph{Strategy of the proof.} 
		The basic fact that will allow us to prove the wall crossing Theorem \ref{th:mainthwallcrossing} is that the stability conditions defining $\mathcal{M}^m$ and $\mathcal{M}^{m+1}$ are related by an intermediate stability condition ``on the wall'', already considered in \cite{NaYo2}, which we call $(m,m+1)$-semistability. This intermediate notion of stability defines a moduli stack $\mathcal{M}^{m,m+1}$ of semistable objects so that we have open embeddings
		\[\mathcal{M}^m\hookrightarrow \mathcal{M}^{m,m+1}\hookleftarrow \mathcal{M}^m.\]
		Unless $\mathcal{M}^m=\mathcal{M}^{m+1}$, the stack $\mathcal{M}^{m,m+1}$ will have points with positive-dimensional automorphism groups. The difference of integrals over $\mathcal{M}^m$ and $\mathcal{M}^{m+1}$ will eventually be expressed in terms of exactly those stacky loci. The basic idea is to use the intrinsic geometry of the moduli problem defining $\mathcal{M}^{m,m+1}$ to create a \emph{master space} from which a wall-crossing formula can be obtained by $\C^*$-localization. Recall that a master space is a proper space with a $\C^*$-action that contains the moduli spaces in question as components of its $\C^*$-fixed locus. We proceed in several steps:
		\begin{enumerate}[label=\arabic*.]
			\item \emph{Reduce to the case of one-dimensional automorphism groups.} The stabilizer groups of $\mathcal{M}^{m,m+1}$ can be of arbitrarily high dimension, which makes it hard to work with this stack directly. In order to simplify the problem we work with an \emph{enhanced moduli} stack $\mathcal{N}\to \mathcal{M}^{m,m+1}$ whose objects are pairs of an $m,m+1$-semistable oriented sheaf $E$ on $\Xhat$ in $\mathcal{M}^{m,m+1}$ together with a full flag in the space of global sections of a sufficiently ample twist of $E$. Then we define a series of new stability and semistability conditions incorporating the data of the flag which cut out proper Deligne--Mumford stacks $\mathcal{N}^{\ell}$ and intermediate Artin stacks $\mathcal{N}^{\ell,\ell+1}$ resulting in the following picture:
		\begin{equation*}
				\begin{tikzcd}[column sep = small]
					 & \mathcal{N}^{0,1} & &\cdots & &\mathcal{N}^{N-1,N}& \\
					\mathcal{N}^0\ar[d]\ar[ur,hook]\ar[rr,dashed, leftrightarrow]& &\mathcal{N}^1\ar[r, dashed, leftrightarrow]\ar[ul, hook]&\cdots & \mathcal{N}^{N-1}\ar[l, dashed, leftrightarrow]\ar[ur,hook]\ar[rr,dashed,leftrightarrow]& &\mathcal{N}^{N} \ar[ul, hook]\ar[d] 	\\
					\mathcal{M}^{m}& & & & & & \mathcal{M}^{m+1}
				\end{tikzcd}
			\end{equation*}
		Here $N$ is the length of the flag, and the vertical maps are relative flag bundles. This allows us to break the original wall-crossing problem into a series of simpler steps, as now each of the spaces $\mathcal{N}^{\ell,\ell+1}$ has at most one-dimensional automorphism groups at each point. 
		\item \emph{Create a master space using the intrinsic geometry of the moduli spaces.}
		For fixed $\ell$, we prove a wall-crossing result for $\mathcal{N}^{\ell}$ and $\mathcal{N}^{\ell+1}$ by using the universal sheaf on $\mathcal{N}^{\ell,\ell+1}$ to construct a master space. One needs to take care to establish the desired properties, such as properness,  of the master space and to identify the contributions of the $\C^*$-fixed loci.
				Once this is done, we can sum the resulting wall-crossing terms to obtain the desired formula for $\mathcal{M}^m$ and $\mathcal{M}^{m+1}$.  
		\item \emph{Construct perfect obstruction theories.} A technical but important point in our work is the construction of perfect obstruction theories on all involved moduli spaces and to show that they are all mutually compatible.  Some of the more delicate points involving obstruction theories are to show that the fixed-loci of the $\C^*$-action on the master space have the expected obstruction theories as well as showing  the necessary compatibilities needed to establish Theorem \ref{th:compthm}.		We make repeated use of the Atiyah class on algebraic stacks and its basic properties. Unfortunately, there appear to be many folklore results for which there is no suitable reference. This will be addressed in \cite{Kuhn}, whose main results are summarized in Appendix \ref{app:atiyah}.

		\end{enumerate}

		We learnt the strategy to constuct the master space used in the second step from work of Kiem and Li. 
		The method of using a flag structure and the creation of the spaces $\mathcal{N}^{\ell}$ is parallel to \cite{NaYo2} and based on Mochizuki's work in \cite{Moch}. Our contribution is that we work entirely with the moduli stack of sheaves and do not rely on a connection to quiver representations or on GIT methods. We believe that our method is conceptually simpler and has a greater potential to generalize beyond the given setting.

	\paragraph{Construction of the master space.}
		We give some motivation and explain the strategy of Kiem--Li's construction of the master space that we adapt. Since this works very generally, we keep the discussion somewhat abstract. For each object in $E \in \mathcal{N}^{\ell,\ell+1}(\C)$, there are two associated filtrations, one for each of the two stability conditions defining $\mathcal{N}^{\ell}$ and $\mathcal{N}^{\ell+1}$ respectively. By our assumptions on stabilizer groups, each of these filtrations is of length at most two and of length one if and only if the object is stable with respect to the stability condition. The object $E$ is called \emph{properly polystable} if and only if one of these filtrations splits $E$ as a direct sum. In that case, the other filtration is the opposite one induced by the splitting. It turns out that $E$ is polystable if and only if it has positive-dimensional stabilizer group. In particular, any object $E$ that is not in $\mathcal{N}^{\ell+1}$ (and therefore has a nontrivial corresponding filtration) can be degenerated to a polystable object which is the associated graded. This is illustrated in Figure \ref{fig:1}.
		\begin{figure}[h]
			\includegraphics[width=.75\textwidth]{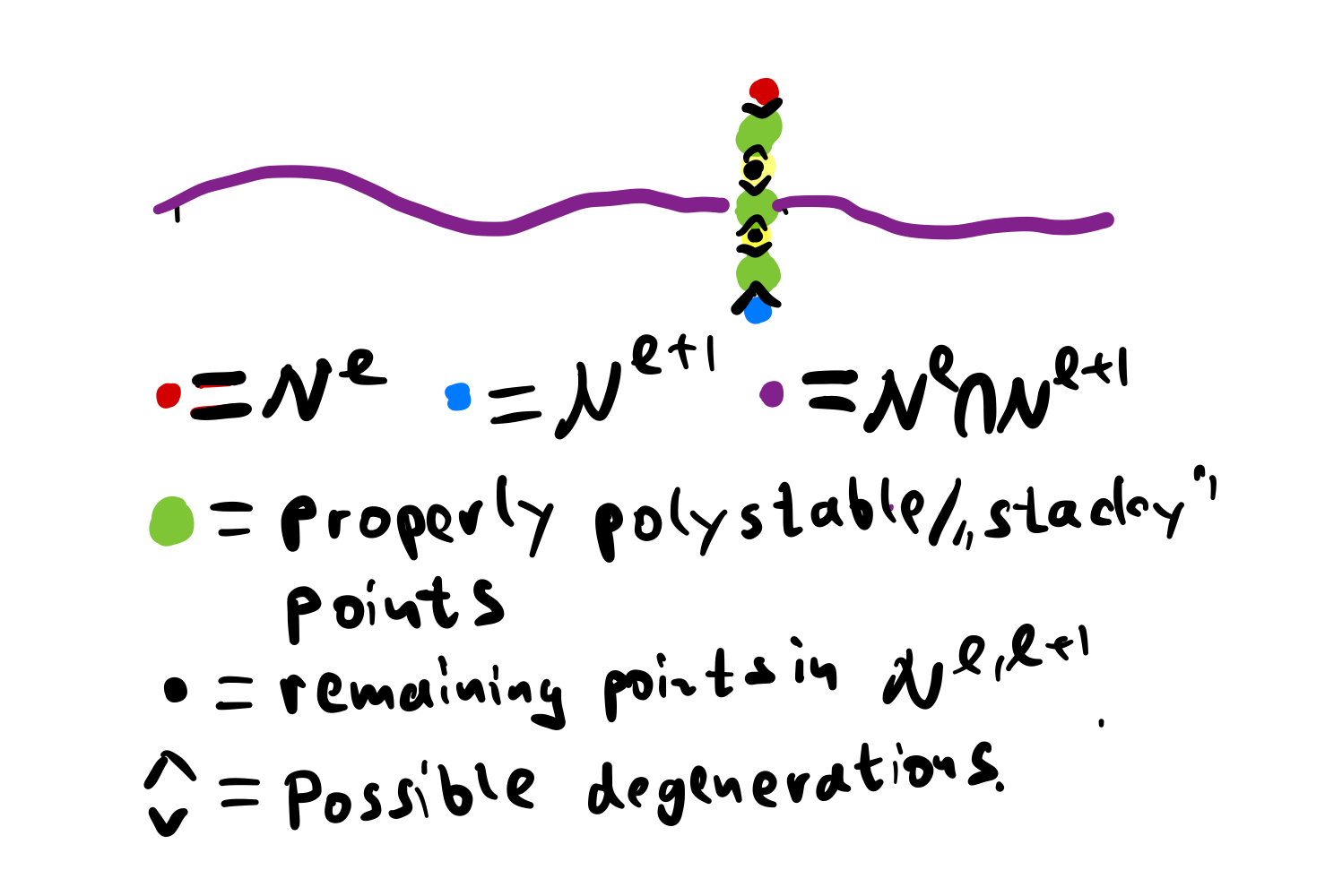}
			\centering 
			\caption{Illustration of $\mathcal{N}^{\ell,\ell+1}$.}
			\label{fig:1}
		\end{figure}
		
		Now suppose that $E$ is a polystable object of $\mathcal{N}^{\ell,\ell+1}$, so that $E$ decomposes as a direct sum $S\oplus T$. The $\C^*$-action on $E$ given by $(t,t^{-1})$ induces an action on the deformation space of $E$. The deformation space is stratified according to the limiting behavior of points under the $\C^*$-action, as shown in Figure \ref{fig:2}. 
		\begin{figure}[h]
  			\includegraphics[width=.9\textwidth]{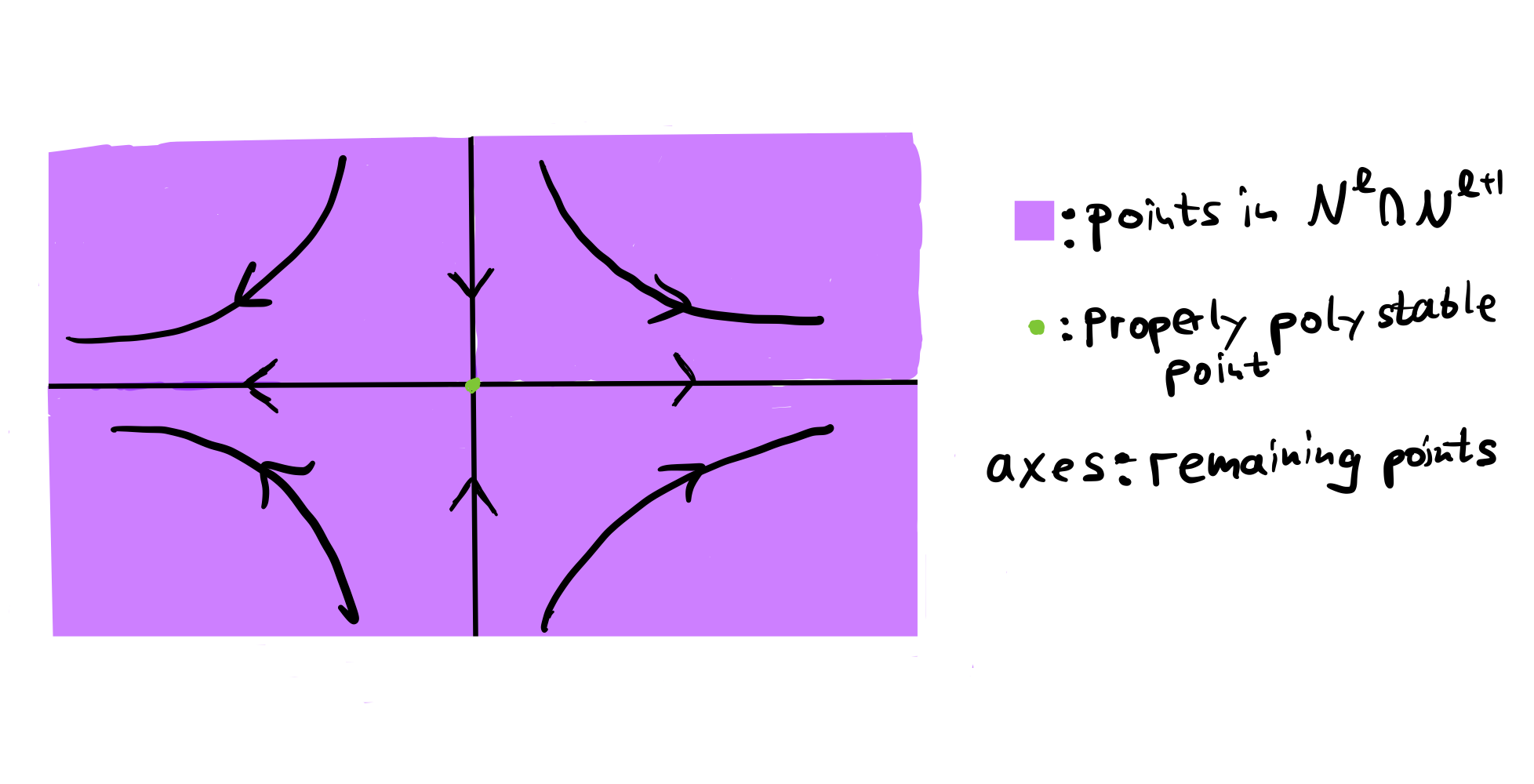}
			\centering 
			\caption{Deformation space of a polystable point.}
			\label{fig:2}
		\end{figure}
		
		The arrows in Figure \ref{fig:2} indicate the natural action of the stabilizer group (after some choice of orientation). By taking the quotient by the $\C^*$-action, one obtains a model for an \'etale neighborhood of $E$ in $\mathcal{N}^{\ell,\ell+1}$. The idea is now to write $\mathcal{N}^{\ell,\ell+1}$ as a global $\C^*$-quotient of some Deligne--Mumford stack $\mathcal{W}$, such that around every polystable point, we recover the situation illustrated in Figure \ref{fig:2}. To do this, we determine a suitable line bundle $\mathcal{L}$ on $\mathcal{N}^{\ell,\ell+1}$ such that for any point properly polystable point the automorphism group acts non-trivially (and with a certain orientation) on $\mathcal{L}$. Then we take $\mathcal{W}$ to be the total space of $\mathcal{L}^{\vee}$ with its natural $\C^*$-action (taking the dual is just a matter of sign choice). To find the line bundle $\mathcal{L}$, one uses determinant line bundles associated to pushforwards of the universal sheaf over $\mathcal{N}^{\ell,\ell+1}$. The resulting stack $\mathcal{W}$ is illustrated in Figure \ref{fig:3}.
		
		\begin{figure}[h]
			\includegraphics[width=.9\textwidth]{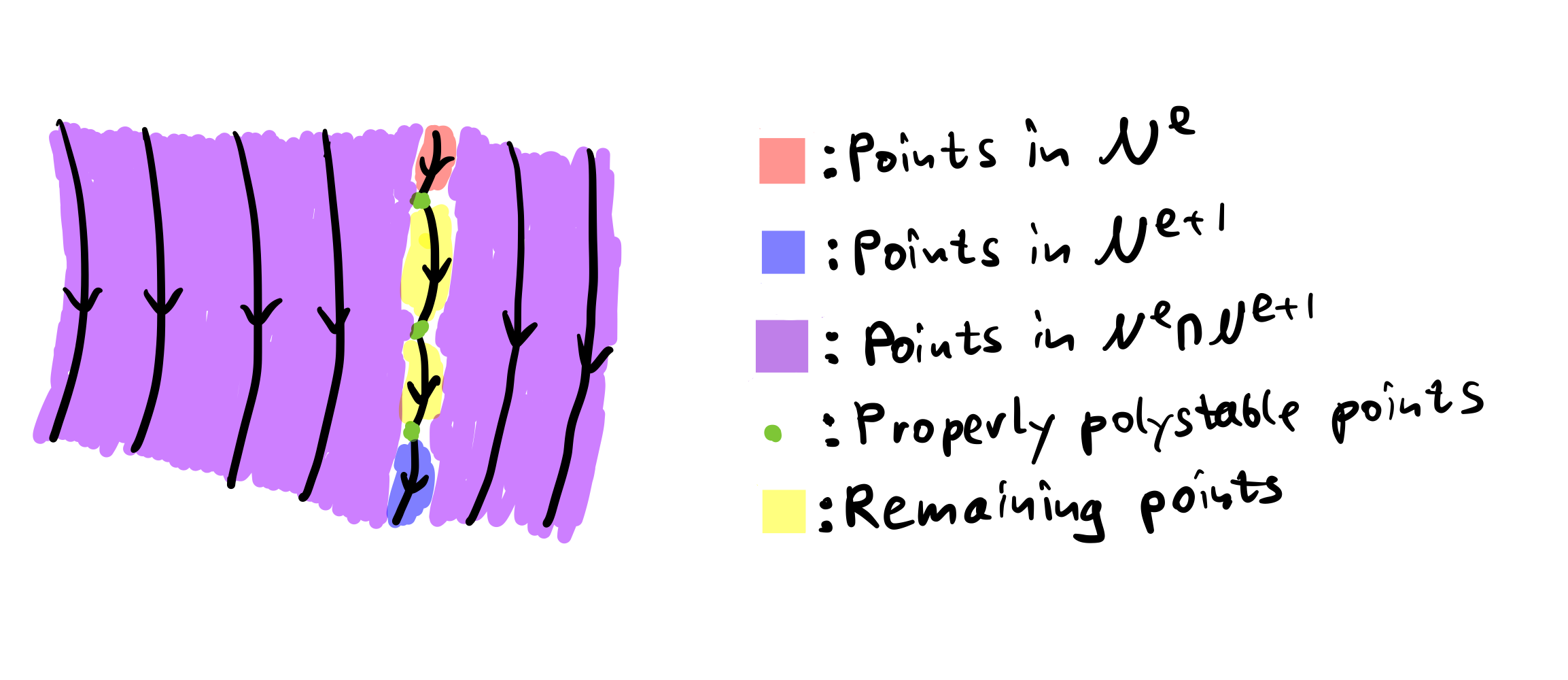}
			\centering 
			\caption{Illustration of $\mathcal{W}$.}
			\label{fig:3}
		\end{figure}
		
		To obtain the master space $\mathcal{Z}$ one ``completes'' $\mathcal{W}$ by adding in a copy of $\mathcal{N}^{\ell+1}$ at the ``top'' and of $\mathcal{N}^{\ell}$ at the ``bottom''. Concretely, one considers the open subset 
		\[\left(\mathcal{W}\times \mathbb{P}^1\right)^s\subset \mathcal{W}\times \mathbb{P}^1\]
		obtained by removing the preimage of $\mathcal{N}^{\ell,\ell+1}\setminus \mathcal{N}^{\ell+1}$ in the fiber over $0\in \pone$ and the preimage of $\mathcal{N}^{\ell,\ell+1}\setminus \mathcal{N}^{\ell}$ in the fiber over $\infty\in \pone$. Then $\mathcal{Z}$ is the quotient by the diagonal $\C^*$ action on $\left(\mathcal{W}\times\pone\right)^s$ and has a natural $\C^*$-action extending the action on $\mathcal{W}$.

		\paragraph{Structure of this paper.}
		
		We begin by giving a short review of the analytic Donaldson invariant and their algebraic counterparts in Section \ref{sec:background}
		
		In Section \ref{sec:perverse}, we review the basic properties of perverse coherent sheaves on the blowup of a smooth projective surface and construct their moduli spaces. We begin by reviewing the definition and basic properties of stable perverse coherent (also called \emph{$0$-stable}) sheaves on the blowup in \S \ref{subsec:pervcoh}. For us, these are exactly the sheaves that arise as pullbacks from Gieseker stable sheaves from $X$ or by quotients thereof by subsheaves of the form $\mathcal{O}_C(-1)$. In \S \ref{subsecmodspaces} we therefore construct the moduli spaces of oriented $0$-stable sheaves as Quot-schemes over the moduli space of oriented Gieseker stable sheaves on $X$. By twisting, they give rise to moduli stacks of $m$-stable sheaves introduced in \S \ref{subsec:mstable}, where we also prove that for $m \gg 0$, we recover the stack of Gieseker stable sheaves on the blowup. We conclude this section by setting up the basic geometric picture needed for the pushdown formula Theorem \ref{th:compthm} in \S \ref{subsec:cohsys}. 
		
		In Section \ref{sec:master}, we prepare the proof of Theorem \ref{th:mainthwallcrossing} by implementing the strategy outlined in the above. 
		We first define the stack $\mathcal{M}^{m,m+1}$ of $m,m+1$-semistable sheaves and show its basic properties in \S \ref{subsec:semistable}. In particular, we show that every $\C$-valued object of this stack that is not $m$ (respectively $m+1$) stable has a good notion of maximal $m$ (respectively $m+1$)-destabilizing subobject. In \S \ref{subsecunderlyingstack} we consider the notion of \emph{sheaf with a flag structure} and introduce a corresponding relative stack of flag structures $\mathcal{N}\to \mathcal{M}^{m,m+1}$. We define the notion of stability with respect to a given piece of the flag, which gives rise to the stacks $\mathcal{N}^{\ell}$ of \emph{flag stable objects}. Similarly, we define the notion of semistability for two adjacent pieces of the flag giving rise to the stacks $\mathcal{N}^{\ell,\ell+1}$. We define the notion of properly polystable sheaves and show the basic properties of these stacks. We prove properness of $\mathcal{N}^{\ell}$ via the usual valuative criterion. The existence part is somewhat delicate and is done in \S \ref{subsecpropex} using elementary modifications. We construct the master space $\mathcal{Z}$ for the spaces $\mathcal{N}^{\ell}$ and $\mathcal{N}^{\ell+1}$ in \S \ref{subsecmasterspace} and show that it is a proper Deligne--Mumford stack. In \S \ref{subsec:fix}, we analyze the stack-theoretic fixed locus $\mathcal{Z}^{\C^*}$ and give an explicit moduli-theoretic description of its components. 
		
		Section \ref{sec:perfob} is about constructing compatible perfect obstruction theories on all spaces appearing throughout Sections \ref{sec:perverse} and \ref{sec:master}. We review some background in \S \ref{subsec:obthy}, in particular the notions of compatibility and \emph{virtual pullbacks}. We construct the obstruction theory for spaces of oriented sheaves on a surface in \S \ref{subsecobstrbasic}, and the relative obstruction theories for the maps $\mathcal{N}\to \mathcal{M}^{m,m+1}$ and $\mathcal{W}\to \mathcal{N}^{\ell,\ell+1}$ in \S \ref{subsec:relobs} where we also establish their compatibilities. \S \ref{subsec:obstrfix} describes the obstruction theory on the master space. The most subtle point of this section is the obstruction theoretic analysis of the components of $\mathcal{Z}^{\C^*}$ done in \S \ref{subsec:fix}: There we determine the virtual normal bundle and show that the perfect obstruction theory induced from the $\C^*$-action on $\mathcal{Z}$ induces the same fundamental class as the one coming from their intrinsic moduli-theoretic characterization. Finally, in \S \ref{subsec:obstrquot} we construct the obstruction theory on the space $\mathcal{Q}(\chat,j)$ appearing in Theorem \ref{th:compthm} and show its compatibilities. 
		
		In Section \ref{sec:technicalresults}, we prove our technical main results. The proof of Theorem \ref{th:mainthwallcrossing} via virtual localization on the master space $\mathcal{Z}$ is given in \S \ref{subsec:wallcr}. There, we use the results of \S \ref{subsec:fix} and \S \ref{subsec:obstrfix} to compute the contributions of $\C^*$-fixed loci to get a wall-crossing result for $\mathcal{N}^{\ell}$ and $\mathcal{N}^{\ell+1}$. Then careful summation over all $\ell$ gives the wall-crossing result for $\mathcal{M}^m$ and $\mathcal{M}^{m+1}$. The proof of Theorem \ref{th:compthm} is given in \S \ref{subsec:compthm} and amounts to a closer analysis of the compatibilities proven in \S \ref{subsec:obstrquot}.
		
		Finally, these results are applied to the study of blowup formulae in Section \ref{sec:MainRes}. In \S \ref{subsec:donmoch}, we give the proof of Theorem \ref{th:blowupdon}. This amounts to an explicit calculation using the wallcrossing and pushdown formulas. A subtle point which is not addressed in detail here is that one may want to define the Donaldson invariant using insertions coming from non-algebraic classes on $X$. The last \S \ref{subsec:structhm} contains the proof of Theorem \ref{thm:structhm}.

		\vspace{0.3cm}
		\noindent
		{\it Acknowledgements.} 
		The first-named author was partially supported by NSF grant DMS-160121. 
The second-named author was partially supported by the Simons Collaboration on Special Holonomy in Geometry, Analysis and Physics and JSPS Grant-in-Aid for Scientific Research numbers JP16K05125, 16H06337, 20K03582, 20K03609. 
The authors would like to thank Tom Graber for helpful discussions about perfect obstruction theories and  Rachel Webb for explanations about the cotangent complex on algebraic stacks. We also thank	Hiraku Nakajima and Kota Yoshioka for explanations of their work. Finally, we would like to thank Martijn Kool and Rafe Mazzeo for helpful conversations and ongoing support.  
	
		\vspace{0.3cm}
	\noindent
		A version of this article will appear as part of the first-named authors PhD thesis. 

		\subsection*{Conventions and index of notations} 
		We often suppress pullbacks along the projections from a product: For example, we denote by $\mathcal{O}_C(-1)$ the pullback of that sheaf along any projection $\Xhat\times U\to \Xhat$ for $U$ an algebraic stack. Similarly, use the notation $\mathcal{E}_U$ to denote a naturally given sheaf on $\Xhat\times U$, for example by a universal property satisfied by $U$, or if $\mathcal{E}_U$ is the pullback of a sheaf $\mathcal{E}_{V}$ on $\Xhat\times V$ along a map $U\to V$.
		If $\mathcal{M}$ is a proper Deligne--Mumford stack with given perfect obstruction theory and $\alpha$ a Chow cohomology class, we write  
		\[\int_{\mathcal{M}} \alpha\]
		to denote the rational number obtained as the degree of $\alpha \cap [\mathcal{M}]^{\vir}$. 
		All the stacks we work with are quasi-separated and locally of finite type over $\Spec \C$.
		We make a list of the most important symbols used throughout the text, sorted by where they are introduced in the main text \S\S \ref{sec:perverse}--\ref{sec:MainRes}. 
		\subsubsection*{Introduction 
		}
		\begin{itemize}
			\item  $C_m:=\mathcal{O}_C(-m-1)$ 
			\item $\chexc_m:= \ch(C_m)$ with $e:=e_0$.
		\end{itemize}
		\subsubsection*{Subsection \ref{subsec:pervcoh}}
		\begin{itemize}	
			\item $r,c_1$: Fixed choice of rank $r\geq 1$ and first Chern class $c_1\in H^2(X,\Z)$.
			\item $c$ : An object in $\Z\oplus H^2(X,\Z)\oplus \frac{1}{2}\Z$ of the form $r+c_1+\ch_2$, interpreted as a \emph{Chern character} of sheaves.
			\item $\chat$ : An admissible Chern character on $\chat$
		\end{itemize}
	\paragraph{Subsection \ref{subsecmodspaces}}	
		\begin{itemize}
			\item $\mathcal{M}_{X,c}, \mathcal{M}_X(c)$: The moduli stack of oriented Gieseker-stable sheaves of Chern character $c$ (with respect to the fixed polarization $H$).
			\item $\mathcal{M}^p_{\chat},\mathcal{M}^0_{\chat}$: The moduli stack of stable perverse coherent (or $0$-stable) sheaves.
			\item  $\widetilde{\mathcal{M}}_{X,c},\widetilde{\mathcal{M}}^p_{\chat}$: The unoriented variants of the moduli spaces.
		\end{itemize}
	\subsubsection*{Subsection \ref{subsec:mstable}}
		\begin{itemize}	
			\item $\mathcal{M}^m,\mathcal{M}^m_{\chat}$: The moduli stack of oriented $m$-stable sheaves.
			
			\end{itemize}
  \subsubsection*{Subsection \ref{subsec:cohsys}}
\begin{itemize}	
			
			\item $\mathcal{Q}(\chat,j),q_1,q_2$: The relative Quot-scheme over $\mathcal{M}^1_{\chat}$, where $\chat=p^*c+j\chexc$, with structure map $q_1$ and forgetful map $q_2$ to $\mathcal{M}^1_{p^*c}$.
			
		\end{itemize}
			\subsubsection*{Subsection \ref{subsec:semistable}}
			\begin{itemize}	

			\item $\mathcal{M}^{0,1}, \mathcal{M}^{0,1}_{\chat}$: The moduli stack of oriented $0,1$-semistable sheaves with Chern character $\chat$.

			\end{itemize}
		\subsubsection*{Subsection \ref{subsecunderlyingstack}}
		\begin{itemize}	
			
			\item $L$: A fixed ample line bundle on $\Xhat$.
			\item $m$: A fixed large enough integer.
			\item $\globm{\mathcal{E}}$: The sheaf $\pi_{U*}(\mathcal{E}\otimes L^{m})$, for $\mathcal{E}$ a coherent sheaf on $\Xhat\times U$.
			\item $N:=\dim \globm{E}$ for $E$ a coherent sheaf on $\Xhat$ with $\ch(E)=\chat$, $d= \dim \globm{\mathcal{O}_C(-1)}$.
			\item $\mathcal{N}:=\mathcal{N}_{\chat}$: Full flag bundle for the vector bundle $\globm{\mathcal{E}_{\mathcal{M}^{0,1}}}$ over $\mathcal{M}^{0,1}$.
			\item $(\stabS_{\ell}),(\stabT_{\ell})$: Conditions of S-and T-stability for the $\ell$-th piece of the flag.
			\item $\mathcal{N}^{\ell}$: The open substack of $\mathcal{N}$ defined by $(\stabS_{\ell})$ and $(\stabT_{\ell})$.
			\item $\mathcal{N}^{\ell,\ell+1}$: The open substack of $\mathcal{N}$ defined by $(\stabS_{\ell})$ and $(\stabT_{\ell+1})$.
						\end{itemize}
		\subsubsection*{Subsection \ref{subsecmasterspace}}
	\begin{itemize}	
			\item $\detL(-)$: A functor associating to a sheaf $\mathcal{E}$ on $\Xhat\times U$ a line bundle on $U$.
			\item $\mathcal{W}$: The total space of $\detL(\mathcal{E}_{\mathcal{N}^{\ell,\ell+1}})$.
			\item $\mathcal{Z}$: The enhanced master space.
						\end{itemize}
		\subsubsection*{Subsection \ref{subsec:fix}}
	\begin{itemize}	
			\item $\mathcal{Z}_{k,\Lambda}, \rho_{k,\Lambda}$: The fixed loci of the enhanced master space, and the corresponding closed embedding respectively.
		\end{itemize}
	\subsubsection*{Subsection \ref{subsec:obstrfix}}
	\begin{itemize}
		\item $\mathfrak{N}(\mathcal{F},\mathcal{G}):= - [R\pi_{U*}(R\Sheafhom(\mathcal{F},\mathcal{G})\otimes \omega_X)^{\vee}]$ for $\mathcal{F}, \mathcal{G}$ coherent sheaves on $\Xhat\times U$.
	\end{itemize}
		\subsubsection*{ }

		\section{Background}\label{sec:background}
		\subsection{Donaldson invariants}\label{subsec:donaldson}
		We give some background about the analytic Donaldson invariants. Good general references for the Donaldson theory are the book by Donaldson--Kronheimer \cite{DoKr} and the collection by Friedman--Morgan \cite{FrMo2}.

		Let $X$ be a closed, oriented, smooth $4$-manifold. For simplicity and lack of a suitable reference, we will assume here that $X$ is simply-connected.

		The set of isomorphism classes of $SU(2)$-bundles on $X$ modulo isomorphism is in bijection with $H^4(X,\mathbb{Z})$ via the second Chern class map and thus with $\mathbb{Z}$, using the orientation. Given an integer $k\in \Z$, let $P=P_k$ denote the $SU(2)$-bundle with second Chern class $c_2(P)=k$. We consider the space of connections on this bundle. If $A$ is a connection on $P_k$, then its curvature $F_A$ is a  $2$-form on $X$ with values in the adjoint bundle $\mathfrak{su}_{2,P}$. Any choice of Riemannian metric $g$ on $X$ gives the Hodge star operator on forms and thereby a splitting 
		\[\Omega^2(X,\mathfrak{su}_{2,P})= \Omega^2_+(X,\mathfrak{su}_{2,P})\oplus \Omega^2_-(X,\mathfrak{su}_{2,P}) \] 
		into the positive and negative eigenspaces of the Hodge star operator. A connection $A$ is called \emph{anti-self-dual}, or ASD, if its curvature $F_A$ lies in $\Omega^2_-(X,\mathfrak{su}_{2,P})$. There is an action of the \emph{gauge group} (i.e. the group of global bundle automorphisms of $P$) on the space of connections, and each connection has automorphism group either $\{\pm 1\}$ or $S^1$. The former are called \emph{irreducible} connections and the latter \emph{reducible}. 
		There is a moduli space  $\mathcal{M}^*(P_k)$ of irreducible ASD connections modulo gauge transformations whose expected dimension is $8 k -3(1+b_2^+(X))$.

		Now suppose that $b_2^+=b_2^+(X)$ is odd. 
		Donaldson \cite{Dona} showed that if $b_2^+\geq1$, then for a generic choice of Riemannian metrics $g$ of $X$ (which can be chosen independently of $k$), the space $\mathcal{M}^*(P_k)$ is smooth of the expected dimension and there are no reducible ASD connections, in which case we write $\mathcal{M}^*(P_k)=\mathcal{M}(P_k)$. An orientation of $\mathcal{M}(P_k)$ is induced by a choice of orientation on $H_2^+(X,\mathbb{Z})$, which we fix from here on (the opposite choice gives the opposite sign for all invariants). There is a compactification $\mathcal{M}(P_k)\subset \overline{\mathcal{M}}(P_k)$ obtained by adding certain singular connections, which is called the \emph{Uhlenbeck compactification}. When $4k\geq 3b_2^+(X)+5$, the boundary of this moduli space is well-behaved, in particular, all boundary strata have lower dimension and $\overline{\mathcal{M}}(P_k)$ has a natural fundamental class. In this case we say that $k$ lies in the $\emph{stable range}$. 
		
		There is a canonical map 
		\[\overline{\mu}:H_{2}(X,\mathbb{Z})\to H^2(\overline{\mathcal{M}}(P_k)),\]
		whose restriction $\mu$ to the open subset $\mathcal{M}(P_k)$ can be described as follows: There is a universal $SO(3)$-bundle $\xi$ on $\mathcal{M}(P_k)\times X$. Then for $\alpha\in H_2(X,\mathbb{Z})$, we have 
		\[\mu(\alpha)= -\frac{1}{4}p_1(\xi)/\alpha,\]
		where $p_1$ denote the first Pontrjagin class. Here one uses the fact that the class $p_1(\xi)/\alpha$ is divisible by $4$ in $H^2(\mathcal{M}, \mathbb{Z})$. 
		When the bundle $\xi$ admits a lift to a principal $SU(2)$-bundle $\eta$ (which happens when $k$ is odd), we have that in fact $\mu(\alpha) = c_2(\xi)/\alpha$.
		
		Similarly, one has a map $\overline{\mu}:H_0(X,\mathbb{Z})\to H^4(\overline{\mathcal{M}}(P_k)\setminus T,\mathbb{Z})$, where $T$ is a certain boundary locus. Let $\alpha_1,\ldots,\alpha_n\in H_2(X,\mathbb{Z})$ and let $\nu\in H_0(X,\mathbb{Z})$ be the class of a point. One defines the \emph{stable Donaldson invariant} $D_{X,k}$ for $k$ in the stable range as 
		\[D_{X,k}(\alpha_1\cdot\cdots \cdot \alpha_n\cdot \nu^{\cdot m}) = \int_{\overline{\mathcal{M}}(P_k)}\mu(\alpha_1)\cup \cdots \mu(\alpha_n)\cup \mu(\nu)^{\cup m},\]
		where the integral denotes the pairing against the fundamental class. If $b_2^+>1$, then these invariants are in fact independent of the choice of a generic metric $g$, and therefore are invariants of the smooth structure of $X$. 
		
		\paragraph{Unstable range and a blowup formula.}
		Let $\Xhat$ be an oriented smooth connected sum of $X$ with $\overline{\mathbb{C}\mathbb{P}}^2$ and let $[C]$ be the homology class coming from a generator of $H_2(\overline{\mathbb{C}\mathbb{P}}^2,\Z)$ (for example, this is the case if $X$ is algebraic and $\Xhat$ the one-point blowup of $X$). Then we have $b_2^+(X)=b_2^+(\Xhat)$, and so from the condition 
		\[4k\geq b_2^+ +5,\]
		it is clear that if $k$ is in the stable range for $X$, then it is also in the stable range for $\Xhat$. For such $k$, one has the following \emph{blowup formula}: 
		\[D_{\Xhat,k+1}(\alpha_1\cdot\cdots \cdot \alpha_n\cdot[C]^{\cdot 4} \cdot \nu^{\cdot m}) = -2 D_{X,k}(\alpha_1\cdot\cdots \cdot \alpha_n\cdot \nu^{\cdot m}).\]
		 By stipulating this formula to hold, one can extend the definition of the Donaldson invariant to $k$ in the unstable range: Choose a positive integer $\ell$, such that $4(k+\ell)\geq b_2^+(X)+5$, and let $\widehat{X}^{(\ell)}$ be the smooth connected sum of $X$ with $\ell$ copies of  $\overline{\mathbb{C}\mathbb{P}}^2$ with $[C_i]$ the class coming from a generator of the second homology group of the $i$-th summand. Then one defines
		 \[ D_{X,k}(\alpha_1\cdot\cdots \cdot \alpha_n\cdot \nu^{\cdot m}):=\left(-\frac{1}{2}\right)^{\ell} D_{\Xhat^{(\ell)},k+\ell}(\alpha_1\cdot\cdots \cdot \alpha_n\cdot[C_1]^{\cdot 4}\cdots [C_{\ell}]^{\cdot 4} \cdot \nu^{\cdot m}).\]
		 This is the \emph{full Donaldson invariant}. It takes values in $(\frac{1}{2})^{\ell_0}\Z$, where $\ell_0$ is the minimal positive integer for which $4(k+\ell_0) \geq b_2^+(X)+5$. Due to the blowup formula for the stable invariant, the full Donaldson invariant is independent of choice of $\ell$ and agrees with the stable invariant for $k$ in the stable range.

		\paragraph{Definition using $SO(3)$-bundles.}
		Donaldson's definition goes through when one takes $P$ more generally to be a principal $SO(3)$ bundle. An $SO(3)$-bundle is classified by its second Stiefel--Whitney class $w_2(P)$ together with its first Pontrjagin class $p_1(P)$. These two classes satisfy the relation
		\[\mathcal{P}(w_2(P))=p_1(P) \mbox{ mod } 4.\] For a generic metric $g$ on $X$, we again have a well-behaved moduli space $\mathcal{M}(P)$ of ASD connections and its Uhlenbeck compactification $\overline{\mathcal{M}}(P)$. The expected dimension is 
		\[-2p_1(P) -3(b^+(X)+1)\]
		
		When $w_2(P)=0$, then $P$ lifts to an $SU(2)$-bundle $P'$ , and the two situations are related by 
		\[c_2(P')=-4p_1(P).\] 
		
		The definition of invariants goes through as before. One difference is that the class $p_1(P)$ may not be divisible by $4$, so the map $\overline{\mu}$ takes values in the rational cohomology group $H^*(\overline{\mathcal{M}},\mathbb{Q})$. On the upside, when $w_2(P)\neq 0$, there is no issue arising with the stable range, and one gets an invariant $D_{w_2,p_1}$ for each pair $(w_2,p_1)$ without additional work. 
		\paragraph{Relation to algebro-geometric Donaldson invariants.}
		Now assume additionally that $X$ is a complex projective surface with a given ample line bundle $H$. 
		For a fixed choice of $c_1\in H^2(X,\mathbb{Z})$ and $c_2\in H^4(X,\mathbb{Z})$, we consider the moduli space $\mathcal{M}^{lf}_{X}(2,c_1,c_2)$ of $H$-semistable locally free sheaves and its  closure $\overline{\mathcal{M}^{lf}}_{X}(2,c_1,c_2)$ in the Gieseker moduli space of $H$-semistable coherent sheaves. A rank $2$ holomorphic vector bundle is in particular a complex vector bundle, and can be viewed as a principal $U(2)$-bundle after a choice of a Hermitian metric. We have the exact sequence of groups 
		\[0\to U(1)\to U(2)\to PSU(2)\simeq SO(3)\to 0,  \]
		which induces an exact sequence
		\[0\to H^1(X,U(1))\to H^1(X,U(2))\to H^1(X,SO(3)).\]
		This means that each principal $U(2)$ bundle induces a principal $SO(3)$-bundle, and there is a bijection between isomorphism classes of $SO(3)$-bundles that arise in this way and the set of isomorphism classes of $U(2)$-bundles up to tensoring with a complex line bundle. 
		If $E$ is a $U(2)$-bundle and $P$ the induced $SO(3)$-bundle, their invariants are related by
		\begin{align}
			w_2(P)&=c_1(E) \mbox{ mod } 2,\label{eq:chtotop1}\\
		p_1(P)&=c_1(E)^2-4c_2(E).\label{eq:chtotop2}
		\end{align}
	 
		In particular, two $U(2)$ bundles with the same Chern classes induce the same topological $SO(3)$-bundle. 
		
		Suppose that $c_1$ is a $(1,1)$-class, i.e. that it lies in the intersection $H^2(X,\mathbb{Z})\cap H^1(X,\Omega_X)$ and fix the $SO(3)$-bundle with the corresponding invariants.  Then every ASD-connection on the bundle $P$ lifts to a unique Hermitian Yang-Mills connection on $E$ (the uniqueness uses crucially the simply-connectedness of $X$). 
		
		Therefore, there is a natural bijection $\mathcal{M}^{lf}_{X}(2,c_1,c_2)\to \mathcal{M}(P)$. A priori this map is only set-theoretic, but it can be extended to a continuous map between the moduli spaces and even to an algebraic map from $\overline{\mathcal{M}^{lf}}_{X}(2,c_1,c_2)$ to the Uhlenbeck compactification of $\mathcal{M}(P)$ \cite{Liju}.
		This has been used by Li \cite{Liju} as well as by O'Grady \cite{Ogra} and Morgan \cite{Morg} to give a definition of the Donaldson invariants purely in terms of algebraic geometry (at least for $SU(2)$-invariants), which recovers the usual definition. Their definition requires that the Gieseker moduli spaces are generically smooth of the expected dimension. For a fixed choice of $c_1$, this holds whenever $c_2$ is large enough. The definition can be extended to all values of $c_2$ by stipulating the blowup formula as before. 
		
		\paragraph{Mochizuki's Donaldson invariants.}
		Another definition of Donaldson invariants in the case of a smooth complex projective surface has been developed by Mochizuki \cite{Moch} using the \emph{virtual fundamental class} in algebraic geometry. This approach works for all values of $c_2$, and for $c_2$ large enough agrees with the algebro-geometric Donaldson invariants of Li and Morgan--O'Grady and therefore with the analytic invariants. Mochizuki's invariants have an advantage also in describing the {\it wall-crossing} of the invariants compared to the algebro-geometric Donaldson invariants mentioned above, in particular, they allow one to remove the assumption of {\it good walls} used in the formulation of wall-crossing of Donaldson invariants. 
This with the resolution of a Nekrasov conjecture led to the determination of the wall-crossing terms on a complex projective surface $X$ with $H^1 (X)=0$ by G\"{o}ttsche--Nakajima--Yoshioka \cite{GoNaYo1}. 

However, for small values of $c_2$ it is a priori unclear whether his invariants agree with the other ones. 
 The equivalence of them may have been obtained for simply-connected surfaces with positive geometric genus via the arguments of G\"{o}ttsche--Nakajima--Yoshioka in \cite{GoNaYo1}.  
We give a more direct proof for this by showing the Friedmann--Morgan style blowup formula for the virtual invariants in this paper.

G\"ottsche and Kool \cite{GoKo} express the virtual Euler characteristics of the moduli space of semistable sheaves on a surface as Mochizuki's invariants, and the $K$-theoretic counterpart, virtual $\chi_y$-genera of them, as well.   
They then study them by using Mochizuki's formula which expresses his virtual invariants in terms of Seiberg--Witten invariants and certain integrals over the product of Hilbert schemes of points on the surface.

Related to the work by G\"ottsche and Kool \cite{GoKo}, 
Thomas and the second-named author \cite{TaTh1}, \cite{TaTh2} define {\it Vafa--Witten invariants}, 
originally proposed in the work of Vafa and Witten \cite{VaWi}. Then the virtual Euler characteristics are realized as {\it instanton part} of the invariants.  
More generally, the virtual $\chi_{y}$-genera of the moduli spaces appear in the same manner in a $K$-theoretic refinement of the Vafa--Witten invariants studied by Thomas \cite{Thom}, Laarakker \cite{Laar} and others. 
Then, the {$S$-duality conjecture} described in \cite{VaWi} predicts the possible form of the virtual Euler characteristics and  the virtual $\chi_{y}$-genera of the moduli spaces from that of {\it monopole part} of these invariants and vice versa.  
The article \cite{GoKo2} is a good survey paper about recent developments of research in this direction.

		\section{Perverse coherent sheaves and their moduli spaces}
		\label{sec:perverse}

		\subsection{Perverse coherent sheaves on a blowup}
		\label{subsec:pervcoh}
		Let $X$ be a smooth projective surface with a fixed ample line bundle $H$. We denote by $p:\widehat{X}\to X$ the blowup of $X$ at a reduced point $q\in X$. We also denote by $C:=p^{-1}(q)$ the exceptional curve of the blowup and by $\mathcal{O}_C(m)$ the unique degree $m$ line bundle on $C$ for any $m\in \Z$.
		Our discussion is based on the notion of perverse coherent sheaf on the blowup $\Xhat$ as studied by Nakajima and Yoshioka in \cite{NaYo2}. 
		\begin{proposition}\label{propdefpervcoh}
			For a coherent sheaf $E$ on $\Xhat$ the following conditions are equivalent:
			\begin{enumerate}[label=(\arabic*)]
				\item The natural map $\phi:p^*p_*E\to E$ is surjective, and \label{propdefpervcoh1}
				\item $\Hom(E,\mathcal{O}_C(-1))=0$. \label{propdefpervcoh2}
			\end{enumerate}
			Moreover, if either of these hold, then we also have
			\begin{enumerate}[resume,label=(\arabic*)]
				\item $R^1p_*E=0$, so in particular $Rp_*E=p_*E$;  \label{propdefpervcoh3}
				\item $\Ker\phi$ is isomorphic to  $V^{\vee}\otimes \mathcal{O}_C(-1)$, where $V\simeq \Ext^1(E,\mathcal{O}_C(-1))$. This makes $p^*p_*E$ the universal extension of $E$ by $\mathcal{O}_C(-1)$; and \label{propdefpervcoh4}
				\item $R^1p_*E(C)=0$. \label{propdefpervcoh5}
			\end{enumerate}
		\end{proposition}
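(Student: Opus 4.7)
My plan relies on three basic facts about the blowup $p\colon \widehat{X}\to X$: the adjunction $(p^*, p_*)$, the identity $Rp_*\mathcal{O}_{\widehat{X}} = \mathcal{O}_X$ (with the consequence $Rp_*\mathcal{O}_{\widehat{X}}(C) = \mathcal{O}_X$, via the tautological sequence $0\to\mathcal{O}\to\mathcal{O}(C)\to\mathcal{O}_C(-1)\to 0$), and the vanishing $Rp_*\mathcal{O}_C(-1) = 0$.

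The direction (1) $\Rightarrow$ (2) is formal: any map $f\colon E\to \mathcal{O}_C(-1)$ composed with $\phi$ yields a map $p^*p_*E\to \mathcal{O}_C(-1)$, which by adjunction factors through $p_*\mathcal{O}_C(-1) = 0$; surjectivity of $\phi$ then forces $f = 0$. For the converse, set $Q = \operatorname{coker}\phi$. The triangle identity of the adjunction gives $p_*\phi = \id_{p_*E}$, so $p_*Q = 0$, and the same adjunction computation shows that every $E\to \mathcal{O}_C(-1)$ vanishes on $\operatorname{im}\phi$ and hence factors through $Q$, giving $\Hom(E,\mathcal{O}_C(-1)) = \Hom(Q,\mathcal{O}_C(-1))$. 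The underived $(i^*,i_*)$-adjunction for $i\colon C\hookrightarrow \widehat{X}$ identifies this last space with $\Hom_C(Q/\mathcal{I}_CQ,\mathcal{O}_C(-1))$, so (2) $\Rightarrow$ (1) reduces to a structural claim about sheaves on $\widehat{X}$ supported on $C$ with vanishing $p_*$.

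This structural claim is the main obstacle I anticipate, and I plan to handle it by an $\mathcal{I}_C$-adic filtration analysis. Using $\mathcal{I}_C^p/\mathcal{I}_C^{p+1} = \mathcal{O}_C(p)$, there is a natural surjection $\mathcal{O}_C(p)\otimes_{\mathcal{O}_C}(Q/\mathcal{I}_CQ)\twoheadrightarrow \operatorname{gr}^p Q$ for every $p\ge 0$. Suppose for contradiction that $Q/\mathcal{I}_CQ$ has no line bundle summand of degree $\leq -1$, i.e.\ consists only of torsion and of $\mathcal{O}_C(m)$ with $m\ge 0$. Any nonzero quotient on $\mathbb{P}^1$ of such a sheaf has the same property, since there is no nonzero map $\mathcal{O}_C(a)\to \mathcal{O}_C(b)$ with $b < a$; hence each $\operatorname{gr}^p Q$ for $p\geq 0$ again consists of torsion and line bundles of degree $\geq 0$, and has nonzero $H^0$ if nonzero. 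Letting $k$ be minimal with $\mathcal{I}_C^kQ = 0$, the bottom piece $\mathcal{I}_C^{k-1}Q$ is then nonzero with nonzero global sections, but injects into $Q$, so $p_*(\mathcal{I}_C^{k-1}Q)\hookrightarrow p_*Q = 0$ is a contradiction. This proves the claim and so the equivalence (1) $\Leftrightarrow$ (2).

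For (3)--(5) I apply $Rp_*$ and $R\Hom(-,\mathcal{O}_C(-1))$ to the short exact sequence $0\to K\to p^*p_*E\to E\to 0$ from (1). The derived projection formula, combined with a truncation argument for the natural map $Lp^*p_*E\to p^*p_*E$ (whose fiber lies in strictly negative degrees and hence contributes nothing in degree $\geq 1$ after $Rp_*$, using $R^{\geq 2}p_* = 0$), yields $R^ip_*(p^*p_*E) = 0$ for $i\ge 1$; together with $p_*\phi = \id$ this gives both $Rp_*K = 0$ and $R^1p_*E = 0$, establishing (3). Applying $R\Hom(-,\mathcal{O}_C(-1))$ and using the analogous vanishing of $R\Hom(p^*p_*E,\mathcal{O}_C(-1))$ in low degrees, the long exact sequence identifies $\Hom(K,\mathcal{O}_C(-1)) \simeq \Ext^1(E,\mathcal{O}_C(-1)) = V$. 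To conclude $K\simeq V^{\vee}\otimes \mathcal{O}_C(-1)$ in (4), I rerun the filtration argument for $K$: now $p_*K = 0$ forces the bottom piece to be a sum of $\mathcal{O}_C(\leq -1)$'s, while $R^1p_*K = 0$ (surjecting onto $R^1p_*(K/\mathcal{I}_CK)$) forces $K/\mathcal{I}_CK$ to have no summand of degree $\leq -2$; the same tensor-surjection argument as above now yields an incompatibility unless $\mathcal{I}_CK = 0$, reducing $K$ to a sheaf on $\mathbb{P}^1$ with vanishing cohomology, which is $\mathcal{O}_C(-1)^{\oplus \dim V}$ by Grothendieck's splitting theorem. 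Finally, (5) follows by twisting the sequence by $\mathcal{O}(C)$: $K(C) = V^\vee\otimes \mathcal{O}_C(-2)$ has $Rp_* = V^\vee[-1]$, and the long exact sequence of $Rp_*$ combined with $Rp_*\mathcal{O}(C) = \mathcal{O}_X$ yields $R^1p_*E(C) = 0$.
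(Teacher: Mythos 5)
Your argument is a genuine self-contained substitute for the paper's proof, which is simply a citation to the relevant lemmas of Nakajima--Yoshioka \cite{NaYo2}. The overall strategy --- adjunction and $Rp_*\mathcal{O}_{\widehat X}=\mathcal{O}_X$ for the formal implications, a truncation comparison of $Lp^*p_*E$ with $p^*p_*E$ for the $R^1$-vanishings, and the $\mathcal{I}_C$-adic filtration plus the structure theorem on $C\simeq\mathbb{P}^1$ for the structural claims (2)$\Rightarrow$(1) and (4) --- is sound. The two key mechanisms (the bottom graded piece $\mathcal{I}_C^{k-1}Q$ inheriting $H^0=0$ from $p_*Q=0$, and the surjections $\mathcal{O}_C(p)\otimes(Q/\mathcal{I}_CQ)\twoheadrightarrow\operatorname{gr}^pQ$ propagating degree bounds) do exactly what you want, and your combination in (4) of $R^1p_*K=0$ (forcing $K/\mathcal{I}_CK$ to have degrees $\geq -1$) with $p_*K=0$ (forcing the bottom piece to have degrees $\leq -1$) to rule out $\mathcal{I}_CK\neq 0$ is correct.

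There is, however, one genuine hole, in (2)$\Rightarrow$(1), at the assertion that ``$p_*\phi=\id_{p_*E}$, so $p_*Q=0$.'' The triangle identity together with $Rp_*\mathcal{O}_{\widehat X}=\mathcal{O}_X$ does give that $p_*\phi$ is an isomorphism, and hence that the inclusion $p_*(\operatorname{im}\phi)\to p_*E$ is an isomorphism. But $p_*$ is only left exact, so from $0\to\operatorname{im}\phi\to E\to Q\to 0$ one obtains only an injection $p_*Q\hookrightarrow R^1p_*(\operatorname{im}\phi)$, not $p_*Q=0$; you still need $R^1p_*(\operatorname{im}\phi)=0$. This vanishing is true --- $\operatorname{im}\phi$ is a quotient of $p^*p_*E$, $R^2p_*=0$, and $R^1p_*(p^*p_*E)=0$ --- but $R^1p_*(p^*p_*E)=0$ is precisely the truncation/projection-formula computation you only carry out later, in the argument for (3). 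As written, the equivalence (1)$\Leftrightarrow$(2) silently relies on a step you defer to a later, logically downstream, part of the proof. The fix is easy: establish $R^1p_*(p^*F)=0$ for coherent $F$ on $X$ as a preliminary lemma, then derive both $p_*Q=0$ and part (3) from it.
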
 
		\begin{proof}
			The equivalence of \ref{propdefpervcoh1} and \ref{propdefpervcoh2} follows from Lemma 1.2 (3) and Proposition 1.9 (1) in \cite{NaYo2}. Then \ref{propdefpervcoh3} follows from Definition 1.1 in \cite{NaYo2}. Finally \ref{propdefpervcoh4} is Proposition 1.9 (2) in \cite{NaYo2}, and \ref{propdefpervcoh5} is Lemma 1.10 there. 
		\end{proof}

		\begin{definition}
			We call a coherent sheaf $E$ on $\widehat{X}$ \emph{perverse coherent} if it satisfies the equivalent conditions \ref{propdefpervcoh1} and \ref{propdefpervcoh2} of Proposition \ref{propdefpervcoh}.
		\end{definition}

		\begin{remark}
			This definition is a special case of one  studied by Bridgeland in \cite{Brid}. More generally, whenever $f:Y\to X$ is a birational morphism between projective varieties such that all fibers of $f$ have dimension $\leq 1$ and $Rf_*\mathcal{O}_Y=\mathcal{O}_X$, one can define an abelian subcategory  $\Per(Y/X)$ of the derived category of coherent sheaves on $Y$, whose objects are called \emph{perverse coherent sheaves}. They are in general certain complexes concentrated in $[-1,0]$. For our purposes, it is sufficient to consider only those objects in $\Per(\widehat{X}/X)$ which are in fact sheaves (i.e. complexes concentrated in degree $0$). In this case, Bridgeland's definition agrees with ours. 
		\end{remark}
		
		\begin{example}\label{expushpull}
			Let $F$ be a torsion-free coherent sheaf on $X$. The natural map  $p^*p_*p^*F\to p^*F$ has a right inverse $p^*F\to p^*p_*p^*F$ coming from adjunction and is therefore surjective. We find that $p^*F$ is perverse coherent (this argument works if $F$ has torsion, but then the right object to consider would really be $Lp^*F$). By the Auslander--Buchsbaum formula, the sheaf $F$ can be resolved by a sequence of two locally free sheaves $F^{-1}\hookrightarrow F^0$. Since $\widehat{X}$ is an integral scheme, it follows that $p^*F^{-1}\to p^*F^0$ remains an inclusion. Therefore $Lp^*F=p^*F$. It follows from this that $p_*p^*F\simeq F$. Indeed, we have $p_*p^*F=Rp_*Lp^*F=Rp_*(\mathcal{O}_{\Xhat}\otimes Lp^*F)$, so by the derived version of the projection formula $p_*p^*F\simeq (Rp_*\mathcal{O}_Y) \otimes F\simeq \mathcal{O}_X\otimes F\simeq F$. Since $p^*F=Lp^*F$, we also find that $c(p^*F)= p^*c(F)$.
		\end{example}

		We will be interested in stable sheaves on $X$ of rank $r\in \Z_{\geq 1}$ and with first Chern class $c_1$. We make the following assumption:

		\begin{assumption}\label{assumpcoprime}
			The integers $r$ and $\deg_{H} c_1:=c_1\cdot H$ are coprime. 
		\end{assumption}

		This assumption implies that Gieseker stability and Gieseker semistability coincide for sheaves on $X$ of rank $r$ with first Chern class $c_1$. If we also assume the sheaves to be  torsion-free, then both conditions agree also with the notions of slope (semi-)stability.  
		
		\begin{definition}\label{defadmiss}
			A cohomology class $\widehat{c}\in H^*(\Xhat,\Q)$ is called \emph{admissible} if its zero'th degree component is equal to $r$, and its first degree component is equal to $p^*c_1+k[C]$ for some $k\in \Z$. Equivalently, if $p_* \chat= r+c_1 +a $ for some $a\in H^4(X,\Z)$.  
		\end{definition}  
		\begin{remark} \label{rempstarchernclass}
			Let $E$ is a perverse coherent sheaf on $\widehat{X}$ with admissible total Chern class. Then $p_*E$ has rank $r$ and first Chern class $c_1$. In particular, $p_*E$ is Gieseker stable if and only if it is Gieseker semistable. 
			We can also compute the second Chern class of $p_*E$ using the Grothendieck--Riemann--Roch theorem: Say $c_1(E)=p^*c_1+k[C]$. Then 
			\[\operatorname{ch}_2(p_*E) = \operatorname{ch}_2(E)-\frac{1}{2} c_1(E)\cdot [C] = \operatorname{ch}_2(E)+\frac{k}{2}. \]
		\end{remark}

		\begin{definition}
			Let $E$ be a perverse coherent sheaf on $\widehat{X}$ with admissible Chern character. We say that $E$ is \emph{stable} as a perverse coherent sheaf, or simply \emph{stable}, if $p_*E$ is stable. 
		\end{definition}
		\begin{remark}
			This is again a special case of a more general definition of stability for perverse coherent sheaves, see Definition 2.2 in \cite{NaYo2}. It agrees with our definition thanks to Assumption \ref{assumpcoprime}, as shown in Lemma 2.6 in \cite{NaYo2}.
		\end{remark}
	
		We will be mainly interested in the moduli spaces of stable perverse coherent sheaves. A stable perverse coherent sheaf may have some torsion along the exceptional curve. The torsion part is, however, bounded by the following:
		
		\begin{lemma}\label{lemhomvanishing}
		If $E$ is a stable perverse coherent sheaf, then $\Hom(\mathcal{O}_C, E)=0$. 
		\end{lemma}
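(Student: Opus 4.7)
The plan is to argue by contradiction: any nonzero map $\phi \colon \mathcal{O}_C \to E$ would push forward to a nonzero torsion subsheaf of $p_*E$, contradicting the torsion-freeness that comes out of Gieseker stability of $p_*E$.

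In more detail, suppose $\phi \colon \mathcal{O}_C \to E$ is nonzero and let $I := \operatorname{im} \phi \subseteq E$. Since $I$ is a coherent quotient of $\mathcal{O}_C \simeq \mathcal{O}_{\mathbb{P}^1}$, and every proper nonzero coherent subsheaf of $\mathcal{O}_{\mathbb{P}^1}$ is of the form $\mathcal{O}(-n)$ for some $n \geq 1$, the sheaf $I$ is isomorphic either to $\mathcal{O}_C$ itself or to a nonzero torsion sheaf of finite length supported on $C$. In the first case, $p_*\mathcal{O}_C \simeq \mathcal{O}_q$ (since $p|_C$ is constant with image $\{q\}$ and $H^0(\mathbb{P}^1, \mathcal{O}) = \C$); in the second case, a length-$n$ torsion sheaf on $C$ pushes forward to a length-$n$ skyscraper at $q$ (the pushforward being exact on sheaves with zero-dimensional support). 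Either way, $p_*I$ is a nonzero torsion sheaf on $X$ supported at $q$.

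Now apply the left-exact functor $p_*$ to the inclusion $I \hookrightarrow E$ to obtain an inclusion $p_*I \hookrightarrow p_*E$, so $p_*E$ contains a nonzero torsion subsheaf. This contradicts the torsion-freeness of $p_*E$: by definition, $p_*E$ is Gieseker stable of rank $r \geq 1$, and Gieseker stability includes a purity requirement, which for a positive-rank sheaf on a surface amounts to torsion-freeness.

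The argument is short and direct, with no real obstacle. The only point requiring a brief verification is that $p_*I$ does not vanish, since the pushforward \emph{can} annihilate nonzero sheaves supported on $C$ (for example $p_*\mathcal{O}_C(-1) = 0$); but the two possibilities for $I$ identified above both have nonvanishing $H^0$ on $C$, so this is immediate.
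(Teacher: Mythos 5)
Your proof is correct, and it reaches the same structural conclusion as the paper's — both ultimately reduce the vanishing of $\Hom_{\Xhat}(\mathcal{O}_C, E)$ to the torsion-freeness of $p_*E$ — but by a noticeably more hands-on route. The paper simply observes $\mathcal{O}_C = p^*\mathcal{O}_q$ and invokes the $p^* \dashv p_*$ adjunction to get $\Hom_{\Xhat}(\mathcal{O}_C, E) = \Hom_X(\mathcal{O}_q, p_*E)$, after which torsion-freeness gives the vanishing immediately; the adjunction bijection in particular makes the nonvanishing of the corresponding map on $X$ automatic. You instead run a contradiction argument by examining the image $I = \operatorname{im}\phi$ and pushing it forward, which forces you to handle the one real subtlety — that $p_*$ can annihilate sheaves supported on $C$ (e.g.\ $\mathcal{O}_C(-1)$) — and you correctly dispose of it via the classification of quotients of $\mathcal{O}_{\mathbb{P}^1}$ and the computation $p_*\mathcal{O}_C \simeq \mathcal{O}_q$. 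What your approach buys is that it avoids the adjunction formalism and makes the ``nonzero torsion subsheaf'' phenomenon explicit, which is pedagogically nice; what it costs is the case analysis, which the adjunction argument renders unnecessary. Both are fine; the paper's is shorter.
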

		\begin{proof}
			We have $\mathcal{O}_C=p^*\mathcal{O}_q$. Therefore by adjunction, we have $\Hom(\mathcal{O}_C, E)=\Hom(\mathcal{O}_q, p_*E)$. Since $p_*E$ is Gieseker stable and therefore torsion-free, this group vanishes. 
		\end{proof}

		Just like Gieseker stable sheaves, stable perverse coherent sheaves are simple: 
		
		\begin{proposition}\label{corstabsimple}
			Let $E_1$ and $E_2$ be stable perverse coherent sheaves on $\Xhat$ with Chern character $\chat$ and let $f:E_1\to E_2$ be a morphism between them. Then $f$ is either zero or an isomorphism.  
			In particular, we have $\Hom_{\Xhat}(E,E)=\C$ for any stable perverse coherent sheaf $E$ on $\Xhat$. 
		\end{proposition}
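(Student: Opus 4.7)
The plan is to reduce to the classical simpleness of $H$-Gieseker stable sheaves on $X$ via the pushforward $p_*$. By Remark \ref{rempstarchernclass}, the sheaves $p_*E_1$ and $p_*E_2$ are Gieseker stable on $X$ with identical Chern character, so the induced morphism $p_*f : p_*E_1 \to p_*E_2$ is either zero or an isomorphism by the standard simpleness argument. I treat the two cases separately.

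If $p_*f$ is an isomorphism, applying $p^*$ and composing with the natural surjections $p^*p_*E_i \twoheadrightarrow E_i$ from Proposition \ref{propdefpervcoh}\ref{propdefpervcoh1} yields, by naturality, a commutative square in which both vertical arrows are surjections and the top horizontal arrow is an isomorphism. A simple diagram chase forces $f$ itself to be surjective; then $\ch(\ker f) = \chat - \chat = 0$, and a coherent sheaf of rank zero with trivial first Chern class and trivial $\ch_2$ is itself zero, so $f$ is an isomorphism.

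If $p_*f = 0$, set $K := \ker f$ and $I := \operatorname{im} f \subseteq E_2$. Applying $Rp_*$ to $0 \to K \to E_1 \to I \to 0$ and using $R^1p_*E_1 = 0$ from Proposition \ref{propdefpervcoh}\ref{propdefpervcoh3} yields
\[0 \to p_*K \to p_*E_1 \to p_*I \to R^1p_*K \to 0.\]
Left exactness of $p_*$ gives an injection $p_*I \hookrightarrow p_*E_2$, and the composition $p_*E_1 \to p_*I \hookrightarrow p_*E_2$ is exactly $p_*f = 0$, so the middle map vanishes and thus $p_*K \xrightarrow{\sim} p_*E_1$. By naturality of the counit $p^*p_*(-) \to (-)$, the composition $p^*p_*K \to K \hookrightarrow E_1$ coincides with the surjection $p^*p_*E_1 \twoheadrightarrow E_1$ precomposed with the isomorphism $p^*p_*K \xrightarrow{\sim} p^*p_*E_1$, and is therefore surjective. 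Hence $K \hookrightarrow E_1$ is itself surjective, so $K = E_1$ and $f = 0$. The final statement $\Hom_{\Xhat}(E,E) = \C$ now follows because this group is a finite-dimensional $\C$-algebra in which every nonzero element is invertible, hence a division algebra over the algebraically closed field $\C$, and therefore equals $\C$.

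The main subtlety is in the second case: one cannot directly conclude $p_*I = 0$ from $p_*f = 0$ because $p_*$ fails to be right exact. The perverse coherent vanishing $R^1p_*E_1 = 0$ together with the counit surjection $p^*p_*E_1 \twoheadrightarrow E_1$ are precisely the ingredients that allow one to bridge this gap and propagate the vanishing from $p_*$ back up to $\Xhat$.
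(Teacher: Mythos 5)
Your argument is correct and rests on the same core mechanism as the paper's proof: push down along $p$ to invoke Gieseker simplicity on $X$, then propagate the conclusion back to $\Xhat$ through the counit surjections $p^*p_*E_i \twoheadrightarrow E_i$ and the naturality square for $f$. Your treatment of the case $p_*f = 0$ is more elaborate than necessary, however: the very naturality square you exploit in the other case gives $f\circ\bigl(p^*p_*E_1\twoheadrightarrow E_1\bigr) = \bigl(p^*p_*E_2\to E_2\bigr)\circ p^*p_*f = 0$ directly, hence $f = 0$, with no need to analyze $\ker f$ or $\operatorname{im} f$ via the $Rp_*$ long exact sequence; the ``subtlety'' you flag about $p_*$ failing to be right exact only arises because you chose to route through $\operatorname{im} f$ rather than applying the square outright.
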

		\begin{proof}
			The map $p_*f:p_*E_1\to p_*E_2$ is either zero or an isomorphism by Gieseker stability (cf. Proposition 1.2.7 in \cite{HuLe}). We have the commutative diagram
			\begin{equation*}
				\begin{tikzcd}
					p^*p_*E_1\ar[r,"p^*p_*f"]\ar[d]& p^*p_*E_2\ar[d] \\
					E_1\ar[r, "f"] & E_2
				\end{tikzcd}
			\end{equation*}
			with the vertical maps being the ones from adjunction. But since $p^*p_*f$ is multiplication by a scalar, and since the map $p^*p_*E_2\to E_2$ is surjective, it follows that $f$ is surjective. Since $E_1$ and $E_2$ have the same Chern character, $f$ is an isomorphism. The last statement follows from this: Indeed, we have shown that $\Hom_{\Xhat}(E,E)$ is a division algebra over $\C$. 
		\end{proof}

		We now prepare some results which are important for the construction of moduli spaces in Subsection \ref{subsecmodspaces}. First, we have openness for families of stable perverse coherent sheaves. 
		\begin{proposition}\label{propstabisopen}
			Being perverse coherent is an open property for flat families of coherent sheaves on $\widehat{X}$. For a flat family of perverse coherent sheaves with admissible total Chern class, stability is an open property. 
		\end{proposition}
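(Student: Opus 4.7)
The plan is to handle the two openness statements separately, using semicontinuity for the first part and reducing to the well-known openness of Gieseker stability on $X$ for the second.

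For the first part, let $\mathcal{E}$ be a $U$-flat family of coherent sheaves on $\Xhat\times U$ for some finite-type base $U$, and consider the characterization in Proposition \ref{propdefpervcoh}~\ref{propdefpervcoh2}: the fiber $\mathcal{E}_u$ is perverse coherent if and only if $\Hom_{\Xhat}(\mathcal{E}_u,\mathcal{O}_C(-1))=0$. Since $\mathcal{O}_C(-1)$ is a fixed coherent sheaf on $\Xhat$ and $\mathcal{E}$ is $U$-flat, the standard construction of relative $\Sheafext$-sheaves (say via a finite locally free resolution of $\mathcal{O}_C(-1)$ after twisting $\mathcal{E}$ by a sufficiently ample power of $L$) produces a coherent sheaf on $U$ whose fiber at $u$ computes $\Hom_{\Xhat}(\mathcal{E}_u,\mathcal{O}_C(-1))$. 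Upper semicontinuity of fiber dimensions of this sheaf implies that the vanishing locus is open; this gives openness of the perverse coherent condition.

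For the second part, assume $\mathcal{E}$ is now a $U$-flat family of perverse coherent sheaves with admissible Chern character $\chat$. The strategy is to push forward along $p\times \id_U:\Xhat\times U\to X\times U$ and invoke openness of Gieseker stability on $X$ for the resulting family. By Proposition \ref{propdefpervcoh}~\ref{propdefpervcoh3}, $R^1 p_*\mathcal{E}_u=0$ for every $u\in U$. Standard cohomology and base change for the proper morphism $p\times \id_U$ then gives that $R^1(p\times \id_U)_*\mathcal{E}=0$, that the formation of $(p\times \id_U)_*\mathcal{E}$ commutes with arbitrary base change in $U$, and that $(p\times \id_U)_*\mathcal{E}$ is $U$-flat. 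In particular, the fiber of this pushforward at $u$ is $p_*\mathcal{E}_u$, which has admissible numerical invariants by Remark \ref{rempstarchernclass} and hence has rank $r$ with first Chern class $c_1$. By Assumption \ref{assumpcoprime} together with the definition of stability for perverse coherent sheaves, stability of $\mathcal{E}_u$ is equivalent to Gieseker (semi)stability of $p_*\mathcal{E}_u$.

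Since the locus in $U$ where a $U$-flat family of coherent sheaves on $X$ of fixed Chern character is Gieseker semistable is open (this is classical, see e.g.\ \cite{HuLe}), and Assumption \ref{assumpcoprime} forces semistability and stability to coincide in this numerical class, the resulting locus where $p_*\mathcal{E}_u$ is Gieseker stable is open in $U$. Pulling this open condition back through the identification established above proves that stability is open. The only nontrivial step is verifying the flatness and base-change properties of $(p\times \id_U)_*\mathcal{E}$, but these follow directly from the fiberwise vanishing of $R^1 p_*$ combined with standard results, so no real obstacle arises.
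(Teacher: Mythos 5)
Your argument divides into two parts, and only the second matches the paper's proof.

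For the first part (openness of the perverse coherent locus) you take a genuinely different route. You appeal to Proposition~\ref{propdefpervcoh}~\ref{propdefpervcoh2}, i.e.\ the characterization $\Hom_{\Xhat}(\mathcal{E}_u,\mathcal{O}_C(-1))=0$, and argue by semicontinuity of $\Hom$. The paper instead uses the characterization via surjectivity of $p^*p_*\mathcal{E}\to\mathcal{E}$: it first establishes that $R^1(p_T)_*$ commutes with base change (via \cite[Thm.~8.3.2]{FGA05} and the fibre dimension bound on $p$), concludes that the vanishing locus of $R^1(p_T)_*\mathcal{E}$ is open, then restricts to it and shows $(p_T)_*$ commutes with base change and is flat, and finally uses openness of surjectivity of the counit. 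The paper's route is more work up front but pays for itself: it produces Corollaries~\ref{lemronevanishing} and~\ref{corpstarbasechange} as by-products, which are invoked repeatedly later (e.g.\ in the construction of moduli spaces in \S\ref{subsecmodspaces} and in the openness of $0,1$-semistability). Your route is shorter but does not yield those corollaries, and you end up re-deriving essentially the same base-change facts in the second half anyway.

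There is a small but genuine wrinkle in your first part as written: you assert that a relative $\Sheafext$ construction "produces a coherent sheaf on $U$ whose fiber at $u$ computes $\Hom_{\Xhat}(\mathcal{E}_u,\mathcal{O}_C(-1))$", and then invoke upper semicontinuity of fiber dimensions of that sheaf. Relative $\Hom$/$\Ext$ sheaves do not in general commute with base change, so there need not be such a coherent sheaf. What is true -- and what you actually need -- is that the function $u\mapsto\dim_{\kappa(u)}\Hom_{\Xhat}(\mathcal{E}_u,\mathcal{O}_C(-1))$ is upper semicontinuous. This follows by representing $R\pi_{U*}R\Sheafhom(\mathcal{E},\mathcal{O}_C(-1))$ by a bounded complex $K^\bullet$ of vector bundles on $U$ whose derived fibre computes $R\Hom_{\Xhat}(\mathcal{E}_u,\mathcal{O}_C(-1))$, and then using lower semicontinuity of the rank of $K^0_u\to K^1_u$; alternatively, use Serre duality to rewrite $\Hom(\mathcal{E}_u,\mathcal{O}_C(-1))$ as a top $\Ext^2$, which does commute with base change. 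Either fix is routine, but the claim as you have phrased it is not quite correct.

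For the second part (openness of stability) your argument is essentially the paper's: use fiberwise vanishing of $R^1p_*\mathcal{E}_u$ from Proposition~\ref{propdefpervcoh}~\ref{propdefpervcoh3} to deduce, by cohomology and base change, that $(p\times\operatorname{id}_U)_*\mathcal{E}$ is $U$-flat, base-change-compatible, and has fibre $p_*\mathcal{E}_u$, and then transport openness of Gieseker stability on $X$ through this identification. One should say a word about why fibrewise vanishing of $R^1$ implies global vanishing (it is supported on $\{q\}\times U$, hence finite over $U$, so Nakayama applies), but this is implicit in "standard cohomology and base change".
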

		\begin{proof}
			For any $\C$-scheme $T$, consider the base change $p_T:\widehat{X}_T \to X_T$ of $p$. Then both statements essentially follow from the fact that formation of $R(p_T)_*$ commutes with base change for $T$-flat coherent sheaves on $\Xhat_T$.  By Theorem 8.3.2 in \cite{FGA05}, for any $T$-flat sheaf $\mathcal{E}$ and any $g:T'\to T$, we have a natural isomorphism
			\begin{equation}\label{eqpbasechange}
				Lg_X^*R(p_T)_* \mathcal{E}\simeq R(p_{T'})_*g_{\Xhat}^*\mathcal{E} 
			\end{equation}
			in the bounded derived category of $X_{T'}$.   Since pullback is right exact, it is not hard to see that derived pullback behaves well with respect to upper truncation functors in the sense that $\tau_{\geq i}\circ Lg_{X}^* =\tau_{\geq i} \circ Lg_{X}^* \circ \tau_{\geq i}$.   Since $p$ has fibers of dimension at most one, $Rp_{T*}$ and $Rp_{T'*}$ both have cohomological dimension $\leq 1$. Therefore, by applying $\tau_{\geq 1}$ to \eqref{eqpbasechange}, we obtain
			
			\[g_{X}^*R^1(p_T)_*\mathcal{E}\simeq R^1(p_{T'})_*g_{\widehat{X}}^*\mathcal{E}.\]
			
			This exactly means that formation of $R^1(p_T)_*$ commutes with arbitrary base change. In particular, vanishing of $R^1p_*$ is an open property in flat families.  
			Since vanishing of $R^1p_*$ is a necessary condition for being perverse coherent, to prove the proposition we may therefore replace $T$ by the open subset where $R^1(p_T)_*\mathcal{E}$ vanishes. Then  \eqref{eqpbasechange} becomes 
			\[Lg_X^*(p_T)_*\mathcal{E}\simeq (p_{T'})_*g_{\Xhat}^*\mathcal{E}.\] 
			
			This shows that formation of $(p_T)_*\mathcal{E}$ commutes with base change and that $Lg_X^*R(p_T)_*\mathcal{E}=g_X^*R(p_T)_*\mathcal{E}$ for any $g:T'\to T$, which implies that $(p_T)_*\mathcal{E}$ is $T$-flat. Once we know that $(p_T)_*$ commutes with base change for $\mathcal{E}$, it follows that the same is true for the map $\Phi: p_T^* (p_T)_*\mathcal{E}\to \mathcal{E}$. In particular, the locus of perverse coherent sheaves is the open set in $T$ over which $\Phi$ is surjective. Finally, openness of stability for perverse coherent sheaves now follows from openness of Gieseker stability together with the statements that $(p_T)_*\mathcal{E}$ is flat and commutes with base change if $R^1(p_T)_*\mathcal{E}=0$, which we have already proven.
		\end{proof}
		For later reference we note two consequences of the proof:
		\begin{corollary}\label{lemronevanishing}
			Let $T$ be a scheme and $\mathcal{E}$ be a $T$-flat sheaf on $T\times \Xhat$. Then formation of $R^1(p_T)_*$ commutes with base change.  
		\end{corollary}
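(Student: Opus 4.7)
The plan is to extract this statement directly from the argument already given in the proof of Proposition \ref{propstabisopen}. The key input is the derived base change isomorphism from \cite[Theorem 8.3.2]{FGA05}: for any morphism $g:T'\to T$ and any $T$-flat coherent sheaf $\mathcal{E}$ on $\widehat{X}\times T$, one has
\[
Lg_X^* R(p_T)_*\mathcal{E} \simeq R(p_{T'})_* g_{\widehat{X}}^*\mathcal{E}
\]
in the bounded derived category on $X_{T'}$. Because $p$ has fibers of dimension at most one, both $R(p_T)_*$ and $R(p_{T'})_*$ have cohomological amplitude concentrated in degrees $0$ and $1$.

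Next, I would apply the truncation functor $\tau_{\geq 1}$ to both sides of the isomorphism. On the right-hand side this simply picks out $R^1(p_{T'})_*g_{\widehat{X}}^*\mathcal{E}$, placed in degree one. For the left-hand side, I would use the general fact that derived pullback is compatible with the upper truncation in the sense $\tau_{\geq 1}\circ Lg_X^* = \tau_{\geq 1}\circ Lg_X^* \circ \tau_{\geq 1}$, as used in the proof of Proposition \ref{propstabisopen}. Since $R(p_T)_*\mathcal{E}$ is concentrated in degrees $[0,1]$, the truncation $\tau_{\geq 1} R(p_T)_*\mathcal{E}$ is just $R^1(p_T)_*\mathcal{E}$ in degree one, and the derived pullback of an object concentrated in a single degree is given by the ordinary pullback in that degree (formally, by applying $\tau_{\geq 1}$ one only sees the top cohomology, which is an honest sheaf pulled back as a sheaf). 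Combining these observations yields the desired base change isomorphism
\[
g_X^* R^1(p_T)_*\mathcal{E} \simeq R^1(p_{T'})_* g_{\widehat{X}}^*\mathcal{E}.
\]

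There is essentially no obstacle here beyond bookkeeping: the substantive content (derived base change plus the dimension bound on the fibers of $p$) was already invoked in the proof of Proposition \ref{propstabisopen}, and the corollary is just the statement that falls out when one isolates the degree-one term. The only mild subtlety is verifying the interaction of $Lg_X^*$ with $\tau_{\geq 1}$, which follows from right exactness of pullback.
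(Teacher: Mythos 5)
Your argument is correct and matches the paper's intended proof exactly: the corollary is extracted verbatim from the steps in the proof of Proposition \ref{propstabisopen}, namely the derived base change isomorphism from \cite[Theorem 8.3.2]{FGA05} combined with the cohomological amplitude bound $[0,1]$ on $Rp_*$ and the truncation identity $\tau_{\geq 1}\circ Lg_X^* = \tau_{\geq 1}\circ Lg_X^*\circ \tau_{\geq 1}$. The paper itself simply records this as a consequence of the proof already given, and your write-up correctly isolates the relevant steps.
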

		
		\begin{corollary}\label{corpstarbasechange}
			Suppose $\mathcal{E}$ is a $T$-flat family of sheaves on $\widehat{X}$, flat over $T$. Suppose that $R^1(p_T)_*$ is identically zero (e.g. if $\mathcal{E}$ is a family of perverse coherent sheaves). Then $(p_T)_*\mathcal{E}$ is a $T$-flat family of sheaves on $X$ and its formation commutes with arbitrary base change on $T$. 
		\end{corollary}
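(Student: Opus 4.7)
The corollary follows in a fairly direct way from the argument already used to prove Proposition \ref{propstabisopen}, so my plan is essentially to isolate the relevant pieces of that proof and present them as a self-contained argument.

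The plan is to start from the derived base change isomorphism
\[Lg_X^* R(p_T)_*\mathcal{E} \simeq R(p_{T'})_* g_{\widehat{X}}^*\mathcal{E}\]
for any morphism $g:T'\to T$, which is available because $\mathcal{E}$ is $T$-flat (Theorem 8.3.2 in \cite{FGA05}; note that $g_{\widehat{X}}^*\mathcal{E}$ equals its derived pullback by flatness of $\mathcal{E}$). Since the map $p$ has fibers of dimension at most one, both $R(p_T)_*$ and $R(p_{T'})_*$ have cohomological amplitude in $[0,1]$. Combined with the hypothesis $R^1(p_T)_*\mathcal{E}=0$, the left-hand side then reduces to a complex concentrated in non-positive degrees, namely $Lg_X^*(p_T)_*\mathcal{E}$, while the right-hand side is concentrated in non-negative degrees. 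Truncation therefore forces both sides to be sheaves in degree zero, giving the natural isomorphism
\[g_X^*(p_T)_*\mathcal{E} \simeq (p_{T'})_*g_{\widehat{X}}^*\mathcal{E},\]
which is the claimed base change statement.

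The same comparison also produces the flatness of $(p_T)_*\mathcal{E}$ over $T$: since $Lg_X^*(p_T)_*\mathcal{E}$ is concentrated in degree zero for \emph{every} $g:T'\to T$, the higher $\mathrm{Tor}$-sheaves of $(p_T)_*\mathcal{E}$ against structure sheaves of points of $T$ vanish, so $(p_T)_*\mathcal{E}$ is flat over $T$ by the local criterion for flatness.

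I do not anticipate any serious obstacle. The only point that requires a small amount of care is to justify that $Lg_X^*$ commutes with the truncation functor $\tau_{\geq 1}$ in the sense used above, so that vanishing of $R^1(p_T)_*\mathcal{E}$ genuinely implies vanishing of $R^1(p_{T'})_*g_{\widehat{X}}^*\mathcal{E}$; this is the observation $\tau_{\geq 1}\circ Lg_X^* = \tau_{\geq 1}\circ Lg_X^*\circ \tau_{\geq 1}$ already invoked in the proof of Proposition \ref{propstabisopen}, which holds because $Lg_X^*$ is right exact. Once this is in place, the rest is a formal consequence of the derived base change isomorphism and the cohomological dimension bound on $p$.
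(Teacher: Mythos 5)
Your proof is correct and rests on the same two ingredients as the paper's: the derived base-change isomorphism from \cite[Theorem~8.3.2]{FGA05} and the cohomological-dimension bound on $p$. There is, however, a small streamlining in your version worth noting. The paper deduces this corollary from the proof of Proposition~\ref{propstabisopen}, which proceeds by first applying the truncation identity $\tau_{\geq 1}\circ Lg_X^* = \tau_{\geq 1}\circ Lg_X^*\circ\tau_{\geq 1}$ to show that $R^1(p_T)_*$ commutes with base change, and then passing to the locus where it vanishes; this detour is forced because the proposition does not know in advance where $R^1$ vanishes. In the corollary the vanishing $R^1(p_T)_*\mathcal{E}=0$ is \emph{hypothesized}, so $R(p_T)_*\mathcal{E}=(p_T)_*\mathcal{E}$ is already a sheaf, and your comparison of cohomological amplitudes — the left side $Lg_X^*(p_T)_*\mathcal{E}$ sits in degrees $\leq 0$, the right side $R(p_{T'})_*g_{\widehat{X}}^*\mathcal{E}$ in degrees $\geq 0$ — immediately forces both to be concentrated in degree zero. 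This means the worry you raise in your final paragraph about justifying the $\tau_{\geq 1}$-commutativity is superfluous for your argument as actually written; that step is needed for the openness statement in Proposition~\ref{propstabisopen} but not for this corollary. The flatness conclusion via vanishing of the higher derived pullbacks (equivalently, the higher $\mathrm{Tor}$-sheaves against residue fields of points of $T$) matches what the paper does.
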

	
		From the basic properties of perverse coherent sheaves in Proposition \ref{propdefpervcoh}, we have that every perverse coherent sheaf $E$ on $\Xhat$ sits in a sequence 
		\[0\to K\to F\to E\to 0,\]
		where $K\simeq \mathcal{O}_C(-1)^{\oplus j}$ for some $j$, and $F$ is pulled back from the base. In fact, modifying perverse coherent sheaves by sheaves of the form $\mathcal{O}_C(-1)^{\oplus k}$ is an important operation. In what follows we give some results to this extent.

		\begin{lemma}\label{lempushvanishing}
			Suppose $K$ is a coherent sheaf on $\widehat{X}$. Then $Rp_*K=0$ if and only if $K$ is a direct sum of copies of $\mathcal{O}_C(-1)$.
		\end{lemma}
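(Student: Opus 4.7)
The plan is to prove the two directions separately. The ``if'' direction is immediate: for $K=\mathcal{O}_C(-1)^{\oplus j}$, $Rp_*K$ equals the pushforward of $R\Gamma(\mathbb{P}^1,\mathcal{O}(-1))^{\oplus j}$ placed at the image point $q\in X$, and vanishes because $H^0(\mathbb{P}^1,\mathcal{O}(-1))=H^1(\mathbb{P}^1,\mathcal{O}(-1))=0$. For the converse direction, I first observe that since $p$ is an isomorphism on $\widehat{X}\setminus C$, the hypothesis $Rp_*K=0$ forces $K$ to be set-theoretically supported on $C$; let $n\geq 0$ be the smallest integer with $\mathcal{I}_C^n K=0$, and proceed by induction on $n$. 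The base case $n\leq 1$ reduces to classifying coherent sheaves on $\mathbb{P}^1$ with $R\Gamma=0$: separating any torsion (which would contribute non-trivially to $H^0$) and decomposing the torsion-free part as $\bigoplus\mathcal{O}(a_i)$, the simultaneous vanishing of $H^0$ and $H^1$ forces each $a_i=-1$.

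For the inductive step $n\geq 2$, I apply $Rp_*$ to the short exact sequence $0\to\mathcal{I}_C K\to K\to K/\mathcal{I}_C K\to 0$. Using $Rp_*K=0$ and the cohomological dimension bound $\leq 1$ for $p$, the long exact sequence collapses to $p_*(\mathcal{I}_C K)=0$, $R^1p_*(K/\mathcal{I}_C K)=0$, and an isomorphism $p_*(K/\mathcal{I}_C K)\simeq R^1p_*(\mathcal{I}_C K)$. The key step is to show that the top quotient $K/\mathcal{I}_C K$, a priori only a sheaf on $\mathbb{P}^1$ with vanishing $H^1$, in fact already has vanishing $H^0$, so that $K/\mathcal{I}_C K\simeq\mathcal{O}_C(-1)^{\oplus m}$. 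Granted this, $p_*(K/\mathcal{I}_C K)=0$ forces $R^1p_*(\mathcal{I}_C K)=0$, hence $Rp_*(\mathcal{I}_C K)=0$, and the inductive hypothesis applied to the smaller-thickening sheaf $\mathcal{I}_C K$ gives $\mathcal{I}_C K\simeq\mathcal{O}_C(-1)^{\oplus\ell}$. The resulting extension $0\to\mathcal{O}_C(-1)^{\oplus\ell}\to K\to\mathcal{O}_C(-1)^{\oplus m}\to 0$ splits, since the local-to-global Ext spectral sequence together with $\mathcal{E}xt^1_{\mathcal{O}_{\widehat{X}}}(\mathcal{O}_C,\mathcal{O}_C)\cong N_{C/\widehat{X}}\cong\mathcal{O}_C(-1)$ and $H^\bullet(\mathbb{P}^1,\mathcal{O}(-1))=0$ yield $\mathrm{Ext}^1_{\widehat{X}}(\mathcal{O}_C(-1),\mathcal{O}_C(-1))=0$.

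The main obstacle is establishing the key vanishing $H^0(\mathbb{P}^1, K/\mathcal{I}_C K)=0$. A conceptually clean route would be to invoke Orlov's semi-orthogonal decomposition $D^b(\widehat{X})=\langle\mathcal{O}_C(-1), p^*D^b(X)\rangle$ for the blowup of a surface at a smooth point: by adjunction, $\mathrm{Hom}(p^*F,K)=\mathrm{Hom}(F,Rp_*K)=0$ for every $F\in D^b(X)$, so $K$ lies in the right-orthogonal of $p^*D^b(X)$, which is equivalent to $D^b(\mathrm{pt})$ via $V^\bullet\mapsto V^\bullet\otimes\mathcal{O}_C(-1)$; the only coherent sheaves in this subcategory are then direct sums of $\mathcal{O}_C(-1)$, and the lemma follows at once. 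A more elementary alternative would instead analyze the bottom piece $\mathcal{I}_C^{n-1}K$ of the filtration (which, by the symmetric long exact sequence argument applied to $0\to\mathcal{I}_C^{n-1}K\to K\to K/\mathcal{I}_C^{n-1}K\to 0$, has vanishing $H^0$) and combine this with careful bookkeeping of the vanishings at the top and bottom of the filtration to extract the missing control at the top.
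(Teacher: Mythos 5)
Your Orlov route does give a valid proof: adjunction places $K$ in the right orthogonal $\left(Lp^*D^b(X)\right)^{\perp}$, and by Orlov's blowup formula this admissible subcategory is the essential image of $D^b(\operatorname{pt})\to D^b(\widehat{X})$, $V^\bullet\mapsto V^\bullet\otimes\mathcal{O}_C(-1)$; the only objects of that image which are sheaves are the $V\otimes\mathcal{O}_C(-1)$ with $V$ a vector space. The paper itself merely cites Lemma 1.7(2) of \cite{NaYo2} and gives no in-text proof, so there is nothing to compare against directly, but invoking the full semi-orthogonal decomposition of $D^b(\widehat{X})$ is a considerably heavier tool than this one-line statement requires, and it makes the preceding induction in your write-up redundant rather than completing it.

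The genuine gap is in your ``more elementary alternative,'' and it is not merely bookkeeping. The $\mathcal{I}_C$-adic filtration points the wrong way: multiplication by $\mathcal{I}_C$ gives \emph{surjections} $(\mathcal{I}_C/\mathcal{I}_C^2)\otimes G_{j-1}\twoheadrightarrow G_j$ on the graded pieces $G_j=\mathcal{I}_C^jK/\mathcal{I}_C^{j+1}K$, and a surjection onto a sheaf on $\mathbb{P}^1$ bounds its splitting degrees only from above, so $H^1(G_0)=0$ together with $H^0(G_{n-1})=0$ controls nothing about the middle pieces. What does close the argument is the socle filtration $S_j:=\Sheafhom_{\widehat{X}}(\mathcal{O}_{jC},K)$, for which multiplication by $\mathcal{I}_C$ yields an \emph{injection}
\[
S_{j+1}/S_j\hookrightarrow (S_j/S_{j-1})\otimes N_{C/\widehat{X}}\cong (S_j/S_{j-1})(-1).
\]
Since $S_1\subset K$ has $H^0=0$ it is torsion-free with all summands of degree $\leq -1$; the chain of injections then makes each $S_j/S_{j-1}$ torsion-free with summands of degree $\leq -j$, so $\chi(S_j/S_{j-1})\leq (1-j)\operatorname{rk}(S_j/S_{j-1})$. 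Summing against $\chi(K)=\chi(Rp_*K)=0$ forces $S_j=S_{j-1}$ for all $j\geq 2$ and $S_1\cong\mathcal{O}_C(-1)^{\oplus k}$, i.e.\ $K\cong\mathcal{O}_C(-1)^{\oplus k}$ --- bypassing both the induction on the thickening order and the Orlov machinery entirely.
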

		\begin{proof}
			See Lemma 1.7, (2) in \cite{NaYo2}.
		\end{proof}
		\begin{lemma}\label{lemvanishing}
			Let $m\in \Z$. Then 
			\begin{enumerate}[label=(\roman*)]
				\item 	$\Ext^1_{\Xhat}(\mathcal{O}_C(m),\mathcal{O}_C(m))=0$, and \label{lemvani}
				\item $\Hom_{\Xhat}(\mathcal{O}_C(m), \mathcal{O}_C(m-1))=\Ext^1_{\Xhat}(\mathcal{O}_C(m), \mathcal{O}_C(m-1))=0$. \label{lemvanii}
			\end{enumerate}
			
		\end{lemma}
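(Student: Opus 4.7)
The plan is to compute both groups directly by resolving $\mathcal{O}_C(m)$ on $\widehat{X}$ and applying the local-to-global Ext spectral sequence, exploiting that $C \simeq \mathbb{P}^1$ with self-intersection $-1$.

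First, I would build a length-one locally free resolution of $\mathcal{O}_C(m)$. Tensoring the standard structure sequence $0 \to \mathcal{O}_{\widehat{X}}(-C) \to \mathcal{O}_{\widehat{X}} \to \mathcal{O}_C \to 0$ by the line bundle $\mathcal{O}_{\widehat{X}}(-mC)$ gives
\[0 \to \mathcal{O}_{\widehat{X}}(-(m+1)C) \to \mathcal{O}_{\widehat{X}}(-mC) \to \mathcal{O}_C(m) \to 0,\]
where I use the adjunction identification $\mathcal{O}_{\widehat{X}}(-mC)|_C \simeq \mathcal{O}_C(m)$ coming from $C \cdot C = -1$.

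Second, applying $\Sheafhom(-, \mathcal{O}_C(n))$ to this two-term resolution and simplifying the middle terms via $\Sheafhom(\mathcal{O}_{\widehat{X}}(-kC), \mathcal{O}_C(n)) \simeq \mathcal{O}_C(n-k)$ gives a four-term exact sequence on $C$ whose interior map $\mathcal{O}_C(n-m) \to \mathcal{O}_C(n-m-1)$ is precomposition with the defining section of $C$ in $\widehat{X}$ and therefore vanishes identically. This immediately yields $\Sheafhom(\mathcal{O}_C(m), \mathcal{O}_C(n)) \simeq \mathcal{O}_C(n-m)$ and $\Sheafext^1(\mathcal{O}_C(m), \mathcal{O}_C(n)) \simeq \mathcal{O}_C(n-m-1)$, with all higher $\Sheafext^i$ vanishing since the resolution has length one.

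Finally, I would feed these into the local-to-global spectral sequence $E_2^{p,q} = H^p(\widehat{X}, \Sheafext^q(\mathcal{O}_C(m), \mathcal{O}_C(n))) \Rightarrow \Ext^{p+q}_{\widehat{X}}(\mathcal{O}_C(m), \mathcal{O}_C(n))$. Everything then reduces to standard vanishings on $\mathbb{P}^1$: for (i), taking $n=m$ requires $H^1(\mathcal{O}_C) = 0$ and $H^0(\mathcal{O}_C(-1)) = 0$; for (ii), taking $n=m-1$ requires $H^0(\mathcal{O}_C(-1)) = 0$, $H^1(\mathcal{O}_C(-1)) = 0$, and $H^0(\mathcal{O}_C(-2)) = 0$. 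All of these hold because $C \simeq \mathbb{P}^1$, so both statements follow. There is no real obstacle here; the only substantive step is the identification of the local Ext sheaves via the explicit resolution, after which the conclusion is a short bookkeeping exercise in line bundle cohomology on $\mathbb{P}^1$.
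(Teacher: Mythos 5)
Your proof is correct. The paper takes the same starting point -- the Koszul resolution $0\to\mathcal{O}_{\Xhat}(-C)\to\mathcal{O}_{\Xhat}\to\mathcal{O}_C\to 0$ (it reduces to $m=0$ rather than twisting by $\mathcal{O}_{\Xhat}(-mC)$ as you do, but that is cosmetic) -- but then simply applies $\Hom_{\Xhat}(-,\mathcal{O}_C)$ and $\Hom_{\Xhat}(-,\mathcal{O}_C(-1))$ and reads off the long exact sequence of global $\Ext$ groups, which sandwiches $\Ext^1_{\Xhat}(\mathcal{O}_C,\mathcal{O}_C(n))$ between $H^0(\mathcal{O}_C(n-1))$ and $H^1(\mathcal{O}_C(n))$. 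You instead first compute the sheaf-$\Sheafext$'s precisely (which requires the observation that the interior map $\mathcal{O}_C(n-m)\to\mathcal{O}_C(n-m-1)$ is multiplication by the restriction of the section cutting out $C$, hence zero) and then feed them into the local-to-global spectral sequence. Both routes bottom out in the same four $\mathbb{P}^1$-cohomology vanishings: $H^0(\mathcal{O}_C(-1))$, $H^1(\mathcal{O}_C)$, $H^0(\mathcal{O}_C(-2))$, $H^1(\mathcal{O}_C(-1))$. The trade-off: the paper's argument is shorter because it never needs to identify the interior map or the local $\Sheafext$ sheaves (the sandwich alone suffices for vanishing), whereas your computation is slightly longer but more informative, determining $\Sheafext^q_{\Xhat}(\mathcal{O}_C(m),\mathcal{O}_C(n))$ exactly for all $q,m,n$, which would be the right way to proceed if one later needed, say, $\Ext^2$ or the non-vanishing cases. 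Note also that for your argument the vanishing of the interior map is not a nicety but is actually needed: if it were nonzero, $\Sheafext^1$ would be a proper quotient of $\mathcal{O}_C(n-m-1)$, potentially a skyscraper, and $H^0$ of such a quotient could be nonzero even when $H^0(\mathcal{O}_C(n-m-1))=0$.
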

		\begin{proof}
			It is clearly enough to show this when $m=0$. The statement $\Hom_{\Xhat}(\mathcal{O}_C,\mathcal{O}_C(-1))=0$ follows from restricting to $C$.  We consider the resolution 
			\[0\to \mathcal{O}_{\Xhat}(-C)\to \mathcal{O}_{\Xhat}\to \mathcal{O}_C\to 0.\]
			The associated long exact sequence for $\Hom_{\Xhat}(-,\mathcal{O}_C)$ gives an exact sequence
			\[\Hom_{\Xhat}(\mathcal{O}_{\Xhat}(-C),\mathcal{O}_C)\to \Ext^1_{\Xhat}(\mathcal{O}_C,\mathcal{O}_C)\to \Ext^1_{\Xhat}(\mathcal{O}_{\Xhat},\mathcal{O}_C).\]
			But $\Hom(\mathcal{O}_{\Xhat}(-C),\mathcal{O}_C)=H^0(\mathcal{O}_C(-1))=0$ and $\Ext^1_{\Xhat}(\mathcal{O}_{\Xhat},\mathcal{O}_C)=H^1(\mathcal{O}_C)=0$, by Serre duality. This shows \ref{lemvani}.
			Considering instead the long exact sequence for $\Hom_{\Xhat}(-,\mathcal{O}_C(-1))$, we obtain the exact sequence
			\[\Hom_{\Xhat}(\mathcal{O}_{\Xhat}(-C),\mathcal{O}_C(-1))\to \Ext^1_{\Xhat}(\mathcal{O}_C,\mathcal{O}_C(-1))\to \Ext^1_{\Xhat}(\mathcal{O}_{\Xhat},\mathcal{O}_C(-1)).\]
			Here the term on the left is isomorphic to $H^0(\mathcal{O}_C(-2))=0$, and the term on the right is isomorphic to $H^1(\mathcal{O}_C(-1))=0$. Therefore the term in the middle vanishes, which proves the remaining statement from \ref{lemvanii}.
		\end{proof}

		\begin{lemma}\label{lemexcchercarisexc}
			Let $E$ is a coherent sheaf on $\widehat{X}$, and let $K$ be coherent sheaf on $\Xhat$ with $\operatorname{ch}(K)=k\operatorname{ch}(\mathcal{O}_C(-1))$ for some $k \in \Z_{\geq 0}$. Then $K\simeq \mathcal{O}_C(-1)^{\oplus k}$ holds in either of the following two situations: 
			\begin{enumerate}[label=(\roman*)]
				\item $K$ is a subsheaf of $E$ and $p_*E$ is stable, or \label{lemexcchercarisexc1}
				\item $K$ is a quotient of $E$ and $E$ satisfies $R^1p_*E=0$. \label{lemexcchercarisexc2}
			\end{enumerate} 
		\end{lemma}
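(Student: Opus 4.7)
My plan is to reduce the lemma to Lemma \ref{lempushvanishing} by showing that $Rp_*K = 0$ in both cases. The key inputs are two elementary observations: first, by Hirzebruch--Riemann--Roch and the fact that $\chi(\mathcal{O}_C(-1)) = \chi(\mathcal{O}_{\mathbb{P}^1}(-1)) = 0$, the assumption $\operatorname{ch}(K) = k \operatorname{ch}(\mathcal{O}_C(-1))$ forces $\chi(K) = 0$; second, since $p$ has fibers of dimension $\leq 1$, we have $R^i p_* = 0$ for $i \geq 2$, so the Leray relation reduces to $\chi(K) = \chi(p_*K) - \chi(R^1 p_*K)$. Furthermore, $K$ has rank zero with $c_1(K) = k[C]$, so the support of $K$ is at most one-dimensional, with its one-dimensional part set-theoretically contained in $C$, which $p$ contracts to $q$.

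In case (i), left-exactness of $p_*$ yields an inclusion $p_*K \hookrightarrow p_*E$. By the support observation above, $p_*K$ has zero-dimensional support in $X$, hence is torsion, while $p_*E$ is torsion-free by Gieseker stability; therefore $p_*K = 0$. For the vanishing of $R^1 p_*K$, I would use that $R^1 p_*$ is supported only on the image of the positive-dimensional fibers, namely the single point $q$; thus $R^1 p_*K$ is zero-dimensional and its length equals its Euler characteristic. The Leray relation then gives $\chi(R^1 p_*K) = -\chi(K) = 0$, forcing $R^1 p_*K = 0$.

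In case (ii), I reverse the roles. From the short exact sequence $0 \to K' \to E \to K \to 0$ and the vanishing $R^2 p_* K' = 0$, the long exact sequence for $Rp_*$ produces a surjection $R^1 p_*E \twoheadrightarrow R^1 p_*K$, so the hypothesis $R^1 p_*E = 0$ gives $R^1 p_*K = 0$ directly. The Leray relation then yields $\chi(p_*K) = \chi(K) = 0$, and since $p_*K$ has zero-dimensional support by the same argument as before, vanishing of its length forces $p_*K = 0$.

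Thus $Rp_*K = 0$ in both cases, and Lemma \ref{lempushvanishing} concludes that $K \simeq \mathcal{O}_C(-1)^{\oplus k}$. There is no real obstacle here; the argument is essentially a bookkeeping exercise combining left-exactness of $p_*$, the stability/vanishing hypothesis, and the Euler characteristic constraint coming from the Chern character. The only point requiring minor care is verifying the zero-dimensionality of the relevant pushforwards from the support of $K$, which is guaranteed by the shape of $\operatorname{ch}(K)$ and the geometry of the blowup contraction.
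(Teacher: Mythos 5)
Your proof is correct and follows essentially the same route as the paper: reduce to showing $Rp_*K = 0$ and invoke Lemma \ref{lempushvanishing}, using the Chern character to get $\chi(K)=0$ and zero-dimensionality of the relevant pushforwards, then handling the two cases by the torsion-free/stability argument (case (i)) and the cohomological-dimension argument (case (ii)) respectively. Your write-up is if anything slightly more explicit about the long exact sequence giving $R^1p_*E \twoheadrightarrow R^1p_*K$ in case (ii), but the ideas and structure are identical.
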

		
		\begin{proof}
			
			By Lemma \ref{lempushvanishing}, in either case it is enough to show that $Rp_*K=0$.
			From knowledge of the Chern character, we can immediately conclude that $K$ has rank $0$, and that the only one-dimensional component of its reduced support is the exceptional curve $C$. It follows that $p_*K$ is a torsion subsheaf with zero-dimensional support.
			Suppose we are in the case \ref{lemexcchercarisexc1}.
			Then $p_*K\subset p_*E$, but since the latter is stable and hence torsion-free, this implies $p_*K=0$. Therefore $Rp_*K=R^1p_*K$. Note that $R^1p_*K$ is supported at the center of the blow-up. Using the properties of the holomorphic euler characteristic, we find that 
			\[-\chi(R^1 p_*K)=\chi(Rp_*K)=\chi(K)=k\chi(\mathcal{O}_C(-1))=0 .\]
			Since the Euler characteristic of a sheaf with zero-dimensional support is the same as its dimension as a vector space over $\C$, we find $R^1p_*K =0$. 
			
			Now assume instead that \ref{lemexcchercarisexc2} holds. We find that $R^1p_*K=0$, since $p_*$ has cohomological dimension $\leq 1$ and $R^1p_*E=0$. Therefore $Rp_*K=R^0p_*K$. The same consideration of Chern classes as in the first part shows that $ p_*K$ must in fact be zero.
		\end{proof}
		\begin{corollary}\label{corquotispervstable}
			Suppose $E$ is a stable perverse coherent sheaf on $\Xhat$ and that  we have an exact sequence 
			\[0\to K\to E\to E'\to 0\]
			with $\operatorname{ch}(K)=\operatorname{ch}(\mathcal{O}_C(-1))$ for some $k\in \Z_{\geq 0}$. Then $K\simeq \mathcal{O}_C(-1)^{\oplus k}$ and $E'$ is stable perverse coherent. 
		\end{corollary}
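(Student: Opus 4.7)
The first assertion, that $K \simeq \mathcal{O}_C(-1)^{\oplus k}$, follows directly by invoking Lemma \ref{lemexcchercarisexc} \ref{lemexcchercarisexc1}: the hypothesis that $E$ is stable perverse coherent means by definition that $p_*E$ is Gieseker stable, so the lemma applies. No further argument is needed here.

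For the second assertion, that $E'$ is stable perverse coherent, I plan to verify the two requirements separately. First, for stability, I would apply $Rp_*$ to the given short exact sequence. By Lemma \ref{lempushvanishing}, the identification $K \simeq \mathcal{O}_C(-1)^{\oplus k}$ forces $Rp_* K = 0$. The resulting long exact sequence then yields an isomorphism $p_* E \simeq p_* E'$, together with $R^1 p_* E' \simeq R^1 p_* E = 0$ (using that $E$ is perverse coherent, so satisfies Proposition \ref{propdefpervcoh} \ref{propdefpervcoh3}). Since $p_* E$ is Gieseker stable by hypothesis, so is $p_* E'$.

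For perverse coherence of $E'$, I would use the characterization in Proposition \ref{propdefpervcoh} \ref{propdefpervcoh2}, namely $\Hom_{\Xhat}(E', \mathcal{O}_C(-1)) = 0$. Applying $\Hom_{\Xhat}(-, \mathcal{O}_C(-1))$ to the short exact sequence produces an injection
\[ \Hom_{\Xhat}(E', \mathcal{O}_C(-1)) \hookrightarrow \Hom_{\Xhat}(E, \mathcal{O}_C(-1)), \]
and the right-hand side vanishes since $E$ is itself perverse coherent. Combined with stability of $p_* E'$, this proves $E'$ is stable perverse coherent.

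The argument is essentially a bookkeeping exercise once the structural lemmas (\ref{lempushvanishing} and \ref{lemexcchercarisexc}) are available, so I do not anticipate a real obstacle. The only point requiring minor care is the cohomological dimension of $p_*$: one must note that $R^i p_* K$ vanishes for $i \geq 2$ before concluding $R^1 p_* E' = 0$ from the long exact sequence, but since $p$ has one-dimensional fibers, this is automatic.
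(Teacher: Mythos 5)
Your proposal is correct and follows essentially the same route as the paper: invoke Lemma \ref{lemexcchercarisexc}\ref{lemexcchercarisexc1} to identify $K\simeq \mathcal{O}_C(-1)^{\oplus k}$, use $Rp_*K=0$ and the resulting long exact sequence to get $p_*E\simeq p_*E'$, and left-exactness of $\Hom(-,\mathcal{O}_C(-1))$ to kill $\Hom(E',\mathcal{O}_C(-1))$. The only difference is that you spell out the bookkeeping (including the unneeded-but-harmless observation $R^1p_*E'=0$) which the paper compresses into one line.
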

		\begin{proof}
			By Lemma \ref{lemexcchercarisexc} and stability of $E$, we know that in fact $K\simeq \mathcal{O}_C(-1)^{\oplus k}$, so in particular $Rp_*K=0$. By taking long exact sequences, we conclude that $\Hom(E',\mathcal{O}_C(-1))=0$ and that $p_*E\simeq p_*E'$. 
		\end{proof}

		\subsection{Moduli spaces of stable perverse coherent sheaves}\label{subsecmodspaces}

		We now address the construction of moduli stacks of stable perverse coherent sheaves. Moduli stacks of coherent sheaves are never proper (except in trivial cases), due to the presence of scalar automorphisms. There are two common approaches to remedy this issue. One may consider the rigidifcation of the moduli stack where one, roughly speaking, ``divides out'' the scalar automorphism group. Alternatively, one may add some rigidifying data to the moduli problem, such as an orientation. We take the latter one here, as it seems to allow for a more natural treatment of obstruction theories later on. We first address the case without orientations. 
		
		Fix an admissible $\widehat{c}=r+\widehat{c}_1+\widehat{ch}_2\in H^*(\Xhat,\mathbb{Z})$. Then $\widehat{c}=p^*c-j\chexc$, where $j:=\widehat{c}_1\cdot [C]$ for some Chern character $c=r+c_1+\ch_2$ on $X$ (recall that $e=\ch(\mathcal{O}_C(-1)) = [C]-\frac{1}{2}[pt]$).  We throughout use the letter $T$ to denote an arbitrary test scheme over $\C$. 
		We let $\widetilde{\mathcal{M}}^p_{\chat}$ denote the moduli stack of stable perverse coherent sheaves on $\widehat{X}$ with Chern character $\widehat{c}$. By Proposition \ref{propstabisopen}, this makes sense as an open substack of the stack of all coherent sheaves on $\widehat{X}$.

		Suppose $E\in\widetilde{\mathcal{M}}^p_{\chat}(\C) $. By definition, $p_*E$ is a Gieseker stable sheaf on $X$ and we have a surjection $p^*p_*E\to E$. By Remark \ref{rempstarchernclass}, we have $\operatorname{ch}(p_*E)=c$.  Moreover, these associations are functorial in $E$, and work well in families by Corollary \ref{corpstarbasechange}.  Let $\widetilde{\mathcal{M}}_{X,c}$ denote the moduli stack of Gieseker stable coherent sheaves on $X$ of Chern character $c$ and $\mathcal{F}_{X,c}$ the universal sheaf over it. Let moreover $\widetilde{\mathfrak{Q}}\to \widetilde{\mathcal{M}}_{X,c}$ denote the relative Quot scheme $\Quot_{\Xhat\times\widetilde{\mathcal{M}}_{X,c}/\widetilde{\mathcal{M}}_{X,c}}(p^*\mathcal{F}_{X,c},p^*c-k\chexc)$. By what we just explained, we obtain a morphism of stacks $[p_*]:\widetilde{\mathcal{M}}^p_{\chat}\to\widetilde{\mathcal{M}}_{X,c}$, which sends a $T$-valued object $\mathcal{E}$ to $(p_T)_*\mathcal{E}$ and a morphism $\Phi:\widetilde{\mathcal{M}}^p_{\widehat{c}}\to\widetilde{\mathfrak{Q}}$ lifting $[p_*]$, which sends $\mathcal{E}$ to the quotient $p^*_T(p_T)_*\mathcal{E}\twoheadrightarrow \mathcal{E}$.
		\begin{proposition} \label{propmodisquot}
			$\Phi$ is an equivalence. A natural quasi-inverse is given by the functor $\Psi$ sending a quotient $p_T^*\mathcal{F}\to \mathcal{E}$ to $\mathcal{E}$, where $\mathcal{F}$ is an object of $\widetilde{\mathcal{M}}_{X,c}(T)$. 
		\end{proposition}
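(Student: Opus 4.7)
The plan is to verify that $\Psi$ lands in $\widetilde{\mathcal{M}}^p_{\chat}$, to note that $\Psi\circ\Phi \simeq \id$ tautologically, and to exhibit a natural isomorphism $\Phi\circ\Psi \simeq \id$ via adjunction. For $\Psi$ to be well-defined, I would start with an object $(p_T^*\mathcal{F} \twoheadrightarrow \mathcal{E}) \in \widetilde{\mathfrak{Q}}(T)$: by construction of the relative Quot scheme, $\mathcal{E}$ is $T$-flat with fibers of Chern character $\chat$, and since stable perverse coherence is an open property (Proposition~\ref{propstabisopen}), it suffices to check fiberwise. For a $\C$-point $t \in T$ the sequence $0 \to K_t \to p^*F_t \to E_t \to 0$ has $\ch(K_t) = j\,\chexc$; since $F_t \simeq p_*p^*F_t$ is Gieseker stable by hypothesis on $\mathcal{F}$, Lemma~\ref{lemexcchercarisexc}\ref{lemexcchercarisexc1} forces $K_t \simeq \mathcal{O}_C(-1)^{\oplus j}$. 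Applying $\Hom(-,\mathcal{O}_C(-1))$ and using the adjunction $\Hom(p^*F_t,\mathcal{O}_C(-1)) = \Hom(F_t, Rp_*\mathcal{O}_C(-1)) = 0$ together with Lemma~\ref{lempushvanishing} yields $\Hom(E_t,\mathcal{O}_C(-1))=0$, so $E_t$ is perverse coherent by Proposition~\ref{propdefpervcoh}. Applying $Rp_*$ to the same sequence and again invoking Lemma~\ref{lempushvanishing} gives $p_*E_t \simeq F_t$, which is stable; hence $E_t$ is stable perverse coherent.

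The equivalence $\Psi\circ\Phi \simeq \id$ is visible: $\Phi$ sends $\mathcal{E}$ to the tautological quotient $p_T^*(p_T)_*\mathcal{E} \twoheadrightarrow \mathcal{E}$ (an honest quotient in families thanks to Corollary~\ref{corpstarbasechange}), and $\Psi$ returns $\mathcal{E}$. For $\Phi\circ\Psi \simeq \id$, given $(p_T^*\mathcal{F} \twoheadrightarrow \mathcal{E})$ the unit of the $(p_T^*, (p_T)_*)$-adjunction yields a natural map $\eta_{\mathcal{F}}: \mathcal{F} \to (p_T)_*\mathcal{E}$ through which the quotient factors as $p_T^*\mathcal{F} \to p_T^*(p_T)_*\mathcal{E} \twoheadrightarrow \mathcal{E}$. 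It remains to show $\eta_{\mathcal{F}}$ is an isomorphism: by Corollary~\ref{corpstarbasechange} both $\mathcal{F}$ and $(p_T)_*\mathcal{E}$ are $T$-flat and commute with base change, so the check reduces to fibers, where applying $Rp_*$ to $0 \to \mathcal{O}_C(-1)^{\oplus j} \to p^*F_t \to E_t \to 0$ and invoking Lemma~\ref{lempushvanishing} once more gives the identification $\eta_{F_t}: F_t \xrightarrow{\sim} p_*E_t$.

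The only mild subtlety is promoting the fiberwise calculations to canonical isomorphisms in families. This is cleanly handled by Corollary~\ref{corpstarbasechange}, which ensures that $(p_T)_*$ of a $T$-flat sheaf with vanishing $R^1(p_T)_*$ both remains $T$-flat and commutes with base change. With that in hand, no real obstacle remains beyond routine bookkeeping of the $(p_T^*, (p_T)_*)$-adjunction.
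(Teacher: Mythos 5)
Your proof is correct and takes essentially the same approach as the paper's: the paper cites Corollary~\ref{corquotispervstable} to see that $\Psi$ is well-defined on all of $\widetilde{\mathfrak{Q}}$ and uses the isomorphism $\varepsilon:(p_T)_*p_T^*\mathcal{F}\xrightarrow{\sim}\mathcal{F}$ from Example~\ref{expushpull} to exhibit $\Phi\circ\Psi\simeq\id$, whereas you unfold the content of that corollary directly via Lemmas~\ref{lemexcchercarisexc} and~\ref{lempushvanishing} and phrase the second step through the adjoint map $\mathcal{F}\to(p_T)_*\mathcal{E}$ (which you label $\eta_{\mathcal{F}}$, though it is the adjoint of the quotient rather than the adjunction unit itself) — the underlying argument is identical.
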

		
		\begin{proof}
			$\Psi$ is well-defined  on the open substack of $\widetilde{\mathfrak{Q}}$ corresponding to quotients which are stable perverse coherent sheaves. This substack is in fact all of $\widetilde{\mathfrak{Q}}$, due to Corollary \ref{corquotispervstable}. It is immediate that $\Psi\circ \Phi$ is the identity on $\widetilde{\mathcal{M}}^p_{\widehat{c}}$.   On the other hand, $\Phi\circ \Psi$ sends an quotient $p_T^*\mathcal{F}\to \mathcal{E}$ to the composition $p_T^*(p_T)_*p_T^\mathcal{F}\to p_T^*\mathcal{F}\to \mathcal{E}$, where the first map is the pullback of the natural map $\varepsilon:(p_T)_*p_T^*\mathcal{F}\to \mathcal{F}$. However, by Example \ref{expushpull}, the map $\varepsilon $ is an isomorphism. Thus we have exhibited a natural isomorphism  $\Phi\circ\Psi\xrightarrow{\sim} id_{\widetilde{\mathfrak{Q}}}$.  
		\end{proof}
		
		\begin{corollary}
			We have $p^*c=\widehat{c}+j\operatorname{ch}(\mathcal{O}_C(-1))$. If $j=0$, we have an isomorphism 
			\[\widetilde{\mathcal{M}}^p_{p^*c}\simeq \widetilde{\mathcal{M}}_{X,c}.\]
		\end{corollary}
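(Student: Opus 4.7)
The first assertion $p^*c = \widehat{c} + j\operatorname{ch}(\mathcal{O}_C(-1))$ is immediate from the definition: by construction $\widehat{c} = p^*c - j\chexc$ with $\chexc = \operatorname{ch}(\mathcal{O}_C(-1))$, so one just rearranges. Note also that from $\widehat{c}_1 \cdot [C] = j$ together with $\chexc = [C] - \tfrac{1}{2}[pt]$, the integer $j$ is determined by $\widehat{c}$.

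For the isomorphism, the plan is to apply Proposition~\ref{propmodisquot} with $\widehat{c} = p^*c$, i.e.\ $j=0$. The proposition identifies $\widetilde{\mathcal{M}}^p_{p^*c}$ with the relative Quot stack $\widetilde{\mathfrak{Q}} = \Quot_{\Xhat \times \widetilde{\mathcal{M}}_{X,c}/\widetilde{\mathcal{M}}_{X,c}}(p^*\mathcal{F}_{X,c}, p^*c)$ parametrizing quotients of $p^*\mathcal{F}_{X,c}$ whose Chern character equals $p^*c = \operatorname{ch}(p^*\mathcal{F}_{X,c})$. The remaining task is to show that the only such quotients are identities, so that the structure morphism $\widetilde{\mathfrak{Q}} \to \widetilde{\mathcal{M}}_{X,c}$ is an equivalence.

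Concretely, for a test scheme $T$ and a quotient $p_T^*\mathcal{F} \twoheadrightarrow \mathcal{E}$ with $\mathcal{F} \in \widetilde{\mathcal{M}}_{X,c}(T)$, let $\mathcal{K}$ denote the kernel. Since both terms have the same Chern character on each fiber, $\mathcal{K}$ has vanishing Chern character fiberwise. Because $\mathcal{F}$ is a flat family of Gieseker stable (hence torsion-free) sheaves on $X$ and $p$ is a birational morphism between smooth surfaces, each fiber $p^*\mathcal{F}_t$ is torsion-free on $\Xhat$ (cf.\ Example~\ref{expushpull}), and therefore so is $\mathcal{K}_t$. A torsion-free sheaf on the surface $\Xhat$ with zero Chern character is the zero sheaf, so $\mathcal{K}_t = 0$ for all $t \in T$ and the quotient map is an isomorphism on each fiber. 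By Nakayama this forces $\mathcal{K} = 0$.

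Thus the tautological quotient $p^*\mathcal{F}_{X,c} \xrightarrow{\operatorname{id}} p^*\mathcal{F}_{X,c}$ defines a section of the forgetful map $\widetilde{\mathfrak{Q}} \to \widetilde{\mathcal{M}}_{X,c}$ which is essentially surjective, and the argument above shows that every object of $\widetilde{\mathfrak{Q}}$ is canonically isomorphic to one in the image. Composing with the equivalence $\widetilde{\mathcal{M}}^p_{p^*c} \simeq \widetilde{\mathfrak{Q}}$ of Proposition~\ref{propmodisquot} yields the desired isomorphism. There is no serious obstacle here; the only point requiring care is the torsion-free argument, which relies crucially on $\mathcal{F}$ being stable (hence torsion-free) rather than merely perverse-coherent.
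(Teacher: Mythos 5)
Your proof is correct and takes essentially the same approach the paper has in mind: the statement is an immediate consequence of Proposition~\ref{propmodisquot}, and the only content is to observe that when $j=0$ the relative Quot scheme parametrizes quotients of $p^*\mathcal{F}_{X,c}$ with the same Chern character, whose kernels are therefore forced to vanish. Your torsion-freeness argument for why the kernel vanishes fiberwise is a perfectly valid (and slightly more elementary) alternative to invoking Lemma~\ref{lemexcchercarisexc} (which would identify the kernel as $\mathcal{O}_C(-1)^{\oplus k}$ and then set $k=0$ from Chern characters); both routes rely on matching Chern characters and the fact that a rank-$0$ subsheaf of a torsion-free sheaf on a surface is zero.
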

		
		\begin{corollary}
			The stack $\widetilde{\mathcal{M}}_{\chat}^p$ is proper over $\widetilde{\mathcal{M}}_{X,c}$. 
		\end{corollary}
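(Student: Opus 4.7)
The strategy is to leverage the equivalence $\Phi\colon \widetilde{\mathcal{M}}^p_{\chat}\xrightarrow{\sim}\widetilde{\mathfrak{Q}}$ established in Proposition \ref{propmodisquot}, which identifies the stack of stable perverse coherent sheaves with the relative Quot scheme
\[\widetilde{\mathfrak{Q}} = \Quot_{\Xhat\times\widetilde{\mathcal{M}}_{X,c}/\widetilde{\mathcal{M}}_{X,c}}(p^*\mathcal{F}_{X,c}, p^*c-j\chexc)\]
over $\widetilde{\mathcal{M}}_{X,c}$. Since the morphism $[p_*]\colon \widetilde{\mathcal{M}}^p_{\chat}\to \widetilde{\mathcal{M}}_{X,c}$ factors as $\Phi$ followed by the structure morphism of this relative Quot scheme, it suffices to show that the relative Quot scheme is proper over its base.

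Properness of the relative Quot scheme follows from Grothendieck's classical theorem: for a projective morphism with fixed Hilbert polynomial on the quotient, the relative Quot scheme is projective (hence proper) over a Noetherian base scheme. In our situation, $\Xhat$ is projective over $\Spec\C$, the sheaf $p^*\mathcal{F}_{X,c}$ is coherent, and fixing the Chern character $p^*c-j\chexc$ fixes the Hilbert polynomial of a quotient sheaf with respect to any chosen polarization of $\Xhat$ (by Hirzebruch--Riemann--Roch, since $\Xhat$ is smooth and projective).

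The remaining point is that $\widetilde{\mathcal{M}}_{X,c}$ is an Artin stack rather than a scheme, so the classical statement does not apply verbatim. To handle this, I would pass to a smooth cover $T\to \widetilde{\mathcal{M}}_{X,c}$ by a finite-type scheme, pull back the universal sheaf $\mathcal{F}_{X,c}$ to $T$, and apply Grothendieck's theorem to the resulting relative Quot scheme over $T$. Since properness is smooth-local on the target, this establishes properness of $\widetilde{\mathfrak{Q}}$ over $\widetilde{\mathcal{M}}_{X,c}$, and hence the desired properness of $\widetilde{\mathcal{M}}^p_{\chat}$ over $\widetilde{\mathcal{M}}_{X,c}$.

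There is essentially no obstacle here: the real work was done in Proposition \ref{propmodisquot} and in Corollary \ref{corquotispervstable}, which together ensure that the entire relative Quot scheme (not just an open substack) parametrizes stable perverse coherent sheaves; once that is known, the corollary reduces to a standard application of Grothendieck's representability and projectivity theorem.
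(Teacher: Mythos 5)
Your proposal is correct and follows the same route the paper intends: the corollary is an immediate consequence of Proposition \ref{propmodisquot}, which identifies $\widetilde{\mathcal{M}}^p_{\chat}$ with the full relative Quot scheme $\widetilde{\mathfrak{Q}}$ (not merely an open substack, thanks to Corollary \ref{corquotispervstable}), and relative Quot schemes with fixed Chern character are projective over their base, a fact that descends to the Artin stack base $\widetilde{\mathcal{M}}_{X,c}$ since properness is smooth-local on the target.
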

		
		\begin{lemma}\label{corkbound}
			For large enough $k$ (depending on $\chat$), the moduli stack $\widetilde{\mathcal{M}}^p_{\chat-k\chexc}$ is empty. 
		\end{lemma}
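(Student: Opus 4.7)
The plan is to use the Quot-scheme description from Proposition \ref{propmodisquot}: any object of $\widetilde{\mathcal{M}}^p_{\chat - k\chexc}$ corresponds to a surjection $p^*G \twoheadrightarrow F$ with $G \in \widetilde{\mathcal{M}}_{X,c}(\C)$ and kernel of Chern character $(j+k)\chexc$. Applying Lemma \ref{lemexcchercarisexc}\ref{lemexcchercarisexc1} (using that $p_*p^*G \simeq G$ is Gieseker stable) identifies this kernel with $\mathcal{O}_C(-1)^{\oplus(j+k)}$. Non-emptiness of the stack therefore forces the existence of some such $G$ admitting an injection $\mathcal{O}_C(-1)^{\oplus(j+k)} \hookrightarrow p^*G$, and such an injection gives $j+k$ linearly independent elements of $\Hom_{\Xhat}(\mathcal{O}_C(-1), p^*G)$. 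Hence
\[
j+k \;\leq\; \dim_{\C} \Hom_{\Xhat}(\mathcal{O}_C(-1), p^*G),
\]
and the remainder of the proof consists in bounding this Hom-space uniformly as $G$ varies in $\widetilde{\mathcal{M}}_{X,c}(\C)$.

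To compute the Hom-space, I would first apply Serre duality on $\Xhat$ together with $\omega_{\Xhat} \otimes \mathcal{O}_C(-1) \simeq \mathcal{O}_C(-2)$ to obtain $\Hom(\mathcal{O}_C(-1), p^*G) \simeq \Ext^2(p^*G, \mathcal{O}_C(-2))^{\vee}$. Because $G$ is torsion-free on the smooth surface $X$, its length-one locally free resolution pulls back to an honest resolution of $p^*G$ on $\Xhat$, so $Lp^*G \simeq p^*G$. Derived adjunction combined with the fiberwise calculation $Rp_*\mathcal{O}_C(-2) \simeq \mathcal{O}_q[-1]$ (where $q$ is the blowup point) then yields
\[
\Hom_{\Xhat}(\mathcal{O}_C(-1), p^*G) \;\simeq\; \Ext^1_X(G, \mathcal{O}_q)^{\vee}.
\]
A minimal free resolution of the stalk $G_q$ over the regular local ring $\mathcal{O}_{X,q}$, together with Auslander--Buchsbaum, identifies the dimension of the right-hand side with $\mu_q(G) - r$, where $\mu_q(G)$ denotes the minimal number of generators of $G_q$.

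The last step is to bound $\mu_q(G)$ uniformly as $G$ ranges over stable sheaves with Chern character $c$. This follows from boundedness of the family of Gieseker-stable sheaves with fixed Chern character (e.g.\ via Langer's theorem or Castelnuovo--Mumford regularity), combined with the upper-semicontinuity of $\mu_q$ in flat families. This gives a constant $B(c)$ independent of $G$ with $j+k \leq B(c)$, so that $\widetilde{\mathcal{M}}^p_{\chat - k\chexc} = \emptyset$ whenever $k > B(c) - j$. I expect this final uniform boundedness to be the main (though essentially routine) point requiring care; the earlier steps are direct manipulations with Serre duality and the derived pullback--pushforward adjunction.
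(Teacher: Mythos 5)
Your proof is correct and achieves the bound, but it takes a genuinely more elaborate route than the paper's own argument. The paper, after reducing to $j=0$, simply twists the inclusion $\mathcal{O}_C(-1)^{\oplus k}\hookrightarrow p^*E$ by $\mathcal{O}_{\Xhat}(-C)$ to get $\mathcal{O}_C^{\oplus k}\hookrightarrow p^*E(-C)$, and then reads off $k=h^0(\mathcal{O}_C^{\oplus k})\leq h^0(p^*E(-C))$; boundedness of the stable family makes the right-hand side uniformly bounded. That is the whole proof -- no duality, no adjunction. You instead bound $\dim\Hom_{\Xhat}(\mathcal{O}_C(-1),p^*G)$ by combining Serre duality on $\Xhat$ (with the identification $\mathcal{O}_C(-1)\otimes\omega_{\Xhat}\simeq\mathcal{O}_C(-2)$), the derived adjunction $Lp^*\dashv Rp_*$ together with $Lp^*G\simeq p^*G$ and $Rp_*\mathcal{O}_C(-2)\simeq\mathcal{O}_q[-1]$, and a minimal-free-resolution computation over $\mathcal{O}_{X,q}$ via Auslander--Buchsbaum. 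All of these steps check out, and your chain of identifications yields the clean local formula $\Hom_{\Xhat}(\mathcal{O}_C(-1),p^*G)\simeq\Ext^1_X(G,\mathcal{O}_q)^{\vee}$ of dimension $\mu_q(G)-r$, which is nice to have in its own right (it makes the dependence on $G$ completely explicit through a local invariant at $q$). But as a proof of mere emptiness for large $k$, this is heavier machinery than necessary: the paper's one-line semicontinuity bound on $h^0(p^*E(-C))$ gets there without invoking duality or derived adjunction at all. Both approaches do ultimately rely on the same ingredient -- boundedness of the family $\widetilde{\mathcal{M}}_{X,c}$ -- to make the pointwise bound uniform, so the structural difference is entirely in how the pointwise bound is extracted.
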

		\begin{proof}
			Say $\chat=p^*c-j\chexc$. We can assume that $j=0$ by changing $k$ appropriately. By Proposition \ref{propdefpervcoh} and the definition of perverse coherent sheaf, every object in $\widetilde{\mathcal{M}}^p_{\chat-k\chexc}(\C)$ is isomorphic to a quotient of $p^*E$ by  a subsheaf isomorphic to $\mathcal{O}_C(-1)^{\oplus k}$, for some $E\in \widetilde{\mathcal{M}}_{X,c}(\C)$. Since the set of sheaves parametrized by $\widetilde{\mathcal{M}}_{X,c}$ is bounded, the upper semicontinuous function $E\mapsto h^0(p^*E(-C))$ is bounded  above on $\widetilde{\mathcal{M}}_{X,c}(\C)$, say by $M>0$. On the other hand $h^0(\mathcal{O}_C(-1)(-C))=h^0(\mathcal{O}_C)=1$, so that for any subsheaf of $p^*E$ isomorphic to $\mathcal{O}_C(-1)^{\oplus k}$ and any $E\in \widetilde{\mathcal{M}}_{X,c}(\C)$, we have $k\leq M$. It follows that $\widetilde{\mathcal{M}}^p_{\chat-k\chexc}$ is empty for $k>M$. 
		\end{proof}
		
		\paragraph{Orientations.}

		Now we prove that these isomorphisms lift when we include orientations. First, we briefly recall the relevant terminology.  
		Let $Y$ be a smooth quasi-projective variety. We let $\det$ denote the determinant functor on the collection of coherent sheaves on $Y$. It associates to any coherent sheaf $E$ on $Y$ a line bundle $\det E$, and to an isomorphism $f:E\xrightarrow{\sim}E'$ an isomorphism $\det f: \det E\xrightarrow{\sim} \det E'$ in a functorial way, and satisfies additional naturality properties. More generally, it is defined for any scheme if one restricts to the class of coherent sheaves that have finite resolutions by locally free coherent sheaves. In particular, it is defined for any $T$-flat coherent sheaf $\mathcal{E}$ on $Y\times T$, such that the fibers $\mathcal{E}_{t}$ vary inside some bounded family of sheaves on $Y$ (e.g. Gieseker stable of some fixed Chern class).  
		Let $y\in Y $ be a chosen base point. Let $\Poin_Y$ on $Y\times \Pic Y$ be the Poincaré line bundle representing the relative Picard functor \cite[Part 5]{FGA05}, with $\Poin_Y\mid_{\{y\}\times \Pic Y} \simeq \mathcal{O}_{\{y\}\times \Pic Y}$. We recall its universal property: Given a line bundle $L$ on $Y\times T$ for some $T$, there is a classifying map $J_{L}:T\to \Pic (Y)$, such that $J_L^*\Poin_Y\simeq L\otimes \pi_Y^*M$ for some line bundle $M$ on $T$. 
		For a $T$-flat family of coherent sheaves $\mathcal{E}$ on $Y\times T$ for some $T$, we let $\det_{\mathcal{E}}=J_{\det \mathcal{E}}:T\to \Pic Y$ denote the classifying map associated to $\det E$. 
		\begin{definition}
			Let $Y$ be a smooth projective $\C$-scheme with basepoint, so that we have a Poincaré line bundle on $\Poin_Y$, and let $\mathcal{E}$ be a $T$-flat family of coherent sheaves on $Y\times T$ for some $T$. An \emph{orientation} of $\mathcal{E}$ is an isomorphism $\operatorname{or}:\det E\to \det_E^*\Poin_Y$. We also call the pair $(\mathcal{E},\operatorname{or})$ a family of \emph{oriented sheaves}.  An isomorphism of families of oriented sheaves $(\mathcal{E},\operatorname{or})\to (\mathcal{E}',\operatorname{or}')$ is an isomorphism $f:\mathcal{E}\to \mathcal{E}'$, such that $\operatorname{or}'\circ \det f =\operatorname{or}$. We will usually suppress reference to $\operatorname{or}$, when this does not cause confusion.
		\end{definition}
		
		Given a scheme, or more generally algebraic stack, $\widetilde{U}$ and a family $\mathcal{E}$ of sheaves on the smooth projective variety with basepoint $Y$ over $\widetilde{U}$, we may form the relative scheme $U$ over $\widetilde{U}$, parametrizing maps $F:T\to \widetilde{U}$ together with an orientation on $F^*\mathcal{E}$. In fact, $U$ is a $\C^*$-bundle over $\widetilde{U}$, where the $\C^*$ action consists of scaling a given orientation. 
		
		\begin{remark}\label{rem:Picarstack}
			Let $\mathcal{P}ic_Y$ denote the Picard stack of line bundles on $Y$ and let $\widetilde{U}$ be an algebraic stack together with a family of sheaves on $Y$ over $\widetilde{U}$.  Then the relative scheme of orientations $\pi_{\operatorname{or}}:U\to \widetilde{U}$ naturally fits into a cartesian square
			\begin{equation*}
				\begin{tikzcd}
					U\ar[r,]\ar[d,"\pi_{\operatorname{or}}"]& \Pic_X\ar[d,"\pi_{\operatorname{or}}"]\\
					\widetilde{U}\ar[r] & \mathcal{P}ic_X,
				\end{tikzcd}
			\end{equation*}
			where the horizontal maps are the respective classifying maps induced by $\det \mathcal{E}$, and the right vertical map is the classifying map of the Poincar\'e line bundle. 
		\end{remark}

		We now choose once and for all a basepoint $x\in X\setminus\{q\}$, which we identify with its preimage in $\widehat{X}$.
		We write $\mathcal{M}^p_{\chat}$ and $\mathcal{M}_{X,c}$ for the obvious moduli spaces of sheaves with orientations and $\mathfrak{Q}$ for the fiber product of the diagram $\widetilde{\mathfrak{Q}}\to\widetilde{\mathcal{M}}_{X,c}\xleftarrow{}\mathcal{M}_{X,c}$. We want to lift the morphisms $\Phi$ and $\Psi$ of Proposition \ref{propmodisquot} to include orientations .
		For each $k\in \Z$, there is an isomorphism $J_k:\Pic^{c_1} X\xrightarrow{\sim}\Pic^{c_1+k[C]}$ given by $L\mapsto p^*L\otimes \mathcal{O}(k C)$.  Due to our compatible choice of basepoint, there exists an isomorphism $J_k^*\Poin_{\Xhat}^{p^*c_1+k[C]}\simeq p^*\Poin_{X}^{c_1}\otimes \mathcal{O}_{\Xhat}(kC)$  on $\Xhat\times \Pic^{c_1} X$. For each $k$, make a choice of such an isomorphism once and for all. Once this choice is made, then for any family of line bundles $L$  on $X$  with fiberwise first Chern class $c_1$, we have natural isomorphisms $J_{p^*L\otimes \mathcal{O}_X(kC)}^*\Poin_{\widehat{X}}\simeq p^*J_{L}^*\Poin_X \otimes \pi_{\Xhat}^*\mathcal{O}_{\Xhat}(kC)$ for all $k$ .

		\begin{proposition}\label{propisrelquot} 
		
			The morphisms $\Phi$ and $\Psi$ lift to $\Phi:\mathcal{M}^p_{\chat} \to \mathfrak{Q}$ and $\Psi: \mathfrak{Q}\to \mathcal{M}^p_{\chat}$ (denoted by the same letters), which are still mutually inverse equivalences. 
		\end{proposition}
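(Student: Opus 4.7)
The plan is to lift $\Phi$ and $\Psi$ by establishing a canonical, functorial bijection between orientations on a $T$-flat family $\mathcal{E}$ of stable perverse coherent sheaves and orientations on its pushforward $\mathcal{F}:=(p_T)_*\mathcal{E}$. The key input is a comparison of determinants: by Proposition \ref{propdefpervcoh}\ref{propdefpervcoh4} applied in families (using Corollary \ref{corpstarbasechange}), there is a universal exact sequence
\[ 0\to V^{\vee}\otimes\mathcal{O}_C(-1)\to p_T^*\mathcal{F}\to \mathcal{E}\to 0 \]
on $\Xhat\times T$, where $V$ is a locally free $\mathcal{O}_T$-module of rank $j$. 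Tensoring the resolution $0\to\mathcal{O}_{\Xhat}\to\mathcal{O}_{\Xhat}(C)\to\mathcal{O}_C(-1)\to 0$ with $\pi_T^*V^{\vee}$ and applying the rank-preserving rule $\det(E\otimes L)=\det(E)\otimes L^{\otimes\operatorname{rk}E}$, the contributions of $\det V^{\vee}$ cancel and one obtains $\det(V^{\vee}\otimes\mathcal{O}_C(-1))\simeq \pi_{\Xhat}^*\mathcal{O}_{\Xhat}(jC)$. Combined with $\det(p_T^*\mathcal{F})=p_T^*\det\mathcal{F}$, the exact sequence yields a canonical isomorphism
\[ \alpha_{\mathcal{E}}\colon\det\mathcal{E}\xrightarrow{\;\sim\;} p_T^*\det\mathcal{F}\otimes\pi_{\Xhat}^*\mathcal{O}_{\Xhat}(-jC). \]

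The next step is to translate $\alpha_{\mathcal{E}}$ into a correspondence of orientations by invoking the chosen isomorphism $J_{-j}^*\Poin_{\Xhat}^{p^*c_1-j[C]}\simeq p^*\Poin_X^{c_1}\otimes\mathcal{O}_{\Xhat}(-jC)$ fixed in the paragraph preceding the proposition. Uniqueness of classifying maps for line bundles forces $\det_{\mathcal{E}}=J_{-j}\circ\det_{\mathcal{F}}$; unwinding the universal property then identifies $\det_{\mathcal{E}}^*\Poin_{\Xhat}^{\chat_1}$ with $p_T^*\det_{\mathcal{F}}^*\Poin_X^{c_1}\otimes\pi_{\Xhat}^*\mathcal{O}_{\Xhat}(-jC)$ compatibly with $\alpha_{\mathcal{E}}$. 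Restricting both sides of $\alpha_{\mathcal{E}}$ to $\{x\}\times T$, where $x\notin C$ so that $\mathcal{O}_{\Xhat}(-jC)$ is canonically trivial, produces a canonical identification of the obstruction line bundles $\det\mathcal{E}|_{\{x\}\times T}\simeq\det\mathcal{F}|_{\{x\}\times T}$. Since an orientation on $\mathcal{E}$ is equivalent to a trivialization of this restriction (and likewise for $\mathcal{F}$), orientations on $\mathcal{E}$ correspond bijectively to orientations on $\mathcal{F}$. In terms of the Cartesian squares of Remark \ref{rem:Picarstack}, this identification is precisely the statement that the pullback along $J_{-j}$ of the $\mathbb{G}_m$-torsor defining $\mathcal{M}_{X,c}\to\widetilde{\mathcal{M}}_{X,c}$ agrees with the torsor defining $\mathcal{M}^p_{\chat}\to\widetilde{\mathcal{M}}^p_{\chat}$.

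This bijection defines the lifts of $\Phi$ and $\Psi$, and since they recover the unoriented equivalences of Proposition \ref{propmodisquot} after forgetting orientations, they remain mutually inverse equivalences. The main technical point is verifying that $\alpha_{\mathcal{E}}$ and the resulting identifications are functorial in $\mathcal{E}$ and compatible with arbitrary base change on $T$; this reduces to the base-change compatibility of $p_*$ from Corollary \ref{corpstarbasechange} and naturality of determinants. No new moduli-theoretic content beyond Proposition \ref{propmodisquot} is required, the entire argument being a combination of the determinant computation and the once-and-for-all choice of Poincar\'e bundle identifications.
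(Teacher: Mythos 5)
Your proof is correct and follows essentially the same route as the paper's: both derive the canonical isomorphism $\det\mathcal{E}\simeq p_T^*\det\mathcal{F}\otimes\pi_{\Xhat}^*\mathcal{O}_{\Xhat}(-jC)$ from the universal exact sequence and then transfer orientations through the fixed identification $J_{-j}^*\Poin_{\Xhat}\simeq p^*\Poin_X\otimes\mathcal{O}_{\Xhat}(-jC)$. The only cosmetic difference lies in how the determinant of the kernel is identified with $\pi_{\Xhat}^*\mathcal{O}_{\Xhat}(jC)$: the paper notes that the potential $T$-line-bundle factor is canonically trivial because the kernel vanishes off $C\times T$, whereas you tensor the Koszul resolution of $\mathcal{O}_C(-1)$ by $\pi_T^*V^{\vee}$ and observe that the $\det V^{\vee}$ contributions cancel.
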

		\begin{proof}
			Since we have already defined $\Phi$ and $\Psi$ without orientations, we are reduced to the following problem: Suppose we have a $T$-valued point of $\widetilde{\mathfrak{Q}}$, i.e. an object $\mathcal{F}\in \widetilde{\mathcal{M}}_{X,c}(T)$ and an exact sequence 
			\[0\to \mathcal{K}\to p_T^*\mathcal{F}\to \mathcal{E}\to 0,\]
			where $\mathcal{E}\in \widetilde{\mathcal{M}}_{\chat}^p(T)$. We need to give natural ways to construct an orientation on $\mathcal{E}$ from a given one on $\mathcal{F}$ and vice-versa, and prove that the two constructions are inverse to each other. Note that $\mathcal{K}$ is fiberwise isomorphic to $\mathcal{O}_{\Xhat}(k_0 C)$ and so $\det \mathcal{K}\simeq \pi_{\Xhat}^*\mathcal{O}_{\Xhat}(k_0 C)\otimes \pi_T^*M$, for some line bundle $M$ on $T$. Since $\mathcal{K}$ is zero outside $C\times T$, we must in fact have that $M$ is canonically isomorphic to $\mathcal{O}_T$.  From the short exact sequence we have a natural isomorphism $\det \mathcal{E}\simeq \det p_T^*\mathcal{F}\otimes \left(\det\mathcal{K}\right)^{-1}\simeq   \det p_T^*\mathcal{F}\otimes\pi_{\Xhat}^*\mathcal{O}_{\Xhat}(-k_0 C)$. It follows by Remark \ref{rem:Picarstack} that we have natural isomorphisms
			\begin{equation}\label{eqorpullback}
				\operatorname{det}_{\mathcal{E}}^*\Poin_{\Xhat}\simeq J^*_{\det\mathcal{F}\otimes \pi_{\Xhat}^*\mathcal{O}_{\Xhat}(-k_0C)}\Poin_{\Xhat}\simeq p_T^*\operatorname{det}_{\mathcal{F}}^*\Poin_X\otimes \pi_{\Xhat}^*\mathcal{O}_{\Xhat}(-k_0C).
			\end{equation}
			Now given an orientation $\rho$ on $\mathcal{F}$ we can obtain an orientation on $\mathcal{E}$ via pullback and \eqref{eqorpullback}: 
			\[\det \mathcal{E}\xrightarrow{p_T^*\rho\otimes \left(\det \mathcal{K}\right)^{-1}}p_T^*\operatorname{det}_{\mathcal{F}}^*\Poin_X\otimes \pi_{\Xhat}^*\mathcal{O}_{\Xhat}(-k_0C)\simeq \operatorname{det}_{\mathcal{E}}^*\Poin_{\Xhat}.\]
			
			Conversely, given an orientation on $\mathcal{E}$, we may use \eqref{eqorpullback} to at least obtain an isomorphism $p_T^*\det \mathcal{F} \to p_T^*\det_{\mathcal{F}}^*\Poin_X$. By pushing forward, one obtains an orientation on $\mathcal{F}$. A straightforward check shows that these constructions are inverse to each other. 
		\end{proof}
		We have shown that $\mathcal{M}^p_{\chat}$ is a relative Quot-scheme over $\mathcal{M}_{X,c}$. 
		Since  $\mathcal{M}_{X,c}$ is a proper Deligne--Mumford stack, we have the following
		\begin{corollary}
			The moduli stack $\mathcal{M}^p_{\chat}$ is a proper Deligne--Mumford stack. 
		\end{corollary}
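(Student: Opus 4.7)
The plan is to apply Proposition \ref{propisrelquot} directly, which identifies $\mathcal{M}^p_{\chat}$ with the relative Quot stack $\mathfrak{Q}$ living over $\mathcal{M}_{X,c}$. Once this identification is in hand, the corollary reduces to two separate assertions: that the base $\mathcal{M}_{X,c}$ is a proper Deligne--Mumford stack, and that the structure map $\mathfrak{Q}\to\mathcal{M}_{X,c}$ is representable and proper. Combining the two, $\mathfrak{Q}$ inherits properness over $\Spec\C$, while being representable over a Deligne--Mumford base forces it to be Deligne--Mumford itself.

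First I would dispose of the base. Under Assumption \ref{assumpcoprime}, Gieseker stability and semistability coincide for sheaves of rank $r$ and first Chern class $c_1$, so the standard GIT construction produces a proper coarse moduli space for the unoriented stack $\widetilde{\mathcal{M}}_{X,c}$. Gieseker stable sheaves are simple, so points of $\widetilde{\mathcal{M}}_{X,c}$ carry $\C^*$-stabilizers; after passing to $\mathcal{M}_{X,c}$ via the orientation torsor, a scalar $\lambda\in\C^*$ lies in the stabilizer only if $\lambda^r=1$, cutting the automorphism groups down to $\mu_r$. Finiteness of these automorphism groups gives the Deligne--Mumford property, and properness is preserved since $\mathcal{M}_{X,c}\to\widetilde{\mathcal{M}}_{X,c}$ is a $\C^*$-torsor fitting into the cartesian square of Remark \ref{rem:Picarstack}, which sits over the proper coarse space via a finite gerbe.

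For the relative statement, I would recall that for a fixed polarization and fixed Chern character, the relative Quot functor $\Quot_{\Xhat\times \widetilde{\mathcal{M}}_{X,c}/\widetilde{\mathcal{M}}_{X,c}}(p^*\mathcal{F}_{X,c},\chat)$ is representable by an algebraic space proper over the base, by Grothendieck's construction applied in families. Base-changing along the representable map $\mathcal{M}_{X,c}\to \widetilde{\mathcal{M}}_{X,c}$ produces the scheme-representable and proper morphism $\mathfrak{Q}\to \mathcal{M}_{X,c}$. Composing with the proper Deligne--Mumford $\mathcal{M}_{X,c}\to \Spec \C$ yields the conclusion. The main point to check carefully, and the only spot where I would pause, is that the numerical input needed to invoke Grothendieck's properness theorem is indeed available here: the quotient sheaves parametrized by $\mathfrak{Q}$ have fixed admissible Chern character $\chat$ and the reference sheaf $p^*\mathcal{F}_{X,c}$ varies in a bounded family (since $\widetilde{\mathcal{M}}_{X,c}$ is of finite type), so the relative Quot scheme is proper in the strong sense required.
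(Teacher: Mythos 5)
Your proposal is correct and follows the paper's own reasoning essentially verbatim: the paper deduces properness and the Deligne--Mumford property of $\mathcal{M}^p_{\chat}$ directly from Proposition \ref{propisrelquot}, which realizes it as a relative Quot-scheme over the proper Deligne--Mumford stack $\mathcal{M}_{X,c}$. You merely spell out the (standard) reasons $\mathcal{M}_{X,c}$ is itself proper Deligne--Mumford and why Grothendieck's properness of the relative Quot applies, neither of which the paper bothers to restate.
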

		\begin{corollary}\label{corpullbackiso}
			We have an isomorphism
			\[\mathcal{M}_{p^*c}^p \simeq \mathcal{M}_{X,c}.\]
		\end{corollary}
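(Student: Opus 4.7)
The strategy is to directly combine the oriented version of Proposition \ref{propmodisquot} (i.e., Proposition \ref{propisrelquot}) with the observation that the relative Quot scheme trivializes when $j=0$. Concretely, for $\chat = p^*c$ we have $\widehat{c}_1 = p^*c_1$ and hence $j = \widehat{c}_1 \cdot [C] = 0$, so the relative Quot scheme $\widetilde{\mathfrak{Q}} \to \widetilde{\mathcal{M}}_{X,c}$ parametrizes quotients of $p^*\mathcal{F}_{X,c}$ whose kernel has Chern character $p^*c - p^*c = 0$.

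The first step is to argue that any such quotient is necessarily the identity quotient. The universal sheaf $\mathcal{F}_{X,c}$ is flat over $\widetilde{\mathcal{M}}_{X,c}$ with fibers Gieseker stable (hence torsion-free), so $p^*\mathcal{F}_{X,c}$ is flat and fiberwise torsion-free on $\Xhat \times \widetilde{\mathcal{M}}_{X,c}$. In any $T$-valued point of $\widetilde{\mathfrak{Q}}$, represented by a short exact sequence $0 \to \mathcal{K} \to p_T^*\mathcal{F} \to \mathcal{E} \to 0$, the kernel $\mathcal{K}$ is $T$-flat with fiberwise trivial Chern character and sits inside a fiberwise torsion-free sheaf; it must therefore be zero fiberwise, hence zero. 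This identifies $\widetilde{\mathfrak{Q}} \xrightarrow{\sim} \widetilde{\mathcal{M}}_{X,c}$ via the structure map.

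Next I would pass to orientations. Taking the fiber product with $\mathcal{M}_{X,c} \to \widetilde{\mathcal{M}}_{X,c}$ in the definition of $\mathfrak{Q}$ and using the identification above immediately yields $\mathfrak{Q} \simeq \mathcal{M}_{X,c}$. Proposition \ref{propisrelquot} then gives the desired equivalence
\[
\mathcal{M}^p_{p^*c} \xrightarrow{\Phi} \mathfrak{Q} \simeq \mathcal{M}_{X,c}.
\]
Concretely, $\Phi$ sends an oriented stable perverse coherent sheaf $(\mathcal{E},\operatorname{or})$ with $\chat = p^*c$ to $((p_T)_*\mathcal{E},\operatorname{or}')$, where $\operatorname{or}'$ is obtained via the isomorphism $\det\mathcal{E} \simeq p_T^*\det (p_T)_*\mathcal{E}$ (the trivialization \eqref{eqorpullback} with $k_0=0$). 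The quasi-inverse sends $(\mathcal{F},\rho)$ to $(p_T^*\mathcal{F},p_T^*\rho)$, which is stable perverse coherent by Example \ref{expushpull}.

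There is really no obstacle here beyond bookkeeping, since Propositions \ref{propmodisquot} and \ref{propisrelquot} already do the substantive work. The only minor point worth checking explicitly is the vanishing of the kernel $\mathcal{K}$ above; this is where torsion-freeness of $p^*\mathcal{F}_{X,c}$ (and the compatibility of $p^*$ with Chern characters from Example \ref{expushpull}) enters, and where the case $j=0$ is essentially different from the general $j > 0$ case.
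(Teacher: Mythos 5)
Your overall route---reduce to the unoriented statement via Proposition \ref{propisrelquot}, then observe that for $\chat = p^*c$ (so $j = 0$) the relative Quot-scheme $\widetilde{\mathfrak{Q}}$ trivializes---is exactly what the paper intends when it states both this corollary and its unoriented precursor without further comment, and the structure of your argument is sound. However, the step you explicitly flag as worth checking is also the one place where your reasoning is wrong: $p^*\mathcal{F}_{X,c}$ is \emph{not} in general fiberwise torsion-free on $\Xhat$. If $F$ is a Gieseker stable torsion-free sheaf on $X$ that fails to be locally free at the blown-up point $q$---the ideal sheaf $\mathcal{I}_q$ of $q$ is the basic example---then $p^*F$ acquires torsion along $C$. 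Concretely, the Koszul presentation $\mathcal{O} \xrightarrow{(-y,\,x)} \mathcal{O}^{\oplus 2} \to \mathcal{I}_q \to 0$ near $q$ pulls back on the chart $y = xt$ of $\Xhat$ to $\mathcal{O} \xrightarrow{x\cdot(-t,\,1)} \mathcal{O}^{\oplus 2} \to p^*\mathcal{I}_q \to 0$, and the image of $(-t,1)$ in $p^*\mathcal{I}_q$ is a nonzero element annihilated by $x$, hence torsion supported on $C$. This is consistent with the paper's remark in \S \ref{subsec:pervcoh} that stable perverse coherent sheaves may carry torsion along the exceptional curve.

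Fortunately the conclusion you need---that the kernel $\mathcal{K}$ of any $T$-point of $\widetilde{\mathfrak{Q}}$ vanishes fiberwise---does not require torsion-freeness of the ambient sheaf at all. A coherent sheaf $\mathcal{K}_t$ on $\Xhat$ with $\ch(\mathcal{K}_t) = 0$ has vanishing Hilbert polynomial with respect to any polarization and is therefore zero: rank zero and $\ch_1 = 0$ force the support to be zero-dimensional, and then $\ch_2 = 0$ forces the length to be zero. This is also exactly what Lemma \ref{lemexcchercarisexc} gives when applied with $k = 0$ and $E = p^*F$, since $p_*p^*F \simeq F$ is stable. Replacing your torsion-freeness step with this Chern-character (equivalently Hilbert-polynomial) argument repairs the proof; everything else you wrote---the passage to orientations, the description of $\Phi$ via \eqref{eqorpullback} with $k_0 = 0$, and the explicit quasi-inverse $p^*$ using Example \ref{expushpull}---is correct and matches the paper's implicit argument.
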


		\subsection{Moduli spaces of $m$-stable sheaves}
		\label{subsec:mstable}
				
		Let $E$ be a sheaf on $\widehat{X}$ with an admissible Chern character and let $m\in \mathbb{Z}$. Note that $E(-mC)$ has Chern character 
		$$\ch (E)\cdot \ch(\mathcal{O}_{\Xhat}(-mC))= r+ \operatorname{ch}_1(E)-mr[C] +  \ch_2 (E) - m [C] \cdot \ch_1 (E) -  \frac{r m^2}{2} \cdot [pt] , 
		$$
		which is still admissible. Following Nakajima and Yoshioka \cite{NaYo2}, we define: 
		\begin{definition}[cf. \S 3 in \cite{NaYo2}]
			Let $m\in \mathbb{Z}$. We say that a sheaf $E$ on $\Xhat$ with admissible Chern character is \emph{$m$-stable}, if $E(-mC)$ is a stable perverse coherent sheaf. 
			We let $\mathcal{M}^m_{\chat}$ denote the moduli stack of oriented $m$-stable sheaves of Chern character $\chat$ on $\Xhat$ and also write $\mathcal{M}^m$ if $\chat$ is understood. 
		\end{definition}
		\begin{lemma}\label{lemmstableprops}
			Suppose $E$ is an $m$-stable coherent sheaf on $\Xhat$. We have
			\begin{enumerate}[label=(\roman*)]
				\item $\Hom(\mathcal{O}_C(-m),E)=0$,
				\item $\Hom(E,\mathcal{O}_C(-m-1))=0$,
				\item if $m\geq 0$, then $p_*E$ is Gieseker stable, \label{lem:mstableprops3}
			\end{enumerate}
		\end{lemma}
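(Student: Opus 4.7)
Parts (i) and (ii) follow from a direct twisting argument. Using the standard identification $\mathcal{O}_{\Xhat}(C)|_C \simeq \mathcal{O}_C(-1)$, one checks the isomorphisms of coherent sheaves on $\Xhat$: $\mathcal{O}_C(-m) \otimes \mathcal{O}_{\Xhat}(-mC) \simeq \mathcal{O}_C$ and $\mathcal{O}_C(-m-1) \otimes \mathcal{O}_{\Xhat}(-mC) \simeq \mathcal{O}_C(-1)$. Tensoring by $\mathcal{O}_{\Xhat}(-mC)$ transforms the Hom groups in (i) and (ii) respectively into $\Hom(\mathcal{O}_C, E(-mC))$ and $\Hom(E(-mC), \mathcal{O}_C(-1))$. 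The first vanishes by Lemma~\ref{lemhomvanishing} applied to the stable perverse coherent sheaf $E(-mC)$, and the second vanishes by the defining condition \ref{propdefpervcoh2} of Proposition~\ref{propdefpervcoh}.

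For part (iii), the starting observation is that $p$ is an isomorphism away from $C$ and $\mathcal{O}_{\Xhat}(-mC)$ restricts to the trivial line bundle there, so $p_*E$ and $p_*(E(-mC))$ agree on $X \setminus \{q\}$. In particular, they share the same rank $r$ and first Chern class $c_1$, so in particular $\mu(p_*E) = \mu(p_*(E(-mC)))$. I would next show that $p_*E$ is torsion-free: since $p_*(E(-mC))$ is torsion-free (being Gieseker stable), any torsion of $p_*E$ must be zero-dimensional and supported at $q$, and by adjunction this torsion equals $\Hom(\mathcal{O}_q, p_*E) \simeq \Hom(\mathcal{O}_C, E)$. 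The torsion subsheaf of $E(-mC)$ has zero pushforward to $X$ (being contained in the torsion-free $p_*(E(-mC))$), so by Lemma~\ref{lempushvanishing} it is isomorphic to $\mathcal{O}_C(-1)^{\oplus k}$ for some $k \geq 0$; twisting by $\mathcal{O}_{\Xhat}(mC)$, the torsion subsheaf of $E$ is $\mathcal{O}_C(-m-1)^{\oplus k}$. Any nonzero map $\mathcal{O}_C \to E$ must factor through this torsion subsheaf, since the torsion-free quotient of $E$ admits no nonzero map from $\mathcal{O}_C$, but $\Hom(\mathcal{O}_C, \mathcal{O}_C(-m-1)) = H^0(\mathcal{O}_C(-m-1)) = 0$ for $m \geq 0$. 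Hence $\Hom(\mathcal{O}_C, E) = 0$ and $p_*E$ is torsion-free.

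Finally, by Assumption~\ref{assumpcoprime}, Gieseker stability is equivalent to slope stability for torsion-free sheaves of rank $r$ and first Chern class $c_1$, so it suffices to verify slope stability of $p_*E$. Given a saturated subsheaf $F \subset p_*E$ of rank $0 < r' < r$, set $F' := F \cap p_*(E(-mC))$. Since $p_*E / p_*(E(-mC))$ is zero-dimensional, $F/F'$ is also zero-dimensional, so $F$ and $F'$ have the same rank $r'$ and first Chern class, and in particular $\mu(F) = \mu(F')$. The slope stability of $p_*(E(-mC))$, obtained from its Gieseker stability via the coprime assumption, yields $\mu(F') < \mu(p_*(E(-mC))) = \mu(p_*E)$, and therefore $\mu(F) < \mu(p_*E)$, completing the proof of (iii). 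The main obstacle is the torsion-freeness step, which requires identifying the torsion of $E$ from that of $E(-mC)$ via twisting; the remainder reduces to elementary comparisons of subsheaves and slopes once one knows that $p_*E$ and $p_*(E(-mC))$ differ only by a zero-dimensional torsion at $q$.
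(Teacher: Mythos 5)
Parts (i) and (ii) of your argument match the paper's: the twisting identification $\mathcal{O}_C(-m)\otimes\mathcal{O}_{\Xhat}(-mC)\simeq\mathcal{O}_C$, $\mathcal{O}_C(-m-1)\otimes\mathcal{O}_{\Xhat}(-mC)\simeq\mathcal{O}_C(-1)$ reduces both Hom-vanishings to Lemma~\ref{lemhomvanishing} and the defining condition of perverse coherence, exactly as in the text. The slope-stability portion of part (iii) is also the same as the paper's.

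The problem is in your torsion-freeness argument for (iii). You write that the torsion subsheaf $T$ of $E(-mC)$ ``has zero pushforward to $X$\ldots so by Lemma~\ref{lempushvanishing} it is isomorphic to $\mathcal{O}_C(-1)^{\oplus k}$.'' But Lemma~\ref{lempushvanishing} requires the vanishing of the \emph{full} derived pushforward $Rp_*T$, i.e.\ both $p_*T=0$ and $R^1p_*T=0$. Your justification only establishes $p_*T=0$ (it embeds into the torsion-free $p_*(E(-mC))$); you never address $R^1p_*T$, and it does not follow from the exact sequence $0\to T\to E(-mC)\to E(-mC)/T\to 0$ together with $R^1p_*(E(-mC))=0$, since that only shows $R^1p_*T$ is a quotient of $p_*(E(-mC)/T)$. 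So the intermediate claim that $T\simeq\mathcal{O}_C(-1)^{\oplus k}$ is not justified as stated, and the twist to $T(mC)\simeq\mathcal{O}_C(-m-1)^{\oplus k}$ that you then use is unsupported.

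This detour is also unnecessary. The paper observes that $\Hom(\mathcal{O}_C(-m),E)=0$ directly implies $\Hom(\mathcal{O}_C,E)=0$ when $m\geq 0$, without determining the torsion of $E(-mC)$ at all. One clean way to see this: $\Sheafhom_{\Xhat}(\mathcal{O}_C,E)$ is a coherent sheaf on $C\simeq\mathbb{P}^1$, say $T_1\oplus\bigoplus_i\mathcal{O}_{\mathbb{P}^1}(a_i)$ with $T_1$ zero-dimensional, and $\Hom(\mathcal{O}_C(-m),E)=H^0(\Sheafhom(\mathcal{O}_C,E)\otimes\mathcal{O}(m))=H^0(T_1)\oplus\bigoplus_i H^0(\mathcal{O}(a_i+m))$. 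Its vanishing forces $T_1=0$ and $a_i<-m\leq 0$, hence also $\Hom(\mathcal{O}_C,E)=H^0(\Sheafhom(\mathcal{O}_C,E))=0$. Once you have $\Hom(\mathcal{O}_C,E)=0$ (equivalently $p_*E$ torsion-free), your slope comparison with $p_*(E(-mC))$ goes through. Replacing your structural analysis of the torsion with this direct observation both closes the gap and shortens the argument.
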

		\begin{proof}
			Note that $\mathcal{O}_{\Xhat}(C)\simeq \mathcal{O}_C(-1)$. Then the first two claims follow by twisting from Lemma \ref{lemhomvanishing} and the definition of perverse coherent sheaf respectively. Suppose that $m\geq 0$. Then $\Hom(\mathcal{O}_C(-m),E)=0$ implies $\Hom(\mathcal{O}_C,E)=0$, i.e. that $p_*E$ is torsion-free. Since $p_*E(-mC)$ and $p_*E$ agree outside the codimension two locus $p_*C$, any slope-destabilizing subsheaf $F\subset p_*E$ induces a destabilizing subsheaf of $p_*E(-mC)$ by intersecting. This implies slope-stability of $p_*E$ and therefore Gieseker stability, as i is torsion-free. This demonstrates \ref{lem:mstableprops3}. 
		\end{proof}
		\begin{corollary}
			If $E$ is both $m_1$- and $m_2$-stable for $m_1\leq m_2$, then it is $m$-stable for any $m_1\leq m\leq m_2$.
		\end{corollary}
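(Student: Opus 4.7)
By the definition of $m$-stability, showing $E$ is $m$-stable is equivalent to showing $E(-mC)$ is a stable perverse coherent sheaf, which by Proposition \ref{propdefpervcoh} amounts to two conditions: $\Hom_{\Xhat}(E(-mC), \mathcal{O}_C(-1)) = \Hom_{\Xhat}(E, \mathcal{O}_C(-m-1)) = 0$, and Gieseker stability of $p_*(E(-mC))$. I plan to verify these separately for each $m_1 \leq m \leq m_2$.

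The first condition follows immediately from $m_1$-stability alone: multiplication by a nonzero section of $\mathcal{O}_C(m-m_1)$ produces an injection $\mathcal{O}_C(-m-1) \hookrightarrow \mathcal{O}_C(-m_1-1)$, which induces an injection $\Hom(E, \mathcal{O}_C(-m-1)) \hookrightarrow \Hom(E, \mathcal{O}_C(-m_1-1)) = 0$. For the Gieseker stability condition, my plan is to construct a chain of subsheaf inclusions $p_*E(-m_2 C) \hookrightarrow p_*E(-(m_2-1)C) \hookrightarrow \cdots \hookrightarrow p_*E(-m_1 C)$ in which every cokernel is torsion supported at $q$, so that the rank $r$ and first Chern class $c_1$ are constant along the chain. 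Once the chain is in place, Gieseker stability of $p_*E(-m_1 C)$ together with Assumption \ref{assumpcoprime} upgrades to $\mu$-stability, and every proper subsheaf of strictly smaller rank of $p_*E(-mC)$ is also such a subsheaf of $p_*E(-m_1 C)$, hence has strictly smaller slope; this makes $p_*E(-mC)$ $\mu$-stable and in particular Gieseker stable.

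To obtain each link of the chain, I would derivedly tensor $0 \to \mathcal{O}_{\Xhat}(-C) \to \mathcal{O}_{\Xhat} \to \mathcal{O}_C \to 0$ with $E(-m'C)$ and apply $Rp_*$, using that $R^1 p_*E(-m'C) = 0$ from the already established perverse coherence. The injectivity $p_*E(-(m'+1)C) \hookrightarrow p_*E(-m'C)$ then reduces to the vanishing of $p_*$ applied to $\operatorname{Tor}_1(E(-m'C), \mathcal{O}_C)$, which is the $C$-torsion of $E(-(m'+1)C)$, i.e.\ $T \otimes \mathcal{O}_C(m'+1)$ where $T \subset E$ denotes the $C$-torsion of $E$. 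The key computation, and the only point where $m_2$-stability is used, is to constrain $T$: since $p_*E(-m_2 C)$ is Gieseker stable and hence torsion-free, the pushforward of its $C$-torsion $T \otimes \mathcal{O}_C(m_2)$ vanishes; combined with the decomposition of coherent sheaves on $C = \mathbb{P}^1$ with no global sections, this forces $T \simeq \bigoplus_i \mathcal{O}_C(-b_i)$ with each $b_i \geq m_2 + 1$. A direct computation then gives $p_*(T\otimes \mathcal{O}_C(m'+1)) = 0$ for all $m' \leq m_2 - 1$, yielding the desired chain.

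The main obstacle is precisely this analysis of $T$: stable perverse coherent sheaves may carry genuine $C$-torsion, and without bounding it we cannot identify $p_*E(-mC)$ with a subsheaf of $p_*E(-m_1 C)$; it is exactly $m_2$-stability that forces $T$ to consist of line bundles of low enough degree for the Tor contributions to die upon pushforward throughout the relevant range $[m_1, m_2-1]$.
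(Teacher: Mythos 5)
Your proposal is correct, but the key step takes a genuinely different route from the paper. For the vanishing $\Hom_{\Xhat}(E,\mathcal{O}_C(-m-1))=0$ you and the paper argue identically, by composing with an injection $\mathcal{O}_C(-m-1)\hookrightarrow\mathcal{O}_C(-m_1-1)$. For Gieseker stability of $p_*E(-mC)$, however, the paper simply observes that $E(-mC)$ is $(m_2-m)$-stable with $m_2-m\geq 0$ and invokes Lemma~\ref{lemmstableprops}~\ref{lem:mstableprops3}, which already packages the argument that for an $m$-stable sheaf with $m\geq 0$ the pushforwards of different twists agree outside codimension two and are both slope-stable. You instead rebuild the required comparison from scratch as an explicit chain of inclusions $p_*E(-m_2C)\hookrightarrow\cdots\hookrightarrow p_*E(-m_1C)$, proving each link by bounding the kernel of multiplication by the defining section $s$ of $\mathcal{O}_{\Xhat}(C)$, and then transporting slope-stability of $p_*E(-m_1 C)$ (from $m_1$-stability and Assumption~\ref{assumpcoprime}) up the chain. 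One small imprecision: what your $\operatorname{Tor}_1$ isolates is $T_s:=\ker(s\colon E\to E(C))$, the part of the torsion scheme-theoretically supported on $C$, rather than the full set-theoretic $C$-torsion $T$ (which could a priori live on a thickening of $C$); but $T_s$ is exactly the sheaf you need, it is $T_s$ that is an $\mathcal{O}_C$-module on $\mathbb{P}^1$ and decomposes as $\bigoplus_i\mathcal{O}_C(-b_i)$, and the bound $b_i\geq m_2+1$ you extract from $\Hom_{\Xhat}(\mathcal{O}_C,E(-m_2C))=0$ is precisely what makes the chain degenerate. Your approach is longer but more concrete: it makes visible the torsion bound that Lemma~\ref{lemmstableprops}~\ref{lem:mstableprops3} implicitly rests on, at the cost of unwinding a lemma the paper cites in one line.
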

		\begin{proof}
			Since $\Hom(E,\mathcal{O}_C(-m_1-1))=0$, we have $\Hom(E,\mathcal{O}_C(-m-1))=0$, so $E(-mC)$ is perverse coherent. On the other hand, by Lemma \ref{lemmstableprops}, $m_2$-stability of $E$ implies that $p_*E(-mC)$ is Gieseker stable. Thus, $E(-mC)$ is stable perverse coherent. 
		\end{proof}
		\begin{remark}
			By definition, $m$-stability and being a stable perverse coherent sheaf are the same up to a twist. Write $\chat(-mC):=\widehat{c}\cdot \operatorname{ch}(\mathcal{O}_{\Xhat}(-mC))$. Then tensoring by $\mathcal{O}_{\Xhat}(-mC)$ gives isomorphisms
			\[\mathcal{M}^m_{\chat}\simeq \mathcal{M}^p_{\widehat{c}(-mC)} .\]
			In particular, $\mathcal{M}^0_{\chat}$  is just $\mathcal{M}^p_{\widehat{c}}$. 
		\end{remark}
		
		We may trivially restate Corollary \ref{corpullbackiso} that for $\chat= p^*c$, the moduli stack of oriented $0$-stable sheaves on $\Xhat$ agrees with the moduli space of Gieseker stable sheaves of Chern character $c$ on $X$.
		On the other hand, for any $\chat$, we have that for large $m$ the space $\mathcal{M}^m_{\chat}$ agrees with the space of Gieseker stable sheaves of Chern character $\chat$ on $\Xhat$ for a suitably chosen polarization.
		To make this precise, we use the following standard result (see for example \cite[Proposition 4.3]{Kose}). Recall that $H$ denotes our fixed polarization on $X$.  
		\begin{proposition}

			Let $\chat$ be a cohomology class on $\Xhat$.
			For any $\varepsilon \in \mathbb{Q}_{>0}$, let 	$L_{\epsilon}:=p^*H\otimes \mathcal{O}_{\Xhat}(-\varepsilon C)$, which is a $\mathbb{Q}$-line bundle on $\Xhat$.
			There exists a constant $\varepsilon_0>0$, such that the following hold:
			\begin{enumerate}
				\item For any $0<\epsilon<\epsilon_0$, the $\mathbb{Q}$-line bundle $H_{\varepsilon}$ is ample. 
				\item The conditions of Gieseker (semi-)stability defined by $H_{\varepsilon}$ on the collection of sheaves of Chern character $\chat$ is independent of $0<\varepsilon<\varepsilon_0$.  
			\end{enumerate}
			Moreover, if Gieseker stability and semistability coincide for sheaves of Chern character $p_*\chat$ on $X$ with respect to $H$, then the same is true for sheaves of Chern character $\chat$ on $\Xhat$ with respect to any of the $H_{\varepsilon}$ for $0<\varepsilon<\varepsilon_0 $.
		\end{proposition}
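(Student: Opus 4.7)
The plan is to prove the three claims in order, using standard intersection theory on blowups together with a wall-and-chamber analysis of Gieseker stability. For the first claim, I would verify ampleness of $L_\varepsilon$ via the Nakai--Moishezon criterion: using $p^*H \cdot C = 0$, $C^2 = -1$, and $(p^*H)^2 = H^2$, one has $L_\varepsilon^2 = H^2 - \varepsilon^2$ and $L_\varepsilon \cdot C = \varepsilon$. For any irreducible curve $D \neq C$ on $\Xhat$, $L_\varepsilon \cdot D = H \cdot p_*D - \varepsilon(C \cdot D)$ with $H \cdot p_*D \geq 1$ and $C \cdot D \geq 0$ controlled linearly by $H \cdot p_*D$ via the estimate $C \cdot D \leq \operatorname{mult}_q(p_*D)$; hence for $\varepsilon$ smaller than an absolute constant, $L_\varepsilon \cdot D > 0$ uniformly in $D$. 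Alternatively, one can invoke that $p^*H$ is big and nef while $-C$ is $p$-ample, so $p^*H - \varepsilon C$ is ample for $\varepsilon$ small by a standard relative ampleness criterion.

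For the second claim, the key input is that the family of $L_\varepsilon$-semistable sheaves on $\Xhat$ of Chern character $\chat$, as $\varepsilon$ ranges over a compact subinterval of $(0, \varepsilon_0)$, is bounded---a standard adaptation of Grothendieck's boundedness to a continuously varying polarization. Consequently, the Chern characters of saturated destabilizing subsheaves form a finite set. For each such character $c'$, the discrepancy $r(E) \chi(F \otimes L_\varepsilon^n) - r(F) \chi(E \otimes L_\varepsilon^n)$ is polynomial in both $n$ and $\varepsilon$, so the walls at which the sign of the leading nonvanishing coefficient in $n$ changes form a discrete subset of $\R_{>0}$. Choosing $\varepsilon_0$ below the smallest such wall forces Gieseker (semi-)stability to be constant on $(0, \varepsilon_0)$.

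For the third claim, suppose $E$ of Chern character $\chat$ is strictly $L_\varepsilon$-semistable for some $\varepsilon \in (0, \varepsilon_0)$ with saturated destabilizing subsheaf $F$. By the wall analysis, equality of reduced $L_\varepsilon$-Hilbert polynomials of $F$ and $E$ persists throughout $(0, \varepsilon_0)$, hence in the limit $\varepsilon \to 0$, yielding equality of reduced $p^*H$-Hilbert polynomials. Using $\chi(G \otimes p^*H^{\otimes n}) = \chi(Rp_*G \otimes H^{\otimes n})$ on $\Xhat$ together with admissibility of $\chat$, this converts into equality of reduced $H$-Hilbert polynomials of $p_*F$ and $p_*E$ on $X$, contradicting the hypothesis that $H$-stability equals $H$-semistability for sheaves of Chern character $p_*\chat$. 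The main obstacle is this last step: verifying $0 \neq p_*F \subsetneq p_*E$. A subsheaf with $p_*F = 0$ is set-theoretically concentrated on $C$ and hence torsion, so its Hilbert polynomial has degree $\leq 1$ and cannot strictly $L_\varepsilon$-destabilize the rank-$r$ sheaf $E$; strict containment $p_*F \subsetneq p_*E$ then follows from $F \subsetneq E$ by a rank comparison after pushforward.
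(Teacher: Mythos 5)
The paper does not prove this proposition itself; it is cited as a standard result (to Koseki, Proposition~4.3), so there is no authorial proof to compare against, and I can only assess your argument on its own terms. Your treatment of (1) via Nakai--Moishezon (or, more cleanly, via $p$-ampleness of $-C$) is fine, and the wall-and-chamber strategy for (2) is the right idea, though as written the boundedness step is circular --- you invoke boundedness on ``a compact subinterval of $(0,\varepsilon_0)$'' before $\varepsilon_0$ has been produced. You should instead fix the ampleness threshold from (1), establish boundedness uniformly on $(0,\varepsilon_1]$ for that $\varepsilon_1$ (this requires an argument that the slope bounds do not degenerate as $\varepsilon\to 0$, e.g.\ by comparing with $\mu_{p^*H}$-slopes of the pushforward), and only then locate the finitely many walls and take $\varepsilon_0$ below the smallest one.

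The genuine gap is in (3), at the step ``this converts into equality of reduced $H$-Hilbert polynomials of $p_*F$ and $p_*E$.'' The identity $\chi(G\otimes (p^*H)^n)=\chi(Rp_*G\otimes H^n)$ gives the reduced $H$-Hilbert polynomial of the \emph{complex} $Rp_*G$, which equals that of $p_*G$ only up to the constant $\dim R^1p_*G/\operatorname{rk}G$, since $R^1p_*G$ is a finite-length sheaf supported at $q$. These constants need not coincide for $F$ and $E$ (torsion-free saturated subsheaves can have nonvanishing $R^1p_*$ even when the ambient sheaf does not --- e.g.\ $\mathcal{O}_{\Xhat}(2C)$ has $R^1p_*\neq 0$). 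So the limiting argument establishes only equality of $H$-slopes $\mu_H(p_*F)=\mu_H(p_*E)$, which does not yet contradict ``$H$-Gieseker stability equals $H$-Gieseker semistability,'' as one could have strict inequality in the $\chi$-term of the reduced Hilbert polynomial. Admissibility of $\chat$ does not repair this, since it is a condition on $\chat$ and says nothing about $R^1p_*F$. To close the gap you would either need to show that, for $\varepsilon$ sufficiently small, any $L_\varepsilon$-semistable $E$ and its saturated $L_\varepsilon$-destabilizing subsheaves $F$ satisfy $R^1p_*E=R^1p_*F=0$ (plausible via a regularity bound, but it must be proved), or reroute the argument --- for instance, under the paper's running coprimality Assumption~\ref{assumpcoprime} one can work purely at the slope level (as in Remark~\ref{rempullbackstable}), where the $R^1p_*$ correction is invisible; but then the argument no longer establishes the proposition in the generality in which it is stated.
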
   
		When referring to Gieseker stable sheaves on $\Xhat$, it is always implicitly with respect to one of the line bundles $H_{\varepsilon}$ for sufficiently small $\varepsilon$.
		\begin{remark}\label{rempullbackstable}
			If $E$ is a sheaf on $\Xhat$, then $\lim_{\varepsilon\to 0} \deg_{L_{\varepsilon}}E =\deg_X p_*E$. Similarly, if $F$ is a sheaf on $X$, then $\lim_{\varepsilon\to 0} \deg_{L_{\epsilon}} p^*F = \deg_X F$. By our assumption on $r$ and $c_1$, it follows that a torsion-free sheaf on $\Xhat$ is Gieseker stable if and only if $p_*E$ is Gieseker stable, since a destabilizing subsheaf of one induces a destabilizing subsheaf of the other. 
		\end{remark}
		Nakajima and Yoshioka make the following observation:
		
		\begin{proposition}[cf. Proposition 3.37 in \cite{NaYo2}]
			For sufficiently large $m$ (depending on $\chat$), the conditions of $m$-stability and Gieseker stability for sheaves on $\Xhat$ of Chern character $\chat$ coincide, i.e. 
			\[\mathcal{M}^m_{\chat}= \mathcal{M}_{\Xhat, \chat}.\]
			In particular, $\mathcal{M}^m_{\chat}$ is independent of $m$ for $m$ large enough. 
		\end{proposition}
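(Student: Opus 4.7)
The argument reduces to verifying equality at the level of $\C$-valued points, since both stacks are open substacks of the moduli of oriented coherent sheaves on $\Xhat$ of Chern character $\chat$ (by Proposition \ref{propstabisopen} on the one hand, and standard openness of Gieseker stability on the other). I would then establish mutual inclusion for $m$ sufficiently large.

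For the inclusion $\mathcal{M}_{\Xhat,\chat}(\C)\subseteq \mathcal{M}^m_\chat(\C)$, let $E$ be Gieseker-stable and torsion-free of Chern character $\chat$. By Remark \ref{rempullbackstable}, $p_*E$ is Gieseker stable on $X$; since $p_*E$ and $p_*E(-mC)$ have the same rank and first Chern class (the $[C]$-component being killed by $p_*$) and slope-stability agrees with Gieseker stability under Assumption \ref{assumpcoprime}, $p_*E(-mC)$ is also Gieseker stable. It remains to verify perverse coherence of $E(-mC)$, equivalently $\Hom(E,\mathcal{O}_C(-m-1))=0$. Every such map factors through $E|_C$, which as a coherent sheaf on $C\simeq\mathbb{P}^1$ decomposes into line bundles plus possibly zero-dimensional torsion; only the line-bundle summands contribute to the Hom-group. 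Boundedness of $\mathcal{M}_{\Xhat,\chat}$ provides a uniform lower bound on the degrees of those summands, so for $m$ large enough no summand has degree $\leq -m-1$ and the Hom vanishes.

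For the converse $\mathcal{M}^m_\chat(\C)\subseteq\mathcal{M}_{\Xhat,\chat}(\C)$, suppose $E$ is $m$-stable. By Lemma \ref{lemmstableprops}\ref{lem:mstableprops3}, $p_*E$ is Gieseker stable; once we know $E$ itself is torsion-free, Remark \ref{rempullbackstable} yields Gieseker stability of $E$ on $\Xhat$. So suppose for contradiction that $T\subseteq E$ is a nonzero torsion subsheaf. Since $p_*T$ is torsion inside the torsion-free sheaf $p_*E$, we have $p_*T=0$ and $T$ is supported set-theoretically on $C$. Passing to the maximal $\mathcal{O}_C$-submodule $T'\subseteq T$: any zero-dimensional summand of $T'$ would push forward to nonzero torsion in $p_*E$, contradicting Gieseker stability, so $T'$ is locally free on $C$ and decomposes (by Grothendieck's theorem) as a direct sum of line bundles, yielding an embedding $\mathcal{O}_C(a)\hookrightarrow E$ for some $a\in\Z$. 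Composition with any nonzero map $\mathcal{O}_C(-m)\to \mathcal{O}_C(a)$, which exists whenever $a\geq -m$ since $H^0(\mathcal{O}_C(a+m))\neq 0$, would violate $\Hom(\mathcal{O}_C(-m),E)=0$ from Lemma \ref{lemmstableprops}; hence $a\leq -m-1$.

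The main obstacle is to rule out such very negative line-bundle subsheaves for $m$ large, uniformly in $E$. My plan is to show that the union $\bigcup_{m\geq 0}\mathcal{M}^m_\chat(\C)$ is a bounded family of coherent sheaves of Chern character $\chat$ on $\Xhat$: every such $E$ satisfies $p_*E\in\mathcal{M}_{X,p_*\chat}(\C)$, which is bounded, and via the Quot-scheme description of Proposition \ref{propisrelquot} (applied after the twist $\mathcal{M}^m_\chat\simeq\mathcal{M}^0_{\chat(-mC)}$) the sheaf $E$ is a controlled modification of the pullback $p^*p_*E(-mC)$ by a torsion sheaf of the form $\mathcal{O}_C(-m-1)^{\oplus N_m}$. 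Combined with an argument in the style of Lemma \ref{corkbound}, following \cite[Proposition 3.37]{NaYo2}, this yields a uniform lower bound $a\geq -N_0$ independent of $m$, contradicting $a\leq -m-1$ for $m\geq N_0$ and completing the verification that $\mathcal{M}^m_\chat=\mathcal{M}_{\Xhat,\chat}$ for all sufficiently large $m$.
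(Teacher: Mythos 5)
Your argument for the inclusion $\mathcal{M}_{\Xhat,\chat}\subseteq\mathcal{M}^m_{\chat}$ is sound and essentially parallel to the paper's: the paper invokes relative ampleness of $\mathcal{O}_{\Xhat}(-C)$ plus boundedness of the Gieseker moduli to get surjectivity of $p^*p_*E(-mC)\to E(-mC)$ and vanishing of $R^1p_*$, while you get the equivalent condition $\Hom(E,\mathcal{O}_C(-m-1))=0$ by bounding degrees of line-bundle summands of $E|_C$ over the bounded family; these are two faces of the same boundedness argument. The stability of $p_*E(-mC)$ is handled correctly via slope-stability under Assumption \ref{assumpcoprime}.

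The gap is in the converse inclusion, and it is precisely the point the paper flags as ``the main point''. You correctly reduce to showing that an $m$-stable sheaf is torsion-free for $m\gg 0$, and correctly reduce further to ruling out a subsheaf $\mathcal{O}_C(a)\hookrightarrow E$ with $a\leq -m-1$. But your plan for closing this -- showing that $\bigcup_{m\geq 0}\mathcal{M}^m_{\chat}(\C)$ is bounded and hence obtaining a uniform lower bound $a\geq -N_0$ -- is both unproven and circular in the form sketched: the boundedness of $\bigcup_m\mathcal{M}^m_{\chat}$ is essentially \emph{equivalent} to the statement being proved (once the union stabilizes, it is $\mathcal{M}_{\Xhat,\chat}$ and is obviously bounded; before that you have no uniform control). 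The Quot-scheme description you cite, $0\to\mathcal{O}_C(-m-1)^{\oplus j_m}\to p^*\bigl(p_*(E(-mC))\bigr)(mC)\to E\to 0$, does not give boundedness directly: as $m$ grows both the kernel gets more negative and the middle term $p_*(E(-mC))$ ranges over Gieseker-stable sheaves of \emph{varying} second Chern class (growing like $rm^2/2$), so the collection of middle terms is not bounded; Lemma \ref{corkbound} bounds the $j$ in a fixed $\mathcal{M}^0_{\chat-je}$ and does not apply across the twist family $\chat(-mC)$.

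The ingredient the paper uses, which you are missing, is the stabilization result (Proposition~3.16 of Nakajima--Yoshioka): there is an $m_0$ with $\mathcal{M}^m_{\chat}=\mathcal{M}^{m_0}_{\chat}$ for all $m\geq m_0$. This converts ``$E$ is $m_0$-stable'' into ``$E$ is $m$-stable for \emph{all} $m\geq m_0$'', and then the torsion subsheaf $T$ satisfies $p_*T(-mC)=0$ for all $m\geq m_0$; since $\mathcal{O}_{\Xhat}(-C)$ is $p$-ample, $p^*p_*T(-mC)\to T(-mC)$ is surjective for $m\gg 0$, forcing $T=0$. Without a version of this stabilization, the vanishing $\Hom(\mathcal{O}_C(-m),E)=0$ is available for a single $m$ only, which is not enough. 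To repair your proof you should either invoke the stabilization result explicitly or find an independent argument for it; the boundedness claim you propose would follow from it, not precede it.
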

		
		\begin{proof}
			For convenience, we outline the proof in \cite{NaYo2}. 
			One can show directly that for sufficiently large $m$ one has $\mathcal{M}_{\Xhat, \chat}\subset \mathcal{M}^m_{\chat}$. In fact, since the class of Gieseker stable sheaves of Chern character $\chat$ is bounded and $\mathcal{O}_X(-C)$ is relatively ample for $p$, we find that for $m\gg 0$ and any $E\in \mathcal{M}_{\Xhat, \chat}$ we have $R^1p_*E(-mC)=0$ and that $p^*p_*E(-mC)\to E(-mC)$ is surjective. In particular, $E(-mC)$ is perverse coherent. By Remark \ref{rempullbackstable}, we also obtain that $p_*E$ is stable.
			
			For the converse direction, we can again use Remark \ref{rempullbackstable} to see that any $m$-stable sheaf on $\Xhat$ is Gieseker stable \emph{provided it is torsion-free}. The main point here is to show that all $m$-stable sheaves are torsion-free once $m$ is sufficiently large. Nakajima and Yoshioka first establish that there is some $m_0>0$ such that $\mathcal{M}_{\chat}^m=\mathcal{M}_{\chat}^{m_0}$ for all $m\geq m_0$ (cf. Proposition 3.16 in \cite{NaYo2}). Then all $m_0$ -stable sheaves are in fact torsion-free: 
			To see this, suppose $E$ is $m_0$-stable, and let $T\subset E$ be the torsion subsheaf. Then for any $m\geq m_0$ we have $\Hom(\mathcal{O}_C, T(-mC))\subseteq\Hom(\mathcal{O}_C,E(-mC))=0$, as $E$ is $m$-stable. By adjunction, it follows that $\Hom(\mathcal{O}_q,p_*T(-mC))=0$ which implies $p_*T(-mC)=0$. On the other hand, $\mathcal{O}_{\Xhat}(-C)$ is ample relative to $p$, so $p^*p_*T(-mC)\to T(-mC)$ is surjective for $m\gg 0$. This shows that $T=0$.
		\end{proof}
		\subsection{Moduli spaces of coherent systems}
		\label{subsec:cohsys}
		Let $\chat = p^*c+j\chexc$ be an admissible Chern character on $\Xhat$ for some $j\geq 0$. We will see in this section, that the moduli space $\mathcal{M}^1_{\chat}$ can be related to the space $\mathcal{M}^1_{p^*c}$. This will be an essential tool in computing integrals over moduli spaces of sheaves on $\Xhat$ with first Chern class not pulled back from $X$.

		Let $\mathcal{Q}(\chat, j)$ be the relative Quot-scheme over $\mathcal{M}^1_{\chat}$ parametrizing quotients of $\mathcal{E}$ with Chern character $\ch(\mathcal{O}_C(-1)^{\oplus j})$. We will show that the kernel of the universal quotient on $\mathcal{Q}(\chat,j)$ is in fact $1$-stable, which gives a map $\mathcal{Q}(\chat,j)\to \mathcal{M}^1_{p^*c}$. 
		
		The following lemma is analogous to Corollary \ref{corquotispervstable}:
		\begin{lemma}\label{lem:exseqquot}
			Let $E$ be a $1$-stable sheaf on $\Xhat$ with an exact sequence 
			\[0\to F\to E\to K \to 0.\]
			Suppose that $\ch(K)=j\ch(\mathcal{O})$. Then $K\simeq \mathcal{O}_C(-1)^{\oplus j}$ and $F$ is $1$-stable. 
		\end{lemma}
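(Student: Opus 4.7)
The plan is to identify $K$ explicitly using the perverse-coherent structure on $E(-C)$, and then deduce $1$-stability of $F$ by twisting the sequence by $-C$ and reducing everything to properties of $0$-stable (i.e.\ stable perverse coherent) sheaves.

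First I would check that $R^1 p_* E = 0$. Since $E$ is $1$-stable by hypothesis, $E(-C)$ is stable perverse coherent, and Proposition \ref{propdefpervcoh}\ref{propdefpervcoh5} applied to $E(-C)$ gives $R^1 p_*\bigl(E(-C)\otimes\mathcal{O}_{\Xhat}(C)\bigr) = R^1 p_* E = 0$. With this in hand and the hypothesis $\ch(K) = j\,\ch(\mathcal{O}_C(-1))$, case \ref{lemexcchercarisexc2} of Lemma \ref{lemexcchercarisexc} applies directly and forces $K \simeq \mathcal{O}_C(-1)^{\oplus j}$.

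Now tensor the short exact sequence with $\mathcal{O}_{\Xhat}(-C)$. Using the projection formula together with the normal bundle identification $\mathcal{O}_{\Xhat}(-C)|_C \simeq \mathcal{O}_C(1)$ gives $K\otimes \mathcal{O}_{\Xhat}(-C)\simeq \mathcal{O}_C^{\oplus j}$, so the twisted sequence reads
\[
0 \to F(-C) \to E(-C) \to \mathcal{O}_C^{\oplus j} \to 0.
\]
To show $F(-C)$ is perverse coherent, it suffices by Proposition \ref{propdefpervcoh}\ref{propdefpervcoh2} to check $\Hom(F(-C), \mathcal{O}_C(-1)) = 0$. Applying $\Hom(-,\mathcal{O}_C(-1))$ to the displayed sequence yields the exact fragment
\[
\Hom(E(-C), \mathcal{O}_C(-1)) \to \Hom(F(-C), \mathcal{O}_C(-1)) \to \Ext^1(\mathcal{O}_C, \mathcal{O}_C(-1))^{\oplus j},
\]
whose outer terms vanish: the left one by the perverse-coherent property of $E(-C)$, the right one by Lemma \ref{lemvanishing}\ref{lemvanii}.

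For Gieseker stability of $p_*F(-C)$, push the twisted sequence forward along $p$. Since $p_*\mathcal{O}_C^{\oplus j} = \mathcal{O}_q^{\oplus j}$ is zero-dimensional, the resulting inclusion $p_*F(-C) \hookrightarrow p_*E(-C)$ realizes $p_*F(-C)$ as a subsheaf of the same rank and the same first Chern class as the stable sheaf $p_*E(-C)$. Any proper subsheaf $G \subset p_*F(-C)$ is then a proper subsheaf of $p_*E(-C)$, so the $\mu$-stability of the latter (which by Assumption \ref{assumpcoprime} is equivalent to its Gieseker stability, and is in turn inherited by $p_*F(-C)$) yields $\mu(G) < \mu(p_*F(-C))$. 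Thus $F(-C)$ is $0$-stable, i.e.\ $F$ is $1$-stable. The one point requiring care is the consistent bookkeeping of the twist and of the identification $\mathcal{O}_C(-1)\otimes \mathcal{O}_{\Xhat}(-C) \simeq \mathcal{O}_C$; all the required vanishings are already packaged in Proposition \ref{propdefpervcoh} and Lemma \ref{lemvanishing}.
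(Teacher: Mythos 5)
Your proof is correct and follows essentially the same route as the paper's: $R^1p_*E=0$ obtained from Proposition \ref{propdefpervcoh} \ref{propdefpervcoh5} applied to $E(-C)$, Lemma \ref{lemexcchercarisexc} to identify $K\simeq\mathcal{O}_C(-1)^{\oplus j}$, and then the twisted sequence together with the vanishing $\Ext^1_{\Xhat}(\mathcal{O}_C,\mathcal{O}_C(-1))=0$ of Lemma \ref{lemvanishing} plus the codimension-two comparison of $p_*F(-C)$ with the stable sheaf $p_*E(-C)$ to conclude that $F$ is $1$-stable. The only difference is cosmetic: you spell out the identification $K\otimes\mathcal{O}_{\Xhat}(-C)\simeq\mathcal{O}_C^{\oplus j}$, which the paper leaves implicit.
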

		\begin{proof}
			By Proposition \ref{propdefpervcoh} \ref{propdefpervcoh5}, any $1$-stable sheaf $E$ satisfies $R^1p_*E=0$, and thus by Lemma \ref{lemexcchercarisexc}, the sheaf $K$ must be isomorphic to $\mathcal{O}_C(-1)^{\oplus j}$. We examine the sheaf $p_*F(-C)$. The sheaf $E(-C)$ is $0$-stable by assumption. Thus, $p_*E(-C)$ is slope-stable and torsion-free. Since $p_*F(-C)\subset p^*E(-C)$ is an isomorphism outside codimension two, the sheaf $p_*F(-C)$ inherits both of these properties. Therefore, $p_*F(-C)$ is even Gieseker stable. We have the following exact sequence:
			\[\Hom_{\Xhat}(E(-C),\mathcal{O}_C(-1))\to \Hom_{\Xhat}(F(-C),\mathcal{O}_C(-1)) \to \Ext^1_{\Xhat}(\mathcal{O}_C,\mathcal{O}_C(-1)).\]
			Here the first term is zero by $1$-stability of $E$, and the last term is zero by Lemma \ref{lemvanishing}, therefore the middle term is zero. This shows that $F(-C)$ is stable perverse coherent, therefore $F$ is $1$-stable.  
		\end{proof}

		 We have the following result, which is a version of Lemma 3.29 in \cite{NaYo2} and the discussion preceding it.

		\begin{proposition}\label{prop:cohsystem}
			\begin{enumerate}[label=\arabic*)]
				\item The stack $\mathcal{Q}(\chat, j)$ is naturally a moduli space for tuples $(\underline{\mathcal{E}},or_{\mathcal{F}},or_{\mathcal{E}})$, where $\underline{\mathcal{E}}$ is a short exact sequence $0\to \mathcal{F}\to \mathcal{E}\to \mathcal{O}_C(-1)\otimes \mathcal{V}\to 0$, such that $\mathcal{F}$ is an object of $\mathcal{M}^1_{p^*c}$, $\mathcal{E}$ is an object of $\mathcal{M}^1_{\chat}$ and such that $\mathcal{V}$ is a vector bundle on $\mathcal{Q}(\chat,j)$; and where $or_{\mathcal{F}}$ and  $or_{\mathcal{E}}$ are compatible orientations on $\mathcal{F}$ and $\mathcal{E}$ respectively. In particular, we have a diagram of forgetful maps 
				\begin{equation*}
					\begin{tikzcd}
						& \mathcal{Q}(\chat,j)\ar[dr,"q_2"]\ar[dl,"q_1"']& \\
						\mathcal{M}^1_{p^*c+j\chexc}& &\mathcal{M}^1_{p^*c} 
					\end{tikzcd}
				\end{equation*}
				\label{prop:cohsystem1}
				\item The map $q_1$ is canonically identified with the structure map of the relative Quot-scheme $\mathcal{Q}(\chat,j)\to \mathcal{M}_{\chat}$ under the given identification.\label{prop:cohsystem2}
				\item  The forgetful map $\mathcal{Q}(\chat,j)\to\mathcal{M}^1_{p^*c}$ identifies $\mathcal{Q}(\chat,j)$ with the relative Grassmann bundle $Gr(j, \Sheafext^1_{\Xhat}(\mathcal{O}_C(-1),\mathcal{F}))$. This identification also identifies $\mathcal{V}$ with the universal rank $j$ subbundle of the rank $r$ bundle \linebreak $q_2^*\Sheafext^1_{\Xhat}(\mathcal{O}_C(-1),\mathcal{F})$. \label{prop:cohsystem3}
			\end{enumerate}
		\end{proposition}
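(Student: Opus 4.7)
The plan is to treat the three assertions sequentially, parallel to the proof of Proposition \ref{propisrelquot}, with Lemma \ref{lem:exseqquot} providing the key pointwise input. For \ref{prop:cohsystem1}, I begin with a tautological $T$-point of $\mathcal{Q}(\chat,j)$: an oriented $1$-stable sheaf $\mathcal{E}$ on $\Xhat\times T$ of Chern character $\chat$ together with a $T$-flat quotient $\mathcal{E}\to\mathcal{K}$ of Chern character $j\,\ch(\mathcal{O}_C(-1))$. Lemma \ref{lem:exseqquot} identifies each fiber $\mathcal{K}_t$ with $\mathcal{O}_C(-1)^{\oplus j}$ and each fiber of the kernel $\mathcal{F}$ as $1$-stable. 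To upgrade this fiberwise picture to a global isomorphism $\mathcal{K}\simeq\pi_{\Xhat}^*\mathcal{O}_C(-1)\otimes\pi_T^*\mathcal{V}$, I apply Nakayama fiberwise to see that $\mathcal{K}$ is scheme-theoretically annihilated by $\mathcal{I}_{C\times T}$, then set $\mathcal{V}:=\pi_{T*}(\mathcal{K}\otimes\pi_{\Xhat}^*\mathcal{O}_C(1))$ and invoke cohomology and base change. Orientations are handled as in the proof of Proposition \ref{propisrelquot}: using the two-term resolution $0\to\mathcal{O}_{\Xhat}\to\mathcal{O}_{\Xhat}(C)\to\mathcal{O}_C(-1)\to 0$ one computes $\det\mathcal{K}\simeq\pi_{\Xhat}^*\mathcal{O}_{\Xhat}(jC)\otimes\pi_T^*\det\mathcal{V}$, and the chosen isomorphism $J_j^*\Poin_{\Xhat}\simeq p^*\Poin_X\otimes\mathcal{O}_{\Xhat}(jC)$ transports an orientation on $\mathcal{E}$ to a compatible one on $\mathcal{F}$ and vice versa.

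Part \ref{prop:cohsystem2} is then essentially tautological: under the identification of part \ref{prop:cohsystem1}, the map $q_1$ sending a tuple to its middle term $\mathcal{E}$ is by construction the structure map of the relative Quot-scheme. For part \ref{prop:cohsystem3}, the first task is to prove that $\mathcal{G}:=\Sheafext^1_{\Xhat}(\mathcal{O}_C(-1),\mathcal{F})$ is a vector bundle of rank $r$ on $\mathcal{M}^1_{p^*c}$. Fiberwise I check $\Ext^0(\mathcal{O}_C(-1),F)=\Ext^2(\mathcal{O}_C(-1),F)=0$ for any $F\in\mathcal{M}^1_{p^*c}(\C)$: the first vanishing uses torsion-freeness of $F$ from Lemma \ref{lemmstableprops}, and the second follows from Serre duality together with the identification $\mathcal{O}_C(-1)\otimes\omega_{\Xhat}\simeq\mathcal{O}_C(-2)$, reducing to $\Hom_{\Xhat}(F,\mathcal{O}_C(-2))=0$, which is exactly the perverse coherence of $F(-C)$. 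A Riemann--Roch computation with $\ch(\mathcal{O}_C(-1))=[C]-\tfrac{1}{2}[pt]$ and $\ch(F)=p^*c$ then yields $\dim\Ext^1(\mathcal{O}_C(-1),F)=r$, so that cohomology and base change globalizes this to present $\mathcal{G}$ as a rank $r$ vector bundle on $\mathcal{M}^1_{p^*c}$.

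With $\mathcal{G}$ in hand, I construct mutually inverse morphisms between $\mathcal{Q}(\chat,j)$ and the relative Grassmannian $Gr(j,\mathcal{G})$. From $\mathcal{Q}(\chat,j)$, the extension class of the universal sequence $0\to\mathcal{F}\to\mathcal{E}\to\mathcal{O}_C(-1)\otimes\mathcal{V}\to 0$ gives a map $\mathcal{V}\to\mathcal{G}$. Applying $\Hom(\mathcal{O}_C(-1),-)$ to this sequence and using the vanishing of $\Hom(\mathcal{O}_C(-1),\mathcal{F}_t)$, one sees that injectivity of this map at $t\in T$ is equivalent to $\Hom(\mathcal{O}_C(-1),\mathcal{E}_t)=0$, which follows from $1$-stability of $\mathcal{E}_t$ since $p_*\mathcal{E}_t(-C)$ is torsion-free. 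This yields the desired morphism to $Gr(j,\mathcal{G})$ over $\mathcal{M}^1_{p^*c}$, with the identification of $\mathcal{V}$ and the tautological subbundle built in. Conversely, over $Gr(j,\mathcal{G})$ the tautological inclusion $\mathcal{V}\hookrightarrow q_2^*\mathcal{G}$ classifies an extension $0\to q_2^*\mathcal{F}\to\mathcal{E}\to\mathcal{O}_C(-1)\otimes\mathcal{V}\to 0$; I show $\mathcal{E}$ is $1$-stable by reversing the above argument (injectivity yields $\Hom(\mathcal{O}_C(-1),\mathcal{E})=0$, hence $p_*\mathcal{E}(-C)$ is torsion-free) and combining the long exact sequence $0\to p_*\mathcal{F}(-C)\to p_*\mathcal{E}(-C)\to\mathcal{O}_q\otimes\mathcal{V}\to 0$ with Gieseker stability of $p_*\mathcal{F}(-C)$ to conclude Gieseker stability of $p_*\mathcal{E}(-C)$. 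I expect the main technical hurdle to be this last stability equivalence in families: carefully tracing that subbundle-ness of $\mathcal{V}\subset\mathcal{G}$ corresponds exactly to $1$-stability of $\mathcal{E}$ over all of $T$, and verifying that the mutual inversion cleanly respects orientations.
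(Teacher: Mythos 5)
Your proposal takes essentially the same route as the paper, with two differences worth flagging.

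First, a gap in the inverse direction of part (3). You write that ``the tautological inclusion $\mathcal{V}\hookrightarrow q_2^*\mathcal{G}$ classifies an extension''. Strictly, that inclusion hands you a global section of the relative sheaf $\Sheafext^1_{\Xhat}(\mathcal{O}_C(-1)\otimes\mathcal{V},\mathcal{F})$ over the base, not directly an element of the absolute group $\Ext^1_{\Xhat\times T}(\mathcal{O}_C(-1)\otimes\mathcal{V}_T,\mathcal{F}_T)$. One needs to know the natural map $\Ext^1_{\Xhat\times T}\to H^0(T,\Sheafext^1_{\Xhat})$ is surjective before one can build the sheaf $\mathcal{E}$. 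The paper achieves this with the Grothendieck spectral sequence for $\Gamma(T,-)\circ\Sheafhom_{\Xhat}(-,-)$, using the universal vanishing $\Sheafhom_{\Xhat}(\mathcal{O}_C(-1),\mathcal{F})=0$ (the term $H^2(T,\Sheafhom_{\Xhat})$ blocks surjectivity otherwise). Since you have already checked $\Hom(\mathcal{O}_C(-1),\mathcal{F}_t)=0$ fiberwise, the ingredient is at hand, but the step needs to be said.

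Second, a genuine but minor improvement: where the paper invokes \cite[Lemmas 3.9(4), 3.13(4)]{NaYo2} for fiberwise $1$-stability of the constructed extension, you give a direct argument via $\Hom(\mathcal{O}_C(-1),\mathcal{E})=0$ and the pushforward sequence $0\to p_*\mathcal{F}(-C)\to p_*\mathcal{E}(-C)\to\mathcal{O}_q\otimes\mathcal{V}\to 0$, which is self-contained and nice. To finish it, though, you also need the perverse coherence half of $1$-stability, i.e.\ $\Hom(\mathcal{E},\mathcal{O}_C(-2))=0$; this follows from applying $\Hom(-,\mathcal{O}_C(-2))$ to the defining short exact sequence, using $\Hom(\mathcal{O}_C(-1),\mathcal{O}_C(-2))=0$ from Lemma \ref{lemvanishing} and $\Hom(\mathcal{F},\mathcal{O}_C(-2))=0$ from $1$-stability of $\mathcal{F}$, but your sketch checks only stability and torsion-freeness of $p_*\mathcal{E}(-C)$. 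Your construction of $\mathcal{V}$ by pushforward in part (1) is a fine alternative to the paper's appeal to rigidity of $\mathcal{O}_C(-1)$ and amounts to the same thing.
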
 
		\begin{proof}
			It follows from Lemma \ref{lem:exseqquot} and the rigidity of $\mathcal{O}_C(-1)$ that the data of a quotient $\mathcal{E}\to \mathcal{K}$, where $\mathcal{E}$ is an object of $\mathcal{M}^1_{\chat}$ and $\mathcal{K}$ is a flat family of sheaves on $\Xhat$ with Chern character $j\ch(\mathcal{O}_C(-1))$ is equivalent to the data of an exact sequence of the type given in \ref{prop:cohsystem1}. We also note that an orientation on $\mathcal{E}$ induces a unique compatible orientation on $\mathcal{F}$. This implies \ref{prop:cohsystem1} and \ref{prop:cohsystem2} is clear.  
	
			We demonstrate \ref{prop:cohsystem3}. Let $\mathcal{F}$ be an object of $\mathcal{M}^1_{p^*c}(T)$ for a base scheme $T$. By definition of $1$-stability, we have the fiberwise $\Hom(\mathcal{O}_C(-1), \mathcal{F}_t)=0$ and $\Ext^2(\mathcal{O}_C(-1), \mathcal{F}_t)= \Hom(\mathcal{F}_t,\mathcal{O}_C(-2))^{\vee}=0$. It follows that $\Ext^1_{\Xhat}(\mathcal{O}_C(-1), \mathcal{F})$ is a vector bundle -- and using Grothendieck--Riemann--Roch one can show its rank is $r=\operatorname{rk} \mathcal{F}$. Now let $\mathcal{V}_T\subset\Sheafext^1_{\Xhat}(\mathcal{O}_C(-1),\mathcal{F})$ be a rank $j$ subbundle, which induces a section $\mathcal{O}_T\to \Sheafext^1_{\Xhat}(\mathcal{O}_C(-1)\otimes \mathcal{V}_T,\mathcal{F})$. The Grothendieck spectral sequence for the composition of functors $\Hom_{\Xhat\times T}(-,-)=\Gamma(T,-)\circ \Sheafhom_{\Xhat}(-,-)$ gives the exact sequence of low degrees
			\begin{gather*}
				0\to H^1(T,\Sheafhom_{\Xhat}(-,-))\to \Ext^1_{\Xhat\times T}(-,-) \qquad\qquad \qquad\\ \qquad\qquad \qquad\to H^0(T,\Sheafext^1_{\Xhat}(-,-))\to H^2(T,\Sheafhom_{\Xhat}(-,-)).
			\end{gather*}
			Since $\Sheafhom_{\Xhat}(\mathcal{O}_C(-1),\mathcal{F})$ is universally zero, we obtain an isomorphism
			\[\Ext^1_{\Xhat\times T}(\mathcal{V}_T\otimes \mathcal{O}_C(-1),\mathcal{F}) \to H^0(T,\Sheafext^1_{\Xhat}(\mathcal{V}_T\otimes \mathcal{O}_C(-1),\mathcal{F})).\]
			In particular, the given section of $\Sheafext^1_{\Xhat}(\mathcal{V}_T\otimes \mathcal{O}_C(-1),\mathcal{F})$ corresponds to a canonical extension 
			\[0\to\mathcal{F}\to \mathcal{E} \to \pi_{T}^*\mathcal{V}_T\otimes \mathcal{O}_C(-1)\to  0.\]
			In particular, the sheaf $\mathcal{E}$ constructed in this way is still $T$-flat. It is fiberwise $1$-stable, which follows from  \cite[Lemmas 3.9 (4) and 3.13 (4)]{NaYo2}. Moreover, the orientation on $\mathcal{F}$ defines uniquely a compatible orientation on $\mathcal{E}$. This shows that we get a natural map $Gr(j,\Sheafext(\mathcal{O}_C(-1), \mathcal{F}))\to \mathcal{Q}(\chat,j)$. The construction we made can easily be checked to be invertible, which shows that this map is an equivalence. 
		\end{proof}

		One can give another characterization of $\mathcal{Q}(\chat,j)$ as a moduli space of \emph{coherent systems}, i.e. of families of pairs $(E,V)$, where $E$ is a $1$-stable perverse coherent sheaf and $V\subset \Hom(E,\mathcal{O}_C(-1))$ is a subspace, see e.g. Definition 3.26 in \cite{NaYo2}. We now make this precise.
		\begin{construction}
			Choose a  resolution of the universal sheaf $\mathcal{E}$ on $\Xhat\times \mathcal{M}^1_{\chat}$ by a two-term complex of vector bundles $V_1\to V_0$, where $V_0$ is chosen sufficiently negative in the sense that $\Sheafext^i_{\Xhat}(V_0\mid_{x},\mathcal{O}_C(-1))=0$ for all $\C$-valued points $x$ of $\mathcal{M}^1_{\chat}$ and $i\geq 0$. Then $R\Sheafhom_{\Xhat}(\mathcal{E}, \mathcal{O}_C(-1))$ is represented by the two-term complex $W_0\to W_1$, where $W_i=\Sheafhom_{\Xhat}(V_i,\mathcal{O}_C(-1))$ (this uses that $\Sheafext^2_{\Xhat}(\mathcal{E},\mathcal{O}_C(-1))$ vanishes universally). In particular, for any $T\to \mathcal{M}^1_{\chat}$, we have an exact sequence 
			\[0\to \Sheafhom_{\Xhat}(\mathcal{E}_T,\mathcal{O}_C(-1)) \to W_{0,T} \to W_{1,T} . \]

			Consider the relative Grassmannian $\pi_G:Gr(j, W_0)\to \mathcal{M}^1_{\chat}$ and let $\mathcal{S}\subset \pi_G^*W_0$ denote the universal subbundle. We define $\operatorname{Coh}( \mathcal{E},\mathcal{O}_C(-1),j)$ as the vanishing locus of the morphism $\mathcal{S}\to \pi_G^*W_0\to \pi_G^*W_1$. By construction, the restriction of $\mathcal{S}$ to $\operatorname{Coh}( \mathcal{E},\mathcal{O}_C(-1),j)$ factors through $\Sheafhom_{\Xhat}(\mathcal{E}_{\operatorname{Coh}( \mathcal{E},\mathcal{O}_C(-1),j)},\mathcal{O}_C(-1))$. \end{construction}
		
		\begin{remark}\label{rem:cohsysbch}
			One can check that the stack $\operatorname{Coh}(\mathcal{E},\mathcal{O}_C(-1),j)$ parametrizes pairs $(\mathcal{E}_T, \mathcal{S}_T)$, where $\mathcal{E}_T$ is an object of $\mathcal{M}^1_{\chat}(T)$ and $\mathcal{S}_T\hookrightarrow \Sheafhom_{\Xhat}(\mathcal{E}_T,\mathcal{O}_C(-1))$ is a rank $j$ locally free subsheaf with the property that it remains a subsheaf after any base change, i.e. for any $T'\to T$, the composition 
			\[\mathcal{S}_{T'}\to \Sheafhom_{\Xhat}(\mathcal{E}_T,\mathcal{O}_C(-1))\mid_{T'}\to  \Sheafhom_{\Xhat}(\mathcal{E}_{T'},\mathcal{O}_C(-1))\]
			remains injective. In particular, this stack is independent of the choice of resolution $V_1\to V_0$. 
		\end{remark}
		
		\begin{proposition}\label{prop:qiscohsys}
			There is a canonical isomorphism $\mathcal{Q}(\chat,j)\simeq \operatorname{Coh}(j, \mathcal{E},\mathcal{O}_C(-1))$ over $\mathcal{M}_{\chat}^1$. This isomorphism gives an identification $\mathcal{S}\simeq \mathcal{V}^{\vee}$, where $\mathcal{S}$ is the universal rank $j$ subsheaf of $\Sheafhom_{\Xhat}(\mathcal{E},\mathcal{O}_C(-1))$ and $\mathcal{V}$ is as in Proposition \ref{prop:cohsystem}. 
		\end{proposition}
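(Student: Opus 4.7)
The plan is to exhibit natural transformations in both directions between $\mathcal{Q}(\chat,j)$ and $\operatorname{Coh}(j,\mathcal{E},\mathcal{O}_C(-1))$ over $\mathcal{M}^1_{\chat}$ and verify they are mutually inverse, with the identification $\mathcal{S} \simeq \mathcal{V}^\vee$ built into each construction. Orientations propagate automatically by the same uniqueness argument used in Proposition \ref{propisrelquot}, so the content of the proof concerns the underlying sheaf-theoretic data.

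For the forward direction I would start with a $T$-valued point of $\mathcal{Q}(\chat,j)$, namely a short exact sequence $0 \to \mathcal{F} \to \mathcal{E}_T \to \pi_T^*\mathcal{V} \otimes \mathcal{O}_C(-1) \to 0$, and apply $R\pi_{T*}R\Sheafhom_{\Xhat \times T}(-,\mathcal{O}_C(-1))$. The projection formula combined with $R\pi_*\mathcal{O}_C = \mathcal{O}_T$ and $R\pi_*\mathcal{O}_C(-1) = 0$ yields a canonical identification $R\pi_{T*}R\Sheafhom(\pi_T^*\mathcal{V}\otimes \mathcal{O}_C(-1), \mathcal{O}_C(-1)) \simeq \mathcal{V}^\vee$, concentrated in degree zero. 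The resulting injection $\mathcal{V}^\vee \hookrightarrow \pi_{T*}\Sheafhom(\mathcal{E}_T, \mathcal{O}_C(-1))$, composed with the natural inclusion into $W_{0,T}$, defines a rank $j$ subbundle whose composite with $W_{0,T} \to W_{1,T}$ vanishes; this is the desired $T$-point of $\operatorname{Coh}$ with $\mathcal{S} = \mathcal{V}^\vee$.

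Conversely, given a $T$-point of $\operatorname{Coh}$, i.e.\ $\mathcal{S}_T \hookrightarrow \pi_{T*}\Sheafhom(\mathcal{E}_T,\mathcal{O}_C(-1))$ as in Remark \ref{rem:cohsysbch}, the combination of push-pull and tensor-hom adjunctions yields a canonical morphism $\varphi \colon \mathcal{E}_T \to \mathcal{S}_T^\vee \otimes \mathcal{O}_C(-1)$. The main obstacle is to show $\varphi$ is surjective. Since source and target are both $T$-flat, it suffices to verify this fiberwise, so fix $x \in T$ and suppose for contradiction that $\operatorname{coker}(\varphi_x)$ is nonzero. Choose $q$ in its support (necessarily a subset of $C$) and apply Nakayama's lemma to produce a surjection $\operatorname{coker}(\varphi_x) \twoheadrightarrow \mathcal{O}_q$; composing gives $\psi \colon S_x^\vee \otimes \mathcal{O}_C(-1) \twoheadrightarrow \mathcal{O}_q$ with $\psi \circ \varphi_x = 0$. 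Under the natural identification $\Hom(S_x^\vee \otimes \mathcal{O}_C(-1), \mathcal{O}_q) \simeq S_x \otimes (\mathcal{O}_C(-1)|_q)^\vee$, the map $\psi$ is a nonzero element of the form $s \otimes \ell$, and the condition $\psi \circ \varphi_x = 0$ forces $s \colon E_x \to \mathcal{O}_C(-1)$ to factor through the subsheaf vanishing at $q$, namely $\mathcal{O}_C(-2) \hookrightarrow \mathcal{O}_C(-1)$. This would yield a nonzero map $E_x \to \mathcal{O}_C(-2)$, contradicting Lemma \ref{lemmstableprops} applied to the $1$-stable sheaf $E_x$. Hence $\varphi$ is surjective, and Lemma \ref{lem:exseqquot} identifies $\ker \varphi$ as a $T$-flat family in $\mathcal{M}^1_{p^*c}(T)$, producing the sought short exact sequence with $\mathcal{V} = \mathcal{S}_T^\vee$.

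Finally, that the two constructions are mutually inverse follows from the naturality of the adjunctions employed: dualising an extension via $R\pi_*R\Sheafhom$ and then reconstructing via tensor-hom adjunction returns the original sequence up to canonical isomorphism, and the opposite composition is analogous. This yields the equivalence $\mathcal{Q}(\chat,j) \simeq \operatorname{Coh}(j,\mathcal{E},\mathcal{O}_C(-1))$ together with the identification $\mathcal{S} \simeq \mathcal{V}^\vee$.
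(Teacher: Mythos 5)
Your proposal is correct and follows essentially the same strategy as the paper: identify a $T$-point of $\mathcal{Q}(\chat,j)$ with a map $F\colon\mathcal{V}_T^{\vee}\to\Sheafhom_{\Xhat}(\mathcal{E}_T,\mathcal{O}_C(-1))$ via tensor-hom adjunction, and show that surjectivity of the corresponding quotient map $f$ is equivalent to $F$ remaining injective after arbitrary base change, with the key converse implication reduced to $\C$-points by openness of surjectivity. The one substantive difference is that the paper delegates the fiberwise surjectivity of $f$ to Lemma~3.9 of \cite{NaYo2}, whereas you give a self-contained argument: take a point $q$ in the support of $\operatorname{coker}(\varphi_x)$, produce $\psi\colon S_x^{\vee}\otimes\mathcal{O}_C(-1)\twoheadrightarrow\mathcal{O}_q$ by Nakayama, use $\dim\Hom(\mathcal{O}_C(-1),\mathcal{O}_q)=1$ to write $\psi$ as a pure tensor $s\otimes\ell$ with $s\in S_x$ nonzero, and derive from $\psi\circ\varphi_x=0$ a nonzero map $E_x\to\mathcal{O}_C(-2)$, contradicting $1$-stability via Lemma~\ref{lemmstableprops}. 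This is a clean replacement for the citation and arguably improves the exposition's self-containedness; otherwise your forward direction (via $R\pi_{T*}R\Sheafhom$ and the vanishing $R\pi_*\mathcal{O}_C(-1)=0$) and your inverse-check (naturality of the adjunction) match the paper's argument in content, if not phrasing.
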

		\begin{proof}
			For a test scheme $T$, let $\mathcal{E}_T$ be an object of $\mathcal{M}_{\chat}^1(T)$. For a rank $j$ locally free sheaf $\mathcal{V}_T$ on $T$, giving a map $f:\mathcal{E}_T\to \pi_{T}^*\mathcal{V}_T\otimes \mathcal{O}_C(-1)$ is the same as giving a map $F:\mathcal{V}_T^\vee\to \Sheafhom_{\Xhat}(\mathcal{E}_T,\mathcal{O}_C(-1))$. We need to show that $f$ is a surjection if and only if $F$ gives an inclusion into the $\Hom$-sheaf after any base change. Assume that $f$ is surjective. Then $F$ may be recovered by applying $\Sheafhom_{\Xhat}(-,\mathcal{O}_C(-1))$ to $f$ and is therefore injective by left-exactness. Since any base change of $f$ is still surjective, the same argument works to show that $F$ remains injective after base change. For the converse, suppose we are given $F$ such that for any $T'\to T$ the induced map $\mathcal{V}_T'\to \Sheafhom_{\Xhat}(\mathcal{E}_T',\mathcal{O}_C(-1))$ is injective. We claim that $f$ is surjective. Since surjectivity of a map of coherent sheaves on a Noetherian scheme is an open statement, this may be checked on $\C$-points. So by the asserted base change property, we may assume that $T=\Spec \C$. Then the surjectivity of $f$ follows from Lemma 3.9 in \cite{NaYo2}.
		\end{proof}

		\section{Master space for wall-crossing}
		\label{sec:master}

	Nakajima and Yoshioka \cite{NaYo3} prove Theorem \ref{th:mainthwallcrossing} in the context of moduli spaces of framed sheaves on $\mathbb{P}^2$ by relating $\zerostable$ and $\onestable$, using an enhanced master space construction. They rely in an essential way on a GIT description of the moduli problem, which comes from an interpretation of moduli spaces as certain quiver varieties. In this section, we lay the geometric groundwork to generalize their formula to moduli spaces of perverse coherent sheaves on an arbitrary smooth projective surface, where such a description is not available. Since we have the identification $\mathcal{M}^m_{\chat}\simeq\mathcal{M}^0(\chat(-mC))$, we only need to study the wall-crossing formula for one value of $m$. 
		
		The notion of the master space goes back to Thaddeus \cite{Thad} in the context of studying how GIT quotients behave under change of linearization and has since then been used more generally to understand the behavior of moduli spaces under change of stability conditions. The general idea is as follows: Given two moduli spaces $\mathcal{M}^-$ and $\mathcal{M}^+$ corresponding to different stability conditions, the master space is a space together with an action of $\C^*$  such that $\mathcal{M}^{\pm}$ appear as fixed components. The remaining fixed points correspond to semistable objects with positive-dimensional automorphism group, and can usually be described in terms of moduli spaces which are in some sense simpler than the ones that one has started out with. Using an equivariant localization formula, one can then hope to relate  invariants on the moduli spaces $\mathcal{M}^+$ and $\mathcal{M}^-$ in terms of invariants on these simpler moduli spaces. 
		In our situation, we encounter two difficulties in constructing a master space:
		\begin{enumerate}[label=\arabic*.]
			\item In order for the master space to be well-behaved (for us, this means it should be a proper Deligne--Mumford stack), one needs that the connected components of automorphism groups of semistable objects are at most $\C^*$. However, for the natural notion of semistability in our situation, automorphism groups of any dimension can appear. In order to deal with this, one can use Mochizuki's notion of an {\it enhanced master space}. The idea is to construct moduli spaces $\flagzero$ and $\flagN$ of sheaves with additional data over $\zerostable$ and $\onestable$ respectively, and introduce a sequence of spaces $\flagell$ ($\ell = 0,1, \dots , N$) interpolating between these ``enhanced'' spaces. By defining a good notion of semistability for $\flagell$ and $\flagellplusone$, we can achieve that the stabilizer groups are sufficiently simple, and thus construct a well-behaved master space. Then we can recover the wall-crossing term between $\zerostable$ and $\onestable$ from the wall-crossing terms of the intermediate spaces. This part is similar to how Nakajima and Yoshioka proceed in \cite{NaYo3}. 
			\item  There is no apparent underlying GIT construction of $\zerostable$ and $\onestable$ as GIT quotients of a common variety. Therefore, we make use of a more general construction of master spaces, which we have learnt from a paper of Kiem and Li \cite{KiLi}. Their construction is purely based on the geometry of the underlying moduli problem and gets by without referring to GIT. On the other hand, this requires some care and repeated arguments using valuative methods to show that our moduli problems have the desired properties such as properness. 
		\end{enumerate}

		\subsection{Stack of semistable objects}\label{subsec:semistable}
		Fix throughout an admissible Chern character $\chat$. We will consider moduli stacks of sheaves with this Chern character.
		We now define the stack connecting the stacks $\mathcal{M}^0$ and $\mathcal{M}^1$. This is done in an ad-hoc manner by writing down a common weakening of their defining properties, and it turns out that this has the desirable geometric properties. A variant of this definition already appears in \cite[Definition 4.6]{NaYo3}.
		\begin{definition}
			Let $E$ be a coherent sheaf on $\widehat{X}$ with admissible Chern character (see Definition \ref{defadmiss}). We say that $E$ is \emph{$0,1$-semistable}, if and only if it satisfies the following conditions:
			\begin{enumerate}[label=(\arabic*)]
				\item $R^1p_*E=0$; \label{def01stable2} 
				\item $p_*E$ is stable; \text{and}  \label{def01stable3}
				\item $\Hom_{\widehat{X}}(E, \mathcal{O}_C(-2))=0$. \label{def01stable4}
				
			\end{enumerate}
		\end{definition}
		\begin{remark}\label{rem01stable}\,
			\begin{enumerate}[label=(\roman*)]
				\item If $E$ is $0,1$-semistable, then by \ref{def01stable3}, the sheaf $p_*E$ is torsion-free. Equivalently, $E$ is torsion-free outside of $C$, and we have $\Hom_{\widehat{X}}(\mathcal{O}_{C}, E)=0$. \label{rem01stable1}
				\item If we have an exact sequence of sheaves on $\widehat{X}$
				\[0\to \mathcal{O}_C(-1)^{\oplus k}\to E\to E'\to 0,\]
				then $E$ is $0,1$-semistable if and only if $E'$ is. The same statement holds with the arrows going in the other direction.  This is because (in either case) we have $p_*E=p_*E'$ and  $R^1p_*E=R^1p_*E'$ by the long exact sequence associated to $p_*$, and we also have \[\Hom_{\widehat{X}}(E, \mathcal{O}_C(-2))=\Hom_{\widehat{X}}(E', \mathcal{O}_C(-2))\] due to the long exact sequence associated to $\Hom_{\Xhat}(-,\mathcal{O}_C(-2))$ and Lemma \ref{lemvanishing} \ref{lemvanii}. \label{rem01stable2}
			\end{enumerate}
		\end{remark}
		
		\begin{lemma}
			The condition to be $0,1$-semistable is open in flat families.
		\end{lemma}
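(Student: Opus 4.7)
The plan is to show that each of the three defining conditions for $0,1$-semistability is open in a flat family $\mathcal{E}$ on $\Xhat \times T$; their conjunction will then be open as well.

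First I would handle condition \ref{def01stable2}, the vanishing of $R^1p_*E$. By Corollary \ref{lemronevanishing}, for any $T$-flat sheaf $\mathcal{E}$ on $\Xhat \times T$ the formation of $R^1(p_T)_*\mathcal{E}$ commutes with arbitrary base change. Combined with the coherence of $R^1(p_T)_*\mathcal{E}$ and Nakayama's lemma (equivalently, semicontinuity), this implies that the locus of $t \in T$ for which $R^1p_*\mathcal{E}_t = 0$ is open. Call this open locus $T_1 \subseteq T$.

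Next I would handle condition \ref{def01stable3} on the open subscheme $T_1$. By Corollary \ref{corpstarbasechange}, the restriction $(p_{T_1})_*\mathcal{E}$ is a $T_1$-flat family of coherent sheaves on $X$ whose formation commutes with arbitrary base change. Since Gieseker stability of sheaves on $X$ is an open condition in flat families (this is classical, see e.g.\ Huybrechts--Lehn), the locus $T_2 \subseteq T_1$ where $p_*\mathcal{E}_t$ is stable is open in $T_1$, hence open in $T$.

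Finally, for condition \ref{def01stable4}, I would invoke semicontinuity applied to the complex $R\pi_{T*}R\Sheafhom_{\Xhat}(\mathcal{E}, \mathcal{O}_C(-2))$, whose fiber over $t$ computes $R\Hom_{\Xhat}(\mathcal{E}_t, \mathcal{O}_C(-2))$. The function $t \mapsto \dim \Hom_{\Xhat}(\mathcal{E}_t, \mathcal{O}_C(-2))$ is upper semicontinuous, so the locus where it vanishes is open. Intersecting this with $T_2$ yields the open locus of $0,1$-semistable fibers.

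There is no real obstacle here; the only mild subtlety is that condition \ref{def01stable3} is phrased in terms of $p_*\mathcal{E}$ rather than $\mathcal{E}$, so one genuinely needs the base change property from Corollary \ref{corpstarbasechange} to transport Gieseker openness on $X$ back to openness on $T$. Everything else is a direct appeal to semicontinuity.
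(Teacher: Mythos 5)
Your proof is correct and follows the paper's argument almost exactly for conditions \ref{def01stable2} and \ref{def01stable3} (Corollary \ref{lemronevanishing} for vanishing of $R^1p_*$, then Corollary \ref{corpstarbasechange} to transport Gieseker openness on $X$ to openness on $T$). The only divergence is condition \ref{def01stable4}: you invoke upper semicontinuity of $t\mapsto\dim\Hom_{\Xhat}(\mathcal{E}_t,\mathcal{O}_C(-2))$ directly, which is fine (both $\mathcal{E}$ and the constant family $\mathcal{O}_C(-2)$ are $T$-flat, so one resolves $\mathcal{E}$ locally by vector bundles and applies the usual semicontinuity), whereas the paper observes that \ref{def01stable4} is exactly the statement that $\mathcal{E}(-C)$ is perverse coherent, so openness follows immediately from Proposition \ref{propstabisopen}. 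The two arguments buy the same thing; the paper's version is slightly more economical because it reuses a lemma already in hand, while yours is marginally more self-contained. You also omit the trivial preliminary that the admissible-Chern-character locus is open (Chern classes being locally constant in flat families), which the paper records, but that is a cosmetic point.
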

		\begin{proof}
			Since Chern classes are locally constant in flat families, the condition to have an admissible Chern character is open. From Corollary \ref{lemronevanishing} it follows that \ref{def01stable2} is open. On the locus where \ref{def01stable2} holds, it follows from Corollary \ref{corpstarbasechange} that $p_*$ commutes with base change and preserves flatness over a base. Since stability is an open condition, it then follows that the conditions \ref{def01stable2} and \ref{def01stable3} together form an open condition. Finally, the condition \ref{def01stable4} is equivalent to $E(-C)$ being perverse coherent, which is an open condition by Proposition \ref{propstabisopen}. 
		\end{proof}
		
		\begin{definition}
			For an admissible Chern character $\widehat{c}$, we let $\zeronestablechat$ denote the moduli stack of oriented $0,1$-semistable sheaves on $\widehat{X}$ with Chern character $\widehat{c}$. We also write $\zeronestable$, if the Chern character $\widehat{c}$ is clear from the context. 
		\end{definition}

		\begin{lemma}\label{lemsstabletostable}
			Suppose $E$ is a $0,1$-semistable coherent sheaf on $\widehat{X}$. Then
			\begin{enumerate}[label=(\roman*)]
				\item $E$ is $0$-stable if and only if  $\Hom_{\widehat{X}}(E, \mathcal{O}_C(-1))=0$; and \label{lemsstabletostablei}  
				\item $E$ is $1$-stable, if and only if $\Hom_{\widehat{X}}(\mathcal{O}_C(-1),E)=0$. \label{lemsstabletostableii}
			\end{enumerate}
		\end{lemma}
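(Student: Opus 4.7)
The plan is to treat each of the two equivalences separately, handling the forward directions by direct appeal to previous results and concentrating effort on the backward directions. For the forward directions, if $E$ is $0$-stable then it is perverse coherent by definition, so $\Hom_{\Xhat}(E,\mathcal{O}_C(-1))=0$ by Proposition \ref{propdefpervcoh}; and if $E$ is $1$-stable then Lemma \ref{lemmstableprops} applied with $m=1$ yields $\Hom_{\Xhat}(\mathcal{O}_C(-1),E)=0$. For the backward direction of \ref{lemsstabletostablei}, the vanishing $\Hom_{\Xhat}(E,\mathcal{O}_C(-1))=0$ combined with Proposition \ref{propdefpervcoh} identifies $E$ as perverse coherent, and $0,1$-semistability supplies the stability of $p_*E$ needed to conclude $0$-stability.

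The interesting case is the backward direction of \ref{lemsstabletostableii}. Assuming $\Hom_{\Xhat}(\mathcal{O}_C(-1),E)=0$, my goal will be to show that $E(-C)$ is $0$-stable. Perverse coherence of $E(-C)$ follows from the computation
\[
\Hom_{\Xhat}(E(-C),\mathcal{O}_C(-1))=\Hom_{\Xhat}(E,\mathcal{O}_C(-1)\otimes \mathcal{O}_{\Xhat}(C))=\Hom_{\Xhat}(E,\mathcal{O}_C(-2))=0,
\]
using $\mathcal{O}_{\Xhat}(C)|_C\simeq \mathcal{O}_C(-1)$ and the $0,1$-semistability condition. Gieseker stability of $p_*(E(-C))$ is the remaining content. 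Tensoring the sequence $0\to \mathcal{O}_{\Xhat}(-C)\to \mathcal{O}_{\Xhat}\to \mathcal{O}_C\to 0$ derivedly with $E$ and applying $Rp_*$, while using $R^1p_*E=0$, will yield an exact sequence of the form
\[
0 \to p_*\operatorname{Tor}_1^{\mathcal{O}_{\Xhat}}\!(E,\mathcal{O}_C) \to p_*\bigl(E(-C)\bigr) \to p_*E \to \mathcal{G} \to 0,
\]
with the cokernel sheaf $\mathcal{G}$ supported at $q=p(C)$. Granted the vanishing of $p_*\operatorname{Tor}_1^{\mathcal{O}_{\Xhat}}\!(E,\mathcal{O}_C)$, the sheaf $p_*(E(-C))$ embeds as a torsion-free subsheaf of the slope-stable $p_*E$ of the same rank and first Chern class, and any slope-destabilizing subsheaf of $p_*(E(-C))$ would also slope-destabilize $p_*E$. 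Gieseker stability then follows from Assumption \ref{assumpcoprime}.

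The hard part will be verifying $p_*\operatorname{Tor}_1^{\mathcal{O}_{\Xhat}}\!(E,\mathcal{O}_C)=0$, since $E$ need not be torsion-free a priori. The torsion subsheaf $T\subset E$ is supported on $C$ by Remark \ref{rem01stable}\ref{rem01stable1}, so the submodule $T_0\subset T$ annihilated by the defining equation of $C$ is a nonzero $\mathcal{O}_C$-module whenever $T\neq 0$, and decomposes over $C\simeq\mathbb{P}^1$ as a sum of line bundles together with zero-dimensional summands. The vanishings $\Hom_{\Xhat}(\mathcal{O}_C,E)=\Hom_{\Xhat}(\mathcal{O}_C(-1),E)=0$ restrict $T_0$ to line-bundle summands of degree $\leq -2$, whose $H^0$ vanishes, and from there one can propagate the control to $\operatorname{Tor}_1^{\mathcal{O}_{\Xhat}}\!(E,\mathcal{O}_C)$ itself (this is the only $\mathcal{O}_C$-module genuinely relevant to computing the kernel term above). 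This torsion bookkeeping is the only delicate step; the remainder is a direct slope-stability comparison.
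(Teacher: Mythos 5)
Your proposal is correct, and for part~\ref{lemsstabletostablei} and both forward directions it coincides with the paper's argument. The interesting divergence is in the backward direction of~\ref{lemsstabletostableii}. The paper avoids the question of whether $p_*(E(-C))\to p_*E$ is injective entirely: it simply notes that the map is an isomorphism away from the blowup center (codimension two), so saturated subsheaves correspond bijectively and with equal rank and degree, which transfers slope stability; torsion-freeness of $p_*(E(-C))$ is then read off from $\Hom_{\Xhat}(\mathcal{O}_C(-1),E)=\Hom_X(\mathcal{O}_q,p_*(E(-C)))=0$ by adjunction. You instead prove injectivity of $p_*(E(-C))\to p_*E$ outright, by bounding the $\mathcal{O}_C$-module $\operatorname{Tor}_1^{\mathcal{O}_{\Xhat}}(E,\mathcal{O}_C)\simeq T_0(-C)$ from the vanishings $\Hom_{\Xhat}(\mathcal{O}_C,E)=\Hom_{\Xhat}(\mathcal{O}_C(-1),E)=0$, which force every line-bundle summand to have degree at most $-1$ and hence vanishing $p_*$; torsion-freeness and slope stability of $p_*(E(-C))$ then come for free from its realization as a subsheaf of the torsion-free, slope-stable $p_*E$ with cokernel supported at $q$. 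Your route is more hands-on and yields a slightly sharper picture (the explicit inclusion), at the cost of some torsion bookkeeping the paper sidesteps; both close the argument with Assumption~\ref{assumpcoprime}. One small inaccuracy to flag: the displayed four-term exact sequence silently assumes $R^1p_*\operatorname{Tor}_1^{\mathcal{O}_{\Xhat}}(E,\mathcal{O}_C)=0$, which need not hold (summands $\mathcal{O}_C(a)$ with $a\leq -2$ have nonzero $R^1p_*$); since you only use the left-exactness statement $p_*\operatorname{Tor}_1^{\mathcal{O}_{\Xhat}}(E,\mathcal{O}_C)=\ker\bigl(p_*(E(-C))\to p_*E\bigr)$, this does not affect the conclusion, but the sequence should be truncated or rephrased.
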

		\begin{proof}
			Since we are assuming that $p_*E$ is Gieseker stable, $E$ is $0$-stable if and only if it is perverse coherent. By definition, $E$ is perverse coherent if and only if $\Hom_{\Xhat}(E,\mathcal{O}_C(-1)) = 0$.  This proves \ref{lemsstabletostablei}. 
			We now show \ref{lemsstabletostableii}. Suppose first that $E$ is $1$-stable. Then $p_*E(-C)$ is stable, and therefore torsion-free, and in particular, torsion-free at the center of the blowup. Therefore, $\Hom_{\widehat{X}}(\mathcal{O}_C(-1), E)=\Hom_{\widehat{X}}(\mathcal{O}_C, E(-C))=0$. For the converse we use the argument in the proof of Lemma 3.4 in \cite{NaYo2}: From the assumption that $E$ is $0,1$-semistable, we have  $\Hom_{\widehat{X}}(E(-C),\mathcal{O}_C(-1))=\Hom(E(-C),\mathcal{O}_C(-2))=0$, from which it follows that $E(-C)$ is perverse coherent. The map $p_*E(-C)\to p_*E$ is an isomorphism outside the blowup center, which has codimension two. It follows, that there is a one-to-one correspondence between saturated subsheaves of $p_*E(-C)$ and of $p_*E$, which moreover preserves degree and rank. This implies that $p_*E(-C)$ is slope stable. Now, we assume that $\Hom_{\widehat{X}}(\mathcal{O}_C(-1), E)=0$, i.e. that $p_*E(-C)$ is torsion-free. Note that $p_*E(-C)$ has the same rank and first Chern class as $E$. Gieseker stability therefore follows from slope stability and torsion-freeness. We have shown that $E(-C)$ is $0$-stable, i.e. that $E$ is $1$-stable.    
		\end{proof}
		
		The following result is the first indication that the notion of $0,1$-semistability is suitable. It says that if a $0,1$-semistable sheaf is not stable (for either $0$- or $1$-stability), then this is due to the existence of a certain destabilizing subsheaf. 
		\begin{proposition}\label{propfiltrations}
			Let $E$ be a $0,1$-semistable sheaf.
			\begin{enumerate}[label=(\arabic*)]
				\item There exists a unique subsheaf $S\subset E$, such that $S\simeq \mathcal{O}_C(-1)^{\oplus k}$ for some $k\geq 0$ and such that $E/S$ is $1$-stable.  \label{propfilt1}
				\item There exists a unique subsheaf $T\subset E$, such that $E/T\simeq \mathcal{O}_C(-1)^{\oplus j}$ for some $j\geq 0$ and such that $T$ is $0$-stable. \label{propfilt2}
				\item The image of $S$ in $E/T$ lifts to a direct summand $K$ of $E$. Moreover, $K$ is isomorphic to a direct sum of copies of $\mathcal{O}_C(-1)$, and $E/K$ does not have a direct summand isomorphic to $\mathcal{O}_C(-1)$. \label{propfilt3} 
			\end{enumerate}
		\end{proposition}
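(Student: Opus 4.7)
For (1), set $V := \Hom_{\Xhat}(\mathcal{O}_C(-1), E)$ and let $\mathrm{ev} \colon V \otimes \mathcal{O}_C(-1) \to E$ be the canonical evaluation. Via the canonical identification $V \simeq \Hom(\mathcal{O}_C(-1), V \otimes \mathcal{O}_C(-1))$, composition with $\mathrm{ev}$ is the identity on $V$; hence $\Hom(\mathcal{O}_C(-1), \ker \mathrm{ev}) = 0$, so any nonzero kernel is a vector bundle on $C \simeq \mathbb{P}^1$ whose summands all have degree $\leq -2$. A rank-degree count then forces the image of $\mathrm{ev}$ to contain torsion or a summand of nonnegative degree, either of which gives $\Hom(\mathcal{O}_C, E) \neq 0$, contradicting $0,1$-semistability (Remark \ref{rem01stable}\ref{rem01stable1}). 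So $\mathrm{ev}$ is injective and $S := V \otimes \mathcal{O}_C(-1) \hookrightarrow E$ with $S \simeq \mathcal{O}_C(-1)^{\oplus k}$. Applying $\Hom_{\Xhat}(\mathcal{O}_C(-1), -)$ to $0 \to S \to E \to E/S \to 0$ with $\Ext^1(\mathcal{O}_C(-1), \mathcal{O}_C(-1)) = 0$ from Lemma \ref{lemvanishing}\ref{lemvani} yields $\Hom(\mathcal{O}_C(-1), E/S) = 0$; combined with Remark \ref{rem01stable}\ref{rem01stable2} and Lemma \ref{lemsstabletostable}\ref{lemsstabletostableii} this gives $1$-stability of $E/S$. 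For uniqueness, any candidate $S'$ (being a direct sum of copies of $\mathcal{O}_C(-1)$ each contributing an element of $V$) embeds naturally into $S$, and the quotient $S/S' \simeq \mathcal{O}_C(-1)^{\oplus(k-k')}$ (by linear algebra on $\mathbb{P}^1$) must vanish by $\Hom(\mathcal{O}_C(-1), E/S') = 0$.

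For (2), dually, let $W := \Hom_{\Xhat}(E, \mathcal{O}_C(-1))$ and $T := \ker(\mathrm{ev} \colon E \to W^* \otimes \mathcal{O}_C(-1))$. The first key step is to show every nonzero $\phi \in W$ is surjective: otherwise $\phi$ factors through $\mathcal{O}_C(-k) \hookrightarrow \mathcal{O}_C(-1)$ for some $k \geq 2$, giving a nonzero element of $\Hom(E, \mathcal{O}_C(-2))$, excluded by $0,1$-semistability. Then $E/T$ embeds into $W^* \otimes \mathcal{O}_C(-1)$, hence is a vector bundle $\bigoplus \mathcal{O}_C(a_i)$ on $C$ with $a_i \leq -1$. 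The long exact sequence for $\Hom_{\Xhat}(-, \mathcal{O}_C(-2))$ applied to $0 \to T \to E \to E/T \to 0$, together with Lemma \ref{lemvanishing} and $\Hom(E, \mathcal{O}_C(-2)) = 0$, forces $\Hom(E/T, \mathcal{O}_C(-2)) = 0$, excluding $a_i \leq -2$; so each $a_i = -1$ and $E/T \simeq \mathcal{O}_C(-1)^{\oplus j}$. That $T$ is $0,1$-semistable with $\Hom(T, \mathcal{O}_C(-1)) = 0$, hence $0$-stable by Lemma \ref{lemsstabletostable}\ref{lemsstabletostablei}, follows from further long exact sequences invoking Lemma \ref{lemvanishing}, and uniqueness is proven exactly as in (1).

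For (3), the composition $S \hookrightarrow E \twoheadrightarrow E/T$, via the identifications of (1) and (2), becomes a bilinear pairing $\beta \colon V \times W \to \Hom(\mathcal{O}_C(-1), \mathcal{O}_C(-1)) \simeq \C$. Set $m := \mathrm{rk}\,\beta$. A biorthonormal system $\phi_1, \ldots, \phi_m \in V$ and $\psi_1, \ldots, \psi_m \in W$ with $\beta(\phi_i, \psi_j) = \delta_{ij}$ yields a split inclusion and retraction $\mathcal{O}_C(-1)^{\oplus m} \hookrightarrow E \twoheadrightarrow \mathcal{O}_C(-1)^{\oplus m}$, so the image $K$ of the first is a direct summand of $E$; moreover its image in $E/T$ equals $\mathrm{im}(\beta) \otimes \mathcal{O}_C(-1)$, the image of $S$. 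The splitting $E = K \oplus (E/K)$ decomposes $\beta$ as $\mathrm{id}_m \oplus \beta_{E/K}$, so $\beta_{E/K} = 0$; any $\mathcal{O}_C(-1)$ summand of $E/K$ would admit a splitting producing a nonzero diagonal entry of $\beta_{E/K}$, a contradiction. The main obstacle, I expect, lies in part (2): forcing $E/T$ to be a direct sum of $\mathcal{O}_C(-1)$'s rather than a more general subbundle of $\mathcal{O}_C(-1)^{\oplus w}$ on $C$ is precisely where the condition $\Hom(E, \mathcal{O}_C(-2)) = 0$ built into $0,1$-semistability becomes essential; a secondary delicate point is the injectivity of $\mathrm{ev}$ in (1), where the kernel could a priori contain $\mathcal{O}_C(-n)$ summands with $n \geq 2$.
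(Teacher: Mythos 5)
Your proof is correct and follows the same overall architecture as the paper's: define $S$ and $T$ as the image and kernel of the evaluation and co-evaluation maps attached to $\Hom(\mathcal{O}_C(-1),E)$ and $\Hom(E,\mathcal{O}_C(-1))$, and use the vanishings $\Hom(\mathcal{O}_C,E)=0$ and $\Hom(E,\mathcal{O}_C(-2))=0$ to pin down the relevant summand degrees on $C\simeq\mathbb{P}^1$. The differences are in how a couple of steps are closed. For the injectivity of the evaluation in (1), you argue directly by a rank–degree count: $\Hom(\mathcal{O}_C(-1),\ker\mathrm{ev})=0$ forces all summands of $\ker\mathrm{ev}$ into degree $\leq -2$, and combined with the constraint that subsheaves of $E$ supported on $C$ have no summand of degree $\geq 0$, this forces $\mathrm{rk}\,\ker\mathrm{ev}\leq 0$. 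The paper instead first shows $\operatorname{Im}\varphi\simeq\mathcal{O}_C(-1)^{\oplus k}$ (using that a quotient of $\mathcal{O}_C(-1)^{\oplus N}$ has summands of degree $\geq -1$) and then uses the rigidity $\Ext^1(\mathcal{O}_C(-1),\mathcal{O}_C(-1))=0$ to split off $\ker\varphi$ as a direct summand $\simeq\mathcal{O}_C(-1)^{\oplus s}$, killing it by the universal property. Both get there; yours avoids invoking rigidity at this spot, while the paper's runs a step shorter because it identifies $\operatorname{Im}\varphi$ early. For part (3), your formulation via the bilinear pairing $\beta\colon V\times W\to\C$ and a biorthonormal system is a clean packaging of the paper's more hands-on argument, which constructs $K$ by splitting the surjection $S\twoheadrightarrow K'$ and the injection $K'\hookrightarrow E/T$ and rules out an extra $\mathcal{O}_C(-1)$-summand of $E/K$ by intersecting with $S$ and $T$. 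The linear-algebra picture makes the ``maximal simultaneous splitting'' more transparent and is, I think, the more natural way to see it. Two small remarks: the long exact sequence establishing $\Hom(E/T,\mathcal{O}_C(-2))=0$ in (2) does not actually need Lemma \ref{lemvanishing} (it is the injectivity of $\Hom(E/T,-)\hookrightarrow\Hom(E,-)$); and when you say uniqueness in (2) is ``exactly as in (1),'' the argument is dual rather than identical — one shows $T\subset T'$ and that $\Hom(T',\mathcal{O}_C(-1))=0$ kills $T'/T$ — but it does work.
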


		\begin{remark}\label{remfiltrationsanyfield}
			The conclusions hold as well over any algebraically closed field of characteristic zero, and in fact over any field of characteristic zero, as the uniqueness statement guarantees that the subsheaves are Galois-invariant. 
		\end{remark}

		\begin{proof}[Proof of Proposition \ref{propfiltrations}.]
			We argue along similar lines as in the proof of Lemma 3.9 in \cite{NaYo2}. 
			\begin{enumerate}[label=(\arabic*)]
				\item \label{propfiltproof1} Let $U:=\Hom_{\widehat{X}}(\mathcal{O}_C(-1),E)$ and consider the evaluation map $\varphi:U\otimes\mathcal{O}_C(-1)\to E$. The map $\varphi$ has the defining property that any $f:\mathcal{O}_C(-1)\to E$ factors as $f=\varphi\circ \iota_f$, where $\iota_f:\mathcal{O}_C(-1)\to U\otimes \mathcal{O}_C(-1)$ is defined by $s\mapsto f\otimes s$. We claim that this map is injective. Indeed, since $\operatorname{Im}\varphi$ is scheme-theoretically supported on $C\simeq \mathbb{P}^1$, we can write  as a direct sum of sheaves $\mathcal{O}_C(m_i)$ for different $m_i$. There are no maps $\mathcal{O}_C(-1)\to \mathcal{O}_C(m)$ for $m<-1$, so all $m_i\geq -1$. On the other hand, we have that all $m_i<0$, as $\Hom(\mathcal{O}_C,E)=0$. Therefore $\operatorname{Im}\varphi$ is a direct sum of copies of $\mathcal{O}_C(-1)$. It follows that we have a decomposition $U\otimes \mathcal{O}_C(-1)=\Ker\varphi\oplus \operatorname{Im}(\varphi)$. In particular, $\Ker\varphi$ is generated by subsheaves of the form $f\otimes \mathcal{O}_C(-1)$ for $f\in U$. However, if $f\otimes \mathcal{O}_C(-1)\subset \Ker\varphi$, then $f=\varphi\circ \iota_f=0$ by the universal property of $U$. Therefore $\Ker\varphi=0$. 
				Setting $S:=\operatorname{Im}\varphi$, we therefore have an exact sequence
				\[0\to U\otimes \mathcal{O}_C(-1)\to E\to E/S\to 0.\]
				By Remark \ref{rem01stable}, \ref{rem01stable2} we find that $E/S$ is $0,1$-semistable. Now consider part of the long exact sequence associated to $\Hom_{\widehat{X}}(\mathcal{O}_C(-1), {-})$: 
				
				\begin{multline*}
					\Hom_{\widehat{X}}(\mathcal{O}_C(-1), S)\to \Hom_{\widehat{X}}(\mathcal{O}_C(-1), E)\\ \to \Hom_{\widehat{X}}(\mathcal{O}_C(-1), E/S)\to \Ext^1_{\widehat{X}}(\mathcal{O}_C(-1), S).
				\end{multline*}
				The first map is an isomorphism by construction of $S$, and the last group is zero due to Lemma \ref{lemvanishing} \ref{lemvani}. Therefore, $\Hom_{\widehat{X}}(\mathcal{O}_C(-1), E/S)=0$, and it follows from Lemma \ref{lemsstabletostable} that $E/S$ is $1$-stable.

				For uniqueness of $S$, suppose $S'$ is another subsheaf of $E$ satisfying the conditions. Then $S'$ is a direct sum of copies of $\mathcal{O}_C(-1)$, and by the definition of $S$ as the image of $\varphi$, it follows that $S'$ is contained in - and therefore a direct summand of $S$. But then $\Hom_{\widehat{X}}(\mathcal{O}_C(-1),E/S')=0$ implies that $S'=S$. 
				\item Let $V:=\Hom(E,\mathcal{O}_C(-1))$. There is a universal map $\psi:E\to V^\vee\otimes \mathcal{O}_C(-1)$. By an argument analogous as in \ref{propfiltproof1}, this time using that $\Hom_{\widehat{X}}(E,\mathcal{O}_C(-2))$, we find that $\psi$ is surjective. Then, setting $T:=\Ker \psi$, we use the short exact sequence 
				\[0\to T\to E\to E/T\to 0\]
				to conclude that $E/T$ is in fact $0$-stable. Then we conclude that $T$ is the unique subobject with the desired properties. The argument is again exactly analogous to the one in \ref{propfiltproof1}.   
				\item The image $K'$ of $S$ in $E/T$ is again a direct sum of copies of $\mathcal{O}_C(-1)$. It therefore splits both the surjection $S\to K'$ and the injection $K'\to E/T$ and it follows that we have a map $E\to K'$ with a section $K'\to E$. Therefore $K'$ induces a direct summand $K$ of $E$. Finally, a nontrivial direct summand of $E/K$ isomorphic to a sum of copies of $\mathcal{O}_C(-1)$ would lift to such a summand $K''$ of $E$ strictly containing $K$. One finds that $K''\subset S$ and that $T\cap K''=0$, so that the induced map $K''\to S\to \operatorname{Im}\varphi\simeq K$ is an inclusion, which contradicts our assumption. 
			\end{enumerate}
		\end{proof}

		\begin{corollary}\label{corintsumexc}
			\begin{enumerate} [label=(\roman*)]
				\item Any map $\mathcal{O}_C(-1)^{\oplus j}\to E$ factors through $S$ and has image isomorphic to $\mathcal{O}_C(-1)^{\oplus j'}$ for some $j'$.  
				\item The collection of subobjects $S'\subset E$ that are isomorphic to a direct sum of copies of $\mathcal{O}_C(-1)$ is closed under sum and intersection.
				\item Any map $E\to \mathcal{O}_C(-1)^{\oplus j}$ factors through $E/T$ and has image isomorphic to $\mathcal{O}_C(-1)^{\oplus j'}$ for some $j'$.
				\item The collection of subobjects $T'\subset E$ for which $E'/T$ is isomorphic to a direct sum of copies of $\mathcal{O}_C(-1)$ is closed under sum and intersection. 
			\end{enumerate}
		\end{corollary}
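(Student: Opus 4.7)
The plan is to reduce all four parts to the explicit constructions of $S$ and $T$ from the proof of Proposition \ref{propfiltrations}, combined with a small piece of linear algebra on direct sums of $\mathcal{O}_C(-1)$. Recall from that proof that, writing $U := \Hom_{\widehat{X}}(\mathcal{O}_C(-1), E)$ and $V := \Hom_{\widehat{X}}(E, \mathcal{O}_C(-1))$, there is a universal evaluation $\varphi : U \otimes \mathcal{O}_C(-1) \to E$ whose image is $S$ and which is itself injective, and dually a universal coevaluation $\psi : E \to V^{\vee} \otimes \mathcal{O}_C(-1)$ which is surjective with kernel $T$. The elementary input I will use throughout is that, by Lemma \ref{lemvanishing}, $\Hom_{\widehat{X}}(\mathcal{O}_C(-1), \mathcal{O}_C(-1)) = \C$ and $\Ext^1_{\widehat{X}}(\mathcal{O}_C(-1), \mathcal{O}_C(-1)) = 0$, so every morphism between finite direct sums of copies of $\mathcal{O}_C(-1)$ is encoded by a scalar matrix; in particular its image is again a direct sum of copies of $\mathcal{O}_C(-1)$ and every injection between such sums splits.

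For (i), a map $f : \mathcal{O}_C(-1)^{\oplus j} \to E$ corresponds to a tuple in $U^{\oplus j}$ and hence factors through $\varphi$ as $f = \varphi \circ \iota_f$ by the universal property of $U$. Thus $f$ factors through $S$, and the image of $f$ equals $\varphi(\operatorname{Im}\iota_f)$; since $\iota_f$ is a map between direct sums of $\mathcal{O}_C(-1)$, its image has the desired isomorphism type by the elementary input, and injectivity of $\varphi|_S$ preserves that type. Part (iii) is entirely dual, using $\psi$ and the universal property of $V$.

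For (ii), by (i) any subobject $S' \subset E$ isomorphic to $\mathcal{O}_C(-1)^{\oplus k'}$ is contained in $S \simeq U \otimes \mathcal{O}_C(-1)$. The plan is to establish a bijection between such subobjects of $U \otimes \mathcal{O}_C(-1)$ and linear subspaces $W \subset U$, via $W \mapsto W \otimes \mathcal{O}_C(-1)$: indeed any injection $\mathcal{O}_C(-1)^{\oplus k'} \hookrightarrow U \otimes \mathcal{O}_C(-1)$ is encoded by an injective linear map $\mathbb{C}^{k'} \to U$ by the elementary input. Under this bijection, the sum and intersection of two such subsheaves inside $E$ (equivalently, inside $S$) correspond to the sum and intersection of the associated linear subspaces of $U$, which again yield subobjects in the collection. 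Part (iv) is again dual, replacing $S$ by the quotient $E/T \simeq V^{\vee} \otimes \mathcal{O}_C(-1)$ and replacing subobjects by quotients.

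The only point requiring care is that $\mathcal{O}_C(-1)^{\oplus k}$ does admit coherent subsheaves that are not themselves direct sums of copies of $\mathcal{O}_C(-1)$ (for instance line bundles of lower degree on the exceptional curve), so the bijection with linear subspaces of $U$ holds only after one restricts to subobjects of the required isomorphism type; the statement of the corollary is phrased precisely so that this restriction is in force. After this observation everything is purely linear-algebraic, and I expect no serious obstacle.
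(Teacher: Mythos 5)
Your proof is correct and follows the approach the paper clearly intends — the corollary is stated without proof precisely because it is meant to drop out of the explicit constructions of $\varphi$ and $\psi$ in the proof of Proposition~\ref{propfiltrations}, and you have supplied exactly that bookkeeping, including the key reduction to linear subspaces of $U$ (resp.\ $V^{\vee}$) using $\Hom_{\widehat{X}}(\mathcal{O}_C(-1),\mathcal{O}_C(-1))=\C$ and the vanishing of $\Ext^1$. Your closing caveat about subsheaves of $\mathcal{O}_C(-1)^{\oplus k}$ of other isomorphism types is a fair observation and correctly identified as harmless, since sums and intersections of the $W\otimes\mathcal{O}_C(-1)$ stay in that subcollection.
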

		\begin{proposition}\label{propzeronebounded}
			The stack $\zeronestablechat$ is quasi-compact. 
		\end{proposition}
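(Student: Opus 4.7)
The plan is to exhibit a finite-type scheme surjecting onto $\zeronestable_{\chat}$. The key decomposition is Proposition \ref{propfiltrations}\,(2): every $0,1$-semistable sheaf $E$ of Chern character $\chat$ sits in a unique short exact sequence
\[0 \to T \to E \to \mathcal{O}_C(-1)^{\oplus j} \to 0,\]
where $T$ is $0$-stable of Chern character $\chat - j\chexc$. Hence $E$ defines an object of $\mathcal{M}^0_{\chat - j\chexc}$, and by Lemma \ref{corkbound} this stack is empty for $j$ larger than some $j_{\max}$ depending only on $\chat$. This gives an a priori bound $0 \le j \le j_{\max}$ stratifying $\zeronestable_{\chat}$ by the discrete invariant $j$.

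For each $j$ in this range, I would choose a smooth atlas $U_j \to \mathcal{M}^0_{\chat - j\chexc}$ of finite type; this exists because $\mathcal{M}^0_{\chat - j\chexc}$ is a proper Deligne--Mumford stack (it is a relative Quot-scheme over $\mathcal{M}_{X,c}$ via Proposition \ref{propisrelquot}). Let $\mathcal{T}$ denote the pullback of the universal family to $\Xhat \times U_j$. The relative ext-sheaf $\mathcal{E}xt^1_{\pi_{U_j}}(\mathcal{O}_C(-1)^{\oplus j}, \mathcal{T})$ is coherent on $U_j$; let $V_j$ be the total space of this coherent sheaf, still of finite type over $U_j$. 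By the standard interpretation of $\mathrm{Ext}^1$-sheaves, there is a tautological extension
\[0 \to \mathcal{T}_{V_j} \to \mathcal{E} \to \mathcal{O}_C(-1)^{\oplus j} \to 0\]
of $V_j$-flat coherent sheaves on $\Xhat \times V_j$. By Remark \ref{rem01stable}\,(ii), $\mathcal{E}$ is fiberwise $0,1$-semistable, so it defines a morphism $V_j \to \widetilde{\zeronestable}_{\chat}$. Pulling back along the $\C^*$-torsor of orientations yields a finite-type scheme $V_j'$ mapping to $\zeronestable_{\chat}$.

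By the uniqueness clause in Proposition \ref{propfiltrations}\,(2) (valid over any field, cf.\ Remark \ref{remfiltrationsanyfield}), every $\C$-point of $\zeronestable_{\chat}$ lies in the image of some $V_j'$ with $0 \le j \le j_{\max}$. The disjoint union $\bigsqcup_{j=0}^{j_{\max}} V_j'$ is therefore a finite-type scheme that surjects onto $\zeronestable_{\chat}$, proving quasi-compactness. The only subtlety to verify along the way is that the extension construction really exhibits a universal family of $0,1$-semistable sheaves; this is essentially bookkeeping, using coherence of relative $\mathcal{E}xt$ and the fact that $0,1$-semistability is fiberwise-checkable as a combination of already-established open conditions. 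The main conceptual step, and where the work really lies, is the a priori bound on $j$, which is supplied by Lemma \ref{corkbound}.
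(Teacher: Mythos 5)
Your overall strategy matches the paper's: decompose every $0,1$-semistable sheaf via Proposition \ref{propfiltrations}\,(2) as an extension of $\mathcal{O}_C(-1)^{\oplus j}$ by a $0$-stable $T$, bound $j$ by Lemma \ref{corkbound}, and show each stratum of fixed $j$ is bounded. Where you diverge is in the last step, and there is a real gap there, not mere bookkeeping.

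You propose to take the \emph{total space} of the coherent sheaf $\Sheafext^1_{\pi_{U_j}}(\mathcal{O}_C(-1)^{\oplus j},\mathcal{T})$ and claim it carries a tautological extension. This fails for two reasons. First, for a coherent sheaf $\mathcal{F}$ that is not locally free, the linear scheme $\mathbb{V}(\mathcal{F})=\underline{\operatorname{Spec}}\operatorname{Sym}(\mathcal{F})$ represents $T\mapsto\Hom_{\mathcal{O}_T}(\mathcal{F}_T,\mathcal{O}_T)$, which is \emph{not} the functor of sections of $\mathcal{F}$, so there is no tautological ``element of $\mathcal{F}$'' over it. Second, and independently, $\Sheafext^1_{\pi}$ need not be locally free precisely because $\Hom(\mathcal{O}_C(-1),T_u)$ jumps as $T_u$ ranges over $\mathcal{M}^0_{\chat-j\chexc}$ --- by Lemma \ref{lemsstabletostable}\,(ii), this $\Hom$ detects exactly whether $T_u$ is also $1$-stable, which is an open but not closed condition. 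When the lower $\Sheafext$ jumps, a global section of $\Sheafext^1_{\pi}$ over a base does not canonically induce a class in $\Ext^1$ over that base (the obstruction lives in $H^2$ of $\Sheafext^0_{\pi}$ via the local-to-global spectral sequence), so even after correcting the ``total space'' you would not automatically get a flat family of extensions, let alone a universal one. This can be repaired --- for instance, replace $\Sheafext^1_\pi$ by a two-term complex of locally free sheaves $K^0\to K^1\to K^2$ computing $R\Sheafhom_{\pi}(\mathcal{O}_C(-1)^{\oplus j},\mathcal{T})$ compatibly with base change, and take the zero locus of the composite $K^1\to K^2$ inside the total space of $K^1$ --- but this is exactly the care you would need to spell out.

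The paper sidesteps this entirely with a more elementary argument: by \cite[Lemma 1.7.2]{HuLe}, boundedness is equivalent to a uniform bound on Castelnuovo--Mumford regularity, and the regularity of an extension is bounded by the maximum of the regularities of $\mathcal{O}_C(-1)$ (a single sheaf) and of the family $\mathcal{M}^0_{\chat-j\chexc}$ (already known to be bounded). This gets the conclusion with none of the Ext-sheaf machinery, so I'd recommend it over trying to patch the parametrizing-scheme construction.
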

		\begin{proof}
			By Proposition \ref{propfiltrations}, every object $E\in \zeronestablechat(\C)$ sits in an exact sequence 
			
			\begin{equation}
				0\to E'\to E\to \mathcal{O}_C(-1)^{\oplus k}\to 0 \label{corproofexteqn}
			\end{equation}
			for some $k\geq 0$, where $E'$ is $0$-stable of Chern character $\widehat{c}-k\chexc$. By Corollary \ref{corkbound}, the moduli spaces $\zerostable[\widehat{c}-k\chexc] $ are empty for large enough $k$. Therefore, it is enough to show that for fixed $k$, the class of objects $E$ arising as extensions of the form \eqref{corproofexteqn} is bounded. By  \cite[Lemma 1.7.2]{HuLe}, a family of sheaves with fixed Chern character is bounded if and only if there is a global bound for the Castelnuovo--Mumford regularity of its sheaves. In our situation, this is clear: Simply take as a bound the maximum of the regularity of $\mathcal{O}_C(-1)$ and of an upper bound for the collection of sheaves appearing as objects of $\mathcal{M}^0_{\chat-ke}$ (which we already know to be bounded). 
		\end{proof}
		\begin{corollary}\label{corkboundzerone}
			For large enough $k$ (depending on $\chat$), the stack $\mathcal{M}_{\chat-k\chexc}^{0,1}$ is empty. 
		\end{corollary}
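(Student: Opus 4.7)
The plan is to deduce emptiness of $\mathcal{M}^{0,1}_{\chat-k\chexc}$ for $k$ large from the analogous emptiness of the $0$-stable (perverse coherent) stack for suitably shifted Chern characters, which has already been recorded as Lemma \ref{corkbound}. Since the stack in question is locally of finite type over $\Spec\C$, it suffices to exclude $\C$-valued objects.

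To carry this out, I would let $E$ be a $0,1$-semistable sheaf on $\Xhat$ with $\ch(E)=\chat-k\chexc$ and apply Proposition \ref{propfiltrations} \ref{propfilt2}: this produces a subsheaf $T\subset E$ that is $0$-stable and for which $E/T\simeq \mathcal{O}_C(-1)^{\oplus j}$ for some $j\geq 0$. Reading off Chern characters in the defining short exact sequence, $\ch(T)=\chat-(k+j)\chexc$, so $T$ defines a $\C$-point of the unoriented stack $\widetilde{\mathcal{M}}^p_{\chat-(k+j)\chexc}$.

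Then I would invoke Lemma \ref{corkbound} applied to the admissible class $\chat$ to produce a constant $M$, depending only on $\chat$, such that $\widetilde{\mathcal{M}}^p_{\chat-m\chexc}$ is empty for every integer $m\geq M$. Taking $k\geq M$ and using $j\geq 0$ forces $k+j\geq M$, so no such $T$ can exist and hence neither can $E$. This gives the required bound on $k$.

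I do not foresee any genuine obstacle: the corollary is essentially a packaging of Proposition \ref{propfiltrations} with the emptiness statement of Lemma \ref{corkbound}. The one mildly delicate point is that $\mathcal{M}^{0,1}$ parametrizes oriented sheaves whereas Lemma \ref{corkbound} is phrased for the unoriented stack, but the forgetful map $\mathcal{M}^{0,1}\to\widetilde{\mathcal{M}}^{0,1}$ is surjective on points (orientations always exist by pulling back the Poincar\'e line bundle), so emptiness on one side is equivalent to emptiness on the other and the reduction goes through without issue.
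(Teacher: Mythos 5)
Your proof is correct and follows essentially the same route as the paper: both invoke Proposition~\ref{propfiltrations}~\ref{propfilt2} to extract the $0$-stable subsheaf $T\subset E$ with Chern character $\chat-(k+j)\chexc$ and then apply Lemma~\ref{corkbound}. Your extra remark about orientations existing over a point is a minor but legitimate refinement of the paper's phrasing, which passes between the oriented and unoriented stacks without comment.
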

		\begin{proof}
			If $E\in \mathcal{M}_{\chat-k\chexc}^{0,1}$ for some $k>0$, then the associated $0$-stable sheaf $E'$ in \eqref{corproofexteqn} lies in $\mathcal{M}^0_{\chat-k'\chexc}$ for some $k'\geq k$. By Corollary \ref{corkbound}, there exists some $M\geq 0$, such that  $\mathcal{M}^0_{\chat-k'\chexc}$ is empty for $k'\geq M$. Therefore $\mathcal{M}_{\chat-k\chexc}^{0,1}$ is empty for $k\geq M$.  
		\end{proof}

		\begin{lemma}\label{lemmapsofsstable}
			Let $E_1$ and $E_2$ be $0,1$-semistable with admissible Chern character $\widehat{c}$, and let $f:E_1\to E_2$ be an homomorphism. 
			Then exactly one of the following two cases holds: 
			\begin{enumerate}[label=(\alph*)]
				\item $\operatorname{Im}f\simeq \mathcal{O}_C(-1)^{\oplus k}$ for some $k\geq 0$, and both $\Ker f$ and $\operatorname{Coker} f$ are $0,1$-semistable with Chern character $\chat-k\chexc$, or \label{lemmapsofsstablea}
				\item $\Ker f\simeq \operatorname{Coker} f\simeq \mathcal{O}_C(-1)^{\oplus k}$ for some $k\geq 0$, and $\operatorname{Im} f$ is $0,1$-semistable with Chern character $\chat-k\chexc$. \label{lemmapsofsstableb}
			\end{enumerate}
			If $f:E\to E$ is an endomorphism of a $0,1$-semistable sheaf, then there exists a unique $\lambda\in \C$, such that \ref{lemmapsofsstablea} holds for $f-\lambda \operatorname{Id}_E$. 
		\end{lemma}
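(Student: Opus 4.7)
The plan is to reduce everything to a dichotomy for the induced map $p_*f\colon p_*E_1\to p_*E_2$. Since both $p_*E_1$ and $p_*E_2$ are Gieseker stable sheaves on $X$ of the same Chern character $p_*\chat$, Proposition 1.2.7 in \cite{HuLe} implies that $p_*f$ is either zero or an isomorphism, and these two cases will correspond to \ref{lemmapsofsstablea} and \ref{lemmapsofsstableb} respectively. The whole proof is then a careful application of the long exact sequences obtained from pushing forward the short exact sequences
\[ 0\to \Ker f\to E_1\to \operatorname{Im} f\to 0, \qquad 0\to \operatorname{Im} f\to E_2\to \operatorname{Coker} f\to 0, \]
combined with Lemma \ref{lempushvanishing} (characterizing sheaves $K$ with $Rp_*K=0$ as sums of copies of $\mathcal{O}_C(-1)$) and the stability of $R^0p_*$ of the involved sheaves.

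In case $p_*f=0$: pushing forward the first sequence and using $R^1p_*E_1=0$ shows $R^1p_*\operatorname{Im} f=0$ as well, and moreover that $p_*\operatorname{Im} f$ is both a subsheaf of the torsion-free $p_*E_2$ (via the second sequence) and a quotient of $R^1p_*\Ker f$, which is supported on the blowup center hence torsion. Therefore $p_*\operatorname{Im} f=0$, so $Rp_*\operatorname{Im} f=0$ and $\operatorname{Im} f\simeq \mathcal{O}_C(-1)^{\oplus k}$ for some $k\ge 0$ by Lemma \ref{lempushvanishing}. The $0,1$-semistability of $\Ker f$ and $\operatorname{Coker} f$ then follows from Remark \ref{rem01stable} \ref{rem01stable2} applied to the sequences $0\to \Ker f\to E_1\to \mathcal{O}_C(-1)^{\oplus k}\to 0$ and $0\to \mathcal{O}_C(-1)^{\oplus k}\to E_2\to \operatorname{Coker} f\to 0$, and their Chern characters are $\chat-k\chexc$ by additivity.

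In case $p_*f$ is an isomorphism: the factorization $p_*E_1\twoheadrightarrow p_*\operatorname{Im} f\hookrightarrow p_*E_2$ forces both arrows to be isomorphisms, so $p_*\Ker f=0$ and $p_*\operatorname{Coker} f=0$. From the long exact sequence of the first sequence, $R^1p_*\Ker f$ embeds into the cokernel of the isomorphism $p_*E_1\to p_*\operatorname{Im} f$, hence vanishes, giving $Rp_*\Ker f=0$ and $\Ker f\simeq \mathcal{O}_C(-1)^{\oplus k}$. From the second sequence (using $R^1p_*E_2=0$) we get $R^1p_*\operatorname{Coker} f=0$, so $Rp_*\operatorname{Coker} f=0$ and $\operatorname{Coker} f\simeq \mathcal{O}_C(-1)^{\oplus k'}$. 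Equality $k=k'$ follows by comparing Chern characters. The $0,1$-semistability of $\operatorname{Im} f$ is obtained once more from Remark \ref{rem01stable} \ref{rem01stable2}.

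For the final endomorphism statement, $p_*f$ is an endomorphism of the simple sheaf $p_*E$ (Proposition 1.2.7 of \cite{HuLe} again), so $p_*f=\lambda\cdot \operatorname{Id}_{p_*E}$ for a unique scalar $\lambda\in\C$; then $p_*(f-\lambda\operatorname{Id}_E)=0$ places $f-\lambda\operatorname{Id}_E$ in case \ref{lemmapsofsstablea}, and uniqueness of $\lambda$ is forced by $p_*E\neq 0$. The only point that requires care in the whole argument is the systematic tracking of the long exact sequences; no single step is genuinely hard, but one has to keep straight which sheaves satisfy $R^1p_*=0$ before concluding that $p_*$ is exact on a given short sequence.
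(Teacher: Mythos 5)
Your proof is correct, but it takes a genuinely different route from the paper's. The paper invokes the canonical filtration of Proposition \ref{propfiltrations}: for each $E_i$ it takes the maximal $0$-stable subobject $T_i\subset E_i$ with $E_i/T_i\simeq\mathcal{O}_C(-1)^{\oplus k_i}$, observes that $f$ restricts to a map $g\colon T_1\to T_2$ of stable perverse coherent sheaves (hence zero or an isomorphism by Proposition \ref{corstabsimple}), and then sorts the two cases using Corollary \ref{corintsumexc} and Remark \ref{rem01stable} \ref{rem01stable2}. You instead go directly to the pushforward $p_*f\colon p_*E_1\to p_*E_2$ and use Gieseker stability of $p_*E_i$, the long exact sequences for $Rp_*$, and Lemma \ref{lempushvanishing} to identify the two cases with $p_*f=0$ and $p_*f$ an isomorphism. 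Both arguments ultimately rest on the same simplicity phenomenon for stable objects, but the paper reads the dichotomy off the internal filtration by $T_i$, whereas you read it off the exterior projection to $X$; your version avoids the existence and uniqueness statements of Proposition \ref{propfiltrations} entirely, at the cost of somewhat more bookkeeping with derived pushforwards. One small notational caution: when you write the factorization $p_*E_1\twoheadrightarrow p_*\operatorname{Im}f\hookrightarrow p_*E_2$ you should not assert surjectivity of the first arrow up front, since $p_*$ is only left exact; rather, you have maps $p_*E_1\to p_*\operatorname{Im}f\hookrightarrow p_*E_2$ whose composition is the isomorphism $p_*f$, from which bijectivity of each arrow follows. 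The conclusion and the remainder of your argument are unaffected.
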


		\begin{proof}
			For $i=1,2$, let $T_i\subset E_i$ be the $0$-stable subobject, such that $E_i/T_i\simeq \mathcal{O}_C(-1)^{\oplus k_i}$ as in Proposition \ref{propfiltrations} \ref{propfilt2}. Since $T_1$ is $0$-stable, the composition $T_1\to E_1\xrightarrow{f} E_2\to E_2/T_2$ is zero. Therefore $f$ restricts to a map $g:T_1\to T_2$. By Proposition \ref{corstabsimple}, the map $g$ is either an isomorphism or zero. If $g$ is zero, then $f$ factors through a map $T_1/E_1\to E_2$ and we are in case \ref{lemmapsofsstablea} by Corollary \ref{corintsumexc} and Remark \ref{rem01stable} \ref{rem01stable2}. If $g$ is an isomorphism, we are instead in the case \ref{lemmapsofsstableb}.
			The remaining statement is clear.  
		\end{proof}

		\subsection{Stacks of sheaves with flag structures} \label{subsecunderlyingstack}

		For the rest of this section we fix an admissible Chern character $\widehat{c}$ on $\widehat{X}$ and an ample line bundle $L$ on $\widehat{X}$. Furthermore, we let $m$ be a sufficiently large integer, by which we mean that the following conditions should be satisfied:
		\begin{assumption}\label{assumpmlarge}
			\begin{enumerate}[label = (\roman*)]
				\item For any $E'\in \coprod_{j=0}^\infty  \zeronestable[\widehat{c}-j\chexc](\C)$ and any $m'\geq m$, the sheaf $E'\otimes L^{\otimes m'}$ is globally generated with vanishing higher cohomology groups.\label{enumlargeenough1}
				\item For any $m'\geq m$, the sheaf $\mathcal{O}_C(-1)\otimes L^{\otimes m'}$ is globally generated with vanishing higher cohomology groups. \label{enumlargeenough2}
			\end{enumerate} 
		\end{assumption}
		Such an $m$ exists, since the spaces $\mathcal{M}^{0,1}_{\chat-j\chexc}$ are quasicompact by Proposition \ref{propzeronebounded}, and nonempty for only finitely many values of $j$ by Corollary \ref{corkboundzerone}.
		For a coherent sheaf $E$ on $\widehat{X}$, or more generally a family $\mathcal{E}$ of coherent sheaves on $\widehat{X}\times U$, where $U$ is a quasicompact algebraic stack, we abbreviate
		\[\globm{E}:= \Gamma(\widehat{X}, E\otimes L^{\otimes m} ),\]
		\[\globm{\mathcal{E}}:= \pi_{U *} (\mathcal{E}\otimes L^{\otimes m}).\]
		We use $\globm{f}$ for the obvious induced map if $f$ is a morphism of (families of) coherent sheaves over $\widehat{X}$. 
		We also define
		\[N:=\dim \globm{E}; \mbox{ for } E\in \zeronestable=\zeronestable[\chat].\]
		We now define the ambient stack which we will use to define stacks of stable objects:
		\begin{equation}\label{eqNdef}
			\flag{} :=\left\{(E,V^{\bullet}) \, \left\lvert \, \begin{array}{ll}
				E \in \zeronestable;\\
				V^{\bullet }=(V^i)_{i=0}^N \mbox{ is a complete flag in } \globm{E}
			\end{array}\right. \right\}.
		\end{equation}

		\begin{remark}
			Due to Assumption \ref{assumpmlarge}, if $\mathcal{E}$ is a family of $0,1$-semistable sheaves of Chern character $\widehat{c}$ on $\widehat{X}$ over some base $T$, then $\globm{\mathcal{E}}$ is a vector bundle over $T$. Therefore objects of $\flag{}$ make sense over any finite-type $\C$-scheme, and $\flag{}$ is naturally a bundle of flag varieties over $\zeronestable$, in particular it is an algebraic stack of finite type over $\C$. 
		\end{remark}
		
		\begin{definition}
			If $\mathcal{E}$ is a family of $0,1$-semistable sheaves on $\widehat{X}$ over some base $T$, then a \emph{(full) flag structure} on $\mathcal{E}$ is the data of a length $N$-filtration
			\[0=\mathcal{V}^0\subsetneq \mathcal{V}^1\subsetneq \cdots\subsetneq \mathcal{V}^N=\globm{\mathcal{E}}\]
			by vector bundles. 
		\end{definition}
		
		Let $E$ be a coherent sheaf on $\widehat{X}$ with Chern character $\widehat{c}$, and let $V\subset \globm{E}$ be a subspace. 
		Following \cite[Definition 5.1]{NaYo3}, we introduce a notion of stability, which lies in between $0$-stability and $1$-stability and depends on the choice of subspace $V$.
		
		\begin{definition}\label{defvstability}
			\begin{enumerate}[label =(\arabic*)]
				\item 	 We say that $E$ is \emph{stable with respect to $V$} or \emph{$V$-stable}, if it is $0,1$-semistable and the following two properties hold:
				\begin{itemize}
					\item[$(\stabS (V))$:] For all subobjects $S\subset E$ such that $S\simeq \mathcal{O}_C(-1)^{\oplus k}$ for some $k\geq 0$, we have $V\cap \globm{S}=\{0\}\subset \globm{E}$.
					\item[$(\stabT (V))$:] For all subobjects $T\subset E$ such that $E/T\simeq \mathcal{O}_C(-1)^{\oplus k}$ for some $k\geq 1$, we have $V\not\subset \globm{T}$.  
				\end{itemize}
				\item We say a subobject $S\subset E$ \emph{violates $\stabS (V)$} if it is isomorphic to a direct sum of copies of $\mathcal{O}_C(-1)$ and $V\cap \globm{S}\neq \{0\}$. Similarly, a proper subobject $T\subsetneq E$ \emph{violates $\stabT(V)$}, if $E/T$ is isomorphic to a direct sum of copies of $\mathcal{O}_C(-1)$, and $V\subset \globm{T}$.  We also call such objects $\stabS(V)$-destabilizing and $\stabT(V)$-destabilizing respectively. 
				\item For $(E,V^{\bullet})$ a semistable sheaf with flag structure, we also write $(\stabS_\ell)$ for $(\stabS(V^\ell))$ and $(\stabT_\ell)$ for $(\stabT(V^\ell))$. \label{defvstab3}
			\end{enumerate}
		\end{definition} 
		\begin{remark}
			\begin{enumerate}[label=(\roman*)]
				\item 
				It is straightforward to check that if $W\subset V\subset \globm{E}$, then we have the implications $(\stabS(V))\Rightarrow( \stabS(W))$ and $(\stabT(W))\Rightarrow (\stabT(V))$. In particular, for $(E,V^\bullet)\in \flag{}(\C)$ we have $(\stabS_{\ell+1})\Rightarrow (\stabS_{\ell})$ and $(\stabT_{\ell})\Rightarrow (\stabT_{\ell+1})$ for $0\leq \ell\leq N-1$. 
				\item Being stable with respect to $V=\{0\}$ is the same as being $0$-stable,  being stable with respect to $V=\globm{E}$ is the same as being $1$-stable. 
			\end{enumerate}
		\end{remark}
		
		We now show that $V$-stability is open in flat families:
		\begin{proposition}
			Suppose $\mathcal{E}$ is a flat family of sheaves in $\zeronestable$ over some base $T$, and that $\mathcal{V}\subset \globm{\mathcal{E}}$ is a locally free subsheaf of constant rank. Then
			\begin{enumerate}[label=\roman*)]
				\item the set of points $t\in T$ such that $(\stabS(\mathcal{V}_t))$ holds for $(\mathcal{E}_t,\mathcal{V}_t)$ is open,
				\item the set of points $t\in T$ such that $(\stabT(\mathcal{V}_t))$ holds for $(\mathcal{E}_t,\mathcal{V}_t)$ is open and 
				\item the set of $t\in T$ such that $\mathcal{E}_t$ is stable with respect to $\mathcal{V}_t$ is open.
			\end{enumerate}
		\end{proposition}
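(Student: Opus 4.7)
The plan is to deduce (iii) from (i) and (ii): since $\mathcal{E}$ is already assumed to be a family of $0,1$-semistable sheaves, $V$-stability at $t$ is just the conjunction of $(\stabS(\mathcal{V}_t))$ and $(\stabT(\mathcal{V}_t))$ by Definition \ref{defvstability}. So everything rests on parts (i) and (ii), which I would handle via parallel relative Quot-scheme arguments. Since openness is local on $T$, I may assume $T$ is quasi-compact.

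For (i), I would show the failure locus is closed. Any $\stabS(V)$-destabilizing subsheaf $S\subset \mathcal{E}_t$ has fiberwise Chern character $k\chexc$ for some $k\ge 1$, and conversely any subsheaf $S\subset \mathcal{E}_t$ with $\ch(S)=k\chexc$ is automatically isomorphic to $\mathcal{O}_C(-1)^{\oplus k}$ by Lemma \ref{lemexcchercarisexc}~(i) applied fiberwise (using that $p_*\mathcal{E}_t$ is stable). Using Corollary \ref{corintsumexc} together with upper-semicontinuity of $\dim\Hom(\mathcal{O}_C(-1),\mathcal{E}_t)$ on a quasi-compact $T$, only finitely many $k$ are relevant. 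For each such $k$, consider the relative Quot scheme $\mathcal{Q}_k:=\Quot_{\Xhat\times T/T}(\mathcal{E},\chat-k\chexc)$, which is proper over $T$. On $\mathcal{Q}_k$ the universal kernel $\mathcal{K}_k\hookrightarrow \mathcal{E}_{\mathcal{Q}_k}$ has fibers $\mathcal{O}_C(-1)^{\oplus k}$, so by Assumption \ref{assumpmlarge}\,(ii) and cohomology-and-base-change, $\Gamma_m(\mathcal{K}_k)$ is a locally free subsheaf of $\Gamma_m(\mathcal{E})\mid_{\mathcal{Q}_k}$ of rank $k\dim W$. Then the locus $Z_k\subset \mathcal{Q}_k$ where $\mathcal{V}\mid_{\mathcal{Q}_k}\cap \Gamma_m(\mathcal{K}_k)\neq 0$ inside $\Gamma_m(\mathcal{E})\mid_{\mathcal{Q}_k}$ is closed, since it is the degeneracy locus where the map of vector bundles $\mathcal{V}\mid_{\mathcal{Q}_k}\oplus \Gamma_m(\mathcal{K}_k)\to \Gamma_m(\mathcal{E})\mid_{\mathcal{Q}_k}$ fails to be injective. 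Its image in $T$ is closed by properness of $\mathcal{Q}_k\to T$, and the failure locus of $(\stabS(\mathcal{V}_t))$ is the finite union $\bigcup_k \pi(Z_k)$, hence closed.

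For (ii) I would run the dual argument using $\mathcal{Q}'_k:=\Quot_{\Xhat\times T/T}(\mathcal{E},k\chexc)$, the relative Quot scheme of quotients with fiberwise Chern character $k\chexc$. By Lemma \ref{lemexcchercarisexc}~(ii) (using $R^1 p_*\mathcal{E}_t=0$ from $0,1$-semistability), the universal quotient $\mathcal{K}'_k$ has fibers $\mathcal{O}_C(-1)^{\oplus k}$, and the bound on relevant $k$ comes from upper-semicontinuity of $\dim\Hom(\mathcal{E}_t,\mathcal{O}_C(-1))$. Applying $\Gamma_m$ to $0\to \mathcal{T}_k\to \mathcal{E}_{\mathcal{Q}'_k}\to \mathcal{K}'_k\to 0$ (which stays exact by Assumption \ref{assumpmlarge}), the condition $\mathcal{V}\subset \Gamma_m(\mathcal{T}_k)$ is equivalent to the composition $\mathcal{V}\mid_{\mathcal{Q}'_k}\hookrightarrow \Gamma_m(\mathcal{E})\mid_{\mathcal{Q}'_k}\twoheadrightarrow \Gamma_m(\mathcal{K}'_k)$ being identically zero, a closed condition. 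The same properness-plus-finite-union then yields closedness of the $(\stabT(\mathcal{V}_t))$-failure locus.

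The main obstacle is the bookkeeping around the universal subsheaves/quotients: one must verify that on each Quot scheme the universal kernel (resp.\ quotient) has the expected structure $\mathcal{O}_C(-1)^{\oplus k}$ in every fiber, so that $\Gamma_m$ of it is a genuine vector bundle injecting into (resp.\ surjected onto by) $\Gamma_m(\mathcal{E})$ in a way compatible with base change. This hinges on Lemma \ref{lemexcchercarisexc} combined with Assumption \ref{assumpmlarge}, after which the closedness reduces to the standard fact that degeneracy loci of maps of vector bundles are closed. Everything else is formal: properness of Quot, finiteness of the relevant range of $k$ via semicontinuity on the quasi-compact base, and the trivial combination for (iii).
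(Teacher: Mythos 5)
Your proposal is correct and follows essentially the same route as the paper's proof: reduce to showing the failure loci of $(\stabS(\mathcal{V}_t))$ and $(\stabT(\mathcal{V}_t))$ are closed by exhibiting each as the image under a proper morphism (from a relevant relative Quot scheme, finitely many $k$ contributing) of a determinantal degeneracy locus, then take the intersection for (iii). The only cosmetic differences are that you bound the relevant $k$ by semicontinuity of $\dim\Hom(\mathcal{O}_C(-1),\mathcal{E}_t)$ where the paper uses the cruder estimate $k\dim\globm{\mathcal{O}_C(-1)}\leq N$, and you phrase the degeneracy locus as non-injectivity of $\mathcal{V}\oplus\globm{\mathcal{K}_k}\to\globm{\mathcal{E}}$ rather than of the composition $\mathcal{V}\to\globm{\mathcal{E}'}$, which is equivalent.
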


		\begin{proof}
			To prove that $(\stabS(\mathcal{V}_t))$ is open, we show that there is a proper map from a scheme $Z$ to $T$ whose image is the complement of the locus where $(\stabS(\mathcal{V}_t))$ holds. 
			
			Using the universal sheaf $\mathcal{E}$ on $\widehat{X}\times T$, we construct the scheme 
			\[\mathcal{Q}:= \coprod_{k=1}^{\infty} \operatorname{Quot}_{\widehat{X}\times\flag{}/\flag{}, \widehat{c}-j\chexc}(\mathcal{E}). \]
			It follows from Lemma \ref{lemexcchercarisexc} that $\mathcal{Q}$ parametrizes families of objects $(E, V^\bullet)$ pulled back from $T$, together with a choice of subsheaf $S\subset E$, such that $S\simeq \mathcal{O}_C(-1)^{\oplus k}$ for some $k$. Note also that $\mathcal{Q}$ is in fact quasi-compact over $T$, since the numbers $k$ for which the Quot-schemes appearing in the union are nonempty are bounded by the condition $k\dim\globm{\mathcal{O}_C(-1)}\leq N$. Consequently the natural map $f:\mathcal{Q}\to T$ is proper.  Let $\mathcal{S}\subset f^*\mathcal{E}\twoheadrightarrow \mathcal{E}'$ be the respective universal sub-and quotient sheaves on $\mathcal{Q}$. By our Assumption \ref{assumpmlarge}, the sheaf $\mathcal{O}_C(-1)$ and all sheaves arising as fibers of $\mathcal{E}'$ have vanishing higher cohomology after tensoring with $L^{\otimes m}$. It follows that on $\mathcal{Q}$ we have a short exact sequence of locally free sheaves (whose ranks are only locally constant)
			\[0\to \globm{\mathcal{S}}\to  f^*\globm{\mathcal{E}}\to \globm{\mathcal{E}'}\to 0.\]
			Now let $Z$ be the closed determinantal subscheme of $\mathcal{Q}$ parametrizing the locus where the composition $f^*\mathcal{V}\to  f^*\globm{\mathcal{E}} \to \globm{\mathcal{E}'}$ is not injective. Then the set of points where condition $(\stabS(\mathcal{V}_t))$ does not hold is precisely the image of $Z$, which is closed by properness of $f$. 
			
			The proof of the second statement is analogous and left as an exercise. The last one follows immediately from the first two. 
		\end{proof}
		
		We can now define the stacks that we will use to construct the master space. 
		\begin{definition}
			\begin{enumerate}[label=(\arabic*)]
				\item For any $0\leq \ell\leq N$, we let $\flag{\ell}$ denote the open substack of $\flag{}$ whose $\C$-valued objects are those pairs $(E,V^{\bullet})$ for which $E$ is stable with respect to $V^{\ell}$. By abuse of notation, objects in $\flagell$ are also called \emph{$V^{\ell}$-stable}. We write $\mathcal{N}^{\ell}_{\chat}$ when we want to emphasize the dependence on the Chern character. 
				
				\item For any $0\leq \ell\leq N-1$, we let $\flagellss$ denote the open substack of $\flag{}$ whose $\C$-valued objects are the pairs $(E,V^{\bullet})$ that satisfy $(\stabS_\ell)$ and $(\stabT_{\ell+1})$. Objects in $\flagellss$ are also called \emph{$V^{\ell},V^{\ell+1}$-semistable}. We write $\mathcal{N}^{\ell,\ell+1}_{\chat}$ when we want to emphasize the dependence on the Chern character. 
			\end{enumerate}
		\end{definition}
		\begin{remark}
			The stack $\flagellss$ contains both $\flagell$ and $\flagellplusone$ as open substacks. It should be regarded as a stack of semistable objects  for a stability condition ``on the wall'' between stability with respect to $V^{\ell}$ and stability with respect to $V^{\ell+1}$ respectively. 
		\end{remark}
		
		A $V$-stable sheaf only has scalar automorphisms, due to the following: 
		\begin{lemma}\label{lemautscalarflag}
			Suppose $E_1$ and $E_2$ are $0,1$-semistable sheaves on $\widehat{X}$ with Chern character $\widehat{c}$, and that $E_1$ is stable with respect to $V_1\subset\globm{E_1}$ and $E_2$ is stable with respect to some $V_2\subset \globm{E_2}$. Suppose further that $V_1$ and $V_2$ have the same dimension. Then any homomorphism $\varphi:E_1\to E_2$ that carries $V_1$ into $V_2$ is either zero or an isomorphism. It follows that the only endomorphisms of $E_1$ that preserve $V_1$ are multiplications by scalars.  
		\end{lemma}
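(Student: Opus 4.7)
The plan is to feed $\varphi$ into Lemma~\ref{lemmapsofsstable}, which produces the dichotomy: either (a) $\operatorname{Im}\varphi \simeq \mathcal{O}_C(-1)^{\oplus k}$ with $\ker\varphi$ and $\operatorname{coker}\varphi$ both $0,1$-semistable, or (b) $\ker\varphi \simeq \operatorname{coker}\varphi \simeq \mathcal{O}_C(-1)^{\oplus k}$ with $\operatorname{Im}\varphi$ being $0,1$-semistable, for some $k\ge 0$. If $k=0$ in case (a) then $\varphi=0$, and if $k=0$ in case (b) then $\varphi$ is an isomorphism. So the whole task reduces to ruling out $k>0$ in each case using the hypotheses that $E_1$ is $V_1$-stable, $E_2$ is $V_2$-stable, and $\varphi(V_1)\subset V_2$ with $\dim V_1=\dim V_2$.

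A technical input I will exploit throughout is that by Assumption~\ref{assumpmlarge}, the functor $\globm{-}$ is exact on all the short exact sequences that appear here, since the pieces are either $0,1$-semistable of some Chern character $\widehat{c}-j\chexc$ or direct sums of $\mathcal{O}_C(-1)$, all of which have vanishing higher cohomology after twisting by $L^{\otimes m}$. In particular, $\globm{\ker\varphi}=\ker\globm{\varphi}$ and $\globm{\operatorname{Im}\varphi}\hookrightarrow\globm{E_2}$, so I can freely identify kernels, images and their global sections.

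Suppose first case (a) with $k>0$. Viewing $\operatorname{Im}\varphi$ as a subsheaf of $E_2$ isomorphic to a direct sum of $\mathcal{O}_C(-1)$'s, condition $(\stabS(V_2))$ forces $V_2\cap\globm{\operatorname{Im}\varphi}=0$; but $\globm{\varphi}(V_1)\subset V_2\cap\globm{\operatorname{Im}\varphi}$, so $V_1\subset\ker\globm{\varphi}=\globm{\ker\varphi}$. Setting $T:=\ker\varphi$, this gives $V_1\subset\globm{T}$ with $E_1/T\simeq\operatorname{Im}\varphi$ a sum of $\mathcal{O}_C(-1)$'s, contradicting $(\stabT(V_1))$. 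Now suppose case (b) with $k>0$. Applying $(\stabS(V_1))$ to $\ker\varphi\subset E_1$ yields $V_1\cap\globm{\ker\varphi}=0$, so $\globm{\varphi}|_{V_1}$ is injective; combined with $\dim V_1=\dim V_2$ and $\varphi(V_1)\subset V_2$, this forces $\varphi(V_1)=V_2$. But then $V_2\subset\globm{\operatorname{Im}\varphi}$ with $\operatorname{Im}\varphi\subsetneq E_2$ a proper subsheaf whose quotient is a sum of $\mathcal{O}_C(-1)$'s, contradicting $(\stabT(V_2))$.

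For the final statement on endomorphisms, I invoke the last clause of Lemma~\ref{lemmapsofsstable}, which provides a unique $\lambda\in\mathbb{C}$ such that $\varphi-\lambda\cdot\mathrm{Id}_{E_1}$ lies in case (a); since $\varphi$ and $\lambda\cdot\mathrm{Id}_{E_1}$ both preserve $V_1$, so does the difference, and what I just proved then forces $\varphi-\lambda\cdot\mathrm{Id}_{E_1}=0$. I do not expect a serious obstacle here; the only delicate point is the exactness of $\globm{-}$ on the short exact sequences arising from $\varphi$, which is exactly why Assumption~\ref{assumpmlarge} is needed.
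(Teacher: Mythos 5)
Your proof is correct and takes essentially the same route as the paper: apply Lemma~\ref{lemmapsofsstable} to get the dichotomy, then rule out $k>0$ in each case by combining $(\stabS)$ and $(\stabT)$ for $V_1$ and $V_2$ in exactly the way the paper does, and conclude the endomorphism statement by subtracting a scalar. Your explicit remark on the exactness of $\globm{-}$ via Assumption~\ref{assumpmlarge} is a worthwhile elaboration of a point the paper leaves implicit, but it does not change the argument.
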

		\begin{proof}
			Suppose $f:E_1\to E_2$ is such a homomorphism. By Lemma \ref{lemmapsofsstable}, we are in one of two possible cases: Either $\operatorname{Im} f \simeq \mathcal{O}_C(-1)^{\oplus k}$ or $\Ker f\simeq \operatorname{Coker} f\simeq \mathcal{O}_C(-1)^{\oplus k}$ for some $k$. Suppose the latter holds. Then by condition $(\stabS(V_1))$, it follows that $V_1\cap \globm{\Ker f}=\{0\}$ and therefore, $f$ induces an isomorphism $\globm{f}:V_1\xrightarrow{\sim} V_2$. But then $V_2\subset \globm{\operatorname{Im}(f)}$, and $E_2/\operatorname{Im} f\simeq \mathcal{O}_C(-1)^{\oplus k}$. By condition $(\stabT(V_2))$, we must have $k=0$, such that $f$ is an isomorphism. Now suppose instead that we have $\operatorname{Im}(f)\simeq \mathcal{O}_C(-1)^{\oplus k}$. Then condition $(\stabS(V_2))$ applied to $\operatorname{Im} f$ shows that $V_2\cap \globm{\operatorname{Im} f}=\{0\}$. Therefore $\globm{f}(V_1)=0$, and it follows that $V_1\subset\globm{\Ker f}$. Then $(\stabT(V_1))$ applied to $\Ker f$ shows that $k=0$, hence $f=0$. For the last statement of the lemma, suppose that $(E_1,V_1)=(E_2,V_2)$. Then one can find $\lambda\in \C$, such that $f-\lambda \operatorname{Id}_{E_1}$ is not injective and therefore zero by what we have already shown. It follows that $f=\lambda\operatorname{Id}_{E_1}$. 
		\end{proof}
		
		The following lemma shows that a sheaf with flag structure that is $V^{\ell},V^{\ell+1}$-semistable but not $V^{\ell}$-stable has a distinguished ``destabilizing'' subobject, and analogously if it is semistable, but not $V^{\ell+1}$-stable. This will allows us to define a notion of polystable sheaf and will be important in the construction of the master space in \S \ref{subsecmasterspace}.  
		\begin{lemma}\label{lemmindest}
			Let $(E,V^{\bullet})$ be a $0,1$-semistable sheaf on $\widehat{X}$ with flag structure.
			\begin{enumerate}[label=(\roman*)]
				\item  	Suppose $(E,V^{\bullet})\in \flag{}$ satisfies $(\stabS_j)$ but not $(\stabS_{j+1})$ for some $0\leq j\leq N-1$. Then there exists a unique minimal $(\stabS_{j+1})$-destabilizing subobject $S\subset E$. Moreover, $\globm{S}\cap V^{j+1}$ is one-dimensional, and equals $\globm{S'}\cap V^{j+1}$ for any other $(\stabS_{j+1})$-destabilizing subobject of $E$. \label{lemmindesti}
				\item  Suppose $(E,V^{\bullet})\in \flag{}$ satisfies $(\stabT_{j+1})$ but not $(\stabT_j)$ for some $0\leq j\leq N-1$. Then there exists a unique minimal $(\stabT_j)$-destabilizing subobject $T\subset E$. 		\label{lemmindestii}
				\item Suppose $f:(E,V^\bullet)\to (E',V'^\bullet)$ is a morphism of $V^{\ell},V^{\ell+1}$-semistable sheaves of the same Chern character which is not an isomorphism. 
				Then we are in exactly one of the following two cases: \label{lemmindestiii}
				\begin{enumerate}[label=(\alph*)]
					\item $\operatorname{Im} f$ is a direct sum of copies of $\mathcal{O}_C(-1)$. Then $\Ker f$ is $(\stabT_{\ell})$-destabilizing for $E$, and $\operatorname{Im} f$ is the minimal $(\stabS_{\ell+1})$-destabilizing subobject of $E'$.  \label{lemmindestiiia}
					\item $\Ker f$ is a direct sum of copies of $\mathcal{O}_C(-1)$. Then $\Ker f$ is a $(\stabS_{\ell+1})$-destabilizing subobject of $E$ and $\operatorname{Im}f$ is the minimal $(\stabT_{\ell})$-destabilizing subobject of $E'$. \label{lemmindestiiib}
				\end{enumerate}
			\end{enumerate}
		\end{lemma}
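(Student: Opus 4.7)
The plan is to prove the three parts in sequence, with parts (i) and (ii) providing the minimal-destabilizer structure invoked in part (iii). The key technical tools throughout are Corollary \ref{corintsumexc} (the collections of $\mathcal{O}_C(-1)$-shaped sub- and quotient objects are closed under sums and intersections) and the $H^1$-vanishing for $0,1$-semistable sheaves twisted by $L^m$ coming from Assumption \ref{assumpmlarge}.

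For part (i), my first observation is that any $(\stabS_{j+1})$-destabilizer $S'$ has $V^{j+1}\cap\globm{S'}$ exactly one-dimensional, because $(\stabS_j)$ kills $V^j\cap\globm{S'}$ while $V^{j+1}/V^j$ has dimension one. For two such destabilizers $S,S'$ the sum $S+S'$ is again of the right shape by Corollary \ref{corintsumexc} and continues to satisfy $(\stabS_j)$, so $V^{j+1}\cap\globm{S+S'}$ is also one-dimensional; this forces $V^{j+1}\cap\globm{S}=V^{j+1}\cap\globm{S'}=:L$. A generator of $L$ lifts to a section of $E\otimes L^m$ factoring through every destabilizer, so intersections remain destabilizers, and selecting one of minimum rank yields the unique minimal $S_0$. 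Part (ii) is dual: for $(\stabT_j)$-destabilizers $T$ and $T'$, the intersection $T\cap T'$ has the right quotient shape by Corollary \ref{corintsumexc}, and the inclusion $V^j\subset\globm{T\cap T'}$ follows from the long exact sequence associated to $0\to T\cap T'\to T\oplus T'\to T+T'\to 0$ after twisting by $L^m$, whose obstruction $H^1((T\cap T')\otimes L^m)$ vanishes by Assumption \ref{assumpmlarge} since $T\cap T'$ is $0,1$-semistable by iterated application of Remark \ref{rem01stable}.

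For part (iii), Lemma \ref{lemmapsofsstable} furnishes the case dichotomy. In case (a), where $\operatorname{Im} f\cong \mathcal{O}_C(-1)^{\oplus k}$, applying $(\stabS_\ell)$ on $E'$ to $\operatorname{Im} f$ forces $\globm{f}(V^\ell)=0$, giving $V^\ell\subset\globm{\Ker f}$ and hence the $(\stabT_\ell)$-destabilizing property of $\Ker f$. Then $(\stabT_{\ell+1})$ on $E$ prevents $V^{\ell+1}\subset\globm{\Ker f}$, so $\globm{f}(V^{\ell+1})$ is a nonzero line in $V'^{\ell+1}\cap\globm{\operatorname{Im} f}$ coinciding with the canonical line $L_{E'}$ from part (i) applied to $E'$. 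For the minimality of $\operatorname{Im} f$, I assume $S_0\subsetneq\operatorname{Im} f$ and form the composition $E\xrightarrow{f}E'\to E'/S_0$: its image is $\mathcal{O}_C(-1)^{\oplus (k-k_0)}$, and its kernel $f^{-1}(S_0)$ contains $V^{\ell+1}$ since $\globm{f}(V^{\ell+1})=L_{E'}\subset\globm{S_0}$; this makes $f^{-1}(S_0)$ a $(\stabT_{\ell+1})$-destabilizer of $E$, contradicting the hypothesis. Case (b) is handled dually: $(\stabS_\ell)$ on $E$ applied to $\Ker f\cong \mathcal{O}_C(-1)^{\oplus k}$ makes $\globm{f}|_{V^\ell}$ injective into $V'^\ell$ and hence, by equality of dimensions, an isomorphism onto $V'^\ell$, so $V'^\ell\subset\globm{\operatorname{Im} f}$ and $\operatorname{Im} f$ is $(\stabT_\ell)$-destabilizing; an analogous dimension count shows $V^{\ell+1}\cap\globm{\Ker f}$ is one-dimensional, so $\Ker f$ is $(\stabS_{\ell+1})$-destabilizing on $E$. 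Finally, if the minimal $(\stabT_\ell)$-destabilizer $T_0$ from part (ii) satisfied $T_0\subsetneq\operatorname{Im} f$, the composition $E\to E'/T_0$ would have image $\mathcal{O}_C(-1)^{\oplus(k_0-k)}$ and kernel $f^{-1}(T_0)$ containing both $V^\ell$ (since $V'^\ell\subset\globm{T_0}$) and the line $L_E=V^{\ell+1}\cap\globm{\Ker f}$, forcing $V^{\ell+1}\subset\globm{f^{-1}(T_0)}$ and contradicting $(\stabT_{\ell+1})$ on $E$. The main obstacle is this minimality assertion, which requires coupling both $(\stabS)$ and $(\stabT)$ conditions on both $E$ and $E'$ and tracking the canonical destabilizing lines from parts (i) and (ii) through the morphism $f$.
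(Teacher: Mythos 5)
Your proposal is correct and follows essentially the same strategy as the paper: for (i) and (ii) you use closure under sums/intersections from Corollary \ref{corintsumexc} together with the codimension-one pinch, and for (iii) you run the dichotomy of Lemma \ref{lemmapsofsstable} and derive minimality from a $(\stabT_{\ell+1})$-contradiction on $E$. The only cosmetic difference is that in part (ii) you route through an $H^1$-vanishing argument for $T\cap T'$ where left-exactness of $\Gamma_m$ already gives $\globm{T\cap T'}=\globm{T}\cap\globm{T'}$ directly, and you spell out case (a) of (iii) in full where the paper leaves it as analogous to (b).
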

		
		\begin{proof}
			\begin{enumerate}[label=(\roman*)]
				\item By Corollary \ref{corintsumexc}, the set of subobjects of $E$ isomorphic to a sum of copies of $\mathcal{O}_C(-1)$ is closed under sum and intersection. We will show that in fact the set of $(\stabS_{j+1})$-destabilizing subobjects is closed under intersections, from which it follows  that there must be a minimal one.  Let $S_1$ and $S_2$ be $(\stabS_{j+1})$-destabilizing subobjects of $E$. Then $W_1:=V^{j+1}\cap \globm{S_1}$ is nonzero. Since $(\stabS_j)$ holds for $E$, we have $V^j\cap \globm{S_1}=\{0\}$. Since $V^{j}\subset V^{j+1}$ has codimension one, it follows that $W_1$ is one-dimensional. The same argument shows that $W_2:=V^{j+1}\cap \globm{S_2}$ is one-dimensional. Note that $S_1+S_2$ also violates $(\stabS_{j+1})$, and therefore the subspace $\globm{S_1+S_2}\cap V^{j+1}$, which contains $W_1+W_2$, is also one-dimensional. This shows that $W_1=W_2$ and that $S_1\cap S_2$ is again $(\stabS_{j+1})$-destabilizing.  
				\item This case is slightly simpler: By Corollary \ref{corintsumexc}, and since the condition $V_{j}\subset \globm{T}$ is preserved by intersections in $T$, it follows that the intersection of two $(\stabT_{j})$-destabilizing objects is again $(\stabT_{j})$-destabilizing. Therefore the unique minimal $(\stabT_{j})$-destabilizing object $T$ is characterized by the condition that $E/T\simeq \mathcal{O}_C(-1)^{\oplus k}$ for $k$ maximal among all $(\stabT_j)$-destabilizing subobjects of $E$.   
				\item The two cases correspond to the cases of Lemma \ref{lemmapsofsstable}. We only discuss the case that $\Ker f$ is a direct sum of copies of $\mathcal{O}_C(-1)$, since the other one goes similarly. Let $S:=\Ker f\subset E$ and $T:=\operatorname{Im} f\subset E'$. Since $(\stabS_\ell)$ holds for $E$, we have that $V^\ell\cap \globm{S}=\{0\}$, so that $\globm{f}$ induces an isomorphism $V^\ell\xrightarrow{\sim}V'^\ell$. Since $f$ factors through $T$, we find that $T$ is $(\stabT_\ell)$-destabilizing for $E'$. On the other hand, $(\stabT_{\ell+1})$ holds for $E'$, so $\globm{f}:V^{\ell+1}\to V'^{\ell+1}$ is not an isomorphism. We must therefore have $V^{\ell+1}\cap \globm{S}\neq \{0\}$, so $S$ violates $(\stabS_{\ell+1})$. Note that this also shows that $\globm{f}(V^{\ell+1})\subset V'^{\ell}$.  Finally, let $T'\subset T$ be the minimal $(\stabT_{\ell})$-destabilizing subobject of $E'$. Then $\globm{f^{-1}(T)}$ contains $V^{\ell+1}$. Suppose the inclusion $T'\subset T$ was proper. Then $f^{-1}(T')$ would moreover be a proper subobject of $E$ with $E/f^{-1}(T)$ a direct sum of copies of $\mathcal{O}_C(-1)$. So we would have found a $(\stabT_{\ell+1})$-destabilizing subobject of $E$. This shows that, in fact, we must have $T'=T$. 
			\end{enumerate}
		\end{proof}
		
		\begin{remark}\label{remdestissimple}
			\begin{enumerate}[label=(\roman*)]
				\item Let $S$ be a minimal $(\stabS_{j+1})$-destabilizing subobject of $E$ as in Lemma \ref{lemmindest}, and write $V_S:=V^{j+1}\cap\globm{S}$. Then the pair $(S,V_S)$ is simple in the sense that the only endomorphisms of $S$ that respect $V_S$ are multiplications by scalars. Equivalently, there exists no nontrivial direct summand $S'\subset S$ such that $V_S\subset \globm{S'}$. To see this, we may suppose that $\varphi$ is a nonzero endomorphism of $S$ with a nontrivial kernel. Then either $\globm{\varphi}$ sends $V_S$ to zero, in which case $\Ker\varphi$ is a proper subobject of $S$ that still violates $(\stabS_{j+1})$, or it does not send $V_S$ to zero, in which case $\varphi(S)$ is a proper subobject that violates $(\stabS_{j+1})$. In either case, we have a contradiction to minimality of $S$.
				\item 	In a similar way one sees that a minimal $(\stabT_j)$ destabilizing subobject $T$ is stable with respect to the subspace $V^j\subset \globm{T}$. 
			\end{enumerate}
		\end{remark}
		
		We now define a notion of polystable sheaf, and show some of its basic properties. 
		
		\begin{definition}\label{defpolystable}
			A $V^{\ell},V^{\ell+1}$-semistable pair $(E,V^{\bullet})$  of a sheaf with flag structure is called \emph{ $V^{\ell},V^{\ell+1}$-polystable} if either
			\begin{enumerate}[label=(\roman*)]
				\item  it is both $V^{\ell}$ and $V^{\ell+1}$-stable, or if
				\item 	it is a direct sum $S\oplus T$, where $S$ and $T$ are its minimal $(\stabS_{\ell+1})$-destabilizing and $(\stabT_\ell)$-destabilizing subobjects respectively such that also $V^{j}=V_S^j\oplus V_T^{j}$ for all $j$, where $V_S^j:=V^j\cap \globm{S}$ and $V_T^j:=V^j\cap\globm{T}$.
			\end{enumerate}
			In the latter case, we also call $(E,V^{\bullet})$ \emph{properly polystable}.   We drop the reference to the flag structure $V^{\bullet}$ if it is implicit. 
		\end{definition}
		
		\begin{lemma}\label{lempolystabdefs}
			Let $(E,V^{\bullet})$ be a $V^{\ell},V^{\ell+1}$-semistable sheaf with flag structure. Then the following are equivalent:
			\begin{enumerate}[label=(\roman*)]
				\item The sheaf $E$ is properly polystable. \label{lempolystabdef1}
				\item The sheaf $E$ is not $V^{\ell+1}$-stable, and $(E,V^{\bullet})\simeq (S\oplus E/S, V_S^{\bullet}\oplus V^\bullet/V_S^\bullet)$, where $S$ is the  minimal $(\stabS_{\ell+1})$-destabilizing subobject and $V_S^j=V^j\cap\globm{S}$. \label{lempolystabdef2}
				\item The sheaf $E$ is not $V^{\ell}$-stable, and $(E,V^{\bullet})\simeq (T\oplus E/T, V_T^{\bullet}\oplus V^\bullet/V_T^\bullet)$, where $T$ is the  minimal $(\stabT_{\ell})$-destabilizing subobject and $V_T^j=V^j\cap\globm{S}$. \label{lempolystabdef3}
				\item The pair $(E,V^{\bullet})$ has an endomorphism $\varphi$ which is not multiplication by a scalar. \label{lempolystabdef4}
				\item The pair $(E,V^{\bullet})$ has automorphism group $\C^*\times \C^*$.  \label{lempolystabdef5}
			\end{enumerate}
		\end{lemma}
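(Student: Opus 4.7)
The plan is to prove the five equivalences by a circuit of implications, using Lemma \ref{lemmapsofsstable}, Lemma \ref{lemmindest}(iii), and Remark \ref{remdestissimple} as the principal tools. The forward implications (i)$\Rightarrow$(ii) and (i)$\Rightarrow$(iii) are direct from Definition \ref{defpolystable} upon identifying $E/S\simeq T$ and $E/T\simeq S$. For the converse (ii)$\Rightarrow$(i) (and (iii)$\Rightarrow$(i) by symmetry), given the flag-compatible splitting $E=S\oplus Q$ with $S$ the minimal $(\stabS_{\ell+1})$-destabilizer, one considers the idempotent $e\in\operatorname{End}(E,V^\bullet)$ projecting onto $Q$: this is a non-isomorphism with $\Ker e=S$ a sum of copies of $\mathcal{O}_C(-1)$, so case (b) of Lemma \ref{lemmindest}(iii) identifies $\operatorname{Im}(e)=Q$ as the minimal $(\stabT_\ell)$-destabilizer, completing the data required by Definition \ref{defpolystable}.

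For (i)$\Rightarrow$(v), I would decompose $\operatorname{End}(E,V^\bullet)$ into four blocks along $E=S\oplus T$. The diagonal blocks $\operatorname{End}(S,V_S^\bullet)$ and $\operatorname{End}(T,V_T^\bullet)$ are each $\C$: for $S$ this follows from Remark \ref{remdestissimple}, and for $T$ one first shows $(T,V_T^\bullet)$ is $(\stabT_\ell)$-stable (a destabilizer of $T$ would lift to a $(\stabT_\ell)$-destabilizer of $E$ strictly smaller than $T$), then invokes the symmetric analog. The off-diagonal Hom groups vanish: a nonzero flag-preserving $g\colon S\to T$ yields a nilpotent endomorphism $\tilde g\colon(s,t)\mapsto(0,g(s))$ of $(E,V^\bullet)$ with image in $T$; Lemma \ref{lemmindest}(iii)(a) would identify $\operatorname{Im}\tilde g$ as the minimal $(\stabS_{\ell+1})$-destabilizer, contradicting $S\cap T=0$ (in the subcase $\globm{g}(V_S^{\ell+1})=0$, one observes instead that $\Ker g\subsetneq S$ is itself S-destabilizing, violating minimality of $S$); the vanishing of $\Hom_{V^\bullet}(T,S)$ follows by the analogous argument using $(\stabT_\ell)$-stability of $T$. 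Thus $\operatorname{End}(E,V^\bullet)=\C\oplus\C$ and $\operatorname{Aut}(E,V^\bullet)=\C^*\times\C^*$. The implication (v)$\Rightarrow$(iv) is trivial.

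The main obstacle is (iv)$\Rightarrow$(i). By Lemma \ref{lemmapsofsstable} choose the unique $\lambda$ with $\psi:=\varphi-\lambda\operatorname{id}$ in case (a), so $S:=\operatorname{Im}\psi$ is a nontrivial sum of copies of $\mathcal{O}_C(-1)$ and, by Lemma \ref{lemmindest}(iii)(a), the minimal $(\stabS_{\ell+1})$-destabilizer; in particular $E$ is not $V^{\ell+1}$-stable. It then suffices to produce a flag-compatible splitting $E=S\oplus\Ker\psi$, since (ii)$\Rightarrow$(i) will deliver polystability. If $\psi^2\neq 0$, applying the same pair of lemmas to $\psi^2$ (whose image remains a sum of copies of $\mathcal{O}_C(-1)$) shows $\operatorname{Im}\psi^2$ is also the minimal S-destabilizer, hence $\operatorname{Im}\psi^2=S$; consequently $\psi|_S$ is surjective, thus an automorphism of $S$, and the section $\sigma:=\iota_S\circ(\psi|_S)^{-1}$ splits the surjection $\bar\psi\colon E\twoheadrightarrow S$ in a flag-preserving way because $(\psi|_S)^{-1}$ preserves $V_S^\bullet$ by invertibility. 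The genuinely hard case is $\psi^2=0$ (to which the general nilpotent case reduces by replacing $\psi$ with a suitable power $\psi^{n-1}$): here $\psi|_S=0$ and no splitting is directly readable from $\psi$. I would handle this by invoking the Jordan--Chevalley decomposition of $\varphi$ acting on $\globm{E}$, whose components lie in $\C[\varphi]\subseteq\operatorname{End}(E,V^\bullet)$; if $\varphi$ has at least two distinct eigenvalues on $\globm{E}$, the corresponding spectral idempotents produce a flag-compatible splitting $E=S_0\oplus T_0$ with $S_0$ a sum of copies of $\mathcal{O}_C(-1)$ (via the $p_*$-stability decomposition), which can then be refined to the canonical polystable form using Lemma \ref{lemmindest}(iii)(b); the remaining subcase, in which $\varphi$ has a single eigenvalue on $\globm{E}$, requires a more delicate analysis of the action of $\psi$ on the full flag $V^\bullet$ combined with the $\Ext^1$-vanishings of Lemma \ref{lemvanishing} to force the extension $0\to S\to E\to E/S\to 0$ to split flag-compatibly, and is the most intricate component of the argument.
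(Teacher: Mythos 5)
Your proposal is structured sensibly and most of the implications match the paper's argument (the paper likewise reduces everything to a single Claim establishing that a non-scalar flag-preserving endomorphism forces the polystable splitting), but the implication \ref{lempolystabdef4}$\Rightarrow$\ref{lempolystabdef1} has a genuine gap at exactly the point you flag as ``the most intricate component of the argument.'' You split into the cases $\psi^2\neq 0$ and $\psi^2=0$, handle the first correctly, and then for the second invoke a Jordan--Chevalley decomposition of the action on $\globm{E}$ followed by an unspecified ``more delicate analysis'' when there is a single eigenvalue. This is not a proof; it is an acknowledgment that the argument is incomplete.

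The missing observation is that the case $\psi^2=0$ (with $\psi\neq 0$) cannot occur, and the flag structure at level $\ell+1$ is what rules it out. Concretely: with $\psi:=\varphi-\lambda\operatorname{id}$ in case (a), Lemma~\ref{lemmindest}~\ref{lemmindestiii} identifies $T:=\Ker\psi$ as a $(\stabT_\ell)$-destabilizing subobject, so $V^\ell\subset\globm{T}$ and hence $\globm{\psi}(V^\ell)=0$. If $\globm{\psi}(V^{\ell+1})$ were also zero, then $V^{\ell+1}\subset\globm{T}$ would make $T$ a $(\stabT_{\ell+1})$-destabilizing subobject, contradicting $V^{\ell},V^{\ell+1}$-semistability. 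So $\globm{\psi}(V^{\ell+1})\neq 0$. By Lemma~\ref{lemmindest}~\ref{lemmindesti} the subspace $V_S:=\globm{S}\cap V^{\ell+1}$ is a one-dimensional complement to $V^\ell$ in $V^{\ell+1}$, so $\globm{\psi}$ is nonzero on $V_S$, and since $\psi$ preserves $V^{\ell+1}$ and $V_S\subset\globm{S}$, we get $0\neq\globm{\psi}(V_S)\subset\globm{\psi(S)}\cap V^{\ell+1}$. Thus $\psi(S)$ is $(\stabS_{\ell+1})$-destabilizing, so $S\subseteq\psi(S)$ by minimality, and equality follows by comparing Chern characters. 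Hence $\psi|_S$ is always an automorphism of $S$, the splitting $E=S\oplus T$ follows at once, and your ``hard case'' never arises. This is the step your proposal is missing; the Jordan--Chevalley route is not needed and, as you present it, does not close the argument.
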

		\begin{proof}
			The implications \ref{lempolystabdef1}$\Rightarrow$\ref{lempolystabdef2} and \ref{lempolystabdef1}$\Rightarrow$\ref{lempolystabdef3} are immediate by definition of polystability. Also we clearly have that \ref{lempolystabdef5} $\Rightarrow$ \ref{lempolystabdef4}, and moreover that either of \ref{lempolystabdef2} or \ref{lempolystabdef3} imply \ref{lempolystabdef4}, e.g. by considering the projection to followed by inclusion of a summand.  To conclude, it is enough to show that \ref{lempolystabdef4} implies \ref{lempolystabdef1} and \ref{lempolystabdef5}, which is due to the the following
			\begin{claim}
				Suppose that $\varphi:E\to E$ is a non-scalar endomorphism that respects the flag structure. Then $(E,V^\bullet)\simeq (S\oplus T, V_S^{\bullet}\oplus V_T^{\bullet})$ is polystable and $\varphi$ is scalar on each factor.  
			\end{claim} 
			\begin{proof}
				According to Lemma \ref{lemmapsofsstable}, after adding a scalar endomorphism to $\varphi$, we can assume that $\operatorname{Im} \varphi$ is a direct sum of $k\geq 1$ copies of $\mathcal{O}_C(-1)$.  We are therefore in case \ref{lemmindestiiia} of Lemma \ref{lemmindest} \ref{lemmindestiii}, and conclude that $T:=\Ker\varphi$ is a $(\stabT_{\ell})$-destabilizing subobject and $S:=\operatorname{Im}\varphi$ is the minimal $(\stabS_{\ell+1})$-destabilizing subobject of $E$.  By Lemma \ref{lemmindest} \ref{lemmindesti}, the subspace $V_S:=\globm{S}\cap V^{\ell+1}$ is a one-dimensional complement to $V^\ell$ in $V^{\ell+1}$. Since $\globm{\varphi}(V^{\ell+1})\neq \{0\}$, but $\globm{\varphi}(V^\ell)=\{0$\}, it follows that $0\neq\globm{\varphi}(V_S)\subset \globm{\varphi}(S)\cap V^{\ell+1}$. So $\varphi(S)$ is $(\stabS_{\ell+1})$-destabilizing and therefore contains $S$. By comparing Chern characters, it follows that $\varphi(S)=S$. This shows that $E$ splits as $S\oplus T$, and that $\varphi=\varphi_S\oplus 0$, where $\varphi_S$ is an automorphism of $S$ that respects $V_S$ to itself. By Remark \ref{remdestissimple}, $\varphi_S$ must be multiplication by a scalar. Suppose $T'\subset T$ is the minimal $(\stabT_{\ell})$-destabilizing subobject. If the inclusion is proper, then $S+T'$ would be a subobject violating $(\stabT_{\ell+1})$. Therefore $T=T'$. Finally, the claim that the flags split as required follows by observing that $V_T^j=\Ker\globm{\varphi}\mid_ {V^j}$ and $V_S^j=\operatorname{Im}\globm{\varphi}\mid_{V^j}$ for any $j$. 
			\end{proof}
		\end{proof}
		
		The following lemma shows that we can build a polystable sheaf out of its direct summands. 
		\begin{lemma}\label{lemdecpolystables}
			Suppose $(E, V^\bullet)$ is a $0,1$-semistable sheaf with flag structure and that we have a decomposition $E\simeq S\oplus T$, where $S\simeq \mathcal{O}_C(-1)^{\oplus k}$ for some $k>0$, and which induces a decomposition $V^{\bullet}=V_S^\bullet\oplus V_T^\bullet$. Then $E$ is $V^{\ell},V^{\ell+1}$-polystable with associated decomposition $E\simeq S\oplus T$ if and only if all of the following conditions are satisfied:
			\begin{enumerate}[label= (\roman*)]
				\item For all $0\leq j\leq \ell$, we have $V_S^j=\{0\}$,
				\item The pair $(S,V_S^{\ell+1})$ is simple in the sense of Remark \ref{remdestissimple} (In particular, $V_S^{\ell+1}$ is one-dimensional),
				\item The sheaf $T$ is stable with respect to the $\ell$-dimensional subspace $V_T^\ell\subset \globm{T}$. 
			\end{enumerate}
		\end{lemma}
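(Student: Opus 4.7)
The proof has two directions, both of which unfold essentially by feeding the given hypotheses into the structural results on $V$-stability already established in Proposition \ref{propfiltrations}, Lemma \ref{lemmindest}, and Remark \ref{remdestissimple}, together with the Ext-vanishing Lemma \ref{lemvanishing}. For the forward direction, assume $(E,V^\bullet)$ is properly polystable with the given decomposition $E \simeq S \oplus T$. By Definition \ref{defpolystable}, $S$ and $T$ are the minimal $(\stabS_{\ell+1})$- and $(\stabT_\ell)$-destabilizing subobjects respectively, and the flag splits. Condition (i) is immediate from $(\stabS_\ell)$ applied to the $\mathcal{O}_C(-1)^{\oplus\bullet}$-subobject $S$. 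Condition (ii) is precisely Remark \ref{remdestissimple} applied to the minimal destabilizer $S$, with the one-dimensionality of $V_S^{\ell+1}$ coming from Lemma \ref{lemmindest} \ref{lemmindesti}. Condition (iii) is the analog of Remark \ref{remdestissimple} for the minimal $(\stabT_\ell)$-destabilizer $T$, with $\dim V_T^\ell = \ell$ following from (i).

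For the converse direction, assume (i)--(iii). First note that (i) gives $V^\ell = V_T^\ell$, while the one-dimensionality of $V_S^{\ell+1}$ in (ii) forces $V_T^{\ell+1} = V_T^\ell$, so $V^{\ell+1} = V_S^{\ell+1} \oplus V_T^\ell$. To verify $(\stabS_\ell)$ for $E$, let $S' \subset E$ be of the form $\mathcal{O}_C(-1)^{\oplus k'}$ and take $v \in V^\ell \cap \globm{S'}$. The projection $\pi_S: E \to S$ restricts to a map $S' \to S$ between direct sums of copies of $\mathcal{O}_C(-1)$, hence has image $\pi_S(S')$ again of the form $\mathcal{O}_C(-1)^{\oplus\bullet}$; by the Ext-vanishing of Lemma \ref{lemvanishing}, the sequence $0 \to S'\cap T \to S' \to \pi_S(S') \to 0$ splits, exhibiting $S' \cap T$ as a sum of copies of $\mathcal{O}_C(-1)$ inside $T$. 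Then $v \in V_T^\ell \cap \globm{S'\cap T}$ vanishes by the $V_T^\ell$-stability of $T$ in (iii).

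To verify $(\stabT_{\ell+1})$, suppose $T'\subsetneq E$ satisfies $E/T'\simeq \mathcal{O}_C(-1)^{\oplus k'}$ and $V^{\ell+1}\subset\globm{T'}$. By Corollary \ref{corintsumexc} applied to $T\in\mathcal{M}^{0,1}$, the image of $T\to E/T'$ is $T/(T\cap T')\simeq \mathcal{O}_C(-1)^{\oplus k''}$. If $T\cap T'\subsetneq T$, then $V_T^\ell\subset \globm{T}\cap\globm{T'} = \globm{T\cap T'}$ contradicts (iii). Otherwise $T\subseteq T'$, and $\pi_S$ identifies $T'/T$ with $T'\cap S\hookrightarrow S$; since $S/(T'/T) = E/T'\simeq \mathcal{O}_C(-1)^{\oplus k'}$, it follows that $T'/T$ is itself of the form $\mathcal{O}_C(-1)^{\oplus (k-k')}$, so by Lemma \ref{lemvanishing} both relevant short exact sequences split to give $T' = T\oplus (T'\cap S)$ with $T'\cap S$ a proper direct summand of $S$. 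But then $V_S^{\ell+1}\subset V^{\ell+1}\subset \globm{T'}$ forces $V_S^{\ell+1}\subset \globm{T'\cap S}$, contradicting simplicity of $(S,V_S^{\ell+1})$ in (ii). Minimality of $S$ is analogous: any $\mathcal{O}_C(-1)^{\oplus\bullet}$-subobject $S''\subsetneq S$ is a direct summand by Lemma \ref{lemvanishing} and cannot contain $V_S^{\ell+1}$ by (ii); minimality of $T$ reduces, via Corollary \ref{corintsumexc} applied to $T$, to the $(\stabT_\ell)$-part of (iii). Together with the hypothesized flag splitting, this establishes $V^\ell,V^{\ell+1}$-polystability with decomposition $S\oplus T$.

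\textbf{Main obstacle.} The most delicate step is the case $T\subseteq T'$ in the verification of $(\stabT_{\ell+1})$. Producing the direct sum decomposition $T' = T\oplus (T'\cap S)$ so as to invoke the simplicity condition (ii) requires observing that $T'/T\hookrightarrow S$ is itself a direct sum of copies of $\mathcal{O}_C(-1)$, which one obtains from $S/(T'/T) = E/T'$ being of that form via a short exact sequence analysis powered by the Ext-vanishing of Lemma \ref{lemvanishing}.
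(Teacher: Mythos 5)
Your proof is correct in outline but takes a genuinely different and more laborious route than the paper, and one step is left too terse to stand on its own.

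The paper's argument is shorter: after verifying that $(E,V^\bullet)$ is $V^\ell,V^{\ell+1}$-semistable, it simply observes that the projector $E\twoheadrightarrow S\hookrightarrow E$ is a non-scalar flag-preserving endomorphism, and then invokes Lemma~\ref{lempolystabdefs}, whose claim shows that any such endomorphism automatically forces $(E,V^\bullet)$ to be properly polystable with decomposition given by the kernel and image of that endomorphism -- which here are precisely $T$ and $S$. The minimality of $S$ and $T$ among destabilizers, and the compatibility of the flag splitting, all come for free from that lemma. You instead chose to verify the defining conditions of Definition~\ref{defpolystable} by hand: semistability \emph{and} that $S$, $T$ are the minimal $(\stabS_{\ell+1})$- and $(\stabT_\ell)$-destabilizers. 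Your semistability arguments for $(\stabS_\ell)$ and $(\stabT_{\ell+1})$ are fine and are essentially more detailed renditions of the paper's one-liners (the invocations of Lemma~\ref{lemvanishing} to split $0\to S'\cap T\to S'\to \pi_S(S')\to 0$ and $T'=T\oplus(T'\cap S)$ are technically superfluous -- once $T\subset T'$ the second splitting is automatic, and for the first one only needs that maps between direct sums of $\mathcal{O}_C(-1)$ behave like linear maps -- but they are correct).

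The minimality of $S$, however, is not fully established by the one sentence you give. You show that no \emph{proper direct summand of $S$} can contain $V_S^{\ell+1}$. But the minimal $(\stabS_{\ell+1})$-destabilizing subobject $S_{\min}$ is a priori only known to be some $\mathcal{O}_C(-1)^{\oplus}$-subobject of $E$ with $\globm{S_{\min}}\cap V^{\ell+1}\neq 0$; you still need to know that $V_S^{\ell+1}\subset\globm{S_{\min}}$ before simplicity of $(S,V_S^{\ell+1})$ is applicable. That step requires first noting $S_{\min}\subset S$ (since $S$ is itself destabilizing) and then invoking Lemma~\ref{lemmindest}~\ref{lemmindesti}, which says $\globm{S_{\min}}\cap V^{\ell+1}=\globm{S}\cap V^{\ell+1}=V_S^{\ell+1}$ (the last equality using the flag splitting and the one-dimensionality of $V_S^{\ell+1}$). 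You omit this, and the way you phrase it -- considering ``subobjects of $S$'' -- does not obviously engage with subobjects of $E$. The minimality argument for $T$ is closer to complete, though also terse. In short, your approach works after filling in the invocation of Lemma~\ref{lemmindest}~\ref{lemmindesti}, but the route through Lemma~\ref{lempolystabdefs} taken by the paper avoids all of the minimality bookkeeping and is the preferred path.
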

		
		\begin{proof}
			The only if direction follows from Remark \ref{remdestissimple} and the definition of polystable sheaf. For the if direction, we only need to show that the given conditions imply $V^{\ell},V^{\ell+1}$-semistability. Polystability then follows due to the presence of extra automorphisms from Lemma \ref{lempolystabdefs}. Any $(\stabS_\ell)$-destabilizing  $S'\subset E$ induces a subobject $S'\cap T\subset T$ which violates $(\stabS_{V_T^\ell})$. Therefore $(\stabS_\ell)$ holds for $E$. On the other hand, if $T'\subset E$ violates $(\stabT_{\ell+1})$, then  $T\subset T'$, since otherwise $T'\cap T\subset T$ violates $(\stabT_{V_T^\ell})$. Moreover, for such $T'$, we have $V_S^{\ell+1}\subset V^{\ell+1}\subset \globm{T'}$, and therefore $S\subset T'$ due to simpleness of $(S,V_S^{\ell+1})$. It follows that $T'=E$, contradicting the assumption that it is $(\stabT_{\ell+1})$-destabilizing. This shows that in fact $E$ is semistable.  
		\end{proof}

		To an object which is $V^\ell,V^{\ell+1}$-semistable but not stable with respect to one of $V^{\ell}$ or $V^{\ell+1}$, we have an associated polystable object as the following lemma shows. 
		\begin{lemma}\label{lemasspolystable}
			Let $(E,V^{\bullet})$ be a $V^{\ell},V^{\ell+1}$-semistable sheaf with flag structure. 
			\begin{enumerate}[label=(\roman*)]
				\item 	If $E$ is not $V^{\ell+1}$-stable, let $S\subset E$ be the minimal subobject violating $(\stabS_{\ell+1})$. Then $(S\oplus E/S, V_S^{\bullet}\oplus V^{\bullet}/V_S^{\bullet})$ is polystable, where $V_S^j=V^j\cap \globm{S}$.
				\item If $E$ is not $V^{\ell}$-stable, let $T\subset E$ be the mininmal subobject violating $(\stabT_\ell)$. Then $(T\oplus E/T, V_T^\bullet\oplus V^{\bullet}/V_T^{\bullet})$ is polystable, where $V_T^j=V^j\cap \globm{T}$. \label{lemasspolystableii}
			\end{enumerate}  
		\end{lemma}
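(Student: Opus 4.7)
The approach is to apply Lemma~\ref{lemdecpolystables} to the pair $(E', V'^{\bullet}) := (S \oplus E/S,\, V_S^{\bullet} \oplus V^{\bullet}/V_S^{\bullet})$, reducing polystability to three explicit conditions. I treat case~(i) in detail; case~(ii) follows by the symmetric argument using the minimal $(\stabT_\ell)$-destabilizing subobject $T$ in place of $S$, together with the dual Hom and Ext vanishings.

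First I would check that $E' = S \oplus E/S$ is itself $0,1$-semistable. By Remark~\ref{rem01stable}~\ref{rem01stable2} applied to $0 \to S \to E \to E/S \to 0$, the quotient $E/S$ is $0,1$-semistable. Since $S$ is a direct sum of copies of $\mathcal{O}_C(-1)$, we have $p_*S = 0$, $R^1p_*S = 0$, and $\Hom_{\Xhat}(S, \mathcal{O}_C(-2)) = 0$ by Lemma~\ref{lemvanishing}~\ref{lemvanii}, so each of the three defining conditions of $0,1$-semistability is preserved by direct summing. The flag $V'^{\bullet}$ has $j$-th term of dimension $j$ and top piece $\globm{S} \oplus \globm{E/S} = \globm{E'}$, so it is a complete flag.

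Conditions~(i) and~(ii) of Lemma~\ref{lemdecpolystables} follow directly from the setup: condition~(i), $V_S^j = 0$ for $j \leq \ell$, follows from $(\stabS_\ell)$ on $E$ applied to the subobject $S$, which forces $V^\ell \cap \globm{S} = V_S^\ell = 0$; condition~(ii), that $(S, V_S^{\ell+1})$ is simple, is Remark~\ref{remdestissimple} applied to the minimal $(\stabS_{\ell+1})$-destabilizing subobject. For condition~(iii), that $E/S$ is stable with respect to $V^\ell \hookrightarrow \globm{E/S}$ (the injection coming from $V_S^\ell = 0$), I would verify $(\stabS(V^\ell))$ on $E/S$ as follows: any subobject $S'' \simeq \mathcal{O}_C(-1)^{\oplus k'}$ of $E/S$ lifts to $\tilde S \subset E$ containing $S$, and since $\Ext^1(\mathcal{O}_C(-1), \mathcal{O}_C(-1)) = 0$ by Lemma~\ref{lemvanishing}~\ref{lemvani}, the sequence $0 \to S \to \tilde S \to S'' \to 0$ splits, so $\tilde S$ is again a direct sum of copies of $\mathcal{O}_C(-1)$ in $E$; then $(\stabS_\ell)$ on $E$ gives $V^\ell \cap \globm{\tilde S} = 0$, which descends to $V^\ell \cap \globm{S''} = 0$ in $\globm{E/S}$.

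The main obstacle is the $(\stabT(V^\ell))$ verification on $E/S$: a destabilizing $T' \subsetneq E/S$ with $(E/S)/T' \simeq \mathcal{O}_C(-1)^{\oplus j}$ and $V^\ell \subset \globm{T'}$ lifts to $\tilde T \subset E$ containing $S$ with $V^\ell \subset \globm{\tilde T}$, but this only says $\tilde T$ is $(\stabT_\ell)$-destabilizing for $E$, and $E$ need not be $V^\ell$-stable. The key observation that closes the argument is that $V_S^{\ell+1} \subset \globm{S} \subset \globm{\tilde T}$ automatically, while $V_S^{\ell+1} \cap V^\ell \subset V^\ell \cap \globm{S} = V_S^\ell = 0$; a dimension count then gives $V^{\ell+1} = V^\ell + V_S^{\ell+1}$, and hence $V^{\ell+1} \subset \globm{\tilde T}$, which contradicts $(\stabT_{\ell+1})$ on $E$. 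This ``upgrade'' from $V^\ell$ to $V^{\ell+1}$, made possible by the one extra flag dimension sitting inside $\globm{S}$, is the geometric crux of the lemma.
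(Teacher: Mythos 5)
Your proposal is correct and follows essentially the same route as the paper: both reduce to checking the three conditions of Lemma~\ref{lemdecpolystables}, obtaining (i) from $(\stabS_\ell)$, (ii) from Remark~\ref{remdestissimple}, and (iii) by lifting putative destabilizers of $E/S$ to subobjects of $E$. The only difference is that you spell out two points the paper leaves tacit (the $\Ext^1$ splitting showing $\tilde S$ is again a direct sum of copies of $\mathcal{O}_C(-1)$, and the dimension count upgrading $V^\ell\subset\globm{\tilde T}$ to $V^{\ell+1}\subset\globm{\tilde T}$), both of which are needed and correctly argued.
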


		\begin{proof}
			Suppose that $S\subset E$ is a minimal $(\stabS_{\ell+1})$-destabilizing subobject. Then $(S,V_S^{\ell+1})$ is simple due to Remark \ref{remdestissimple}. Moreover, $E/S$ is stable with respect to $V^{\ell}/V_S^\ell=V^\ell$: By taking the preimage of a subobject violating $(\stabS({V^\ell/V_S^\ell}))$ or $(\stabT(V^\ell/V_S^\ell))$ in $E/S$, we would get a subobject of $E$ violating $(\stabS_\ell)$ or $(\stabT_{\ell+1})$ respectively. Now the first claim follows from Lemma \ref{lemdecpolystables}. 
			Next, suppose instead that $T\subset E$ is a minimal $(\stabT_\ell)$-destabilizing subobject. By Remark \ref{remdestissimple}, $T$ is stable with respect to $V_T^\ell=V^\ell$. We claim that $(E/T, V^{\ell+1}/V_T^{\ell+1})$ is simple in the sense of Remark \ref{remdestissimple}. In fact, any proper subobject $S'\subset E/T$ which is a sum of copies of $\mathcal{O}_C(-1)$ and such that $V^{\ell+1}/V_T^{\ell+1}\subset\globm{S'}$ pulls back to a subobject of $E$ violating $(\stabT_{\ell+1})$. Again, the claimed polystability follows from Lemma \ref{lemdecpolystables} .
		\end{proof}

		In Subsection \ref{subsec:fix}, we will use the following partial generalization of Lemma \ref{lempolystabdefs} to families
		\begin{lemma}\label{lempolystabfamilies} 
			Let $\mathcal{E}$ be a family of $V^{\ell},V^{\ell+1}$-semistable sheaves over an finite type $\C$-scheme $Y$ and let $f$ be an endomorphism of $\mathcal{E}$ respecting the flag structure. Suppose $y\in Y$ is a closed point, such that the restriction $f_y:\mathcal{E}_y\to\mathcal{E}_y$ is not equal to multiplication by a scalar. Then 
			\begin{enumerate}[label=(\roman*)]
				\item there exists a neighborhood $U$ of $y$, such that the subsheaf $\Sheafhom_{flag}(\mathcal{E},\mathcal{E})$ of endomorphisms that respect the flag structure is free over $U$ with generators $\operatorname{id}_{\mathcal{E}_U}$ and $f\mid_U$, \label{lempolystabfam1}
				\item over each connected component of $U$, we have a splitting $\mathcal{E}\simeq \mathcal{S}\oplus \mathcal{T}$ which induces a splitting of flags $\mathcal{V}^{\bullet}\simeq \mathcal{V}_S^{\bullet}\oplus \mathcal{V}_T^j$,  such that $\mathcal{T}$ is a family of $\mathcal{V}_T^{\ell}$-stable sheaves of Chern character $\chat-k\chexc$ and such that  $\mathcal{S}$ is Zariski-locally isomorphic to  a constant family with fiber $\mathcal{O}_C(-1)^{\oplus k}$ for some $k>0$ and $(\mathcal{S}, \mathcal{V}_S^{\ell+1})$ defines a family of simple sheaves in the sense of Remark \ref{remdestissimple}.\label{lempolystabfam2}
			\end{enumerate}
		\end{lemma}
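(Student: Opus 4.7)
The plan is to establish (i) directly from the pointwise bound in Lemma \ref{lempolystabdefs}, and then use (i) to construct the decomposition in (ii) via idempotents in the resulting rank-$2$ algebra of flag-preserving endomorphisms.

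For (i), Lemma \ref{lempolystabdefs} gives the pointwise bound $\dim \Hom_{flag}(\mathcal{E}_t,\mathcal{E}_t)\leq 2$ at every closed point $t\in Y$. On the other hand, since $f_y$ is not scalar, $f_t$ remains non-scalar in an open neighborhood $U$ of $y$ (the scalar locus $\{t:f_t\in \C\cdot \id\}$ is closed), so at each $t\in U$ the sections $\id$ and $f|_U$ restrict to two linearly independent elements of $\Hom_{flag}(\mathcal{E}_t,\mathcal{E}_t)$. Combined with the upper bound, this forces the fiber dimension to equal exactly $2$ throughout $U$. By cohomology and base change, $\Sheafhom_{flag}(\mathcal{E},\mathcal{E})$ is then locally free of rank $2$ on $U$ with its formation commuting with base change, and the sections $\id$ and $f|_U$ form a basis.

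For (ii), shrink $U$ so that each $(\mathcal{E}_t,\mathcal{V}_t^{\bullet})$ is properly polystable (by (i) and Lemma \ref{lempolystabdefs}). The algebra structure on $\mathcal{A}:=\Sheafhom_{flag}(\mathcal{E},\mathcal{E})$ is encoded by a relation $f^2=\beta(t)f-\alpha(t)\id$ for regular functions $\alpha,\beta$ on $U$. At each $t\in U$, this quadratic has two distinct roots, namely the eigenvalues of $f_t$ on the two summands of the polystable decomposition of $\mathcal{E}_t$, so the discriminant $\beta^2-4\alpha$ is a unit on $U$ and $\mathcal{A}\simeq \mathcal{O}_U[x]/(x^2-\beta x+\alpha)$ is an étale $\mathcal{O}_U$-algebra of rank $2$. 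On each connected component of $U$ (chosen so that the two eigenvalue functions $\lambda_1(t),\lambda_2(t)$ are regular), we form orthogonal idempotents $e:=(f-\lambda_2\id)/(\lambda_1-\lambda_2)$ and $1-e$ in $\mathcal{A}$, and set $\mathcal{S}:=e\cdot\mathcal{E}$, $\mathcal{T}:=(1-e)\cdot\mathcal{E}$. Since $e$ respects the flag, taking global sections yields the compatible splitting $\mathcal{V}^{\bullet}=\mathcal{V}_S^{\bullet}\oplus \mathcal{V}_T^{\bullet}$. At each $t\in U$ the fibers match the polystable decomposition of Lemma \ref{lempolystabdefs} and Lemma \ref{lemasspolystable}: $\mathcal{S}_t\simeq \mathcal{O}_C(-1)^{\oplus k}$ and $\mathcal{T}_t$ is $\mathcal{V}_T^{\ell}$-stable. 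By openness of flag stability, $\mathcal{T}$ is a family of $\mathcal{V}_T^{\ell}$-stable sheaves on a neighborhood of $y$; and the vanishing $\Ext^1_{\Xhat}(\mathcal{O}_C(-1),\mathcal{O}_C(-1))=0$ from Lemma \ref{lemvanishing} implies that $\mathcal{O}_C(-1)$ is rigid, so any family with fibers isomorphic to $\mathcal{O}_C(-1)^{\oplus k}$ is a $\mathrm{GL}_k$-torsor over the base and hence Zariski-locally isomorphic to the constant family.

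The delicate point in the argument is the existence of the idempotents $e,1-e$: although $\mathcal{A}$ is étale of rank $2$, it may fail to split Zariski-locally on $U$, so one must in general pass to an étale double cover to obtain the two eigenvalue sections $\lambda_1,\lambda_2$. This is precisely why the statement is phrased in terms of connected components of $U$ rather than on $U$ itself. Once the splitting is in place, the remaining checks (stability of $\mathcal{T}$ in families, Zariski-local triviality of $\mathcal{S}$) are direct applications of the results of Subsections \ref{subsec:semistable}--\ref{subsecunderlyingstack} together with Lemma \ref{lemvanishing}.
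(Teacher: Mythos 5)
Your strategy mirrors the paper's, but the execution of both parts has a genuine gap.

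For (i), the justification you cite is not quite the right one. Constant fiber dimension of the functor $t\mapsto \Hom_{flag}(\mathcal{E}_t,\mathcal{E}_t)$ combined with ``cohomology and base change'' does not directly yield local freeness of $\Sheafhom_{flag}(\mathcal{E},\mathcal{E})$ when $Y$ is not reduced: the fibers of the sheaf need not compute $\Hom_{flag}(\mathcal{E}_t,\mathcal{E}_t)$ until you know that formation of the sheaf commutes with base change, which is what you are trying to prove. Also, $\Sheafhom_{flag}$ is not a derived pushforward but a kernel, so the standard theorems do not apply out of the box. What you do have --- and this is the right lever --- is \emph{surjectivity} of the restriction map at the single point $y$ (because $\operatorname{id}$ and $f$ already span the two-dimensional target). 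The paper realizes $\Hom_{flag}$ as a coherent left-exact functor in the sense of Hartshorne and applies his Proposition~3.9, which converts exactly this surjectivity at a point into local freeness with base change in a neighborhood, with no reducedness hypothesis. Your argument should invoke that criterion rather than constancy of fiber dimension.

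For (ii), the gap is more serious. You correctly observe that $\mathcal{A}$ is a rank-two \'etale $\mathcal{O}_U$-algebra and that the eigenvalue functions may not exist on $U$; but you then assert this is ``precisely why the statement is phrased in terms of connected components,'' as if passing to a connected component forces the \'etale double cover $\Spec\mathcal{A}\to U$ to split. That is false in general: a finite \'etale double cover of a connected scheme can be connected and nontrivial. The lemma nonetheless asserts that the splitting \emph{does} exist over each connected component, so some further argument is mandatory. In the paper this is a descent argument: pull back to $U'=\Spec\mathcal{A}$, where the idempotent is tautological, obtain $\mathcal{E}_{U'}\simeq\mathcal{S}_{U'}\oplus\mathcal{T}_{U'}$, and then observe that the two summands are distinguished in a locally constant way (by Chern character; equivalently, that every flag-preserving endomorphism preserves the decomposition, including those arising as descent data on $U'\times_U U'$). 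This forces the descent datum to respect the decomposition, so it descends to $U$; equivalently, the cover $\Spec\mathcal{A}\to U$ is in fact split over each connected component for this specific algebra. Without that step your idempotents $e,1-e$ are only defined after an \'etale base change, which is not what the lemma claims.

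The remaining checks ($V_T^{\ell}$-stability of $\mathcal{T}$ by openness, Zariski-local triviality of $\mathcal{S}$ from rigidity of $\mathcal{O}_C(-1)$ via Lemma \ref{lemvanishing}) are handled the same way in the paper and are fine.
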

		\begin{proof}
			In order to prove \ref{lempolystabfam1}, we will prove a version of cohomology and base change for $\Sheafhom_{flag}(\mathcal{E},\mathcal{E})$. We will use the following version of the projection formula: 
			\begin{lemma}\label{lemprojection}
				For any coherent sheaf $M$ on $Y$, the natural morphism $\globm{\mathcal{E}}\otimes M \to \globm{\mathcal{E}\otimes \pi_T^*M}$ of coherent sheaves on $Y$ is an isomorphism. 
			\end{lemma}
			\begin{proof}
				This is a consequence of the usual results on cohomology and base change using that $\mathcal{E}\otimes L^{m}$ is flat and has vanishing higher cohomology.  
			\end{proof}

			We define for any coherent sheaf $M$ on $Y$ a coherent sheaf:
			\[\Homflag{M}:= \Ker\left(\Sheafhom_{\pi_Y}(\mathcal{E},\mathcal{E}\otimes \pi_Y^*M)\to \prod_{j=0}^{N} \Sheafhom(\mathcal{V}^j, \globm{\mathcal{E}}/\mathcal{V}^j \otimes M )\right).\]
			The map between the $\Hom$-sheaves is given as follows: A morphism $f:\mathcal{E}\to \mathcal{E}\otimes \pi_Y^*M$ is pushed down to $Y$, and  then precomposed with the inclusion of $\mathcal{V}^j$ and postcomposed with the quotient map $\globm{\mathcal{E}\otimes \pi_Y^*M}\simeq \Gamma_m(\mathcal{E})\otimes M\to \globm{E}/\mathcal{V}^j\otimes M$ (here we implicitly used Lemma \ref{lemprojection}). Notice that for $M=\mathcal{O}_Z$ the structure sheaf of a closed subscheme of $Y$, we have canonically $\Homflag{ \mathcal{O}_Z}\simeq  \Hom_{flag}(\mathcal{E}_Z,\mathcal{E}_Z)$ is the sheaf of endomorphisms of the restriction $\mathcal{E}_{Z}$ preserving the flag structure.
			
			The association  $M\mapsto \Homflag{M}$ defines a coherent functor in the sense of \cite{Hart}, \S 1. To see this, use Theorem 1.1 a) and Example 2.7 in \cite{Hart}. Moreover, as a kernel of left-exact functors it is a left-exact functor.  
			By our assumption that $f$ is $\C$-linearly independent of $\operatorname{id}_{\mathcal{E}}$ at $y$, the restriction map $\Homflag{ \mathcal{O}_Y}\otimes_{\mathcal{O}_Y} \mathcal{O}_{y}\to \Homflag{\mathcal{O}_y}\simeq \C\times \C$ is surjective. It follows from Proposition 3.9 in \cite{Hart}, that the functor $\Homflag{-}$  is right-exact on some neighborhood $U$ of $y$ in $Y$, and that over $U$ it is given by $\Homflag{M}=\Sheafhom_{flag}(\mathcal{E},\mathcal{E})\otimes M$ for every $M$. By left-exactness, we find that $\Sheafhom_{flag}(\mathcal{E},\mathcal{E})$ must in fact be flat, hence locally free, over $U$. After restricting $U$ further, we can assume that a basis is given by $\operatorname{id}_\mathcal{E}$ and $f$. This proves \ref{lempolystabfam1}. For \ref{lempolystabfam2},  we note that  $\mathcal{A}:=\Sheafhom_{flag}(\mathcal{E},\mathcal{E})$ is in fact a subalgebra of the endomorphism algebra. 
			We have shown it is flat, and since it is fiberwise \'etale it is therefore \'etale over $U$. That is, $U':=\Spec \mathcal{A}\to U$ is a finite \'etale double cover. Pulling back to $U'$, we find that $\Sheafhom_{flag}(\mathcal{E}_{U'},\mathcal{E}_{U'})$ splits as a product $\mathcal{O}_{U'}\times \mathcal{O}_{U'}$. In particular, we have a nontrivial idempotent endomorphism $g$ of $\mathcal{E}_{U'}$. We therefore obtain a decomposition $\mathcal{E}_{U'}\simeq \Ker g\oplus \operatorname{Im} g$ together with a decomposition of flags. Both summands have to be flat, and by restricting to any closed point in $U'$, we see that exactly one of them parametrizes sheaves with Chern character $k\chexc$ for some $k>0$, and the other one parametrizes $\mathcal{V}^{\ell}$-stable sheaves with Chern character $\chat-k\chexc$. Let $\mathcal{S}_{U'}$ denote the former and $\mathcal{T}_{U'}$ denote the latter. We claim that the decomposition $\mathcal{E}_{U'}\simeq \mathcal{S}_{U'}\oplus \mathcal{T}_{U'}$ descends to a decomposition $\mathcal{E}\simeq \mathcal{S}\oplus \mathcal{T}$. It is enough to show that the isomorphism defining the descent datum for $\mathcal{E}_{U'}$ respects $\mathcal{S}_{U'}$ and $\mathcal{T}_{U'}$. Since $\Hom_{flag}(\mathcal{E}_{U'},\mathcal{E}_{U'})$ is generated by the projections to $\mathcal{S}_{U'}$, and $\mathcal{T}_{U'}$, it follows that every flag-preserving endomorphism of $\mathcal{E}_{U'}$ preserves the decomposition $\mathcal{E}_{U'}=\mathcal{S}_{U'}\oplus \mathcal{T}_{U}'$, and this is also true after pullback to $U'\times_{U}U'$.    It remains to show that $\mathcal{S}$ is \'etale locally isomorphic to $\mathcal{O}_C(-1)^{\oplus k}$.  This is at least true fiberwise, from which the result follows, as by Lemma \ref{lemvanishing}, the sheaf $\mathcal{O}_C(-1)$ has no infinitesimal deformations. 
		\end{proof}
		
		The following is the main result about the spaces $\mathcal{N}^{\ell}$. The proof of the existence part of the valuative criterion for properness will take up \S \ref{subsecpropex}.

		\begin{proposition}\label{propproper}
			Each of the stacks $\flagell$ is a proper Deligne--Mumford stack over $\C$. 
		\end{proposition}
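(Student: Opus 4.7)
The plan is to verify the four requirements (finite type, Deligne--Mumford, separated, universally closed) in turn, deferring only the existence part of the valuative criterion for properness to \S \ref{subsecpropex}. First, $\mathcal{N}^{\ell}$ is an open substack of $\mathcal{N}$, and $\mathcal{N}$ is the relative full flag bundle of the vector bundle $\globm{\mathcal{E}_{\mathcal{M}^{0,1}}}$ over $\mathcal{M}^{0,1}$. By Proposition \ref{propzeronebounded}, $\mathcal{M}^{0,1}$ is quasi-compact and (as an open substack of the stack of coherent sheaves on $\Xhat$) of finite type over $\C$, so $\mathcal{N}$ and hence $\mathcal{N}^{\ell}$ are of finite type.

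For the Deligne--Mumford property, I would check that the automorphism groups of $\C$-valued points are finite and reduced. Given $(E,V^{\bullet})\in \mathcal{N}^{\ell}(\C)$, any automorphism of the object consists of an automorphism of $E$ preserving $V^{\bullet}$ and compatible with the fixed orientation on $\det E$. By Lemma \ref{lemautscalarflag}, every endomorphism of $E$ preserving $V^{\bullet}$ is multiplication by a scalar, so the flag-preserving automorphism group is $\C^{*}$; the compatibility with the orientation then cuts this down to the finite group $\mu_{r}$ of $r$-th roots of unity. Hence stabilizers are finite and reduced, as required.

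For separatedness I would apply the (uniqueness part of the) valuative criterion. Given a DVR $R$ with fraction field $K$, two $R$-families $(\mathcal{E}_{i},\mathcal{V}_{i}^{\bullet},\operatorname{or}_{i})$ of $V^{\ell}$-stable flagged sheaves, and an isomorphism between them over $\Spec K$, the isomorphism can be regarded as a nonzero morphism $\mathcal{E}_{1,K}\to \mathcal{E}_{2,K}$. After clearing denominators by a power of the uniformizer, one obtains a morphism $f:\mathcal{E}_{1}\to \mathcal{E}_{2}$ not vanishing identically on the closed fiber and preserving flags by continuity. Restricted to the closed fiber, Lemma \ref{lemautscalarflag} forces $f_{0}$ to be either zero or an isomorphism; having arranged $f_{0}\neq 0$, it is an isomorphism, so $f$ itself is an isomorphism, and compatibility with the orientations fixes its normalization up to a root of unity (which is already absorbed into the stabilizer). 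This gives uniqueness of the extension.

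The hard part is universal closedness, i.e.\ the existence part of the valuative criterion: given a $V^{\ell}$-stable flagged family over $\Spec K$, one must extend it (perhaps after a finite extension of $R$) to a $V^{\ell}$-stable flagged family over $\Spec R$. Since $\mathcal{M}^{0,1}$ is an open substack of the stack of coherent sheaves with admissible Chern character $\chat$, one can first extend the underlying sheaf to an $R$-flat coherent sheaf on $\Xhat_{R}$, but the closed fiber may fail to be $0,1$-semistable or may fail the $V^{\ell}$-stability conditions $(\stabS_{\ell})$ and $(\stabT_{\ell})$. The strategy, executed in \S \ref{subsecpropex}, is to perform a sequence of elementary modifications along the exceptional divisor $C\subset \Xhat$ on the central fiber, each such modification replacing a $\stabS_{\ell+1}$- or $\stabT_{\ell}$-violating subobject or quotient of type $\mathcal{O}_{C}(-1)^{\oplus k}$. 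Using the minimality results of Lemma \ref{lemmindest} and the bounds on destabilizing subobjects provided by Lemmas \ref{lemexcchercarisexc}, \ref{corkbound} and Corollary \ref{corkboundzerone}, this process must terminate in finitely many steps, and the flag structure is transported along each modification using the pushforward by $L^{\otimes m}$ of the associated short exact sequences; the orientation is transported using the trivialisation of $\det \mathcal{O}_{C}(-1)^{\oplus k}$ away from $C$ as in Proposition \ref{propisrelquot}. The main obstacle is to show that this modification procedure indeed converges after finitely many steps to a genuinely $V^{\ell}$-stable central fiber and that the resulting limit is unique up to the ambiguity already accounted for by separatedness.
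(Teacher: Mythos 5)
Your proposal follows the same structure as the paper: finite presentation is clear from the flag bundle description, the Deligne--Mumford property is checked via finite stabilizer groups using Lemma \ref{lemautscalarflag}, separatedness is handled by the uniqueness part of the valuative criterion using the standard trick of scaling a given generic-fiber isomorphism by a power of the uniformizer so it extends with nonzero central fiber, and the existence part is deferred to the elementary modification argument in \S\ref{subsecpropex}. One imprecision worth flagging: in the separatedness step you say the orientation ``fixes the normalization up to a root of unity,'' but what is actually needed (and what the paper proves) is that the orientation forces the scaling power $k$ of the uniformizer to be exactly $0$: since $\det g$ must extend to an automorphism of the trivial bundle over $\Spec R$, hence be a unit, and it restricts to multiplication by $\pi^{rk}$ on the generic fiber, one concludes $k=0$, so that the original isomorphism $f_\eta$ itself extends without rescaling.
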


		\begin{proof}
			By construction, $\flagell$ is of finite presentation over $\C$. In particular, the diagonal $\flagell\to \flagell\times\flagell$ is of finite presentation. By Remark 8.3.4 in \cite{Olso2}, an algebraic stack whose diagonal morphism is of finite presentation is a  Deligne--Mumford stack if and only if the stabilizer group of every object of $\flagell$ over an algebraically closed field $K$ is unramified over $K$, which in characteristic $0$ is the same as being finite. By Lemma \ref{lemautscalarflag}, (which holds as well with $\C$ replaced by any algebraically closed extension), if $(E,V^{\bullet})\in \flagell(K)$, then the only automorphisms of the sheaf that preserve $V^\ell$ are the scalar ones, and those in fact preserve the whole flag $V^{\bullet}$. Multiplication by a scalar $\lambda$ preserves a given orientation of $E$ exactly if $\lambda$ is an $r$th root of unity, where $r=\operatorname{rk} E$. Therefore $(E,V^{\bullet})$ has automorphism group $\mu_r$ as an object of $\flagell$. This shows that $\mathcal{N}^{\ell}$ is a  Deligne--Mumford stack. We are left with showing properness. For a quasi-separated stack of finite type (in particular for a stack of finite presentation), this can be checked using the valuative criterion. We do this in Propositions \ref{propsepval} and \ref{proppropval}.  
		\end{proof}
		
		\begin{proposition}\label{propsepval}
			The uniqueness part of the valuative criterion for properness holds for $\flagell$. 
		\end{proposition}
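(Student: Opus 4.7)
The plan is to use the standard approach via graph closure, or more concretely, via the ``power of uniformizer'' trick applied to the generic isomorphism. Let $R$ be a DVR with uniformizer $t$, fraction field $K$, and residue field $k$. Suppose we have two objects $(\mathcal{E}_1, \mathcal{V}_1^{\bullet}, or_1)$ and $(\mathcal{E}_2,\mathcal{V}_2^{\bullet}, or_2)$ of $\mathcal{N}^{\ell}(\Spec R)$ and an isomorphism $\phi$ between them over $\Spec K$. The goal is to extend $\phi$ to an isomorphism over $\Spec R$.

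First, I would choose the unique integer $n$ such that $\psi := t^{-n}\phi$ extends to a morphism $\mathcal{E}_1\to \mathcal{E}_2$ over $\Spec R$ whose restriction $\psi_0: \mathcal{E}_{1,0}\to\mathcal{E}_{2,0}$ to the special fiber is nonzero. Since $t$ is a non-zero-divisor on each $\mathcal{V}_i^j$, the map $\psi$ inherits compatibility with the flags, so in particular $\globm{\psi_0}(V_1^j)\subset V_2^j$ for every $j$. Applying Lemma \ref{lemmapsofsstable} (which holds over any field of characteristic zero by Remark \ref{remfiltrationsanyfield}) to $\psi_0$ gives two possibilities, and I would rule out both nontrivial ones using the $V$-stability hypothesis at level $\ell$.

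In case (a), $\operatorname{Im}\psi_0\simeq \mathcal{O}_C(-1)^{\oplus k}$ with $k\geq 1$. If $\globm{\psi_0}|_{V_1^\ell}=0$, then $V_1^\ell\subset \globm{\Ker\psi_0}$, so $\Ker\psi_0$ violates $(\stabT_\ell)$ for $(\mathcal{E}_{1,0},V_1^\bullet)$, a contradiction. Otherwise $\globm{\psi_0}(V_1^\ell)$ is a nonzero subspace of $V_2^\ell\cap \globm{\operatorname{Im}\psi_0}$, so $\operatorname{Im}\psi_0$ violates $(\stabS_\ell)$ for $(\mathcal{E}_{2,0},V_2^\bullet)$, also a contradiction. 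In case (b) with $k\geq 1$, the condition $(\stabS_\ell)$ for $\mathcal{E}_{1,0}$ forces $V_1^\ell\cap \globm{\Ker\psi_0}=0$, so $\globm{\psi_0}|_{V_1^\ell}$ is injective and its $\ell$-dimensional image is contained in $V_2^\ell\cap \globm{\operatorname{Im}\psi_0}$; comparing dimensions shows $V_2^\ell\subset \globm{\operatorname{Im}\psi_0}$, contradicting $(\stabT_\ell)$ for $(\mathcal{E}_{2,0},V_2^\bullet)$ since $\mathcal{E}_{2,0}/\operatorname{Im}\psi_0\simeq \mathcal{O}_C(-1)^{\oplus k}$. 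Therefore $k=0$ and $\psi_0$ is an isomorphism, and by Nakayama's lemma $\psi$ is itself an isomorphism over $\Spec R$.

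It remains to pin down $n=0$, which is where the orientation data enters. Taking determinants gives $\det \psi = t^{-rn}\det\phi$, where $r=\operatorname{rk}\mathcal{E}_i$. Since $\phi$ respects orientations, $\det\phi$ extends to an isomorphism of the pulled-back Poincar\'e line bundles matching $or_1$ and $or_2$; since $\det\psi$ must also restrict to an isomorphism on the special fiber, the order of vanishing $-rn$ at $t=0$ must be zero, forcing $n=0$. Hence $\psi=\phi$ is the required extension, and its induced isomorphism of orientations agrees with the given one by construction. The main delicate point is the simultaneous bookkeeping of conditions $(\stabS_\ell)$ and $(\stabT_\ell)$ on both source and target to rule out the two cases of Lemma \ref{lemmapsofsstable}; the orientation-based rigidification is what upgrades ``extension up to a power of $t$'' to a genuine extension.
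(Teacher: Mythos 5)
Your proof is correct and follows essentially the same route as the paper's: extend the generic isomorphism up to a power of the uniformizer so that the special-fiber restriction is nonzero, observe the extension respects flags because this is a closed condition, show the special fiber restriction is an isomorphism, and finally use the orientation data to force the power of the uniformizer to be zero. The one cosmetic difference is that the paper simply invokes Lemma~\ref{lemautscalarflag} (which says any nonzero flag-respecting map between $V$-stable sheaves is an isomorphism) at the key step, whereas you unpack that lemma's case analysis via Lemma~\ref{lemmapsofsstable} directly; both are the same argument, with the same use of $(\stabS_\ell)$ and $(\stabT_\ell)$ on source and target to rule out the degenerate cases.
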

		See \cite[\href{https://stacks.math.columbia.edu/tag/0CL9}{Tag 0CL9}]{St20} for a precise statement of the valuative criterion for algebraic stacks.

		\begin{proof}[\it Proof of Proposition \ref{propsepval}.]
			Let $R$ be a DVR over $\C$. Let $\eta$ denote the generic point and $\xi$ the closed point of $\Spec R$ respectively. Let further $(\mathcal{E},\mathcal{V}^\bullet)$ and $(\mathcal{E}',\mathcal{V}'^{\bullet}_2)$ be $\Spec R$-valued points of $\flagell$, together with an isomorphism $f_{\eta}:(\mathcal{E}'_{\eta},\mathcal{V}_{\eta}'^{\bullet})\to (\mathcal{E}_{\eta},\mathcal{V}_{\eta}^{\bullet})$. That means $f_{\eta}$ is an isomorphism $\mathcal{E}'_{\eta}\to \mathcal{E}_{\eta}$ that induces an isomorphism of flags and is compatible with orientations. The last point means that the composition 
			\[\mathcal{O}_{\widehat{X}_\eta}\simeq \det \mathcal{E}'_{\eta}\xrightarrow{\det f}\det \mathcal{E}\simeq \mathcal{O}_{\widehat{X}_\eta} \]
			is the identity map, where the isomorphisms are the natural ones induced by the orientations. 
			We need to show that there is a unique isomorphism $f$ extending $f_{\eta}$ to $\Spec R$.  Due to $R$-flatness of $\mathcal{E}$ and $\mathcal{E}'$, we can regard $\mathcal{E}'$ as a subsheaf of $\mathcal{E}_{\eta}= \mathcal{E}\otimes_R K$ via $f_{\eta}$. Since $\mathcal{E}_{\eta}= \cup_{j\leq 0} \pi^j\mathcal{E}$, and since $\mathcal{E}'$ is coherent on a quasi-compact scheme, there exists a unique $k\in \mathbb{Z}$, such that $\pi^k\mathcal{E}'\subset \mathcal{E}$ and $\pi^k\mathcal{E}'\not\subset \pi \mathcal{E}$.  This proves that $\pi^k f_\eta$ extends uniquely to a morphism $g:\mathcal{E}'\to \mathcal{E}$. Moreover, the restriction of $g$ to the special fiber is nontrivial. Since $g$ respects the flags away from the special fiber it does so over all of $\Spec R$, as this is a closed condition. It follows from Lemma \ref{lemautscalarflag} that $g$ is an isomorphism over the special fiber, hence an isomorphism $(\mathcal{E}',\mathcal{V}'^{\bullet})\to (\mathcal{E}',\mathcal{V})$ of $0,1$-semistable sheaves with flag structure. From the given orientations, it follows that $\det g$ induces an isomorphism $\mathcal{O}_{\widehat{X}_R}\to \mathcal{O}_{\widehat{X}_R}$, whose restriction to the generic fiber equals multiplication by $\pi^{r k}$. It follows that we must have $k=0$. Hence $g$ is in fact, an extension of $f_{\eta}$ to all of $\Spec R$ as an isomorphism of sheaves with flag structures. One easily sees that $g$ is also compatible with orientations. Since the sheaves have no $R$-torsion, it follows that any extension of a morphism defined over $\eta$ must be unique. This finishes the uniqueness part of the valuative criterion.
		\end{proof}

		\subsection{Properness of $\mathcal{N}^{\ell}$ (existence part)}
		\label{subsecpropex}
		In this subsection, we finish the proof of Proposition \ref{propproper} by establishing the following: 
		
		\begin{proposition}\label{proppropval}
			The existence part of the valuative criterion for properness holds for $\flagell$. 
		\end{proposition}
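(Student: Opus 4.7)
The plan is to first extend $(\mathcal{E}_\eta,\mathcal{V}_\eta^\bullet)$ (possibly after a finite extension of $R$) to an $R$-flat family of oriented $0,1$-semistable sheaves with flag structure over $\Spec R$, and then improve the special fiber to be $V^\ell$-stable by a finite sequence of Langton-style elementary modifications. We may assume $R$ is complete with algebraically closed residue field. To produce the initial extension, apply Proposition \ref{propfiltrations} \ref{propfilt2} to get a canonical short exact sequence $0\to T_\eta\to \mathcal{E}_\eta\to \mathcal{O}_C(-1)^{\oplus j}\to 0$ with $T_\eta$ a $0$-stable sheaf of Chern character $\chat-j\chexc$. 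Using properness of $\mathcal{M}^0_{\chat-j\chexc}$ (which follows from Proposition \ref{propisrelquot}), extend $T_\eta$ to an $R$-flat family $\mathcal{T}_R$. Since $\Ext^2_{\widehat{X}_s}(\mathcal{O}_C(-1),T_s)$ vanishes for $s=\eta,\xi$ by Serre duality and Gieseker stability of $p_*T_s$, the relative $\Ext^1$-sheaf commutes with base change; completeness of $R$ then allows the $\eta$-extension class to be lifted to an extension over $R$. The resulting $R$-flat sheaf $\mathcal{E}_R$ extends $\mathcal{E}_\eta$, has $0,1$-semistable special fiber by Remark \ref{rem01stable} \ref{rem01stable2}, and the orientation extends uniquely via the construction in the proof of Proposition \ref{propisrelquot}. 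Finally, the flag $\mathcal{V}_\eta^\bullet$ extends by saturations inside $\globm{\mathcal{E}_R}$, which by Assumption \ref{assumpmlarge} is a rank $N$ vector bundle over $\Spec R$.

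If the special fiber $(E_\xi,V_\xi^\bullet)$ is $V^\ell$-stable we are done. Otherwise, by Lemma \ref{lemmindest}, at least one of $(\stabS_\ell)$ or $(\stabT_\ell)$ fails on $E_\xi$, and there is a canonical minimal destabilizing subobject $S\subset E_\xi$ (respectively $T\subset E_\xi$) isomorphic to a direct sum of copies of $\mathcal{O}_C(-1)$. I perform a Hecke-type elementary modification along this subobject: in the $\stabS$ case, replace $\mathcal{E}_R$ by the kernel of the composition $\mathcal{E}_R\twoheadrightarrow E_\xi \twoheadrightarrow E_\xi/S$, and symmetrically in the $\stabT$ case. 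A standard computation with the long exact Tor sequence (using that $E_\xi/S$ is $\pi$-annihilated, so that $\operatorname{Tor}_2^R(E_\xi/S,k(\xi))=0$) shows the new family is again $R$-flat with generic fiber $\mathcal{E}_\eta$, and its special fiber fits in a short exact sequence $0\to E_\xi/S\to E'_\xi\to S\to 0$ (respectively $0\to T\to E'_\xi\to E_\xi/T\to 0$). By Remark \ref{rem01stable} \ref{rem01stable2}, the modified special fiber is again $0,1$-semistable; the orientation extends, and the saturated flag of Step 1 transports to the modified family.

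The main obstacle is to show that iterating this procedure terminates, at which point the resulting special fiber is $V^\ell$-stable by construction. The plan is to attach to each candidate special fiber $(E_\xi,V_\xi^\bullet)$ a $\mathbb{Z}_{\geq 0}$-valued lexicographic invariant that strictly decreases at each elementary modification. A natural choice is the pair whose first entry measures the failure of $(\stabS_\ell)$, for instance $\dim(V_\xi^\ell\cap \globm{S_{\max}})$ where $S_{\max}\subset E_\xi$ is the maximal subsheaf of the form $\mathcal{O}_C(-1)^{\oplus k}$ from Proposition \ref{propfiltrations} \ref{propfilt1}, and whose second entry symmetrically measures the failure of $(\stabT_\ell)$ via the corresponding $T$ from Proposition \ref{propfiltrations} \ref{propfilt2}. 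The technical heart of the argument will be a case-by-case verification, using Corollary \ref{corintsumexc} to track the lattice of $\mathcal{O}_C(-1)$-subobjects through the modification and Lemma \ref{lemvanishing} to control the relevant $\Ext^1$-groups, that a modification of type $\stabS_\ell$ strictly decreases the first entry without increasing the second, and symmetrically for $\stabT_\ell$. Since the total number of $\mathcal{O}_C(-1)$-summands appearing in the $S$- and $T$-parts is controlled by the bound of Corollary \ref{corkboundzerone}, the invariant is bounded, so the process must terminate in finitely many steps, yielding the required $V^\ell$-stable extension.
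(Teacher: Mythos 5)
Your first step---extending to an $R$-flat family of $0,1$-semistable sheaves with flag structure via Proposition \ref{propfiltrations}\ref{propfilt2}, properness of $\mathcal{M}^0$, and a lifting of the extension class---matches the paper's Lemma \ref{lem01stablext} in spirit (the paper lifts the extension class with a resolution of $\mathcal{O}_C(-1)^{\oplus k}$ rather than appealing to base change for relative $\Ext^1$, but these are interchangeable). The framework of improving the special fiber by Langton-style modifications along the canonical minimal destabilizing subobjects is also the paper's framework.

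The genuine gap is in the termination argument. You propose a lexicographic invariant whose first entry is $\dim(V_\xi^\ell\cap\globm{S_{\max}})$, and assert that an $S$-type modification strictly decreases it. This is not true, and the paper's own analysis explains why no such naive numerical invariant can work: in the proof of Lemma \ref{lemfinsmods}, after finitely many steps the maps $f_n\colon S_{n+1}\to S_n$ between the successive minimal destabilizing subobjects all become isomorphisms, so the numbers one would hope to use (the rank of $S_n$, or the intersection dimension you propose) \emph{stabilize} while the sequence of modifications continues. For a concrete instance, take $E_\xi = T\oplus S_1\oplus S_2$ split with $S_i\simeq\mathcal{O}_C(-1)$; the $S$-modification along $S_1$ can return an isomorphic special fiber with flag still meeting $\globm{S_1}$ in a line via Remark \ref{remflagstr}\ref{remflagstriii}, so $\dim(V_\xi^\ell\cap\globm{S_{\max}})$ need not drop. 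The paper's actual termination argument (Lemmas \ref{lemfinsmods} and \ref{lemfintmods}) is not a decreasing-invariant argument at all: it is a proof by contradiction which converts the hypothetical infinite sequence of modifications, via Lemma \ref{lemseqofmodifications}, into a system of flat quotients over all infinitesimal neighborhoods $\Spec R/(\pi^n)$, and then plays this off against the scheme-theoretic vanishing locus $Z$ inside the relative Quot-scheme, using that $Z$ lies over a single thickening $Z_n$. That Quot-scheme/infinitesimal-lifting step is the real content, and it has no counterpart in your plan.

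A secondary but related point: the paper proceeds one level of the flag at a time---Lemma \ref{lemfinsmods} is applied $\ell$ times to establish $(\stabS_1),\dots,(\stabS_\ell)$ in order, and then Lemma \ref{lemfintmods} is applied $N-\ell$ times, with Lemma \ref{lemmodificationproperties} guaranteeing each step preserves what was already won. This is not just bookkeeping: the ``minimal destabilizing subobject'' of Lemma \ref{lemmindest} is only well-defined at the first index $j$ where $(\stabS_j)$ holds and $(\stabS_{j+1})$ fails, so the modifications must be performed in this graded fashion. Your invocation of a single ``canonical minimal destabilizing subobject'' for $(\stabS_\ell)$ or $(\stabT_\ell)$ elides this structure, and the preservation properties of Lemma \ref{lemmodificationproperties}(iii)--(iv) (that $T$-modifications preserve $(\stabS_j)$, and vice versa) need to be invoked to justify that progress is not being undone.
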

		The proof relies heavily on the use of elementary modifications, which will again appear in the construction of the master space in \S \ref{subsecmasterspace}. Therefore, we begin by including a review of elementary modifications suited to our setting.

		\subsubsection{Elementary modifications}

		Throughout, $R$ denotes a DVR over $\C$, with $\eta$ and $\xi$ denoting the generic and closed point of $\operatorname{Spec} R$ respectively. Moreover, we use $\pi$ to denote some uniformizer of $R$.  All of the constructions and properties work as well over a smooth curve with distinguished closed point instead of $\Spec R$. 
		
		Elementary modifications were introduced in the work of Langton \cite{Lang} as a tool to check valuative criteria for moduli spaces of stable sheaves. The idea is that given a family of sheaves over $\operatorname{Spec} R$ such that the generic fiber is stable,  we can modify the family in a controlled way that preserves the generic fiber, and improves the stability properties of the central fiber. Then after finitely many such modifications one can hope to obtain a central fiber which is also stable, or at least semistable. This works when the notion of stability is Gieseker stability and we will apply the same ideas to our notion of stability.
		
		\begin{definition}
			Let $X$ be any scheme, and let $R$ be a DVR. Suppose $\mathcal{E}$ is an $R$-flat coherent sheaf on $X\times \Spec R$. Let $\mathcal{E}_\xi\twoheadrightarrow Q$ be a quotient sheaf of the generic fiber.  Then the \emph{elementary modification of $\mathcal{E}$ along $Q$} is defined to be the kernel of the composition  $\mathcal{E}\to\mathcal{E}_{\xi}\to Q$. 
		\end{definition}
		
		\begin{lemma} \label{lemexacts}
			Let $\mathcal{E}'$ be the elementary modification of $\mathcal{E}$ along some quotient $\mathcal{E}_\xi\twoheadrightarrow Q$. Then $\mathcal{E}'$ is again flat over $\Spec R$. 
			If we have an exact sequence 
			\[0\to F\to \mathcal{E}_\xi\to Q\to 0\]
			of sheaves on the special fiber, then we have an exact sequence 
			\begin{equation}
				0\to Q\to \mathcal{E}_{\xi}'\to F\to 0,  \label{eqmodif}
			\end{equation}	
			where the map $\mathcal{E}_{\xi}'\to F$ comes from the isomorphism $F\simeq \mathcal{E}'/\pi \mathcal{E}$, and the map $Q\to \mathcal{E}_{\xi}'$ is obtained from $\mathcal{E}\xrightarrow{\cdot \pi}\mathcal{E}'$ by taking the quotient mod $\pi \mathcal{E}'$.   
		\end{lemma}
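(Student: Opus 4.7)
My plan is to realize both statements via the short exact sequence
\[ 0 \to \mathcal{E}' \to \mathcal{E} \to Q \to 0 \]
that comes directly from the definition of the elementary modification, and then derive the sequence on the special fiber by taking derived reductions modulo $\pi$.

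For flatness, since $R$ is a DVR, $R$-flatness of a coherent sheaf on $X \times \Spec R$ is equivalent to being $\pi$-torsion-free (locally, one is dealing with an $R$-module without torsion). The sheaf $\mathcal{E}$ is $R$-flat by hypothesis, hence has no $\pi$-torsion, and the same must hold for its subsheaf $\mathcal{E}'$, so $\mathcal{E}'$ is $R$-flat.

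For the second statement, I would apply $(-) \otimes_R^L R/\pi$ to the short exact sequence above. Because $Q$ is scheme-theoretically supported on the special fiber, multiplication by $\pi$ on $Q$ is zero, so from the resolution $0 \to R \xrightarrow{\pi} R \to R/\pi \to 0$ one computes $\operatorname{Tor}_1^R(Q, R/\pi) \simeq Q$ and $Q \otimes_R R/\pi \simeq Q$. The associated long exact sequence of Tors thus reads
\[ 0 \to Q \to \mathcal{E}'_\xi \to \mathcal{E}_\xi \to Q \to 0, \]
and splitting this via $F = \ker(\mathcal{E}_\xi \to Q)$ yields exactly \eqref{eqmodif}.

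The only remaining task is to check that the two maps in \eqref{eqmodif} are the ones described in the statement. The map $\mathcal{E}'_\xi \to F$ is the one induced by the inclusion $\mathcal{E}' \hookrightarrow \mathcal{E}$; since $\pi \mathcal{E} \subset \mathcal{E}'$ (as $\pi \mathcal{E}$ maps to $\pi Q = 0$), the inclusion factors through $\mathcal{E}'/\pi\mathcal{E} \hookrightarrow \mathcal{E}_\xi$ with image $F$, giving the identification. For the map $Q \to \mathcal{E}'_\xi$, multiplication by $\pi$ on $\mathcal{E}$ lands in $\mathcal{E}'$ and descends modulo $\pi \mathcal{E}'$ to a map $\mathcal{E}_\xi \to \mathcal{E}'_\xi$; it vanishes on any section coming from $\mathcal{E}'$, hence on $F$, so it factors through the quotient $\mathcal{E}_\xi/F \simeq Q$. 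Identifying this with the connecting map from the Tor long exact sequence is a direct diagram chase, which is the only slightly fiddly point in the argument but is standard. I expect no conceptual obstacle; the main care needed is tracking signs and identifications in the $\operatorname{Tor}$ computation.
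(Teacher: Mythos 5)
Your proof is correct, but it takes a somewhat different route from the paper. The paper bypasses homological algebra entirely: it considers the chain of inclusions $\pi\mathcal{E}' \subset \pi\mathcal{E} \subset \mathcal{E}'$, whose associated three-term sequence of quotients is
\[0\to \pi\mathcal{E}/\pi\mathcal{E}'\to \mathcal{E}'/\pi\mathcal{E}'\to \mathcal{E}'/\pi\mathcal{E}\to 0,\]
and then observes that multiplication by $\pi$ gives an isomorphism $Q=\mathcal{E}/\mathcal{E}'\xrightarrow{\sim}\pi\mathcal{E}/\pi\mathcal{E}'$ (using that $\mathcal{E}$ and $\mathcal{E}'$ are $\pi$-torsion-free), yielding \eqref{eqmodif} with both boundary maps visible at a glance. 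You instead apply $(-)\otimes^L_R R/\pi$ to $0\to\mathcal{E}'\to\mathcal{E}\to Q\to 0$ and invoke the Tor long exact sequence together with $\operatorname{Tor}_1^R(Q,R/\pi)\simeq Q$. Both arguments are valid; what the paper's filtration buys is that the identification of the connecting map as descended multiplication by $\pi$ is immediate and requires no diagram chase, whereas in your version it is the one "fiddly" point you flag at the end. Your calculation of the Tor groups and the splitting via $F=\ker(\mathcal{E}_\xi\to Q)$ are correct, and the diagram chase you defer does go through — it is effectively the same computation the paper carries out directly. The flatness argument is identical in both.
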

		This illustrates why we are interested in elementary modifications. They preserve flat families, and if $F$ is a destabilizing subobject of the central fiber of $\mathcal{E}$, it will become a quotient object after the modification. One can then reasonably hope that if the generic fiber is stable, then by a sequence of suitable elementary modifications, one can eliminate all the destablizing subobjects of the special fiber.

		\begin{proof}[\it Proof of Lemma \ref{lemexacts}.]
			Flatness follows from the fact that a module over a DVR is flat if and only if it is torsion-free together with the simple observation that a  submodule of a torsion-free module is torsion-free.  
			To show the statement about the exact sequences, we may assume that $Q=\mathcal{E}/\mathcal{E}'$ and that $F=\mathcal{E}'/\pi \mathcal{E}$. The claim now follows by considering the exact sequence associated to the  chain of inclusions to $\pi\mathcal{E}'\subset \pi \mathcal{E}'\subset \mathcal{E}'$:
			\begin{equation*}
				\begin{tikzcd}
					0\ar[r] &\pi\mathcal{E}/\pi\mathcal{E}'\ar[r]\ar[d,"\simeq"] &\mathcal{E}'/\pi\mathcal{E}'\ar[r]\ar[d,equal] &\mathcal{E}'/\pi\mathcal{E}\ar[r]\ar[d, equal]& 0 \\
					0\ar[r]& Q\ar[r] &\mathcal{E}'_{\xi}\ar[r]& F  \ar[r]& 0.
				\end{tikzcd}
			\end{equation*}
			Here the isomorphism $Q=\mathcal{E}/\mathcal{E}'\to \mathcal{E}\to \pi\mathcal{E}/\pi\mathcal{E}'$ is just multiplication by $\pi$. 
		\end{proof}

		\begin{remark} \label{remelmodinv}
			Suppose we have the exact sequence
			\[0\to F\to \mathcal{E}_\xi\to Q\to 0,\]
			and let $\mathcal{E}'$ be the elementary modification of $\mathcal{E}$ along $Q$. If we let $\mathcal{E}''$ be the elementary modification of $\mathcal{E}'$ along $F$ in the resulting exact sequence 
			\[0\to Q\to \mathcal{E}'_\xi\to F\to 0,\]
			then we have $\mathcal{E}''=\pi \mathcal{E}$. 
		\end{remark}
		Next, we consider the case, where after an elementary modification along $Q$, the sequence \eqref{eqmodif} is split. Then one may choose a splitting, and again do a modification of $\mathcal{E}'$ along $Q$. If one again ends up with a split exact sequence, one can keep going, potentially ad infinitum. It turns out that this process can be geometrically interpreted as extending the section of the Quot-scheme of $\mathcal{E}$ defined by $Q$ to increasingly large infinitesimal neighborhoods of the special fiber as we now make precise. 
		
		\begin{lemma}[cf. Chapter 2.B in \cite{HuLe}]\label{lemseqofmodifications}
			Fix a nonnegative integer $n$ and let $\mathcal{E}$ be an $R$-flat coherent sheaf on $X_R$.  Then we have natural bijections between the following two pieces of data:
			\begin{enumerate}[label=(\arabic*)]
				\item 	Sequences of sheaves $\mathcal{E}_n\subset  \cdots \subset \mathcal{E}_{i} \subset \mathcal{E}_{i-1}\subset \cdots \subset \mathcal{E}_0=\mathcal{E}$ such that $\mathcal{E}_{i+1}$ is obtained from $\mathcal{E}_i$ by an elementary modification along some quotient $\mathcal{E}_{i,\xi}\twoheadrightarrow Q_i$ for $0\leq i\leq n-1$ and such that moreover  $Q_{i}\to \mathcal{E}_{i+1,\xi}\to Q_{i+1}$ is an isomorphism for all $0\leq i\leq n-1$, where the first map is the one induced by multiplication by $\pi$. \label{lemseqofmod1}
				\item Quotients $\mathcal{E}\otimes_R R/(\pi^n)\to \mathcal{Q}_n$ which are flat over $R/(\pi^n)$. \label{lemseqofmod2}
			\end{enumerate}	
			The bijections are given by associating to a a sequence $\mathcal{E}_n\subset\cdots \subset\mathcal{E}$ as in \ref{lemseqofmod1}, the quotient $\mathcal{E}/\pi^n\mathcal{E}\twoheadrightarrow \mathcal{E}/\mathcal{E}_{n}$, and conversely to an $R/(\pi^n)$-flat quotient $\mathcal{E}/\pi^n\mathcal{E}\to \mathcal{Q}_n$ as in \ref{lemseqofmod2} the sequence defined by letting $\mathcal{E}_i$ be the preimage of $\pi^i\mathcal{Q}_n$ under the composition $\mathcal{E}\to \mathcal{E}/\pi^n\mathcal{E}\to \mathcal{Q}_n$. 
		\end{lemma}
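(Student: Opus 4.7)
The plan is to give explicit constructions in both directions and check that they are mutually inverse. The central technical point is to translate the compatibility condition in \ref{lemseqofmod1} into $R/(\pi^n)$-flatness of the quotient $\mathcal{Q}_n$. Throughout, I would use that $\mathcal{E}$ is $R$-flat (so all $\mathcal{E}_i \subseteq \mathcal{E}$ are $\pi$-torsion free) together with the standard criterion that an $R/(\pi^n)$-module $M$ is flat if and only if multiplication by $\pi$ induces isomorphisms $\pi^i M/\pi^{i+1} M \to \pi^{i+1} M/\pi^{i+2} M$ for $0 \leq i \leq n-2$.

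For the forward direction, given a sequence as in \ref{lemseqofmod1}, I would set $\mathcal{Q}_n := \mathcal{E}/\mathcal{E}_n$. Iterating the inclusion $\pi \mathcal{E}_i \subseteq \mathcal{E}_{i+1}$ gives $\pi^n \mathcal{E} \subseteq \mathcal{E}_n$, so $\mathcal{Q}_n$ is naturally a quotient of $\mathcal{E}/\pi^n \mathcal{E}$. Since $\mathcal{E}_i$ is $\pi$-torsion free, multiplication by $\pi$ gives a well-defined map $Q_i \to \mathcal{E}_{i+1,\xi}$, and the surjectivity of the composite $Q_i \to Q_{i+1}$ is equivalent to $\mathcal{E}_{i+1} = \pi \mathcal{E}_i + \mathcal{E}_{i+2}$. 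Iterating this relation (and using $\pi \mathcal{E}_{j+1} \subseteq \pi \mathcal{E}_j$), I would prove by ascending induction on $i$ that $\mathcal{E}_i + \mathcal{E}_n = \pi^i \mathcal{E} + \mathcal{E}_n$ for $0 \leq i \leq n$. This identifies the image of $\mathcal{E}_i$ in $\mathcal{Q}_n$ with $\pi^i \mathcal{Q}_n$, so the filtration by the $\mathcal{E}_i$ on $\mathcal{Q}_n$ is exactly the $\pi$-adic filtration, and the compatibility maps $Q_i \to Q_{i+1}$ become the multiplication-by-$\pi$ maps on successive graded pieces. The flatness criterion then yields $R/(\pi^n)$-flatness of $\mathcal{Q}_n$.

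Conversely, given an $R/(\pi^n)$-flat quotient $\mathcal{E}/\pi^n \mathcal{E} \twoheadrightarrow \mathcal{Q}_n$, I would define $\mathcal{E}_i$ as the preimage of $\pi^i \mathcal{Q}_n$ under $\mathcal{E} \to \mathcal{Q}_n$. The inclusion $\pi \mathcal{E}_i \subseteq \mathcal{E}_{i+1}$ is immediate, exhibiting $\mathcal{E}_{i+1}$ as an elementary modification of $\mathcal{E}_i$ along $Q_i \simeq \pi^i \mathcal{Q}_n/\pi^{i+1} \mathcal{Q}_n$, and the compatibility $Q_i \to \mathcal{E}_{i+1,\xi} \to Q_{i+1}$ becomes multiplication by $\pi$ between successive graded pieces of the $\pi$-adic filtration, an isomorphism by flatness. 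That the two constructions are mutually inverse then follows formally from the identifications $\mathcal{E}_i/\mathcal{E}_n = \pi^i \mathcal{Q}_n$ established in the forward direction, together with the observation that in both constructions $\mathcal{E}_i$ can be characterized as the preimage in $\mathcal{E}$ of the $i$-th piece of the $\pi$-adic filtration on $\mathcal{Q}_n$.

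The only substantive content is the flatness translation; the rest is a direct unwinding of definitions. The main obstacle is the bookkeeping in the iteration $\mathcal{E}_{i+1} = \pi \mathcal{E}_i + \mathcal{E}_{i+2} \implies \mathcal{E}_i + \mathcal{E}_n = \pi^i \mathcal{E} + \mathcal{E}_n$, which is the one place where the compatibility assumption is genuinely used and where one must carefully control how far the iteration propagates down the chain.
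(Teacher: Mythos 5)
Your proposal is correct and takes essentially the same approach as the paper: the paper's proof simply states the local flatness criterion of Altman--Kleiman (a sheaf $\mathcal{F}$ over $R/(\pi^n)$ is flat iff $\pi^i\mathcal{F}/\pi^{i+1}\mathcal{F}\xrightarrow{\cdot\pi}\pi^{i+1}\mathcal{F}/\pi^{i+2}\mathcal{F}$ is an isomorphism for $0\leq i\leq n-2$) and leaves the unwinding to the reader, which is precisely what you have carried out. The key computation $\mathcal{E}_i=\pi^i\mathcal{E}+\mathcal{E}_n$ identifying the $\mathcal{E}_i$-filtration with the $\pi$-adic filtration on $\mathcal{Q}_n$ is the substantive detail the paper omits, and you have supplied it correctly.
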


		\begin{proof}
			This is essentially a consequence of the local flatness criterion \cite[Theorem 3.2]{AlKl}, which in our situation can be stated as follows: Let $\mathcal{F}$ be a coherent sheaf on $X\times \Spec R/(\pi^n)$, and consider the filtration of $\mathcal{F}$ given by $\mathcal{F}_i:=\pi^i\mathcal{F}$.  Then $\mathcal{F}$ is flat over $R/\pi^n$ if and only if the map $\mathcal{F}_i/\mathcal{F}_{i+1}\xrightarrow{\cdot \pi}\mathcal{F}_{i+1}/\mathcal{F}_{i+2}$ is an isomorphism for each $i=0,1,\ldots,n-2$.  Using this, the proof of the lemma is straightforward and left to the reader.   
		\end{proof}

		\subsubsection{Proof of Proposition \ref{proppropval}}
		The strategy to establish the existence part of the valuative criterion is summarized as follows: Suppose $(\mathcal{E}_{\eta},\mathcal{V}_{\eta}^{\bullet})$ is a family of $V^{\ell}$-stable sheaves on $\widehat{X}$ over the generic point of $\Spec R$. We first complete this to an arbitrary family $(\mathcal{E},\mathcal{V}^{\bullet})$ on $\Spec R$ for which $\mathcal{E}_{\xi}$ is $0,1$-semistable. The central fiber then automatically satisfies the trivial conditions $(\stabS_0)$ and $(\stabT_N)$.  By successively performing a finite number of suitable elementary modifications, one can construct a family whose restriction to the central fiber satisfies $(\stabS_1)$, then by performing more modifications one whose restriction satifies $(\stabS_2)$, and so on, until we arrive at a family whose central fiber satisfies $(\stabS_\ell)$.  By a similar process applied to this new family,  we subsequently obtain families such that $(\stabT_{N-1})$ holds, then $(\stabT_{N-2})$, etc. until we achieve a family satisfying $(\stabT_\ell)$. If we can do this in a way that preserves the property $(\stabS_\ell)$, we obtain an extension of $(\mathcal{E}_{\eta},\mathcal{V}^{\bullet})$ to a $V^{\ell}$-stable family over $\Spec R$. In what follows, we work out the steps in this strategy in detail. 
		
		First, we address the problem of extending a family of $0,1$-semistable sheaves. 
		\begin{lemma}\label{lem01stablext}
			Suppose $\mathcal{E}_{\eta}$ is a $0,1$-semistable sheaf on $\widehat{X}_{\eta}$. After a finite base change on $R$, there exists an extension to a family of $0,1$-semistable sheaves on $\widehat{X}_R$.  
		\end{lemma}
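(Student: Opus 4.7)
The plan is to build the extension by splitting $\mathcal{E}_\eta$ along its canonical $0$-stable subobject and treating the two pieces separately. By Proposition \ref{propfiltrations}(2) applied over $K = \kappa(\eta)$ (see Remark \ref{remfiltrationsanyfield}), we have a canonical exact sequence
\[
0 \to T_\eta \to \mathcal{E}_\eta \to \mathcal{O}_C(-1)^{\oplus j}_{\eta} \to 0,
\]
where $T_\eta$ is $0$-stable of Chern character $\chat - j\chexc$ for some $j \geq 0$.

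First, I would extend $T_\eta$ to an $R$-flat family $T$ of $0$-stable sheaves on $\Xhat_R$. Choosing an orientation on $T_\eta$ (possible after a finite base change), the sheaf $T_\eta$ determines a $\Spec K$-valued point of the proper Deligne--Mumford stack $\mathcal{M}^0_{\chat - j\chexc}$. The valuative criterion for properness, combined with a possibly further finite base change to realize the residue extension, then supplies the desired $R$-flat family $T$.

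Second, I would extend the extension class. Consider the $R$-module $M := \Ext^1_{\Xhat_R}(\mathcal{O}_C(-1)^{\oplus j}_{R}, T)$. A cohomology-and-base-change argument, made tractable by the rigidity of $\mathcal{O}_C(-1)$ from Lemma \ref{lemvanishing}, yields a natural isomorphism $M \otimes_R K \simeq \Ext^1_{\Xhat_\eta}(\mathcal{O}_C(-1)^{\oplus j}_{\eta}, T_\eta)$ with $M$ finitely generated over $R$. The class $\alpha \in M \otimes_R K$ classifying $\mathcal{E}_\eta$ can therefore be rescaled by an appropriate power of a uniformizer to land in the image of the natural map $M \to M \otimes_R K$; this rescaling does not alter the isomorphism class of the middle term of the extension. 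Lifting then yields an $R$-flat extension
\[
0 \to T \to \mathcal{E} \to \mathcal{O}_C(-1)^{\oplus j}_{R} \to 0
\]
on $\Xhat_R$ whose generic fiber is $\mathcal{E}_\eta$.

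It remains to verify that the central fiber $\mathcal{E}_\xi$ is $0,1$-semistable. The restriction of the extension to the special fiber reads $0 \to T_\xi \to \mathcal{E}_\xi \to \mathcal{O}_C(-1)^{\oplus j} \to 0$, so by Remark \ref{rem01stable}(ii) it suffices to check that $T_\xi$ is $0,1$-semistable. Being $0$-stable, $T_\xi$ is perverse coherent and $p_*T_\xi$ is Gieseker stable, handling the first two conditions. The vanishing $\Hom(T_\xi, \mathcal{O}_C(-2)) = 0$ follows by composing any such morphism with the inclusion $\mathcal{O}_C(-2) \hookrightarrow \mathcal{O}_C(-1)$ on $C \simeq \mathbb{P}^1$ and invoking perverse coherence. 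The main obstacle is the cohomology-and-base-change step above: one must control the relative $\Ext$ sheaf carefully, but this should be tractable given the $\Hom$- and $\Ext^1$-vanishing statements of Lemma \ref{lemvanishing}.
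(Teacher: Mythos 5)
Your proof follows exactly the paper's strategy: split off the canonical $0$-stable subobject $T_\eta$ from Proposition \ref{propfiltrations}, extend it over $\Spec R$ using properness of $\mathcal{M}^0_{\chat-j\chexc}$, and then extend the classifying extension class after scaling by a power of the uniformizer.

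One small correction to the concluding paragraph: the base-change step is not really an obstacle, and the rigidity statements of Lemma \ref{lemvanishing} are not what make it go through. The map $\Spec K \to \Spec R$ is flat (it is localization at the zero ideal), so $\Ext^1_{\Xhat_K}(\mathcal{O}_C(-1)^{\oplus j}_K, T_K) \simeq \Ext^1_{\Xhat_R}(\mathcal{O}_C(-1)^{\oplus j}_R, T) \otimes_R K$ is just flat base change for $\Ext$; no rigidity of $\mathcal{O}_C(-1)$ is needed. The paper makes exactly this point by resolving $\mathcal{O}_C(-1)^{\oplus j}$ by sufficiently negative locally free sheaves, so that $\Ext^1$ sits at the end of a right-exact sequence of $\Hom$ modules whose localizations at $\pi$ are visibly the corresponding modules over $K$, and then invokes exactness of localization. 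Once $M$ is known to be finitely generated (which holds by coherence of $\Ext$ sheaves on the Noetherian scheme $\Xhat_R$), the rescaling step and the rest of your argument, including the check that the special fiber is $0,1$-semistable via Remark \ref{rem01stable}, are correct.
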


		\begin{proof}
			By abuse of notation, we write $\mathcal{O}_C(-1)$ for its pullbacks to $\widehat{X}_R$ or $\widehat{X}_\eta$.  By Proposition \ref{propfiltrations} and Remark \ref{remfiltrationsanyfield}, there exists a $0$-stable subsheaf $\mathcal{T}_{\eta}\subset \mathcal{E}_{\eta}$, such that $\mathcal{E}_{\eta}$ is an extension.
			\[0\to \mathcal{T}_{\eta}\to \mathcal{E}_{\eta} \to \mathcal{O}_C(-1)^{\oplus k} \to 0,\]
			for some $k$. This defines an extension class $\nu_{\eta}\in \Ext^1_{\widehat{X}_{\eta}}(\mathcal{O}_C(-1)^{\oplus k}, \mathcal{T}_{\eta} )$.
			After possibly performing a finite base change on $R$, we can extend $\mathcal{T}_{\eta}$  to a $0$-stable family $\mathcal{T}$ over $\Spec R$, for example by choosing any orientation and using properness of $\zerostable$. We claim that for some $n$, the class $\pi^n\nu_{\eta}$ is the restriction of a class $\nu\in \Ext^1_{\widehat{X}_{R}}(\mathcal{O}_C(-1)^{\oplus k}, \mathcal{T}_{R} )$. Then we can take $\mathcal{E}$ to be the extension defined by $\nu$. One way to see this, is to resolve $\mathcal{O}_C(-1)^{\oplus k}$ on $\widehat{X}_R$ as 
			\[ \mathcal{F}^1\to \mathcal{F}^0\to \mathcal{O}_C(-1)^{\oplus k}\to 0,\]
			where $\mathcal{F}^0$ is a direct sum of sufficiently negative powers of an ample line bundle, such that the higher derived pushfowwards of $\mathcal{T}_R\otimes \mathcal{F}^{0\vee}$ vanish.Then we get a commutative diagram with exact rows
			\begin{equation*}
				\begin{tikzcd}
					\Hom_{\widehat{X}_R}(\mathcal{F}^0, \mathcal{T}_R)\ar[r]\ar[d]&\Hom_{\widehat{X}_R}(\mathcal{F}^1, \mathcal{T}_R) \ar[r]\ar[d]& \Ext^1_{\widehat{X}_R}(\mathcal{O}_C(-1)^{\oplus k}, \mathcal{T}_R)\ar[r]\ar[d]& 0\\
					\Hom_{\widehat{X}_\eta}(\mathcal{F}_\eta^0, \mathcal{T}_\eta)\ar[r] & \Hom_{\widehat{X}_\eta}(\mathcal{F}_\eta^1, \mathcal{T}_\eta)\ar[r]& \Ext^1_{\widehat{X}_\eta}(\mathcal{O}_C(-1)^{\oplus k}, \mathcal{T}_\eta)\ar[r]& 0.
				\end{tikzcd}
			\end{equation*}
			The first two terms in the lower row are just the localizations at $\pi$ of the terms above them. Since localization is exact, this must also hold for the last term, which gives us what we want.
		\end{proof}

		\begin{definition}
			Let $(\mathcal{E},\mathcal{V}^{\bullet})$ be a family of $0,1$-semistable sheaves with flag structure on $\widehat{X}$ over $\Spec R$. 
			\begin{enumerate}[label=(\arabic*)]
				\item Suppose the central fiber does not satisfy $(\stabS_N)$. Let $0\leq j\leq N-1$ such that the central fiber satisfies $(\stabS_j)$ but not $(\stabS_{j+1})$. Let $S\subset \mathcal{E}_{\xi}$ be the minimal subobject violating $(\stabS_{j+1})$ as in Lemma \ref{lemmindest}. Then we call the elementary modification $\mathcal{E}'$ of $\mathcal{E}$ along $\mathcal{E}_{\xi}$, together with the induced flag structure and orientation an \emph{$S$-modification} of $(\mathcal{E}, \mathcal{V}^{\bullet})$. We also call it an $S$-modification of the sheaf $\mathcal{E}$, when the rest of the data are understood.  
				\item Suppose the central fiber does not satisfy $(\stabT_0)$ and let $0\leq j\leq N$ be such that the central fiber satisfies $(\stabT_{j+1})$ but not $(\stabT_j)$. Let $T\subset \mathcal{E}_{\xi}$ be the minimal subobject violating $(\stabT_j)$ as in Lemma \ref{lemmindest}. Then we call the modification of $\mathcal{E}$ along $\mathcal{E}_{\xi}/T$ together with its induced data a \emph{$T$-modification} of $(\mathcal{E},\mathcal{V}^{\bullet})$ or just of $\mathcal{E}$ if $\mathcal{V}^{\bullet}$ is understood.
			\end{enumerate} 
		\end{definition}
		\begin{remark}\label{remflagstr}
			\begin{enumerate}[label=(\roman*)]
				\item By Remark \ref{rem01stable}, \ref{rem01stable2} the central fiber of $\mathcal{E}'$ will still be $0,1$-semistable after an $S$- or $T$-modification. In each case, the flag structure is the unique one induced from the given one over the generic fiber and properness of Grassmannians.
				\item Suppose we perform an $S$-modification and $(\stabS_j)$ holds for $\mathcal{E}_{\xi}$. Then we have explicitly $\mathcal{V}'^{j}=\pi \mathcal{V}^j\subset \globm{\mathcal{E}'}$. This is since $\globm{S}\cap V_{\xi}^j=\{0\}$, and therefore each map in the composition $\mathcal{V}^j_{\xi}\to \globm{\mathcal{E}_{\xi}/S}\xrightarrow{\cdot \pi} \globm{\mathcal{E}'_\xi}$ is injective. \label{remflagstrii} 
				\item If $(\stabS_j)$ holds for $\mathcal{E}_{\xi}$ but not $(\stabS_{j+1})$, we can describe $\mathcal{V}'^{j+1}$ as follows: Let $V_S:=\globm{S}\cap \mathcal{V}_\xi^{j+1}$, which is one-dimensional by Lemma \ref{lemmindest}. Choose any extension of $V_S$ to a rank one subbundle $\mathcal{V}_S\subset \mathcal{V}^{j+1}$, so in particular $\mathcal{V}^{j+1}=\mathcal{V}^j+\mathcal{V}_S$. Then $\mathcal{V}_S\subset \globm{E}'$, and $\mathcal{V}'^{j+1}= \mathcal{V}'^j+\mathcal{V}_S =\pi \mathcal{V}^{j}+\mathcal{V}_S$. More generally, for any $i$ we can set $V^i_S:=\globm{S}\cap \mathcal{V}^i_\xi$ and choose an extension $\mathcal{V}_S^i$ to a subbundle of $\mathcal{V}^i$. Then $\mathcal{V}'^i=\pi \mathcal{V}^i+\mathcal{V}^i_S$. Moreover, it is true that $\mathcal{V}'^{i}$ is the elementary modification of $\mathcal{V}^{i}$ along its image in $\globm{ \mathcal{E}_{\xi}/S}$. \label{remflagstriii} 
				\item On the other hand, if $(\stabT_j)$ \emph{doesn't} hold for $\mathcal{E}_{\xi}$, and we perform a $T$-modification, then $\mathcal{V}'^j=\mathcal{V}^j\subset\globm{\mathcal{E}'}$. \label{remflagstriv} 
				\item If $(\stabT_j)$ doesn't hold, but $(\stabT_{j+1})$ does, we can describe $\mathcal{V}'^{j+1}$ as follows: Choose a splitting $\mathcal{V}^{j+1}=\mathcal{V}^j\oplus \mathcal{V}_T$ over $\Spec R$. Then $\mathcal{V}'^{j+1}=\mathcal{V}'^j+\pi \mathcal{V}_T =\mathcal{V}^j+\pi \mathcal{V}_T$. \label{remflagstrv} 
			\end{enumerate}
		\end{remark}
		We go on to show that an $S$- or $T$-modification will preserve the properties $(\stabS_{j})$ and $(\stabT_{j})$ appropriately. 
		
		\begin{lemma}\label{lemmodificationproperties}
			Let $(\mathcal{E},\mathcal{V}^{\bullet})$ be a family of $0,1$-semistable sheaves with flag structures over $\Spec R$.
			\begin{enumerate}[label=(\roman*)]
				\item Suppose $\mathcal{E}_\xi$ satisfies $(\stabS_j)$ but not $(\stabS_{j+1})$ and let $\mathcal{E}'$ be the $S$-modification of $\mathcal{E}$. Then the central fiber $\mathcal{E}'_{\xi}$ satisfies $(\stabS_j)$.  If $\mathcal{E}'_{\xi}$ does not satisfy $(\stabS_{j+1})$, let  $S'\subset \mathcal{E}'_{\xi}$ and $S\subset \mathcal{E}_{\xi}$ be the respective  $(\stabS_{j+1})$-destabilizing subobjects. Then the composition $S'\to \mathcal{E}'_{\xi}\to S$ is surjective.    
				\item Suppose $\mathcal{E}_{\xi}$ satisfies $(\stabT_{j+1})$ but not $(\stabT_j)$ and let $\mathcal{E}'$ be the $T$-modification of $\mathcal{E}$. Then the central fiber $\mathcal{E}'_{\xi}$ satisfies $(\stabT_{j+1})$.  If $\mathcal{E}_{\xi}'$ does not satisfy $(\stabT_j)$, let $T\subset \mathcal{E}_{\xi}$ and $T'\subset \mathcal{E}'_{\xi}$ be the minimal $(\stabT_{j})$-destabilizing subobjects. Then the composition $T'\to \mathcal{E}'_{\xi}\to T$ is surjective. 
				\item Suppose $\mathcal{E}_{\xi}$ satisfies $(\stabS_j)$, but not $(\stabT_j)$. Let $\mathcal{E}'$ be the $T$-modification of $\mathcal{E}$. Then $\mathcal{E}'_\xi$ satisfies $(\stabS_j)$. 
				\item Suppose $\mathcal{E}_{\xi}$ satisfies $(\stabT_j)$ but not $(\stabS_j)$. Let $\mathcal{E}'$ be the $S$-modification of $\mathcal{E}$. Then $\mathcal{E}'_\xi$ satisfies $(\stabT_j)$.  
			\end{enumerate}
		\end{lemma}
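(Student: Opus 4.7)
\textbf{Proof plan for Lemma \ref{lemmodificationproperties}.}

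The overall strategy is a direct case analysis using the exact sequence description of $\mathcal{E}'_\xi$ from Lemma \ref{lemexacts} together with the explicit description of the modified flag structure from Remark \ref{remflagstr}. The three main technical tools are: (a) the vanishing $\operatorname{Ext}^1_{\widehat{X}}(\mathcal{O}_C(-1),\mathcal{O}_C(-1))=0$ from Lemma \ref{lemvanishing}, which splits any extension between copies of $\mathcal{O}_C(-1)$ and lets us lift subsheaves of sums of $\mathcal{O}_C(-1)$'s through quotients; (b) the short exact sequence $0\to \globm{\mathcal{E}_\xi/S}\to \globm{\mathcal{E}'_\xi}\to \globm{S}\to 0$ obtained by applying $\globm{-}$ to the exact sequence from Lemma \ref{lemexacts} (this is genuinely exact thanks to Assumption \ref{assumpmlarge}); and (c) Remark \ref{remflagstr}, which expresses each $\mathcal{V}'^i_\xi$ either as $\alpha(\mathcal{V}^i_\xi)$, $\mathcal{V}^i_\xi$, or a sum involving the distinguished line $V_S$ (resp.\ $V_T$) determined by Lemma \ref{lemmindest}. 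Throughout we denote by $\alpha:\mathcal{E}_\xi\twoheadrightarrow \mathcal{E}_\xi/S\hookrightarrow \mathcal{E}'_\xi$ and $\beta:\mathcal{E}'_\xi\twoheadrightarrow S\hookrightarrow \mathcal{E}_\xi$ the natural maps arising from an $S$-modification, and analogously $\alpha',\beta'$ for a $T$-modification.

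For part (i), the first assertion is proven by contradiction. Given a hypothetical $(\stabS_j)$-destabilizing $A'\subset \mathcal{E}'_\xi$ with nonzero $s'\in \globm{A'}\cap \mathcal{V}'^j_\xi$, the fact that $\mathcal{V}'^j_\xi=\alpha(\mathcal{V}^j_\xi)$ maps to zero in $\globm{S}$ forces $s'\in \globm{A'\cap (\mathcal{E}_\xi/S)}$, so after replacing $A'$ by $A'\cap(\mathcal{E}_\xi/S)$ we may assume $A'\subset \mathcal{E}_\xi/S$. Using the splitting from Lemma \ref{lemvanishing} applied to $0\to S\to \text{preim}(A')\to A'\to 0$, lift to $A\subset \mathcal{E}_\xi$ with $A\simeq A'$ and $A\cap S=0$. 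Writing $s'=\alpha^*(q(u))$ with $u\in \mathcal{V}^j_\xi$ and lifting $s'$ back along the isomorphism $\globm{A}\simeq \globm{A'}$, the relation $q(u-\tilde{s})=0$ yields $u=\tilde{s}+w$ with $w\in \globm{S}$, so $u\in \mathcal{V}^j_\xi\cap \globm{A\oplus S}$. But $A\oplus S\subset \mathcal{E}_\xi$ is again a sum of copies of $\mathcal{O}_C(-1)$, so $(\stabS_j)$ on $\mathcal{E}_\xi$ forces $u=0$, hence $s'=0$, a contradiction. For the surjectivity claim, if $S'\subset \mathcal{E}'_\xi$ is the minimal $(\stabS_{j+1})$-destabilizer, pick $0\neq s'\in \globm{S'}\cap \mathcal{V}'^{j+1}_\xi$ and decompose $s'=\alpha^*(q(u))+v$ with $u\in \mathcal{V}^j_\xi,v\in V_S$. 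If $v=0$, then $s'\in \mathcal{V}'^j_\xi\cap\globm{S'}$ would violate $(\stabS_j)$ on $\mathcal{E}'_\xi$, contradicting what we just proved. Hence $v\neq 0$, which forces $V_S\subset \globm{\beta(S')}$, so $\beta(S')\subset S$ is itself $(\stabS_{j+1})$-destabilizing in $\mathcal{E}_\xi$; minimality of $S$ then gives $\beta(S')=S$.

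Part (ii) is entirely parallel to (i), exchanging the roles of $S$- and $T$-modifications and using Remark \ref{remflagstr}(v) in place of (iii); the computation proceeds by dualizing the exact sequences and replacing subobjects violating $(\stabS)$ by quotients violating $(\stabT)$. For part (iii), Remark \ref{remflagstr}(iv) gives $\mathcal{V}'^j=\mathcal{V}^j$, where the crucial inclusion $\mathcal{V}^j\subset \globm{\mathcal{E}'}$ is guaranteed by the condition $\mathcal{V}^j_\xi\subset \globm{T}$ coming from $(\stabT_j)$ failing with minimal destabilizer $T$. Moreover $\beta'^*$ restricted to $\mathcal{V}^j_\xi\subset \globm{\mathcal{E}'_\xi}$ is injective with image $\mathcal{V}^j_\xi\subset \globm{T}\subset \globm{\mathcal{E}_\xi}$. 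A hypothetical violator $A'\subset \mathcal{E}'_\xi$ then splits into two cases: if $\beta'(A')=0$ then any $s'\in \globm{A'}\cap\mathcal{V}^j_\xi$ would lie in $\globm{\mathcal{E}_\xi/T}\cap \mathcal{V}^j_\xi=0$; if $\beta'(A')\neq 0$, the injectivity of $\beta'^*$ on $\mathcal{V}^j_\xi$ produces a nonzero element of $\globm{\beta'(A')}\cap \mathcal{V}^j_\xi$, contradicting $(\stabS_j)$ on $\mathcal{E}_\xi$. Part (iv) is dual to (iii), with the roles of $(\stabS_j)$, $(\stabT_j)$ and the modifications interchanged. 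The main technical obstacle throughout is bookkeeping: keeping straight that $\mathcal{V}'^i_\xi$ is a subspace of $\globm{\mathcal{E}'_\xi}$, not $\globm{\mathcal{E}_\xi}$, and tracking its image under the maps $\alpha,\beta,\alpha',\beta'$ through the exact sequence on global sections. The computation of these images is where the precise description in Remark \ref{remflagstr} is essential and where one expects to spend most of the writing.
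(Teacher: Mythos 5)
Your detailed arguments for parts (i) and (iii) are correct and follow the same approach the paper uses: transfer a hypothetical destabilizer of $\mathcal{E}'_\xi$ to one of $\mathcal{E}_\xi$ via the exact sequence $0\to Q\to\mathcal{E}'_\xi\to F\to 0$ from Lemma~\ref{lemexacts}, track global sections through Remark~\ref{remflagstr}, and invoke the $\operatorname{Ext}^1$-vanishing of Lemma~\ref{lemvanishing} to split extensions where needed. (In (iii), your case split on whether $\beta'(A')=0$ is unnecessary --- in both cases the contradiction comes from injectivity of $\globm{\beta'}$ on $\mathcal{V}'^j_\xi$ --- but it is not wrong.)

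The weak point is part (iv), which you claim is ``dual to (iii).'' It is not, and the reason is exactly the asymmetry you would run into on writing: in (iii), the hypothesis is that $(\stabT_j)$ \emph{fails}, which is precisely the case handled by Remark~\ref{remflagstr}~\ref{remflagstriv}, giving the clean description $\mathcal{V}'^j = \mathcal{V}^j$ and injectivity of $\globm{\beta'}$ on $\mathcal{V}'^j_\xi$. In (iv), the hypothesis is that $(\stabS_j)$ fails, so Remark~\ref{remflagstr}~\ref{remflagstrii} does \emph{not} apply to $\mathcal{V}'^j$; one must locate the index $i<j$ at which $(\stabS_i)$ holds but $(\stabS_{i+1})$ fails and use the more elaborate description $\mathcal{V}'^j = \pi\mathcal{V}^j + \mathcal{V}_S^j$ of Remark~\ref{remflagstr}~\ref{remflagstriii}. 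In particular $\globm{g}$ is \emph{not} injective on $\mathcal{V}'^j_\xi$ (the $\pi\mathcal{V}^j$ part collapses), so the analogue of your (iii)-argument breaks down. The paper's proof of (iv) instead combines two separate contradictions: given a hypothetical $(\stabT_j)$-destabilizer $T\subset\mathcal{E}'_\xi$, one first shows $g(T)=S$ by the minimality of $S$ as an $(\stabS_{i+1})$-destabilizer (since $V_S\subset\globm{g(T)}$), and separately that $T\supset Q$ (otherwise the preimage of $T\cap Q$ in $\mathcal{E}_\xi$ would be $(\stabT_j)$-destabilizing), forcing $T=\mathcal{E}'_\xi$, a contradiction. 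Similarly, your dismissal of (ii) as ``entirely parallel to (i)'' is a bit quick: the paper proves the surjectivity assertion of (ii) first and then uses it in establishing $(\stabT_{j+1})$, whereas your (i) does the reverse; the flow of implication is genuinely different. None of this invalidates your overall framework, but you should not expect a literal dualization to go through for (ii) and (iv).
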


		\begin{proof}
			\begin{enumerate}[label =(\roman*)]
				\item First let $S'\subset \mathcal{E}_{\xi}'$ be any subobject isomorphic to a direct sum of copies of $\mathcal{O}_C(-1)$. Then the preimage $f^{-1}(S')$ of $S'$ under $f:\mathcal{E}_{\xi}\to \mathcal{E}_{\xi}/S\xrightarrow{\cdot \pi} \mathcal{E}'_{\xi}$ is again a direct sum of copies of $\mathcal{O}_C(-1)$. If $\globm{S'}\cap \mathcal{V}_{\xi}'^j\neq 0$, it follows from Remark \ref{remflagstr} \ref{remflagstrii} that $\globm{f^{-1}(S')}\cap \mathcal{V}^j_{\xi}\neq 0$, and therefore would violates $(\stabS_{j})$. This would contradict our assumption, so in fact we must have $\globm{S'}\cap \mathcal{V}'^{j}_{\xi} =\{0\}$. This shows that $\mathcal{E}'_\xi$ satisfies $(\stabS_j)$. Now suppose that $S'\subset \mathcal{E}'_{\xi}$ is the minimal subobject violating $(\stabS_{j+1})$, and consider the map $g:S'\to \mathcal{E}'_\xi\to S$. Then $\globm{g}$ is nonzero on $\mathcal{V}'^{j+1}\cap \globm{S}'$ for example by Remark \ref{remflagstr} \ref{remflagstriii}. Thus, $g(S')$ gives a subsheaf of $\mathcal{E}_{\xi}$ violating $(\stabS_{j+1})$. Since $S$ is the minimal such, it follows that $g$ is surjective. 
				\item This goes similarly.  Let $T'\subset \mathcal{E}'_\xi$ be a subobject such that $\mathcal{E}'_\xi/T'$ is isomorphic to a sum of copies of $\mathcal{O}_C(-1)$ and such that $\mathcal{V}^j_{\xi}\subset \globm{T'}$. Consider the composition $g:T'\to \mathcal{E}'_{\xi}\to T$. It follows from Remark \ref{remflagstr} that $\mathcal{V}_{\xi}^j\subset g(T')$. Moreover, $\mathcal{E}_{\xi}/g(T')$ is a direct sum of copies of $\mathcal{O}_C(-1)$. Since $T$ is the minimal $(\stabT_j)$-destabilizing subobject of $\mathcal{E}_\xi$, we find that $g(T')=T$. This already shows the last statement. Now suppose $T'$ was in fact $(\stabT_{j+1})$-destabilizing.  We view $S:=\mathcal{E}_{\xi}/T$ as a subobject of $\mathcal{E}'_\xi$ via the exact sequence 
				\[0\to S\to \mathcal{E}'_\xi\to T\to 0. \]
				Since $g$ is surjective, but $T'\neq \mathcal{E}'_{\xi}$, the inclusion $T'\cap S\subset S$ is proper. Moreover, we have the inclusion $V_T:=\mathcal{V}'^{j+1}_{\xi}\cap\globm{S} \subset\globm{T'\cap S}$. But by Remark \ref{remflagstr} \ref{remflagstrv}, $V_T$ equals the image of $\mathcal{V}^{j+1}_{\xi}$ in $\globm{Q}$. It follows that the preimage of $T'\cap S$ in $\mathcal{E}_{\xi}$ violates $(\stabT_{j+1})$, a contradiction. So, in fact, $\mathcal{E}'_\xi$ satisfies $(\stabT_{j+1})$. 
				\item  Let $S'\subset \mathcal{E}'_\xi$ be any subobject isomorphic to a direct sum of copies of $\mathcal{O}_C(-1)$. We consider the composition $f:\mathcal{E}'_\xi\to T\to \mathcal{E}_\xi$. Then $f(S')$ is again a direct sum of copies of $\mathcal{O}_C(-1)$. Since $\mathcal{E}_\xi$ satisfies $(\stabS_j)$, we have $\globm{f(S')}\cap \mathcal{V}^{j}_\xi=\{0\}$.  By Remark \ref{remflagstr} \ref{remflagstriv}, $\globm{f}$ induces an isomorphism on $\mathcal{V}'^j_{\xi}\to \mathcal{V}^j_{\xi}$. Therefore $\globm{S'}\cap \mathcal{V}'^j_{\xi}=0$. Hence, $\mathcal{E}'_\xi$ satisfies $(\stabS_j)$. 
				\item Suppose that $S$ is the minimal $(\stabS_{i+1})$-destabilizing subobject of $\mathcal{E}_\xi$ for some $i<j$, such that $\mathcal{E}_\xi$ satisfies $(\stabS_i)$ and let $Q:=\mathcal{E}_\xi/S$. Then $\mathcal{V}'^{i+1}=\pi \mathcal{V}^i+\mathcal{V}_S$ and $\mathcal{V}'^j=\pi \mathcal{V}^j+\mathcal{V}_S^j$ as in Remark \ref{remflagstr} \ref{remflagstriii}. Suppose $T\subset \mathcal{E}'_\xi$ violates $(\stabS_j)$. Then in particular $\mathcal{V}'^{i+1}\subset \globm{T}$. We consider the composition $g:\mathcal{E}_\xi'\to S\to \mathcal{E}_\xi$. It follows that $V_S\subset \globm{g(T)}$, and therefore that $g(T)$ violates $(\stabS_{i})$ in $\mathcal{E}_\xi$. So we must have $S\subset g(T)$ by minimality of $S$, i.e. $T$ maps onto $S$ under $\mathcal{E}_\xi\to S$. On the other hand, suppose $T\cap Q\subsetneq Q$ where the inclusion $Q\to \mathcal{E}_\xi'$ is induced by multiplication with $\pi$. Then as $\pi\mathcal{V}^j_\xi\subset T$, it is easily seen that the preimage of $T$ in $\mathcal{E}_\xi$ violates $(\stabT_j)$. So in fact we must have $T=\mathcal{E}'_\xi$ which contradicts the assumption that it is $(\stabT_{j})$-destabilizing for $\mathcal{E}'_\xi$.  
			\end{enumerate}
		\end{proof}

		\begin{lemma}\label{lemfinsmods}
			Suppose $(\mathcal{E}, \mathcal{V}^{\bullet})$ is a family of $0,1$-semistable sheaves with flag structure over $\Spec R$, such that the generic fiber satisfies $(\stabS_{j+1})$ and such that the special fiber satisfies $(\stabS_j)$. Then after finitely many $S$-modifications (possibly zero), we obtain a family that satisfies $(\stabS_{j+1})$.  
		\end{lemma}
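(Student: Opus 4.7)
The plan is to iterate $S$-modifications and argue that the procedure terminates after finitely many steps. If $\mathcal{E}_\xi$ already satisfies $(\stabS_{j+1})$, there is nothing to do. Otherwise, I perform an $S$-modification to obtain a new family $(\mathcal{E}^{(1)}, \mathcal{V}^{(1)\bullet})$ whose special fiber still satisfies $(\stabS_j)$ by Lemma \ref{lemmodificationproperties}(i), and I iterate. At each step, either $(\stabS_{j+1})$ is achieved on the special fiber (and the process stops) or another strict $S$-modification is performed. The content of the lemma is the termination assertion, which I prove by contradiction.

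Suppose for contradiction an infinite strictly descending chain $\mathcal{E} = \mathcal{E}_0 \supsetneq \mathcal{E}_1 \supsetneq \cdots$ of $S$-modifications arises, with minimal $(\stabS_{j+1})$-destabilizers $S_i \subset \mathcal{E}_{i,\xi}$. First, I verify the compatibility hypothesis of Lemma \ref{lemseqofmodifications} for the sequence with quotients $Q_i := \mathcal{E}_{i,\xi}/S_i$. By Lemma \ref{lemmindest}(i), the intersection $\globm{S_i}\cap \mathcal{V}^{j+1}_{i,\xi}$ is one-dimensional, and by Remark \ref{remdestissimple} any proper direct summand of $\mathcal{O}_C(-1)^{\oplus k}$ containing this one-dimensional subspace would contradict the minimality of $S_i$; so each $S_i$ has rank one, i.e.\ $S_i\simeq \mathcal{O}_C(-1)$. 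Lemma \ref{lemmodificationproperties}(i) then supplies surjections $S_{i+1}\twoheadrightarrow S_i$ that are automatically isomorphisms, providing a splitting $\mathcal{E}_{i+1,\xi}\simeq Q_i\oplus S_{i+1}$ and the desired compatibility $Q_i \xrightarrow{\sim} \mathcal{E}_{i+1,\xi}/S_{i+1} = Q_{i+1}$. Lemma \ref{lemseqofmodifications} now yields that each quotient $\mathcal{Q}_n := \mathcal{E}/\mathcal{E}_n$ is $R/\pi^n$-flat.

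The contradiction is extracted from the limit of the chain. Since every $\mathcal{E}_i$ has the same generic fiber $\mathcal{E}_\eta$ (the $S$-modifications only affect the special fiber), the intersection $F := \bigcap_n \mathcal{E}_n$, which is a coherent subsheaf of $\mathcal{E}$ by Noetherianness of $\widehat{X}_R$, satisfies $F_\eta = \mathcal{E}_\eta$. Hence $\mathcal{E}/F$ is a coherent sheaf with zero generic fiber, supported on $\widehat{X}_\xi$, and is therefore annihilated by some power $\pi^N$ of the uniformizer. This gives $\pi^N\mathcal{E} \subset F\subset \mathcal{E}_n$, so $\pi^N\mathcal{Q}_n = 0$ for every $n$. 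On the other hand, because $\mathcal{Q}_n$ is a nonzero $R/\pi^n$-flat coherent sheaf (locally free of positive rank as an $R/\pi^n$-module), $\pi^{n-1}\mathcal{Q}_n \neq 0$. For $n\geq N+1$ we have $\pi^{n-1}R\subset \pi^N R$, whence $\pi^{n-1}\mathcal{Q}_n \subset \pi^N\mathcal{Q}_n = 0$, a contradiction. Thus only finitely many $S$-modifications are possible, and the terminal family has special fiber satisfying $(\stabS_{j+1})$.

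The main obstacle is the rank-one identification $S_i\simeq \mathcal{O}_C(-1)$: this is what guarantees that the surjection $S_{i+1}\twoheadrightarrow S_i$ is automatically an isomorphism and thereby the compatibility needed for Lemma \ref{lemseqofmodifications}. Once this is in place, the rest is a Langton-style termination argument using the annihilator of the limit subsheaf $F$ to bound the length of the modification chain.
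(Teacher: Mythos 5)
Your proof has two gaps, one patchable and one fatal.

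The rank-one claim for $S_i$ is incorrect. Remark \ref{remdestissimple} asserts that the pair $(S_i, V_{S_i})$ is \emph{simple}, i.e.\ there is no proper direct summand $S'\subsetneq S_i$ with $V_{S_i}\subset\globm{S'}$; this does not force $S_i$ to have rank one. For instance, $S = \mathcal{O}_C(-1)^{\oplus k}$ with $V_S$ spanned by $\sum_{i=1}^k v_i\otimes e_i$ for linearly independent $v_i\in\globm{\mathcal{O}_C(-1)}$ is simple for any $k\leq d$ (this is exactly the open condition defining $\operatorname{Fl}^\circ$ in \S\ref{subsec:fix}). The paper handles this differently: the ranks $k_n$ of the $S_n$ form a nondecreasing sequence that strictly increases whenever $f_n:S_{n+1}\twoheadrightarrow S_n$ is not an isomorphism, and since $k_n\cdot d\leq N$ is a priori bounded, the $f_n$ are eventually isomorphisms; one then restarts from that point. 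This step of yours can be repaired.

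The termination argument is not repairable in its current form. The crucial assertion that $F:=\bigcap_n\mathcal{E}_n$ satisfies $F_\eta=\mathcal{E}_\eta$ is false: localization does not commute with infinite intersections, and in fact the opposite conclusion holds. Let $\eta_\xi$ denote the generic point of the special fiber $\Xhat_\xi$, whose local ring $\mathcal{O}_{\eta_\xi}$ is a DVR with uniformizer $\pi$. Since each $\mathcal{Q}_n$ is $R/\pi^n$-flat and its restriction to $\Xhat_\xi$ is $Q_0=\mathcal{E}_\xi/S_0$, which has generic rank $r=\operatorname{rk}\mathcal{E}$, the stalk $\mathcal{Q}_{n,\eta_\xi}$ is free of rank $r$ over $\mathcal{O}_{\eta_\xi}/\pi^n$. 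It follows that $\mathcal{E}_{n,\eta_\xi}=\pi^n\mathcal{E}_{\eta_\xi}$, hence $F_{\eta_\xi}=\bigcap_n\pi^n\mathcal{E}_{\eta_\xi}=0$. Since the generic point of $\Xhat_R$ specializes to $\eta_\xi$ and $\operatorname{Supp}(F)$ is closed, $F$ also vanishes at the generic point of $\Xhat_R$; so $F\mid_{\Xhat_\eta}$ is torsion and cannot equal $\mathcal{E}\mid_{\Xhat_\eta}$. Consequently there is no $N$ with $\pi^N\mathcal{E}\subset F$, and your contradiction evaporates. Indeed, this is exactly why the lemma requires the stability hypothesis rather than being a purely module-theoretic fact: an infinite chain of elementary modifications with $R/\pi^n$-flat quotients is perfectly possible in general (e.g.\ $\mathcal{E}_n=\pi^n\mathcal{E}$). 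The paper instead extracts the contradiction from the \emph{flag structure}: the flat quotients $\mathcal{Q}_n$ over $R/\pi^n$ give maps into the relative Quot-scheme factoring through a fixed rank-$j$ subbundle of $\globm{\mathcal{Q}}$, forcing the determinantal locus where $\sigma^*\mathcal{V}^{j+1}\to\globm{\mathcal{Q}}$ degenerates to contain arbitrarily thick infinitesimal neighborhoods of the special fiber; but this locus is proper over $\Spec R$ and, by the assumption $(\stabS_{j+1})$ on the generic fiber, does not meet the generic fiber. That argument uses the flag and stability in an essential way, and it is precisely the input your proof is missing.
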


		\begin{proof}
			We prove this by contradiction. Suppose that the statement of the lemma is false. Then we get an infinite descending sequence of sheaves $\mathcal{E}= \mathcal{E}_0 \supset \cdots\supset \mathcal{E}_n\supset\mathcal{E}_{n+1} \supset\cdots$ with flag structures $\mathcal{V}_n^{\bullet}$, such that $\mathcal{E}_{n+1}$ is obtained from $\mathcal{E}_n$ through an $S$-modification. By Lemma \ref{lemmodificationproperties}, each $\mathcal{E}_n$ satisfies $(\stabS_j)$, and by our assumption has a central fiber that does not satisfy $(\stabS_{j+1})$. Let $S_n\subset \mathcal{E}_{n,\xi}$ be the minimal $(\stabS_{j+1})$-destabilizing subobject and set $Q_n:=\mathcal{E}_{n,\xi}/S_n$.  Then Lemma \ref{lemmodificationproperties} further states that the maps $f_n: S_{n+1}\to \mathcal{E}_{n,\xi}\to S_{n}$ are all surjective.  For each $n$, define $k_n$ to be the rank of $S_n$ restricted to the exceptional divisor $C$, so that $S_n$ is a direct sum of $k_n$ copies of $\mathcal{O}_C(-1)$. By considering Chern characters, $k_{n+1}>k_n$ whenever $f_n$ is not an isomorphism. However, due to Assumption \ref{assumpmlarge}, all the $k_n$ are a priori bounded by $k_n\dim \globm{\mathcal{O}_C(-1)}\leq \dim\globm{\mathcal{E}_\xi}$. Thus, eventually all $f_n$ are isomorphisms. Replacing $\mathcal{E}$ by $\mathcal{E}_n$ for $n$ big enough, we can assume that all the $f_n$ are isomorphisms.  In other words, for each $n$ the inclusion  $S_{n+1}\subset \mathcal{E}_{n+1,\xi}$ splits the exact sequence
			\[0\to Q_n\to \mathcal{E}_{n+1}\to S_n\to 0.\]
			Equivalently, the quotient $Q_{n+1}$ splits the sequence, i.e. the compositions $Q_n\to \mathcal{E}_{n+1,\xi}\to Q_{n+1}$ are isomorphisms. Therefore, we are in the situation of Lemma \ref{lemseqofmodifications}.  It follows that we have a flat quotient $\mathcal{E}/(\pi^n)\to \mathcal{Q}_n$ over $\Spec R/(\pi^n)$ that restricts to $\mathcal{E}_{0,\xi}\to Q_0$ over $\xi$ for any $n\geq 0$.  
			Moreover, we also have the sequence 
			\[\cdots \subset \mathcal{V}^{j+1}_{n+1}\subset \mathcal{V}^{j+1}_n\subset \cdots \subset \mathcal{V}^{j+1}_0=\mathcal{V}^{j+1} \]
			where $\mathcal{V}^i_n$ is the induced $i$-th piece of the flag structure on $\mathcal{E}_n$. Due to Remark \ref{remflagstr} \ref{remflagstriii}, each entry in  ,this sequence is the elementary modification of the preceeding one along the image of $\mathcal{V}^{j+1}_n\to \globm{Q_n}$, or equivalently along the surjection $\mathcal{V}^{j+1}_{n,\xi}\to \mathcal{V}^j_{n,\xi}$ obtained from the splitting $\mathcal{V}^{j+1}_{n,\xi}=\mathcal{V}^j_{n,\xi}\oplus V_{n,S}$.   Since $\mathcal{V}^{j}_{n+1}=\pi\mathcal{V}^j_n$, in particular, the condition of Lemma \ref{lemseqofmodifications} is fulfilled. This means that for all $n$, the sheaf $\mathcal{W}_n:=\mathcal{V}^{j+1}_0/\mathcal{V}^{j+1}_n$ is an $R/(\pi^n)$-flat quotient of $\mathcal{V}^{j+1}\otimes_R R/(\pi^n)$ which restricts to $\mathcal{V}^{j+1}_\xi\to \mathcal{V}^{j+1}_\xi/V_S$ on the special fiber. We also have for any $n$ that $\mathcal{V}^{j+1}_{n+1}=\mathcal{V}^{j+1}_{n}\cap \globm{\mathcal{E}_{n+1}}$, and therefore by induction $\mathcal{V}^{j+1}_n=\mathcal{V}^{j+1}\cap \globm{\mathcal{E}_n}$. This shows that $\mathcal{W}_n$ is equal to the image of the map $\mathcal{V}^{j+1}\otimes_R R/(\pi^n)\to \globm{\mathcal{Q}_n}$. To summarize, we have shown that on each infinitesimal neighborhood $\iota_n:Z_n:=\Spec R/(\pi^n)\hookrightarrow \Spec R$ of the closed point we have some flat quotient $\iota_n^*\mathcal{E}\to \mathcal{Q}_n$ extending $\mathcal{E}_\xi\to Q$, such that the composition $\iota_n^*\mathcal{V}^{j+1}\to\globm{\iota^*\mathcal{E}} \to\globm{\mathcal{Q}_n}$ factors through some rank $j$ vector bundle $\mathcal{W}_n\subset \globm{\mathcal{Q}_n}$. We now show that this creates a contradiction to the assumption that the generic fiber satisfies $(\stabS_{j+1})$.
			Consider the relative Quot scheme $\sigma:\Quot_{X_R/R}(\mathcal{E},\chat(Q))\to \Spec R$ and let $\sigma^*\mathcal{E}\to \mathcal{Q}$ denote the universal quotient. We consider the map $f:\sigma^*\mathcal{V}^{j+1}\to \globm{\mathcal{Q}}$.  By Lemma \ref{lemexcchercarisexc}, all the points in the Quot-scheme parametrize a quotient with Kernel isomorphic to a direct sum of copies of $\mathcal{O}_C(-1)$. Thus, the condition $(\stabS_{j+1})$ guarantees that the map $f$ is injective over every point of the generic fiber. On the other hand, the restriction of $f$ to the point corresponding to  $E_{\xi}\to Q$ is not injective by assumption. We let $Z$ be the zero locus of the map $\wedge^{j+1} f:\sigma^*(\wedge^j\mathcal{V}^{j+1})\to \globm{\mathcal{Q}}^{j+1}$. It has the universal property, that a map $\alpha:T\to \Quot_{X_R/R}(\mathcal{E},H_Q)$ factors through $Z$ if and only if $\alpha^*(\wedge^{j+1}f)=0$. Then in particular, $Z$ contains $[E_{\xi}\to Q]$, but is set-theoretically supported only over the special fiber. Its scheme-theoretic image must therefore be contained in some closed subscheme of $\Spec R$ supported on $\{\xi\}$, i.e. in $Z_n$ for some $n$. Now consider the map $\alpha_{n+1}:Z_{n+1}\to \Quot_{X_R/R}(\mathcal{E},H_Q)$ induced by $\mathcal{Q}_{n+1}$. As we have shown, $\alpha_{n+1}^*f$ factors through the rank $j$ bundle $\mathcal{W}_{n+1}$. Hence, in particular, $\alpha_{n+1}^*(\wedge^{j+1} f)=0$, namely, $\alpha_{n+1}$ factors through $Z$. But this would imply that the closed subscheme $Z_{n+1}$ factors through $Z_n$, which is a contradiction. 
		\end{proof}
		
		\begin{lemma}\label{lemfintmods}
			Suppose $(\mathcal{E}, \mathcal{V}^{\bullet})$ is a family of $0,1$-semistable sheaves with flag structure over $\Spec R$, such that the generic fiber satisfies $(\stabT_{j})$ and such that the special fiber satisfies $(\stabT_{j+1})$. Then after finitely many $S$-modifications (possibly zero), we obtain a family that satisfies $(\stabT_{j+1})$. Moreover, if we assume that the special fiber initially satisfies $(\stabS_\ell)$ for some $\ell\leq j$, then this is still true for any of the modifications.   
		\end{lemma}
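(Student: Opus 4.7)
The plan is to mirror the proof of Lemma~\ref{lemfinsmods} in the dual $\stabT$-setting, performing modifications of $T$-type (the natural tool for upgrading $\stabT$-stability in the central fiber) until the central fiber attains the target $\stabT$-condition, while simultaneously exploiting Lemma~\ref{lemmodificationproperties}(iii) to preserve the given $(\stabS_\ell)$ property.

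First I would reduce to a single level of modification. If the central fiber already satisfies the desired upgraded $\stabT$-condition there is nothing to do. Otherwise there is a unique transition index $j' \in [j, N-1]$ with $(\stabT_{j'+1})$ holding and $(\stabT_{j'})$ failing, and the corresponding modification produces $\mathcal{E}'$ whose central fiber again satisfies $(\stabT_{j'+1})$ by Lemma~\ref{lemmodificationproperties}(ii); moreover the minimal destabilizer $T_{n+1}$ on the next step surjects onto the previous $T_n$. Consequently the transition index is non-increasing along the sequence, so after finitely many steps we may assume it equals a fixed value $j_0 \geq j$.

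The \emph{Moreover} clause drops out at the same time. Since the modification is at level $j' \geq j \geq \ell$, we have $\mathcal{V}^\ell \subset \mathcal{V}^{j'}_\xi \subset \globm{T}$, and Remark~\ref{remflagstr}(iv) (applied iteratively to indices $\leq j'$) yields $\mathcal{V}'^\ell = \mathcal{V}^\ell$. Then the verbatim argument of Lemma~\ref{lemmodificationproperties}(iii), but with $\ell$ in place of the modification level, shows that a hypothetical $(\stabS_\ell)$-destabilizing subobject of $\mathcal{E}'_\xi$ would map through $\mathcal{E}'_\xi \to T \to \mathcal{E}_\xi$ to a $(\stabS_\ell)$-destabilizing subobject of $\mathcal{E}_\xi$, contradicting the hypothesis.

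To rule out an infinite chain of modifications at the fixed level $j_0$, write $T_n \subset \mathcal{E}_{n,\xi}$ for the minimal $(\stabT_{j_0})$-destabilizer and $K_n := \mathcal{E}_{n,\xi}/T_n \simeq \mathcal{O}_C(-1)^{\oplus k_n}$. Lemma~\ref{lemmodificationproperties}(ii) gives surjections $T_{n+1} \twoheadrightarrow T_n$; Assumption~\ref{assumpmlarge} caps the $k_n$, so for $n$ large these are isomorphisms, equivalently the composition $K_n \to \mathcal{E}_{n+1,\xi} \to K_{n+1}$ (multiplication by $\pi$ followed by projection) is an isomorphism. Lemma~\ref{lemseqofmodifications} then packages the data into $R/(\pi^n)$-flat quotients $\mathcal{E} \otimes_R R/(\pi^n) \twoheadrightarrow \mathcal{Q}_n$ extending $\mathcal{E}_\xi \twoheadrightarrow K_0$, and iterating Remark~\ref{remflagstr}(iv) shows $\mathcal{V}^{j} \otimes_R R/(\pi^n) \to \globm{\mathcal{Q}_n}$ is identically zero. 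The contradiction then comes from the relative Quot scheme $\sigma : \Quot_{\Xhat_R/R}(\mathcal{E}, \ch(K_0)) \to \Spec R$ with universal quotient $\mathcal{Q}$: the closed subscheme $Z$ cut out by the vanishing of $\sigma^* \mathcal{V}^j \to \globm{\mathcal{Q}}$ satisfies $Z_\eta = \emptyset$ by the generic hypothesis $(\stabT_j)$ combined with Lemma~\ref{lemexcchercarisexc}, so $Z \subset \Spec R/(\pi^{n_0})$ for some $n_0$, while the family $\mathcal{Q}_n$ produces maps $\Spec R/(\pi^n) \to Z$ for all $n$. The main obstacle in executing this plan is the Moreover clause for strict inequality $\ell < j'$: Lemma~\ref{lemmodificationproperties}(iii) only states the preservation at the modification level itself, and one has to redo the short argument there at a general lower level while making sure the flag-level identifications $\mathcal{V}'^\ell = \mathcal{V}^\ell$ hold throughout the iteration.
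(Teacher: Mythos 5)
Your proposal is correct and takes essentially the same route as the paper, whose proof simply reruns the argument of Lemma \ref{lemfinsmods} with $T$-modifications (producing the flat quotients $\mathcal{Q}_n$ with $\mathcal{V}^j\to\globm{\mathcal{Q}_n}$ vanishing and contradicting $(\stabT_j)$ on the generic fiber via the relative Quot-scheme) and deduces the ``moreover'' clause from Remark \ref{remflagstr}, exactly as you do. The only cosmetic slips are that the modification level is forced to equal $j$ (not merely lie in $[j,N-1]$), since $(\stabT_{j+1})$ is preserved throughout, and that in the $T$-case the integers $k_n$ are non-increasing, so stabilization follows from $k_n\geq 0$ rather than from the upper bound of Assumption \ref{assumpmlarge}; neither affects the argument.
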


		\begin{proof}
			The last statement is immediate from Remark \ref{remflagstr}. The rest of the proof is along the same lines as the one in Lemma \ref{lemfinsmods}. In particular, we obtain a sequence of quotients  $\iota_n^*\mathcal{E}\to \mathcal{Q}_n$ over all infinitesimal neighborhoods $\iota_n:Z_n=\Spec R/(\pi^n)\hookrightarrow \Spec R$ of $\xi$ and for which the natural map $\iota^*\mathcal{V}^{j}\to \globm{\mathcal{Q}_n}$ vanishes. Using the relative Quot-scheme as in the proof of Lemma \ref{lemfinsmods}, we see that this contradicts the assumption that $(\stabT_j)$ holds for the generic fiber.
		\end{proof}

		We can now prove the existence part of the valuative criterion for properness of $\flagell$.
		\begin{proof}[Proof of Proposition \ref{proppropval} ]
			Suppose $(\mathcal{E}_\eta, \mathcal{V}_{\eta}^{\bullet})\in \flagell(\eta)$ is an oriented $V_\ell$-stable sheaf over the generic point of $\Spec R$. Thus, it satisfies properties $(\stabS_\ell)$ and $(\stabT_\ell)$ of Definition \ref{defvstability}. We need to show that, possibly after a base change, there exists a family of oriented $V_\ell$-stable sheaves over $\Spec R$, such that its restriction to the generic fiber is isomorphic to the given one. By Lemma \ref{lem01stablext}, we can extend the sheaf $\mathcal{E}_\eta$ to a family $\mathcal{E}'$ of $0,1$-semistable sheaves on $\Spec R$ (possibly after performing a base change). By properness of Grassmannians, we can automatically extend the flag $\mathcal{V}_{\eta}^{\bullet}$ to obtain a family $(\mathcal{E}',\mathcal{V}'^{\bullet})$ of $0,1$-semistable sheaves with flag structure. The special fiber of this family satisfies $(\stabS_0)$ and $(\stabT_N)$, while the generic fiber satisifes $(\stabS_j)$ for all $0\leq j\leq \ell$ and $(\stabT_i)$ for all $\ell\leq i\leq N$. By applying Lemma \ref{lemfinsmods} a number of $\ell$ times, after finitely many $S$-modifications we can assume that the special fiber satisfies $(\stabS_\ell)$. Then by applying Lemma \ref{lemfintmods} a number of $N-\ell$ times, we can assume that the special fiber satisfies both $(\stabS_\ell)$ and $(\stabT_\ell)$, i.e. that we have a $V_\ell$-stable family. Let $(\mathcal{E}', \mathcal{V}'^{\bullet})$ now denote this new family. Its generic fiber is isomorphic to $(\mathcal{E}_\eta,\mathcal{V}^{\bullet}_\eta)$ via some isomorphism $\varphi:\mathcal{E}'_\eta\to \mathcal{E}_\eta$. Pick any orientation of $\mathcal{E}'$ over $\Spec R$. Then $\varphi$ and the chosen orientations induce a map $\mathcal{O}_{X_\eta}\simeq \det\mathcal{E}'_\eta\xrightarrow{\det \varphi} \det \mathcal{E}_\eta  \simeq \mathcal{O}_{X_\eta}$, which by properness of $X$ must be the multiplication with some $a\in K^*$, where $K$ is the field of fractions of $R$. After passing to some valuation ring dominating $R$, we can assume that $K$ contains an $r$-th root of $a$. Then  $a^ {-1/r}\varphi$  is an isomorphism between $(\mathcal{E}'_\eta,\mathcal{V}'^\bullet_\eta)$ and $(\mathcal{E}_\eta,\mathcal{V}^\bullet_\eta)$ as oriented $V^\ell$-stable sheaves over $K$.  
		\end{proof}
		
		\begin{corollary}\label{corvalpropss}
			The existence part of the valuative criterion for properness holds for $\flagellss$. 
		\end{corollary}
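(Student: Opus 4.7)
The plan is to mimic the proof of Proposition \ref{proppropval} almost verbatim, simply stopping the $T$-modification stage one step earlier so that the special fiber ends up satisfying $(\stabT_{\ell+1})$ instead of $(\stabT_{\ell})$. Let $R$ be a DVR over $\C$ with generic point $\eta$ and closed point $\xi$, and suppose $(\mathcal{E}_\eta,\mathcal{V}_\eta^\bullet)\in\flagellss(\eta)$. By definition of $V^\ell,V^{\ell+1}$-semistability the generic fiber satisfies $(\stabS_j)$ for all $j\leq \ell$ and $(\stabT_i)$ for all $i\geq \ell+1$.

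First I would apply Lemma \ref{lem01stablext}, possibly after a finite base change on $R$, to extend $\mathcal{E}_\eta$ to an $R$-flat family $\mathcal{E}'$ of $0,1$-semistable sheaves on $\widehat{X}$. Then, using properness of the full flag variety attached to $\globm{\mathcal{E}'}$, I would extend $\mathcal{V}_\eta^\bullet$ to a flag structure $\mathcal{V}'^\bullet$ on the whole family. The special fiber vacuously satisfies $(\stabS_0)$ and $(\stabT_N)$.

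Next I would apply Lemma \ref{lemfinsmods} successively $\ell$ times to perform a finite sequence of $S$-modifications until the special fiber satisfies $(\stabS_\ell)$; this is legitimate because the hypothesis $(\stabS_\ell)$ on the generic fiber is available at every intermediate stage. At this point I would apply Lemma \ref{lemfintmods} successively $N-\ell-1$ times, each step performing $T$-modifications and lowering the special-fiber index from $(\stabT_{j+1})$ to $(\stabT_{j})$ for $j$ ranging from $N-1$ down to $\ell+1$. Each step requires only the generic-fiber hypothesis $(\stabT_{j})$, which is available since $j\geq \ell+1$; crucially, the ``moreover'' clause of Lemma \ref{lemfintmods} guarantees that the condition $(\stabS_\ell)$, once achieved on the special fiber, is preserved throughout these subsequent $T$-modifications. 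The outcome is an $R$-flat family whose special fiber satisfies both $(\stabS_\ell)$ and $(\stabT_{\ell+1})$, i.e.\ lies in $\flagellss$.

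Finally the orientation is treated exactly as at the end of the proof of Proposition \ref{proppropval}: equip the extended family with any orientation, compare with the given orientation on $\mathcal{E}_\eta$ via a scalar $a\in K^\ast$ obtained from $\det$ on the generic fiber, and pass to a valuation ring dominating $R$ in which $a$ admits an $r$-th root so that the comparison becomes an isomorphism of oriented $V^\ell,V^{\ell+1}$-semistable families. I do not anticipate a genuine obstacle in any step; the entire argument is a direct adaptation of Proposition \ref{proppropval}, and the numerical adjustment in the number of Lemma \ref{lemfintmods} applications is the only change.
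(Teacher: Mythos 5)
Your proposal is correct and follows the paper's own argument exactly: the paper's proof of this corollary is literally ``the same proof works, except that one now applies Lemma \ref{lemfintmods} only $N-\ell-1$ times, to achieve a special fiber that satisfies $(\stabT_{\ell+1})$.'' You have merely written out in full the details that the paper leaves implicit, including the correct counts of applications of Lemmas \ref{lemfinsmods} and \ref{lemfintmods}, the availability of the generic-fiber hypotheses at each step, and the preservation of $(\stabS_\ell)$ via the ``moreover'' clause.
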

		\begin{proof}
			The same proof works, except that one now applies Lemma \ref{lemfintmods} only $N-\ell-1$ times, to achieve a special fiber that satisfies $(\stabT_{\ell+1})$. 
		\end{proof}

		\subsection{Construction of the enhanced master space} \label{subsecmasterspace}

		We now construct the master space connecting the spaces $\flagell$ and $\flagellplusone$. We mostly follow the construction given by Kiem and Li in \cite{KiLi}, fill out some details, and exhibit how their method adapts to our situation. We suppress mentioning the flag structure on sheaves in this section unless needed.

		We fix a large enough integer $M$ which satisfies
		\begin{equation}
			\chi_1:=\int_X c\cdot \operatorname{ch}(H^{\otimes M}) \cdot \operatorname{td}(X)>0,  
		\end{equation} 
		where $\mathcal{O}_X(1) = H$ and $c$ are our fixed choices of an ample line bundle and a Chern character on $X$ respectively. This number is chosen such that for any $0,1$-semistable sheaf $E$ with Chern character $p^*c-j\chexc$ for some $j\geq 0$, we have $\chi(E\otimes p^*\mathcal{O}_X(M))=\chi_1$, as can be seen by the Grothendieck--Riemann--Roch formula. We also set 
		\begin{equation}
			\chi_2:= \int_X \widehat{c} \cdot \operatorname{ch}(p^*\mathcal{O}_X(M))\cdot \operatorname{ch}(\mathcal{O}_{\widehat{X}}(-C)) \cdot \operatorname{td}(\widehat{X}).
		\end{equation}
		For any coherent sheaf $E$ on $\widehat{X}$ of Chern character $\widehat{c}$, we have $\chi(E(-C)\otimes p^*\mathcal{O}_X(M))=\chi_2$. 
		The following is a variant of Definition 2.6 in \cite{KiLi}: 
		\begin{construction}\label{constrdetl}
			Let $T$ be a $\C$-scheme and let $\pi_{\widehat{X}}:\widehat{X}\times T\to T$ be the projection. To any $T$-flat coherent sheaf $\mathcal{E}$ on $\widehat{X}\times T$, we associate a line bundle $\detL(\mathcal{E})$ on $T$ via
			\[\detL(\mathcal{E}):= \left(\det R \pi_*(\mathcal{E}\otimes p^*\mathcal{O}_X(M))\right)^{\chi_2} \left( \det R\pi_*(\mathcal{E}(-C)\otimes p^*\mathcal{O}_X(M))\right)^{-\chi_1}.\]
			This is functorial for isomorphisms between coherent sheaves.
		\end{construction}
		\begin{remark}
			More generally, this makes sense - and the expected functorialities hold - for not neccessarily $T$-flat $\mathcal{E}$ whenever the determinants appearing in the definition of $\detL(\mathcal{E})$ are defined. For example, this is always the case when $T$ is the spectrum of a discrete valuation ring. We will  use the same notation also in this more general situation 
		\end{remark}

		The following properties follow from the properties of the determinant functor. By abuse of notation, let $\rho_t$ denote multiplication by $t\in \C^*$ on various sheaves.
		\begin{lemma}\label{lemlprops}
			
			Let $\mathcal{E}$ be a coherent sheaf on $\widehat{X} \times T$ for which $\detL(\mathcal{E})$ is defined.  
			\begin{enumerate}[label=(\roman*)]
				\item  If each fiber $\mathcal{E}_t$ has Chern character $\chat$, then we have $\detL(\rho_t)=\operatorname{id}_{\detL(E)}$.\label{lemlprops1}
				\item The functor $\detL$ is multiplicative in exact sequences, i.e. given an exact sequence of coherent sheaves on $\widehat{X} \times T$
				\[0\to \mathcal{F}_1\to \mathcal{E}\to \mathcal{F}_2\to 0,\]
				we have a natural isomorphism $\detL(\mathcal{E})\simeq \detL(\mathcal{F}_1)\otimes \detL(\mathcal{F}_2)$ whenever $\detL$ is defined for all terms.\label{lemlprops2}
				\item For the sheaf $\mathcal{O}_C(-1)^{\oplus k}$ on $C \times T$ we have that $\detL(\rho_t)$ acts as $t^{-k\chi_1}$ on $\detL(\mathcal{O}_C(-1))$. If $F$ is a sheaf with Chern character $\chat-k\chexc$ on $\widehat{X} \times T$, then $\detL(\rho_t)$ acts on $\detL(F)$ as multiplication by $t^{k\chi_1}$ . \label{lemlprops3}
				\item Let $\nu(\mathcal{E})$ denote the locally constant function on $T$ such that locally $\detL(\rho_t)$ acts as $t^{\nu(\mathcal{E})}$ on $\detL(\mathcal{E})$. Then for any line bundle $A$ on $T$, we have a natural isomorphism $\detL(\mathcal{E}\otimes \pi_{\Xhat}^*A)\simeq \detL(\mathcal{E})\otimes A^{\nu(E)}$. \label{lemlprops4}
				\item Suppose we are given an inclusion $\mathcal{E}'\subset \mathcal{E}$ of $R$-flat sheaves over $\Xhat_R$ for some DVR $R$, such that the quotient $Q:=\mathcal{E}/\mathcal{E}'$ is supported on the special fiber and suppose that on the special fiber we have $\detL(\rho_t) = t^{\nu(Q)} \operatorname{id}_{Q}$ on $\detL(Q)$. Then the rational map $r:\detL(\mathcal{E}')\dasharrow\detL(\mathcal{E})$ induced by functoriality over the generic fiber has a zero of order $\nu(Q)$.  \label{lemlprops5}
			\end{enumerate}
		\end{lemma}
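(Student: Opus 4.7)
My plan is to handle the five parts essentially independently, with parts (i)--(iv) reducing to direct computations with the Knudsen--Mumford determinant and Grothendieck--Riemann--Roch, and part (v) requiring a more delicate length/Euler characteristic analysis on $\Spec R$.

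For parts (i)--(iii), the key input is that for a coherent sheaf $\mathcal{F}$ on $\Xhat\times T$, the scaling action $\rho_t$ acts on each factor $\det R\pi_{T*}(\mathcal{F}\otimes(\mbox{twist}))$ with weight equal to the Euler characteristic of the corresponding twisted fiber. First I would compute the two relevant Euler characteristics in each case: for a sheaf with Chern character $\chat$ one gets $\chi(\mathcal{E}_t\otimes p^*\mathcal{O}_X(M))=\chi_1$ (using the projection formula $p_*p^*=\id$ and that $\chi(\mathcal{O}_C(-1)\otimes p^*\mathcal{O}_X(M))=\chi(\mathcal{O}_{\mathbb{P}^1}(-1))=0$) and $\chi(\mathcal{E}_t(-C)\otimes p^*\mathcal{O}_X(M))=\chi_2$ by definition, giving total weight $\chi_2\chi_1-\chi_1\chi_2=0$, which is (i). For (iii) I would run the same Euler-characteristic bookkeeping: $\mathcal{O}_C(-1)(-C)|_C\simeq\mathcal{O}_C$ so $\chi=1$, yielding weight $-k\chi_1$ on $\detL(\mathcal{O}_C(-1)^{\oplus k})$; and for $F$ of character $\chat-k\chexc$, additivity gives $\chi(F\otimes p^*\mathcal{O}_X(M))=\chi_1$ and $\chi(F(-C)\otimes p^*\mathcal{O}_X(M))=\chi_2-k$, so the total weight is $k\chi_1$. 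Part (ii) is immediate from the multiplicativity of the Knudsen--Mumford determinant on distinguished triangles applied to $0\to \mathcal{F}_1\otimes(\mbox{twist})\to \mathcal{E}\otimes(\mbox{twist})\to \mathcal{F}_2\otimes(\mbox{twist})\to 0$.

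For (iv), I would apply the projection formula $R\pi_{T*}(\mathcal{E}\otimes \pi_{\Xhat}^*A\otimes (\mbox{twist}))\simeq R\pi_{T*}(\mathcal{E}\otimes(\mbox{twist}))\otimes A$ together with the standard identity $\det(V^{\bullet}\otimes A)\simeq \det(V^{\bullet})\otimes A^{\chi(V^{\bullet})}$ for $V^{\bullet}$ a perfect complex. Raising to the powers $\chi_2$ and $-\chi_1$ and combining yields an $A$-power equal to $\chi_2\chi(\mathcal{E}_t\otimes p^*\mathcal{O}_X(M))-\chi_1\chi(\mathcal{E}_t(-C)\otimes p^*\mathcal{O}_X(M))$, which by the characterization of $\nu(\mathcal{E})$ via the $\rho_t$ weight is precisely $\nu(\mathcal{E})$.

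The subtle part is (v), and this is where I expect the real work to lie. Although $Q=\mathcal{E}/\mathcal{E}'$ is not $R$-flat, the distinguished triangle $R\pi_{R*}\mathcal{E}'\to R\pi_{R*}\mathcal{E}\to R\pi_{R*}Q$ (and its $(-C)$-twisted version) still yields via (ii) a canonical isomorphism $\detL(\mathcal{E})\simeq \detL(\mathcal{E}')\otimes \detL(Q)$ on $\Spec R$. Restricting to $\eta$, the hypothesis that $Q$ is supported on the closed fiber forces $Q_{\eta}=0$, so $\detL(Q_{\eta})\simeq\mathcal{O}_{\eta}$ has a canonical trivialization, and under the tensor decomposition above the rational map $r$ is precisely this trivialization extended rationally to $\Spec R$. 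Thus the problem reduces to computing the vanishing order of the canonical generic trivialization of $\detL(Q)$. For a perfect complex $V^{\bullet}$ of $R$-modules with $V^{\bullet}_{\eta}\simeq 0$, a standard computation with a resolution by free $R$-modules of equal rank shows that the generic trivialization of $\det V^{\bullet}$ vanishes to order $\sum_i(-1)^i\mathrm{length}_R H^i(V^{\bullet})$. Applying this to $V^{\bullet}=R\pi_{R*}(Q\otimes p^*\mathcal{O}_X(M))$ and its $(-C)$-twist, and observing that for cohomology sheaves supported at $\xi$ the $R$-length equals $\dim_{\C}$, the two vanishing orders come out to $\chi(Q\otimes p^*\mathcal{O}_X(M))$ and $\chi(Q(-C)\otimes p^*\mathcal{O}_X(M))$ respectively. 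Raising to the defining powers $\chi_2$ and $-\chi_1$ then gives the total vanishing order $\chi_2\chi(Q\otimes p^*\mathcal{O}_X(M))-\chi_1\chi(Q(-C)\otimes p^*\mathcal{O}_X(M))$, which by the same identification of $\nu$ with the weight of $\rho_t$ used in part (iv) is exactly $\nu(Q)$. The main obstacle is justifying the length computation rigorously, in particular verifying that even when $Q$ is not killed by $\pi$ (i.e.~merely supported set-theoretically on the special fiber), one can reduce to the case $\pi Q=0$ by filtering $Q\supset\pi Q\supset\pi^2 Q\supset\cdots$ and using multiplicativity of both determinants and lengths along the associated short exact sequences.
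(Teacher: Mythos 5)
Your proposal is correct, and for parts (i)--(iv) your computations match what the paper leaves as "straightforward." For part (v), however, you take a genuinely different route. You reduce the problem to the standard formula that, for a perfect complex $V^{\bullet}$ of $R$-modules with $V^{\bullet}_{\eta}\simeq 0$, the generic trivialization of $\det V^{\bullet}$ vanishes to order $\sum_i(-1)^i\,\ell_R\,H^i(V^{\bullet})$, and then translate those lengths into the Euler characteristics entering the definition of $\detL$. The paper instead avoids that length formula entirely by a short but slick trick: assuming $Q$ is the restriction of an $R$-flat $Q_R$, one feeds the exact sequence $0\to Q_R\xrightarrow{\cdot\pi} Q_R\to Q\to 0$ into the multiplicativity (your part (ii)) to get $\detL(Q_R)\xrightarrow{\sim}\detL(Q_R)\otimes\detL(Q)$, identifies the induced map with $\mathrm{id}\otimes s$ in a commutative triangle against $\detL(\pi)^{-1}$, and reads off the vanishing order from $\detL(\pi)=\pi^{\nu(Q)}$, which is simply a rank computation (your part (iv) applied to the "unit" $\pi$ over $\eta$). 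What the paper's route buys is that it never needs the length/d\'evissage lemma at all; what yours buys is that it handles all torsion perfect complexes uniformly, which is arguably cleaner when $Q$ is not the restriction of a flat family. For that general case, the paper gestures at "applying the method to the derived pushforwards" in essentially the same spirit as your proposal to filter $Q\supset\pi Q\supset\pi^2 Q\supset\cdots$ and use additivity on both sides; this is a routine verification, not a genuine obstacle, so you need not worry about it being a gap in your argument.
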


		\begin{proof}
			We only show \ref{lemlprops5}, as the other parts being straightforward. Since the determinant behaves well with respect to pullback, and since the determinant of the zero sheaf comes with a natural trivialisation, we get a natural rational section $s:\mathcal{O}_{\Spec R}\dasharrow \detL(Q)$. We claim that this section has a zero of order $\nu(Q)$ (respectively a pole, if this is negative). In fact, suppose we had an exact sequence $0\to Q_R\xrightarrow{\cdot \pi} Q_R \to Q\to 0$, where $Q_R$ is $R$-flat. It induces an isomorphism $\detL(Q_R)\xrightarrow{\sim}\detL(Q_R)\otimes \detL(Q)$. Moreover, this isomorphism fits into the commutative diagram
			\begin{equation*}
				\begin{tikzcd}
					\detL(Q_R) \ar[r,dashed, "\detL(\pi)^{-1}"]\ar[dr, "\sim"]&\detL(Q_R) \ar[d,dashed,"\operatorname{id}\otimes s"]\\
					&  \detL(Q_R)   \otimes \detL(Q)
				\end{tikzcd}
			\end{equation*}
			as one sees by restricting to the generic point of $R$. 
			Since $\detL(\pi)=\pi^{\nu(Q)}$, this proves the claim about $s$. If $Q$ does not fit into such a sequence, one can still prove the result by applying this method to the derived pushforwards appearing in the definition of $\detL$. 
			Now we consider the sequence $0\to \mathcal{E}'\to \mathcal{E}\to Q\to 0$. We obtain an isomorphism $\detL(\mathcal{E})\xrightarrow{\sim} \detL(\mathcal{E}')\otimes \detL(Q)$, which fits into a commutative diagram 
			\begin{equation*}
				\begin{tikzcd}
					\detL(\mathcal{E}) \ar[r,dashed, "r^{-1}"]\ar[dr, "\sim"] &\detL(\mathcal{E}') \ar[d,dashed,"\operatorname{id}\otimes s"]\\
					&  \detL(\mathcal{E}')   \otimes \detL(Q). 
				\end{tikzcd}
			\end{equation*}
			It follows that $r$ has a zero of order $\nu(Q)$ as claimed.
		\end{proof}

		We now define a stack $\mathcal{W}$ as follows. For a scheme $T$, its $T$-valued points are
		\begin{equation}\label{eqnWdef}
			\mathcal{W}(T) :=\left\{(\mathcal{E},\varphi) \,\mid\, \begin{array}{ll}
				\mathcal{E} \in \flagellss(T);\\
				\varphi:\detL(\mathcal{E})\xrightarrow{\sim}\mathcal{O}_T
			\end{array}\right\}.
		\end{equation}
		A morphism from $(\mathcal{E}',\varphi')$ to $(\mathcal{E},\varphi)$ in the category $\mathcal{W}(T)$ is given by an isomorphism $f:\mathcal{E}\to \mathcal{E}'$ of sheaves with flag structure, such that $\varphi=\varphi'\circ\detL(f)$. 
		The stack $\mathcal{W}$ has a natural $\C^*$-action given by $t\cdot (\mathcal{E}, \varphi) = (\mathcal{E},t\varphi)$. There is a natural map $\mathcal{W}\to \flagellss$ given by forgetting $\varphi$, which makes $\mathcal{W}$ a principal $\C ^*$-bundle over $\flagellss$ with respect to the given action. 
		We go on to show some properties of $\mathcal{W}$ and of the $\C^*$-action on it. 
		\begin{proposition}\label{propstackwprops}
			\begin{enumerate}[label=(\arabic*)]
				\item The stack $\mathcal{W}$ is a separated Deligne--Mumford stack.\label{propstackw1}
				\item The set of $\C$-valued fixed points of the $\C^*$-action is exactly the set of pairs $(E,\varphi)$ such that $E\in \flagellss(\C)$ is properly polystable. \label{propstackw2}
				\item For $(E,\varphi)\in \mathcal{W}(\C)$, the limit $\lim_{t\mapsto 0} t\cdot(E,\varphi)$ exists if and only if $E$ does not satisfy condition $(\stabT_\ell)$. In that case the limit is isomorphic to $(T\oplus E/T,\varphi')$, where $T\oplus E/T$ is the polystable object associated to the minimal subobject violating $(\stabT_{\ell})$ as in Lemma \ref{lemasspolystable}, and any choice of $\varphi'$.  \label{propstackw3}
				\item For $(E,\varphi)\in \mathcal{W}(\C)$, the limit $\lim_{t\mapsto \infty} t\cdot(E,\varphi)$ exists if and only if $E$ does not satisfy condition $(\stabS_{\ell+1})$. In that case the limit is isomorphic to $(S\oplus E/S,\varphi')$, where $S\oplus E/S$ is the polystable object associated to the minimal subobject violating $(\stabS_{\ell+1})$ as in Lemma \ref{lemasspolystable}, and any choice of $\varphi'$.  \label{propstackw4}
			\end{enumerate}
		\end{proposition}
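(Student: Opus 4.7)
\textbf{Plan of proof for Proposition \ref{propstackwprops}.}

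For \ref{propstackw1}, I first observe that $\mathcal{W} \to \mathcal{N}^{\ell,\ell+1}$ is a $\mathbb{G}_m$-torsor (the complement of the zero section in $\detL^\vee$), so $\mathcal{W}$ is an Artin stack of finite type. The key point is the finiteness of stabilizers. Over a $V^\ell$- or $V^{\ell+1}$-stable $E$, Lemma \ref{lemautscalarflag} gives $\operatorname{Aut}(E) = \mu_r$, so the stabilizer in $\mathcal{W}$ is a subgroup of $\mu_r$. Over a properly polystable $E \simeq S \oplus T$ with $S \simeq \mathcal{O}_C(-1)^{\oplus k}$ and $T$ of Chern character $\chat - k\chexc$, the automorphism group in $\mathcal{N}^{\ell,\ell+1}$ is the one-dimensional subgroup of $\mathbb{G}_m \times \mathbb{G}_m$ (scaling each summand) cut out by the orientation condition. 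By Lemma \ref{lemlprops} \ref{lemlprops3} and \ref{lemlprops4}, the induced character on $\detL(E) \simeq \detL(S) \otimes \detL(T)$ is $(t_1,t_2) \mapsto t_1^{-k\chi_1} t_2^{k\chi_1}$; since $\chi_1 > 0$ and $k > 0$, this restricts to a nontrivial (hence surjective) character on the one-dimensional stabilizer, so the stabilizer of $(E,\varphi)$ is finite. Hence $\mathcal{W}$ is Deligne--Mumford. Separatedness follows from the valuative criterion by the same uniqueness argument as in Proposition \ref{propsepval}, together with the observation that the $\detL$-trivialization $\varphi$ rigidifies the scalar ambiguity that could otherwise obstruct separatedness of $\mathcal{N}^{\ell,\ell+1}$ itself.

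For \ref{propstackw2}, a $\C$-point $(E,\varphi)$ is $\mathbb{G}_m$-fixed precisely when for every $t \in \mathbb{G}_m$ there exists $f_t \in \operatorname{Aut}_{\mathcal{N}^{\ell,\ell+1}}(E)$ with $\detL(f_t) = t \cdot \operatorname{id}$; equivalently, the composite $\operatorname{Aut}(E) \to \mathbb{G}_m$ is surjective. When $E$ is $V^\ell$- or $V^{\ell+1}$-stable, $\operatorname{Aut}(E)$ is finite and this fails. When $E$ is properly polystable, the computation of the previous paragraph shows the character is a nontrivial power map on a one-dimensional torus and hence surjective. Combined with Lemma \ref{lempolystabdefs}, which shows that the existence of non-scalar endomorphisms characterizes proper polystability, this identifies the fixed locus.

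For \ref{propstackw3} (and by symmetry \ref{propstackw4}), the existence of $\lim_{t \to 0} t \cdot (E,\varphi)$ amounts to asking whether the map $\mathbb{G}_m \to \mathcal{W}$, $t \mapsto (E, t\varphi)$, extends to $\mathbb{A}^1$, possibly after finite base change. If $(\stabT_\ell)$ holds for $E$, then $E$ is $V^\ell$-stable and by the valuative criterion established for $\mathcal{N}^\ell$ in Proposition \ref{propsepval} together with finiteness of $\operatorname{Aut}(E)$, any such extension would force $\varphi$ itself to be $\mathbb{G}_m$-invariant, contradicting stability -- so no limit exists. If $(\stabT_\ell)$ fails, let $T \subset E$ be the minimal destabilizing subobject with $Q := E/T$ a sum of copies of $\mathcal{O}_C(-1)$. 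I construct a flat family $\mathcal{E}$ on $\Xhat \times \mathbb{A}^1$ by the Rees-type formula
\[
\mathcal{E} \;:=\; T \otimes_{\C} \C[t] \;+\; E \otimes_{\C} t\, \C[t] \;\subset\; E \otimes_{\C} \C[t,t^{-1}],
\]
which has generic fiber $E$ and central fiber $T \oplus Q$; the flag structure degenerates compatibly using the splitting from Lemma \ref{lemasspolystable} \ref{lemasspolystableii}. The natural $\mathbb{G}_m$-action on $\mathbb{A}^1$ lifts to $\mathcal{E}$ with weight $0$ on $T$ and weight $1$ on $Q$, so the restriction to $\mathbb{G}_m$ is $\mathbb{G}_m$-equivariantly constant. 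By Lemma \ref{lemlprops} \ref{lemlprops5}, the rational map $\detL(\mathcal{E}) \dashrightarrow \detL(E \otimes \C[t,t^{-1}])$ acquires a zero of order determined by $\nu(Q) = -k\chi_1$, which is exactly compensated by the $\mathbb{G}_m$-weight on the central fiber computed in part \ref{propstackw2}; hence $\varphi$ extends uniquely to a trivialization $\tilde\varphi$ of $\detL(\mathcal{E})$ over $\mathbb{A}^1$, giving the desired $\mathbb{A}^1$-point of $\mathcal{W}$ whose value at $0$ is $(T \oplus Q, \varphi')$.

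\textbf{Main obstacle.} The delicate point is the bookkeeping of the $\mathbb{G}_m$-weight on $\detL(\mathcal{E})$ under the Rees construction: one must verify that the weight arising from the geometric degeneration matches the character computed abstractly on the automorphism group of the polystable limit, so that $\varphi$ extends without introducing a pole or zero. This is handled by a direct application of Lemma \ref{lemlprops} \ref{lemlprops5} to the exact sequence $0 \to \mathcal{E} \to E \otimes \C[t,t^{-1}] \to Q \to 0$ viewed as a quotient supported at $t=0$, but one must be careful to match conventions of weights and of the $\mathbb{G}_m$-equivariant structure lifted from $\mathbb{A}^1$.
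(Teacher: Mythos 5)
Your proofs of the Deligne--Mumford property and of part \ref{propstackw2} are correct and agree with the paper's; the Rees-algebra family
$\mathcal{E} = T\otimes\C[t] + E\otimes t\C[t]$
you build for the existence direction of \ref{propstackw3} is literally the elementary modification of the constant family $E_{\aone}$ along the quotient $E\to Q$ at $t=0$, so this too matches the paper's construction, and the order-of-vanishing comparison via Lemma \ref{lemlprops} \ref{lemlprops5} is the right mechanism to extend $\varphi$.

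There are, however, two genuine gaps, and they share a common missing ingredient.

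\textbf{Separatedness of $\mathcal{W}$.} You claim this ``follows by the same uniqueness argument as in Proposition \ref{propsepval}'' plus the observation that $\varphi$ ``rigidifies the scalar ambiguity that could otherwise obstruct separatedness of $\mathcal{N}^{\ell,\ell+1}$.'' This mischaracterizes the obstruction. The failure of separatedness for $\mathcal{N}^{\ell,\ell+1}$ is not about scalar ambiguity --- that is already fixed by orientations --- but about the existence of two non-isomorphic semistable limits of the same generic fiber, one closer to $V^\ell$-stability and one closer to $V^{\ell+1}$-stability, related by a chain of elementary modifications. The substantive step in the paper's proof is what it isolates as Claim \ref{claimstmods}: if $\mathcal{E}''\subset\mathcal{E}$ is a second semistable extension, then $\mathcal{E}''$ is obtained from $\mathcal{E}$ by a sequence of modifications \emph{all of the same type} ($S$ or $T$). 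Only then does the order-of-vanishing computation from Lemma \ref{lemlprops} \ref{lemlprops5} applied to $\detL$ combine with the constraint coming from $\varphi$ to force the modification sequence to be empty. Without that structural result about modification sequences, the numerical comparison on $\detL$ alone gives no contradiction.

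\textbf{Non-existence of the limit when $(\stabT_\ell)$ holds.} Your sentence ``any such extension would force $\varphi$ itself to be $\mathbb{G}_m$-invariant, contradicting stability'' does not actually produce a contradiction: the limit point, if it exists, is a $\C^*$-fixed point of $\mathcal{W}$ with $\varphi$ scaling equivariantly, not $\C^*$-invariant, and ``contradicting stability'' is not a conclusion that follows from separatedness of $\mathcal{N}^\ell$ alone (the degenerating family need not stay in $\mathcal{N}^\ell$ at the central fiber, only in $\mathcal{N}^{\ell,\ell+1}$). What actually kills the limit is again the modification argument: any equivariant family over $\aone$ extending $(E,x^k\varphi)$ with $k>0$ is, after clearing poles, given by a nonempty sequence of $T$-modifications of the constant family, and such a sequence requires $E$ to have a $(\stabT_\ell)$-destabilizing subobject, contradicting $(\stabT_\ell)$.

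In short, the two places your argument hand-waves are precisely the two places where the Langton-style analysis of modification sequences over a DVR (Claim \ref{claimstmods} and its consequences) is doing the real work. The rest of your proposal is sound.
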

		\begin{proof}
			\begin{enumerate}[label = (\arabic*)]
				\item The Deligne--Mumford property here can be checked on closed points as in the proof of Proposition \ref{propproper}. By definition, the automorphism group of an object $(E,\varphi)\in \flagellss(\C)$ is a subgroup of the automorphism group of $E\in \flagellss(\C)$. If $E$ is not polystable, then due to Lemma \ref{lempolystabdefs}, the only automorphisms of $E$ as a sheaf with flag structure are the scalar automorphisms. Of those, the ones that preserve the given orientation are only the $r$-th roots of unity. Thus, the automorphism group of $E$ and a fortiori of $(E,\varphi)$ is finite in this case. Now suppose that $E$ is polystable, thus $E\simeq S\oplus T$, where $S\simeq \mathcal{O}_C(-1)^{\oplus k }$, $T$ is $V^{\ell}$-stable, and where $S$ and $T$ have the appropriate flag structures. By Lemma \ref{lempolystabdefs}, its automorphism group as a (unoriented) sheaf with flag structure is $\C^*\times \C^*$, acting via scalars on each of the factors. The group of automorphisms preserving the orientation is then $\C^*\times \mu_r$. For $(x,y)\in \C^*\times \mu_r$ let $\sigma_{x,y}$ be the corresponding automorphism of $E$. By Lemma \ref{lemlprops}, we have $\detL(\sigma_{x,y})=x^{-k\chi_1}y^{k\chi_1}=(y/x)^{k\chi_1}$. Thus, the only automorphisms of $(E,\varphi)$ are given by those pairs $(x,y)\in \C^*\times \mu_r$ for which $(y/x)^{k\chi_1}=1$. Since there are only finitely many possible values for $y$, there are only finitely many such pairs. Hence, the automorphism group is also finite in this case. This proves that in fact $\mathcal{W}$ is a Deligne--Mumford stack. 
				
				Now we check the valuative criterion for separatedness: Suppose that $(\mathcal{E},\Phi)$ and $(\mathcal{E}',\Phi')$ are two families in $\mathcal{W}(\Spec R)$ for a DVR $R$, and that we have an isomorphism $f_\eta:\mathcal{E}_\eta\xrightarrow{\sim} \mathcal{E}_\eta'$ respecting the flag structures, orientations and compatible with $\Phi_\eta$ and $\Phi_\eta'$. We need to show that $f_\eta$ extends to an isomorphism over all of $\Spec R$ (then it does so uniquely by $R$-flatness). Via $f_\eta$, we can view $\mathcal{E}'$ as a subsheaf of $\mathcal{E}_\eta$. Then there exists a unique $k\in \Z$, such that $\pi^k\mathcal{E}'\subset \mathcal{E}$, and $\pi^k\mathcal{E}'\not\subset\pi \mathcal{E}$. We let $\mathcal{E}'':=\pi^k\mathcal{E}'$ with flag structure induced from the one on $\mathcal{E}'$. 
				\begin{claim}\label{claimstmods}
					The family $\mathcal{E}''$ of $V^{\ell},V^{\ell+1}$-semistable sheaves is obtained from $\mathcal{E}$ either by a finite sequence of $S$-modifications or by a finite sequence of $T$-modifications.
				\end{claim}
				\begin{proof}
					By construction, the map $g:\mathcal{E}''_\xi\to\mathcal{E}_\xi$ induced by restricting the inclusion $\mathcal{E}''\subset \mathcal{E}$ to the special fiber is nonzero. We find that $g$ respects the flag structures, as this is a closed condition.  Thus, $g$ is a nonzero map of $V^\ell,V^{\ell+1}$-semistable sheaves.  We may assume that $g$ is not an isomorphism, since otherwise $\mathcal{E}''=\mathcal{E}$ and the claim is true. By Lemma \ref{lemmindest} \ref{lemmindestiii}, the image of $g$ is either a minimal $(\stabS_{\ell +1})$-destabilizing subobject or a minimal $(\stabT_\ell)$-destablizing subobject of $\mathcal{E}_{\xi}$. By respectively performing an $S$ or $T$-modification on $\mathcal{E}$, we therefore obtain a new family $\mathcal{E}_1$, which by Lemma \ref{lemmodificationproperties} is still $V^\ell,V^{\ell+1}$-semistable. We also have $\mathcal{E}''\subset \mathcal{E}_1\subset \mathcal{E}$ and $\mathcal{E}_1=\pi \mathcal{E}+\mathcal{E}''$. We can keep performing $S$- and $T$-modifications to obtain a sequence of sheaves $\mathcal{E}_i$ each being a modification of the preceeding one, until we obtain an equality $\mathcal{E}_n=\mathcal{E}''$ for some $n$. In fact, this process has to stop, since  on one hand the equality $\mathcal{E}^{i+1}=\pi\mathcal{E}^{i}+\mathcal{E}''$ implies $\mathcal{E}^i\subset \pi^i\mathcal{E}+\mathcal{E}''$, and on the other hand $\pi^i\mathcal{E} \subset \mathcal{E}''$ for $i$ large enough, so $\mathcal{E}_i\subset \mathcal{E}''$ for large enough $i$. We conclude that $\mathcal{E}''$ is obtained from $\mathcal{E}$ by a finite sequence of $S$ and $T$-modifications. We claim that necessarily only one type of modification appears throughout. Indeed, suppose we have $\mathcal{E}_{i+2}\subset \mathcal{E}_{i+1}\subset \mathcal{E}_i$, where, say, we performed an $S$-modification followed by a $T$-modification. Let $S\subset \mathcal{E}_{i,\xi}$ be the minimal $(\stabS_{\ell+1})$-destabilizing subobject, and let $T:=\mathcal{E}_{i,\xi}/S$, so that $\mathcal{E}_{i+1}$ is the modification of $\mathcal{E}_i$ along $T$. By the properties of elementary modifications, the map $\mathcal{E}_{i,\xi}\to \mathcal{E}_{i+1,\xi}$ induced by multiplication by $\pi$ has image isomorphic to $T$. It follows from Lemma \ref{lemmindest} \ref{lemmindestiii} that $T$ is the minimal $(\stabT_{\ell})$-destabilizing subobject of $\mathcal{E}_{i+1,\xi}$. This means that $\mathcal{E}_{i+2}$ is obtained from $\mathcal{E}_{i+1}$ by an elementary modification along the object $S$ arising in the exact sequence $0\to T\to \mathcal{E}_{i+1,\xi}\to S\to 0$,which we have due to the virtue of $\mathcal{E}_{i+1}$ being a modification of $\mathcal{E}_{i}$. But then by Remark \ref{remelmodinv}, we have that $\mathcal{E}_{i+2}=\pi \mathcal{E}_i$, and in particular $\mathcal{E}''\subset \pi\mathcal{E}$, which contradicts our assumptions. 
				\end{proof}

				By Claim \ref{claimstmods}, the sheaf $\mathcal{E}''$ is obtained by elementary modifications along successive quotients $\left(Q_i\right)_{i=1}^n$, where one of the following two situations holds: Either each $Q_i$ is a direct sum of $k_i$ copies of $\mathcal{O}_C(-1)$, or each $Q_i$ has Chern character $\chat-k_i\chexc$. In either case, the $k_i$ are positive integers. By a repeated application of Lemma \ref{lemlprops}, \ref{lemlprops5} one finds that the inclusion $\mathcal{E}''\to \mathcal{E}$ induces a rational map $r:\detL(\mathcal{E}'')\to \detL(\mathcal{E}) $, which has a zero of order $\sum_i \nu(Q_i)=\pm (\sum_i k_i) \chi_1$, where the sign depends on which case we are in. On the other hand, the isomorphism $\mathcal{E}''=\pi^k \mathcal{E}' \simeq \mathcal{E}'$ identifies the inclusion $\mathcal{E}''\subset \mathcal{E}$ with the map $\pi^kf_{\eta}$. Consider the rational maps $\detL(\pi^k):\detL(\mathcal{E})\dashrightarrow \detL(\mathcal{E})$ and $\det(f_{\eta}):\detL(\mathcal{E}')\dashrightarrow \detL(\mathcal{E})$.  By Lemma \ref{lemlprops} \ref{lemlprops5}, $\detL(\pi^k)=\operatorname{id}_{\mathcal{E}}$ vanishes to order zero at the special point. So does $\det(f_{\eta}) = \Phi^{-1}_{\eta} \circ \Phi'_{\eta}$. Therefore $\sum_i k_i \chi_1=0$, which implies that $n=0$, i.e. $\mathcal{E}''=\mathcal{E}$. We conclude that $\pi^k f_{\eta}:\mathcal{E}_\eta'\to \mathcal{E}_\eta$ extends to an isomorphism of sheaves with flag structures  over all of $\Spec R$, which we call $h$. Since $ \detL(\pi)=\operatorname{id}_{\detL(\mathcal{E}')}$, the isomorphism $\pi^kf_\eta$ is still compatible with $\Phi_\eta'$ and $\Phi_\eta$. Therefore $h$ is compatible with $\Phi$ and $\Phi'$. Finally, since $f_\eta$ was assumed to be compatible with orientations, we conclude as in the proof of  Proposition \ref{propsepval} that in fact $k=0$. Then $h$ is also compatible with orientations, and is the desired extension of $f_\eta$. 
				
				\item  Let $(E,\varphi)\in \mathcal{W}(\C)$ be a $\C$-valued point. It is fixed under the $\C^*$-action if and only if for each $t\in \C^*$, there exists an automorphism $f_t$ of $E$ that induces an isomorphism $f_t:(E,t\varphi)\xrightarrow{\sim}(E,\varphi)$. In particular, if $(E,\varphi)$ is a $\C^*$-fixed point, then $E$ must have infinitely many automorphisms that preserve the orientation, so at least one non-scalar one. By Lemma \ref{lempolystabdefs}, we obtain that $E\simeq S\oplus T$ is properly polystable, where the indicated splitting is as in Definition \ref{defpolystable}. Let $k$ be a positive integer such that $S\simeq \mathcal{O}_C(-1)^{\oplus k}$.  For $s\in \C^*$, let $\sigma_{s,1}$ be the automorphism of $E$ which is multiplication by $s$ on $S$ and the identity on $T$. Then by Lemma \ref{lemlprops}, we have $\detL(s)=s^{-k\chi_1}$. So given $t\in \C^*$, we can take $f_t=\sigma_{s,1}$, where $s$ is a $k\chi_1$-th root of $t$. Then the diagram 
				\begin{equation*}
					\begin{tikzcd}
						\detL(E) \ar[rr,"\detL(f_t)=t^{-1}"]\ar[dr,"\varphi"]&&\detL(E) \ar[dl,"t\varphi"']\\
						& \mathcal{O}_{\Spec \C}& 
					\end{tikzcd}
				\end{equation*}
				is commutative. So $f_t$ is an isomorphism $(E,t\varphi)\to (E,\varphi)$. This shows that a polystable object is indeed $\C^*$-fixed.  
				\item 
				Suppose first that $E$ does not satisfy $(\stabT_\ell)$. Let $x$ be the coordinate function on $\aone$ and let $T\subset E$ be the minimal $(\stabT_\ell)$-destabilizing subobject and $Q:=E/T$. Let $E_{\mathbb{A}^1}$ denote the constant family over $\mathbb{A}^1$ with fiber $E$ and similarly $T_{\aone}, Q_{\aone}$. We also let $\Phi:\detL(E_{\mathbb{A}^1})\to \mathcal{O}_{\aone}$ be the trivialisation which is constant over $\mathbb{A}^1$ with value $\varphi$. Now let $\mathcal{E}\subset E_{\aone}$ be the elementary modification along the quotient $E\to Q$ in the fiber over $0\in \mathbb{A}^1$. By Lemma \ref{lemlprops}  \ref{lemlprops3} and \ref{lemlprops5} the rational map $r:\detL(\mathcal{E})\dasharrow \detL(E_{\aone})$ induced from the isomorphism away from zero vanishes to order $\nu(Q)=-\nu(T)<0$, i.e. has a pole of order $\nu(T)$. Therefore, the composition $x^{\nu(T)}\Phi\circ r$ extends to a trivialization $\Psi:\detL(\mathcal{E})\xrightarrow{\sim} \mathcal{O}_{\aone}$.  Since $Q$ has rank $0$ on $X$, the orientation on $E_{\aone}$ induces an orientation on $\mathcal{E}$, which is compatible with the isomorphism induced by inclusion away from the fiber over $0$. It follows that $\mathcal{E}\mid_{\mathbb{A}^1\setminus\{0\}}$ is isomorphic to $(E_{\aone\setminus\{0\}},x^{\nu(T)}\varphi)$. We now examine the central fiber. From the elementary modification, we have the nontrivial morphism $E\to \mathcal{E}_0$ with image $Q$. It follows that $Q$ is the minimal $(S_{j+1})$-destabilizing subobject by Lemma \ref{lemmindest} \ref{lemmindestiii}. On the other hand, we have an inclusion $T\subset \mathcal{E}_0$, induced by $T_{\aone}\subset \mathcal{E}$. It is easy to check that this gives the minimal $(\stabT_\ell)$-destabilizing subobject. We have the splitting $\mathcal{E}_0\simeq Q\oplus T$, and this makes $\mathcal{E}_0$ into the polystable object associated to $E$ with destabilizing subsheaf $T$ as in Lemma \ref{lemasspolystable} \ref{lemasspolystableii}. We have shown that the limit exists, and that the limiting object (which is unique due to separatedness) is of the stated form. 
				Now suppose instead that the limit $\lim_{t\to 0}(E,t\varphi)$ exists. We need to show that $E$ does not satisfy $(\stabT_j)$.  By assumption we have a family $(\mathcal{E},\Phi)$ of objects in $\mathcal{W}$ over $\aone$ and positive integer $k> 0$, such that over any $\aone\setminus\{0\}$ it is isomorphic to $(E\mid_{\aone\setminus\{0\}},x^{k} \varphi)$. Using this isomorphism, we may regard $\mathcal{E}$ as a subsheaf of $E\otimes \C(x)$. And, disregarding the orientations, we may assume that $\mathcal{E}\subset E_{\aone}$ and $\mathcal{E}\not \subset xE_{\aone}$, after possibly multiplying $\mathcal{E}$ by a suitable power of $x$. By the same argument as in Claim \ref{claimstmods}, $\mathcal{E}$ is then obtained from $E_{\aone}$ either by a sequence of $S$-modifications or by a sequence of $T$-modifications. But as $(\mathcal{E},\Phi)\mid_{\aone\setminus \{0\}}$ is isomorphic to $(E_{\aone\setminus\{0\}}, x^{k} \varphi)$ for a \emph{positive} exponent $k$, it must have been a sequence of $T$-modifications, and moreover a nonempty sequence. In particular, $E$ must have a $(\stabT_j)$-destabilizing subobject.
				\item  The argument here is exactly analogous to the one for \ref{propstackw3} with a minimal $(\stabS_{j+1})$-destabilizing subobject in place of $T$. Only the argument regarding orientations is slightly different: Here the isomorphism $(\mathcal{E},\Psi )\mid_{\aone\setminus}\simeq (E_{\aone}, x^{\nu(S)}\Phi)$ is not induced by the identification $\mathcal{E}_{\aone\setminus\{0\}}=E_{\aone\setminus \{0\}}$ but instead by the composition $\mathcal{E}_{\aone\setminus\{0\}}=E_{\aone\setminus \{0\}}\xrightarrow{\cdot x^{-1}} E_{\aone\setminus \{0\}}$. 
			\end{enumerate}
		\end{proof}
		
		We can now finally construct the master space. Let $\mathcal{W}^-$  (respectively $\mathcal{W}^+$) be the preimage of $\flagell$ (respectively of $\flagellplusone$) under the map $\mathcal{W}\to \flagellss$. As preimages of open substacks of $\mathcal{N}^{\ell,\ell+1}$, they are open substacks of $\mathcal{W}$. We also let $\Sigma^+:=\mathcal{W}\setminus \mathcal{W}^-$, considered as a closed substack with its induced reduced structure, and similarly $\Sigma^-:=\mathcal{W}\setminus \mathcal{W}^+$. By Proposition \ref{propstackwprops}, the set $\Sigma^{\pm}$ contains exactly those $\C$-points $(E,\varphi)$ of $\mathcal{W}$ for which $\lim_{t\to 0}t^{\pm}(E,\varphi)$ exists.

		\begin{construction}[cf. \S 4 in \cite{KiLi}]
			Consider $\mathcal{W}\times \pone$. We write $0$ for $[0,1]\in \pone$ and $\infty$ for $[1,0]\in \pone$. We let the group $T:=\C^*$ act on $\mathcal{W}\times \pone$ via 
			\begin{equation}\label{eqnTaction}
				t\cdot (w, [s_0,s_1]):= (t\cdot w,[t s_0, s_1]).
			\end{equation}
			We also consider another $\C^*$-action via
			\begin{equation}
				(w,[s_0,s_1])^t:=(w,[s_0,ts_1]). \label{eqnsecondaction}
			\end{equation}
			Evidently, the two actions commute with each other. We define an open subset of "stable points" as
			\[\left( \mathcal{W}\times \pone\right)^s:=\left( \mathcal{W}\times \pone\right)\setminus \left(\Sigma^-\times \{0\}\cup \Sigma^+\times\{\infty\} \right),\]
			which is a $T$-invariant open subset. We define the master space as the stack quotient
			\[\mathcal{Z}:=\left[\left(\mathcal{W}\times\pone \right)^s/T \right] .\] 
		\end{construction}
		\begin{proposition}
			The master space $\mathcal{Z}$ is a proper Deligne--Mumford stack with $\C^*$ action induced by \eqref{eqnsecondaction}.
		\end{proposition}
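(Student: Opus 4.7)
The plan is to verify the three assertions separately. First, since the two $\C^*$-actions on $\mathcal{W}\times\pone$ given by \eqref{eqnTaction} and \eqref{eqnsecondaction} commute by construction, the action \eqref{eqnsecondaction} preserves $T$-orbits and hence descends to the quotient stack $\mathcal{Z}$. It also preserves the stable locus $(\mathcal{W}\times\pone)^s$, since it fixes the points $0,\infty\in\pone$ setwise and acts trivially on the $\mathcal{W}$-factor.

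Next, I would establish that $\mathcal{Z}$ is a Deligne--Mumford stack. Since $\mathcal{W}$ is separated DM by Proposition \ref{propstackwprops}\ref{propstackw1}, so is $(\mathcal{W}\times\pone)^s$, and it suffices to show that the $T$-stabilizer of every point of $(\mathcal{W}\times\pone)^s$ is finite. For $(w,[s_0,s_1])$, the $T$-stabilizer is trivial unless $s_0=0$ or $s_1=0$, in which case it equals the $\C^*$-stabilizer of $w$ in $\mathcal{W}$. By Proposition \ref{propstackwprops}\ref{propstackw2}, the latter is positive-dimensional precisely when the underlying sheaf $E$ is properly polystable, and such $E$ is by Lemma \ref{lempolystabdefs} neither $V^\ell$-stable nor $V^{\ell+1}$-stable, i.e., $w\in\Sigma^-\cap\Sigma^+$. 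But then $(w,0)\in\Sigma^-\times\{0\}$ and $(w,\infty)\in\Sigma^+\times\{\infty\}$ are removed from the stable locus, a contradiction. Thus all $T$-stabilizers on $(\mathcal{W}\times\pone)^s$ are finite, and $\mathcal{Z}$ is DM.

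For properness, I would verify the valuative criterion. Let $R$ be a DVR over $\C$ with fraction field $K$. For the existence part, any $K$-point of $\mathcal{Z}$ lifts to some $(w_K,a_K)\in(\mathcal{W}\times\pone)^s(K)$ because $\C^*$-torsors over a field are trivial, and using the $T(K)$-action I would normalize $a_K$ to one of the three representatives $0,1,\infty$ of the $T$-orbits on $\pone(K)$. When $a_K=1$, the underlying object $(E_K,\mathcal{V}^\bullet_K)\in\mathcal{N}^{\ell,\ell+1}(K)$ extends (after finite base change) to an $R$-point of $\mathcal{N}^{\ell,\ell+1}$ by Corollary \ref{corvalpropss}; the orientation and the trivialization $\varphi$ of $\detL$ then lift to $R$ because every line bundle on $\Spec R$ is trivial, producing $w_R\in\mathcal{W}(R)$, and $(w_R,1)$ lies automatically in $(\mathcal{W}\times\pone)^s$ since nothing is removed over $\mathbb{G}_m\subset\pone$. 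When $a_K=0$ (resp.\ $a_K=\infty$), membership of $(w_K,a_K)$ in the stable locus forces $w_K\in\mathcal{W}^+$ (resp.\ $\mathcal{W}^-$), so that $(E_K,\mathcal{V}^\bullet_K)$ is a $K$-point of $\mathcal{N}^{\ell+1}$ (resp.\ $\mathcal{N}^\ell$), which extends over $R$ by the full properness of Proposition \ref{propproper}.

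For the uniqueness part, suppose $(w_R,a_R),(w'_R,a'_R)\in(\mathcal{W}\times\pone)^s(R)$ are two extensions representing the same $R$-point of $\mathcal{Z}$, so there exists $t_K\in T(K)$ with $t_K\cdot(w_K,a_K)=(w'_K,a'_K)$. I would analyze the $\pone$-coordinates to show that $t_K$ must lie in $R^*$: in the generic case $a_K,a'_K\in\mathbb{G}_m(R)$ one has $t_K=a'_K/a_K\in R^*$; the cases $a_K\in\{0,\infty\}$ are handled separately. Once $t_K$ extends to $t_R\in T(R)$, the agreement $t_R\cdot(w_R,a_R)=(w'_R,a'_R)$ follows from separatedness of $\mathcal{W}$ and $\pone$. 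The anticipated main obstacle is the subcase where $a_K\in\mathbb{G}_m$ but the $\pone$-coordinate of one of the two extensions specializes to $0$ or $\infty$ at $\xi$; here one must rule out that the two $T$-orbits limit to a common point of $(\mathcal{W}\times\pone)^s$ at $\xi$. This is precisely prevented by Proposition \ref{propstackwprops}\ref{propstackw3}, \ref{propstackw4}, which identifies the potential limit points at $\pone$-values $0,\infty$ as properly polystable objects, necessarily contained in $\Sigma^-\cap\Sigma^+$ and hence excluded from $(\mathcal{W}\times\pone)^s$, so that $T$-orbits are closed in the stable locus.
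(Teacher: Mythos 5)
Your overall plan — verify the DM property by bounding $T$-stabilizers, then check the valuative criterion in both directions, splitting into cases according to the $\pone$-coordinate — matches the paper's. The DM part is correct and essentially the same argument. But the properness part has a genuine gap, and it is exactly where the paper's proof does its real work.

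For the existence part with $a_K=1$, you assert that once $\mathcal{E}_\eta$ is extended to $\mathcal{E}_R$, the trivialization $\varphi$ of $\detL(\mathcal{E}_\eta)$ ``lifts to $R$ because every line bundle on $\Spec R$ is trivial.'' This is a non-sequitur: triviality of $\detL(\mathcal{E}_R)$ tells you a trivialization exists, but the rational section of $\detL(\mathcal{E}_R)^{\vee}$ induced by the given $\varphi$ over the generic fiber can have a zero or a pole at $\xi$ of some order $k\neq 0$. You cannot simultaneously keep the $\pone$-coordinate at $1$ and extend $\varphi$; the best you can do is extend $\pi^{k}\varphi$, which after $T$-translation forces the $\pone$-coordinate to $\pi^{k}$, and the resulting $R$-point lands on the $\{0\}$ or $\{\infty\}$ fiber at $\xi$. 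One then has to check whether the central fiber of $\mathcal{E}_R$ is $V^{\ell+1}$-stable (resp.\ $V^{\ell}$-stable), and if not perform $S$-modifications guided by Lemma \ref{lemlprops} \ref{lemlprops5}, with a ramified base change when $k\leq\nu(Q)$ — this is the content of the paper's Claim. Without this machinery your argument breaks exactly when $\mathcal{E}_\xi$ fails $(\stabS_{\ell+1})$.

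For separatedness, your appeal to Proposition \ref{propstackwprops} \ref{propstackw3}, \ref{propstackw4} to conclude that $T$-orbits are closed is too weak: those items describe limits along the one-parameter $\C^*$-orbit over $\aone$, whereas the valuative criterion involves two a priori unrelated $R$-extensions of the same generic point, related over $\eta$ by $t_K=u\pi^{k}$. To rule out $k\neq 0$ one must actually compare the two sheaf extensions: realize $\mathcal{E}'$ as a subsheaf of $\mathcal{E}$, show via Claim \ref{claimstmods} that it is obtained by a sequence of $S$-modifications or $T$-modifications, and then use the order of vanishing of the rational map $\detL(\mathcal{E}')\dasharrow\detL(\mathcal{E})$ (Lemma \ref{lemlprops} \ref{lemlprops5}) to pin down the sign of the modifications, which in turn forces $w_\xi\in\Sigma^+$, $w'_\xi\in\Sigma^-$, contradicting the stability of the $\pone$-coordinates. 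This interplay between $\detL$, elementary modifications, and the stable locus is the heart of the proof, and it is missing from the proposal.
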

		\begin{proof}
			It is easy to see that $T$ acts on $\mathcal{W}\times \pone$ without fixed points. Indeed, the action on the second coordinate only fixes the points $\{0\}$ and $\{\infty\}$, and over these points we have removed the fixed points of $\mathcal{W}$. It follows that $\mathcal{Z}$ is a Deligne--Mumford stack. Since the $\C^*$-action \eqref{eqnsecondaction} commutes with the one of $T$, it induces an action on the quotient.  
			In what follows $R$ will always denote a DVR with generic point $\eta$, closed point $\xi$ and field of fractions $K$. The $R$-valued points of $\pone$ are in bijection with the set $K\cup \{\infty\}$. To ease notation, we will use elements of $K\cup \{\infty\}$ to denote $R$-valued points of $\pone$. In particular, if $s\in K$ is regarded as a point of $\pone(R)$, then $s_{\xi}$ is the image in the residue field of $R$, when $s\in R$, and $\infty$ when $s\not \in R$.

			\paragraph{Separatedness of $\mathcal{Z}$.}
			Separatedness of $\mathcal{Z}$ is equivalent to properness of the action map 
			\[T\times \left(\mathcal{W}\times\pone \right)^s\xrightarrow{\mu\times \operatorname{proj}_2} \left(\mathcal{W}\times\pone \right)^s\times \left(\mathcal{W}\times\pone \right)^s, \]
			where we write $\mu$ for the $T$-action: $\mu(t, (w,[s_0,s_1])):=t\cdot (w,[s_0,s_1])$. We therefore want to check the valuative criterion for properness of the map $\mu\times \operatorname{proj}_2$. Separatedness follows from separatedness of $\mathcal{W}$, so we only need to check the existence part of the valuative criterion. Let $R$ be a DVR with field of fractions $K$. Suppose that we are given two objects $(w,s)$ and $(w',s')$ in $\left(\mathcal{W}\times\pone \right)^s(\Spec R)$, where $s,s'\in K\cup\{\infty\}$, together with an element $t\in \mathbb{G}_m(\eta) = K^*$ and an isomorphism $t\cdot (w_\eta,s_\eta)\xrightarrow{\sim}(w'_\eta,s'_\eta)$ over the generic point $\eta$. We need to show that one can extend $t$ and the given isomorphism uniquely to $\Spec R$.  In fact, once we can extend $t$, we can extend the isomorphism uniquely due to separatedness of $\mathcal{W}$ and $\pone$. But $t$ extends uniquely to $\mathbb{G}_m(\Spec R)$, if and only if it is in fact an element of $R^*$, which we will show. In any case, we can write $t=u\pi^k$ for some $k\in \Z$ and some $u\in R^*$. If we can show that $k=0$ we are done. Suppose it is not. Write $w=(\mathcal{E},\Phi)$ and $w'=(\mathcal{E}',\Phi')$. Using the given isomorphism $\mathcal{E}_\eta\simeq \mathcal{E}_\eta'$ and possibly multiplying it by a suitable power of $\pi$, we can realize $\mathcal{E}'$ as a subsheaf of $\mathcal{E}$ satisfying $\mathcal{E}'\not\subset \pi \mathcal{E}$ and such that the inclusion is an equality over the generic point and compatible with the flag structures. Due to Lemma \ref{lemlprops}, \ref{lemlprops1} and our assumptions, this inclusion will induce a rational map $r:\detL(\mathcal{E}')\dasharrow \detL(\mathcal{E})$, which commutes with $\Phi'_\eta $ and $t\Phi_\eta$ over the generic point and therefore vanishes to order $-k$ at the closed point of $\Spec R$. Moreover, by the same argument as in \ref{claimstmods}, it follows that $\mathcal{E}'$ is obtained from $\mathcal{E}$ through either a sequence of $S$-modifications or a sequence of $T$-modifications. Suppose $k>0$. Then by Lemma \ref{lemlprops}, \ref{lemlprops5}, $\mathcal{E}'$ must be obtained by a nontrivial sequence of $T$-modifications.  It follows from Lemma \ref{lemmindest} that $\mathcal{E}_\xi$ does not satisfy $(\stabT_{\ell})$ and that $\mathcal{E}'_{\xi}$ does not satisfy $(\stabS_{\ell+1})$. Thus $w_\xi $ lies in $\Sigma^+$, and $w'_\xi$ lies in $\Sigma^-$. It follows from definition of $\left(\mathcal{W}\times\pone \right)^s$ that $s_\xi \neq \infty$ and $s_\xi\neq 0$. But by assumption, we have that $s'= u\pi^k s$. This contradicts our assumption that $k>0$. With the exact analogue argument, one shows that $k<0$ is impossible. This concludes the proof of separatedness of $\mathcal{Z}$. 
			
			\paragraph{Properness of $\mathcal{Z}$.}
			To show that $\mathcal{Z}$ is proper, we want to check the existence part of the valuative criterion. Suppose we are given a map $b_\eta:\eta \to \mathcal{Z}$, where $\eta$ is the generic point of a DVR $R$. Since $\left(\mathcal{W}\times\pone \right)^s\to \mathcal{Z}$ is a map of finite type stacks,  after possibly passing to an extension of $R$, we can lift $b_\eta$ to a map $a_\eta:\eta\to \left(\mathcal{W}\times\pone \right)^s$. It is sufficient to show that there exists some choice of $a_\eta$ that extends to a map $\Spec R\to \left(\mathcal{W}\times\pone \right)^s$. Since any translate of a lift $a_\eta$ by an object in $\mathbb{G}_m(\eta)$ is still a lift of $b_\eta$, we are therefore reduced to the following problem:
			\begin{claim}
				Let  $(\mathcal{E}_\eta,\Phi_\eta)\in \mathcal{W}(\eta)$, and let $s\in \pone(\eta)$. Then there exists an element $t=u\pi^k\in K$, such that $\left((\mathcal{E}_\eta, u\pi^k\Phi_\eta),u\pi^k s_\eta\right)$ extends to a $\Spec R$-valued point of $\left(\mathcal{W}\times\pone \right)^s$.
			\end{claim}  
			\begin{proof}
				
				Suppose first that $s=0$. Then $(\mathcal{E}_\eta,\Phi_\eta)\in \mathcal{W}^+(\eta)$, so that $\mathcal{E}_\eta\in \flagellplusone(\eta)$. By properness of $\flagellplusone$, we can extend $\mathcal{E}_\eta$ to a family $\mathcal{E}_R$ over $\Spec R$. Then taking $k$ to be minus the vanishing order at $\xi$ of the rational map $\detL(\mathcal{E}_R)\dasharrow \mathcal{O}_{\Spec R}$ induced by $\Phi_\eta$ of $R$, we see that $\pi^{k}\Phi_\eta$ also extends over $\Spec R$, which concludes this case. In the case $s=\infty$, one uses the obvious modification of this argument. If we have $s\neq 0,\infty$, then by using the $T$-action, we may assume that $s=1$. By Corollary \ref{corvalpropss}, we can still extend $\mathcal{E}_\eta$ to a family $\mathcal{E}_R$ of oriented $V^{\ell},V^{\ell+1}$-semistable sheaves. Once this extension is chosen, we let again $k$ denote minus the vanishing order at $\xi$ of the rational map $\detL(\mathcal{E}_R)\dasharrow \mathcal{O}_{\Spec R}$ induced by $\Phi_\eta$, so that as before $\pi^{k}\Phi_{\eta}$ extends over $\Spec R$ as, say, $\Psi_R$.  We obtain an map $a_1:\Spec R\to \mathcal{W}\times\pone(\Spec R)$ defined by the family  $\left((\mathcal{E}_R, \Psi_R),\pi^k \right) $. If $a_1$ factors through$\left(\mathcal{W}\times\pone \right)^s(\Spec R)$, we are done; so we may suppose it does not. Then $a_1$ sends $\xi$ either to $ \Sigma^{-}\times \{0\}$ or to ${\Sigma^+}\times \{\infty\}$. Thus either $k>0$ and $\mathcal{E}_\xi$ is not $V^{\ell+1}$-stable, or $k<0$ and $\mathcal{E}_\xi$ is not $V^{\ell}$-stable. We only treat the first case with $k>0$, as the other one is analogous. Thus, by assumption $\mathcal{E}_\xi$ satisfies $(\stabS_\ell)$ and $(\stabT_{\ell+1})$, but is not $V^{\ell+1}$-stable, hence does not satisfy $(\stabS_{\ell+1})$. Let $S\subset\mathcal{E}_\xi$ be the minimal $(\stabS_{\ell+1})$-destabilizing subobject with the quotient $Q:=\mathcal{E}_\xi/S$. Let $\mathcal{E}_R'$ be the $S$-modification of $\mathcal{E}_R$. Then by Lemma \ref{lemlprops}, \ref{lemlprops5} we find that $\pi^{-\nu({Q})}\Psi_R$ induces a trivialisation of $\detL(\mathcal{E}'_R)$. Note that $\nu(Q)>0$  by Lemma \ref{lemlprops}, \ref{lemlprops3}. We consider the isomorphism $\mathcal{E}_{\eta}'\to \mathcal{E}_\eta$ induced by the inclusion composed with multiplication by $1/\pi$. The orientation on $\mathcal{E}_R$ pulls back along this morphism to an orientation on $\mathcal{E}_{\eta}'$, which extends over $\Spec R$. Suppose that $\nu(Q)<k$. Then we may replace $\left((\mathcal{E}_R, \Psi_R),\pi^k\right)$ by $\left((\mathcal{E}'_R, \pi^{-\nu(Q)}\Psi_R),\pi^{k-\nu(Q)} \right)$ and repeat the argument. Since $k$ becomes smaller every time we do this, eventually we must arrive in a situation where either we end up with a central fiber which satisfies $(\stabS_{\ell+1})$ -in which case we are done - or we end up with a central fiber which is still not $V^{\ell+1}$-stable, but such that we have $k\leq \nu(Q)$.  In this latter case, let $kc=\nu(Q)d$ be the least common multiple of $k$ and $\nu(Q)$, so that $d\leq c$. Then let $R'$ be a degree $c$ totally ramified extension of $R$ with uniformizer $\pi'$. Pulling back to $\Spec R'$, we obtain a family $\left((\mathcal{E}_{R'}, \Psi_{R'}), (\pi')^{kc} \right)$. By construction, this family admits a flat quotient $\mathcal{E}_{R'/(\pi'^d)}\to Q_{R'/(\pi'^d)}$ over $\Spec R'/(\pi'^d)$, which is the pullback of the quotient $\mathcal{E}_\xi\to Q$ over the central fiber $\xi\in \Spec R$. Then one has a sequence
				\[\mathcal{E}_{c,R'}\subset\cdots\subset \mathcal{E}_{1,R'}\subset \mathcal{E}_{0,R'}=\mathcal{E}_{R'},\]
				where $\mathcal{E}_{c,R'}= \Ker\left(\mathcal{E}_{R'/(\pi'^d)}\to Q_{R'/(\pi'^d)}\right)$, and $\mathcal{E}_{i,R'}=\mathcal{E}_{c,R'}+(\pi')^i\mathcal{E}_{R'}$. It is not hard to check that this is a sequence of $S$-modifications along $(\stabS_{\ell+1})$-destabilizing subobjects, and that at each step the we have $\mathcal{E}_{i+1,R'}/\mathcal{E}_{i,R'}\simeq Q$. In fact, $\mathcal{E}_{i,\xi'}\simeq S\oplus Q$ for $i=1,\ldots,c-1$. Since $d\leq c$, we can take $\mathcal{E}_{d,R'}$. By the same line of reasoning as before, we have that $\pi^{-d\nu(Q)}\Psi_{R'} $ extends to a trivialization of $\detL(\mathcal{E}_{d,R'})$, and we can arrange for compatibility of the orientations. This gives us the family $\left((\mathcal{E}_{d,R'},\pi^{-d\nu(Q)}\Psi), 1\right)$, which is an extension of the original one $\left((\mathcal{E}_\eta,\Psi_\eta),s\right)$ up to base change and translation by the action of $T$. Moreover, the induced map $\Spec R'\to \mathcal{W}\times\pone $ now clearly factors through $\left(\mathcal{W}\times \pone\right)^s$, so we are done. 
			\end{proof}
		\end{proof}

		\subsection{Identification of fixed loci}\label{subsec:fix}

		We determine the fixed point stack $\mathcal{Z}^{\C^*}$ of the $\C^*$-action on $\mathcal{Z}$. We begin by determining the set-theoretic fixed points. 
		
		\paragraph{Set-theoretic fixed locus.} 
		For an algebraic stack $\mathcal{X}$, we denote by $\abs{\mathcal{X}}$ the set of isomorphism classes of $\C$-points of $\mathcal{X}$. An algebraic $\C^*$-action on $\mathcal{X}$ induces an action on $\abs{\mathcal{X}}$. 
		We have a stratification
		\[\left(\mathcal{W}\times \mathbb{P}^1\right)^s \simeq \mathcal{W}^+\times \{0\} \sqcup \mathcal{W}\times \C^*\sqcup \mathcal{W}^-\times\{\infty\},\]
		which is equivariant both under the $T$-action \eqref{eqnTaction} and under the action \eqref{eqnsecondaction}. After taking the quotient with respect ot the action by $T$, we have $\C^*$-equivariant isomorphisms $ \mathcal{W}\times \C^*/T\simeq \mathcal{W}$ as well as $\mathcal{W}^+\times \{0\}/T\simeq \flagellplusone$ and  $\mathcal{W}^-\times \{\infty\}/T\simeq \flagell$, where the $\C^*$-action on $\flagell$ and $\flagellplusone$ is the trivial one. We see that the fixed point set decomposes as 
		\[ \abs{\mathcal{Z}}^{\C^*}= \abs{\flagell} \sqcup \abs{\flagellplusone} \sqcup  \abs{\mathcal{W}}^{\C^*} .\]
		By Proposition \ref{propstackwprops} \ref{propstackw2}, $ \abs{\mathcal{W}}^{\C^*}$ consists of the set of isomorphism classes of properly polystable objects of $\mathcal{N}^{\ell,\ell+1}$. We will now describe this set in more detail. 
		
		Recall our conventions
		\[N=\dim \globm{E}; \mbox{ and } d =\dim \globm{\mathcal{O}_C(-1)},\]
		where $E$ is any object of $\mathcal{M}^{0,1}(\C)$. 
		Let $(E,V^{\bullet})\in \mathcal{N}^{\ell,\ell+1}(\C)$ be a  $V^{\ell},V^{\ell+1}$-semistable sheaf with flag structure which is properly polystable. By Lemma \ref{lempolystabdefs},we have a splitting $E\simeq S\oplus T$, where $S\simeq \mathcal{O}_C(-1)^{\oplus k}$ for some $k>0$, and a compatible splitting of flags $V^j=V^j_S\oplus V^j_T$.
		We denote by $W_S^{\bullet}$ the flag $V_S^{\bullet}$ with repetitions eliminated, so that $W_S^\bullet$ is a full flag in $\globm{S}$. Similarly, we define a full flag $W_T^\bullet $ in $\globm{T}$ from $V_T^{\bullet}$. Let 
		\[\Lambda:=\left\{j \left\lvert V^{j}_S\neq V^{j+1}_S \right. \right\}\subset \{0,\ldots,N-1\}.\]
		We have $\# \Lambda = kd$, and since $S$ is $(\stabS_{\ell+1})$-destabilizing, we have $\min \Lambda=\ell$. Then the data of $V_S^{\bullet}$ and $V_{T}^\bullet$ is equivalent to the data of $W_{S}^\bullet$ and $W_T^{\bullet}$ together with $\Lambda$. Combining these consideration with Lemma \ref{lemdecpolystables}, we find that the datum of a properly $V^{\ell},V^{\ell+1}$-polystable sheaf $E$ is up to isomorphism equivalent to the following pieces of data:
		\begin{itemize}
			\item a positive integer $k>0$, where we denote $S := \mathcal{O}_C(-1)^{\oplus k}$, 
			\item  a full flag $W_S^\bullet$ inside $\globm{\mathcal{O}_C(-1)^{\oplus k}}$, such that $(S,W_S^1)$ is simple in the sense of Remark \ref{remdestissimple}, 
			\item an object $(T,W_T^{\bullet})\in\mathcal{N}^{\ell}_{\chat-k\chexc}$ up to isomorphism and
			\item  a subset $\Lambda\subset \{0, \ldots, N-1\}$ of size $k d$ and with $\min \Lambda =\ell$. 
		\end{itemize}
		We will now turn this description into a scheme-theoretic description of the fixed locus
		
		\paragraph{Scheme-theoretic fixed locus.}
		Let $\mathcal{X}$ be a quasi-separated Deligne--Mumford stack with a $\C^*$-action, locally of finite type over $\C$. 
		\begin{definition}
			A $\C^*$-equivariant affine \'etale neighborhood of $\mathcal{X}$ consists of an affine \'etale neighborhood $j:\Spec A\to \mathcal{X}$ together with the following pieces of data:
			\begin{enumerate}[label = (\roman*)]
				\item A $\C^*$-action on $\Spec A$,
				\item a non-constant homomorphism $\lambda:\C^*\to \C^*$, and
				\item a $2$-isomorphism fitting in the following diagram 
				\begin{equation*}
					\begin{tikzcd}
						\C^*\times  \Spec A \ar[r]\ar[d,"\lambda\times j"]& \ar[d, "j"] \Spec A \\
						\C^* \times \mathcal{X}\ar[r] \ar[ur, Rightarrow] & \mathcal{X},
					\end{tikzcd}
				\end{equation*}
				where the horizontal maps are the action maps. 
			\end{enumerate}
			Moreover, this this data is required to satisfy a ``higher associativity'' condition \cite[Definition 2.1 (ii)]{Roma}.
		\end{definition}
		\begin{remark}
			This definition may be slightly non-standard, since we allow the character $\lambda$. This is important for us to obtain the right notion of fixed stack. 
		\end{remark}
		
		By Theorem 2.5 in \cite{AlHaRy}, any quasi-separated Deligne--Mumford stack with torus action which is locally of finite type over $\C$  is covered by such equivariant affine \'etale neighborhoods.
		This allows us to define the fixed substack of a $\C^*$ action. A $\C^*$-action on an affine scheme $\Spec A$ corresponds to a $\Z$-grading $A=\oplus_{i\in \Z} A_i$. We denote by $A_{\C^*}$ the quotient of $A$ by the ideal generated by the ideal $(f \mid f\in A_i; i\neq 0)$. Then the fixed locus of $\Spec A$ is given by $(\Spec A)^{\C^*}=\Spec A_{\C^*}$. 
		\begin{proposition}[{cf. \cite[\S 3]{ChKiLi}}]
			There is a unique closed substack $\mathcal{X}^{\C^*}\subset \mathcal{X}$ with the property that for any $\C^*$-equivariant affine \'etale neighborhood $\Spec A\to \mathcal{X}$ we have an equality $\Spec A \times_{\mathcal{X}}\mathcal{X}^{\C^*}= (\Spec A)^{\C^*}$ of closed subschemes of $\Spec A$.   
		\end{proposition}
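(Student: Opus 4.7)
The strategy is to construct $\mathcal{X}^{\C^*}$ by descent along the equivariant \'etale cover provided by \cite{AlHaRy}, and to derive uniqueness from the characterization property itself. Uniqueness is essentially free: if two closed substacks $\mathcal{X}_1^{\C^*}, \mathcal{X}_2^{\C^*} \subset \mathcal{X}$ both satisfy the condition, then for every $\C^*$-equivariant affine \'etale neighborhood $j:\Spec A \to \mathcal{X}$, both pull back to the same closed subscheme $(\Spec A)^{\C^*}$ of $\Spec A$. Since such neighborhoods jointly form an \'etale cover of $\mathcal{X}$, \'etale descent for closed substacks forces $\mathcal{X}_1^{\C^*} = \mathcal{X}_2^{\C^*}$.

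For existence, I would fix a cover $\{j_i: \Spec A_i \to \mathcal{X}\}_{i\in I}$ by $\C^*$-equivariant affine \'etale neighborhoods and glue the local pieces $(\Spec A_i)^{\C^*}$ into a global closed substack. The technical heart of the argument is the following base-change property: if $f: \Spec A \to \Spec B$ is a $\C^*$-equivariant \'etale morphism of affine schemes endowed with $\C^*$-actions, then
\[(\Spec A)^{\C^*} \;=\; f^{-1}\bigl((\Spec B)^{\C^*}\bigr)\]
as closed subschemes of $\Spec A$. In ring-theoretic terms, if $\phi: B \to A$ is a graded homomorphism of $\Z$-graded rings making $A$ \'etale over $B$, then the natural map $A \otimes_B B_{\C^*} \to A_{\C^*}$ is an isomorphism, where $A_{\C^*} = A/I_A$ and $I_A$ denotes the ideal generated by the homogeneous elements of nonzero degree. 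To prove this I would examine the quotient $\bar A := A \otimes_B B_{\C^*}$, which is \'etale over $B_{\C^*}$ (regarded as concentrated in degree zero) and inherits a $\Z$-grading from $A$; because the fibers of an \'etale morphism are discrete, the resulting $\C^*$-action on $\Spec \bar A$ over the $\C^*$-trivial base $\Spec B_{\C^*}$ must be trivial along fibers, forcing $\bar A$ to be concentrated in degree zero and hence equal to $A_{\C^*}$.

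With the base-change lemma in hand, the gluing proceeds in the standard way: applying it to the two projections $\Spec A_i \times_{\mathcal{X}} \Spec A_j \to \Spec A_i, \Spec A_j$ shows that the pullbacks of $(\Spec A_i)^{\C^*}$ and $(\Spec A_j)^{\C^*}$ to the fiber product coincide, supplying a descent datum whose glued object is the desired closed substack $\mathcal{X}^{\C^*} \subset \mathcal{X}$. The main obstacle I anticipate is the careful bookkeeping of the characters $\lambda_i: \C^* \to \C^*$: on $\Spec A_i \times_{\mathcal{X}} \Spec A_j$ the two $\C^*$-actions inherited from the factors are only compatible up to the characters $\lambda_i$ and $\lambda_j$, so to speak of ``the'' fixed locus one must pass to a common $\C^*$-action, for instance by replacing both actions by their pullbacks along an appropriate isogeny $\C^* \to \C^*$. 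Since the fixed ideal $I_A$ is insensitive to precomposing a $\C^*$-action with a nonzero endomorphism of $\C^*$, this bookkeeping has no effect on the conclusion, and the descent runs through cleanly.
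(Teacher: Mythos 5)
Your approach is structurally the same as the paper's: uniqueness by \'etale descent, existence by gluing local fixed loci $(\Spec A_i)^{\C^*}$, with a base-change lemma for $\C^*$-equivariant \'etale morphisms of affine schemes as the technical heart (this is precisely the paper's ``point (b)''). Your proof of the base-change lemma differs from theirs: you observe that the graded ring $\bar A := A \otimes_B B_{\C^*}$ is \'etale over the degree-zero ring $B_{\C^*}$ and argue that a finite \'etale algebra over a degree-zero ring carries only the trivial grading, whence $\bar A = A_{\C^*}$. The paper instead first matches the underlying reduced schemes (using that a closed point is $\C^*$-fixed if and only if it has finitely many translates) and then matches nilpotent thickenings via the infinitesimal lifting property of \'etale maps. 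Both are correct; yours is perhaps cleaner algebraically, while theirs avoids any direct manipulation of graded rings.

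There is, however, a gap in the gluing step. You propose to compare the two pullbacks of the local fixed loci by applying the base-change lemma to the projections $p_i, p_j : \Spec A_i \times_{\mathcal{X}} \Spec A_j \to \Spec A_i,\, \Spec A_j$. But the fiber product is in general only a (quasi-affine, separated) scheme, not an affine one, so the lemma as you have stated it does not apply. One must first cover the fiber product by $\C^*$-equivariant affine opens — this requires noting that the fiber product inherits a $\C^*$-action (which in turn uses that any two endomorphisms of $\C^*$ commute, to reconcile the characters $\lambda_i$ and $\lambda_j$), and then invoking \cite{AlHaRy} a second time, applied now to the fiber product, to produce equivariant affine refinements. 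This is exactly the paper's ``point (a).'' Your remark about character bookkeeping touches on the commutativity issue but does not supply the needed equivariant affine cover, and without it the base-change lemma cannot be invoked where you want to invoke it.
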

		\begin{proof}
			We only give an outline. One needs to check the following points
			\begin{enumerate}[label=(\alph*)]
				\item If $\Spec A\to \mathcal{X}$ and $\Spec B\to \mathcal{X}$ are two equivariant \'etale neighborhoods covering a given geometric point $x$ of $X$, then there exists a common refinement covering $x$, i.e. an affine scheme $\Spec C$ with $\C^*$-action and equivariant maps $\Spec C\to\Spec A$ and $\Spec C\to \Spec B$, such that the two resulting maps $\Spec C\to \mathcal{X}$ cover $x$ and are related via a $2$-isomorphism that commutes with the respective equivariant structures induced by the ones on $\Spec A$ and $\Spec B$.\label{prooflista}
				\item If $\Spec B\to \Spec A$ is an \'etale $\C^*$-equivariant morphism, then $(\Spec B)^{\C^*}$ agrees with the base change of $(\Spec A)^{\C^*}$ as closed subschemes of $\Spec B$.   \label{prooflistb}
			\end{enumerate}
			To see \ref{prooflista}, one checks that the fiber product $\Spec A\times_{\mathcal{X}}\Spec B$ has a natural $\C^*$-action, (this needs the fact that any two homomorphisms $\C^*\to \C^*$ commute!), with the right properties. Then one can use Theorem 2.5 in \cite{AlHaRy} again to obtain  $\Spec C \to\Spec A\times_{\mathcal{X}}\Spec B$. For \ref{prooflistb} one can see that the two closed subschemes have the same underlying reduced scheme by using that a $\C$-point of an affine scheme with $\C^*$-action is $\C^*$-fixed if and only if it has finitely many translates. Then by the infinitesimal lifting property for \'etale morphisms the scheme structures also agree. 
			
			Now one may construct the substack $\mathcal{X}$ as follows: Choose a covering family $\{(\Spec A_i\to \mathcal{X})\}$ consisting of equivariant affine \'etale neighborhoods of $\mathcal{X}$. Then from \ref{prooflista} and \ref{prooflistb} it follows that the family of closed subschemes $\left((\Spec A_i)^{\C^* }\subset \Spec A_i\right)$ induces a well-defined closed subscheme $\mathcal{X}^{\C^*}$ of $\mathcal{X}$, which is independent of choice of cover and satisfies the claimed properties.  
		\end{proof}

		We now specialize the discussion to describe the fixed stack $\mathcal{Z}^{\C^*}$ of the master space in detail.
		
		\begin{proposition}\label{propfixeasy}
			We have natural closed immersions  $\mathcal{N}^{\ell}\to \mathcal{Z}$ and $\mathcal{N}^{\ell+1}\to \mathcal{Z}$. Each is an inclusion of connected components of $\mathcal{Z}^{\C^*}$. 
		\end{proposition}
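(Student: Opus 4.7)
First I would construct the two closed embeddings by working in $(\mathcal{W}\times\mathbb{P}^1)^s$ before passing to the quotient. Since the points $0=[0,1]$ and $\infty=[1,0]$ are closed in $\mathbb{P}^1$, their preimages $\mathcal{W}\times\{0\}$ and $\mathcal{W}\times\{\infty\}$ are closed substacks of $\mathcal{W}\times\mathbb{P}^1$. Intersecting with $(\mathcal{W}\times\mathbb{P}^1)^s$ removes exactly $\Sigma^-\times\{0\}$ and $\Sigma^+\times\{\infty\}$, so by definition of $\mathcal{W}^{\pm}=\mathcal{W}\setminus\Sigma^{\mp}$ we obtain closed substacks
\[
\mathcal{W}^+\times\{0\}\hookrightarrow(\mathcal{W}\times\mathbb{P}^1)^s,\qquad \mathcal{W}^-\times\{\infty\}\hookrightarrow(\mathcal{W}\times\mathbb{P}^1)^s.
\]
Both are clearly $T$-invariant, since $T$ fixes $0$ and $\infty$ and preserves $\mathcal{W}^{\pm}$. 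The restriction of the $T$-action to these substacks coincides (via the trivial action on the second factor) with the principal $T$-action on $\mathcal{W}^{\pm}$ induced from the $T$-bundle $\mathcal{W}\to\mathcal{N}^{\ell,\ell+1}$.

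Next I would check that $T$ acts freely on $\mathcal{W}^{\pm}$, so that the stack quotients are the expected coarser moduli. By Proposition \ref{propstackwprops} \ref{propstackw2}, the $T$-fixed points of $\mathcal{W}$ are precisely the properly polystable objects, and by Lemma \ref{lempolystabdefs} such objects fail both $V^{\ell}$- and $V^{\ell+1}$-stability, so they lie in $\Sigma^+\cap\Sigma^-$ and are absent from both $\mathcal{W}^+$ and $\mathcal{W}^-$. Therefore
\[
[\mathcal{W}^+\times\{0\}/T]\;\cong\;[\mathcal{W}^+/T]\;\cong\;\mathcal{N}^{\ell+1},\qquad [\mathcal{W}^-\times\{\infty\}/T]\;\cong\;\mathcal{N}^{\ell},
\]
and the compositions $\mathcal{N}^{\ell+1}\hookrightarrow\mathcal{Z}$ and $\mathcal{N}^{\ell}\hookrightarrow\mathcal{Z}$ are closed immersions. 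To see they land in $\mathcal{Z}^{\C^*}$, observe that the second $\C^*$-action \eqref{eqnsecondaction} acts trivially on the $\mathcal{W}$-factor and fixes both $0$ and $\infty$ in $\mathbb{P}^1$ pointwise, hence restricts to the trivial action on these two substacks before and after descent.

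Finally, to show these closed immersions identify unions of connected components of $\mathcal{Z}^{\C^*}$, I would use the $T$-equivariant projection $(\mathcal{W}\times\mathbb{P}^1)^s\to\mathbb{P}^1$ to produce a map $\pi\colon\mathcal{Z}\to[\mathbb{P}^1/T]$, which is $\C^*$-equivariant for the action \eqref{eqnsecondaction} (trivial on the target). The underlying topological space of $[\mathbb{P}^1/T]$ consists of the three points $[0]$, $[\infty]$, and the open generic orbit $[\C^*/T]$. The images $\mathcal{N}^{\ell+1}$ and $\mathcal{N}^{\ell}$ map to the distinct closed points $[0]$ and $[\infty]$ respectively, while any other $\C^*$-fixed component of $\mathcal{Z}$ arises (via Proposition \ref{propstackwprops} \ref{propstackw2}) from properly polystable points of $\mathcal{W}$ and maps to the generic point. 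Since the fibers of $\pi$ over the three points of $|[\mathbb{P}^1/T]|$ are disjoint closed and open substacks, $\mathcal{N}^{\ell}$ and $\mathcal{N}^{\ell+1}$ are each a union of connected components of $\mathcal{Z}^{\C^*}$. The only nontrivial step in the argument is the verification that $T$ acts freely on $\mathcal{W}^{\pm}$, which is where the choice of the stability pattern for $\mathcal{W}^{\pm}$ is essential; everything else is formal manipulation of stack quotients and fiber products.
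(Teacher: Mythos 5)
Your construction of the closed immersions (steps 1–3) is correct and matches the paper's, though note that the identifications $[\mathcal{W}^{\pm}/T]\simeq\mathcal{N}^{\ell\pm\ldots}$ follow directly from the principal $\C^*$-bundle structure of $\mathcal{W}\to\mathcal{N}^{\ell,\ell+1}$ restricted to opens; the discussion of polystable points is not really needed for this.

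The gap is in the final step. You produce a map $\pi:\mathcal{Z}\to[\mathbb{P}^1/T]$ and observe that $|[\mathbb{P}^1/T]|$ has three points, with $\mathcal{N}^{\ell+1}$ and $\mathcal{N}^{\ell}$ mapping to the closed points $[0]$, $[\infty]$ and the remaining fixed points mapping to the generic point $*$. But the conclusion "the fibers of $\pi$ over the three points are disjoint closed and open substacks" is false: the points $[0]$ and $[\infty]$ are \emph{closed but not open} in $|[\mathbb{P}^1/T]|$, since the generic point $*$ specializes to both. Consequently $\pi^{-1}([0])$ is closed in $\mathcal{Z}$ but not open, and it is not immediate that $\pi^{-1}([0])\cap\mathcal{Z}^{\C^*}=\mathcal{N}^{\ell+1}$ is open in $\mathcal{Z}^{\C^*}$: one would have to rule out specialization \emph{inside $\mathcal{Z}^{\C^*}$} from a point lying over $*$ to a point lying over $[0]$. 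Equivalently, one would need to know that $\mathcal{W}^{\C^*}=\mathcal{Z}^{\C^*}\cap\pi^{-1}(*)$ is closed in $\mathcal{Z}^{\C^*}$; this follows once the full decomposition $\mathcal{Z}^{\C^*}=\mathcal{N}^{\ell}\sqcup\mathcal{N}^{\ell+1}\sqcup\bigsqcup_{k,\Lambda}\mathcal{Z}_{k,\Lambda}$ is established (each $\mathcal{Z}_{k,\Lambda}$ is proper, hence closed), but the present proposition is proved in the paper \emph{as a step toward} that decomposition, so the argument would be circular.

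The paper closes the gap by a direct local computation: it shows the stronger claim that the $\C^*$-invariant open substack $(\mathcal{W}^+\times\mathbb{A}^1)/T\subset\mathcal{Z}$ has $\C^*$-fixed locus exactly $(\mathcal{W}^+\times\{0\})/T=\mathcal{N}^{\ell+1}$. This is done by working over an \'etale chart $V\to\mathcal{N}^{\ell+1}$ trivializing the $\C^*$-bundle $\mathcal{W}^+$, so that $(\mathcal{W}^+|_V\times\mathbb{A}^1)/T\simeq(V\times\C^*\times\mathbb{A}^1)/T\simeq V\times\mathbb{A}^1$ after the change of coordinates $(s,x)\mapsto(s,x/s)$, with $\C^*$ acting by $t\mapsto 1/t$ on $\mathbb{A}^1$; the fixed locus in this neighborhood is visibly $V\times\{0\}$. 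This gives openness of $\mathcal{N}^{\ell+1}$ in $\mathcal{Z}^{\C^*}$ independently of what the remaining fixed loci look like. You would need some analogue of this local trivialization to make your step 4 go through.
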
 
		
		\begin{proof}
			The closed immersions are induced by the isomorphisms  
			\[\mathcal{N}^{\ell+1} \simeq \left(\mathcal{W}^+\times \{0\}\right)/T\subset \mathcal{Z} \mbox{ and } \mathcal{N}^{\ell}\simeq \left(\mathcal{W}^{-}\times \{\infty\}\right)/T\subset \mathcal{Z}.\] We will only show that $\mathcal{N}^{\ell+1}\to \mathcal{Z}$ is an inclusion of a connected component of $\mathcal{Z}^{\C^*}$, the other case being similar. It is enough to show this \'etale-locally on $\mathcal{Z}$. In fact, we will show the stronger statement that the $\C^*$-invariant open subset $(\mathcal{W}^+\times \aone)/T\subset \mathcal{Z}$ has $\C^*$-fixed locus equal to $(\mathcal{W}^+\times \{0\})/T$. 
			Since the $T$-action respects the projection $\mathcal{W}^{+}\times \pone\to \mathcal{N}^{\ell+1}$, we may work locally over $\mathcal{N}^{\ell+1}$. Since $\mathcal{N}^{\ell+1}$ is a Deligne--Mumford stack, it is covered by \'etale neighborhoods $V$ over which the $\C^*$-bundle $\mathcal{W}^+$ is trivializable. For such a $V$, we may then pick a trivialization $\mathcal{W}^+\mid_{V}\simeq  \mathcal{N}^{\ell+1}\times \C^*$. Then   
			we obtain a $\C^*$-equivariant isomorphism $(\mathcal{W}^+\mid_V\times \aone)/T\simeq (V\times \C^*\times \aone)/T$. Here, on the right-hand side $T$ acts diagonally on $\C^*\times \aone$ and the $\C^*$-action on the quotient is induced from multiplication by $1/t$ on $\aone$. By a change of coordinates on $\C^*\times \aone$ given by $(s,x)\mapsto (s,x/s)$, one obtains a $\C^*$-equivariant isomorphism  $(\mathcal{W}^+\mid_V\times \aone)/T\simeq V\times \aone$, where $\C^*$ acts by $1/t$ on $\aone$. We see that the $\C^*$-fixed locus in this neigborhood is $V\times \{0\}$, which under the chosen isomorphisms is identified with $(\mathcal{W}^+\times \C^*)/T$.
		\end{proof}
		
		It remains to determine the part of $\mathcal{Z}^{\C^*}$ lying in the open subset $\mathcal{W}\times \C^*/T\subset \mathcal{Z}$. Due to the equivariant isomorphism $\mathcal{W}\times \C^*/T\simeq \mathcal{W}$, this is equivalent to determining $\mathcal{W}^{\C^*}$.  
		We fix once and for all an orientation on $\mathcal{O}_C(-1)$, as well as a trivialization $\varphi_e:\detL(\mathcal{O}_C(-1))\simeq \C$. 
		Guided by the set-theoretic descripition of the fixed point set above, we fix the following discrete data:
		\begin{itemize}
			\item A positive integer $k$, and
			\item a subset $\Lambda\subset \{0,1,\ldots,N-1\}$ of cardinality $kd$ with $\min\Lambda =\ell$. 
		\end{itemize}
		Essentially, we want to define a substack $\mathcal{Z}_{k,\Lambda}\subset \mathcal{W}$ parametrizing polystable objects of the form $(E,V^{\bullet})=(S\oplus T,V_S^{\bullet}\oplus V_T^{\bullet})$, such that $S\simeq \mathcal{O}_C(-1)^{\oplus k}$, and such that $\Lambda$ contains exactly those indices $j$ for which $V_S^j\neq V_S^{j+1}$. We will make this precise and show that $\mathcal{Z}_{k,\Lambda}$ in fact defines an open and closed substack of $\mathcal{W}^{\C^*}$.

		Let $\operatorname{Fl}_{\globm{\mathcal{O}_C(-1)^{\oplus k}}}$ denote the full flag variety associated to $\globm{\mathcal{O}_C(-1)^{\oplus k}}$, and let $\operatorname{Fl}_{\globm{\mathcal{O}_C(-1)^{\oplus k}}}^{\circ}$ denote the open subset parametrizing flags $(W^{\bullet}_S)$ such that $W^1_S\subset \globm{\mathcal{O}_C(-1)^{\oplus k}}\simeq \globm{\mathcal{O}_C(-1)}\otimes \C^{\oplus k}$  defines a rank $k$ tensor. Let also $\mathcal{W}^\ell_{\chat-k\chexc}\to \mathcal{N}^\ell_{\chat-k\chexc}$ be the relative moduli stack associating to any $\mathcal{E}_B\in \mathcal{N}^{\ell}_{\chat-k\chexc}(B)$ the set of trivializations $\varphi:\detL(\mathcal{E}_B)\to \mathcal{O}_B$.  
		We consider the morphism 
		\begin{align*}
			\rho_{k,\Lambda}^{\circ}:\operatorname{Fl}_{\globm{\mathcal{O}_C(-1)^{\oplus k}}}\times \mathcal{W}^{\ell}_{\chat-k\chexc}&\to \mathcal{W}\\
			\left(W_S^{\bullet}, (T,W_T^{\bullet},\varphi_T) \right)&\mapsto (T\oplus \mathcal{O}_C(-1)^{\oplus k}, V_T^{\bullet}\oplus V_S^{\bullet}, \varphi_T\otimes \varphi_e^{k}),
		\end{align*}
		Here, the implicit orientations on $\mathcal{O}_C(-1)$ and on $T$ induce an orientation on $T\oplus \mathcal{O}_C(-1)^{\oplus k}$.  The flags $V_{S}^\bullet$ and $V_T^{\bullet}$ respectively are obtained from the flags $W_{S}^\bullet$ and $W_{T}^\bullet$ by repeating certain entries as indicated by $\Lambda$. To be precise, we can set $j_{\Lambda}(i):=\#(\Lambda\cap [0,i-1])$ and similarly $j_{\Lambda^{c}}(i)=\#(\Lambda^c\cap [0,i-1])$ with $\Lambda_c =\{0,1\ldots,N-1\}\setminus \Lambda$. Then $V_S^{i}=W_S^{j_{\Lambda}(i)}$ and $V_T^i= W_T^{j_{\Lambda^c}(i)}$ for $i=0,\ldots, N$. 
		
		We have an action by $\operatorname{GL}(k)\times \C^*$, where $\operatorname{GL}(k)$ acts on $\operatorname{Fl}_{\globm{\mathcal{O}_C(-1)^{\oplus k}}}$ through its action on $\mathcal{O}_C(-1)^{\oplus k}=\mathcal{O}_C(-1)\otimes \C^{\oplus k}$, and $\C^*$ acts on $\mathcal{W}_{\chat-ke}^{\ell}$ in the usual way by scaling $\varphi_T$.  
		We consider the subgroup $G\subset GL(k)\times \C^*$, which consists of elements of the form $(g,(\det g)^{\chi_1})$. We set
		\[\mathcal{Z}_{k,\Lambda}:=[\left(\operatorname{Fl}_{\globm{\mathcal{O}_C(-1)^{\oplus k}}}\times \mathcal{W}^{\ell}_{\chat-k\chexc}\right)/G].\]
		This is our candidate for the closed substack corresponding to the part of the fixed point locus labeled by $k$ and $\Lambda$.  
		
		The $GL(k)$-action on $\operatorname{Fl}_{\globm{\mathcal{O}_C(-1)^{\oplus k}}}$ descends to a free $PGL(k)$-action. Let $\mathfrak{F}_{k}$ denote the quotient. An exercise in linear algebra shows that $\mathfrak{F}_k$ can be constructed as follows: Consider the Grassmannian $\operatorname{Gr}$ of $k$-planes in $\globm{\mathcal{O}_C(-1)}$. The inclusion of the universal subbundle $V\subset \globm{\mathcal{O}_C(-1)}\otimes \mathcal{O}_{\operatorname{Gr}}$ induces a section $\mathcal{O}_{\operatorname{Gr}}\to \globm{\mathcal{O}_C(-1)}\otimes V^{\vee}$. Let $W$ be the cokernel of this section. Then $\mathfrak{F}_k$ is naturally isomorphic to the relative full flag variety of $W$ over $\operatorname{Gr}$, in particular it is a smooth projective variety. 
		
		We then have the following:

		\begin{lemma}\label{lemetalemap}
			There is a natural finite \'etale morphism $\epsilon:\mathcal{Z}_{k,\Lambda}\to \mathfrak{F}_k\times \mathcal{N}^{\ell}_{\chat-k\chexc}$ of degree $1/(k\chi_1)$. In particular, $\mathcal{Z}_{k,\Lambda}$ is a proper Deligne--Mumford stack. 
		\end{lemma}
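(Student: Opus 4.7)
The plan is to realize $\epsilon$ as a $\mu_{k\chi_1}$-gerbe by carrying out the defining $G$-quotient of $\mathcal{Z}_{k,\Lambda}$ in two stages. Let $\C_c^* \subset G$ denote the central subgroup of pairs $(sI_k, s^{k\chi_1})$ with $s \in \C^*$; it is the kernel of the natural surjection $G \simeq GL(k) \twoheadrightarrow PGL(k)$ obtained by composing the (isomorphism) first projection with $GL(k)\to PGL(k)$. Since $PGL(k) = G/\C_c^*$ acts freely on $\operatorname{Fl}_{\globm{\mathcal{O}_C(-1)^{\oplus k}}}$ with quotient $\mathfrak{F}_k$ and trivially on $\mathcal{W}^{\ell}_{\chat-k\chexc}$, a first quotient yields
\[\mathcal{Z}_{k,\Lambda} \simeq [(\mathfrak{F}_k \times \mathcal{W}^{\ell}_{\chat-k\chexc})/\C_c^*],\]
with $\C_c^*$ acting trivially on $\mathfrak{F}_k$ and on $\mathcal{W}^{\ell}_{\chat-k\chexc}$ through the character $s \mapsto s^{k\chi_1}$. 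I would then define $\epsilon$ to be the product of the identity on $\mathfrak{F}_k$ with the morphism induced by the $\C^*$-bundle map $\mathcal{W}^{\ell}_{\chat-k\chexc} \to \mathcal{N}^{\ell}_{\chat-k\chexc}$, where the target carries the trivial $\C_c^*$-action.

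The central step is to identify $\epsilon$ as a $\mu_{k\chi_1}$-gerbe. The standard weight-one $\C^*$-action rescaling trivializations exhibits $\mathcal{W}^{\ell}_{\chat-k\chexc}$ as a $\C^*$-torsor over $\mathcal{N}^{\ell}_{\chat-k\chexc}$, so that $\mathcal{N}^{\ell}_{\chat-k\chexc} \simeq [\mathcal{W}^{\ell}_{\chat-k\chexc}/\C^*_{\mathrm{std}}]$. The $\C_c^*$-action on $\mathcal{W}^{\ell}_{\chat-k\chexc}$ factors through the $k\chi_1$-power isogeny $\C_c^* \to \C^*_{\mathrm{std}}$, whose kernel is $\mu_{k\chi_1}$. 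Enlarging a free torsor action along a surjective cyclic isogeny yields a $\mu_{k\chi_1}$-banded gerbe on the original quotient, so $[\mathcal{W}^{\ell}_{\chat-k\chexc}/\C_c^*] \to \mathcal{N}^{\ell}_{\chat-k\chexc}$ and hence $\epsilon$ are $\mu_{k\chi_1}$-gerbes. In particular $\epsilon$ is finite étale of degree $1/(k\chi_1)$. The Deligne--Mumford property and properness of $\mathcal{Z}_{k,\Lambda}$ then follow by descent from $\mathfrak{F}_k \times \mathcal{N}^{\ell}_{\chat-k\chexc}$, using that $\mathfrak{F}_k$ is a smooth projective variety and that $\mathcal{N}^{\ell}_{\chat-k\chexc}$ is a proper Deligne--Mumford stack by Proposition \ref{propproper}.

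The point that demands the most care is confirming that $\C_c^*$ acts on $\mathcal{W}^{\ell}_{\chat-k\chexc}$ through exactly the character $s \mapsto s^{k\chi_1}$, with this exponent and without a spurious twist by a root of unity. This is a sign-tracking exercise of the kind already performed in the proof of Proposition \ref{propstackwprops} \ref{propstackw2}: the scaling automorphism $\sigma_{s,1}$ of $T \oplus \mathcal{O}_C(-1)^{\oplus k}$ induces multiplication by $s^{-k\chi_1}$ on $\detL$ by Lemma \ref{lemlprops} \ref{lemlprops3}, so the defining compatibility $\varphi = \varphi' \circ \detL(\sigma_{s,1})$ of $\mathcal{W}$-morphisms forces the accompanying rescaling of $\varphi_T$ by $s^{k\chi_1}$ that is built into the very definition of $G$. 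Tracing this identification through the forgetful map $\mathcal{W}^{\ell}_{\chat-k\chexc} \to \mathcal{N}^{\ell}_{\chat-k\chexc}$ then pins down the banding group of the gerbe as $\mu_{k\chi_1}$ and delivers the stated degree.
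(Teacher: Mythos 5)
Your route is close in substance to the paper's but organizes the computation differently: the paper works \'etale-locally over $\mathfrak{F}_k\times\mathcal{N}^{\ell}_{\chat-k\chexc}$ and reduces to the stack quotient $[PGL(k)\times\C^*/G]\simeq BK$ with $K=\Ker(G\to PGL(k)\times\C^*)\simeq\mu_{k\chi_1}$, while you perform a global two-stage quotient and exhibit $\epsilon$ directly as a $\mu_{k\chi_1}$-gerbe. Both hinge on the same kernel computation; the gerbe packaging is a reasonable alternative and makes the banding group more visible.

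There is, however, an imprecision in the justification of your first quotient step that needs repair. You assert that ``$PGL(k)=G/\C_c^*$ acts\ldots trivially on $\mathcal{W}^{\ell}_{\chat-k\chexc}$,'' but the $G$-action on $\mathcal{W}^{\ell}_{\chat-k\chexc}$ rescales $\varphi_T$ by $(\det g)^{\chi_1}$, and this character of $G\simeq GL(k)$ does not factor through $PGL(k)$ --- indeed $\C_c^*$ itself acts nontrivially --- so there is no $PGL(k)$-action on $\mathcal{W}^{\ell}_{\chat-k\chexc}$ to speak of, and one cannot ``quotient by a quotient group first.'' The correct way to reach $\mathcal{Z}_{k,\Lambda}\simeq[(\mathfrak{F}_k\times\mathcal{W}^{\ell}_{\chat-k\chexc})/\C_c^*]$ is to quotient by the \emph{normal subgroup} $\C_c^*$ first, obtaining $\operatorname{Fl}_{\globm{\mathcal{O}_C(-1)^{\oplus k}}}\times[\mathcal{W}^{\ell}_{\chat-k\chexc}/\C_c^*]$, and then observe that the residual $G/\C_c^*$-action on the second factor is trivial: since $\det^{\chi_1}\mid_{\C_c^*}:s\mapsto s^{k\chi_1}$ surjects onto $\C^*$, any $g\in G$ scales $\varphi_T$ by the same amount as some $n\in\C_c^*$, and that action is absorbed in the $\C_c^*$-quotient. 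With the order of the two quotients corrected and this triviality spelled out, the rest of your argument --- the isogeny $\C_c^*\to\C^*$, the $\mu_{k\chi_1}$-banding, the degree $1/(k\chi_1)$, and the descent of the Deligne--Mumford and properness properties --- goes through and agrees with the paper's conclusion.
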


		\begin{proof}
			The map $\operatorname{Fl}_{\globm{\mathcal{O}_C(-1)^{\oplus k}}}\times \mathcal{W}^{\ell}_{\chat-k\chexc}\to  \mathfrak{F}_k\times \mathcal{N}^\ell_{\chat-k\chexc}$ is a $\operatorname{PGL}(k)\times \C^*$-bundle. The $G$-action on the domain factors through the surjective homomorphism $G\subset GL(k)\times \C^*\to \operatorname{PGL}(k)\times \C^*$. By working \'etale locally over the base $\mathfrak{F}_k\times \mathcal{N}^{\ell}_{\chat-k\chexc}$, we may assume the bundle is trivial. This reduces us to showing that the stack quotient $[PGL(k)\times \C^*/G]$ is finite \'etale of degree $1/k\chi_1$ over $\Spec \C$. But it is in fact isomorphic to the classifying space of the finite group $K=\Ker(G\to \operatorname{PGL}(k)\times \C)$ of order $k\chi_1$  (see Exercise 10.F in \cite{Olso2}). The claim follows from this. 
		\end{proof}
		
		\begin{remark}\label{remequiv}
			The following alternative characterization of $\mathcal{Z}_{k,\Lambda}$ is useful: $\mathcal{Z}_{k,\Lambda}$ parametrizes tuples 
			\[\left((\mathcal{S}, W_S^{\bullet}),(\mathcal{T},W_T^{\bullet}), \operatorname{or}, \Phi \right),\]
			where
			\begin{enumerate}[label=\roman*)]
				\item 	 $(\mathcal{S},W_S^{\bullet})$ is a sheaf with flag-structure, such that $\mathcal{S}$ is \'etale locally isomorphic to $\mathcal{O}_C(-1)^{\oplus k}$, and $(\mathcal{S},W_S^1)$ is a family of simple pairs,
				\item $(\mathcal{T},W_T^{\bullet})$ is a $V_{\ell}$-stable family of sheaves with Chern character $\chat-ke$,
				\item $\operatorname{or}$ is an orientation on $\mathcal{S}\oplus \mathcal{T}$, and
				\item $\Phi:\mathcal{L}(\mathcal{S}\oplus \mathcal{T})\to \mathcal{O}$ is a trivialization.  
			\end{enumerate} 
			We let $\C^*$ act on $\mathcal{Z}_{k,\Lambda}\to \mathcal{W}$ through acting on $\Phi$. Denote this action by $\mu:\C^*\times \mathcal{Z}_{k,\Lambda}\to \mathcal{Z}_{k,\Lambda}$. Then the map $\mathcal{Z}_{k,\Lambda}\to \mathcal{W}$ is in the obvious way $\C^*$-equivariant. It is moreover easy to see that $\mathcal{Z}_{k,\Lambda}$ is $\C^*$-fixed: For this, we want to compare the action $\mu$ with the trivial action on $\mathcal{Z}_{k,\Lambda}$.  Let $\lambda:\C^*\to \C^*, t\mapsto t^{k\chi_1}$ and consider the two outer edges of the following diagram
			\begin{equation}\label{eq:idequiv}
				\begin{tikzcd}
					\C^*\times  \mathcal{Z}_{k,\Lambda} \ar[r,"p_2"]\ar[d,"\lambda\times \operatorname{id}"]& \ar[d,"\operatorname{id}" ] \mathcal{Z}_{k,\Lambda} \\
					\C^* \times \mathcal{Z}_{k,\Lambda}\ar[r,"\mu"] \ar[ur, Rightarrow] & \mathcal{Z}_{k,\Lambda}.
				\end{tikzcd}
			\end{equation}
			We need to give the marked $2$-isomorphism witnessing commutativity of the diagram. Giving this isomorphism is equivalent to giving isomorphisms $a:\mathcal{S}\to \mathcal{S}$ and $b:\mathcal{T}\to \mathcal{T}$ that preserve the flag structures, and such that $a\oplus b$ preserves the orientation on $\mathcal{S}\oplus \mathcal{T}$ and maps $\Phi$ to $t^{k\chi_1} \Phi$. We may take $b$ to be the identity and $a$ to be multiplication by $t$. Then the corresponding $2$-morphism satisfies the higher associativity condition, and this gives   the structure of $\C^*$-equivariant morphism relating the two actions. We see from this that $\mathcal{S}$ and $\mathcal{T}$  are equivariant sheaves on $\mathcal{Z}_{k,\Lambda}$ of weight $-1/k\chi_1$ and weight $0$ respectively.
		\end{remark}
		
		\begin{lemma}
			The morphism $\rho^{\circ}_{k,\Lambda}$ descends to a closed embedding
			\[\rho_{k,\Lambda}: \mathcal{Z}_{k,\Lambda}\to \mathcal{W}.\] 
		\end{lemma}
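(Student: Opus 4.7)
The plan is to proceed in three stages: first verify that $\rho^{\circ}_{k,\Lambda}$ is $G$-invariant and therefore descends to a morphism $\rho_{k,\Lambda}$ from the stack quotient, then show that $\rho_{k,\Lambda}$ is a monomorphism of algebraic stacks, and finally combine this with properness to conclude that it is a closed embedding.

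For the descent step, given $(g,(\det g)^{\chi_1})\in G\subset GL(k)\times \C^*$, I will use the canonical identification $\mathcal{O}_C(-1)^{\oplus k}=\mathcal{O}_C(-1)\otimes \C^k$ to lift $g$ to an automorphism $\tilde g$ of the sheaf $\mathcal{O}_C(-1)^{\oplus k}$, and then exhibit the natural transformation witnessing $G$-invariance of $\rho^{\circ}_{k,\Lambda}$ as the automorphism $f:=\operatorname{id}_T\oplus \tilde g$ of $T\oplus \mathcal{O}_C(-1)^{\oplus k}$. Compatibility with the flag structure is transparent: $f$ intertwines $V_T^\bullet\oplus V_S^\bullet$ with $V_T^\bullet\oplus (g\cdot V_S^\bullet)$. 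Compatibility with the trivialization reduces to the identity $\detL(f)=(\det g)^{-\chi_1}$, which I will verify by the same K-theoretic argument used in Lemma \ref{lemlprops}\ref{lemlprops3}: only the second factor in the definition of $\detL$ contributes, since $\mathcal{O}_C(-1)$ is fiberwise annihilated by $p^*\mathcal{O}_X(M)$ but $\mathcal{O}_C(-1)(-C)\simeq \mathcal{O}_C$ has nontrivial pushforward, and the tensor-product formula for determinants yields precisely the claimed scalar. Orientation compatibility is automatic, since the analogous K-theoretic calculation for $\det S$ shows $\det \tilde g$ acts as the identity on the line bundle $\det S\simeq \mathcal{O}_{\Xhat}(kC)$. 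The cocycle (higher associativity) condition for the natural transformation is immediate from $\operatorname{id}_T\oplus \widetilde{g_1g_2}=(\operatorname{id}_T\oplus\tilde g_1)\circ (\operatorname{id}_T\oplus\tilde g_2)$.

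Next, I will prove that $\rho_{k,\Lambda}$ is a monomorphism, meaning fully faithful on $B$-valued points for every $\C$-scheme $B$. Using the alternative moduli description from Remark \ref{remequiv}, a $B$-valued point of $\mathcal{Z}_{k,\Lambda}$ corresponds to a tuple $(\mathcal{S},W_S^\bullet,\mathcal{T},W_T^\bullet,\operatorname{or},\Phi)$ satisfying the conditions listed there. Given two such tuples $A_1,A_2$ together with an isomorphism $\varphi$ between their images in $\mathcal{W}(B)$, the underlying families $\mathcal{E}_i=\mathcal{S}_i\oplus \mathcal{T}_i$ carry the non-scalar flag-preserving endomorphism given by projection to $\mathcal{S}_i$. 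Applying Lemma \ref{lempolystabfamilies}\ref{lempolystabfam2} to these endomorphisms pulled back through $\varphi$, I conclude that the decomposition into a torsion summand fiberwise isomorphic to $\mathcal{O}_C(-1)^{\oplus k}$ and a $V^\ell$-stable complement of Chern character $\chat-k\chexc$ is uniquely determined on every connected component. Hence $\varphi$ restricts to isomorphisms $\mathcal{S}_1\to \mathcal{S}_2$ and $\mathcal{T}_1\to\mathcal{T}_2$ respecting flags, orientations and trivializations, which is precisely the data of an isomorphism in $\mathcal{Z}_{k,\Lambda}(B)$ (well-defined up to the rigidifying $G$-action), and this lift is unique by the freeness of the $G$-torsor on the chart $\operatorname{Fl}_{\globm{\mathcal{O}_C(-1)^{\oplus k}}}\times \mathcal{W}^{\ell}_{\chat-k\chexc}$.

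Finally, properness of $\rho_{k,\Lambda}$ is automatic: $\mathcal{Z}_{k,\Lambda}$ is proper by Lemma \ref{lemetalemap}, $\mathcal{W}$ is separated by Proposition \ref{propstackwprops}\ref{propstackw1}, and a morphism from a proper finite-type stack to a separated finite-type stack over $\Spec\C$ is always proper. A monomorphism of algebraic stacks is representable (since its diagonal is an isomorphism), and a proper representable monomorphism is a closed immersion. This yields the desired closed embedding $\rho_{k,\Lambda}:\mathcal{Z}_{k,\Lambda}\hookrightarrow \mathcal{W}$. The main technical obstacle is the weight computation underlying the descent; once that is in hand, the monomorphism property is essentially a family-theoretic reinterpretation of Lemma \ref{lempolystabfamilies}, and the remainder is formal.
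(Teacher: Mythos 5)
The overall architecture — descent, monomorphism, conclude via proper $+$ monomorphism $=$ closed embedding — matches the paper, and the descent computation (weight of $\detL$ on $\mathcal{O}_C(-1)^{\oplus k}$ via the second factor, triviality of $\det$ on the rank-zero summand) is the same calculation the paper does, if stated a bit more loosely. For fullness of $\rho_{k,\Lambda}$ you invoke Lemma \ref{lempolystabfamilies} in place of the paper's more elementary route (uniqueness of the $S/T$-decomposition at geometric points, then flatness), which is a mild detour but works.

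However, the faithfulness step has a genuine gap. You assert that the lift to $\mathcal{Z}_{k,\Lambda}(B)$ is unique ``by the freeness of the $G$-torsor on the chart,'' but the $G$-action on $\operatorname{Fl}^{\circ}_{\globm{\mathcal{O}_C(-1)^{\oplus k}}}\times \mathcal{W}^{\ell}_{\chat-k\chexc}$ is \emph{not} free: a point with flag $W_S^{\bullet}$ whose one-dimensional piece spans a rank-$k$ tensor has stabilizer $\{(\lambda I,1):\lambda^{k\chi_1}=1\}\simeq\mu_{k\chi_1}$, which is exactly the group $K$ appearing in the proof of Lemma \ref{lemetalemap}. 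So $\mathcal{Z}_{k,\Lambda}$ has nontrivial automorphism groups, and faithfulness of $\rho_{k,\Lambda}$ is not formal — it requires knowing that the explicit assignment $(\gamma,f)\mapsto f\oplus g$ on morphisms is injective, i.e.\ that the stabilizer $\mu_{k\chi_1}$ injects into $\operatorname{Aut}_{\mathcal{W}}(\rho(z))$. This is exactly what the paper checks in the ``Representability'' paragraph of its proof, and it is a real verification: for $\lambda\neq 1$ the image $\operatorname{id}_T\oplus\lambda I_S$ is a nontrivial automorphism of the polystable object in $\mathcal{W}$, which is why the map is a monomorphism despite the non-free action. Without this check (or something equivalent), your proof of the monomorphism property is incomplete.
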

		\begin{proof}~
			\paragraph{Existence of $\rho_{k,\Lambda}$.} 
			Consider the prestack \[\mathcal{Z}_{k,\Lambda}^{pre}:=\left\{\left(\operatorname{Fl}_{\globm{\mathcal{O}_C(-1)^{\oplus k}}}\times \mathcal{W}^{\ell}_{\chat-k\chexc}\right)/G\right\}\] associated to the $G$-action. It has the same objects as  $\operatorname{Fl}_{\globm{\mathcal{O}_C(-1)^{\oplus k}}}\times \mathcal{W}^{\ell}_{\chat-k\chexc}$ and for a scheme $B$ and objects $y,z\in \mathcal{Z}_{k,\Lambda}^{pre}(B)$ a morphism $y\to z$ in  $\mathcal{Z}_{k,\Lambda}^{pre}(B)$ is a pair $(\gamma,f)$, where $\gamma\in G(B)$, and $f:\gamma z\to z'$ is a morphism in $\operatorname{Fl}_{\globm{\mathcal{O}_C(-1)^{\oplus k}}}\times \mathcal{W}^{\ell}_{\chat-k\chexc}$. 
			To show that $\rho^{\circ}_{k,\Lambda}$ descends to a map $\rho_{k,\Lambda}$, it is enough to show that it induces a morphism $\mathcal{Z}_{k,\Lambda}^{pre}\to \mathcal{W}$. 
			In order to see this, let $B$ be a scheme, let $\gamma\in G(B)$ and let $z \in \mathcal{Z}_{k,\Lambda}^{pre}(B)$. We consider the morphism $(\gamma,\operatorname{id}_{\gamma z}):z\to gz$ of $\mathcal{Z}_{k,\Lambda}^{pre}(B)$. Say $\gamma=(g,(\det g)^{\chi_1})$ and $z=(W_S^{\bullet}, (T,W_T^{\bullet},\varphi_T))$, so that $\gamma z= (gW_S^{\bullet}, (T,W_T^{\bullet},(\det g)^{\chi_1}\varphi_T))$. To the morphism $(\gamma,\operatorname{id}_{\gamma z})$ we associate the morphism in $\mathcal{W}$ given by $\operatorname{id}_T\oplus g:T\oplus \mathcal{O}_C(-1)^{\oplus k}\to T\oplus \mathcal{O}_C(-1)^{\oplus k}$. More precisely, this gives a morphism 
			\begin{equation*}
				\begin{tikzcd}	
					\rho_{k,\lambda}^\circ(z) \ar[r,equals]&(T\oplus \mathcal{O}_C(-1)^{\oplus k},V_T\oplus V_S, \varphi_T\otimes \varphi_{e}^k) \ar[d,"\operatorname{id}_T \oplus g"] \\ \rho_{k,\Lambda}^{\circ}(\gamma z)\ar[r,equals]&(T\oplus \mathcal{O}_C(-1)^{\oplus k},V_T\oplus gV_S,(\det g)^{\chi_1} \varphi_T\otimes \varphi_{e}^k).
				\end{tikzcd}
			\end{equation*}
			This rule extends $\rho^{\circ}_{k,\Lambda}$ in a natural way to give a morphism $\mathcal{Z}^{pre}_{k,\Lambda}\to \mathcal{W}$. Taking the stackification gives the morphism $\rho_{k,\Lambda}$.\\

			\paragraph{Representability.}
			For any $\C$-valued point $z\in \mathcal{Z}_{k,\Lambda}(\C)$, the automorphisms of $z$ are pairs $(\gamma, f)$, where $\gamma\in G(\C)$ and $f:\gamma z\to z$ is an isomorphism in $\operatorname{Fl}_{\globm{\mathcal{O}_C(-1)^{\oplus k}}}\times \mathcal{W}^{\ell}_{\chat-k\chexc}$. Concretely, if $z=(W_S^{\bullet}, (T,W_T^{\bullet},\varphi_T))$, and $\gamma=(g,(\det g)^{\chi_1})$, then $g\in GL(k)$ must be scalar, and $f$ is given by a morphism $f:T\to T$. We have $\rho_{k,\Lambda}(\gamma, f)=f\oplus g$ as an isomorphism of $\rho_{k,Z}(z)$. In particular, only the identity morphism of $z$ gets sent to the identity morphism of $\rho_{k,\Lambda}(z)$. This shows that $\rho_{k,\Lambda}$ is representable.	\\
			\paragraph{$\rho_{k,\Lambda}$ is a monomorphism.}
			Suppose we have two objects \linebreak $z_i=(\mathcal{W}_{S_i},(\mathcal{T}_i,\mathcal{W}_{T_i},\Phi_{T_i} ))$ for $i=1,2$ of $\mathcal{Z}_{k,\Lambda}$ over some base $b:B\to \Spec \C$, which we can assume to be locally Noetherian, such that we have an isomorphism $\rho_{k,\Lambda}(z_{1})\xrightarrow{\sim}\rho_{k,\Lambda}(z_{2})$, i.e. an isomorphism $f: \mathcal{T}_1\oplus b^*\mathcal{O}_C(1)^{\oplus k}\xrightarrow{\sim}\mathcal{T}_2\oplus b^*\mathcal{O}_C(-1)^{\oplus k}$ compatible with the additional data. By composing $f$ with the respective inclusions into and projections out of the direct sum, we obtain maps $g:b^*\mathcal{O}_C(-1)^{\oplus k}\to b^*\mathcal{O}_C(-1)^{\oplus k}$ and $h:\mathcal{T}_1\to \mathcal{T}_2$, which are compatible with the flags $\mathcal{W}_{S_i}^{\bullet}$ and $\mathcal{W}_{T_i}^{\bullet}$ respectively. Since over a geometric point the splitting of a polystable sheaf $E$ into its minimal $S$-destabilizing and $T$-destabilizing subobjects is unique, it follows that $g$ and $h$ are isomorphisms over geometric points of $B$, hence they are isomorphisms by flatness. Since the automorphism group of $\mathcal{O}_C(-1)$ is just $\C^*$, $g$ must correspond to a $B$-valued point of $GL(k)$. Moreover, it is easy to see that $h$ must be compatible with orientations and that we must have $\Phi_{T_1} (\det g)^{\chi_1}=\Phi_{T_2}\circ \detL(h)$. Let $\gamma:=(g,(\det g)^{\chi_1})$.  It follows that $(\gamma,h)$ induces an isomorphism $z_1\to z_2$ in $\mathcal{Z}_{k,\Lambda}(B)$.  Since we have already established representability, this shows that $\rho_{k,\Lambda}$ is a monomorphism. 
			\paragraph{Conclusion.}  Since $\rho_{k,\Lambda}$ is a morphism from a proper to a separated $\C$-stack, it is a proper morphism.
			A closed immersion of stacks is the same as a proper monomorphism. Therefore we are done.  
		\end{proof}

		\begin{proposition}
			The natural map  
			\[\mathcal{N}^{\ell}\sqcup \mathcal{N}^{\ell+1}\sqcup\bigsqcup_{k,\Lambda}\mathcal{Z}_{k,\Lambda}\to \mathcal{Z} \]
			is an equivalence onto the fixed substack $\mathcal{Z}^{\C^*}$.                     
		\end{proposition}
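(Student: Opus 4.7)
By Proposition \ref{propfixeasy}, the inclusions of $\mathcal{N}^\ell$ and $\mathcal{N}^{\ell+1}$ already realise two components of $\mathcal{Z}^{\C^*}$, so the plan reduces to showing that the map
\[\bigsqcup_{k,\Lambda}\mathcal{Z}_{k,\Lambda}\longrightarrow\mathcal{W}^{\C^*}\]
induced by the closed immersions $\rho_{k,\Lambda}$ (which factor through $\mathcal{W}^{\C^*}$ by Remark \ref{remequiv}) is an equivalence of stacks. Since each $\rho_{k,\Lambda}$ is already known to be a closed immersion, it suffices to establish that these closed immersions have pairwise disjoint images, jointly cover $\mathcal{W}^{\C^*}$, and do so scheme-theoretically.

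First I would verify the statement on $\C$-points. By Proposition \ref{propstackwprops} \ref{propstackw2}, an object of $\mathcal{W}^{\C^*}(\C)$ is a properly polystable pair $(E,V^{\bullet})$ together with a trivialisation $\varphi$ of $\detL(E)$. The set-theoretic discussion preceding the statement assigns to such an object the intrinsic invariants $k$ with $S\simeq\mathcal{O}_C(-1)^{\oplus k}$ and $\Lambda=\{j\mid V_S^j\ne V_S^{j+1}\}$; the remaining data $(W_S^\bullet,(T,W_T^\bullet,\varphi))$ is precisely that of a $\C$-point of $\mathcal{Z}_{k,\Lambda}$ by the moduli description in Remark \ref{remequiv}. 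This shows that the combined map is bijective on isomorphism classes of $\C$-points and that the images of the various $\mathcal{Z}_{k,\Lambda}$ are pairwise disjoint, distinguished by the locally constant invariants $(k,\Lambda)$.

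To upgrade to a scheme-theoretic equivalence I would work in a $\C^*$-equivariant étale neighborhood $U\to\mathcal{W}$ of a fixed $\C$-point $w$ with invariants $(k,\Lambda)$. Applying Lemma \ref{lempolystabfamilies} to the universal family $\mathcal{E}_U$ together with a flag-preserving endomorphism obtained from the $\C^*$-action at $w$ yields, after shrinking $U$, a flag-respecting decomposition $\mathcal{E}_U\simeq\mathcal{S}_U\oplus\mathcal{T}_U$ in which $\mathcal{S}_U$ is étale-locally constant of type $\mathcal{O}_C(-1)^{\oplus k}$ and $(\mathcal{T}_U,\mathcal{V}_T^\bullet)$ is a $V^\ell$-stable family of Chern character $\chat-k\chexc$. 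By the moduli-theoretic characterization in Remark \ref{remequiv} this produces a canonical morphism $U\to\mathcal{Z}_{k,\Lambda}$; restricted to $U\times_{\mathcal{W}}\mathcal{W}^{\C^*}$ it is inverse to the base change of $\rho_{k,\Lambda}$, exhibiting the image of $\rho_{k,\Lambda}$ as an open-and-closed substack of $\mathcal{W}^{\C^*}$.

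The main obstacle will be the scheme-theoretic part of the preceding paragraph: one must show that the étale-local splitting furnished by Lemma \ref{lempolystabfamilies} is compatible with the full (possibly non-reduced) infinitesimal structure of the $\C^*$-fixed subscheme, so that the local inverse truly inverts $\rho_{k,\Lambda}$ as a morphism of stacks. The key observation making this feasible is the weight decomposition of the tangent space at $w$: the $\C^*$-action on $\operatorname{Ext}^1_{flag}(E,E)$ has weight-zero part $\operatorname{Ext}^1_{flag}(S,S)\oplus\operatorname{Ext}^1_{flag}(T,T)$ — exactly the tangent directions parametrized by $\mathcal{Z}_{k,\Lambda}$ via Lemma \ref{lemetalemap} — while $\operatorname{Ext}^1(S,T)$ and $\operatorname{Ext}^1(T,S)$ carry non-zero weight $\pm 1/k\chi_1$ (cf.\ Remark \ref{remequiv}), and the analogous weight decomposition controls the higher infinitesimal neighborhoods. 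Once this étale-local identification is set up cleanly, the combination with the disjointness and $\C$-point bijectivity from the previous steps completes the proof.
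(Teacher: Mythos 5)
Your overall strategy matches the paper: cite Proposition \ref{propfixeasy} for the components $\mathcal{N}^{\ell}, \mathcal{N}^{\ell+1}$, reduce to showing $\mathcal{W}^{\C^*}=\bigsqcup_{k,\Lambda}\mathcal{Z}_{k,\Lambda}$, and invoke Lemma \ref{lempolystabfamilies} on a $\C^*$-equivariant \'etale chart. However, there is a genuine error in how you invoke the Lemma. You write that you apply Lemma \ref{lempolystabfamilies} ``to the universal family $\mathcal{E}_U$ together with a flag-preserving endomorphism obtained from the $\C^*$-action at $w$'' to get a decomposition $\mathcal{E}_U\simeq\mathcal{S}_U\oplus\mathcal{T}_U$ over all of $U$. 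This cannot be right: $U$ contains non-fixed points, where $\mathcal{E}$ is not properly polystable and no such splitting exists, and indeed $\mathcal{Z}_{k,\Lambda}$ is a \emph{closed}, not open, substack of $\mathcal{W}$. The hypothesis of the Lemma is an endomorphism of the family over the whole base that is non-scalar at one point; the $\C^*$-equivariance structure on the chart $W=\Spec A$ does not give you such an endomorphism over $U$ --- it gives a morphism $\mu_s^*\mathcal{E}\to\mathcal{E}$ covering the translation by $s$, not an endomorphism.

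The missing observation that closes the gap (and is the paper's key step) is this: let $Y\subset W$ be a closed subscheme factoring through $W^{\C^*}$. Over $Y$ the action map factors through the projection, so the equivariance isomorphism $f:\mu^*(\mathcal{E},\Phi)\to(\pi_W^*\mathcal{E},s^j\pi_W^*\Phi)$ restricts, for each fixed $s\in\C^*$, to an honest flag-preserving endomorphism $f_s:\mathcal{E}_Y\to\mathcal{E}_Y$ carrying $\Phi_Y$ to $s^j\Phi_Y$. Choosing $s$ with $s^j\neq1$ makes $f_s$ non-scalar at every closed point of $Y$. Now Lemma \ref{lempolystabfamilies} applies directly over the full, possibly non-reduced scheme $Y$, producing the splitting and hence a local factorization of $Y\to\mathcal{W}$ through some $\mathcal{Z}_{k,\Lambda}$; together with the reverse containment from Remark \ref{remequiv} and the closed immersion property of $\rho_{k,\Lambda}$, this finishes the proof. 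This renders moot your proposed ``main obstacle'' --- the worry about the non-reduced infinitesimal structure and the suggested detour through the weight decomposition of $\operatorname{Ext}^1$ --- because the Lemma's hypotheses are already verified over the full closed subscheme $Y$ with no restriction to the reduced locus and no separate infinitesimal analysis needed.
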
 
		
		\begin{proof}
			We already proved part of this in Proposition \ref{propfixeasy}.
			It remains to show that the $\C^*$-fixed stack of $\mathcal{W}$ is the union of the images of the $\rho_{k,\Lambda}$. 
			Let $W:=\Spec A\to \mathcal{W}$ be a $\C^*$-equivariant \'etale neighborhood with respect to some homomorphism $\lambda:s\mapsto s^j$. In particular, $W$ is an affine scheme with $\C^*$-action $\mu:\C^*\times W\to W$, and has a family $(\mathcal{E},\Phi)$ together with an isomorphism $f:\mu^*(\mathcal{E},\Phi)\to (\pi_W^*\mathcal{E},s^j\pi_W^*\Phi)$, satisfying an additional compatibility condition. Now suppose $Y\subset W$ is a closed subscheme that factors through $W^{\C^*}$. Then, for every $s\in \Gamma(Y,\mathcal{O}_Y)^{\times}$, we have isomorphisms $f_s:\mathcal{E}_Y\to \mathcal{E}_Y$, such that $f_s$ maps $\Phi_Y$ to $s^j \Phi_Y$ and the compatibility translates to  $f_s\circ f_t=f_{st}$.  Pick any $s\in \C^*$ for which $s^j\neq 1$ and let $f:=f_s$. Since it induces a nontrivial action on $\Phi$, we have that $f$ is not a scalar multiple of the identity over any closed point $y\in Y$. By Lemma \ref{lempolystabfamilies}, it follows that over each connected component of $Y$, we have a decomposition $\mathcal{E}\simeq \mathcal{T}\oplus \mathcal{S}$ and a compatible decomposition of flags. Let $Y_0$ be any such component. Then on each small enough open  $U\subset Y_0$, we may choose a trivialization $\mathcal{S}\simeq \pi_{X}^*\mathcal{O}_C(-1)^{\oplus k}$ for some $k$. Such an isomorphism induces a lift of the map $U\to \mathcal{W}$ to a map $U\to  \operatorname{Fl}^{\circ}_{\globm{\mathcal{O}_C(-1)^{\oplus k}}}\times \mathcal{W}_{\chat -k\chexc}^{\ell}$ along $\rho_{k,\Lambda}$ for some $\Lambda$. This shows that locally $Y_0\to \mathcal{W}$ factors through $\mathcal{Z}_{k,\Lambda}$, therefore it factors through $\mathcal{Z}_{k,\Lambda}$. Since $Y_0$ was an arbitrary component of $Y$, we conclude that $Y$ factors trough the union of all $\mathcal{Z}_{k,\Lambda}$ for varying $k$ and $\Lambda$. 
		\end{proof}

		\section{Perfect obstruction theories}\label{sec:perfob}
		This section is dedicated to constructing perfect obstruction theories on the various moduli spaces considered in Sections \ref{sec:perverse} and \ref{sec:master}. As the fundamental technical tool to construct obstruction theories on moduli spaces of coherent sheaves we will use the Atiyah class. The necessary results are summarized in Appendix \ref{app:atiyah}.
		For an algebraic stack $U$, we denote by $D(U):=\dbcoh(U)$ the bounded derived category of complexes with coherent cohomology. We denote by $\mathcal{H}^i$ the functor taking the $i$-th cohomology sheaf of a complex. For the smooth surface $\mathcal{X}$, we let $\omega_{X}$ denote the dualizing \emph{complex}, i.e. the sheaf $\Omega_{X}^2$ placed in cohomological degree $-2$ and similarly for $X$ replaced with $\Xhat$.

		\subsection{Obstruction theories and virtual classes}\label{subsec:obthy}
		We will use the following definition of obstruction theory for algebraic stacks, which agrees with the usual notion in the case of Deligne--Mumford stacks. By abuse of notation, we frequently denote obstruction theories as $E^{\bullet}$

		\begin{definition}
			Let $f:\mathcal{X}\to \mathcal{Y}$ be a morphism of quasi-separated, locally finite type stacks over $\C$.
			\begin{enumerate}[label=\arabic*)]
				\item A \emph{relative obstruction theory} for $\mathcal{X}$ over $\mathcal{Y}$ is given by an object $E^{\bullet}\in D^{\leq 1}(\mathcal{X})$ together with an homomorphism $\phi:E^{\bullet}\to L_{\mathcal{X}/\mathcal{Y}}$, such that $\mathcal{H}^i{\phi}$ is an isomorphism form $i=0,1$ and surjective for $i=-1$. We also drop the reference to $\phi$ if it is understood from the context. 
				\item An \emph{obstruction theory} is a relative obstruction theory over $\mathcal{Y}=\Spec \C$.
				\item  A (relative) obstruction theory is called \emph{perfect}, if smooth-locally on $\mathcal{X}$ the object $E^{\bullet}$ is isomorphic to a complex of locally free sheaves $E^{-1}\to E^0\to E^1$. 
			\end{enumerate} 
		\end{definition}

		See \cite[\S 2.3.1]{Moch} for a short overview of the cotangent complex on general algebraic stacks and \cite{Olso1} as the basic reference. 
		
		For a (relative) perfect obstruction theory the rank $\operatorname{rk}E^{\bullet}:=\operatorname{rk}(E^0)-\operatorname{rk}(E^1)-\operatorname{rk}(E^2)$ is always a locally constant function on $\mathcal{X}$. We say that \emph{$E^{\bullet}$ has rank $n$}, if the rank function is in fact constant with value $n$.

		\begin{remark}\label{remcheckperfect}
			The perfectness condition is equivalent to $E^{\bullet}$ having tor-amplitude in $[-1,1]$. 
		\end{remark}
		
		If $\mathcal{X}$ is a Deligne--Mumford stack with perfect obstruction theory $(E^{\bullet},\varphi)$ of rank $n$, there is an associated virtual fundamental class $[\mathcal{X}]^{\operatorname{vir}}\in A_{n}(\mathcal{X})$  by the construction of Behrend--Fantecchi \cite[Section 5]{BeFa} (an alternative approach to the virtual fundamental class is due to Li--Tian \cite{LiTi}).  A more recent treatment has been given by Manolache \cite{Mano} in her study of virtual pullbacks, which we will make use of.

		\paragraph{Virtual pullback.}
		Suppose that $F:\mathcal{X}\to \mathcal{Y}$ is a Deligne--Mumford morphism and that we are given a relative perfect obstruction theory $E^{\bullet} \to L_{\mathcal{X}/\mathcal{Y}}$ of rank $n$. This data induces a \emph{virtual pullback} map on Chow groups $F_{E^{\bullet}}^!:A_{*}(\mathcal{Y})\to A_{*+n}(\mathcal{X})$, which has been constructed by Manolache \cite[Construction 3.6]{Mano}.
		This construction generalizes the one by Behrend and Fantechi, due to the following:

		\begin{proposition}
			Suppose that $\mathcal{Y}$ is an equidimensional algebraic stack and that $\mathcal{X}$ is a Deligne--Mumford stack. Then $F_{E^{\bullet}}^![\mathcal{Y}]$ coincides with the virtual fundamental class of $\mathcal{X}$ in the sense of Behrend--Fantechi. 
		\end{proposition}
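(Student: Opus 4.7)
The plan is to unpack both sides of the asserted equality in terms of intrinsic normal cones and intersection products with zero sections of vector bundle stacks, and then invoke the basic compatibility of these constructions. On the Behrend--Fantechi side, one first uses the distinguished triangle
\[F^*L_{\mathcal{Y}} \to L_{\mathcal{X}} \to L_{\mathcal{X}/\mathcal{Y}} \xrightarrow{+1}\]
together with the given relative POT to construct an absolute perfect obstruction theory $\widetilde{E}^\bullet \to L_{\mathcal{X}}$ (taking the cone of the composition $F^*L_{\mathcal{Y}}[-1] \to L_{\mathcal{X}/\mathcal{Y}}[-1] \to E^\bullet$). The class $[\mathcal{X}]^{\vir}$ is then defined as $0^!_{\widetilde{\mathcal{E}}_1}[\mathfrak{C}_\mathcal{X}]$, where $\widetilde{\mathcal{E}}_1 := h^1/h^0((\widetilde{E}^\bullet)^\vee)$ and $\mathfrak{C}_\mathcal{X}$ is the intrinsic normal cone of $\mathcal{X}$.

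On the virtual pullback side, $F^!_{E^\bullet}[\mathcal{Y}]$ is computed by embedding the relative intrinsic normal cone $\mathfrak{C}_{\mathcal{X}/\mathcal{Y}}$ into the vector bundle stack $\mathcal{E}_1 := h^1/h^0((E^\bullet)^\vee)$ via the POT, deforming $[\mathcal{Y}]$ to $\mathfrak{C}_{\mathcal{X}/\mathcal{Y}}$ under specialization, and intersecting with the zero section of $\mathcal{E}_1$. Equidimensionality of $\mathcal{Y}$ ensures that $[\mathcal{Y}] \in A_*(\mathcal{Y})$ is the ordinary fundamental class and that the specialization process is well-defined.

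The bridge between the two constructions is the short exact sequence of abelian cone stacks
\[F^*\mathfrak{C}_{\mathcal{Y}} \to \mathfrak{C}_\mathcal{X} \to \mathfrak{C}_{\mathcal{X}/\mathcal{Y}} \to 0,\]
induced by the distinguished triangle of cotangent complexes above (cf.\ Behrend--Fantechi's discussion of the intrinsic normal cone of a morphism). Combining this exact sequence with the standard commutativity of Gysin pullbacks along zero sections of vector bundle stacks, one obtains a direct chain of equalities identifying $0^!_{\widetilde{\mathcal{E}}_1}[\mathfrak{C}_\mathcal{X}]$ with $0^!_{\mathcal{E}_1}(F^*[\mathcal{Y}] \cdot [\mathfrak{C}_{\mathcal{X}/\mathcal{Y}}])$, the latter being essentially the definition of the virtual pullback.

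The main technical obstacle is that $\mathcal{Y}$ is only assumed to be an algebraic stack, not necessarily Deligne--Mumford, so the intrinsic normal cone $\mathfrak{C}_{\mathcal{Y}}$ must be treated with care and the compatibility of its fundamental class with flat pullback along $F$ must be verified in this generality. This is, however, precisely what Manolache's framework in \cite{Mano} was designed to handle, and our proposition is essentially a restatement of the comparison result proved there; the proof we outline is modeled on hers.
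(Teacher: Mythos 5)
You land on the right conclusion --- the paper's entire proof is the one-line citation to \cite[Corollary 3.12]{Mano}, which your final paragraph also invokes --- but the sketch you present as a model of that argument does not hold up and is not how Manolache actually proceeds.

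The cone ``of the composition $F^*L_{\mathcal{Y}}[-1]\to L_{\mathcal{X}/\mathcal{Y}}[-1]\to E^\bullet$'' has its arrows pointing the wrong way: in the triangle $F^*L_{\mathcal{Y}}\to L_{\mathcal{X}}\to L_{\mathcal{X}/\mathcal{Y}}\xrightarrow{+1}$ the connecting morphism runs $L_{\mathcal{X}/\mathcal{Y}}[-1]\to F^*L_{\mathcal{Y}}$, and the relative obstruction theory is a map $E^\bullet\to L_{\mathcal{X}/\mathcal{Y}}$, so the natural gluing (as in Construction~\ref{constrrelob}) is the cone of a lift $E^\bullet[-1]\to F^*L_{\mathcal{Y}}$, not of the maps you write. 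More seriously, the resulting cone is a \emph{perfect} absolute obstruction theory on $\mathcal{X}$ only if $L_{\mathcal{Y}}$ itself has the required tor-amplitude, which forces $\mathcal{Y}$ smooth; the proposition assumes $\mathcal{Y}$ only equidimensional, so the strategy ``construct an absolute perfect obstruction theory on $\mathcal{X}$ and compare absolute intrinsic normal cones'' is not available in the stated generality. Manolache's comparison is more direct and bypasses all of this: by definition $F^!_{E^\bullet}[\mathcal{Y}]=0^!_{\mathcal{E}_1}\sigma[\mathcal{Y}]$, the specialization $\sigma[\mathcal{Y}]$ equals $[\mathfrak{C}_{\mathcal{X}/\mathcal{Y}}]$ when $\mathcal{Y}$ is pure-dimensional, and $0^!_{\mathcal{E}_1}[\mathfrak{C}_{\mathcal{X}/\mathcal{Y}}]$ is precisely the Behrend--Fantechi class built from the \emph{relative} obstruction theory. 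No exact sequence of intrinsic normal cones enters, and the ``short exact sequence of abelian cone stacks'' $F^*\mathfrak{C}_{\mathcal{Y}}\to\mathfrak{C}_{\mathcal{X}}\to\mathfrak{C}_{\mathcal{X}/\mathcal{Y}}\to 0$ you invoke is not well-formed as written: $\mathfrak{C}_{\mathcal{X}}$ is a cone stack but not abelian, and $F^*\mathfrak{C}_{\mathcal{Y}}$ needs justification when $F$ is not flat. The closing citation rescues the proposal, but the supporting outline would not go through on its own terms.
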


		\begin{proof}
			This is Corollary 3.12 in \cite{Mano}.
		\end{proof}

		\begin{example}[cf. {\cite[Remark 3.10]{Mano}}]\label{exvirtualpullbackflat}
			If $F:\mathcal{X}\to \mathcal{Y}$ is a smooth Deligne--Mumford morphism of relative dimension $n$, we may endow $\mathcal{X}$ with the trivial perfect obstruction theory, given by the identity $\Omega_{F}\to \Omega_F$ in degree zero. The induced virtual pullback $F^!:A_*(\mathcal{Y})\to A^n_{*+n}(\mathcal{X})$ is just the usual flat pullback on the Chow groups. 
		\end{example}

		\paragraph{Virtual pullback and relative obstruction theories.}
		
		Let $F:\mathcal{X}\to \mathcal{Y}$ be  a morphism of algebraic stacks and  $E_1^{\bullet}\to L_{\mathcal{Y}}$ an obstruction theory.  Suppose that we have a commutative diagram in $D(\mathcal{X})$:
		
		\begin{equation}\label{eqrelobthy}
			\begin{tikzcd}
				F^*E_1^{\bullet}\ar[r]\ar[d]& E_2^{\bullet}\ar[r]\ar[d]&E_3^{\bullet}\ar[d]\ar[r,"+1"]& \,\\
				F^*L_{\mathcal{Y}}\ar[r]&L_{\mathcal{X}}\ar[r]& L_{\mathcal{X}/\mathcal{Y}}\ar[r,"+1"] & \, 
			\end{tikzcd}
		\end{equation}
		with rows forming exact triangles. 
		
		\begin{lemma}\label{lemobfromrelob}
			If either of $E_2^{\bullet}$ or $E_3^{\bullet}$ is an obstruction theory, so is the other one. In this case, if moreover $E_1^{\bullet}$ and $E_3^{\bullet}$ are perfect, then so is $E_2^{\bullet}$.
		\end{lemma}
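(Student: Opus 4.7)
The plan is to reduce the statement to a diagram chase on the long exact cohomology sequences of the two exact triangles in \eqref{eqrelobthy}. The key preliminary observation is that $F^{*}\phi_{1}:F^{*}E_{1}^{\bullet}\to F^{*}L_{\mathcal{Y}}$ itself satisfies the cohomological conditions of an obstruction theory. Indeed, the hypotheses that $\mathcal{H}^{i}(\phi_{1})$ is an isomorphism for $i=0,1$ and surjective for $i=-1$, together with $E_{1}^{\bullet}\in D^{\leq 1}$, force $\coneop(\phi_{1})$ to lie in $D^{\leq -2}(\mathcal{Y})$. Since derived pullback preserves the property of belonging to $D^{\leq n}$ (the hypercohomology spectral sequence receives only non-positive Tor contributions), $\coneop(F^{*}\phi_{1})\simeq F^{*}\coneop(\phi_{1})$ also lies in $D^{\leq -2}(\mathcal{X})$, so $\mathcal{H}^{i}(F^{*}\phi_{1})$ is an isomorphism for every $i\geq -1$.

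With this in hand, applying cohomology to the two rows of \eqref{eqrelobthy} produces a commutative ladder of long exact sequences connected by the three vertical maps $F^{*}\phi_{1}$, $\phi_{2}$, $\phi_{3}$. Suppose $\phi_{2}$ is an obstruction theory; to show $\phi_{3}$ is a relative one, I argue degree by degree. At $i=1$, the five-lemma applied to the appropriate five-term portion of the ladder gives that $\mathcal{H}^{1}(\phi_{3})$ is an isomorphism, where the rightmost inputs $\mathcal{H}^{2}(F^{*}E_{1}^{\bullet})$ and $\mathcal{H}^{2}(F^{*}L_{\mathcal{Y}})$ vanish since both sides lie in $D^{\leq 1}$. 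An analogous argument at $i=0$ yields $\mathcal{H}^{0}(\phi_{3})$ an isomorphism. At $i=-1$, a four-lemma type diagram chase, using the surjectivity of $\mathcal{H}^{-1}(\phi_{2})$ together with the isomorphisms $\mathcal{H}^{-1}(F^{*}\phi_{1})$, $\mathcal{H}^{0}(F^{*}\phi_{1})$, and $\mathcal{H}^{0}(\phi_{2})$, yields surjectivity of $\mathcal{H}^{-1}(\phi_{3})$. The reverse implication, assuming $\phi_{3}$ is a relative obstruction theory and deducing that $\phi_{2}$ is one, proceeds symmetrically from the same ladder.

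For the perfectness claim, I would use Remark \ref{remcheckperfect} to restate perfectness as having tor-amplitude in $[-1,1]$. Derived pullback preserves this bound, so $F^{*}E_{1}^{\bullet}$ has tor-amplitude in $[-1,1]$ whenever $E_{1}^{\bullet}$ does. In any exact triangle $A\to B\to C\to A[1]$, the tor-amplitude of the middle term is contained in the union of those of the outer terms, so $E_{2}^{\bullet}$ also has tor-amplitude in $[-1,1]$ and is therefore perfect. The only genuinely subtle point in the argument is the cohomological positioning of $\coneop(\phi_{1})$ and its preservation under derived pullback; once that is set up, the rest reduces to routine homological algebra.
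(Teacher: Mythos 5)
Your proof is correct and matches the paper's approach: five-lemma variants applied to the long exact cohomology ladders for the first assertion, and the tor-amplitude characterization of perfectness (Remark \ref{remcheckperfect}) for the second. One small inaccuracy: from $\coneop(F^*\phi_1)\in D^{\leq -2}$ you can only conclude that $\mathcal{H}^{-1}(F^*\phi_1)$ is \emph{surjective}, not an isomorphism, since $\mathcal{H}^{-2}$ of the cone is uncontrolled; this is harmless, as the four-lemma chase at degree $-1$ needs only the surjectivity of $\mathcal{H}^{-1}(\phi_2)$ and of $\mathcal{H}^0(F^*\phi_1)$ together with the injectivity of $\mathcal{H}^0(\phi_2)$, and does not use $\mathcal{H}^{-1}(F^*\phi_1)$ at all.
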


		\begin{proof}
			The first statement follows from the usual variants of the five-lemma by taking the long exact sequences in cohomology associated to the triangles. The second statement also follows from considering long exact sequences in view of \autoref{remcheckperfect}.
		\end{proof}
		
		We will frequently use Lemma \ref{lemobfromrelob} in the following situation:
		Suppose we are given an obstruction theory $E_1^{\bullet}$ on $\mathcal{Y}$ and a relative obstruction theory $E_3^{\bullet}$ for $\mathcal{X}$ over $\mathcal{Y}$. We say that the obstruction theories are \emph{compatible} if there exists a morphism $\alpha:E_3^{\bullet}[-1]\to F^*E_1^{\bullet}$ in $D(\mathcal{X})$ making the following diagram commute
		\begin{equation}\label{eqrelob}
			\begin{tikzcd}
				E_3^{\bullet}[-1]\ar[r,"\alpha"]\ar[d] & F^*E_1^{\bullet}\ar[d] \\
				L_{\mathcal{X}/\mathcal{Y}}\ar[r]& F^*L_{\mathcal{Y}}.
			\end{tikzcd}
		\end{equation}
		\begin{construction}\label{constrrelob}
			In the situation, of diagram \eqref{eqrelob}, let $E_2^{\bullet}$ be a cone of $-\alpha$, by which we mean an object together with morphisms fitting into an exact triangle 
			\[E_3^{\bullet}[-1]\xrightarrow{-\alpha}F^*E_1^{\bullet}\to E_2^{\bullet}\xrightarrow{+1}.\] By the axioms of triangulated categories, we can find a morphism $E_2^{\bullet}\to L_{\mathcal{X}}$ such that we are in situation of \eqref{eqrelobthy}, where the connecting homomorphism is given by $\alpha[1]$. By Lemma \ref{lemobfromrelob}, we find that $E_2^{\bullet}$ is an obstruction theory and that it is perfect if $E_1^{\bullet}$ and $E_3^{\bullet}$ are perfect. 
		\end{construction}
		
		Note, that while $E_2^{\bullet}$ in Construction \ref{constrrelob} depends up to isomorphism only on $\alpha$, the map $E_2^{\bullet}\to L_{\mathcal{X}}$ is not uniquely determined. In fact, for a fixed choice of $E_2^{\bullet}$, suppose $\phi_0,\phi_1:E_2^{\bullet}\to L_{\mathcal{X}}$ are maps making \eqref{eqrelobthy} commute. Then for any $t\in \C$, the map $\phi_t:=(1-t)\phi_0+t\phi_1$ also makes $\eqref{eqrelobthy}$ commute. In fact, it is straightforward to construct an obstruction theory on $\mathcal{X}\times \aone$ whose restriction to $t\in \aone(\C)$ is $\phi_t$. Using this, one can show that if $\mathcal{X}$ is a Deligne--Mumford stack, then the virtual class defined by $E_2^{\bullet}$ is independent of the choice of map in Construction \ref{constrrelob}. 
		\begin{remark}
			Let us say, that two obstruction theories $\phi_0,\phi_1:E^{\bullet}\to L_{\mathcal{X}}$ are \emph{homotopic}, if $(1-t)\phi_0+t\phi_1$ is again an obstruction theory for all $t\in \C^*$. Then the construction of the virtual fundamental class depends on the obstruction theory only up to homotopy. Moreover, we will use Construction \ref{constrrelob} in situations where $E_1^{\bullet}\to L_{\mathcal{Y}}$ may only be defined up to homotopy. Then the obstruction theory $E_2^{\bullet}\to L_{\mathcal{X}}$ is still defined up to homotopy, and we still get a well-defined virtual fundamental class.
		\end{remark} 
		
		In the case of Deligne--Mumford stacks, we have the following result relating Construction \ref{constrrelob} to virtual pullbacks:
		\begin{proposition}\label{prop:virpullfunc}
			Suppose that we have a diagram \eqref{eqrelobthy} with all vertical arrows perfect obstruction theories. If $\mathcal{X}$ and $\mathcal{Y}$ are both Deligne--Mumford stacks, then we have  $[\mathcal{X}]^{\operatorname{vir}}=F_{E_3^{\bullet}}^! [\mathcal{Y}]^{\operatorname{vir}}$. In particular, $[\mathcal{X}]^{\vir}$ depends only on $E_1^{\bullet}\to L_{\mathcal{Y}}$ and $E_3^{\bullet}\to L_{\mathcal{X}/\mathcal{Y}}$.   
		\end{proposition}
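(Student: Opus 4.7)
My plan is to deduce this from Manolache's functoriality theorem for virtual pullbacks, applied to the composition $\mathcal{X} \xrightarrow{F} \mathcal{Y} \xrightarrow{p} \Spec \C$. The first step is to observe that the hypothesis \eqref{eqrelobthy} is exactly Manolache's notion of a compatible triple of perfect obstruction theories for the composition $p \circ F$: the upper row of \eqref{eqrelobthy} is a distinguished triangle with third vertex $E_3^\bullet$, and the vertical maps realize it as a morphism of triangles into the cotangent complex triangle for $F$. Since $p$ is the structure map, the obstruction theory $E_1^\bullet \to L_{\mathcal{Y}}$ is equivalently viewed as a perfect relative obstruction theory for $p$, and similarly $E_2^\bullet \to L_{\mathcal{X}}$ for $p \circ F$.

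Next I would invoke \cite[Theorem 4.8]{Mano}, which under exactly this compatibility assumption yields the functoriality identity
\[ (p\circ F)^!_{E_2^\bullet} \;=\; F^!_{E_3^\bullet} \circ p^!_{E_1^\bullet} \]
as operators on Chow groups. Applying both sides to $[\Spec \C] \in A_0(\Spec \C)$ and using the characterization of the Behrend--Fantechi virtual class as the virtual pullback of a point along the structure map (which is valid because $\mathcal{X}$ and $\mathcal{Y}$ are both Deligne--Mumford), one gets
\[ [\mathcal{X}]^{\vir} \;=\; (p \circ F)^!_{E_2^\bullet}[\Spec \C] \;=\; F^!_{E_3^\bullet}\bigl(p^!_{E_1^\bullet}[\Spec \C]\bigr) \;=\; F^!_{E_3^\bullet}[\mathcal{Y}]^{\vir}, \]
which is the desired identity. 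The ``in particular'' clause follows immediately: the right-hand side depends only on the data of $E_1^\bullet \to L_{\mathcal{Y}}$, which determines $[\mathcal{Y}]^{\vir}$, and of $E_3^\bullet \to L_{\mathcal{X}/\mathcal{Y}}$, which determines the virtual pullback operator $F^!_{E_3^\bullet}$ by construction.

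The main substantive point is therefore the verification that the hypotheses of \cite[Theorem 4.8]{Mano} are met. This is essentially formal once one matches definitions, but two small checks are worth doing carefully: that the $p \circ F$-perfect obstruction theory used by Manolache coincides up to the notion of homotopy discussed after Construction \ref{constrrelob} with the absolute obstruction theory of $\mathcal{X}$ associated to $E_2^\bullet \to L_{\mathcal{X}}$, and that the choice of map $E_2^\bullet \to L_{\mathcal{X}}$ in Construction \ref{constrrelob}, though not unique, produces the same virtual class (which is the content of the homotopy invariance noted just before the proposition). I do not expect any genuine obstacle here; the result is really a repackaging of Manolache's theorem in the language of compatible pairs of absolute and relative obstruction theories as we use them throughout this section.
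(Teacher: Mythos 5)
Your proposal is correct and follows the same route as the paper, which simply cites Manolache's functoriality theorem \cite[Thm.\ 4.8]{Mano}; you merely spell out the reduction (viewing $E_1^\bullet$ and $E_2^\bullet$ as relative obstruction theories over $\Spec\C$ and applying the operator identity to the class of a point) that the paper leaves implicit.
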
 
		\begin{proof}
			This is a special case of the functoriality property for virtual pullbacks \cite[Thm. 4.8]{Mano}. 
		\end{proof}
		
		Given a $\C^*$-equivariant obstruction theory, we obtain an obstruction theory on the stack quotient by the following
		\begin{construction}\label{constcstarobstr}
			Suppose that $\mathcal{X}$ has a $\C^*$-action and that $\mathcal{Y}$ is the quotient of this action. Suppose moreover, that we have a $\C^*$-equivariant obstruction theory $E_2^{\bullet}\to L_{\mathcal{X}}$ on $\mathcal{X}$. We then have a natural $\C^*$-equivariant isomorphism $\Omega_{\mathcal{X}/\mathcal{Y}}\simeq \mathcal{O}_{X}$, and the following commutative diagram in the equivariant derived category of $\mathcal{X}$:
			\begin{equation*}
				\begin{tikzcd}
					\,&E_2\ar[r]\ar[d]& \mathcal{O}_{\mathcal{X}}\ar[d,"\sim"] \\
					F^*L_{\mathcal{Y}}\ar[r]&L_{\mathcal{X}}\ar[r] & \Omega_{\mathcal{X}/\mathcal{Y}}.
				\end{tikzcd}
			\end{equation*} 
			
			Using the axioms of triangulated categories, we can find an equivariant object $E_{1,\mathcal{X}}^{\bullet}$ that fits into a distinguished triangle  $E_{1,\mathcal{X}}^{\bullet}\to E_2^{\bullet}\to \mathcal{O}_{\mathcal{X}}\xrightarrow{+1}$, and obtain an equivariant morphism $E_{1,\mathcal{X}}^{\bullet}\to F^*L_{\mathcal{Y}}$ which makes the diagram a morphism of exact triangles. The morphism $E_{1,\mathcal{X}}^{\bullet}\to F^*L_{\mathcal{Y}}$ in the equivariant category of $\mathcal{X}$ induces a morphism $E_1^{\bullet}\to L_{\mathcal{Y}}$ in $D(\mathcal{Y})$.
			
		\end{construction}
		Similarly to Lemma \ref{lemobfromrelob}, one shows
		\begin{lemma} \label{lemcstarobstr}
			The morphism $E_1^{\bullet}\to L_{\mathcal{Y}}$ is an obstruction theory on $\mathcal{Y}$. It is perfect if and only if the obstruction theory $E_2^{\bullet}$ is perfect. 
		\end{lemma}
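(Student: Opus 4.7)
The plan is to combine the standard long exact sequence argument for obstruction theories with faithfully flat descent along the $\C^*$-torsor $F: \mathcal{X} \to \mathcal{Y}$.

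First, I would record the two basic facts about $F$ that will be used throughout. Since $F$ is a $\C^*$-torsor, it is smooth, surjective, of relative dimension one, and faithfully flat. The equivalence between $\C^*$-equivariant quasi-coherent sheaves on $\mathcal{X}$ and quasi-coherent sheaves on $\mathcal{Y}$ extends to an equivalence of bounded derived categories, under which $F^*$ is identified with the forgetful functor. In particular, the $\C^*$-equivariant object $E_{1,\mathcal{X}}^\bullet$ of Construction \ref{constcstarobstr} is canonically of the form $F^* E_1^\bullet$ for a unique $E_1^\bullet \in D(\mathcal{Y})$, and the equivariant morphism $E_{1,\mathcal{X}}^\bullet \to F^*L_\mathcal{Y}$ likewise descends to a morphism $\phi_1: E_1^\bullet \to L_\mathcal{Y}$. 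Moreover, since $F$ is faithfully flat, $\mathcal{H}^i(\phi_1)$ is an isomorphism (resp.\ surjection) if and only if $\mathcal{H}^i(F^*\phi_1) = \mathcal{H}^i(E_{1,\mathcal{X}}^\bullet) \to \mathcal{H}^i(F^*L_\mathcal{Y})$ is. Hence it suffices to check the obstruction theory conditions after pulling back to $\mathcal{X}$.

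Next, I would apply the long exact sequence of cohomology sheaves to the two compatible triangles
\[ E_{1,\mathcal{X}}^\bullet \to E_2^\bullet \to \mathcal{O}_\mathcal{X} \xrightarrow{+1}, \qquad F^*L_\mathcal{Y} \to L_\mathcal{X} \to \Omega_{\mathcal{X}/\mathcal{Y}} \xrightarrow{+1}, \]
recalling that $\Omega_{\mathcal{X}/\mathcal{Y}} \simeq \mathcal{O}_\mathcal{X}$ since $F$ is smooth of relative dimension one, and that the rightmost vertical morphism is an isomorphism by construction. Because $\mathcal{O}_\mathcal{X}$ is concentrated in degree zero, the long exact sequences immediately give $\mathcal{H}^{-1}(E_{1,\mathcal{X}}^\bullet) \xrightarrow{\sim} \mathcal{H}^{-1}(E_2^\bullet)$ and $\mathcal{H}^{-1}(F^*L_\mathcal{Y}) \xrightarrow{\sim} \mathcal{H}^{-1}(L_\mathcal{X})$, and a four-term exact sequence relating $\mathcal{H}^0$ and $\mathcal{H}^1$. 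A direct diagram chase (or the five-lemma) using that $\mathcal{H}^0(E_2^\bullet) \to \mathcal{H}^0(L_\mathcal{X})$ and $\mathcal{H}^1(E_2^\bullet) \to \mathcal{H}^1(L_\mathcal{X})$ are isomorphisms, $\mathcal{H}^{-1}(E_2^\bullet) \to \mathcal{H}^{-1}(L_\mathcal{X})$ is surjective, and $\mathcal{H}^0(\mathcal{O}_\mathcal{X}) \to \mathcal{H}^0(\Omega_{\mathcal{X}/\mathcal{Y}})$ is an isomorphism, then yields the analogous assertions for $E_{1,\mathcal{X}}^\bullet \to F^*L_\mathcal{Y}$, completing the proof that $\phi_1$ is an obstruction theory. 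The fact that $E_{1,\mathcal{X}}^\bullet \in D^{\leq 1}$ drops out of the same triangle.

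Finally, for the perfectness claim, I would invoke Remark \ref{remcheckperfect} to reduce perfectness to tor-amplitude in $[-1,1]$. This property is stable under passage to cones and fibers when it holds for two of three terms of a distinguished triangle. Since $\mathcal{O}_\mathcal{X}$ has tor-amplitude $[0,0]$, the triangle $E_{1,\mathcal{X}}^\bullet \to E_2^\bullet \to \mathcal{O}_\mathcal{X} \xrightarrow{+1}$ shows that $E_{1,\mathcal{X}}^\bullet$ has tor-amplitude in $[-1,1]$ iff $E_2^\bullet$ does. Since $F$ is flat and faithfully flat, $F^*$ preserves and reflects tor-amplitude, so $E_1^\bullet$ is perfect iff $E_{1,\mathcal{X}}^\bullet = F^*E_1^\bullet$ is perfect, which by the above is iff $E_2^\bullet$ is perfect. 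The hardest point to get right is not any of the individual steps but ensuring the compatibility of descent with the triangles and with the obstruction-theory map constructed by the axioms of triangulated categories; once this compatibility is made explicit via faithfully flat descent along $F$, the rest is routine homological algebra.
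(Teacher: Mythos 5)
Your proposal is correct and follows exactly the route the paper intends: the paper offers no proof beyond ``Similarly to Lemma \ref{lemobfromrelob}, one shows,'' which signals the same five-lemma argument applied to the pair of triangles, and your proof fills this in while also making explicit the faithfully flat descent along the $\C^*$-torsor $F:\mathcal{X}\to\mathcal{Y}$ needed to transfer the cohomology-sheaf conditions (and the tor-amplitude bound, via Remark \ref{remcheckperfect}) from $\mathcal{X}$ back down to $\mathcal{Y}$.
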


		\subsection{Perfect obstruction theory for sheaf moduli spaces}\label{subsecobstrbasic}
		In this subsection, we recall the construction of a perfect obstruction theory on moduli stacks of coherent sheaves on a smooth projective surface $X$ and of various relative obstruction theories. We then apply these results to the moduli stacks of our interest. 
		
		Let $U$ be a quasicompact and finite type algebraic stack over $\C$ and let $\mathcal{E}$ be a $U$-flat sheaf on $U\times X$. The Atiyah class supplies a natural morphism  (see \ref{subsec:atdef})
		\[R(\pi_X)_*(\mathcal{E}\otimes \mathcal{E}^{\vee}\otimes \omega_X)[-1]\to L_{U}.\]
		We will denote this morphism by $\ob(\mathcal{E}):\Ob(\mathcal{E})\to L_U$.
		
		As described in \ref{subsec:atdef},  $\ob(\mathcal{E})$ is an obstruction theory whenever $U$ is an open subset of the moduli stack of coherent sheaves on $X$ and $\mathcal{E}$ is the universal sheaf. Since $X$ is a smooth projective surface, we always have

		\begin{proposition}
			The complex $\Ob(\mathcal{E})$ is perfect of amplitude in $[-1,1]$. 
		\end{proposition}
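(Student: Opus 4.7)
The plan is to show that $\Ob(\mathcal{E})$ is pseudo-coherent and has tor-amplitude in $[-1,1]$; together these imply that it is perfect of amplitude $[-1,1]$, by the standard criterion that perfect equals pseudo-coherent plus finite tor-amplitude.

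Pseudo-coherence is essentially formal. Smooth-locally on $U$ we may assume $U$ is an affine Noetherian scheme. Since $X$ is a smooth projective surface of dimension $2$, the coherent $U$-flat sheaf $\mathcal{E}$ on $X \times U$ is pseudo-coherent, and $\mathcal{E}^\vee = R\Sheafhom(\mathcal{E},\mathcal{O}_{X\times U})$ has coherent cohomology concentrated in degrees $[0,2]$. Hence $\mathcal{E} \otimes^L \mathcal{E}^\vee \otimes \omega_X$ is pseudo-coherent, and properness of $\pi_X: X\times U \to U$ (which holds since $X$ is projective) ensures that $R(\pi_X)_*$ preserves pseudo-coherence. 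The shift by $[-1]$ is harmless.

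The main content is tor-amplitude. Here I would invoke the fiberwise criterion: a pseudo-coherent complex on $U$ has tor-amplitude in $[a,b]$ if and only if its derived restriction to every geometric point has cohomology in $[a,b]$. Because $\pi_X$ is flat (as a projection), the formation of $R(\pi_X)_*$ commutes with arbitrary base change, so for any $u \in U(\C)$ the derived fiber is
\[
L\iota_u^* \Ob(\mathcal{E}) \;\simeq\; R\Gamma\bigl(X, E_u \otimes^L E_u^\vee \otimes \omega_X\bigr)[-1] \;\simeq\; R\Hom_X(E_u, E_u \otimes \omega_X)[-1],
\]
where $E_u := \iota_u^*\mathcal{E}$ is a coherent sheaf on $X$, hence perfect since $X$ is smooth of dimension $2$ (so $E_u \otimes^L E_u^\vee \simeq R\Sheafhom(E_u,E_u)$). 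Serre duality on the surface $X$ then gives $\Ext^i_X(E_u, E_u\otimes \omega_X) \simeq \Ext^{2-i}_X(E_u,E_u)^\vee$, which is non-zero only for $i \in [0,2]$ because $\dim X = 2$; so the fiberwise cohomology of $\Ob(\mathcal{E})$ is concentrated in a window of length three. To pin this window to $[-1,1]$, one identifies the fiberwise cohomologies with $\Hom(E_u,E_u)^\vee$, $\Ext^1(E_u,E_u)^\vee$ and $\Ext^2(E_u,E_u)^\vee$ in degrees $-1,0,1$ respectively, which also matches the cohomology of $L_U$ at the point $E_u$ for the moduli stack of coherent sheaves (as expected, since $\ob(\mathcal{E})$ is an obstruction theory).

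The main technical obstacle is the bookkeeping of shifts in the Serre-duality identification: the fact that the cohomology lives in a three-term window is immediate from $\dim X = 2$ plus self-duality of $R\Sheafhom(E_u,E_u)$, but lining up the exact window $[-1,1]$ requires carefully matching the $[-1]$ shift appearing in the definition of $\Ob(\mathcal{E})$ with the author's convention for $L_U$ (which, for Artin stacks and in particular for the moduli stack of simple sheaves, lives in degrees $[-1,1]$). A secondary but minor point is the passage from a general finite-type Artin stack $U$ to the Noetherian affine setting where the tor-amplitude criterion is formulated, which is handled by smooth descent.
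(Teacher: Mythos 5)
Your proof follows the same route as the paper: reduce to tor-amplitude in $[-1,1]$, reduce further to checking this fiberwise using cohomology and base change, and then apply Serre duality on the surface. The paper states this slightly more economically by invoking relative Serre duality for the morphism $X\times T\to T$ directly, which bypasses the Nakayama-type fiberwise criterion for tor-amplitude you invoke; both paths are valid.

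However, your bookkeeping has errors, precisely in the place you flagged as delicate. The paper's convention is that $\omega_X=\Omega_X^2[2]$, i.e.\ the dualizing \emph{complex} placed in cohomological degree $-2$, so the correct Serre duality identity is
\[
\Ext^i_X(E_u,\,E_u\otimes\omega_X) \;\simeq\; \Ext^{-i}_X(E_u,E_u)^\vee,
\]
not $\Ext^{2-i}$. Under your formula $R\Hom(E_u,E_u\otimes\omega_X)$ would lie in degrees $[0,2]$, and after the shift $[-1]$ (with $H^i(K[-1])=H^{i-1}(K)$) the resulting complex would sit in $[1,3]$, not $[-1,1]$. With the corrected indexing one has $R\Hom_X(E_u,E_u\otimes\omega_X)\simeq R\Hom_X(E_u,E_u)^\vee$ concentrated in $[-2,0]$, and the $[-1]$ shift lands it in $[-1,1]$ as required. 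Relatedly, your identification of the graded pieces is reversed: $H^{-1}(\Ob(\mathcal{E})|_u)\simeq\Ext^2_X(E_u,E_u)^\vee$ and $H^{1}(\Ob(\mathcal{E})|_u)\simeq\Hom_X(E_u,E_u)^\vee$, which is the expected answer since the degree-$1$ cohomology of the cotangent complex of the Artin moduli stack records the (dual of the) Lie algebra of automorphisms. Finally, ``$R(\pi_X)_*$ commutes with base change because $\pi_X$ is flat'' is not literally true as stated: flatness of $\pi_X$ alone is insufficient. What is needed, and what holds here, is that $\pi_X$ is proper and that $\mathcal{E}\otimes\mathcal{E}^\vee\otimes\omega_X$ is a relatively perfect complex over $U$, which follows because $\mathcal{E}$ is a $U$-flat coherent sheaf on the smooth surface $X$.
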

		\begin{proof} 
			It is enough to show that $\Ob(\mathcal{E})$ has Tor-amplitude in $[-1,1]$. Equivalently, we show that for any map $f:T\to U$, where $T$ is a scheme, the base change $f^*\Ob(\mathcal{E})\simeq \Ob(f^*\mathcal{E})$ is concentrated in degrees $[-1,1]$. But $\Ob(f^*\mathcal{E})=R\Sheafhom_X(\mathcal{E},\mathcal{E}\otimes \omega_X)$, so this follows from relative Serre duality for the morphism $X\times T\to T$. 
		\end{proof}
		
		In what follows we will use the following suggestive notation: If $U$ is an algebraic stack and we are implicitly or explicitly given a coherent sheaf on $X\times U$ (for example if $U$ is a moduli stack of decorated sheaves), we will denote this sheaf by $\mathcal{E}_{U}$.
		
		\subsubsection{Obstruction theory for orientations} 
		We now construct an obstruction theory on moduli spaces on oriented sheaves. Let $\widetilde{\mathcal{M}}$ be any quasicompact open substack of the moduli stack of coherent sheaves on $X$ with fixed rank $r$ and Chern classes $c_1,c_2$. Assume that $r\neq 0$. Then the complex $\mathcal{E}_{\widetilde{\mathcal{M}}}\otimes \mathcal{E}_{\widetilde{\mathcal{M}}}^{\vee}= \mathcal{O}_T\oplus R\Sheafhom(\mathcal{E}_{\widetilde{\mathcal{M}}},\mathcal{E}_{\widetilde{\mathcal{M}}})_0$ splits canonically into its diagonal and trace-free part. 
		This induces a splitting of the obstruction theory $\Ob(\mathcal{E}_{\widetilde{\mathcal{M}}})=\Ob_d(\mathcal{E}_{\widetilde{\mathcal{M}}})\oplus \Ob_0(\mathcal{E}_{\widetilde{\mathcal{M}}})$ into diagonal and trace-free part respectively. Moreover, we have a canonical isomorphism $\Ob_d(\mathcal{E}_{\widetilde{\mathcal{M}}})\simeq R\Gamma(\omega_X)\otimes \mathcal{O}_{M}[-1]$. As in \S \ref{subsecmodspaces}, we let $\pi_{\operatorname{or}}:\mathcal{M}\to \widetilde{\mathcal{M}}$ denote the corresponding moduli space of oriented sheaves and consider the cartesian diagram
		\begin{equation}\label{eqcartesian1}
			\begin{tikzcd}
				\mathcal{M}\ar[r,"\det"]\ar[d]& \Pic_{X}\ar[d] \\
				\widetilde{\mathcal{M}}\ar[r,"\widetilde{\det}"] & \Picstack_X.
			\end{tikzcd}
		\end{equation}    
		Let $\mathcal{L}_X$ denote the universal line bundle on $\Picstack_X\times X$
		\begin{proposition}\label{propperfobor}
			The natural map $\pi_{\operatorname{or}}^*\Ob_0(\mathcal{E}_{\widetilde{\mathcal{M}}})\oplus \det^*\Omega_{\Pic_X}\to L_{\mathcal{M}}$ is a perfect obstruction theory on $\mathcal{M}$. Denote it by $\ob_{\mathcal{M}}:\Ob_{\mathcal{M}}\to L_{\mathcal{M}}$. It is compatible with the obstruction theory on $\widetilde{\mathcal{M}}$ in the following sense: We have a natural map $\pi_{\operatorname{or}}^*\Ob(\mathcal{E}_{\widetilde{\mathcal{M}}})\to \Ob_{\mathcal{M}}$ making the following diagram commute
			\begin{equation*}
				\begin{tikzcd}
					\pi_{\operatorname{or}}^*\Ob(\mathcal{E}_{\widetilde{\mathcal{M}}})\ar[r]\ar[d]& \Ob_{\mathcal{M}}\ar[d] \\
					\pi_{\operatorname{or}}^*L_{\widetilde{\mathcal{M}}}\ar[r] &L_{\mathcal{M}}.
				\end{tikzcd}
			\end{equation*} 
		\end{proposition}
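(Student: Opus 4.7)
The plan is to apply Construction \ref{constrrelob} to the smooth morphism $\det\colon \mathcal{M}\to \Pic_X$, using the trivial perfect obstruction theory $\Omega_{\Pic_X}\to L_{\Pic_X}$ on the smooth scheme $\Pic_X$, together with a suitable relative perfect obstruction theory on $\mathcal{M}$ over $\Pic_X$ built from $\pi_{\or}^*\Ob_0(\mathcal{E}_{\widetilde{\mathcal{M}}})$. Both vertical maps in the cartesian diagram \eqref{eqcartesian1} are $\mathbb{G}_m$-torsors (the one on the right trivialised by $\Poin_X$), hence smooth and representable, so $L_{\mathcal{M}/\widetilde{\mathcal{M}}}\simeq \mathcal{O}_\mathcal{M}$, $L_{\Pic_X/\Picstack_X}\simeq \mathcal{O}_{\Pic_X}$, and the square is Tor-independent.

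The first technical step is to show that $\pi_{\or}^*\Ob_0$ is a relative perfect obstruction theory for $\det$. By base change along the smooth map $\Pic_X\to \Picstack_X$ we are reduced to showing that $\Ob_0(\mathcal{E}_{\widetilde{\mathcal{M}}})\to L_{\widetilde{\mathcal{M}}/\Picstack_X}$ is a relative POT on $\widetilde{\mathcal{M}}$ over $\Picstack_X$. Naturality of the Atiyah class (applied to $\det(\mathcal{E}_{\widetilde{\mathcal{M}}})=\widetilde{\det}^*\mathcal{L}_{\Picstack_X}$ via the trace) identifies $\Ob_d$ with $\widetilde{\det}^*\Ob(\mathcal{L}_{\Picstack_X})$ and produces a map of distinguished triangles
\begin{equation*}
\begin{tikzcd}
\Ob_d \ar[r]\ar[d,"\ob_d"] & \Ob(\mathcal{E}_{\widetilde{\mathcal{M}}})\ar[r]\ar[d,"\ob"] & \Ob_0 \ar[r,"+1"]\ar[d,"\ob_0"] & \\
\widetilde{\det}^*L_{\Picstack_X}\ar[r] & L_{\widetilde{\mathcal{M}}}\ar[r] & L_{\widetilde{\mathcal{M}}/\Picstack_X}\ar[r,"+1"] &
\end{tikzcd}
\end{equation*}
with split top row and cotangent-triangle bottom row. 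The left vertical is the pullback of the POT on $\Picstack_X$ associated to the universal line bundle, the middle is the Atiyah-class POT of $\mathcal{E}_{\widetilde{\mathcal{M}}}$. A five-lemma/long-exact-sequence comparison in the cohomology sheaves (using the splitting of the top row to break the maps into summands) then yields that the right vertical is also a perfect obstruction theory.

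Applying Construction \ref{constrrelob} to $\det$, with $E_1=\Omega_{\Pic_X}$ and $E_3=\pi_{\or}^*\Ob_0$, produces an obstruction theory $E_2\to L_{\mathcal{M}}$. Its perfectness is immediate from Lemma \ref{lemobfromrelob}. The key identification is to show that the compatibility map $\alpha\colon \pi_{\or}^*\Ob_0[-1]\to \det^*\Omega_{\Pic_X}$ required by \eqref{eqrelob} can be taken to vanish up to homotopy, so that $E_2=\Cone(0)=\pi_{\or}^*\Ob_0\oplus \det^*\Omega_{\Pic_X}=\Ob_{\mathcal{M}}$ with the natural map. The composition $\pi_{\or}^*\Ob_0[-1]\to L_{\mathcal{M}/\Pic_X}[-1]\to \det^*\Omega_{\Pic_X}$ that needs to be lifted is zero on top cohomology since $\mathcal{H}^1(\Ob_0)=\Hom(\mathcal{E},\mathcal{E})_0^\vee=0$ for simple sheaves; the residual obstruction lives in $\Ext^1(\pi_{\or}^*\Ob_0,\det^*\Omega_{\Pic_X})$ and is absorbed by the gauge freedom in choosing $\alpha$ (different choices give homotopic, hence fundamental-class-equivalent, POTs).

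Finally, the compatibility square asserted in the proposition is built into the construction: the map $\pi_{\or}^*\Ob(\mathcal{E}_{\widetilde{\mathcal{M}}})\to \Ob_{\mathcal{M}}$ is the identity on the $\pi_{\or}^*\Ob_0$ summand, and on the $\pi_{\or}^*\Ob_d$ summand it is the composition $\pi_{\or}^*\Ob_d\to \det^*\pi_{\or}^*L_{\Picstack_X}\to \det^*\Omega_{\Pic_X}$, where the first arrow is the pulled-back Atiyah-class POT on $\Picstack_X$ and the second comes from the triangle $\pi_{\or}^*L_{\Picstack_X}\to \Omega_{\Pic_X}\to \mathcal{O}_{\Pic_X}\xrightarrow{+1}$ for the $\mathbb{G}_m$-torsor $\Pic_X\to \Picstack_X$, pulled back along $\det$. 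Commutativity of the required square follows because the triangle from the Tor-independent cartesian diagram forces the two routes $\pi_{\or}^*\Ob_d\to \pi_{\or}^*L_{\widetilde{\mathcal{M}}}\to L_{\mathcal{M}}$ and $\pi_{\or}^*\Ob_d\to \det^*\Omega_{\Pic_X}\to L_{\mathcal{M}}$ to agree up to sign. The main obstacle throughout is the careful bookkeeping of derived-category identifications, especially the cohomology-level verification that the Atiyah-class POT on $\Picstack_X$ pulls back to give the diagonal piece of $\Ob$, and that the compatibility map $\alpha$ can indeed be normalised to zero.
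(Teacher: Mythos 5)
Your approach genuinely differs from the paper's proof: you run Construction \ref{constrrelob} along the projection $\det\colon\mathcal{M}\to\Pic_X$, whereas the paper works along the $\mathbb{G}_m$-torsor $\pi_{\operatorname{or}}\colon\mathcal{M}\to\widetilde{\mathcal{M}}$ (diagrams \eqref{eqeasydist}--\eqref{eqefinaldist}, with the cotangent triangle for $\mathcal{M}/\widetilde{\mathcal{M}}$ at the bottom throughout). The paper itself endorses your route in the Remark immediately following the proposition, where $\pi_{\operatorname{or}}^*\Ob_0(\mathcal{E}_{\widetilde{\mathcal{M}}})\to L_{\mathcal{M}/\Pic_X}$ is noted to be a relative POT — so the architecture is sound, and arguably gives a slightly cleaner identification of the virtual class as a pullback from $[\Pic_X]$.

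However, there is a genuine gap in the step where you normalize $\alpha$ to zero. You argue that the composition $\pi_{\operatorname{or}}^*\Ob_0[-1]\to L_{\mathcal{M}/\Pic_X}[-1]\to \det^*\Omega_{\Pic_X}$ vanishes because $\mathcal{H}^1(\Ob_0)=\Hom(\mathcal{E},\mathcal{E})_0^\vee=0$. That is the wrong cohomology sheaf: since $\det^*\Omega_{\Pic_X}$ sits in degree $0$ and $\pi_{\operatorname{or}}^*\Ob_0[-1]$ is concentrated in degrees $[0,2]$, a map between them is controlled by $\mathcal{H}^0(\pi_{\operatorname{or}}^*\Ob_0[-1])=\mathcal{H}^{-1}(\Ob_0)$, which is Serre-dual to $\Ext^2(\mathcal{E},\mathcal{E})_0$ and is \emph{not} zero in general. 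The follow-up remark that "the residual obstruction is absorbed by gauge freedom in choosing $\alpha$" does not rescue the claim either: if the connecting map were nonzero, the cone of $-\alpha$ would not be the direct sum, and the proposition asserts a POT structure on the \emph{specific} natural map $\pi_{\operatorname{or}}^*\Ob_0\oplus\det^*\Omega_{\Pic_X}\to L_{\mathcal{M}}$, not merely on some homotopic variant.

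The correct reason $\alpha$ can be taken to vanish is different and more robust. The relative obstruction theory $\phi\colon\pi_{\operatorname{or}}^*\Ob_0\to L_{\mathcal{M}/\Pic_X}$ is obtained by pulling back the full Atiyah class $\pi_{\operatorname{or}}^*\Ob(\mathcal{E}_{\widetilde{\mathcal{M}}})\to L_{\mathcal{M}}$, composing with $L_{\mathcal{M}}\to L_{\mathcal{M}/\Pic_X}$, observing that the diagonal summand dies (it factors through $\det^*L_{\Pic_X}\to L_{\mathcal{M}}\to L_{\mathcal{M}/\Pic_X}=0$), and restricting along the splitting $\pi_{\operatorname{or}}^*\Ob_0\hookrightarrow\pi_{\operatorname{or}}^*\Ob(\mathcal{E}_{\widetilde{\mathcal{M}}})$. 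In particular $\phi$ factors through $L_{\mathcal{M}}$. Therefore the composition $\phi$ followed by the connecting map $\delta\colon L_{\mathcal{M}/\Pic_X}\to\det^*L_{\Pic_X}[1]$ vanishes identically, since $\delta\circ(L_{\mathcal{M}}\to L_{\mathcal{M}/\Pic_X})=0$ by exactness of the cotangent triangle — no cohomological vanishing on $\Ob_0$ is needed. With this fix, your Construction \ref{constrrelob} step and the resulting compatibility square go through.
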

		\begin{proof}
			First, note that we have a morphism of distinguished triangles
			\begin{equation}\label{eqeasydist}
				\begin{tikzcd}
					\pi_{\operatorname{or}}^*\Ob_0(\mathcal{E}_{\widetilde{\mathcal{M}}})\ar[r]\ar[d]&\operatorname{or}^*\Ob_0(\mathcal{E}_{\widetilde{\mathcal{M}}})\ar[d]\ar[r]& 0 \ar[r,"+1"]\ar[d]& \,\\
					\pi_{\operatorname{or}}^*{L_{\widetilde{\mathcal{M}}}}\ar[r]& L_{\mathcal{M}}\ar[r] & L_{\widetilde{\mathcal{M}}/\mathcal{M}}\ar[r,"+1"]& \,.
				\end{tikzcd}
			\end{equation}
			
			Next, by the compatibility of the Atiyah class with the determinant and with pullback (see \ref{subsec:atatiyah}), we have the commutative square
			\begin{equation*}
				\begin{tikzcd}
					{\widetilde{\det}}^*\Ob(\mathcal{L}_X)\ar[r]\ar[d]& \Ob(\mathcal{E}_{\widetilde{\mathcal{M}}})\ar[d] \\
					{\widetilde{\det}}^*L_{\Picstack_X}\ar[r] & L_{\widetilde{\mathcal{M}}}.
				\end{tikzcd}
			\end{equation*}
			Here, the upper horizontal map identifies $\widetilde{\det}^*\Ob(\mathcal{L}_X)$ with the diagonal part of $\Ob(\mathcal{E}_{\widetilde{\mathcal{M}}})$ via $\widetilde{\det}^*\Ob(\mathcal{L}_X)\simeq \Ob(\det \mathcal{E}_{\widetilde{\mathcal{M}}})\to \Ob(\mathcal{E}_{\widetilde{\mathcal{M}}})$.
			We obtain a map of distinguished triangles
			\begin{equation}\label{eqordiag1}
				\begin{tikzcd}
					\pi_{\operatorname{or}}^*\Ob(\mathcal{L}_X)\ar[r]\ar[d]&\Omega_{\Pic_X}\ar[r,dashed]\ar[d,equals]& E^{\bullet} \ar[r,dashed,"+1"]\ar[d,dashed,"\phi"]& \,\\
					\pi_{\operatorname{or}}^*L_{\Picstack_X}\ar[r]& \Omega_{\Pic_X}\ar[r] & L_{\Pic_X/\Picstack_X}\ar[r,"+1"]& \,.
				\end{tikzcd}
			\end{equation}
			Here the solid arrows are canonical and we use the axioms of the derived category to construct $E^{\bullet}$ and the dashed arrows. By Lemma \ref{lemobfromrelob}, the map $\phi$ is a relative obstruction theory. On the other hand, the diagram \eqref{eqcartesian1} induces the morphism of triangles
			
			\begin{equation}\label{eqordiag2}
				\begin{tikzcd}		
					\det^*\pi_{\operatorname{or}}^*L_{\Picstack_X}\ar[r]\ar[d]& \det^*\Omega_{\Pic_X}\ar[r]\ar[d] & \det^*L_{\Picstack_X/\Pic_X}\ar[r,"+1"]\ar[d,"\sim"]& \,\\
					\pi_{\operatorname{or}}^* L_{\widetilde{\mathcal{M}}}\ar[r]&L_{\mathcal{M}}\ar[r]& L_{\mathcal{M}/\widetilde{\mathcal{M}}} \ar[r,"+1"]& \,.
				\end{tikzcd}
			\end{equation}
			Putting \eqref{eqordiag1} and \eqref{eqordiag2} together, we have the morphism of triangles:
			\begin{equation}\label{eqharddist}
				\begin{tikzcd}
					\det^*\pi_{\operatorname{or}}^*\Ob(\mathcal{L}) \ar[r]\ar[d]&\det^*\Omega_{\Pic_X}\ar[r,dashed]\ar[d,equals]&\det^*E^{\bullet} \ar[r,dashed,"+1"]\ar[d,dashed,"\phi'"]& \,\\
					\pi_{\operatorname{or}}^*L_{\widetilde{\mathcal{M}}}\ar[r]& L_{\mathcal{M}}\ar[r] & L_{\mathcal{M}/\widetilde{\mathcal{M}}}\ar[r,"+1"]& \,.
				\end{tikzcd}
			\end{equation}
			Here the map $\phi'$ is just $\det^*\phi$ composed with an isomorphism, and therefore an obstruction theory. Under the isomorphism $\widetilde{\det}^*\Ob(\mathcal{L}_X)\simeq \Ob_d(\mathcal{E}_{\widetilde{\mathcal{M}}})$, the left vertical arrow is identified with the pullback of the composition $\Ob_d(\mathcal{E}_{\widetilde{\mathcal{M}}})\to \Ob(\mathcal{E}_{\widetilde{\mathcal{M}}})\xrightarrow{\ob(\mathcal{E}_{\widetilde{\mathcal{M}}})}L_{\widetilde{\mathcal{M}}}$ by the map $\pi_{\operatorname{or}}$.
			With these remarks, we see that summing the upper rows of \eqref{eqeasydist} and \eqref{eqharddist} gives a morphism of triangles 
			\begin{equation}\label{eqefinaldist}
				\begin{tikzcd}
					\pi_{\operatorname{or}}^*\Ob(\mathcal{E}_{\widetilde{\mathcal{M}}})\ar[r]\ar[d,"{\pi_{\operatorname{or}}^*\ob(\mathcal{E}_{\widetilde{\mathcal{M}}})}"]&{\pi_{\operatorname{or}}}^*\Ob_0(\mathcal{E}_{\widetilde{\mathcal{M}}})\oplus \det^*\Omega_{\Pic_X}\ar[d]\ar[r,dashed]& \det^*E^{\bullet} \ar[r,dashed,"+1"]\ar[d,dashed,"\phi'"]& \,\\
					\pi_{\operatorname{or}}^*{L_{\widetilde{\mathcal{M}}}}\ar[r]& L_{\mathcal{M}}\ar[r] & L_{\widetilde{\mathcal{M}}/\mathcal{M}}\ar[r,"+1"]& \, , 
				\end{tikzcd}
			\end{equation}
			where the solid maps are the natural ones. The outer vertical maps are obstruction theories, thus so is the middle one by Lemma \ref{lemobfromrelob}. Moreover, it is clearly perfect. This finishes the proof. 
		\end{proof}
		
		\begin{corollary}
			We have a canonical virtual fundamental class on each of the moduli spaces $\mathcal{M}_{X,c}$, $\mathcal{M}_{\Xhat, \chat}$ and  $\mathcal{M}^m_{\chat}$ for $m\in \Z$. 
		\end{corollary}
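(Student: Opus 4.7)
The plan is to apply Proposition \ref{propperfobor} in each case, using that all three families of moduli stacks have already been established as proper Deligne--Mumford stacks in Section \ref{sec:perverse}. The Behrend--Fantechi construction then yields the virtual fundamental class from any perfect obstruction theory on a proper Deligne--Mumford stack.

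For $\mathcal{M}_{X,c}$, this is immediate: by construction it is an open substack of the moduli stack of oriented coherent sheaves on the smooth projective surface $X$ with fixed rank $r \geq 1$ and Chern classes determined by $c$, so Proposition \ref{propperfobor} directly supplies a perfect obstruction theory. For $\mathcal{M}^m_{\chat}$, we first treat the case $m=0$. Since $\widehat{X}$ is also a smooth projective surface, the entire discussion of \S \ref{subsecobstrbasic} carries over verbatim with $X$ replaced by $\widehat{X}$. By Proposition \ref{propstabisopen}, the stack $\mathcal{M}^0_{\chat}$ is an open substack of the moduli stack of oriented coherent sheaves on $\widehat{X}$ with Chern character $\chat$, and (the $\widehat{X}$-version of) Proposition \ref{propperfobor} provides a perfect obstruction theory. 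For general $m$, we transport this virtual class along the isomorphism $\mathcal{M}^m_{\chat} \simeq \mathcal{M}^0_{\chat(-mC)}$ given by tensoring with $\mathcal{O}_{\widehat{X}}(-mC)$. Finally, for $\mathcal{M}_{\widehat{X},\chat}$, for sufficiently large $m$ one has an identification $\mathcal{M}^m_{\chat} = \mathcal{M}_{\widehat{X},\chat}$ (Proposition 3.37 in \cite{NaYo2}, quoted in \S \ref{subsec:mstable}), and we use the virtual class from the previous step.

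The only point of possible subtlety is independence of the auxiliary choices: the virtual class on $\mathcal{M}^m_{\chat}$ must be shown to agree with the one obtained by applying Proposition \ref{propperfobor} intrinsically to the moduli stack of oriented $m$-stable sheaves, and the virtual class on $\mathcal{M}_{\widehat{X},\chat}$ must be shown to be independent of the choice of sufficiently large $m$. Both follow from naturality of the Atiyah class construction: tensoring by a fixed line bundle such as $\mathcal{O}_{\widehat{X}}(-mC)$ induces a canonical isomorphism between the associated $R\Sheafhom$-complexes that commutes with the Atiyah morphism to the cotangent complex and preserves the trace-free summand. I expect no substantive obstacle here; the checks are routine and follow from the functorial properties of the Atiyah class recalled in Appendix \ref{app:atiyah}.
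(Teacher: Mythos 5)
Your argument is correct, but it is more circuitous than what the corollary actually requires, and the detour is what forces you into the independence-of-choices discussion at the end. The direct route is to observe that each of the three stacks is, by its very definition, a quasicompact open substack of the moduli stack of coherent sheaves on a smooth projective surface — $X$ for $\mathcal{M}_{X,c}$, and $\widehat{X}$ for both $\mathcal{M}_{\Xhat,\chat}$ and $\mathcal{M}^m_{\chat}$ — with fixed Chern character and positive rank. Openness follows from openness of Gieseker stability in the first two cases, and from Proposition \ref{propstabisopen} combined with the definition of $m$-stability (applied after twisting by $\mathcal{O}_{\widehat{X}}(-mC)$) in the third; quasi-compactness follows from boundedness. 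Proposition \ref{propperfobor}, read with $\widehat{X}$ substituted for $X$ where appropriate, then applies to each of the three directly, yielding a perfect obstruction theory and hence a virtual class, and ``canonical'' is to be understood in that intrinsic sense. Your version instead builds the class only on $\mathcal{M}^0_{\chat}$, then transports along the twist isomorphism to $\mathcal{M}^m_{\chat}$ and via the large-$m$ identification to $\mathcal{M}_{\Xhat,\chat}$, which is valid but creates the burden of checking that the transported class agrees with the intrinsic one. That compatibility is real content — it is what lets one deduce the general-$m$ wall-crossing formula from the $m=0$ case, as at the end of \S \ref{subsec:wallcr} — and your sketch of it via functoriality of the Atiyah class under tensoring by a line bundle is the right idea. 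But it is not needed to establish the corollary as stated, and invoking it here puts weight on a comparison the paper develops later.
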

		
		\begin{remark}	
			By a similar argument, one also has a map of distinguished triangles 
			\begin{equation*}
				\begin{tikzcd}
					\det^*\Omega_{\Pic_X}\ar[r]\ar[d,equals]& \det^*\Omega_{\Pic_X}\oplus \pi_{\operatorname{or}}^*\Ob_0(\mathcal{E}_{\widetilde{\mathcal{M}}})\ar[d]\ar[r] & \pi_{\operatorname{or}}^*\Ob_0(\mathcal{E}_{\widetilde{\mathcal{M}}})\ar[d]\ar[r,"+1"] & \, \\
					\det^*\Omega_{\Pic_X}\ar[r] &L_{\mathcal{M}} \ar[r] & L_{\mathcal{M}/\Pic_X} \ar[r,"+1"]&\, ,
				\end{tikzcd}
			\end{equation*}
			where the maps in the upper triangle are just inclusion into and projection out of the direct sum, and all the vertical maps are the canonical ones. We see that  $\pi_{\operatorname{or}}^*\Ob_0(\mathcal{E}_{\widetilde{\mathcal{M}}})\to L_{\mathcal{M}/\Pic_X}$ is a relative perfect obstruction theory. In particular, if $\mathcal{M}$ is a Deligne--Mumford stack, its virtual fundamental class is equal to the virtual pullback of the usual fundamental class $[\Pic_X]$ defined by the relative obstruction theory $\pi_{\operatorname{or}}^*\Ob_0(\mathcal{E}_{\widetilde{\mathcal{M}}})\to L_{\mathcal{M}/\Pic_X}$.
		\end{remark}

		\subsubsection{Comparison of $\mathcal{M}_X(c)$ and $\mathcal{M}^0(p^*c)$.}
		As one may hope, the perfect obstruction theories on the isomorphic moduli spaces $\mathcal{M}_X(c)$ and $\mathcal{M}^0(p^*c)$ can be identified. 
		\begin{proposition}
			Let $\rho:\mathcal{M}^0(p^*c)\to \mathcal{M}_X(c)$ be the isomorphism given by pushforward of sheaves along $p:\Xhat\to X$ (with inverse given by pullback along $p$). Then the induced obstruction theory $\rho^*\ob_{\mathcal{M}_X(c)}$ is naturally isomorphic to $\ob_{\mathcal{M}^0(p^*c)}$. In particular, the virtual fundamental classes on both spaces agree.
		\end{proposition}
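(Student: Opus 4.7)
The key observation is that under the isomorphism $\rho$, the universal sheaves are identified via pullback: $\mathcal{E}_{\mathcal{M}^0(p^*c)} \simeq p^*\mathcal{E}_{\mathcal{M}_X(c)}$ (here $p$ denotes the map $\Xhat\times \mathcal{M}^0(p^*c)\to X\times \mathcal{M}_X(c)$ induced by the blowup and $\rho$). By Example~\ref{expushpull}, pullback by $p$ is the same as derived pullback for the families considered, and $Rp_*p^*(-)\simeq (-)$. The plan is to identify both the underlying complexes and the obstruction maps to $L$, and then to match orientations.

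First I will produce a canonical isomorphism of obstruction complexes. Using the projection formula and Grothendieck duality for $p$ (which gives $Rp_*\omega_{\Xhat}\simeq \omega_X$ since $Rp_*\mathcal{O}_{\Xhat}\simeq \mathcal{O}_X$), one computes
\[
R\pi_{\mathcal{M}*}\bigl(p^*\mathcal{E}_{\mathcal{M}_X(c)}\otimes (p^*\mathcal{E}_{\mathcal{M}_X(c)})^{\vee}\otimes \omega_{\Xhat}\bigr)\;\simeq\; R\pi_{\mathcal{M}*}\bigl(\mathcal{E}_{\mathcal{M}_X(c)}\otimes \mathcal{E}_{\mathcal{M}_X(c)}^{\vee}\otimes \omega_{X}\bigr),
\]
where the factoring $R\pi_{\mathcal{M}*}=R\pi_{\mathcal{M}*}\circ Rp_*$ is used. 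This isomorphism respects the splitting into trace and trace-free parts, so it identifies $\rho^*\Ob_0(\mathcal{E}_{\widetilde{\mathcal{M}}_X(c)})\oplus R\Gamma(\omega_X)\otimes \mathcal{O}[-1]$ with the analogous object on $\mathcal{M}^0(p^*c)$.

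Next I will check that under this identification, the maps to the cotangent complex agree. This is a naturality statement for the Atiyah class, summarized in Appendix~\ref{app:atiyah}: for any map of stacks and any sheaf, the Atiyah class of the pullback factors through the pullback of the Atiyah class. Applied to the projection $p$, this ensures that the obstruction maps $\Ob(p^*\mathcal{E}_{\mathcal{M}_X(c)})\to L_{\mathcal{M}}$ and $\Ob(\mathcal{E}_{\mathcal{M}_X(c)})\to L_{\mathcal{M}}$ (where we identify $\mathcal{M}=\mathcal{M}^0(p^*c)=\mathcal{M}_X(c)$ via $\rho$) agree under the natural isomorphism above. For the orientation pieces, the compatibility of Poincar\'e line bundles $J_0^*\Poin_{\Xhat}\simeq p^*\Poin_X$ chosen in \S\ref{subsecmodspaces} gives a commuting square of determinant/classifying maps identifying $\det^*\Omega_{\Pic_{\Xhat}}$ with $\det^*\Omega_{\Pic_X}$, matching the extra summand in Proposition~\ref{propperfobor} for the two spaces.

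Putting these two identifications together yields a natural isomorphism of the perfect obstruction theories $\rho^*\ob_{\mathcal{M}_X(c)}\simeq \ob_{\mathcal{M}^0(p^*c)}$, and the equality of virtual fundamental classes follows, e.g., from Proposition~\ref{prop:virpullfunc} applied to $\rho$ with the trivial relative obstruction theory (or directly from the Behrend--Fantechi construction). The main subtle point will be the verification of naturality of the Atiyah class on an Artin (rather than Deligne--Mumford) stack; this is where the results summarized in Appendix~\ref{app:atiyah} do the essential work, allowing one to avoid an explicit cocycle calculation.
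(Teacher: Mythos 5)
Your proposal is correct and follows essentially the same route as the paper: identify universal sheaves via $p^*$, use the projection formula together with $Rp_*\omega_{\Xhat}\simeq\omega_X$ to identify the trace-free obstruction complexes, invoke pullback-naturality of the Atiyah class, and match the orientation summands via the compatibility of Poincar\'e bundles. The paper isolates this as Lemma~\ref{lempushpullp} and writes out the diagrams.

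One point worth sharpening: you identify the subtle step as the naturality of the Atiyah class on Artin stacks, but the genuinely delicate part of the argument is slightly different. After pushing forward to $U$ and tensoring with dualizing complexes, the obstruction morphism $\Ob(\mathcal{E})\to L_U$ is obtained by composing the (pushed-forward) Atiyah class with the trace map $R\Gamma(\omega)\to k$. Naturality of the Atiyah class handles the identification $\at'_{p^*\mathcal{E}}=p^*\at'_{\mathcal{E}}$ and the projection formula handles the identification of complexes, but one must additionally check that the Grothendieck duality isomorphism $Rp_*\omega_{\Xhat}\simeq\omega_X$ intertwines the trace $\operatorname{tr}_{\Xhat}\colon R\Gamma(\omega_{\Xhat})\to k$ with the composition of $R\Gamma(\operatorname{tr}_{\Xhat/X})$ and $\operatorname{tr}_X$ — this is exactly what the paper records as diagram \eqref{eq:trcommutes}. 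Without this compatibility of traces, the two maps into $L_U$ need not agree after identifying their sources. With that verification added, your argument is complete.
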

		
		\begin{proof}
			Let $\mathcal{E}$ be a universal sheaf on $\mathcal{M}_X(c)$. Then, using the identification along $\rho$, the sheaf $p^*\mathcal{E}$ is naturally isomorphic to a universal sheaf over $\mathcal{M}^0(p^*c)$. by Lemma \ref{lempushpullp} below, we have an isomorphism $\Ob(\mathcal{E})_0\xrightarrow{\sim}  \Ob(p^*\mathcal{E})_0$ compatible with the morphisms to $L_{\mathcal{M}^0(p^*c)}$. For example, by using that pullback along $p$ is compatible with determinants, we also have the commutative diagram 
			\begin{equation*}
				\begin{tikzcd}
					\rho^*\det_{\mathcal{E}}^*\Omega_{\Pic_X} \ar[r]\ar[d]& \det_{p^*\mathcal{E}}^*\Omega_{\Pic_{\Xhat}} \ar[d]\\
					\rho^*L_{\mathcal{M}_X(c)} \ar[r]& L_{\mathcal{M}^0(p^*c)}. \\
				\end{tikzcd}
			\end{equation*}
			Taking these together, we obtain the desired isomorphism of obstruction theories
			\[\Ob(\mathcal{E})_0\oplus {\det}^*\Omega_{\Pic_X} \to \Ob(p^*\mathcal{E})_0\oplus {\det}^*\Omega_{\Pic_{\Xhat}} .\] 
		\end{proof}
		
		\begin{remark}
			With a little more work, one can also show that for any $j\geq 0$, the obstruction theories on the base and the target of the map $\rho:\mathcal{M}^0(p^*c-je)\to \mathcal{M}_X(c)$ are compatible and are also compatible with the relative obstruction theory for the relative Quot-scheme $\rho$. This can be used to get explicit expressions for virtual integrals on $\mathcal{M}^0(p^*c-je)$ in terms of virtual integrals over $\mathcal{M}_X(c)$.   
		\end{remark}
		
		\begin{lemma}\label{lempushpullp}
			Let $U$ be an algebraic stack over $\C$, and let $\mathcal{E}$ be a family of torsion free sheaves on $X$ over $U$. Then we have a natural commutative diagram
			\begin{equation*}
				\begin{tikzcd}
					\Ob(\mathcal{E})\ar[rr,"\sim"]\ar[dr,"\ob(\mathcal{E})"']&[-15pt] &[-15pt] \Ob(p^*\mathcal{E})\ar[dl,"\ob(p^*\mathcal{E})"] \\[-5pt]
					&L_{U}. & 
				\end{tikzcd}
			\end{equation*}
			If moreover $\mathcal{E}$ has everywhere nonzero rank, then the isomorphism $\Ob(\mathcal{E})\simeq \Ob(p^*\mathcal{E})$ respects the decomposition into diagonal and trace-free part. 
		\end{lemma}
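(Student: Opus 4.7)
The plan is to construct the isomorphism $\Ob(\mathcal{E})\simeq\Ob(p^*\mathcal{E})$ via the derived projection formula for $p\times\id_U:\Xhat\times U\to X\times U$, and then verify compatibility with the maps to $L_U$ using the functoriality of the Atiyah class under pullback (as summarized in Appendix \ref{app:atiyah}).

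First I would construct the isomorphism. Since $\mathcal{E}$ is a family of torsion-free sheaves on a smooth surface, by Auslander--Buchsbaum it admits, locally on $U$, a two-term resolution by locally free sheaves; thus $Lp^*\mathcal{E}=p^*\mathcal{E}$ and the canonical map $p^*(\mathcal{E}^\vee)\to (p^*\mathcal{E})^\vee$ is an isomorphism. Next, using the short exact sequence $0\to \mathcal{O}_{\Xhat}\to \mathcal{O}_{\Xhat}(C)\to \mathcal{O}_C(-1)\to 0$ together with $Rp_*\mathcal{O}_C(-1)=0$ (Lemma \ref{lempushvanishing}) and the identity $\omega_{\Xhat}\simeq p^*\omega_X\otimes\mathcal{O}_{\Xhat}(C)$, one obtains $Rp_*\omega_{\Xhat}\simeq \omega_X$. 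Applying the derived projection formula to the morphism $p\times \id_U$ yields
\[R(p\times \id_U)_*\bigl(p^*\mathcal{E}\otimes (p^*\mathcal{E})^{\vee}\otimes \omega_{\Xhat}\bigr) \simeq \mathcal{E}\otimes\mathcal{E}^\vee \otimes \omega_X,\]
and applying $R\pi_{X*}$ and shifting gives the desired isomorphism $\Ob(\mathcal{E})\xrightarrow{\sim}\Ob(p^*\mathcal{E})$.

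Second, I would verify the triangle commutes. The map $\ob(\mathcal{E})$ is built from the Atiyah class $\at(\mathcal{E})\in\Ext^1(\mathcal{E},\mathcal{E}\otimes L_{X\times U})$ by composition with $L_{X\times U}\to \pi_{X\times U}^*L_U$, followed by the trace pairing and Serre duality along $\pi_X$. By the compatibility of the Atiyah class with pullback (\S \ref{subsec:atatiyah}), there is a commutative diagram identifying $\at(p^*\mathcal{E})$ with the composition of $(p\times\id_U)^*\at(\mathcal{E})$ and the canonical map $(p\times\id_U)^*L_{X\times U}\to L_{\Xhat\times U}$. Since the composition $L_{\Xhat\times U}\to \pi_{\Xhat\times U}^*L_U$ factors through the pullback of $L_{X\times U}\to \pi_{X\times U}^*L_U$ (both being induced by projection to the $U$-factor and using $\pi_{X\times U}\circ (p\times\id_U)=\pi_{\Xhat\times U}$), the image of $\at(p^*\mathcal{E})$ in $\Ext^1(p^*\mathcal{E},p^*\mathcal{E}\otimes\pi_{\Xhat\times U}^*L_U)$ is the pullback of the corresponding image of $\at(\mathcal{E})$. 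Combined with the naturality of the projection formula and the trace, this matches the two maps $\ob(\mathcal{E})$ and $\ob(p^*\mathcal{E})$ under the isomorphism from Step 1.

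The final statement about the trace decomposition follows because the isomorphism from the projection formula is natural with respect to the identity and trace maps: both $\id_{p^*\mathcal{E}}$ and $\operatorname{tr}_{p^*\mathcal{E}}$ are obtained by pullback from $\id_{\mathcal{E}}$ and $\operatorname{tr}_{\mathcal{E}}$ respectively (using $p^*\mathcal{O}_X=\mathcal{O}_{\Xhat}$ and that the rank is unchanged), so the splitting $\mathcal{E}\otimes\mathcal{E}^\vee=\mathcal{O}\oplus (\mathcal{E}\otimes\mathcal{E}^\vee)_0$ pulls back to the analogous splitting for $p^*\mathcal{E}$. The main obstacle is bookkeeping in Step 2: one must precisely track how the functoriality of the Atiyah class in Appendix \ref{app:atiyah} interacts with the projection formula and the trace duality used to define $\ob$, as opposed to carrying out any substantive new computation. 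Once the diagrammatic functoriality statements are in hand, everything reduces to naturality.
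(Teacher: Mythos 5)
Your proposal is correct and follows essentially the same strategy as the paper's proof: the isomorphism comes from the projection formula together with $Rp_*\omega_{\Xhat/X}\simeq\mathcal{O}_X$, and the commutativity of the triangle is established using the pullback compatibility of the Atiyah class together with the compatibility of the trace maps for $\omega_{\Xhat}$, $\omega_{\Xhat/X}$, and $\omega_X$. The only (harmless) difference in presentation is that you work with the absolute Atiyah class and then project to $\pi_U^*L_U$, whereas the paper uses the relative Atiyah class $\at'$ over $X$ (respectively $\Xhat$) from the outset, and that you compute $Rp_*\omega_{\Xhat}\simeq\omega_X$ via the explicit sequence $0\to\mathcal{O}_{\Xhat}\to\mathcal{O}_{\Xhat}(C)\to\mathcal{O}_C(-1)\to 0$ rather than invoking the relative trace isomorphism directly.
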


		\begin{proof}
			Let $\pi_{U}:U\times X\to U$ and $\widehat{\pi}_U:U\times \widehat{X}\to U$ denote the respective projections onto the second factor. 
			As in \ref{subsec:atdef}, the map $\ob(p^*\mathcal{E})$ is obtained as the composition
			\[ R(\widehat{\pi}_U)_*(p^*\mathcal{E}\otimes p^*\mathcal{E}^\vee\otimes \omega_{\Xhat})[-1]\xrightarrow{R(\widehat{\pi}_{U})_*(\operatorname{at}'_{p^*\mathcal{E}}\otimes \omega_{\Xhat})[-1]}R (\widehat{\pi}_U)_*(L_{\Xhat\times U/\Xhat}\otimes\omega_{\Xhat})\to L_U. \]
			By the pullback compatibility of the Atiyah class, we have $\at'_{p^*\mathcal{E}}=p^*\at'_\mathcal{E}$ upon identifying $L_{\Xhat\times U/\Xhat}\simeq L_{X\times U/X}$.
			We have a natural isomorphism $\omega_{\Xhat}\simeq p^*\omega_X\otimes \omega_{\Xhat/X}$ which is compatible with the trace maps 
			\begin{equation}\label{eq:trcommutes}
				\begin{tikzcd}
					R\Gamma(\omega_{\Xhat})\ar[r,"\operatorname{tr}_{\Xhat}"]\ar[d]& \mathcal{O}_{\Spec \C} \\
					R\Gamma(\omega_X \otimes Rp_*(\omega_{\Xhat/X}))\ar[r,"R\Gamma(\operatorname{tr}_{\Xhat/X})"] & R\Gamma(\omega_X).\ar[u,"\operatorname{tr}_X"']
				\end{tikzcd}
			\end{equation}
			
			By the projection formula, we can identify $Rp_*(\at'_{p^*\mathcal{E}}\otimes \omega_{\Xhat})$ with 
			\[\at'_{\mathcal{E}}\otimes \omega_X \otimes Rp_*\omega_{\Xhat/X}.\] 
			We therefore have the following diagram, in which at least the square commutes: 
			\begin{equation*}
				\begin{tikzcd}
					R(\pi_U)_*(\mathcal{E}\otimes \mathcal{E}^\vee\otimes \omega_X\otimes Rp_*\omega_{\Xhat/X})[-1]\ar[r]\ar[d]&L_{U}\otimes R(\pi_U)_*(\omega_X\otimes Rp_*\omega_{\Xhat/X})\ar[r]\ar[d]& L_U . \\
					R(\pi_U)_*(\mathcal{E}\otimes \mathcal{E}^\vee\otimes \omega_X)[-1]\ar[r]&L_U\otimes R(\pi_U)_*(\omega_X)\ar[ur] & \,
				\end{tikzcd}
			\end{equation*}
			Here, the upper set of rightwards pointing arrows composes to $\ob(p^*\mathcal{E})$ and the lower set composes to $\ob(\mathcal{E})$. The vertical maps are induced from the trace $Rp_*\omega_{\Xhat/X}\to \mathcal{O}_{X}$, which is an isomorphism, and they are therefore iso morphisms. We are done if we can show that the triangle commutes. In this triangle, the horizontal map is induced from the trace map for $\omega_{\Xhat}$, the vertical map from the trace map for $\omega_{\Xhat/X}$, and the diagonal map from the trace map for $\omega_X$. This commutativity follows from \eqref{eq:trcommutes}.
			  
			To see the final statement about the decomposition into diagonal and trace free part, it is enough to note that this decomposition is preserved under pullbacks and behaves well with respect to the projection formula. 
		\end{proof}

		\subsection{Construction of relative obstruction theories}\label{subsec:relobs}
		\subsubsection{Relative obstruction theory for flag spaces}
		Let $U$ be a quasicompact and finite type algebraic stack over $\C$ and let $\mathcal{E}$ be a $U$-flat sheaf on $U\times X$. Suppose that the fibers of $\mathcal{E}$ over $\C$-points of $U$ all have the same rank and Chern classes. 
		Let $L$ be an ample line bundle on $X$ and $m$ large enough, so that for each fiber $\mathcal{E}_u, u\in U(\C)$, the twist $\mathcal{E}_u\otimes L^{\otimes m}$ is globally generated with vanishing higher cohomology. Then $\mathcal{V}:=\pi_*(\mathcal{E}\otimes L^{\otimes m})$ is a vector bundle on $U$, and we have a surjective map $\pi^*\mathcal{V}\to \mathcal{E}\otimes L^{\otimes m}$. Let $\pi_F:F(\mathcal{V}) \to U$ be the relative flag space of $\mathcal{V}$. We consider $F(\mathcal{V})$ with its relative perfect obstruction theory over $U$ given by $\Omega_{F(\mathcal{V})/U}\xrightarrow{=}\Omega_{F(\mathcal{V})/U}$. 
		\begin{lemma}\label{lem:relobflags}
			There is a natural dashed arrow making the following diagram of solid arrows commute:
			\begin{equation*}
				\begin{tikzcd}
					\Omega_{F(\mathcal{V})/U}[-1]\ar[r, dashed]\ar[d,equals]& \pi_F^*\Ob(\mathcal{E})\ar[d] \\
					\Omega_{F(\mathcal{V})/U}[-1]\ar[r] & \pi_F^*L_{U} .
				\end{tikzcd}
			\end{equation*}
		\end{lemma}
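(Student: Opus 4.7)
My approach is to construct the map $\alpha:\Omega_{F(\mathcal{V})/U}[-1]\to \pi_F^*\Ob(\mathcal{E})$ via the Atiyah class machinery summarized in Appendix \ref{app:atiyah}. First, by flat base change along the smooth morphism $\pi_F$, I identify $\pi_F^*\Ob(\mathcal{E})$ with $\Ob(\mathcal{E}_F) = R\pi_{F(\mathcal{V}),*}(\mathcal{E}_F^\vee\otimes \mathcal{E}_F\otimes \omega_X)[-1]$, where $\mathcal{E}_F := (\pi_F\times \operatorname{id}_X)^*\mathcal{E}$ on $F(\mathcal{V})\times X$ and $\pi_{F(\mathcal{V})}$ denotes the projection to $F(\mathcal{V})$. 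By adjunction together with the trace pairing for $\omega_X$, constructing $\alpha$ reduces to producing a morphism on $F(\mathcal{V})\times X$ of the form
\[
\pi_{F(\mathcal{V})}^*\Omega_{F(\mathcal{V})/U}\longrightarrow \mathcal{E}_F^\vee\otimes \mathcal{E}_F\otimes \omega_X[1].
\]

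The key geometric input is the Atiyah class of the universal flag. For the pullback $\pi_F^*\mathcal{V}$, the composition of $\at(\pi_F^*\mathcal{V})$ with the projection $L_{F(\mathcal{V})}\to \Omega_{F(\mathcal{V})/U}$ vanishes, since by pullback-compatibility $\at(\pi_F^*\mathcal{V}) = \pi_F^*\at(\mathcal{V})$ factors through $\pi_F^*L_U[1]$, and $\pi_F^*L_U\to L_{F(\mathcal{V})}\to \Omega_{F(\mathcal{V})/U}$ is zero by the cotangent triangle. For each subbundle $\mathcal{V}^i\subset \pi_F^*\mathcal{V}$ in the universal flag, however, the analogous composition $\mathcal{V}^i\to \mathcal{V}^i\otimes \Omega_{F(\mathcal{V})/U}[1]$ is nonzero and encodes the second fundamental form of the flag inside $\pi_F^*\mathcal{V}$. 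Combining these second fundamental forms with the universal surjection $\pi_F^*\mathcal{V}\otimes L^{-m}\twoheadrightarrow \mathcal{E}_F$ on $F(\mathcal{V})\times X$ yields, after appropriate dualization, the morphism sought in the previous paragraph.

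The commutativity of the diagram in the statement then follows from the interaction of this construction with the cotangent triangle $\pi_F^*L_U\to L_{F(\mathcal{V})}\to \Omega_{F(\mathcal{V})/U}$: tracking through the Atiyah-class recipe for $\pi_F^*\ob(\mathcal{E})$, the composition $\Omega_{F(\mathcal{V})/U}[-1]\xrightarrow{\alpha}\pi_F^*\Ob(\mathcal{E})\xrightarrow{\pi_F^*\ob(\mathcal{E})}\pi_F^*L_U$ precisely recovers the natural boundary morphism, because the ``pullback part'' of $\at(\mathcal{V}^i)$ that is removed in forming the second fundamental form reappears as exactly that boundary. In other words, the diagram records the compatibility between the Atiyah class of the flag and the cotangent extension classifying $L_{F(\mathcal{V})}$ as an extension of $\Omega_{F(\mathcal{V})/U}$ by $\pi_F^*L_U$.

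The main obstacle will be making the assembly in the second paragraph canonical---that is, independent of the choice of ample line bundle $L$ and large integer $m$---and checking that all the resulting identifications commute in the derived category rather than just up to ambiguity. Both verifications reduce to the detailed functoriality of the Atiyah class developed in Appendix \ref{app:atiyah}: compatibility under pullback along $\pi_F\times \operatorname{id}_X$, compatibility with the short exact sequence $0\to \Ker\to \pi_F^*\mathcal{V}\otimes L^{-m}\to \mathcal{E}_F\to 0$, and compatibility with the tensor-product and trace operations used in the construction of $\ob(\mathcal{E})$.
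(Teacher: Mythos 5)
Your approach is genuinely different from the paper's, and while the outline is plausible, the gaps you acknowledge in your final paragraph are real and substantial. The paper's proof sidesteps the need to assemble anything explicitly by exploiting that the whole construction is the pullback of a universal situation over $BGL_R$. It shows two things: (a) using the pushforward, tensor-product, pullback, and determinant compatibilities of the Atiyah class, the composite $\Phi_{\mathcal{V}}^*L_{BGL_R} \to L_U$ (the pullback of the universal cotangent complex along the classifying map of $\mathcal{V}$) lifts to $\Ob(\mathcal{E})$ through the chain $\Phi_{\mathcal{V}}^*(\mathcal{V}^u\otimes \mathcal{V}^{u\vee})[-1] \simeq \mathcal{V}\otimes\mathcal{V}^\vee[-1] \to \Ob(\mathcal{E}\otimes L^{\otimes m}) \simeq \Ob(\mathcal{E})$; and (b) since $F(\mathcal{V}) = U\times_{BGL_R} F(\mathcal{V}^u)$ is cartesian, the connecting map $\Omega_{F(\mathcal{V})/U}[-1]\to \pi_F^*L_U$ naturally factors through $\pi_F^*\Phi_{\mathcal{V}}^*L_{BGL_R}$. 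Composing (a) and (b) produces the dashed arrow with commutativity built in for free. The key observation you do not make is exactly this factorization through $\mathcal{V}\otimes\mathcal{V}^\vee[-1]$, which is where the pushforward-compatibility of the Atiyah class (recall $\mathcal{V} = \pi_{U*}(\mathcal{E}\otimes L^{\otimes m})$) does all the work of ``descending to $\mathcal{E}$.'' This is the main lever of the argument, and the route you propose would have to re-derive it.

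The specific gaps in your proposal are: you never explain how the individual second fundamental forms of the $\mathcal{V}^i$'s combine into a single morphism out of $\Omega_{F(\mathcal{V})/U}$ (the relative cotangent bundle of a flag bundle is a quotient of $\bigoplus_i \Sheafhom(\mathcal{V}^i, \pi_F^*\mathcal{V}/\mathcal{V}^i)$ and is not itself a direct sum of second fundamental forms), nor how that data transports along the surjection $\pi_F^*\mathcal{V}\otimes L^{-m}\twoheadrightarrow \mathcal{E}_F$ to land in $\mathcal{E}_F^\vee\otimes\mathcal{E}_F\otimes\omega_X$. The commutativity claim in your third paragraph is asserted rather than verified, and in fact it is precisely the content of (a) and (b) above: the ``pullback part'' of the Atiyah class of the flag reappearing as the boundary map is not a happy coincidence but a manifestation of the cartesian diagram over $BGL_R$. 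Your proposal also has a degree shift off by one in the target of the adjunction-reduced morphism. In short, what you outline is a program, not a proof; carrying it through would in effect reconstruct the paper's argument through the classifying stack, which is a cleaner and more economical route to the same map.
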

		\begin{proof}
			Let $R:=\operatorname{rk}\mathcal{V}$, and let $\Phi_{\mathcal{V}}:U\to BGL_R$ denote the classifying map and $\mathcal{V}^u$ a universal rank $R$ vector bundle on $BGL_R$. 
			By the pullback, tensor product and pushforward compatibility of the Atiyah class, we have the following commuting diagram on $U$:
			\begin{equation}\label{eqflagdiag}
				\begin{tikzcd}
					\Phi_{\mathcal{V}}^*(\mathcal{V}^u\otimes {\mathcal{V}^u}^{\vee})\ar[r,"\sim"]\ar[d,"\sim"]&\mathcal{V}\otimes \mathcal{V}^{\vee}[-1]\ar[r]\ar[dr]& \Ob(\mathcal{E}\otimes L^{\otimes m})\ar[d] \ar[r,"\sim"]&\Ob(\mathcal{E})\ar[dl]\\
					\Phi_{\mathcal{V}}^*L_{BGL_R}\ar[rr] & \,&L_{U} . &\,
				\end{tikzcd}
			\end{equation}
			To show the lemma, it suffices to show that the connecting map 
			\[\Omega_{\mathcal{F}(\mathcal{V})/U}[-1] \to \pi_F^* L_U\]
			naturally factors through $\Phi_{\mathcal{V}}^*L_{BGL_R}$. To see this, consider the cartesian diagram
			
			\begin{equation*}
				\begin{tikzcd}
					\mathcal{F}(\mathcal{V})\ar[r,"\overline{\Phi}_{\mathcal{V}}"]\ar[d, "\pi_F"]& \mathcal{F}(\mathcal{V}^u) \ar[d] \\
					U\ar[r,"\Phi_{\mathcal{V}}"] & BGL_R.
				\end{tikzcd}
			\end{equation*}
			It induces the following commutative diagram of cotangent complexes, proving the needed statement:
			\begin{equation*}
				\begin{tikzcd}
					\overline{\Phi}_{\mathcal{V}}^*L_{\mathcal{F}(\mathcal{V}^u)/BGL_R}[-1] \ar[r]\ar[d,"\sim"]& \pi_F^*\Phi_{\mathcal{V}}^*L_{BGL_R}\ar[d] \\
					\Phi_{\mathcal{V}}^*L_{\mathcal{F}(\mathcal{V})/U}[-1] \ar[r] & \pi_F^*L_U.
				\end{tikzcd}
			\end{equation*}
		\end{proof} 
		
		By applying Lemma \ref{lem:relobflags} in the situation of \S \ref{subsecunderlyingstack}, i.e. with $U=\mathcal{M}^{0,1}$, $\mathcal{E}=\mathcal{E}_{\mathcal{M}^{0,1}}$ and $m,L$ as chosen there,  and composing with the map $\Ob(\mathcal{E}_{\mathcal{M}^{0,1}})\to \Ob_{\mathcal{M}^{0,1}}$ of Proposition \ref{propperfobor}, we get a relative obstruction theory on $\mathcal{N}$  (defined in \eqref{eqNdef}) over $\mathcal{M}^{0,1}$, which is naturally compatible with $\Ob_{\mathcal{M}^{0,1}}$.  Applying Construction \ref{constrrelob}, we obtain
		
		\begin{corollary}\label{cor:obnstack}
			The stack $\mathcal{N}$ defined by \eqref{eqNdef} has a perfect obstruction theory $\ob_{\mathcal{N}}:\Ob_{\mathcal{N}}\to L_{\mathcal{N}}$, which is canonical up to homotopy. It comes with a map $\Ob(\mathcal{E}_{\mathcal{N}})\to \Ob_{\mathcal{N}}$, compatible with the maps to the cotangent complexes. 
		\end{corollary}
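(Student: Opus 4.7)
The plan is to directly assemble the obstruction theory on $\mathcal{N}$ by combining the relative obstruction theory coming from the flag bundle structure with the obstruction theory on $\mathcal{M}^{0,1}$ via Construction \ref{constrrelob}. First I would apply Lemma \ref{lem:relobflags} with $U = \mathcal{M}^{0,1}$, $\mathcal{E} = \mathcal{E}_{\mathcal{M}^{0,1}}$, and $L,m$ as fixed in \S \ref{subsecunderlyingstack} (noting that Assumption \ref{assumpmlarge} ensures $\globm{\mathcal{E}_{\mathcal{M}^{0,1}}}$ is a vector bundle). Writing $\pi_{\mathcal{N}}: \mathcal{N} \to \mathcal{M}^{0,1}$ for the structure map of the flag bundle, this produces the dashed arrow
\[
\Omega_{\mathcal{N}/\mathcal{M}^{0,1}}[-1] \longrightarrow \pi_{\mathcal{N}}^* \Ob(\mathcal{E}_{\mathcal{M}^{0,1}})
\]
compatible with the maps to $\pi_{\mathcal{N}}^* L_{\mathcal{M}^{0,1}}$.

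Next, I would compose this with the natural morphism $\Ob(\mathcal{E}_{\mathcal{M}^{0,1}}) \to \Ob_{\mathcal{M}^{0,1}}$ supplied by Proposition \ref{propperfobor}. This yields a map $\alpha: \Omega_{\mathcal{N}/\mathcal{M}^{0,1}}[-1] \to \pi_{\mathcal{N}}^* \Ob_{\mathcal{M}^{0,1}}$ which, thanks to the compatibility statement in Proposition \ref{propperfobor} and in Lemma \ref{lem:relobflags}, sits in a square over $\pi_{\mathcal{N}}^* L_{\mathcal{M}^{0,1}}$ together with the natural morphism $L_{\mathcal{N}/\mathcal{M}^{0,1}} \to \pi_{\mathcal{N}}^* L_{\mathcal{M}^{0,1}}$. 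This is precisely the input needed for Construction \ref{constrrelob}: take $E_1^{\bullet} = \Ob_{\mathcal{M}^{0,1}}$ and $E_3^{\bullet} = \Omega_{\mathcal{N}/\mathcal{M}^{0,1}}$, and form a cone $\Ob_{\mathcal{N}}$ of $-\alpha$ equipped with a map $\ob_{\mathcal{N}}:\Ob_{\mathcal{N}} \to L_{\mathcal{N}}$.

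By Lemma \ref{lemobfromrelob}, the resulting $\ob_{\mathcal{N}}$ is an obstruction theory, and it is perfect because both $\Ob_{\mathcal{M}^{0,1}}$ (Proposition \ref{propperfobor}) and $\Omega_{\mathcal{N}/\mathcal{M}^{0,1}}$ (the cotangent bundle of a smooth morphism) are perfect of the appropriate amplitude. As remarked after Construction \ref{constrrelob}, the resulting $\ob_{\mathcal{N}}$ is well-defined up to homotopy, which gives the canonicity statement.

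For the final claim, since $\mathcal{E}_{\mathcal{N}}$ is by definition the pullback $\pi_{\mathcal{N}}^* \mathcal{E}_{\mathcal{M}^{0,1}}$ along the flat map $\pi_{\mathcal{N}}$, the pullback compatibility of the Atiyah class together with cohomology and base change (using that $\omega_{\Xhat}$ is pulled back from $\Xhat$) gives a canonical isomorphism $\Ob(\mathcal{E}_{\mathcal{N}}) \simeq \pi_{\mathcal{N}}^* \Ob(\mathcal{E}_{\mathcal{M}^{0,1}})$. Composing with the pullback of $\Ob(\mathcal{E}_{\mathcal{M}^{0,1}}) \to \Ob_{\mathcal{M}^{0,1}}$ from Proposition \ref{propperfobor} and then with the structure map $\pi_{\mathcal{N}}^* \Ob_{\mathcal{M}^{0,1}} \to \Ob_{\mathcal{N}}$ from the defining triangle of $\Ob_{\mathcal{N}}$ produces the desired morphism $\Ob(\mathcal{E}_{\mathcal{N}}) \to \Ob_{\mathcal{N}}$. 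Compatibility with the maps to $L_{\mathcal{N}}$ is then a diagram chase combining the compatibility in Proposition \ref{propperfobor} with the one provided by Construction \ref{constrrelob}. The only real bookkeeping obstacle is to track all the commuting squares through the octahedral-type argument inside Construction \ref{constrrelob}, but no substantive new input is needed beyond the results already cited.
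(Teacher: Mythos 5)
Your proposal matches the paper's own construction: applying Lemma \ref{lem:relobflags} with $U=\mathcal{M}^{0,1}$, composing with the map $\Ob(\mathcal{E}_{\mathcal{M}^{0,1}})\to\Ob_{\mathcal{M}^{0,1}}$ from Proposition \ref{propperfobor}, and then invoking Construction \ref{constrrelob} is exactly the argument given. The extra details you supply (perfectness of the two inputs, the identification $\Ob(\mathcal{E}_{\mathcal{N}})\simeq\pi_{\mathcal{N}}^*\Ob(\mathcal{E}_{\mathcal{M}^{0,1}})$ via pullback compatibility of the Atiyah class) are correct and just spell out what the paper leaves implicit.
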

		By restricting $\ob_{\mathcal{N}}$, we obtain perfect obstruction theories on its open substacks $\mathcal{N}^{\ell}$ and $\mathcal{N}^{\ell,\ell+1}$. We will denote these by $(\Ob_{\mathcal{N}^{\ell}},\ob_{\mathcal{N}^{\ell}})$ and $(\Ob_{\mathcal{N}^{\ell, \ell+1}},\ob_{\mathcal{N}^{\ell,\ell+1}})$ respectively. 
		\begin{corollary}
			Each of the stacks $\mathcal{N}^{\ell}$ has a canonical virtual fundamental class. Moreover, $[\mathcal{N}^{0}]^{\vir}$ is the flat pullback of $[\mathcal{M}^0]^{\vir}$, and $[\mathcal{N}^{N}]^{\vir}$ is the flat pullback of $[\mathcal{M}^{1}]^{\vir}$. 
		\end{corollary}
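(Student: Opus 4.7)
The first assertion is immediate: Corollary \ref{cor:obnstack} (restricted to the open substack $\mathcal{N}^\ell \subset \mathcal{N}$) provides a perfect obstruction theory on $\mathcal{N}^\ell$, and Proposition \ref{propproper} shows that $\mathcal{N}^\ell$ is a proper Deligne--Mumford stack. The Behrend--Fantechi construction then yields $[\mathcal{N}^\ell]^{\vir}$.

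For the flat pullback statements, the plan is to use functoriality of virtual pullbacks (Proposition \ref{prop:virpullfunc}) applied to the smooth flag-bundle morphisms $\pi^0\colon \mathcal{N}^0 \to \mathcal{M}^0$ and $\pi^N\colon \mathcal{N}^N \to \mathcal{M}^1$. First I would check that $\mathcal{N}^0$ is precisely the preimage of the open substack $\mathcal{M}^0 \subset \mathcal{M}^{0,1}$ under $\pi_F$: on a $0$-stable sheaf, condition $(\stabS_0)$ is vacuous (there is no destabilizing $\mathcal{O}_C(-1)$-subsheaf at all) and $(\stabT_0)$ with $V^0 = 0$ is automatic, so every flag is $V^0$-stable. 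Analogously, $\mathcal{N}^N \simeq \mathcal{M}^1 \times_{\mathcal{M}^{0,1}} \mathcal{N}$. Restricted to these opens, $\pi_F$ becomes the relative full flag bundle of $\globm{\mathcal{E}_{\mathcal{M}^0}}$ and $\globm{\mathcal{E}_{\mathcal{M}^1}}$ respectively, in particular smooth.

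Next I would unwind the construction of $\ob_\mathcal{N}$ in Corollary \ref{cor:obnstack}. It comes from Construction \ref{constrrelob} applied to the relative obstruction theory $\Omega_{\mathcal{N}/\mathcal{M}^{0,1}}[-1] \to \pi_F^* L_{\mathcal{M}^{0,1}}$ of Lemma \ref{lem:relobflags} together with the obstruction theory on $\mathcal{M}^{0,1}$ coming from \S \ref{subsecobstrbasic}. On the open $\mathcal{N}^0$ the first pulls back to $\Omega_{\mathcal{N}^0/\mathcal{M}^0}$ (the tautological perfect obstruction theory for the smooth morphism $\pi^0$), while the obstruction theory on $\mathcal{M}^{0,1}$ restricts to $\Ob_{\mathcal{M}^0}$. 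Thus the restriction of $\ob_\mathcal{N}$ to $\mathcal{N}^0$ fits, together with $\ob_{\mathcal{M}^0}$ and the identity relative obstruction theory on $\pi^0$, into a diagram of the form \eqref{eqrelobthy}. Proposition \ref{prop:virpullfunc} then gives
\[
  [\mathcal{N}^0]^{\vir} = (\pi^0)^!_{\Omega_{\mathcal{N}^0/\mathcal{M}^0}}\, [\mathcal{M}^0]^{\vir},
\]
and Example \ref{exvirtualpullbackflat} identifies the right-hand side with the usual flat pullback. The case $\ell = N$ works verbatim with $\mathcal{M}^0$ replaced by $\mathcal{M}^1$.

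The main obstacle is really only a bookkeeping one: verifying that the perfect obstruction theory on $\mathcal{N}$ built globally over $\mathcal{M}^{0,1}$ agrees, after restriction to $\mathcal{N}^0$ (resp.\ $\mathcal{N}^N$), with the theory obtained by the same construction relative to $\mathcal{M}^0$ (resp.\ $\mathcal{M}^1$). This reduces to compatibility of the Atiyah-class-based construction of $\Ob_{\mathcal{M}^{0,1}}$ with the open immersion $\mathcal{M}^0 \hookrightarrow \mathcal{M}^{0,1}$ and to the base-change behavior of $\Omega_{\pi_F}$, both of which are formal and hold up to canonical isomorphism — hence the two cones produced by Construction \ref{constrrelob} are canonically isomorphic up to homotopy, which is enough to give the same virtual pullback.
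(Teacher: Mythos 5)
Your proof is correct and fills in exactly the details the paper leaves implicit — the corollary is stated without an explicit proof, and the intended argument is precisely the combination you give: Corollary~\ref{cor:obnstack} plus Proposition~\ref{propproper} for the virtual class, then Proposition~\ref{prop:virpullfunc} with Example~\ref{exvirtualpullbackflat} for the flat-pullback identification, once one knows $\mathcal{N}^0 = \pi_F^{-1}(\mathcal{M}^0)$ and $\mathcal{N}^N = \pi_F^{-1}(\mathcal{M}^1)$.

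One imprecision is worth correcting, because it swaps the roles of the two stability conditions. For $V^0 = 0$, the condition $(\stabS_0)$ is vacuous simply because $V^0 \cap \globm{S} = 0$ for every $S$ — \emph{not} because a $0$-stable sheaf has no $\mathcal{O}_C(-1)$-subsheaves (it may well have such subsheaves; $0$-stability controls $\Hom_{\Xhat}(E,\mathcal{O}_C(-1))$, not $\Hom_{\Xhat}(\mathcal{O}_C(-1),E)$). Conversely, $(\stabT_0)$ is \emph{not} automatic: since $V^0 = 0 \subset \globm{T}$ for every subsheaf $T$, the condition forces the absence of any $T$ with $E/T \simeq \mathcal{O}_C(-1)^{\oplus k}$, $k\geq 1$, which by Lemma~\ref{lemsstabletostable}~\ref{lemsstabletostablei} is exactly $0$-stability. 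So $(\stabT_0)$, not $(\stabS_0)$, is the condition that cuts out $\pi_F^{-1}(\mathcal{M}^0)$. For $\ell = N$ the roles reverse: $(\stabT_N)$ is automatic since any proper $T$ has $\globm{T}\subsetneq\globm{E}$ under Assumption~\ref{assumpmlarge}, while $(\stabS_N)$ is equivalent to $1$-stability via Lemma~\ref{lemsstabletostable}~\ref{lemsstabletostableii}. With this fixed, the rest of your argument — that the global obstruction theory on $\mathcal{N}$ restricts over the opens $\mathcal{N}^0$, $\mathcal{N}^N$ to the one built directly from $\Ob_{\mathcal{M}^0}$, $\Ob_{\mathcal{M}^1}$ together with the flag-bundle relative obstruction theory, and that functoriality of virtual pullbacks then gives the claim — is correct.
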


		\subsubsection{Relative obstruction theory for the stack $\mathcal{W}$}

		Now consider the case that $\Xhat$ is a blowup of the smooth projective surface $X$, i.e. we work with $\Xhat$ in place of $X$ in the above constructions. Let $U$ be a quasicompact algebraic $\C$-stack and $\mathcal{E}$ a $U$-flat coherent sheaf on $\Xhat\times U$. We assume that the rank and Chern classes of the fibers of $\mathcal{E}$ over $U$ are constant. We define $\detL(\mathcal{E})$ as in Construction \ref{constrdetl}. 
		Let $\pi_W:\mathcal{W}(\mathcal{E})\to U$ denote the total space of $\detL(\mathcal{E})^\vee$ with the tautological obstruction theory $\Omega_{\mathcal{W}(\mathcal{E})/U}\xrightarrow{=}\Omega_{\mathcal{W} (\mathcal{E})/U}$.  
		\begin{lemma}\label{lemnatW}
			There is a natural dashed arrow making the following diagram of solid arrows in the equivariant derived category on $\mathcal{W}$ commute:
			\begin{equation*}
				\begin{tikzcd}
					\Omega_{\mathcal{W}(\mathcal{E})/U}\ar[r, dashed]\ar[d,equals]& \pi_W^*\Ob(\mathcal{E})\ar[d] \\
					\Omega_{\mathcal{W}(\mathcal{E})/U}\ar[r] & \pi_W^*L_{U} .
				\end{tikzcd}
			\end{equation*}
		\end{lemma}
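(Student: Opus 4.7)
The approach will mirror the proof of Lemma~\ref{lem:relobflags}, with the classifying map into $BGL_R$ replaced by the classifying map $\Phi_{\detL(\mathcal{E})}:U\to B\mathbb{G}_m$ of a line bundle, and the relative flag bundle replaced by the total space of a line bundle. First I would observe that $\pi_W:\mathcal{W}(\mathcal{E})\to U$ is smooth of relative dimension one, with $\Omega_{\mathcal{W}(\mathcal{E})/U}\simeq \pi_W^*\detL(\mathcal{E})$. Via the cartesian square
\[
\begin{tikzcd}
\mathcal{W}(\mathcal{E})\ar[r]\ar[d,"\pi_W"']& [\mathbb{A}^1/\mathbb{G}_m]\ar[d]\\
U\ar[r,"\Phi_{\detL(\mathcal{E})}"]& B\mathbb{G}_m,
\end{tikzcd}
\]
the connecting map $\Omega_{\mathcal{W}(\mathcal{E})/U}[-1]\to \pi_W^*L_U$ in the cotangent triangle for $\pi_W$ is canonically identified with the $\pi_W$-pullback of (a shift of) the Atiyah class $\at_{\detL(\mathcal{E})}:\detL(\mathcal{E})\to \detL(\mathcal{E})\otimes L_U[1]$, which after tensoring with $\detL(\mathcal{E})^{\vee}$ becomes a map $\mathcal{O}_U\to L_U[1]$ (the first Chern class of $\detL(\mathcal{E})$).

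Next I would factor this Atiyah class through $\ob(\mathcal{E}):\Ob(\mathcal{E})\to L_U$ by iterated application of the Atiyah class compatibilities of Appendix~\ref{app:atiyah}. Writing $\detL(\mathcal{E})=\det R\pi_{U*}(\mathcal{E}\otimes M_1)^{\chi_2}\otimes \det R\pi_{U*}(\mathcal{E}\otimes M_2)^{-\chi_1}$ with $M_1=p^*\mathcal{O}_X(M)$ and $M_2=p^*\mathcal{O}_X(M)(-C)$, determinant compatibility replaces $\at_{\detL(\mathcal{E})}$ by the corresponding weighted combination of traces of the Atiyah classes of the two pushforwards; pushforward compatibility identifies each such trace with $R\pi_{U*}$ applied to $\operatorname{tr}\at_{\mathcal{E}\otimes M_i}$ projected along $L_{\Xhat\times U}\to \pi_U^*L_U$; and tensor/pullback compatibility reduces $\at_{\mathcal{E}\otimes M_i}$ to $\at_{\mathcal{E}}\otimes \id_{M_i}$, the summand $\id_{\mathcal{E}}\otimes \at_{M_i}$ projecting to zero since $M_i$ is pulled back from $\Xhat$. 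Collating, one exhibits $\pi_W^*\at_{\detL(\mathcal{E})}$ as the image under trace of a map built from $R\pi_{U*}(\mathcal{E}\otimes \mathcal{E}^{\vee}\otimes \omega_X)$ --- the $\omega_X$ appearing via relative Serre duality for $\pi_U$ --- which is exactly $\Ob(\mathcal{E})[1]$. This produces the desired dashed arrow, and its compatibility with $\pi_W^*\ob(\mathcal{E})$ is automatic from the naturality of all the Atiyah-class operations invoked.

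The main technical obstacle lies in the bookkeeping for the two weighted determinant factors: one must verify that the $\id_{\mathcal{E}}$-independent ``rank-only'' parts cancel, so that the factorization lands in $\pi_W^*\Ob(\mathcal{E})$ rather than some larger object involving $\omega_X$-independent contributions. This cancellation is precisely the same one that ensures $\nu=0$ in Lemma~\ref{lemlprops}\ref{lemlprops1}, and is the reason the specific weights $(\chi_2,-\chi_1)$ were built into Construction~\ref{constrdetl}. While each individual compatibility is formal, verifying that all intermediate diagrams --- especially those involving relative Grothendieck duality for $\pi_U$ and the interplay between the two determinant factors --- commute at the derived level is the most labor-intensive part of the argument. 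Once set up, the same strategy yields, word for word, the equivariant version stated in the lemma.
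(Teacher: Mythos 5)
Your proposal follows essentially the same strategy as the paper's proof: identify the connecting map $\Omega_{\mathcal{W}(\mathcal{E})/U}[-1]\to\pi_W^*L_U$ of the cotangent triangle with the pullback along $\pi_W$ of (something equivalent to) the Atiyah class of $\detL(\mathcal{E})$, and then lift this map through $\Ob(\mathcal{E})$ via the determinant, pushforward, tensor-product and pullback compatibilities of Appendix~\ref{app:atiyah}. The paper packages this slightly differently — it introduces the classifying maps $\Phi_{\mathcal{L}_i}:U\to B\mathbb{G}_m$ of the two determinant factors $\mathcal{L}_i=\det R\pi_{U*}(\mathcal{E}\otimes B_i)$ and the morphism $\mu:B\mathbb{G}_m\times B\mathbb{G}_m\to B\mathbb{G}_m$, $(L_1,L_2)\mapsto L_1^{\chi_2}\otimes L_2^{-\chi_1}$, so that the induced map $\Phi_{\detL(\mathcal{E})}^*L_{B\mathbb{G}_m}\to\Phi_{\mathcal{L}_1}^*L_{B\mathbb{G}_m}\oplus\Phi_{\mathcal{L}_2}^*L_{B\mathbb{G}_m}$ encodes the weighted tensor product — but this is a bookkeeping choice, not a conceptual divergence from what you wrote.

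Where your proposal goes wrong is the final paragraph. There is no ``cancellation of rank-only parts'' anywhere in this factorization, and the weights $(\chi_2,-\chi_1)$ play no role in ensuring that the lift lands in $\pi_W^*\Ob(\mathcal{E})$. Each factor $\mathcal{L}_i$ is a determinant of a pushforward of $\mathcal{E}$ twisted by a line bundle pulled back from $\Xhat$ (which has trivial Atiyah class relative to $\Xhat$), so the commutative square $\mathcal{L}_i\otimes\mathcal{L}_i^\vee[-1]\to\Ob(\mathcal{E})$ over $\Phi_{\mathcal{L}_i}^*L_{B\mathbb{G}_m}\to L_U$ exists for each $i$ separately and formally, and tensor powers and products of line bundles only add Atiyah classes. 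The factorization therefore exists for any weighted combination of such determinants, not just the one in Construction~\ref{constrdetl}, and the analogy you draw with Lemma~\ref{lemlprops}\ref{lemlprops1} is a red herring. The weights $(\chi_2,-\chi_1)$ are fixed for entirely unrelated reasons: so that scalar automorphisms of a family of Chern character $\chat$ act trivially on $\detL(\mathcal{E})$ (Lemma~\ref{lemlprops}\ref{lemlprops1}) while properly polystable objects pick up a nontrivial $\C^*$-weight (Lemma~\ref{lemlprops}\ref{lemlprops3}), which is what makes the master-space construction function as intended, not what makes the obstruction theory well-defined. Since the rest of your argument does not actually rely on this purported cancellation, the proof goes through as written once that paragraph is discarded.
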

		\begin{proof}
			Let $B_1=p^*\mathcal{O}_X(M)$ and $B_2=p^*\mathcal{O}_X(M)\otimes \mathcal{O}_{\Xhat}(-C)$. Let also $\mathcal{L}_i:=\det R(\pi_U)_*B_i$ for $i=1,2$.  
			By compatibility of the Atiyah class with pushforward, compatibility with tensor product, and compatibility with determinants as in Apendix \ref{subsec:atatiyah}, we have natural diagrams
			\begin{equation}\label{eqBdiag}
				\begin{tikzcd}
					\mathcal{L}_i\otimes \mathcal{L}_i^{\vee}[-1]\ar[r]\ar[d,"\sim"]& \Ob(\mathcal{E})\ar[d] \\
					\Phi_{\mathcal{L}_i}^*L_{B\mathbb{G}_m}\ar[r] & L_{U}
				\end{tikzcd}
			\end{equation}
			for $i=1,2$. Let $\mu:B\mathbb{G}_m\times B\mathbb{G}_m\to B\mathbb{G}_m$ be the morphism given by $(L_1,L_2)\mapsto L_1^{\chi_2}\otimes L_2^{-\chi_1}$. It induces a natural map
			$\Phi_{\detL(\mathcal{E})}^*L_{B\mathbb{G}_m}\to \Phi_{\mathcal{L}_1}^*L_{B\mathbb{G}_m}\oplus \Phi_{\mathcal{L}_2}^*L_{B\mathbb{G}_m}$. Thus by summing the left hand sides of \eqref{eqBdiag}, we can form the diagram
			\begin{equation*}
				\begin{tikzcd}
					&\mathcal{L}_1\otimes \mathcal{L}_1^{\vee}[-1]\oplus \mathcal{L}_2\otimes \mathcal{L}_2^{\vee}[-1]\ar[r]\ar[d,"\sim"]& Ob(\mathcal{E})\ar[d] \\
					\Phi^*_{\detL(\mathcal{E})}L_{B\mathbb{G}_m}\ar[r]& \Phi_{\mathcal{L}_1}^*L_{B\mathbb{G}_m}\oplus \Phi_{\mathcal{L}_2}^*L_{B\mathbb{G}_m}\ar[r] & L_{U} .
				\end{tikzcd}
			\end{equation*}
			From this, we see that natural map $\Phi^*_{\detL(\mathcal{E})}L_{B\mathbb{G}_m}\to L_{U}$ has a natural lift to $\Ob(\mathcal{E})$. The lemma now follows from  the observation that that the map $\Omega_{\mathcal{W}(\mathcal{E})/U}[-1]\to \pi_{\mathcal{W}(\mathcal{E})}^* L_{U}$ is the composition of $\Phi^*_{\detL(\mathcal{E})}\to L_{U}$ with the natural isomorphism $\Omega_{\mathcal{W}(\mathcal{E})/U}[-1]\xrightarrow{\sim}\pi_{W}^*\Phi^*_{\detL(\mathcal{E})}L_{B\mathbb{G}_m}$. 
		\end{proof}
		
		By applying Lemma \ref{lemnatW} in the situation of \S \ref{subsecmasterspace} to $U=\mathcal{N}^{\ell,\ell+1}$ and $\mathcal{E}=\mathcal{E}_{\mathcal{N}^{\ell,\ell+1}}$, we obtain an equivariant relative perfect obstruction theory for the stack $\mathcal{W}$ defined in $\eqref{eqnWdef}$ over $\mathcal{N}^{\ell,\ell+1}$, which is naturally compatible with $\Ob(\mathcal{E}_{\mathcal{N}^{\ell,\ell+1}})$. By composing with the pullback of the map $\Ob(\mathcal{E}_{\mathcal{N}^{\ell,\ell+1}})\to \Ob_{\mathcal{N}^{\ell,\ell+1}}$ of Corollary \ref{cor:obnstack}, we see that the relative obstruction theory is naturally compatible with $\Ob_{\mathcal{N}^{\ell,\ell+1}}$. Applying the equivariant analogue of Construction \ref{constrrelob}, we obtain an equivariant perfect obstruction theory $\ob_{\mathcal{W}}:\Ob_{\mathcal{W}}\to L_{\mathcal{W}}$ on $\mathcal{W}$, which is canonical up to homotopy.  
		We have therefore shown the following: 
		\begin{corollary}\label{cor:obW}
			The stack $\mathcal{W}$ defined by \eqref{eqnWdef} has a $\C^*$-equivariant perfect obstruction theory $\ob_{\mathcal{W}}:\Ob_{\mathcal{W}}\to L_{\mathcal{W}}$, which is canonical up to homotopy and therefore induces a canonical virtual fundamental class. It comes with a map $\Ob(\mathcal{E}_{\mathcal{W}})\to \Ob_{\mathcal{W}}$, compatible with the maps to the cotangent complexes.
		\end{corollary}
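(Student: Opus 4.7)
The plan is to construct $\ob_{\mathcal{W}}$ by applying the equivariant version of Construction \ref{constrrelob} to the relative obstruction theory for $\pi_{\mathcal{W}}:\mathcal{W}\to \mathcal{N}^{\ell,\ell+1}$ produced by Lemma \ref{lemnatW} and the absolute obstruction theory $\ob_{\mathcal{N}^{\ell,\ell+1}}$ on $\mathcal{N}^{\ell,\ell+1}$ inherited from Corollary \ref{cor:obnstack}. The essential inputs are already in place: $\pi_{\mathcal{W}}$ is the total space of $\detL(\mathcal{E}_{\mathcal{N}^{\ell,\ell+1}})^\vee$, so it carries the tautological relative (perfect) obstruction theory $\Omega_{\mathcal{W}/\mathcal{N}^{\ell,\ell+1}} \xrightarrow{=} \Omega_{\mathcal{W}/\mathcal{N}^{\ell,\ell+1}}$, which is $\C^*$-equivariant because the $\C^*$-action is by scaling the fibers of $\detL(\mathcal{E}_{\mathcal{N}^{\ell,\ell+1}})^\vee$.

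First I would specialize Lemma \ref{lemnatW} to $U = \mathcal{N}^{\ell,\ell+1}$ and $\mathcal{E} = \mathcal{E}_{\mathcal{N}^{\ell,\ell+1}}$ to obtain a natural map $\Omega_{\mathcal{W}/\mathcal{N}^{\ell,\ell+1}}[-1] \to \pi_{\mathcal{W}}^*\Ob(\mathcal{E}_{\mathcal{N}^{\ell,\ell+1}})$ compatible with the maps to $\pi_{\mathcal{W}}^*L_{\mathcal{N}^{\ell,\ell+1}}$. Composing with $\pi_{\mathcal{W}}^*\Ob(\mathcal{E}_{\mathcal{N}^{\ell,\ell+1}}) \to \pi_{\mathcal{W}}^*\Ob_{\mathcal{N}^{\ell,\ell+1}}$ from Corollary \ref{cor:obnstack}, I obtain a morphism
\[
\alpha: \Omega_{\mathcal{W}/\mathcal{N}^{\ell,\ell+1}}[-1] \longrightarrow \pi_{\mathcal{W}}^*\Ob_{\mathcal{N}^{\ell,\ell+1}}
\]
fitting into the compatibility diagram \eqref{eqrelob}. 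Then I take $\Ob_{\mathcal{W}}$ to be a cone on $-\alpha$ and invoke Construction \ref{constrrelob} (in its equivariant variant) to produce the map $\ob_{\mathcal{W}}:\Ob_{\mathcal{W}}\to L_{\mathcal{W}}$. Lemma \ref{lemobfromrelob} then gives that $\ob_{\mathcal{W}}$ is an obstruction theory, and perfectness follows because both $\Ob_{\mathcal{N}^{\ell,\ell+1}}$ and $\Omega_{\mathcal{W}/\mathcal{N}^{\ell,\ell+1}}$ are perfect. The compatibility morphism $\Ob(\mathcal{E}_{\mathcal{W}})\to \Ob_{\mathcal{W}}$ required by the statement comes directly from the cone construction combined with the pullback of the compatibility map from Corollary \ref{cor:obnstack}, since $\mathcal{E}_{\mathcal{W}}\simeq \pi_{\mathcal{W}}^*\mathcal{E}_{\mathcal{N}^{\ell,\ell+1}}$.

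The main obstacle I expect to deal with is the bookkeeping needed to carry the whole construction through $\C^*$-equivariantly and to verify the canonicity up to homotopy. For equivariance, one needs the scaling $\C^*$-action on $\detL(\mathcal{E}_{\mathcal{N}^{\ell,\ell+1}})^\vee$ to be respected at every step: the tautological relative obstruction theory is manifestly equivariant (with $\mathcal{N}^{\ell,\ell+1}$ carrying the trivial action), the pullback $\pi_{\mathcal{W}}^*\Ob_{\mathcal{N}^{\ell,\ell+1}}$ is equivariant for the same trivial reason, and the map $\alpha$ coming from Lemma \ref{lemnatW} is equivariant because the determinant line bundle construction and the Atiyah class formalism are functorial with respect to the $\mathbb{G}_m$-automorphism of $\detL(\mathcal{E}_{\mathcal{N}^{\ell,\ell+1}})$. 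Hence the cone may be chosen in the equivariant derived category. For canonicity up to homotopy, the only non-uniqueness comes from the choice of cone and the choice of lift in Construction \ref{constrrelob}; by the discussion following that construction, any two such choices are linked by a one-parameter family of obstruction theories, and therefore produce the same virtual class, yielding the canonical virtual fundamental class $[\mathcal{W}]^{\vir}$ promised in the statement.
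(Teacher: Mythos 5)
Your proposal is correct and follows essentially the same route as the paper: apply Lemma \ref{lemnatW} to $U=\mathcal{N}^{\ell,\ell+1}$, compose with the map $\Ob(\mathcal{E}_{\mathcal{N}^{\ell,\ell+1}})\to \Ob_{\mathcal{N}^{\ell,\ell+1}}$ from Corollary \ref{cor:obnstack} to get the compatibility diagram, and then run the equivariant version of Construction \ref{constrrelob}. You also correctly use the $[-1]$-shifted form of the map from Lemma \ref{lemnatW} (as in its proof, rather than the unshifted form displayed in its statement), and your elaboration on equivariance and homotopy-canonicity matches what the paper leaves implicit.
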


		\subsection{Obstruction theory on the master space} \label{subsec:obstmaster}
		
		Let $\mathcal{W}\to \mathcal{N}^{\ell,\ell+1}$ and $\mathcal{Z}$ be as in \S \ref{subsecmasterspace}. We consider $\mathcal{W}$ with the obstruction theory $\ob_{\mathcal{W}}:\Ob_{\mathcal{W}}\to L_{\mathcal{W}}$ of Corollary \ref{cor:obW}. By taking the trivial obstruction theory on $\mathbb{P}^1$, we obtain in a canonical way an obstruction theory on $\mathcal{W}\times \pone$, which is equivariant for both $\C^*$-actions \eqref{eqnTaction} and \eqref{eqnsecondaction}. By Construction \ref{constcstarobstr} and Lemma \ref{lemcstarobstr}, we obtain an obstruction theory 
		
		\[Ob_{\mathcal{Z}}\to L_{\mathcal{Z}}\]
		on the master space $\mathcal{Z}$. Since the two $\C^*$-actions on $\mathcal{W}\times \pone$ commute, it is equivariant with respect to the $\C^*$-action on $\mathcal{Z}$. By construction, it is also natural up to homotopy in the equivariant derived category. 
		
		\begin{proposition}
			The master space $\mathcal{Z}$ has a $\C^*$-equivariant obstruction theory, natural up to homotopy. In particular, it carries a virtual fundamental class $[\mathcal{Z}]^{\vir}$.  
		\end{proposition}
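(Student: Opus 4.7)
The plan is to follow exactly the strategy sketched right before the statement: start with the obstruction theory on $\mathcal{W}$, pull back to $\mathcal{W}\times \pone$, restrict to the open substack $(\mathcal{W}\times \pone)^s$, and then descend along the free $T$-action to $\mathcal{Z}$.

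First, I would form the obstruction theory on $\mathcal{W}\times \pone$. Since $\pone$ is smooth, it carries a tautological (in fact perfect of rank $1$) obstruction theory $\Omega_{\pone}\xrightarrow{=}\Omega_{\pone}$, and by pulling back along the projections we obtain
\[
\mathrm{Ob}_{\mathcal{W}\times \pone}:= \mathrm{pr}_1^*\mathrm{Ob}_{\mathcal{W}}\oplus \mathrm{pr}_2^*\Omega_{\pone}\longrightarrow L_{\mathcal{W}\times\pone}\simeq \mathrm{pr}_1^*L_{\mathcal{W}}\oplus \mathrm{pr}_2^*\Omega_{\pone}.
\]
Because the $\C^*$-action on $\pone$ used in \eqref{eqnTaction} and the action of Corollary \ref{cor:obW} on $\mathcal{W}$ are algebraic, and because the second $\C^*$-action \eqref{eqnsecondaction} commutes with the first and only affects the $\pone$-factor, this complex is naturally equivariant with respect to $T\times \C^*$. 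Moreover it is perfect, being a direct sum of perfect complexes, and it is natural up to homotopy since $\mathrm{Ob}_{\mathcal{W}}$ is.

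Next, I would restrict to the open substack $(\mathcal{W}\times \pone)^s\subset \mathcal{W}\times \pone$; restriction to an open substack preserves the property of being a perfect obstruction theory, the equivariance, and naturality up to homotopy. At this stage I would invoke the key geometric fact, already used in the proof of separatedness of $\mathcal{Z}$: the $T$-action on $(\mathcal{W}\times \pone)^s$ is free, so the quotient map $q:(\mathcal{W}\times \pone)^s\to \mathcal{Z}$ is a $T$-torsor (in particular smooth of relative dimension $1$). Then Construction \ref{constcstarobstr} produces an object $\mathrm{Ob}_{\mathcal{Z}}\to L_{\mathcal{Z}}$ in $D(\mathcal{Z})$: concretely, one takes the cone of the canonical morphism $\mathrm{Ob}_{(\mathcal{W}\times \pone)^s}\to \Omega_{(\mathcal{W}\times \pone)^s/\mathcal{Z}}\simeq \mathcal{O}_{(\mathcal{W}\times\pone)^s}$ induced by the $T$-action, shifted by $[-1]$, and descends the result along $q$. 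Lemma \ref{lemcstarobstr} guarantees that this is an obstruction theory, and that it is perfect because $\mathrm{Ob}_{(\mathcal{W}\times\pone)^s}$ is. Because we carried out the construction in the $T\times \C^*$-equivariant derived category and the $\C^*$-action commutes with $T$, the resulting morphism $\mathrm{Ob}_{\mathcal{Z}}\to L_{\mathcal{Z}}$ is $\C^*$-equivariant; because $\mathrm{Ob}_{\mathcal{W}}$ was natural up to homotopy, so is $\mathrm{Ob}_{\mathcal{Z}}$.

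Finally, since we proved earlier that $\mathcal{Z}$ is a proper Deligne--Mumford stack, the Behrend--Fantechi machine (or equivalently the virtual pullback formalism of \cite{Mano}) applied to the perfect obstruction theory $\mathrm{Ob}_{\mathcal{Z}}\to L_{\mathcal{Z}}$ yields a canonical virtual fundamental class $[\mathcal{Z}]^{\mathrm{vir}}\in A_*(\mathcal{Z})$. The only delicate point in this argument is making sure that Construction \ref{constcstarobstr} really applies in the slightly stacky setting here: the $T$-action on $\mathcal{W}$ need not be strict, so one must interpret the quotient map and its cotangent complex in the sense of algebraic stacks. But this is exactly the setting for which Construction \ref{constcstarobstr} and Lemma \ref{lemcstarobstr} were stated, so no extra work is needed beyond verifying the freeness of the $T$-action on $(\mathcal{W}\times\pone)^s$, which we already have.
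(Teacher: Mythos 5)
Your proposal is correct and follows essentially the same route as the paper: take the product obstruction theory on $\mathcal{W}\times\pone$ from $\Ob_{\mathcal{W}}$ and the trivial one on $\pone$, note its equivariance for both $\C^*$-actions, restrict to the stable open locus, and apply Construction \ref{constcstarobstr} with Lemma \ref{lemcstarobstr} to descend along the free $T$-action to $\mathcal{Z}$. The explicit remarks about restriction to $(\mathcal{W}\times\pone)^s$ and freeness of the $T$-action are left implicit in the paper but are exactly the right justifications.
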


		\subsection{Fixed obstruction theories and virtual normal bundles}
		\label{subsec:obstrfix}
		We consider the fixed point loci of the $\C^*$-action on the master space, and describe the induced perfect obstruction theory and virtual normal bundle on each component. We let $\teq$ denote a trivial line bundle with $\C^*$-action of weight $1$ and $t=c_1(\teq)$ the corresponding generator of the equivariant Chow ring of a point. 
		
		We recall the decomposition of the fixed point stack determined in \S \ref{subsec:fix}:
		
		\[\mathcal{Z}^{\C^*}=\mathcal{N}^{\ell}\sqcup \mathcal{N}^{\ell+1}\sqcup \bigsqcup_{k,\Lambda} \mathcal{Z}_{k,\Lambda}. \]
		
		\begin{proposition}\label{propfixobeasy}
			\begin{enumerate}[label=\arabic*)]
				\item 	The perfect obstruction theories induced on $\mathcal{N}^{\ell}$ and $\mathcal{N}^{\ell+1}$ as components of $\mathcal{Z}^{\C^*}$ agree with the usual ones.\label{propfixobeasy1}
				\item The virtual normal bundle on $\mathcal{N}^{\ell}$ is given by $\teq$.  \label{propfixobeasy2}
				\item The virtual normal bundle on $\mathcal{N}^{\ell+1}$ is given by $\teq^{\vee}$. \label{propfixobeasy3}
			\end{enumerate}
		\end{proposition}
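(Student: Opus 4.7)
The plan is to use the explicit local model of $\mathcal{Z}$ near each fixed component established in the proof of Proposition~\ref{propfixeasy}. Étale-locally on $\mathcal{N}^{\ell+1}$, there is an open $V\to \mathcal{N}^{\ell+1}$ over which the principal $\C^*$-bundle $\mathcal{W}^+\to \mathcal{N}^{\ell+1}$ trivializes, and the corresponding open neighbourhood of $V = V\times\{0\}$ in $\mathcal{Z}$ becomes $\C^*$-equivariantly isomorphic to $V\times \aone$, with the coordinate $u$ on $\aone$ carrying second-$\C^*$-representation weight $+1$, as one computes from the action $[s_0,s_1]\mapsto [s_0,ts_1]$ and the change of variables $u=x/s$. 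The entire task then reduces to verifying that, under this local isomorphism, $\Ob_\mathcal{Z}|_{V\times\aone}$ is canonically identified with the product obstruction theory $\Ob_{\mathcal{N}^{\ell+1}}|_V\boxplus \Omega_{\aone}$; once this is established, restricting to $V\times\{0\}$ yields a splitting from which all parts of the proposition for $\mathcal{N}^{\ell+1}$ read off immediately.

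To establish the product decomposition, the key computation is $T$-equivariant on $V\times \C^*\times \aone$, where the obstruction theory $\Ob_\mathcal{W}|_{V\times \C^*}\boxplus \Omega_\aone$ descends via Construction~\ref{constcstarobstr} to $\Ob_\mathcal{Z}$ on $V\times \aone$. The universal map to the cotangent of the $T$-torsor $\Omega_{(V\times \C^*\times \aone)/(V\times \aone)}$ is computed by pairing one-forms with the $T$-generating vector field $s\,\partial/\partial s+x\,\partial/\partial x$: on the $\Omega_\aone$ summand this pairing is proportional to $x$ (and hence zero on $V\times \C^*\times \{0\}$), while on the $\Omega_{V\times \C^*/V}$-piece inside $\Ob_\mathcal{W}$ it is proportional to the unit $s$ and is therefore an isomorphism. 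Taking the fiber in the defining triangle of $\Ob_\mathcal{W}$ from Corollary~\ref{cor:obW} leaves $\pi^*\Ob_{\mathcal{N}^{\ell+1}}|_V\oplus \Omega_\aone$, which descends to the claimed product obstruction theory on $V\times \aone$.

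With the product decomposition in hand, part~\ref{propfixobeasy1} follows from the functoriality of the constructions in \S\ref{subsec:relobs}, which identifies the weight-$0$ summand canonically with the intrinsic $\Ob_{\mathcal{N}^{\ell+1}}|_V$. For part~\ref{propfixobeasy3}, the moving piece $\Omega_\aone|_0$ is a trivial line bundle of second-$\C^*$-representation weight $+1$; passing to the virtual tangent $(\Ob_\mathcal{Z})^\vee$ and extracting the moving part yields $N^{\vir}_{\mathcal{N}^{\ell+1}}=T_0\pone\simeq \teq^\vee$. The analogous argument at $\{\infty\}$, where the local coordinate $y=s_1/s_0$ transforms with the opposite $\C^*$-weight, gives $N^{\vir}_{\mathcal{N}^\ell}=T_\infty\pone\simeq \teq$, establishing part~\ref{propfixobeasy2}. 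The principal technical hurdle is the product decomposition of the obstruction theory; once this is secured, the weight computation is essentially linear algebra, and care is needed only to distinguish representation versus pullback conventions for the two commuting $\C^*$-actions.
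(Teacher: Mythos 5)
Your proposal follows essentially the same route as the paper: both pass to the $T$-torsor $\mathcal{W}^+\times\aone$ over $\mathcal{Z}^+$, observe that on the fiber over $0$ the map to $\Omega_{\mathcal{W}^+\times\aone/\mathcal{Z}^+}$ kills the $\Omega_\aone$ summand and restricts to an isomorphism on the relative cotangent of $\mathcal{W}^+\to\mathcal{N}^{\ell+1}$, deduce the splitting of the obstruction theory into $\ob_{\mathcal{N}^{\ell+1}}\oplus\operatorname{id}_{\Omega_\aone}$, and then read off the fixed part and the weight of the moving part; your explicit contraction with the $T$-generating vector field $s\,\partial_s+x\,\partial_x$ is simply a concrete incarnation of the paper's lemma that the one map factors through $\Omega_{\mathcal{W}^+/\mathcal{N}^{\ell+1}}$ and the other vanishes at $x=0$. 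Be a little careful with the sign bookkeeping around the coordinate $u=x/s$ — the second $\C^*$-action sends $u\mapsto u/t$, so whether you declare $u$ to have weight $+1$ or $-1$ depends on a convention you need to pin down explicitly; with the paper's conventions (the standard action giving $\Omega_\aone|_0$ weight $-1$), the present dual action gives $\Omega_\aone|_0$ weight $+1$ and hence $N^{\vir}_{\mathcal{N}^{\ell+1}}=\teq^\vee$, matching your conclusion, but the intermediate weight statements should be stated in a way consistent with that convention.
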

		\begin{proof}
			We determine the fixed obstruction theory and virtual normal bundle on $\mathcal{N}^{\ell+1}$. This gives \ref{propfixobeasy3} and half of \ref{propfixobeasy1}. The other statements follow in an analogous manner. 
			We may work in the open substack $\mathcal{Z}^+:=\mathcal{W}^+\times \aone/T\subset \mathcal{Z}$ containing the fixed locus corresponding to $\mathcal{N}^{\ell+1}$. 
			We consider the natural map $L_{\mathcal{W}^{+}}\oplus  \Omega_{\mathbb{A}^1}\simeq L_{\mathcal{W}^+\times \aone}\to \Omega_{\mathcal{W}^{+}\times \mathbb{A}^1/\mathcal{Z}^+}$ and its restrictions $L_{\mathcal{W}^+}\to \Omega_{\mathcal{W}^+\times\aone/\mathcal{Z}^+}$ and $\Omega_{\aone}\to \Omega_{\mathcal{W}^+\times \aone/\mathcal{Z}^+}$.  
			\begin{lemma}
				The map   $L_{\mathcal{W}^{+}}\to \Omega_{\mathcal{W}^{+}\times \mathbb{A}^1/\mathcal{Z}}$ factors through $\Omega_{\mathcal{W}^+/\mathcal{N}^{\ell+1}}\to \Omega_{\mathcal{W}^+\times \aone/\mathcal{Z}}$.  
				The map $\Omega_{\aone}\to \Omega_{\mathcal{W}^+\times \aone/\mathcal{Z}^+}$ vanishes when restricted to $\mathcal{W}^{+}\times \{0\}$. 
			\end{lemma}
			\begin{proof}
				The first statement follows from the usual properties of the cotangent complex in view of the following $2$-commutative diagram:
				\begin{equation*}
					\begin{tikzcd}[cramped,sep=small]
						&\mathcal{W}^+\times \aone\ar[dl]\ar[dr] & \\
						\mathcal{W}^+\ar[dr]& & \mathcal{Z}^{+}\ar[dl] \\
						&\mathcal{N}^{\ell+1} & \,
					\end{tikzcd}
				\end{equation*}
				For the second statement, we may work \'etale-locally over $\mathcal{N}^{\ell+1}$. We may then choose a local trivialization of the $\C^* $-bundle $\mathcal{W}^+$ over $\mathcal{N}^{\ell+1}$ and the statement reduces to the following claim:  Let $G:\C^*\times \aone\to \C^*, (t,s)\mapsto s/t$. Then the induced map on K\"ahler differentials $\Omega_{\C^*}\oplus  \Omega_{\aone }\to \Omega_{G}$ vanishes on $\Omega_{\aone}$ when restricted to $\C^*\times \{0\}$. This follows from the formula $d(s/t)=ds/t- sdt/t^2$ and specializing to $s=0$. 
			\end{proof}
			
			It follows from this that on $\mathcal{W}^{+}\times \{0\}$, the map $L_{\mathcal{W}^+}\oplus \Omega_{\aone}\to \Omega_{\mathcal{W}^+\times \aone/\mathcal{Z}}$ factors through a map $\iota^* \Omega_{\mathcal{W}^+/\mathcal{N}^{\ell+1}}\to \iota^*\Omega_{\mathcal{W}^+\times \aone/\mathcal{Z}}$, which is moreover an isomorphism. Indeed, the map $L_{\mathcal{W}^+\times \aone}\to \Omega_{\mathcal{W}^+\times \aone/\mathcal{Z}}$ induces an epimorphism on zero-th cohomology sheaves, and an epimorphism between line bundles is an isomorphism.  We conclude that we have an exact triangle 
			\[\iota^*\left(\pi^* L_{\mathcal{N}^{\ell+1}}\oplus \Omega_{\aone} \right)\to \iota^*L_{\mathcal{W}^+\times \aone }\to \iota^*\Omega_{\mathcal{W}^+\times \aone/\mathcal{Z}}\xrightarrow{+1}.\]
			From the construction of the obstruction theory on $\mathcal{W}$, we obtain in the same way a triangle
			
			\[\iota^*\left(\pi^* \Ob_{\mathcal{N}^{\ell+1}}\oplus \Omega_{\aone}\right)\to \iota^*\left(\Ob_{\mathcal{W}^+} \oplus \Omega_{\aone} \right)\to \iota^*\Omega_{\mathcal{W}^+\times \aone/\mathcal{Z}}\xrightarrow{+1}.\]
			Moreover, the maps associated to the obstruction theories give a morphism between these triangles. 
			It follows, that the restriction of the obstruction theory to the fixed point locus on $\mathcal{Z}$ is given by  $\operatorname{ob}_{\mathcal{N}^{\ell+1}}\oplus {\operatorname{id}}_{\Omega_{\aone}}$. We find that the fixed part recovers the usual obstruction theory on $\mathcal{N}^{\ell+1}$. The moving part of the obstruction theory is the pullback of $\Omega_{\aone}\mid_0$ with the action induced by \eqref{eqnsecondaction}. By our conventions, the standard $\C^*$-action on $\aone$ induces a weight minus one action on $\Omega_{\mathbb{A}^1}|_{0}$. In our case, the action on the master space induced from \eqref{eqnsecondaction} corresponds to the dual of the standard $\C^*$-action on $\aone$, so the moving part is a trivial line bundle with $\C^*$-weight $1$. To get the virtual normal bundle, we need to take the dual again, so we find that the virtual normal bundle is $\teq^{\vee}$.  
		\end{proof}
		
		We now describe the virtual obstruction theory on the remaining fixed loci $\mathcal{Z}_{k,\Lambda}$. 
		\begin{lemma}
			The isomorphism $\mathcal{W}\simeq \mathcal{W}\times \C^*/T$ described in Subsection \ref{subsec:fix} is compatible with obstruction theories.
		\end{lemma}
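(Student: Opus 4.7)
The plan is to work out both obstruction theories in terms of the common \emph{equivariant} object on $\mathcal{W}\times \C^*$ that they descend from, and show they become canonically (up to homotopy) identified there.

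More precisely, let $\psi:\mathcal{W}\times \C^*\to \mathcal{W}$ be the $T$-invariant map $\psi(w,s)=s^{-1}\cdot w$, which descends to the isomorphism $\bar\psi:\mathcal{W}\times \C^*/T\to \mathcal{W}$ in question. The obstruction theory on $\mathcal{Z}$ restricted to the open substack $\mathcal{W}\times \C^*/T$ is obtained, via Construction \ref{constcstarobstr} and Lemma \ref{lemcstarobstr}, from the $T$-equivariant obstruction theory $\Ob_{\mathcal{W}}\boxplus \Omega_{\C^*}\to L_{\mathcal{W}\times \C^*}$, where $T$ acts by \eqref{eqnTaction}. On the other hand, pulling back $\ob_{\mathcal{W}}:\Ob_{\mathcal{W}}\to L_{\mathcal{W}}$ along $\psi$ yields a second $T$-equivariant obstruction theory $\psi^*\Ob_{\mathcal{W}}\to \psi^*L_{\mathcal{W}}$ on $\mathcal{W}\times \C^*$, where now $T$ acts trivially on the target. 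It suffices to exhibit a $T$-equivariant homotopy-commutative diagram of exact triangles
\begin{equation*}
\begin{tikzcd}
\psi^*\Ob_{\mathcal{W}}\ar[r]\ar[d]& \Ob_{\mathcal{W}}\boxplus \Omega_{\C^*}\ar[r]\ar[d]& \mathcal{O}_{\mathcal{W}\times \C^*}\ar[d,equal]\ar[r,"+1"]&\,\\
\psi^*L_{\mathcal{W}}\ar[r]& L_{\mathcal{W}\times \C^*}\ar[r]& \mathcal{O}_{\mathcal{W}\times \C^*}\ar[r,"+1"]&\,,
\end{tikzcd}
\end{equation*}
where the lower triangle is the one coming from the $T$-bundle structure, and then to apply (the equivariant version of) Construction \ref{constrrelob} and the homotopy invariance of the descent in Construction \ref{constcstarobstr}.

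To build the upper row, first observe that $\psi$ factors as the composition of the swap, $\mathrm{inv}\times \id_{\mathcal{W}}$, and the $\C^*$-action map $\mu:\C^*\times \mathcal{W}\to \mathcal{W}$. Since $\ob_{\mathcal{W}}$ is $\C^*$-equivariant by Corollary \ref{cor:obW}, we get a canonical equivariant isomorphism $\psi^*\Ob_{\mathcal{W}}\simeq p_{\mathcal{W}}^*\Ob_{\mathcal{W}}$ together with an equivariant isomorphism $\psi^*L_{\mathcal{W}}\simeq p_{\mathcal{W}}^*L_{\mathcal{W}}$ compatible with $\ob_{\mathcal{W}}$. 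Composing the inclusion $p_{\mathcal{W}}^*\Ob_{\mathcal{W}}\hookrightarrow \Ob_{\mathcal{W}}\boxplus \Omega_{\C^*}$ with this isomorphism gives the top-left map; the top-right map is the projection to $\Omega_{\C^*}\simeq \mathcal{O}_{\C^*}$, together with the natural trivialization obtained from the uniformizer $ds/s$ on $\C^*$. The crucial point is that this $\Omega_{\C^*}$-summand is exactly what descends to the canonical trivialization of $\Omega_{(\mathcal{W}\times \C^*)/(\mathcal{W}\times \C^*/T)}$ appearing in Construction \ref{constcstarobstr}, so the right square commutes.

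The main obstacle is the verification of this last point: namely, that under $\psi$ the Lie-algebra vector field of the $T$-action on $\mathcal{W}\times \C^*$ maps to the (vanishing) derivative on $\mathcal{W}$, so that the map $\psi^*L_{\mathcal{W}}\to \Omega_{(\mathcal{W}\times \C^*)/(\mathcal{W}\times \C^*/T)}$ is zero and the right-hand square of the diagram of triangles is identified with the tautological one. This is a direct computation with the action map using the identity $\psi(w,s)=s^{-1}w$, which shows $d\psi$ vanishes on the infinitesimal $T$-direction $s\partial_s+\xi_{\mathcal{W}}$, where $\xi_{\mathcal{W}}$ is the fundamental vector field of the $\C^*$-action on $\mathcal{W}$. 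Once this is in place, the five-lemma argument of Lemma \ref{lemobfromrelob} applied to the diagram above, together with the homotopy-invariance discussed after Construction \ref{constrrelob}, produces a homotopy between the two obstruction theories on $\mathcal{W}\times \C^*/T\simeq \mathcal{W}$, as desired.
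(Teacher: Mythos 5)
Your overall strategy---descend the comparison from a $T$-equivariant statement on $\mathcal{W}\times\C^*$---is reasonable, but as written the diagram of triangles does not commute, and the missing point is not the one you flag.

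The subtlety is that $L_{\mathcal{W}\times\C^*}$ carries two genuinely different filtrations: one from the product structure, with sub-object $p_1^*L_{\mathcal{W}}$ and quotient $p_2^*\Omega_{\C^*}$, and one from the $T$-bundle $\pi:\mathcal{W}\times\C^*\to\mathcal{Z}^\circ$, with sub-object $\pi^*L_{\mathcal{Z}^\circ}\simeq\psi^*L_{\mathcal{W}}$ and quotient $\Omega_{\mathcal{W}\times\C^*/\mathcal{Z}^\circ}$. Under the $\C^*$-equivariance isomorphism $\psi^*L_{\mathcal{W}}\simeq p_{\mathcal{W}}^*L_{\mathcal{W}}$, the structural arrow $d\psi:\psi^*L_{\mathcal{W}}\to L_{\mathcal{W}\times\C^*}$ is \emph{not} identified with the inclusion $p_1^*L_{\mathcal{W}}\hookrightarrow L_{\mathcal{W}\times\C^*}$; the difference is a map $\psi^*L_{\mathcal{W}}\to p_2^*\Omega_{\C^*}$, dual to contraction with the fundamental vector field $\xi_{\mathcal{W}}$ of the $\C^*$-action, and this is nonzero away from the fixed points. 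Concretely, $d\psi$ kills the $T$-orbit direction $(\xi_{\mathcal{W}},s\partial_s)$ while $dp_{\mathcal{W}}$ kills $(0,s\partial_s)$, and these are distinct subspaces unless $\xi_{\mathcal{W}}(w)=0$. So if you take the top-left arrow to be the direct-sum inclusion composed with the equivariance isomorphism (as you state), the left square fails to commute against the $T$-bundle triangle in the bottom row. Equivalently, your right square fails: the composition $p_1^*L_{\mathcal{W}}\hookrightarrow L_{\mathcal{W}\times\C^*}\to\Omega_{\mathcal{W}\times\C^*/\mathcal{Z}^\circ}$ is contraction with $\xi_{\mathcal{W}}$ and does not vanish. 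The vanishing you do verify, of $\psi^*L_{\mathcal{W}}\to L_{\mathcal{W}\times\C^*}\to\Omega_{\mathcal{W}\times\C^*/\mathcal{Z}^\circ}$, is an automatic consequence of exactness of the bottom row and does not imply what you need about $p_1^*L_{\mathcal{W}}$.

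The paper sidesteps exactly this issue by restricting to the section $\mathcal{W}\times\{1\}\subset\mathcal{W}\times\C^*$, where $\psi$ is the identity. There one shows directly that $\Omega_{\C^*}\hookrightarrow L_{\mathcal{W}\times\C^*}\to\Omega_{\mathcal{W}\times\C^*/\mathcal{Z}^\circ}$ is an isomorphism (because pairing $ds/s$ against the orbit vector $(\xi_{\mathcal{W}},s\partial_s)$ at $s=1$ is nonzero), and hence, by comparing long exact sequences of the two triangles, that $\pi^*L_{\mathcal{Z}^\circ}\to L_{\mathcal{W}\times\C^*}\to p_1^*L_{\mathcal{W}}$ is also an isomorphism on the section; the corresponding statement for $\Ob$'s follows. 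If you want to keep your approach, you would need to modify the top-left arrow so that its $\Omega_{\C^*}$-component records the derivative of the $\C^*$-action, and check this still produces a map of triangles---at which point restricting to $\mathcal{W}\times\{1\}$, as the paper does, is both simpler and suffices.
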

		\begin{proof}
			Indeed, this isomorphism is given by the composition 
			\begin{equation*}
				\begin{tikzcd}
					\mathcal{W} \ar[rr,"{w\mapsto (w,1)}"] \ar[drr] &\, &\mathcal{W}\times \C^*  \ar[d,"\pi_{\mathcal{Z}}"]&\\
					\,&\, & \mathcal{Z}^{\circ}\ar[hookrightarrow,r]& \mathcal{Z}.
				\end{tikzcd}
			\end{equation*}
		Here $\mathcal{Z}^{\circ}$ is just $\mathcal{W}\times \C^*/T$.
			By construction, the obstruction theory of $\mathcal{Z}$ fits into an exact triangle
			\[\pi_{\mathcal{Z}}^*\Ob_{\mathcal{Z}}\to p_1^*\Ob_{\mathcal{W}}\oplus p_2^*\Omega_{\C^*}\to \Omega_{\mathcal{W}\times \C^*/\mathcal{Z}^{\circ}}.\]
			By composing the first map with projection onto $\Ob_{\mathcal{W}}$, we get the upper horizontal map in the following diagram on $\mathcal{W}\times \{1\}$. The lower horizontal map is obtained in the analogous way from the natural maps of the cotangent complexes. 
			\begin{equation*}
				\begin{tikzcd}
					\pi_{\mathcal{Z}}^*\Ob_{\mathcal{Z}}\mid_{\mathcal{W}\times \{1\}}\ar[r]\ar[d]& \Ob_{\mathcal{W}}\ar[d] \\
					\pi_{\mathcal{Z}}^*L_{\mathcal{Z}}\mid_{\mathcal{W}\times \{1\}}\ar[r] &L_{\mathcal{W}} \,
				\end{tikzcd}
			\end{equation*}
			We claim that the horizontal arrows are isomorphisms. This follows from the statement that over $\mathcal{W}\times \{1\}$, the map $L_{\mathcal{W}}\oplus \Omega_{\C^*}\to \Omega_{\mathcal{W}\times \C^*/\mathcal{Z}}$ restricts to an isomorphism $\Omega_{\C^*}\to \Omega_{\mathcal{W}\times \C^*/\mathcal{Z} }$. The last statement reduces to a corresponding elementary statement for the map $\C^*\times \C^*\to \C^*; (t,s)\mapsto s/t$, and is readily checked. 
		\end{proof}
		
		Now consider the closed embedding $\rho_{k,\Lambda}:\mathcal{Z}_{k,\Lambda}\to \mathcal{W}$. We determine the fixed and moving part of the $\Ob_{\mathcal{W}}$ restricted to $\mathcal{Z}_{k,\Lambda}$.

		Recall that we have built the obstruction theory on $\mathcal{W}$ from the following pieces:
		\begin{enumerate}[label=(\arabic*)]
			\item $\Ob_0(\mathcal{E})=R\pi_*(R\Sheafhom(\mathcal{E},\mathcal{E})_0\otimes \omega_X)[-1]$ \label{piece1},
			\item $\Omega_{\Pic_{\Xhat}}$ \label{piece3},
			\item $\Omega_{Flag}=\operatorname{Coker}(\bigoplus_{i=0}^{N-1} \Hom(\globm{\mathcal{E}}/\mathcal{V}^{i+1},\mathcal{V}^i)  \to \bigoplus_{i=0}^N\Hom(\globm{\mathcal{E}}/\mathcal{V}^i,\mathcal{V}^i)),$ \label{piece2}
			\item $R\Sheafhom(\detL(\mathcal{E}),\detL(\mathcal{E}))$. \label{piece4}
		\end{enumerate}
		More precisely, all of these objects naturally pull back to objects in the equivariant derived category of $\mathcal{W}$, and we have certain natural equivariant maps between them. We may regard $\Ob_{\mathcal{W}}$ as being constructed from this data in the equivariant derived category on $\mathcal{W}$ by successively completing maps to exact triangles. In particular, the weight decomposition of $\rho_{k,\Lambda}^*\Ob_{\mathcal{W}}$ is induced from the decomposition of each summand. For \ref{piece3}, we observe that the equivariant structure is obtained from pullback along the classifying map $\mathcal{W}\to \Pic_{\Xhat}$, which we may regard as equivariant with respect to the trivial action on $\Pic_{\Xhat}$. Since $\Pic_{\Xhat}$ is a scheme, this shows that the pullback of $\Omega_{\Pic}$ to $\mathcal{Z}_{k,\Lambda}$ is $\C^*$-fixed. For the other parts, we use that $\mathcal{E}_{\mathcal{Z}_{k,\Lambda}}$ splits as a direct sum $\mathcal{S}\oplus \mathcal{T}$, with induced splitting of flags $\rho_{k,\Lambda}^*\mathcal{V}^{\bullet}=\mathcal{V}_S^{\bullet}\oplus \mathcal{V}_T^{\bullet}$. By Remark \ref{remequiv}, the sheaf $\mathcal{S}$ is equivariant of weight $-1/k\chi_1$, while $\mathcal{T}$ is equivariant of weight $0$. Let $\bchi$ denote an equivariant line bundle of weight $1/k\chi_1$. We will abuse notation, and from here on out consider $\mathcal{S}$ with the trivial $\C^*$-action and make the action explicit by tensoring with $\bchi^{\vee}$.

		We can now determine the fixed and moving part of the remaining pieces: 
		\begin{enumerate}[label=(\arabic*)]
			\item  The equivariant decomposition $\mathcal{E}_{\mathcal{Z}_{k,\Lambda}}=\mathcal{S}\otimes \bchi^{\vee}\oplus \mathcal{T}$ induces a decomposition 
			\begin{align*}
				\Ob(\mathcal{E}_{\mathcal{Z}_{k,\Lambda}})&=R\pi_*(R\Sheafhom(\mathcal{S},\mathcal{S})\otimes \omega_X)[-1]\oplus R\pi_*(R\Sheafhom(\mathcal{T},\mathcal{T})\otimes \omega_X)[-1]\oplus\\
				&R\pi_*(R\Sheafhom(\mathcal{S},\mathcal{T})\otimes \omega_X)[-1]\otimes \bchi \oplus R\pi_*(R\Sheafhom(\mathcal{T},\mathcal{S})\otimes \omega_X)[-1]\otimes \bchi^{\vee}\\
				&=\Ob(\mathcal{S})\oplus \Ob(\mathcal{T}) \oplus\\ 
				& R\pi_*(R\Sheafhom(\mathcal{S},\mathcal{T})\otimes \omega_X)[-1]\otimes \bchi \oplus R\pi_*(R\Sheafhom(\mathcal{T},\mathcal{S})\otimes \omega_X)[-1]\otimes \bchi^{\vee}
			\end{align*}
			Moreover, the decomposition of $\Ob(\mathcal{E}_{\mathcal{Z}_{k,\Lambda}})$ into diagonal and trace-free part induces a decomposition
			\[\Ob(\mathcal{S})\oplus \Ob(\mathcal{T})=\left(\Ob(\mathcal{S})\oplus \Ob(\mathcal{T})\right)_0\oplus \mathcal{O}_{\mathcal{Z}_{k,\Lambda }}\]
			So the $\C^*$-fixed part of $\Ob_0(\mathcal{E}_{\mathcal{Z}_{k,\Lambda}})$ is the summand 
			$\left(\Ob(\mathcal{S})\oplus \Ob(\mathcal{T})\right)_0$, 
			while the moving part is the summand
			\[ R\pi_*(R\Sheafhom(\mathcal{S},\mathcal{T})\otimes \omega_X)[-1]\otimes \bchi \oplus R\pi_*(R\Sheafhom(\mathcal{T},\mathcal{S})\otimes \omega_X)[-1]\otimes \bchi^{\vee}.\]
			\item For ease of notation, we write  
			\[N(B,A):=\operatorname{Coker}\left(\bigoplus_{i=0}^{N-1} \Hom(\globm{\mathcal{E}}/\mathcal{V}_A^{i+1},\mathcal{V}_B^{i}) \to \bigoplus_{i=0}^N \Hom(\globm{\mathcal{E}}/\mathcal{V}_A^i,\mathcal{V}_B^i)\right)^\vee,\]
			where each of $A,B$ may stand for either $S$ or $T$.
			Then we have an equivariant decomposition 
			\[\rho^*_{k,\Lambda}\Omega_{\operatorname{Flag}}= N(S,S)^{\vee}\oplus N(T,T)^{\vee}\oplus N(S,T)^{\vee}\otimes \bchi^{\vee}\oplus N(T,S)^{\vee}\otimes \bchi \]
			
			So the fixed part is 
			$N(S,S)^{\vee}\oplus N(T,T)^{\vee}$, 
			and the moving part is $N(S,T)^{\vee}\otimes \bchi^{\vee}\oplus N(T,S)^{\vee}\otimes \bchi$.  \addtocounter{enumi}{1}
			\item One may check from the definition that $\detL(\mathcal{E})$ has weight $1$. 
			In any case, we only need that it is equivariant of pure weight and conclude that
			\[\Hom(\detL(\mathcal{E}),\detL(\mathcal{E}))\]
			is $\C^*$-fixed.
		\end{enumerate}
	
		For any two sheaves $\mathcal{F},\mathcal{G}$ on $X\times U$, where $U$ is an algebraic stack, set 
	\[\mathfrak{N}(\mathcal{F},\mathcal{G}):= - [R\pi_*(R\Sheafhom(\mathcal{F},\mathcal{G})\otimes \omega_X)^{\vee}].\]
		Then we obtain the $K$-theory class of the virtual normal bundle along $\mathcal{Z}_{k,\Lambda}$ as the dual of the moving part of the obstruction theory:
		\begin{equation}\label{eq:virnormal2}
		N_{k,\Lambda}:=\mathfrak{N}(\mathcal{S}\otimes \bchi^{\vee},\mathcal{T})+ \mathfrak{N}(\mathcal{T},\mathcal{S}\otimes \mathbb{\bchi}^{\vee})+[N(S,T) \otimes \bchi]+[N(T,S) \otimes \bchi^{\vee}].
	\end{equation}
		
		For the fixed obstruction theory on $\mathcal{Z}_{k,\Lambda}$ we have
		\begin{proposition}\label{propsamevclass}
			The fixed point obstruction theory on $\mathcal{Z}_{k,\Lambda}$ induces the same fundamental class as the one induced from the pullback along the map in Lemma \ref{lemetalemap}, where we take the usual obstruction theory on $\mathcal{N}_{\widehat{c}-ke}^{\ell}$, and the tautological one on $\mathcal{F}$. 
		\end{proposition}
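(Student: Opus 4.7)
Since $\epsilon:\mathcal{Z}_{k,\Lambda}\to \mathfrak{F}_k\times \mathcal{N}^{\ell}_{\chat-k\chexc}$ is finite \'etale by Lemma \ref{lemetalemap}, endowing it with the trivial relative obstruction theory $\Omega_{\epsilon}\xrightarrow{=}\Omega_{\epsilon}$ realizes virtual pullback along $\epsilon$ as flat pullback of cycles (Example \ref{exvirtualpullbackflat}). By Proposition \ref{prop:virpullfunc}, it therefore suffices to produce an isomorphism, compatible up to homotopy with the canonical maps to $L_{\mathcal{Z}_{k,\Lambda}}$, between the fixed part $\Ob_{\mathcal{Z}_{k,\Lambda}}^{\mathrm{fix}}$ of $\rho_{k,\Lambda}^*\Ob_{\mathcal{W}}$ and $\epsilon^*\bigl(\Omega_{\mathfrak{F}_k}\boxplus \Ob_{\mathcal{N}^{\ell}_{\chat-k\chexc}}\bigr)$, where on the target we take the trivial obstruction theory on the smooth factor $\mathfrak{F}_k$ and the obstruction theory of Corollary \ref{cor:obnstack} on the other.

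The plan is to match the pieces enumerated in \S\ref{subsec:obstrfix} using the decompositions $\mathcal{E}_{\mathcal{Z}_{k,\Lambda}}=\mathcal{S}\oplus\mathcal{T}$ and $\mathcal{V}^{\bullet}=\mathcal{V}_S^{\bullet}\oplus\mathcal{V}_T^{\bullet}$. The ``pure $\mathcal{T}$'' summands $\Ob(\mathcal{T})_0$, the $\det\mathcal{T}$-contribution to $\Omega_{\Pic_{\Xhat}}$, and $N(T,T)^{\vee}$ should, by the functoriality of the constructions underlying Proposition \ref{propperfobor}, Lemma \ref{lem:relobflags} and Corollary \ref{cor:obnstack}, assemble into $\epsilon^*\pi_{\mathcal{N}}^*\Ob_{\mathcal{N}^{\ell}_{\chat-k\chexc}}$; the key point is that under $\epsilon$ the pair $(\mathcal{T},\mathcal{V}_T^{\bullet})$ pulls back to the universal $V^{\ell}$-stable pair on $\mathcal{N}^{\ell}_{\chat-k\chexc}$.

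The ``pure $\mathcal{S}$'' summands $\Ob(\mathcal{S})_0$, the $\det\mathcal{S}$-contribution to $\Omega_{\Pic_{\Xhat}}$, the one-dimensional determinant-line piece coming from $R\Sheafhom(\detL(\mathcal{E}),\detL(\mathcal{E}))$, and $N(S,S)^{\vee}$ must instead collapse to $\epsilon^*\pi_{\mathfrak{F}}^*\Omega_{\mathfrak{F}_k}$. The main input here is Lemma \ref{lemvanishing}, which implies that $\mathcal{S}$ is infinitesimally rigid and that $\Ob(\mathcal{S})_0$ is concentrated in degree zero, equal to the trace-free endomorphism bundle $\mathfrak{sl}(\mathcal{S})$. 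All four pieces are then honest vector bundles on $\mathcal{Z}_{k,\Lambda}$, and we will match their sum with the pullback of $\Omega_{\mathfrak{F}_k}$ by exploiting the explicit description of $\mathfrak{F}_k$ as a flag bundle over the Grassmannian of rank-$k$ tensor subspaces of $\globm{\mathcal{O}_C(-1)}\otimes \C^k$: the cotangent bundle of the Grassmannian factor matches the combination of $\Ob(\mathcal{S})_0$, $\det_{\mathcal{S}}^*\Omega_{\Pic_{\Xhat}}$ and the $\detL$-piece (the latter two correspond to the constraint that the $k$-plane actually spans a rank-$k$ tensor, modulo the $G$-relation defining $\mathcal{Z}_{k,\Lambda}$), while the relative cotangent bundle of the flag factor matches $N(S,S)^{\vee}$.

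The main obstacle is not the isomorphism at the level of complexes — most pieces are bundles and the matchings are near-tautological — but the verification that the resulting isomorphism commutes up to homotopy with the respective morphisms to $L_{\mathcal{Z}_{k,\Lambda}}$. This is delicate for the $\Pic_{\Xhat}$ and $\detL$ pieces, since these enter $\Ob_{\mathcal{W}}$ only through the triangles built in Construction \ref{constrrelob} from classifying maps to $\Picstack_{\Xhat}$ and $B\mathbb{G}_m$, and one has to chase these triangles equivariantly through the $G$-quotient presentation of $\mathcal{Z}_{k,\Lambda}$ from \S\ref{subsec:fix}. Fortunately, the obstruction theories $\Ob_{\mathcal{W}}\to L_{\mathcal{W}}$ and $\Ob_{\mathcal{N}^{\ell}_{\chat-k\chexc}}\to L_{\mathcal{N}^{\ell}_{\chat-k\chexc}}$ are only canonical up to homotopy, and Proposition \ref{prop:virpullfunc} only requires an identification of obstruction theories up to homotopy; we will exploit this slack to reduce the verification to a comparison at the level of cohomology sheaves of the cotangent complex, where the compatibility is forced by the universal properties defining $\mathcal{Z}_{k,\Lambda}$.
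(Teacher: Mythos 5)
Your reduction via Proposition \ref{prop:virpullfunc}, and the plan to compare the fixed part of $\rho_{k,\Lambda}^*\Ob_{\mathcal{W}}$ with $\epsilon^*\bigl(\Omega_{\mathfrak{F}_k}\boxplus \Ob_{\mathcal{N}^{\ell}_{\chat-k\chexc}}\bigr)$ piece by piece using the splitting $\mathcal{E}=\mathcal{S}\oplus\mathcal{T}$, is indeed the right starting point and is the same basic strategy the paper follows. However, there is a genuine gap in the middle: you treat the $\Pic_{\Xhat}$ and $\detL$ contributions as if they cleanly split into an ``$\mathcal{S}$-part'' (to be absorbed into $\Omega_{\mathfrak{F}_k}$) and a ``$\mathcal{T}$-part'' (matching $\Ob_{\mathcal{N}^{\ell}_{\chat-k\chexc}}$), but they do not. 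The fixed obstruction theory on $\mathcal{Z}_{k,\Lambda}$ is built from an orientation on $\mathcal{S}\oplus\mathcal{T}$, while $\Ob_{\mathcal{N}^{\ell}_{\chat-ke}}$ uses an orientation on $\mathcal{T}$ alone. Even though $\det\mathcal{S}$ is canonically trivial so the two determinant data can be identified, the way the Picard piece is \emph{wired into} the obstruction theory (via the diagonal map $\alpha=(d,d):\mathcal{O}\to R\Sheafhom(\mathcal{S},\mathcal{S})\oplus R\Sheafhom(\mathcal{T},\mathcal{T})$ and its trace splitting, as in the proof of Proposition \ref{propperfobor}) is different for $\mathcal{S}\oplus\mathcal{T}$ and for $\mathcal{T}$. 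The paper handles this with a dedicated homotopy (Lemma \ref{lem:obcomp2}), interpolating $\alpha_t=((1-t)d,\, d)$ and $\beta_t$, where the crucial point is that $\operatorname{tr}\circ d=0$ for the \emph{rank-zero} sheaf $\mathcal{S}$ so that $\beta_t$ remains a splitting for all $t$. Without this orientation-shift step your piece-by-piece matching cannot be made compatible with the maps to $L_{\mathcal{Z}_{k,\Lambda}}$; the mismatch is not something a degree-by-degree comparison of cohomology sheaves will resolve, since the two triangles differ by a genuine homotopy, not merely by passage to cohomology.

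Two smaller issues. First, your claim that $\Ob(\mathcal{S})_0$ is ``concentrated in degree zero, equal to $\mathfrak{sl}(\mathcal{S})$'' is off by a shift: since $\Ext^1(S,S)=\Ext^2(S,S)=0$ for $S\simeq\mathcal{O}_C(-1)^{\oplus k}$ (Lemma \ref{lemvanishing} and Serre duality with $K_{\Xhat}|_C\simeq\mathcal{O}_C(-1)$), the trace-free part $\Ob(\mathcal{S})_0$ is $\mathfrak{sl}(S)^{\vee}$ placed in cohomological degree $-1$, i.e.\ the automorphism direction — and its eventual cancellation is exactly what makes the $\mathbb{G}_m$-gerbe $\widetilde{\mathcal{N}}_S\to\mathfrak{F}_k$ appear in the paper's Lemma \ref{lem:obcomp3}. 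Second, the ``$\det_{\mathcal{S}}$-contribution to $\Omega_{\Pic_{\Xhat}}$'' that you want to feed into $\Omega_{\mathfrak{F}_k}$ is in fact trivial (again because $\det\mathcal{S}$ is canonically trivial), so it cannot carry any content to the $\mathfrak{F}_k$ side. After the orientation shift, the paper instead routes the identification of the $\mathcal{S}$-pieces with $\Omega_{\mathfrak{F}_k}$ through the auxiliary unobstructed stack $\widetilde{\mathcal{N}}_S$ and a degree argument there; that is where a cohomology-sheaf computation legitimately enters, used to show a specific already-constructed map is a quasi-isomorphism rather than, as your last paragraph suggests, to manufacture a homotopy out of agreement on cohomology.
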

		We break this down into steps. 
		First, we note that there is an -- up to homotopy natural -- obstruction theory $\Ob_{\mathcal{Z}_{k,\Lambda}}$ on $\mathcal{Z}_{k,\Lambda}$ coming from its moduli theoretic description of Remark \ref{remequiv}. The construction of this obstruction theory is an easy modification of the construction of the obstruction theory on $\mathcal{W}$ and we will not repeat the steps. The relative obstruction theories and maps between them are exactly the fixed parts of the restrictions of the relative obstruction theories used in the construction of $\Ob_\mathcal{W}$. This shows:
		\begin{lemma}\label{lem:obcomp1}
			The fixed obstruction theory on $\mathcal{Z}_{k,\Lambda}$ agrees with $\Ob_{\mathcal{Z}_{k,\Lambda}}$ up to homotopy. 
		\end{lemma}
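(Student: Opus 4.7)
The plan is to show that the construction of $\Ob_{\mathcal{W}}$ is natural enough that, when restricted to $\mathcal{Z}_{k,\Lambda}$ and projected onto its $\C^*$-fixed part, it recovers step-by-step the construction of $\Ob_{\mathcal{Z}_{k,\Lambda}}$ coming from the moduli description of Remark \ref{remequiv}. By the lemma immediately preceding Proposition \ref{propsamevclass}, the restriction $\Ob_{\mathcal{Z}}|_{\mathcal{Z}_{k,\Lambda}}$ is canonically identified with $\Ob_{\mathcal{W}}|_{\mathcal{Z}_{k,\Lambda}}$, so the fixed obstruction theory under discussion is precisely the $\C^*$-fixed summand of the latter, already computed piece by piece in the paragraphs above. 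Both obstruction theories are built by three successive applications of Construction \ref{constrrelob}: one begins with an Atiyah-class/orientation piece, then adds a flag contribution via Lemma \ref{lem:relobflags}, and finally a $\detL$-trivialization contribution via Lemma \ref{lemnatW}. The task is to match the two constructions step by step.

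The matching at each step uses the equivariant splittings $\mathcal{E}_{\mathcal{Z}_{k,\Lambda}} = \mathcal{S}\otimes \bchi^{\vee} \oplus \mathcal{T}$ and $\mathcal{V}^\bullet = \mathcal{V}_S^\bullet \oplus \mathcal{V}_T^\bullet$, together with multiplicativity of $\detL$ in exact sequences (Lemma \ref{lemlprops}). For the Atiyah piece, the fixed summand $(\Ob(\mathcal{S})\oplus \Ob(\mathcal{T}))_0$ is exactly the oriented-sheaf obstruction theory for the pair $(\mathcal{S}, \mathcal{T})$ produced by Proposition \ref{propperfobor}, courtesy of the compatibility of the Atiyah class with direct sums and with the trace decomposition (Appendix \ref{app:atiyah}). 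For the flag piece, the fixed summand $N(S,S)^\vee \oplus N(T,T)^\vee$ decomposes as the sum of the tautological flag obstructions for $\globm{\mathcal{S}}$ and $\globm{\mathcal{T}}$ separately, which are precisely the flag contributions used to assemble $\Ob_{\mathcal{Z}_{k,\Lambda}}$. For the $\detL$ piece, the isomorphism $\detL(\mathcal{E}) \simeq \detL(\mathcal{S}) \otimes \detL(\mathcal{T})$ shows that the fixed part of the $\detL$-contribution on $\mathcal{W}$ matches the trivialization-obstruction entering $\Ob_{\mathcal{Z}_{k,\Lambda}}$.

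The main obstacle is bookkeeping: one must show that the connecting morphisms $E_3^\bullet[-1] \to F^*E_1^\bullet$ appearing in Construction \ref{constrrelob} at each of the three steps respect the equivariant direct-sum decompositions above. Since these morphisms are in all three steps built from the Atiyah class via the naturality properties catalogued in Appendix \ref{app:atiyah} (pullback, tensor product, pushforward, and determinant compatibilities), the verification reduces to checking that each of these naturality statements is itself compatible with direct-sum decompositions of the underlying sheaf, which it is. Once this compatibility is in hand, the canonical-up-to-homotopy nature of the cone in Construction \ref{constrrelob} ensures that assembling the three fixed pieces gives an object homotopic to the fixed summand of the corresponding iterated cone on $\mathcal{W}$, which is exactly the desired homotopy equivalence between the fixed obstruction theory and $\Ob_{\mathcal{Z}_{k,\Lambda}}$.
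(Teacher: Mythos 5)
Your proposal is correct and takes essentially the same approach as the paper: both identify the fixed obstruction theory with $\Ob_{\mathcal{Z}_{k,\Lambda}}$ by matching, step by step, the three iterated cones of Construction \ref{constrrelob} via the equivariant direct-sum decomposition $\mathcal{E}=\mathcal{S}\otimes\bchi^{\vee}\oplus\mathcal{T}$. The paper simply asserts that the relative obstruction theories and connecting maps used to build $\Ob_{\mathcal{Z}_{k,\Lambda}}$ are the fixed parts of those used for $\Ob_{\mathcal{W}}$; your version is a bit more explicit in singling out the compatibility of the connecting morphisms with the equivariant splittings as the verification that makes the matching go through.
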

		
		We now consider $\mathcal{Z}_{k,\Lambda}'$ to be the moduli stack parametrizing the same data as $\mathcal{Z}_{k,\Lambda}$, but with the orientation being on $\mathcal{T}$ instead of on $\mathcal{S}\oplus \mathcal{T}$. Since the determinant of $\mathcal{O}_C(-1)$ is canonically trivial, is a natural isomorphism $\mathcal{Z}_{k,\Lambda}\to \mathcal{Z}_{k,\Lambda}'$. Given by replacing the orientation on $\mathcal{S}\oplus \mathcal{T}$ by the induced orientation on $\mathcal{T}$. Again, we construct an obstruction theory $\Ob_{\mathcal{Z}_{k,\Lambda}'}$ on $\mathcal{Z}_{k,\Lambda}'$ which is natural up to homotopy, and we denote its pullback to $\mathcal{Z}_{k,\Lambda}$ by $\Ob'_{\mathcal{Z}_{k,\Lambda}}$. 
		\begin{lemma}\label{lem:obcomp2}
			The obstruction theories $\Ob_{\mathcal{Z}_{k,\Lambda}}$ and $\Ob'_{\mathcal{Z}_{k,\Lambda}}$ induce the same virtual class.  
		\end{lemma}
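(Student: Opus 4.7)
My plan is to identify the two obstruction theories summand by summand and show that their underlying complexes are canonically isomorphic, with obstruction maps to $L_{\mathcal{Z}_{k,\Lambda}}$ that agree up to a trivial summand whose contribution does not affect the virtual class. Both $\Ob_{\mathcal{Z}_{k,\Lambda}}$ and $\Ob'_{\mathcal{Z}_{k,\Lambda}}$ are built via Construction \ref{constrrelob} from the same relative obstruction theories of Lemmas \ref{lem:relobflags} and \ref{lemnatW}, so they agree in the flag and $\detL$ pieces. The only possible discrepancy lies in the ``trace/Picard'' sector produced by Proposition \ref{propperfobor} applied to the respective orientation data.

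First I would expand this sector using the $\C^*$-equivariant splitting $\mathcal{E}_{\mathcal{Z}_{k,\Lambda}}\simeq \mathcal{S}\oplus\mathcal{T}$ of Remark \ref{remequiv}. The fixed part of $\Ob_0(\mathcal{S}\oplus\mathcal{T})$ decomposes canonically as
\[
\Ob_0(\mathcal{S}\oplus\mathcal{T})^{\mathrm{fix}}\simeq \Ob_0(\mathcal{S})\oplus \Ob_0(\mathcal{T})\oplus R\Gamma(\omega_{\Xhat})[-1],
\]
where the extra trivial summand is the kernel of the total trace inside the diagonal scalar part $\mathrm{id}_\mathcal{S}\oplus \mathrm{id}_\mathcal{T}$. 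Combined with $\det_{\mathcal{S}\oplus\mathcal{T}}^*\Omega_{\Pic_{\Xhat}}$ from Proposition \ref{propperfobor}, this gives the full trace/Picard contribution of $\Ob_{\mathcal{Z}_{k,\Lambda}}$. For $\Ob'_{\mathcal{Z}_{k,\Lambda}}$, the unoriented $\mathcal{S}$-factor contributes the full $\Ob(\mathcal{S})\simeq \Ob_0(\mathcal{S})\oplus R\Gamma(\omega_{\Xhat})[-1]$ via the trace splitting, while the oriented $\mathcal{T}$-factor contributes $\Ob_0(\mathcal{T})\oplus \det_\mathcal{T}^*\Omega_{\Pic_{\Xhat}}$. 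Thus the two underlying complexes already agree up to the Picard piece alone.

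To identify the two Picard pullbacks, I will use that $\mathcal{S}$ is étale-locally isomorphic to $\mathcal{O}_C(-1)^{\oplus k}$ by Remark \ref{remequiv}, so $\det\mathcal{S}$ is canonically the pullback of the fixed line bundle $\mathcal{O}_{\Xhat}(-kC)$. Hence the classifying map $\det_{\mathcal{S}\oplus\mathcal{T}}:\mathcal{Z}_{k,\Lambda}\to \Pic_{\Xhat}$ factors as translation by $[\mathcal{O}_{\Xhat}(-kC)]$ composed with $\det_\mathcal{T}$, and since translation on $\Pic_{\Xhat}$ is an isomorphism with translation-invariant cotangent sheaf, this yields a canonical identification $\det_{\mathcal{S}\oplus\mathcal{T}}^*\Omega_{\Pic_{\Xhat}}\simeq \det_\mathcal{T}^*\Omega_{\Pic_{\Xhat}}$ compatible with the obstruction morphisms. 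Under this identification, the two obstruction theories have canonically isomorphic underlying complexes, and their obstruction morphisms to $L_{\mathcal{Z}_{k,\Lambda}}$ can only differ on the trivial summand $R\Gamma(\omega_{\Xhat})[-1]$. A diagram chase using the compatibility of the Atiyah class with determinants (Proposition \ref{propperfobor}) shows that on this summand both morphisms factor through the constant classifying map of $\det\mathcal{S}$, hence are null. The linear path $(1-t)\phi_0+t\phi_1$ then remains an obstruction theory for all $t$, so by the homotopy invariance discussed after Construction \ref{constrrelob} both induce the same virtual class on $\mathcal{Z}_{k,\Lambda}$.

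The main obstacle is the last step: verifying that the two distinct trace-type contributions (the diagonal-scalar kernel inside $\Ob_0(\mathcal{S}\oplus\mathcal{T})^{\mathrm{fix}}$ on one side, and the trace summand of $\Ob(\mathcal{S})$ on the other) both factor through the constant classifying map $\det\mathcal{S}$. This reduces to carefully unwinding the Atiyah class diagrams in the proof of Proposition \ref{propperfobor} and using that, by the pullback compatibility of \S\ref{subsec:atatiyah}, the induced map from $R\Gamma(\omega_{\Xhat})[-1]$ to $L_{\mathcal{Z}_{k,\Lambda}}$ factors through the differential of $\det\mathcal{S}$, which vanishes because $\det\mathcal{S}$ is the pullback of a point in $\Pic_{\Xhat}$.
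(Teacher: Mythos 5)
Your overall strategy — produce a homotopy between the two obstruction theories and appeal to the discussion after Construction \ref{constrrelob} — matches the paper's. However, there is a genuine gap in the step where you split the $\mathcal{S}$-sector. You write $\Ob(\mathcal{S})\simeq\Ob_0(\mathcal{S})\oplus R\Gamma(\omega_{\Xhat})[-1]$ ``via the trace splitting,'' but this splitting only exists when the rank is nonzero: the retraction of the diagonal map $d_{\mathcal{S}}:\mathcal{O}\to R\Sheafhom(\mathcal{S},\mathcal{S})$ is $\operatorname{tr}_{\mathcal{S}}/\operatorname{rk}\mathcal{S}$, and here $\operatorname{rk}\mathcal{S}=0$. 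Indeed, $\operatorname{tr}_{\mathcal{S}}\circ d_{\mathcal{S}}=0$, so the diagonal direction does \emph{not} split off $\Ob(\mathcal{S})$. Similarly, your decomposition of $\Ob_0(\mathcal{S}\oplus\mathcal{T})^{\mathrm{fix}}$ is only a $K$-theory identity; as a direct-sum decomposition of complexes it identifies the wrong summand, because the kernel of the total trace on the two diagonal copies of $R\Gamma(\omega_{\Xhat})[-1]$ is the $\mathcal{S}$-diagonal copy itself (again because $r_{\mathcal{S}}=0$), not an ``anti-diagonal.'' Since the summands you want to compare are not literally the same subobjects of $\Ob(\mathcal{S})\oplus\Ob(\mathcal{T})$, the rest of the argument (``the morphisms can only differ on the trivial summand, and on that summand both are null'') does not go through as stated.

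The paper's proof uses the same vanishing $\operatorname{tr}_{\mathcal{S}}\circ d_{\mathcal{S}}=0$, but in the opposite way: instead of trying to split $\Ob(\mathcal{S})$, it keeps the sum $\Ob(\mathcal{S})\oplus\Ob(\mathcal{T})$ intact and interpolates the \emph{choice} of diagonal map $\alpha_t=((1-t)d_{\mathcal{S}},d_{\mathcal{T}})$ together with its retraction $\beta_t=((1-t)\operatorname{tr}_{\mathcal{S}}/r,\operatorname{tr}_{\mathcal{T}}/r)$. The relation $\beta_t\circ\alpha_t=\operatorname{id}$ holds for every $t$ precisely because the $\mathcal{S}$-contribution $\operatorname{tr}_{\mathcal{S}}\circ d_{\mathcal{S}}$ vanishes, so one gets a coherent family of orientation-adapted obstruction theories over $\mathbb{A}^1$ whose fibers at $0$ and $1$ are $\Ob_{\mathcal{Z}_{k,\Lambda}}$ and $\Ob'_{\mathcal{Z}_{k,\Lambda}}$. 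This avoids any direct comparison of summands and does not require identifying the underlying complexes or analyzing maps to $L_{\Picstack}$. To repair your argument along your own lines you would have to replace the false splitting of $\Ob(\mathcal{S})$ with some other device, at which point you are essentially led back to interpolating $(\alpha_t,\beta_t)$.
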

		
		\begin{proof}
			We will describe exactly in what way the construction of the two obstruction theories differs and why they are still homotopic. Let $\widetilde{\mathcal{M}}_{S,T}$ be the moduli stack for pairs $(\mathcal{S},\mathcal{T})$ and denote by $\operatorname{or}:\mathcal{M}_{S,T}\to\widetilde{\mathcal{M}}_{S,T} $ the relative moduli stack of orientations on $\mathcal{S}\oplus \mathcal{T}$ (which is canonically isomorphic to the stack of orientations on $\mathcal{T}$). The obstruction theory on $\widetilde{\mathcal{M}}_{S,T}$ is given by $\Ob(\mathcal{S}_{\widetilde{\mathcal{M}}_{S,T}})\oplus \Ob(\mathcal{T}_{\widetilde{\mathcal{M}}_{S,T}})$. Similar to Proposition \ref{propperfobor} we want to construct an obstruction theory $\Ob_{\mathcal{M}_{S,T}}$ that fits into a commutative diagram
			
			\begin{equation*}
				\begin{tikzcd}
					\operatorname{or}^*\left(\Ob(\mathcal{S}_{\widetilde{\mathcal{M}}_{S,T}})\oplus \Ob(\mathcal{T}_{\widetilde{\mathcal{M}}_{S,T}})\right)  \ar[r]\ar[d]& \Ob_{\mathcal{M}_{S,T}}\ar[d] \\
					\operatorname{or}^*L_{\widetilde{\mathcal{M}}_{S,T}}\ar[r] &L_{\mathcal{M}_{S,T}}.
				\end{tikzcd} 
			\end{equation*} 
			From this data, we then construct the obstruction theory on $\mathcal{Z}_{k,\Lambda}$. In the proof of Proposition \ref{propperfobor}, one needs to choose a dotted map making the following diagram commute as well as a splitting of this map:
			
			\begin{equation*}
				\begin{tikzcd}
					{\det}^*\Ob(\mathcal{L})\ar[r,dotted]\ar[d]& \Ob(\mathcal{S}_{\widetilde{\mathcal{M}}_{S,T}})\oplus \Ob(\mathcal{T}_{\widetilde{\mathcal{M}}_{S,T}})\ar[d] \\
					{\det}^*L_{\Picstack_X}\ar[r] & L_{\widetilde{\mathcal{M}}_{S,T}}.
				\end{tikzcd}
			\end{equation*}
			Here is where the difference arises: When we consider orientations on $\mathcal{S}\oplus \mathcal{T}$, the dotted map is induced by $\alpha_0=(d,d):\mathcal{O}\to R\Sheafhom(\mathcal{S},\mathcal{S})\oplus R\Sheafhom(\mathcal{T},\mathcal{T})$ with splitting $\beta_0=(\operatorname{tr}/r,\operatorname{tr}/r)^t:R\Sheafhom(\mathcal{S},\mathcal{S})\oplus R\Sheafhom(\mathcal{T},\mathcal{T})\to \mathcal{O}$, where $d$ is the respective diagonal and $\operatorname{tr}$ the trace map. On the other hand, when we consider orientations on $\mathcal{T}$, the dotted map is given by $\alpha_1=(0,d)$ and its splitting by $(0,\operatorname{tr}/r)^ t$. Now defining $\alpha_t$ and $\beta_t$ for $t\in \C$ by interpolation, we obtain a family over $\aone$ of dotted maps together with a splitting (that $\beta_t$ defines a splitting follows easily from the formula $\operatorname{tr}\circ d=0$ for the diagonal and trace map associated to the rank zero sheaf $\mathcal{S}$). Then all constructions go through relative to $\aone$ and one obtains that $\Ob_{\mathcal{Z}_{k,\Lambda}}$ and $\Ob_{\mathcal{Z}'_{k,\Lambda}}$ arise as fibers of an obstruction theory over $\mathcal{Z}_{k,\Lambda}\times \aone$. Thus they define the same virtual classes. 
		\end{proof}
		
		Let $\widetilde{\mathcal{N}}_S$ denote the moduli space of pairs $(\mathcal{S},W_S^{\bullet})$ such that $\mathcal{S}$ is a coherent sheaf on $\Xhat$ that is \'etale-locally isomorphic to $\mathcal{O}_C(-1)^{\oplus k}$ and such that $(\mathcal{S},W_S^1)$ is simple. Then we have a commutative diagram
		
		\begin{equation*}
			\begin{tikzcd}
				\mathcal{Z}_{k,\Lambda}\ar[dr,"\epsilon"]\ar[d,"B"]& \, \\
				\mathcal{N}_S\times \mathcal{N}^{\ell}_{\chat-ke}\ar[r,"G\times \operatorname{id}"] & \mathfrak{F}_k\times \mathcal{N}^{\ell}_{\chat-ke}.
			\end{tikzcd}
		\end{equation*}
		The vertical map $B$ is a $\C^*$-bundle and the horizontal map $G$ is a $\mathbb{G}_m$-gerbe. On $\mathcal{N}_S$ we have an obstruction theory $\Ob_{\mathcal{N}_S}$ for unoriented sheaves with flag structure. By construction, the obstruction theory $\Ob'_{\mathcal{Z}_{k,\Lambda}}$ fits into the following map of distinguished triangles:
		
		\begin{equation*}
			\begin{tikzcd}
				B^*\left(\Ob_{\widetilde{\mathcal{N}}_S}\oplus \Ob_{\mathcal{N}^{\ell}_{\chat-ke}}\right)\ar[r]\ar[d]&\Ob'_{\mathcal{Z}_{k,\Lambda}}\ar[r] \ar[d] &\Omega_B\ar[r,"+1"]\ar[d] &\,\\
				B^*\left(L_{\widetilde{\mathcal{N}}_S}\oplus L_{\mathcal{N}^{\ell}_{\chat-ke}}\right)\ar[r] & L_{\mathcal{Z}_{k,\Lambda}}\ar[r] &\Omega_{B} \ar[r,"+1"] & . \,
			\end{tikzcd}
		\end{equation*}
		Since $\widetilde{\mathcal{N}}_S$ is a $\mathbb{G}_m$-gerbe over a Deligne--Mumford stack, we have that $h^{1}(L_{\widetilde{\mathcal{N}}_S})$ is a line bundle. It follows that $h^1(B^*(\Ob_{\widetilde{\mathcal{N}}_S}\oplus \Ob_{\mathcal{N}^{\ell}_{\chat-ke}}))$ is also a line bundle. Therefore the connecting map 
		\[h^0(\Omega_B)\to h^1(B^*(\Ob_{\widetilde{\mathcal{N}}_S}\oplus \Ob_{\mathcal{N}^{\ell}_{\chat-ke}}))\]
		is an isomorphism. It follows that the map 
		\[B^*\left(\Ob_{\widetilde{\mathcal{N}}_S}\oplus \Ob_{\mathcal{N}^{\ell}_{\chat-ke}}\right)\to \Ob'_{\mathcal{Z}_{k,\Lambda}}\]
		induces an isomorphism on cohomology sheaves in degrees $\leq 0$. 
		
		On the other hand, we have that $\widetilde{\mathcal{N}}_{\mathcal{S}}$ is unobstructed, hence it is smooth and $\Ob_{\widetilde{\mathcal{N}}_S}\to L_{\widetilde{\mathcal{N}}_S}$ is a quasi-isomorphism. Therefore the map $G^*:\Omega_{\mathfrak{F}_k}\to L_{\widetilde{\mathcal{N}}_S}$ factors through $\Ob_{\widetilde{\mathcal{N}}_S}$. Similar to before, we find that the map $G^*:\Omega_{\mathfrak{F}_k}\to \Ob_{\widetilde{\mathcal{N}}_S}$ induces isomorphisms in cohomology in degrees $\leq 0$, and that remains true for its pullback along $B$. Putting these things together, we have shown that we have a commutative diagram
		\begin{equation*}
			\begin{tikzcd}
				\epsilon^*\left(\Omega_{\mathfrak{F}_k}\oplus \Ob_{\mathcal{N}^{\ell}_{\chat-ke}}\right) \ar[r]\ar[d]& \Ob'_{\mathcal{Z}_{k,\Lambda}}\ar[d] \\
				\epsilon^*\left(\Omega_{\mathfrak{F}_k}\oplus L_{\mathcal{N}^{\ell}_{\chat-ke}}\right)\ar[r] & L_{\mathcal{Z}_{k,\Lambda}}
			\end{tikzcd}
		\end{equation*}
		and that the upper horizontal map is an isomorphism in cohomology in degrees $\leq 0$. Since both complexes are concentrated in non-positive degrees, it is therefore a quasi-isomorphism. We thus have shown:  
		\begin{lemma} \label{lem:obcomp3}
			The obstruction theory $\Ob'_{\mathcal{Z}_{k,\Lambda}}$ and the obstruction theory on $\mathfrak{F}_k\times \mathcal{N}^{\ell}_{\chat-ke}$ are compatible along $\epsilon$ with respect to the trivial relative obstruction theory.  
		\end{lemma}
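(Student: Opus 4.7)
The plan is to factor $\epsilon$ through the intermediate stack $\widetilde{\mathcal{N}}_S \times \mathcal{N}^{\ell}_{\chat-ke}$ via the $\C^*$-bundle $B$ followed by the $\mathbb{G}_m$-gerbe $G \times \id$, and to compare the obstruction theories stage by stage. Since $\epsilon$ is finite \'etale by Lemma \ref{lemetalemap}, one has $L_{\mathcal{Z}_{k,\Lambda}/\mathfrak{F}_k \times \mathcal{N}^\ell_{\chat-ke}} \simeq 0$, so compatibility with the trivial relative obstruction theory amounts to exhibiting a quasi-isomorphism $\epsilon^*(\Omega_{\mathfrak{F}_k} \oplus \Ob_{\mathcal{N}^\ell_{\chat-ke}}) \xrightarrow{\sim} \Ob'_{\mathcal{Z}_{k,\Lambda}}$ commuting with the maps down to $L_{\mathcal{Z}_{k,\Lambda}}$.

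First I would construct an auxiliary obstruction theory $\Ob_{\widetilde{\mathcal{N}}_S}$ on $\widetilde{\mathcal{N}}_S$, built in the obvious way from $\Ob(\mathcal{S}_{\widetilde{\mathcal{N}}_S})$ and the relative flag cotangent piece, mirroring the construction of $\Ob_\mathcal{W}$ but without orientations. The construction of $\Ob'_{\mathcal{Z}_{k,\Lambda}}$ in Lemmas \ref{lem:obcomp1}–\ref{lem:obcomp2} is compatible with the factorization through $B$, so that $\Ob'_{\mathcal{Z}_{k,\Lambda}}$ fits into a distinguished triangle
\[
B^*\bigl(\Ob_{\widetilde{\mathcal{N}}_S} \oplus \Ob_{\mathcal{N}^\ell_{\chat-ke}}\bigr) \to \Ob'_{\mathcal{Z}_{k,\Lambda}} \to \Omega_B \xrightarrow{+1},
\]
mapping to the analogous triangle for cotangent complexes.

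Next, I would use that $\widetilde{\mathcal{N}}_S$ is smooth: since $\mathcal{S}$ is \'etale-locally isomorphic to $\mathcal{O}_C(-1)^{\oplus k}$, Lemma \ref{lemvanishing} shows that $\mathcal{S}$ has no infinitesimal deformations, and the full flag structure inside the global sections contributes only the smooth flag variety. Thus $\Ob_{\widetilde{\mathcal{N}}_S} \to L_{\widetilde{\mathcal{N}}_S}$ is a quasi-isomorphism, and moreover $G: \widetilde{\mathcal{N}}_S \to \mathfrak{F}_k$ being a $\mathbb{G}_m$-gerbe forces $h^1(L_{\widetilde{\mathcal{N}}_S})$ to be the line bundle $\Omega_{G}[1]$, so $G^*\Omega_{\mathfrak{F}_k} \to L_{\widetilde{\mathcal{N}}_S}$ induces isomorphisms on $h^i$ for $i \leq 0$. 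Consequently $G^*\Omega_{\mathfrak{F}_k} \to \Ob_{\widetilde{\mathcal{N}}_S}$ is a quasi-isomorphism in degrees $\leq 0$.

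To finish, I would pull back along $B$ and sum with the identity on $\Ob_{\mathcal{N}^\ell_{\chat-ke}}$, producing the commutative square of the lemma. A diagram chase using the distinguished triangle above and the fact that $\Omega_B$ is a line bundle in degree zero (hence its positive-degree cohomology vanishes) upgrades the statement to a quasi-isomorphism on all of $\epsilon^*(\Omega_{\mathfrak{F}_k} \oplus \Ob_{\mathcal{N}^\ell_{\chat-ke}})$; both source and target are concentrated in non-positive degrees, so an isomorphism in those degrees is an isomorphism. The main obstacle is bookkeeping: one must verify that the connecting morphism $h^0(\Omega_B) \to h^1(B^*(\Ob_{\widetilde{\mathcal{N}}_S} \oplus \Ob_{\mathcal{N}^\ell_{\chat-ke}}))$ really is an isomorphism onto the $\widetilde{\mathcal{N}}_S$-factor, which requires tracking the $\mathbb{G}_m$-gerbe contribution to $h^1$ of the obstruction complex in a way compatible with the trivializing $\C^*$-torsor parameter coming from $\detL$.
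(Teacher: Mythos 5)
Your approach matches the paper's in structure and in all the key ingredients: the factorization $\epsilon = (G\times\id)\circ B$, the auxiliary unoriented obstruction theory $\Ob_{\widetilde{\mathcal{N}}_S}$ on $\widetilde{\mathcal{N}}_S$, the distinguished triangle expressing $\Ob'_{\mathcal{Z}_{k,\Lambda}}$ in terms of $B^*(\Ob_{\widetilde{\mathcal{N}}_S}\oplus\Ob_{\mathcal{N}^\ell_{\chat-ke}})$ and $\Omega_B$, the smoothness of $\widetilde{\mathcal{N}}_S$ making $\Ob_{\widetilde{\mathcal{N}}_S}\to L_{\widetilde{\mathcal{N}}_S}$ a quasi-isomorphism, and the fact that $G$ is a $\mathbb{G}_m$-gerbe over a Deligne--Mumford stack so that $h^1(L_{\widetilde{\mathcal{N}}_S})$ is a line bundle and $G^*\Omega_{\mathfrak{F}_k}\to\Ob_{\widetilde{\mathcal{N}}_S}$ is an isomorphism on cohomology in degrees $\leq 0$. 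However, you explicitly flag the crucial step --- that the connecting map $h^0(\Omega_B)\to h^1(B^*(\Ob_{\widetilde{\mathcal{N}}_S}\oplus\Ob_{\mathcal{N}^\ell_{\chat-ke}}))$ is an isomorphism --- as an ``obstacle'' and suggest resolving it by explicitly tracking the gerbe contribution to $h^1$ against the $\detL$-trivialization. That is a genuine gap: as written, the argument is not complete, and the resolution you gesture at is heavier than what the situation calls for.

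The paper closes this with a soft argument needing no explicit bookkeeping. Because $\mathcal{Z}_{k,\Lambda}$ is a Deligne--Mumford stack, $L_{\mathcal{Z}_{k,\Lambda}}$ is concentrated in degrees $\leq 0$, so the obstruction theory forces $h^1(\Ob'_{\mathcal{Z}_{k,\Lambda}})\simeq h^1(L_{\mathcal{Z}_{k,\Lambda}})=0$. The long exact sequence of the distinguished triangle then makes the connecting map surjective. Its source $h^0(\Omega_B)$ is a line bundle (since $B$ is a $\C^*$-bundle), and its target is also a line bundle: the $\Ob_{\mathcal{N}^\ell_{\chat-ke}}$ summand contributes nothing to $h^1$ since $\mathcal{N}^\ell_{\chat-ke}$ is Deligne--Mumford, so $h^1(B^*(\Ob_{\widetilde{\mathcal{N}}_S}\oplus\Ob_{\mathcal{N}^\ell_{\chat-ke}}))\simeq B^*h^1(L_{\widetilde{\mathcal{N}}_S})$, which is the line bundle you identified. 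A surjection of line bundles on a Noetherian stack is an isomorphism. Once this is in place, your remaining diagram chase goes through exactly as you describe, since both $\epsilon^*(\Omega_{\mathfrak{F}_k}\oplus\Ob_{\mathcal{N}^\ell_{\chat-ke}})$ and $\Ob'_{\mathcal{Z}_{k,\Lambda}}$ are concentrated in non-positive degrees.
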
  
		
		By combining Lemmas \ref{lem:obcomp1}, \ref{lem:obcomp2} and \ref{lem:obcomp3} we obtain Proposition \ref{propsamevclass}. 
		
		\subsection{Obstruction theory on relative Quot-schemes}\label{subsec:obstrquot}
		We consider the situation of \S \ref{subsec:cohsys}. Recall that we have a diagram
		
		\begin{equation*}
			\begin{tikzcd}
				& \mathcal{Q}(\chat,j)\ar[dr,"q_2"]\ar[dl,"q_1"']& \\
				\mathcal{M}^1(\chat)& &\mathcal{M}^1(\chat-je) 
			\end{tikzcd}
		\end{equation*}
		and that $q_1$ is identified with the relative Quot-scheme  $\operatorname{Quot}(\mathcal{E},j\chexc)\to \mathcal{M}^1(\chat)$, while $q_2$ is identified with the relative Grassmann bundle $Gr(j,\Ext^1(\mathcal{O}_C(-1),\mathcal{F})\to \mathcal{M}^1(\chat-je)$. 
		By Proposition \ref{prop:atrelobquot}, the reduced Atiyah class provides a relative obstruction theory $(\Ob_{q_1},\ob_{q_1})$ on the relative Quot-scheme $q_1$. 
		We will construct a natural obstruction theory on $\mathcal{Q}(\chat,j)$ and show that it is compatible with the ones on $\mathcal{M}^1(\chat)$ and $\mathcal{M}^1(\chat-j e)$.
		
		\begin{proposition}\label{prop:obtheoryq}
			There is a natural obstruction theory $\Ob_{\mathcal{Q}}\to L_{\mathcal{Q}(\chat,j)}$, which has the following properties:
			
			\begin{enumerate}[label=\arabic*)]
				\item It is compatible with the obstruction theory on $\mathcal{M}^1(\chat)$ and the relative obstruction theory for the relative Quot-scheme $\mathcal{Q}(\chat,j)\to\mathcal{M}^1(\chat)$. More precisely, we have a natural map of distinguished triangles 
				\begin{equation}\label{eq:obstrdiagq}
					\begin{tikzcd}
						q_1^*\Ob_{\mathcal{M}^1(\chat)}\ar[r]\ar[d]&\Ob_{\mathcal{Q}}\ar[r]\ar[d] & \Ob_{q_1}\ar[r,"+1"]\ar[d]&\, \\
						q_1^*L_{\mathcal{M}^1(\chat)}\ar[r] & L_{\mathcal{Q}(\chat,j)}\ar[r]& L_{q_1}\ar[r,"+1"] & . \,
					\end{tikzcd}
				\end{equation} 
				\item It is compatible with the obstruction theory on $\mathcal{M}^1_{\chat-je}$, i.e. we have a commutative diagram 
				\begin{equation} \label{eq:obstrdiagq2}
					\begin{tikzcd}
						q_2^*\Ob_{\mathcal{M}^1(\chat-je)}\ar[r]\ar[d]& \Ob_{\mathcal{Q}}\ar[d] \\
						q_2^*L_{\mathcal{M}^1(\chat-je)}\ar[r] & L_{\mathcal{Q}} .
					\end{tikzcd}
				\end{equation}
			\end{enumerate} 
		\end{proposition}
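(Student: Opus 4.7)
The plan is to build $\Ob_{\mathcal{Q}}$ so that both compatibilities are manifest by exploiting the universal short exact sequence
\[0\to \mathcal{F}\to \mathcal{E}\to \mathcal{V}\otimes \mathcal{O}_C(-1)\to 0\]
on $\Xhat\times \mathcal{Q}(\chat,j)$ from Proposition \ref{prop:cohsystem}. For the existence of $\Ob_{\mathcal{Q}}$ and property (1), I would apply Construction \ref{constrrelob} to the pair consisting of the obstruction theory $\Ob_{\mathcal{M}^1(\chat)}$ and the relative obstruction theory $\Ob_{q_1}\to L_{q_1}$ coming from the reduced Atiyah class (Proposition \ref{prop:atrelobquot}), together with the canonical compatibility map $\Ob_{q_1}[-1]\to q_1^*\Ob_{\mathcal{M}^1(\chat)}$ supplied by the Atiyah class machinery of Appendix \ref{app:atiyah}. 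By Lemma \ref{lemobfromrelob}, the resulting cone $\Ob_\mathcal{Q}$ is automatically a perfect obstruction theory fitting into the map of triangles \eqref{eq:obstrdiagq}, which is precisely (1).

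The substance of the argument is in (2). I would first give an intrinsic description of $\Ob_{\mathcal{Q}}$ in terms of the sheaf $R(\pi_{\mathcal{Q}})_*(R\Sheafhom(\mathcal{F},\mathcal{E})_0\otimes \omega_{\Xhat})[-1]$ combined with the appropriate orientation piece $\det^*\Omega_{\Pic_{\Xhat}}$, along the lines of Proposition \ref{propperfobor}. To see that this is equivalent to the cone built in Step 1, apply $R\Sheafhom(-,\mathcal{E})\otimes\omega_{\Xhat}$ to the universal exact sequence and then $R(\pi_{\mathcal{Q}})_*[-1]$, identifying the triangle
\[R\pi_*(R\Sheafhom(\mathcal{V}\otimes\mathcal{O}_C(-1),\mathcal{E})\otimes\omega)[-1] \to \Ob(\mathcal{E})\to \Ob_{\mathcal{Q}}^{\mathrm{unor}}\xrightarrow{+1}\]
with the cone description (after extracting trace-free parts and adjoining the determinant contributions); the first term is then naturally identified with $\Ob_{q_1}$ via Serre duality on $\Xhat$.

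With this symmetric description in hand, the compatibility (2) arises from applying $R\Sheafhom(\mathcal{F},-)\otimes \omega_{\Xhat}$ to the same universal exact sequence to get
\[R\Sheafhom(\mathcal{F},\mathcal{F})\otimes\omega \to R\Sheafhom(\mathcal{F},\mathcal{E})\otimes\omega \to R\Sheafhom(\mathcal{F},\mathcal{V}\otimes\mathcal{O}_C(-1))\otimes\omega\xrightarrow{+1}.\]
Taking trace-free parts, pushing forward, shifting by $[-1]$, and combining with the orientation morphism $q_2^*\det^*\Omega_{\Pic_{\Xhat}}\to \det^*\Omega_{\Pic_{\Xhat}}$ (using the canonical identification of determinants from Proposition \ref{propisrelquot}) produces the desired morphism $q_2^*\Ob_{\mathcal{M}^1(\chat-je)}\to \Ob_{\mathcal{Q}}$. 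Commutativity of \eqref{eq:obstrdiagq2} then reduces to the pullback compatibility of the Atiyah class: since $\mathcal{F}=q_2^*\mathcal{F}_{\mathcal{M}^1(\chat-je)}$, the composition $q_2^*\Ob(\mathcal{F}_{\mathcal{M}^1(\chat-je)})\to q_2^*L_{\mathcal{M}^1(\chat-je)}\to L_\mathcal{Q}$ agrees with the map induced by $\at'_{\mathcal{F}}$ factoring through $\Ob_\mathcal{Q}$.

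The main obstacle, and the one requiring careful bookkeeping, is reconciling the orientation data throughout. The obstruction theory on $\mathcal{M}^1(\chat)$ uses the trace-free part of $R\Sheafhom(\mathcal{E},\mathcal{E})$ plus $\det_\mathcal{E}^*\Omega_{\Pic_{\Xhat}}$, while that on $\mathcal{M}^1(\chat-je)$ uses analogous data for $\mathcal{F}$; however, in the mixed complex $R\Sheafhom(\mathcal{F},\mathcal{E})$ there is no natural trace map, so one must first pass to a diagonal-trace-free splitting after fixing an isomorphism of determinants $\det\mathcal{F}\simeq \det\mathcal{E}\otimes \pi_{\Xhat}^*\mathcal{O}_{\Xhat}(jC)$, as in Proposition \ref{propisrelquot}. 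This compatible splitting must then be shown to intertwine the natural maps arising from $R\Sheafhom(\mathcal{E},\mathcal{E})\to R\Sheafhom(\mathcal{F},\mathcal{E})\leftarrow R\Sheafhom(\mathcal{F},\mathcal{F})$, which is where the bulk of the derived-categorical bookkeeping will take place and where pullback- and tensor-compatibilities of Atiyah classes from Appendix \ref{app:atiyah} are used repeatedly.
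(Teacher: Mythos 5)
Your high‐level strategy — apply $R\Sheafhom(-,\mathcal{E})$ or similar to the universal exact sequence, form a cone, and use functoriality of the Atiyah class for the compatibilities — is pointing in the right direction, but the concrete identification of the cone is wrong in a way that derails the rest. The relative obstruction theory $\Ob_{q_1}$ for the Quot‐scheme is $R\pi_*(\mathcal{F}\otimes\mathcal{G}^\vee\otimes\omega_{\Xhat})$ (the dual of $R\Hom_\pi(\mathcal{F},\mathcal{G})$, with $\mathcal{G}=\mathcal{V}\otimes\mathcal{O}_C(-1)$), not $R\pi_*(\mathcal{E}\otimes\mathcal{G}^\vee\otimes\omega_{\Xhat})[-1]$. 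Consequently the cone $\coneop(\Ob_{q_1}[-1]\to q_1^*\Ob(\mathcal{E}))$ is the pushforward of $\frac{\mathcal{E}\otimes\mathcal{E}^\vee}{\mathcal{F}\otimes\mathcal{G}^\vee}[-1]\otimes\omega_{\Xhat}$, \emph{not} of $R\Sheafhom(\mathcal{F},\mathcal{E})[-1]\otimes\omega_{\Xhat}$ as you propose. These two complexes differ by a copy of $R\Sheafhom(\mathcal{G},\mathcal{G})$: your triangle built from $R\Sheafhom(-,\mathcal{E})$ has $\mathcal{E}\otimes\mathcal{G}^\vee$ where the Quot obstruction theory has $\mathcal{F}\otimes\mathcal{G}^\vee$, and Serre duality on $\Xhat$ will not swap $\mathcal{E}$ for $\mathcal{F}$. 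So your "intrinsic description" $R\pi_*(R\Sheafhom(\mathcal{F},\mathcal{E})_0\otimes\omega)[-1]$ is not the obstruction theory that Construction \ref{constrrelob} produces, and the subsequent steps inherit this defect.

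The gap is instructive because the paper sidesteps exactly the difficulty you flag at the end. You observe that $R\Sheafhom(\mathcal{F},\mathcal{E})$ has no natural trace map, forcing a non‐canonical splitting tied to an identification of determinants; this is a symptom that the object is wrong. The correct quotient complex $\frac{\mathcal{E}\otimes\mathcal{E}^\vee}{\mathcal{F}\otimes\mathcal{G}^\vee}$ \emph{does} inherit a canonical trace‐free decomposition, because the composition $\mathcal{F}\otimes\mathcal{G}^\vee\to\mathcal{E}\otimes\mathcal{E}^\vee\xrightarrow{\operatorname{tr}}\mathcal{O}$ is zero and the diagonal $\mathcal{O}\to\mathcal{E}\otimes\mathcal{E}^\vee$ descends, yielding the natural splitting $\frac{\mathcal{E}\otimes\mathcal{E}^\vee}{\mathcal{F}\otimes\mathcal{G}^\vee}\simeq\frac{(\mathcal{E}\otimes\mathcal{E}^\vee)_0}{\mathcal{F}\otimes\mathcal{G}^\vee}\oplus\mathcal{O}$. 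The paper builds $\ob_{\mathcal{Q}}$ directly from the Atiyah class of the exact sequence (Appendix \ref{subsec:atclexseq}), whose functoriality statements Propositions \ref{prop:atclexseqcomp} and \ref{prop:atclexseqcomp2} hand you both the morphism of triangles for (1) and the natural comparison map $q_2^*\mathcal{F}\otimes\mathcal{F}^\vee[-1]\to\frac{\mathcal{E}\otimes\mathcal{E}^\vee}{\mathcal{F}\otimes\mathcal{G}^\vee}[-1]$ needed for (2), with the trace‐free and determinant compatibilities falling out with no ad hoc choices. If you reorganize your Step 2 around the quotient complex rather than $R\Sheafhom(\mathcal{F},\mathcal{E})$, and use the canonical trace‐free splitting, the rest of your plan for (2) (compare along $\mathcal{F}\otimes\mathcal{F}^\vee\to\frac{\mathcal{E}\otimes\mathcal{E}^\vee}{\mathcal{F}\otimes\mathcal{G}^\vee}$ and the determinant identification $\det\mathcal{E}\simeq\det\mathcal{F}\otimes\mathcal{O}_{\Xhat}(jC)$) matches the paper's argument.
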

		\begin{proof}
			Let $\mathcal{Q}:=\mathcal{Q}(\chat,j)$.
			We will construct $\ob_{Q}:\Ob_{\mathcal{Q}}\to L_{\mathcal{Q}} $ and show that it fits into the diagram \eqref{eq:obstrdiagq}. It then follows from Lemma \ref{lemobfromrelob} that $\Ob_{\mathcal{Q}}$ is indeed a perfect obstruction theory. To begin, we work on the product $\mathcal{Q}\times \Xhat$, where we have the universal exact sequence $\underline{\mathcal{E}}_{\mathcal{Q}}$ given by $0\to \mathcal{F}_{\mathcal{Q}}\to \mathcal{E}_{\mathcal{Q}}\to \mathcal{G}:=\mathcal{O}_C(-1)\otimes V_\mathcal{Q}\to 0$. By \ref{subsec:atclexseq}, there is a natural morphism \[\at_{\underline{\mathcal{E}}_{\mathcal{Q}}}:\frac{\mathcal{E}\otimes \mathcal{E}^{\vee}}{\mathcal{F}\otimes \mathcal{G}^{\vee}}[-1]\to L_{\mathcal{Q}}. \]
			Here we have a canonical choice of homotopy quotient (i.e. up to canonical isomorphism), as pointed out in \ref{subsec:atclexseq}.
			By Proposition \ref{prop:atclexseqcomp}, this map fits into a natural morphism of distinguished triangles
			\begin{equation}\label{eq:obstrdiagqprel}
				\begin{tikzcd}
					q_1^*\mathcal{E}\otimes \mathcal{E}^{\vee}[-1]\ar[r]\ar[d,"q_1^*\at_{\mathcal{E}}"]&\frac{\mathcal{E}\otimes \mathcal{E}^{\vee}}{\mathcal{F}\otimes \mathcal{G}^{\vee}}[-1]\ar[r]\ar[d] & \mathcal{F}\otimes \mathcal{G}^{\vee}\ar[r,"+1"]\ar[d,"\at'_{\mathcal{E}}"]&\, \\
					q_1^*L_{\mathcal{M}^1(\chat)\times \Xhat/\Xhat}\ar[r] & L_{Q\times \Xhat/\Xhat}\ar[r]& L_{q_1\times \Xhat}\ar[r,"+1"]&\,.
				\end{tikzcd}
			\end{equation}

			Since the composition $\mathcal{F}\otimes \mathcal{G}^{\vee}\to \mathcal{E}\otimes \mathcal{E}^{\vee}\xrightarrow{tr} \mathcal{O}$ is zero, we have a canonical factorization $\mathcal{F}\otimes \mathcal{G}^{\vee}\to (\mathcal{E}\otimes \mathcal{E}^{\vee})_0$. We also have the natural splitting
			
			\[\frac{\mathcal{E}\otimes \mathcal{E}^{\vee}}{\mathcal{F}\otimes \mathcal{G}^{\vee}} \simeq \frac{\left(\mathcal{E}\otimes \mathcal{E}^{\vee}\right)_0}{\mathcal{F}\otimes \mathcal{G}^{\vee}} \oplus \mathcal{O}.\]  In conclusion, the triangle in $\eqref{eq:obstrdiagqprel}$ splits off the diagonal part of $\mathcal{E}\otimes \mathcal{E}^{\vee}$ and we obtain a direct summand
			\[(\mathcal{E}\otimes \mathcal{E}^{\vee})_0[-1]\to  \frac{(\mathcal{E}\otimes \mathcal{E}^{\vee})_0}{\mathcal{F}\otimes \mathcal{G}^{\vee}}[-1]\to \mathcal{F}\otimes G^{\vee}\xrightarrow{+1}.\]

			We define $\ob_{\mathcal{Q}}$ as the natural map 
			
			\[R\pi_*\left(\frac{(\mathcal{E}\otimes \mathcal{E}^{\vee})_0}{\mathcal{F}\otimes \mathcal{G}^{\vee}}[-1]\otimes \omega_{\Xhat} \right)\oplus \operatorname{det}_{\mathcal{E}}^*\Omega_{\Pic}\to L_{\mathcal{Q}}.\]
			It follows that $\ob_{Q}$ fits into a map of exact triangles as in \eqref{eq:obstrdiagq}.
			
			Now we address the second point. By \ref{prop:atclexseqcomp2}, we have the following commutative diagram on $\mathcal{Q}\times \Xhat$:
			
			\begin{equation*}
				\begin{tikzcd}
					q_2^*\mathcal{F}\otimes \mathcal{F}^{\vee}[-1]\ar[r]\ar[d]& \frac{\mathcal{E}\otimes \mathcal{E}^{\vee}}{\mathcal{F}\otimes \mathcal{G}^{\vee}}[-1]\ar[d] \\
					q_2^*L_{\mathcal{M}^1(\chat-je)\times \Xhat/\Xhat}\ar[r] & L_{\mathcal{Q}\times \Xhat/\Xhat}.
				\end{tikzcd}
			\end{equation*}
			The upper horizontal map in this square respects the respective decompositions into trace-free and diagonal part. Moreover, we have a commutative square
			\begin{equation*}
				\begin{tikzcd}
					\det_{\mathcal{F}}^*\Omega_{\Pic}\ar[r]\ar[d]& \det_{\mathcal{E}}^*\Omega_{\Pic}\ar[d] \\
					L_{\mathcal{Q}}\ar[r] & L_{\mathcal{Q}} ,
				\end{tikzcd}
			\end{equation*}
			which comes from the canonical isomorphism $\det \mathcal{E}=\det \mathcal{F}\otimes \det(\mathcal{O}_C(-1)\otimes \mathcal{V})=\det \mathcal{F}\otimes \mathcal{O}(C)^{\otimes j}$. 
			Part 2) follows from these observations in view of the construction of the obstruction theories on $\mathcal{M}^1(\chat-je)$ and $\mathcal{Q}$ respectively. 
		\end{proof}
		
		From Proposition \ref{prop:obtheoryq}, we conclude that the virtual class of $\mathcal{Q}(\chat,j)$  can be related to the virtual classes of $\mathcal{M}^1_{\chat}$ and $\mathcal{M}^1_{\chat-j\chexc}$.
		\begin{corollary}
			\begin{enumerate}[label=\arabic*)]
				\item 
				We have $[\mathcal{Q}(\chat,j)]^{\vir}=(q_1)^!_{\Ob_{q_1}}[\mathcal{M}^1(\chat)]^{\vir}$, where $\Ob_{q_1}$ is the relative obstruction theory for Quot-schemes. 
				\item We have $[\mathcal{Q}(\chat,j)]^{\vir}=q_2^*[\mathcal{M}^1(\chat-j\chexc)]^{\vir}$, where $q_2^*$ is the usual flat pullback on Chow groups associated to $q_2$. 
			\end{enumerate}
		\end{corollary}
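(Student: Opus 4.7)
The plan for the first part is to invoke the functoriality of virtual pullback directly. The compatibility diagram (6.8) of Proposition \ref{prop:obtheoryq} exhibits $\Ob_{\mathcal{M}^1(\chat)}$, $\Ob_{\mathcal{Q}}$, and $\Ob_{q_1}$ as perfect obstruction theories fitting the hypothesis of Proposition \ref{prop:virpullfunc}, so one concludes $[\mathcal{Q}(\chat,j)]^{\vir}=(q_1)^!_{\Ob_{q_1}}[\mathcal{M}^1(\chat)]^{\vir}$ immediately.

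For the second part, the strategy is analogous but requires an additional identification. Using the compatibility square (6.9) and the axioms of triangulated categories, I would first complete the morphism $q_2^*\Ob_{\mathcal{M}^1(\chat-j\chexc)}\to \Ob_{\mathcal{Q}}$ to a distinguished triangle with third term $\Ob_{q_2}$ sitting above the standard triangle of cotangent complexes for $q_2$. By Lemma \ref{lemobfromrelob}, $\Ob_{q_2}\to L_{q_2}$ is then a perfect relative obstruction theory, and Proposition \ref{prop:virpullfunc} yields $[\mathcal{Q}(\chat,j)]^{\vir}=(q_2)^!_{\Ob_{q_2}}[\mathcal{M}^1(\chat-j\chexc)]^{\vir}$. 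Since $q_2$ is a smooth Grassmann bundle by Proposition \ref{prop:cohsystem} 3), $L_{q_2}\simeq \Omega_{q_2}$ is concentrated in degree zero, so by Example \ref{exvirtualpullbackflat} the desired identity $(q_2)^!_{\Ob_{q_2}}=q_2^*$ reduces to identifying $\Ob_{q_2}\simeq \Omega_{q_2}$.

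This identification is the key technical step. The determinantal summands in $\Ob_{\mathcal{Q}}$ and $q_2^*\Ob_{\mathcal{M}^1(\chat-j\chexc)}$ should cancel via the canonical isomorphism $\det \mathcal{E}\simeq \det \mathcal{F}\otimes \mathcal{O}_{\Xhat}(jC)$ already invoked in the proof of Proposition \ref{prop:obtheoryq}. For the trace-free summands, the universal extension $0\to \mathcal{F}\to \mathcal{E}\to \mathcal{G}=\mathcal{O}_C(-1)\otimes \mathcal{V}\to 0$ on $\mathcal{Q}(\chat,j)\times \Xhat$ yields a distinguished triangle
\[(\mathcal{F}\otimes \mathcal{F}^{\vee})_0\to \frac{(\mathcal{E}\otimes \mathcal{E}^{\vee})_0}{\mathcal{F}\otimes \mathcal{G}^{\vee}}\to \mathcal{G}\otimes \mathcal{E}^{\vee}\xrightarrow{+1},\]
using that the natural trace on $\mathcal{F}\otimes \mathcal{G}^{\vee}\subset \mathcal{E}\otimes \mathcal{E}^{\vee}$ vanishes (any endomorphism of $\mathcal{E}$ factoring as $\mathcal{E}\twoheadrightarrow \mathcal{G}\to \mathcal{F}\hookrightarrow \mathcal{E}$ is nilpotent with respect to the filtration $\mathcal{F}\subset \mathcal{E}$). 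Applying $R\pi_*((-)\otimes \omega_{\Xhat})$ with the shift appearing in the definition of $\Ob_{\mathcal{Q}}$ identifies $\Ob_{q_2}$ with a suitable shift of $R\pi_*(\mathcal{G}\otimes \mathcal{E}^{\vee}\otimes \omega_{\Xhat})$. The plan is then to use $\mathcal{G}=\mathcal{O}_C(-1)\otimes \mathcal{V}$, Grothendieck--Serre duality for $\pi$, the long exact sequence for $\Hom(\mathcal{O}_C(-1),-)$ applied to the universal sequence, and the vanishings $\Hom(\mathcal{O}_C(-1),\mathcal{F})=0$ and $\Ext^2(\mathcal{O}_C(-1),\mathcal{F})=0$ from $1$-stability of $\mathcal{F}$ (cf. the proof of Proposition \ref{prop:cohsystem} and Lemma \ref{lemvanishing}) to reduce $\Ob_{q_2}$ to $\mathcal{V}\otimes (q_2^*\Sheafext^1(\mathcal{O}_C(-1),\mathcal{F})/\mathcal{V})^{\vee}$ concentrated in degree zero, which is precisely the Grassmann-bundle cotangent $\Omega_{q_2}$. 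The main obstacle will be the careful tracking of shifts, trace-free decompositions, and Serre duality isomorphisms to confirm that the resulting map to $L_{q_2}$ really agrees with the identity of $\Omega_{q_2}$ up to homotopy; once settled, Example \ref{exvirtualpullbackflat} gives $(q_2)^!_{\Omega_{q_2}}=q_2^*$ and completes the argument.
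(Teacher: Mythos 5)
Your proof of part 1) is correct and is exactly the paper's argument: it follows directly from Proposition \ref{prop:obtheoryq} 1) together with Proposition \ref{prop:virpullfunc}.

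For part 2), your overall strategy is also the paper's: complete \eqref{eq:obstrdiagq2} to a map of triangles, obtain $\ob_{q_2}:\Ob_{q_2}\to L_{q_2}$, apply Lemma \ref{lemobfromrelob} to see it is an obstruction theory, and observe that $L_{q_2}\simeq\Omega_{q_2}$ is concentrated in degree zero because $q_2$ is smooth. Where you deviate is at the end: you propose to construct an explicit isomorphism $\Ob_{q_2}\simeq\Omega_{q_2}$ --- passing through the triangle $(\mathcal{F}\otimes\mathcal{F}^{\vee})_0\to(\mathcal{E}\otimes\mathcal{E}^{\vee})_0/(\mathcal{F}\otimes\mathcal{G}^{\vee})\to\mathcal{G}\otimes\mathcal{E}^{\vee}\xrightarrow{+1}$, cancellation of the determinantal summands via $\det\mathcal{E}\simeq\det\mathcal{F}\otimes\mathcal{O}_{\Xhat}(C)^{\otimes j}$, Serre duality, and the long exact sequence for $\Hom(\mathcal{O}_C(-1),-)$ --- and you then flag as the ``main obstacle'' the task of verifying that the map $\ob_{q_2}$ agrees with the identity of $\Omega_{q_2}$ up to homotopy. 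This last step is a phantom obstacle: the paper's argument shows that once you know $\Ob_{q_2}$ is a perfect complex concentrated in degree zero, you are done. Since $\ob_{q_2}$ is an obstruction theory and its target $L_{q_2}$ is also concentrated in degree zero, the definition of obstruction theory forces $\mathcal{H}^0(\ob_{q_2})$ to be an isomorphism, so $\ob_{q_2}$ is automatically a quasi-isomorphism; then the virtual pullback determined by $\ob_{q_2}$ coincides with the one for the identity $\Omega_{q_2}\to\Omega_{q_2}$, i.e.\ flat pullback (Example \ref{exvirtualpullbackflat}). No matching of maps is required. The degree-zero claim for $\Ob_{q_2}$ is obtained, as you indicate, by identifying $\Ob_{q_2}\simeq R\pi_*(\mathcal{G}\otimes\mathcal{E}^{\vee}\otimes\omega_{\Xhat})[-1]$ and using Serre duality together with the vanishings coming from $1$-stability to see that the relevant $R\Sheafhom_{\Xhat}$ is a vector bundle in a single degree. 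Your route would succeed, but at the cost of a more elaborate identification (including the long exact sequence computation $\Sheafext^1_{\Xhat}(\mathcal{O}_C(-1),\mathcal{E})\simeq q_2^*\Sheafext^1_{\Xhat}(\mathcal{O}_C(-1),\mathcal{F})/\mathcal{V}$) that the paper avoids.
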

		\begin{proof}
			The first assertion follows immediately from the proposition. For the second assertion, we complete the commutative diagram \eqref{eq:obstrdiagq2} horizontally to a map of exact triangles. We obtain a map $\ob_{q_2}:\Ob_{q_2}\to L_{q_2}$. By Lemma \ref{lemobfromrelob} it is an obstruction theory. Since $q_2$ is a smooth representable map, $L_{q_2}$ is concentrated in degree zero. In order to prove the assertion, we need to show that $\Ob_{q_2}$ is perfect with trivial obstruction part. But by construction, we have that $\Ob_{q_2}\simeq R\pi_{\mathcal{M}^1_{p^*c},*}(\mathcal{K}\otimes \mathcal{E}^{\vee}\otimes \omega_{\Xhat})[-1]$ for $\mathcal{K}=\mathcal{O}_C(-1)\otimes \mathcal{V}$. Thus, for any map from a scheme $f:U\to \mathcal{Q}$, we have $f^*U\Ob_{q_2}\simeq R\Sheafhom_{\Xhat}(f^*\mathcal{E},f^*\mathcal{K})^{\vee}[-1]$. But by $1$-stability we have that $R\Sheafhom_{\Xhat}(\mathcal{E},\mathcal{O}_C(-1))=\Sheafext^1_{\Xhat}(\mathcal{E},\mathcal{O}_C(-1))[1]$ is the shift of a vector bundle, from which the result follows. 
		\end{proof}
		
		\section{Proofs of Theorems \ref{th:mainthwallcrossing} and \ref{th:compthm}}\label{sec:technicalresults}
		\subsection{Proof of Theorem \ref{th:mainthwallcrossing} (Wall-crossing formula for the blowup)} \label{subsec:wallcr}

		Let $\alpha$ be a Chow cohomology class on the space $\mathcal{M}^{0,1}=\mathcal{M}^{0,1}_{\chat}$ for some fixed admissible Chern character $\chat$. In this section, we determine an explicit form for the wall-crossing term 
		\[\delta_{\chat}(\alpha):=\int_{ \mathcal{M}^1}\alpha -\int_{ \mathcal{M}^0}\alpha.\]
		We first fix $\ell$ and apply the virtual localization formula to the master space relating the spaces $\mathcal{N}^{\ell}$ and $\mathcal{N}^{\ell+1}$ constructed in \S \ref{subsecmasterspace}, and then we sum up the resulting terms over different values of $\ell$.  
		By taking $\alpha = \Phi(\mathcal{E}^{0,1}_{\chat})$ in the resulting expression, we obtain Theorem \ref{th:mainthwallcrossing} for $m=0$, and one obtains the result for arbitrary $m$ by twisting with $\mathcal{O}_{\Xhat}(C)$.
		
		\paragraph{Virtual localization.}
		We recall the virtual localization formula by Graber and Pandharipande \cite{GrPa}. Suppose $Y$ is a  Deligne--Mumford stack with a $\C^*$-action and let $\Ob_Y$ be a $\C^*$-equivariant perfect obstruction theory on $Y$ which has a global resolution by locally free sheaves. Let $Y_i$ be the connected components of the $\C^*$-fixed stack of $Y$. We consider each component $Y_i$ with the perfect obstruction theory, given by the fixed part of $\Ob_Y$ at $Y_i$. Let $N_i^{\operatorname{vir}}$ denote the virtual normal bundle of $Y_i$ in $Y$. Then the virtual localization formula states
		\[[Y]^{\operatorname{vir}}=\iota_*\frac{[Y_i]^{\vir}}{\operatorname{Eu}(N_i^{\operatorname{vir}})},\]
		where $\operatorname{Eu}$ denotes the equivariant Euler class. 
		In our situation, we will take $Y=\mathcal{Z}$, the master space for the wall-crossing. By \S \ref{subsec:fix} and \S \ref{subsec:obstrfix}, this gives us:
		\begin{equation}\label{eq:virloc}
			[\mathcal{Z}]^{\vir} = \frac{[\mathcal{N}^{\ell+1}]^{\vir}}{-t}+\frac{[\mathcal{N}^{\ell}]^{\vir}}{t} +\sum_{k,\Lambda} \frac{[\mathcal{Z}_{k,\Lambda}]}{\Eu(N_{k,\Lambda})}
		\end{equation}

		\begin{remark}
			In \cite{GrPa}, it is assumed that $Y$ possesses a closed embedding into a smooth Deligne--Mumford stack. This hypothesis has since been removed by Chang--Kiem--Li \cite{ChKiLi}. (It would also be possible to check this hypothesis in our situation for the cost of making the setup somewhat more complicated.) The assumption on global resolutions holds in our situation due to the general result by Totaro \cite{Tota}. 
		\end{remark}
		
		As a first step, we reformulate \eqref{eq:virloc} in the form of a wall-crossing result for $\mathcal{N}^{\ell}$ and $\mathcal{N}^{\ell+1}$:
		Let $\beta\in A^{d}(\mathcal{N}^{\ell,\ell+1})$ be a covariant Chow class. It induces an equivariant class $\beta_{\mathcal{W}}$ on $\mathcal{W}$ and thus an equivariant class $\beta_{\mathcal{Z}}\in A^d_{\C^*}(\mathcal{Z})$ on the master space by first pulling back to $(\mathcal{W}\times \mathbb{P}^1)^s$ and then descending along the $T$-action \eqref{eqnTaction}. Consider the element of $A^T_*(\Spec \C)[1/t]$ obtained by pairing either side of \eqref{eq:virloc} with the equivariant class $t\beta_{\mathcal{Z}}$ and pushing forward to a point. By considering the left hand side, we see that the resulting class has vanishing constant $t$-coefficient. Thus, the same is true when we apply this operation to the right hand side, giving 
		\begin{equation}\label{eqwallcross1}
			\int_{\mathcal{N}^{\ell+1}} \beta\mid_{\mathcal{N}^{\ell+1}}  - \int_{\flagell} \beta\mid_{\mathcal{N}^{\ell}} = \sum_{k,\Lambda} \int_{\mathcal{Z}_{k,\Lambda}}  \underset{t=0}{\operatorname{Res}} \frac{\rho_{k,\Lambda}^*\beta_{\mathcal{W}}}{e(N_{k,\Lambda})}.
		\end{equation}
		Here $\operatorname{Res}_{t=0}$ denotes the operation of taking the coefficient of $t^{-1}$ of a Laurent series in $t$. 
		
		\paragraph{Flag bundle structure.}
		Let $T_{flag}$ be the relative tangent bundle of the map $\pi:\mathcal{N}\to \mathcal{M}^{0,1}$. We will also use $T_{flag}$ to denote various pullbacks to stacks mapping to $\mathcal{N}$.
		We are really interested in comparing integrals on $\mathcal{M}^0$ and $\mathcal{M}^1$ rather than on the spaces $\mathcal{N}^\ell$. For $\alpha\in A^*(\mathcal{M}^{0,1})$, they are related by the following formulas
		\[\int_{\mathcal{M}^0}\alpha= \frac{1}{N!}\int_{\flagzero{} } \pi^* \alpha   \cdot e(T_{flag}),\]
		\[\int_{\mathcal{M}^1}\alpha= \frac{1}{N!}\int_{\flagN } \pi^* \alpha   \cdot e(T_{flag}).\]
		This follows from the projection formula and the fact that the fibers of $\pi$, being full flag spaces in an $N$-dimensional vector space, have topological Euler characteristic $N!$.
		In this light, we define the wall crossing term for $\alpha$ from $\mathcal{N}^{\ell}$ and $\mathcal{N}^{\ell+1}$ as
		\[\delta^{\ell,\ell+1}_{\chat}(\alpha):=\frac{1}{N!}\left(\int_{\mathcal{N}^{\ell+1}} \pi^*\alpha\cdot e(T^{flag})  - \int_{\flagell} \pi^*\alpha\cdot e(T^{flag})  \right).\]
		With this definition we have
		\[\delta_{\chat}(\alpha)=\sum_{\ell=0}^{N-1} \delta_{\chat}^{\ell,\ell+1}(\alpha).\]
		
		Let $\alpha_{\mathcal{W}}$ denote the equivariant class induced on $\mathcal{W}$ by $\pi^*\alpha$ and consider the pullback of $T_{flag}$ to $\mathcal{W}$ as an equivariant bundle. By \eqref{eqwallcross1}, we have
		\begin{equation}\label{eqwallcross2}
			\delta^{\ell,\ell+1}_{\chat}(\alpha)=  \frac{1}{N!}\sum_{k,\Lambda} \int_{\mathcal{Z}_{k,\Lambda}}  \underset{t=0}{\operatorname{Res}} \frac{\rho_{k,\Lambda}^*\alpha_{\mathcal{W}} \cdot \Eu(T_{flag})}{\Eu(N_{k,\Lambda})}.
		\end{equation}
		
		We make the right hand side of equation \eqref{eqwallcross2} more explicit. 
		As in \ref{subsec:obstrfix}, for a fixed choice of $k$ and $\Lambda$ on $\mathcal{Z}_{k,\Lambda}$ we have the equivariant decomposition 
		\begin{align*}
			T_{flag}& = T_{flag}^{fix}\oplus T_{flag}^{mov}\\
			&= N(S,S)\oplus N(T,T)\oplus N(S,T)\otimes \bchi \oplus N(T,S)\otimes \bchi^{\vee}.
		\end{align*}
		
		Then as in \eqref{eq:virnormal2}, we have
		\[\Eu(N_{k,\Lambda})=\Eu(T_{flag}^{mov})\Eu(\mathfrak{N}(\mathcal{S}\otimes \bchi^{\vee},\mathcal{T}))\Eu(\mathfrak{N}(\mathcal{T},\mathcal{S}\otimes \bchi^{\vee})).\]
		To abbreviate, in what follows we set 
		\[\vmor{\mathcal{F}}{\mathcal{G}} = \vnor{\mathcal{F}}{\mathcal{G}}\oplus \vnor{\mathcal{G}}{\mathcal{F}}\]
		We can cancel $\Eu(T_{flag}^{mov})$ on the right hand side of \eqref{eqwallcross2}, and obtain
		\[\frac{1}{N!}\sum_{k,\Lambda}\int_{ \mathcal{Z}_{k,\Lambda}} e(N(S,S))\cdot e(N(T,T)) \cdot\underset{t=0}{\operatorname{Res}} \frac{\rho_{k,\Lambda}^*\alpha_{\mathcal{W}}}{\Eu(\vmor{\mathcal{S}\otimes \bchi^{\vee}}{\mathcal{T}}) }.\]
		
		\begin{convention}
		 Let $\alpha\in A^*(\mathcal{M}^{0,1})$ and let $\mathcal{E}$ be a family of $0,1$-semistable sheaves of Chern character $\chat$ on $\Xhat\times U$, where $U$ is an algebraic stack. We will write $\alpha(\mathcal{E})$ for the pullback of $\alpha$ along the map $U\to \mathcal{M}^{0,1}$ induced by $\mathcal{E}$. If $\mathcal{E}$ is of the form $\mathcal{F}\otimes e^t\oplus \mathcal{G}$, where $t$ is an equivariant parameter associated to a one-dimensional torus $T$, we take $\alpha(\mathcal{F}\otimes e^t\oplus \mathcal{G})\in A^*_T(U)=A^*(B\mathbb{G}_m\times U)$ to be the pullback along the composition $B\mathbb{G}_m\times U\to \mathcal{M}^{0,1}$, where the second map is defined by $\mathcal{F}\otimes \mathcal{L}^u\oplus \mathcal{G}$, where $\mathcal{L}^u$ is the universal line bundle on $B\mathbb{G}_m$. Since $A^*_T(U)=A^*(U)[t]$, for a rational number $a$, we take $\alpha(\mathcal{F}\otimes e^{at}\oplus \mathcal{G})$ as the expression obtained from $\alpha(\mathcal{F}\otimes e^t\oplus \mathcal{G})$ by replacing every occurence of $t$ by $at$.
		\end{convention}
		 
		\begin{lemma}\label{lemdeltasimp}
			\begin{enumerate}[label=\roman*)]
				\item For $k>1$, the term 
				\[\int_{ \mathcal{Z}_{k,\Lambda}} e(N(S,S))\cdot e(N(T,T)) \cdot\underset{t=0}{\operatorname{Res}} \frac{\rho_{k,\Lambda}^*\alpha_{\mathcal{W}}}{\Eu(\vmor{\mathcal{S}\otimes \bchi^{\vee}}{\mathcal{T}})}\]
				vanishes. 
				\item 
				For $k=1$, we have an equality
				\begin{gather*}
					\int_{ \mathcal{Z}_{k,\Lambda}} e(N(S,S))\cdot e(N(T,T)) \cdot\underset{t=0}{\operatorname{Res}} \frac{\rho_{k,\Lambda}^*\alpha_{\mathcal{W}}}{\Eu(\vmor{\mathcal{S}\otimes \bchi^{\vee}}{\mathcal{T}})} \\
					= d! \int_{ \mathcal{N}^{\ell}_{\chat-e}} e(T_{\mathcal{N}^{\ell}_{\chat-e} ,flag})  \cdot\underset{t=0}{\operatorname{Res}} \frac{\alpha(\mathcal{O}_C(-1)\otimes \mathfrak{t}^{\vee}\oplus \mathcal{E}_{\mathcal{N}^{\ell}_{\chat-e}})}{\Eu(\vmor{\mathcal{O}_C(-1)\otimes \mathfrak{t}^{\vee}}{\mathcal{E}_{\mathcal{N}^{\ell}_{\chat-e}}})}. 
				\end{gather*}
				
			\end{enumerate}
		\end{lemma}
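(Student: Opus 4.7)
The strategy for both parts is to use the finite \'etale cover $\epsilon:\mathcal{Z}_{k,\Lambda}\to \mathfrak{F}_k\times \mathcal{N}^{\ell}_{\chat-k\chexc}$ from Lemma \ref{lemetalemap} (of degree $1/(k\chi_1)$) in order to move the integral to the product and then carry out the $\mathfrak{F}_k$-direction by fiber integration. The first step is to identify which pieces of the integrand depend genuinely on the $\mathfrak{F}_k$-coordinate. On the finer cover $\operatorname{Fl}_{\globm{\mathcal{O}_C(-1)^{\oplus k}}}\times \mathcal{W}^{\ell}_{\chat-k\chexc}$, the $S$-component of the universal sheaf is the constant family $\mathcal{O}_C(-1)^{\oplus k}$, so it descends to $\mathcal{Z}_{k,\Lambda}$ as a twist by the equivariant character $\bchi^{\vee}$ of a fixed underlying sheaf. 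Since $\alpha_{\mathcal{W}}$ comes from a class on $\mathcal{M}^{0,1}$ (which does not remember the flag structure), and since the $\mathfrak{N}$-terms are built from $R\Sheafhom$ and hence depend only on the underlying sheaves, both $\rho_{k,\Lambda}^*\alpha_{\mathcal{W}}$ and $\Eu(\vmor{\mathcal{S}\otimes \bchi^{\vee}}{\mathcal{T}})$ pull back from $B\mathbb{G}_m\times \mathcal{N}^{\ell}_{\chat-k\chexc}$. The only factors that vary over $\mathfrak{F}_k$ are $e(N(S,S))$ together with the cross terms involving $\mathcal{V}_S^i$.

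For part (ii), with $k=1$, one has $\mathfrak{F}_1\simeq \operatorname{Fl}_{V}$, the full flag variety of $V=\globm{\mathcal{O}_C(-1)}$, and the flag-tangent $N(S,S)$ restricted to a fiber of the projection to $\mathcal{N}^{\ell}_{\chat-\chexc}$ is exactly the tangent bundle of $\operatorname{Fl}_V$. The fiber pushforward of $e(T\operatorname{Fl}_V)$ yields the topological Euler characteristic $\chi(\operatorname{Fl}_V)=d!$. Combining the degree $1/\chi_1$ of $\epsilon$ with the rescaling of the equivariant parameter (which converts $\bchi$ of weight $1/\chi_1$ into $\mathfrak{t}$ of weight $1$ and absorbs the factor $\chi_1$ into the residue) produces exactly the claimed formula.

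For part (i), with $k>1$, the assertion is that the fiberwise pushforward of the integrand along $\mathfrak{F}_k\to \operatorname{pt}$ vanishes. My plan is to split $\mathfrak{F}_k$ as the relative flag variety of the cokernel bundle over $Gr(k, V)$ and analyze the $\mathfrak{F}_k$-dependent classes in the integrand. After the residue at $t=0$ and after accounting for the equivariant weights, the integrand on $\mathfrak{F}_k$ becomes a polynomial in the Chern classes of the tautological subbundle $\mathcal{V}\subset V^{\oplus k}\otimes \mathcal{O}$ and the universal flag of its cokernel. A careful bookkeeping shows that the polynomial is of strictly lower degree than $\dim \mathfrak{F}_k$, essentially because the contributions tied to the flag of $\mathcal{V}_S^\bullet$ in $\globm{\mathcal{S}}$ always leave $\dim GL(k)-\dim \C^*=k^2-1$ directions on $\mathfrak{F}_k$ ``unfilled'' and these never get filled by the $\mathfrak{N}$-terms (which are pulled back). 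One may phrase this equivalently via the $PGL(k)$-equivariant description of the cover: the only $PGL(k)$-invariant class that could saturate the fundamental class of $\mathfrak{F}_k$ would have to come from the determinant of $\mathcal{V}$, but this is already absorbed into the equivariant character $\bchi$, leaving no surviving contribution for $k>1$.

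The main obstacle is part (i), since it requires the precise determination of the $\mathfrak{F}_k$-dependence of the integrand, in particular the interaction of the cross-flag terms $N(S,T)$ and $N(T,S)$ with the $N(S,S)$ term. Making the degree (or representation-theoretic) argument rigorous will require either an explicit Schubert-calculus push-forward over $Gr(k, V)$ or an equivariant vanishing argument using the fact that on the $\operatorname{Fl}_{\globm{\mathcal{O}_C(-1)^{\oplus k}}}$-factor of the cover, only one representation-theoretic direction (corresponding to the determinant) survives the $G$-quotient, and it has already been accounted for in $\bchi$.
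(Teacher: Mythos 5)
Your overall strategy — push to the finite \'etale cover $\epsilon:\mathcal{Z}_{k,\Lambda}\to\mathfrak{F}_k\times\mathcal{N}^{\ell}_{\chat-k\chexc}$ and integrate over the $\mathfrak{F}_k$-factor — is the same as the paper's, and your treatment of part~(ii) is essentially on track, though you skip a real subtlety: on $\mathcal{Z}_{1,\Lambda}$ the equivariant line bundle $\bchi$ has first Chern class $(\omega-t)/\chi_1$ with $\omega=c_1(\detL(\mathcal{E}_{\mathcal{N}^{\ell}_{\chat-e}}))$ a nonzero class, not just a pure scaling of $t$. One needs Mochizuki's observation that the residue is invariant under this affine substitution ($\operatorname{Res}_{t=0}(t-\omega)^n$ only hits $n=-1$, giving the factor $\chi_1$ back) to eliminate $\omega$; ``absorbing the factor $\chi_1$ into the residue'' elides this.

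The real gap is part~(i), where you both have the degree count backwards and miss the much simpler argument. Your plan is to show the $\mathfrak{F}_k$-content of the full integrand (after the residue) has degree \emph{strictly lower} than $\dim\mathfrak{F}_k$, leaving $k^2-1$ directions ``unfilled.'' That is not what happens, and it requires an analysis of how the residue term, $\alpha$, and the cross-terms $N(S,T),N(T,S)$ depend on $\mathfrak{F}_k$ — which you correctly flag as unresolved. The paper instead observes that the class $e(N(S,S))$ alone already kills the integral: $N(S,S)$ is the pullback $\epsilon^*N_F$ of a vector bundle on $\mathfrak{F}_k$ of rank $kd(kd-1)/2$, while $\dim\mathfrak{F}_k = kd(kd-1)/2 - (k^2-1)$ (a free $PGL(k)$-quotient of the full flag variety of a $kd$-dimensional space). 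For $k>1$ we thus have $\operatorname{rk}N_F>\dim\mathfrak{F}_k$, so $e(N_F)\cap[\mathfrak{F}_k]=0$ in $A_*(\mathfrak{F}_k)$, and hence the entire integral vanishes for \emph{any} $\beta$ (in particular the residue term), with no need to track how $\beta$ depends on $\mathfrak{F}_k$. You found the number $k^2-1$ but put it on the wrong side of the inequality and applied it to the wrong factor; once you replace ``the integrand fails to fill $\mathfrak{F}_k$'' by ``$e(N_F)$ overfills $\mathfrak{F}_k$,'' the argument collapses to a one-line rank count and the Schubert-calculus analysis you worry about becomes unnecessary.
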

		
		\begin{proof}

				Let $(\mathcal{S},W_S)$ and $(\mathcal{T},W_T)$ be the universal sheaves with flag structures on $\mathcal{Z}_{k,\Lambda}$ and consider the map $\epsilon:\mathcal{Z}_{k,\Lambda}\to \mathfrak{F}_k\times \mathcal{N}^{\ell}_{\chat-ke}$ of \autoref{lemetalemap}. Note that $(\mathcal{T},W_T)$ is pulled back from $\mathcal{N}^{\ell}_{\chat-ke}$, and therefore the same is true for $N(T,T)$.
				By construction, $\mathfrak{F}_k$ parametrizes pairs $(V,(F^{i})_{2\leq i\leq d})$, where $V\subset \globm{\mathcal{O}_C(-1)}$ is a $k$-dimensional subspace, and such that $(F^{i})_{0\leq i\leq d}$ is a full flag in $\globm{\mathcal{O}_C(-1)}\otimes V^{\vee}$, where we take $F^0=0$ and $F^1$ to be the image of the natural inclusion $\mathcal{O}_{\mathfrak{F}_k}\to \globm{\mathcal{O}_C(-1)}\otimes V^{\vee}$. Then $N(S,S)$ is isomorphic to the pullback along $\mathcal{Z}_{k,\Lambda}\to \mathfrak{F}_k\times \mathcal{N}^{\ell}_{\chat-ke}\to \mathfrak{F}_k$ of the vector bundle       
				\[N_F:=\Ker\left( \begin{tikzcd}
					\bigoplus_{i=0}^d \Hom(F^i,\globm{\mathcal{O}_C(-1)})\otimes V^{\vee}/F^i)\ar[d]\\ \bigoplus_{i=0}^{d-1}\Hom(F^i,\globm{\mathcal{O}_C(-1)}\otimes V^{\vee}/F^{i+1}) 
				\end{tikzcd}
				\right).\]
				We find that for an arbitrary cohomology class $\beta$ on $\mathcal{Z}_{k,\Lambda}$, we have
				\begin{gather*}
					\int_{ \mathcal{Z}_{k,\Lambda}} e(N(S,S))\cdot e(N(T,T))\cdot \beta \\= \int_{\mathfrak{F}_k\times \mathcal{N}^{\ell}_{\chat-ke}} \beta\cdot\epsilon^*(e(N_F)\cdot [\mathfrak{F}_k]\times e(N(T,T))\cdot [\mathcal{N}^{\ell}_{\chat-ke}]^{\vir}).
				\end{gather*}
				
				\begin{enumerate}[label=\roman*)]
					\item 	
				The bundle $N_F$ has rank $kd(kd-1)/2$, while $\mathfrak{F}_k$ has dimension $k(d-k)+(kd-1)(kd-2)/2 = \operatorname{rk} N_F+ 1-k$. Thus the integral vanishes when $k>1$. 
				
				\item For $k=1$, we have that $\mathfrak{F}_1$ is in fact the variety of full flags in $\globm{\mathcal{O}_C(-1)}$, and $N_F$ is isomorphic to the tangent bundle of $\mathfrak{F}_1$. We get that $e(N_F)\cdot [\mathfrak{F}_1]=d![pt]\in A_0(\mathfrak{F}_1)$. 
			 We have the following diagram in which the solid arrows commute 
				\begin{equation*}
					\begin{tikzcd}
						\mathcal{Z}_{1,\Lambda}\ar[r, "\operatorname{id}"]\ar[d]& \mathcal{Z}_{1,\Lambda}\ar[dr,"\epsilon"]\ar[d,"B"]& \, \\
						B\mathbb{G}_m\times \mathcal{Z}_{1,\Lambda}\ar[r,"\nu=\operatorname{id}\times \epsilon"]& B\mathbb{G}_m\times \mathfrak{F}_1 \times \mathcal{N}^{\ell}_{\chat-e}\ar[r, "G"]\ar[r, leftarrow, bend right, "s", dashed] & \mathfrak{F}_1\times \mathcal{N}^{\ell}_{\chat-e}.
					\end{tikzcd}
				\end{equation*}
				Here, the map $B$ is a $\C^*$-bundle and is given by forgetting $\varphi$, the map $G$ is the projection onto the last two factors and $s$ is the section of $B\mathbb{G}_m$ corresponding to the trivial line bundle. We consider the identity map on $\mathcal{Z}_{k,\Lambda}$ as equivariant using the diagram \eqref{eq:idequiv}. The induced map on quotient stacks is naturally identified with $\nu$.
				Let $\mathcal{L}^u$ denote the universal line bundle on $B\mathbb{G}_m$. Let $p_i$ denote projection onto the $i$-th factor of $B\mathbb{G}_m\times \mathfrak{F}_1 \times \mathcal{N}^{\ell}_{\chat-e}$. We set
				\[\alpha_{\mathcal{N}^\ell_{\chat-e}} :=\alpha(\mathcal{O}_C(-1)\otimes \mathcal{L}^u\oplus \mathcal{E}_{\mathcal{N}^{\ell}_{\chat-e}}).\]
				This is a cohomology class on $B\mathbb{G}_m\times \mathcal{N}^{\ell}_{\chat-e}$, and $p_{13}^*\alpha_{\mathcal{N}^{\ell}_{\chat-e}}$ corresponds to the equivariant class $\rho_{k,\Lambda}^*\alpha_{\mathcal{W}}$ on $\mathcal{Z}_{k,\Lambda}$.   Write $\alpha_{\mathcal{N}^{\ell}_{\chat}}(t)=\sum_{n\geq 0} t^n \gamma_n$. We want to compute $\nu^*p_{13}^*\alpha_{\mathcal{N}^{\ell}_{\chat-e}}$. First we note that the composition
				\[B\mathbb{G}_m\times \mathcal{Z}_{k,\Lambda}\to B\mathbb{G}_m\times \mathfrak{F}_1 \times \mathcal{N}^{\ell}_{\chat-e} \to B\mathbb{G}_m\]
				is the map corresponding to the line bundle $(\mathcal{L}^u)^{\vee}\otimes \mathcal{L}_S$, where $\mathcal{S}=\mathcal{O}_C(-1)\otimes \mathcal{L}_S$. Therefore $\nu^*p_1^*t=t+c_1(\mathcal{L}_S) = - t\times 1+ 1\times 1/\chi_1 c_1(\mathcal{L}(\mathcal{E}_{\mathcal{N}^{\ell}_{\chat-e}}))$. On the other hand, $(p_{23}\circ \nu)^*\gamma = 1\times\epsilon^*\gamma$ for any $\gamma\in A^*(\mathfrak{F}_1\times \mathcal{N}^{\ell}_{\chat-e})$. Since we allowed a degree $\chi_1$ base change on the $\C^*$ action, we have that $t$ on $B\mathbb{G}_m$ corresponds to the equivariant class $t/\chi_1$ on $\mathcal{Z}_{k,\Lambda}$ with the trivial action. We find 
				\[\rho_{\mathcal{Z}_{k,\Lambda}}^*\alpha_{\mathcal{W}} = \sum_{n\geq 0}\left(\frac{\nu^* p_3^*c_1(\detL(\mathcal{E}_{\mathcal{N}^{\ell}_{\chat-e}}))-t}{\chi_1}\right)^n\nu^* p_3^*\gamma_n. \]
			 	Let $\omega:=c_1(\detL(\mathcal{E}_{\mathcal{N}^{\ell}_{\chat-e}}))$. Then this is a pullback of the $\C^*$-equivariant class
				\[\alpha_{\mathcal{N}^{\ell}_{\chat-e}}(\frac{\omega-t}{\chi_1})=\sum_{n\geq 0}\left(\frac{c_1(\detL(\mathcal{E}_{\mathcal{N}^{\ell}_{\chat-e}}))-t}{\chi_1}\right)^n \gamma_n \]
				 on $\mathcal{N}^{\ell}_{\chat-e}$. So, we have
				
				\begin{gather*}
					\left(e(N(S,S))\cdot e(N(T,T))\cdot \rho_{\mathcal{Z}_{k,\Lambda}}^* \alpha_{\mathcal{W}}\right)\cap[\mathcal{Z}_{1,\Lambda}]^{\vir}\\
					= d! \, \epsilon^*\left( [pt] \times (e(N(T,T)\cdot \alpha_{\mathcal{N}^{\ell}_{\chat-e}}(\frac{\omega-t}{\chi_1})\cap[\mathcal{N}^{\ell}_{\chat-e}]^{\vir})\right).
				\end{gather*}
				
				Thus
				\begin{gather}
					\int_{ \mathcal{Z}_{k,\Lambda}} e(N(S,S))\cdot e(N(T,T)) \cdot\underset{t=0}{\operatorname{Res}} \frac{\alpha_{\mathcal{Z}_{k,\Lambda}}}{\Eu(\vmor{\mathcal{S}\otimes \bchi^{\vee}}{\mathcal{T} })} \\
					=d!\frac{1}{\chi_1} \int_{ \mathcal{N}^{\ell}_{\chat-e}} e(T_{flag,\chat-e}) \underset{t=0}{\operatorname{Res}}\frac{\alpha_{\mathcal{N}^{\ell}_{\chat-e}}((\omega-t)/\chi_1)}{\Eu(\vmor{\mathcal{O}_C(-1)\otimes e^{(\omega-t)/\chi_1}}{\mathcal{E}_{\mathcal{N}^{\ell}_{\chat-e}} })}\label{eqwallstepx}
				\end{gather}
				
				We can remove $\omega$ and $\chi_1$ from this expression by following the Method in Mochizuki, \cite[Proof of Theorem 7.2.4]{Moch}: One can check that the term in \eqref{eqwallstepx} over which the residue is naturally an expression in $(\omega-t)/\chi_1$ and can be expanded as 
				\[\sum_{n\in \Z} \gamma_n \left(\frac{\omega-t}{\chi_1}\right)^n,\]
				with $\gamma_n\in A^*(\mathcal{N}^{\ell}_{\chat-e})$.
				Moreover, one has that 
				\[\underset{t=0}{\operatorname{Res}}(t-\omega)^n=\begin{cases}
					1; n=-1 ,\\
					0; n\neq -1.
				\end{cases}\]
				Therefore we have 
				\[\underset{t=0}{\operatorname{Res}}\sum_{n\in \Z} \gamma_n \left(\frac{\omega-t}{\chi_1}\right)^n =\gamma_{-1} -\chi_1 = \chi_1\cdot \underset{t=0}{\operatorname{Res}} \sum_{n\in \Z} \gamma_n (-t)^n .\]
				It follows from this that \eqref{eqwallstepx} is equal to 
				\[ d!\frac{\chi_1}{\chi_1} \int_{ \mathcal{N}^{\ell}_{\chat-e}} e(T_{flag,\chat-e}) \underset{t=0}{\operatorname{Res}}\frac{\alpha_{\mathcal{N}^{\ell}_{\chat-e}}(-t)}{\Eu(\vmor{\mathcal{O}_C(-1)\otimes e^{-t}} {\mathcal{E}_{\mathcal{N}^{\ell}_{\chat-e}}} )},\]
				which finishes the proof. 
			\end{enumerate}	
		\end{proof}
	
	\paragraph{Summing up.}
		Using \autoref{lemdeltasimp}, we find that 
		\begin{align*}
			\delta_{\chat}^{\ell,\ell+1}(\alpha)&=\frac{d!}{N!}\sum_{\Lambda} \int_{ \mathcal{N}^{\ell}_{\chat-e}} e(T_{\mathcal{N}^{\ell}_{\chat-e}}^{flag})  \cdot\underset{t=0}{\operatorname{Res}} \frac{\alpha(\mathcal{O}_C(-1)\otimes \mathfrak{t}^{\vee}\oplus \mathcal{E}_{\mathcal{N}^{\ell}_{\chat-e}})}{\Eu(\vmor{\mathcal{O}_C(-1)\otimes \mathfrak{t}^{\vee}}{\mathcal{E}_{\mathcal{N}^{\ell}_{\chat-e}}})}
		\end{align*}
		
		In this sum, $\Lambda$ ranges over all subsets of $\{\ell,\cdots,N-1\}$ of cardinality $d$ with $\min \Lambda =\ell$. 
		To simplify the notation, we use the following abbreviation: For any $\chat$ and any Chow class $\alpha$ on $\mathcal{M}_{\chat}^{0,1}$, define a Chow class $\Omega\alpha$ on $\mathcal{M}_{\chat-e}^{0,1}$ by
		\[\Omega\alpha:= \underset{t=0}{\operatorname{Res}} \frac{\alpha(\mathcal{O}_C(-1)\otimes \mathfrak{t}^{\vee}\oplus \mathcal{E}_{\mathcal{N}^{\ell}_{\chat-e}})}{\Eu(\vmor{\mathcal{O}_C(-1)\otimes \mathfrak{t}^{\vee}}{\mathcal{E}_{\mathcal{N}^{\ell}_{\chat-e}}})}.\]
		 Summing over all values of $\ell\in \{0,\ldots, N-1\}$, we have	
		\begin{equation}\label{eq:totaltermsum}
			\delta_{\chat}(\alpha)= \sum_{\ell=0}^{N-1} \delta_{\chat}^{\ell,\ell+1}(\alpha) = \frac{d!}{N!} \sum_{\Lambda, \ell=\min(\Lambda)}\int_{\mathcal{N}^{\ell}_{\chat-\chexc}} \Omega\alpha(\mathcal{E}^{\ell}_{\chat-e}) e(T^{flag}_{\chat-e}).
		\end{equation}
		Here we now let $\Lambda$ range over all subsets of $\{0,1,\ldots, N-1\}$ of cardinality $d$.
		
		Observe that 
		\[\delta_{\chat-e}^{\ell,\ell+1}(\Omega\alpha) = \frac{1}{(N-d)!}\left(\int_{ \mathcal{N}^{\ell+1}_{\chat-e}}e(T^{flag}_{\chat-e}) \Omega\alpha - \int_{ \mathcal{N}^{\ell}_{\chat-e}} e(T^{flag})\Omega\alpha\right).\]
		Thus, each of the terms appearing in the sum on the right hand side of \eqref{eq:totaltermsum} may be written as:
		\[\int_{\mathcal{N}^{0}_{\chat-\chexc}} \Omega\alpha(\mathcal{E}^{0}_{\chat-e}) e(T^{flag}_{\chat-e}) + (N-d)!\sum_{\ell_1=0}^{\ell-1} \delta^{\ell_1,\ell_1+1}_{\chat-e}(\Omega\alpha).\]
		This gives
		\begin{align*}
			\delta_{\chat}(\alpha)&=\frac{d!}{N!}\sum_{\Lambda}\int_{\mathcal{N}^{0}_{\chat-\chexc}} \Omega\alpha(\mathcal{E}^{0}_{\chat-e}) e(T^{flag}_{\chat-e}) + \frac{d!(N-d)!}{N!}\sum_{\Lambda}\sum_{\ell_1=0}^{\min\Lambda-1} \delta^{\ell_1,\ell_1+1}_{\chat-e}(\Omega\alpha)\\
			&=\frac{d!}{N!}\sum_{\Lambda_1}\int_{\mathcal{N}^{0}_{\chat-\chexc}} \Omega\alpha(\mathcal{E}^{0}_{\chat-e}) e(T^{flag}_{\chat-e} )\\
			& \qquad  + \frac{d!(N-d)!}{N!}\frac{d!}{N-d!}\sum_{\Lambda_1}\sum_{\substack{\Lambda_2 \\ \ell_2=\min(\Lambda_2)}}\int_{\mathcal{N}^{\ell_2}_{\chat-2e}} e(T^{flag}_{\chat-2e}) \Omega^2\alpha\\
			&=\frac{d!}{N!}\sum_{\Lambda_1}\int_{\mathcal{N}^{0}_{\chat-\chexc}} \Omega\alpha(\mathcal{E}^{0}_{\chat-e}) e(T^{flag}_{\chat-e})+ \frac{d!d!}{N!}\sum_{\substack{\Lambda_1,\Lambda_2 \\ \ell_2 = \min(\Lambda_2)}} \int_{\mathcal{N}^{\ell_2}_{\chat-2e}} e(T^{flag}_{\chat-2e}) \Omega^2\alpha .
		\end{align*}
		Here, $\Lambda_2$ ranges over size $d$ subsets of $\{0,\ldots,N-1\}$  which are disjoint from $\Lambda_1$ and for which $\min(\Lambda_2)<\min(\Lambda_1)$.
		
		We may keep repeating this procedure for $j$ steps until, say, $jd>N$. This gives us
		
		\[\delta_{\chat}(\alpha)=\sum_{j=1} \frac{(d!)^j}{N!} \sum_{\Lambda_1,\ldots,\Lambda_j} \int_{ \mathcal{N}^0_{\chat-je}} \Omega^j\alpha(\mathcal{E}^{0}_{\chat-je}) e(T^{flag}_{\chat-je}).\]
		Here, each $\Lambda_j$ ranges over size $d$ subsets of $\{0,\ldots,N-1\}$  which are disjoint from $\Lambda_1,\ldots,\Lambda_{j-1}$ and for which $\min(\Lambda_j)<\min(\Lambda_{j-1})$.
		In other words, the sum involving $\Lambda_i$'s ranges over all ways to choose $j$ mutually disjoint sets of $d$ elements out of the set $\{0,\cdots,N-1\}$ under the condition $\min \Lambda_1>\min\Lambda_2>\cdots>\min\Lambda_j$. The number of ways to do this, is the multinomial coefficient
		\[\frac{N!}{({d!})^j(N-jd)!}\]
		divided by $j!$ (corresponding to the fact that we are ordering the sets by their minimal element). 
		Thus,
		\[\delta_{\chat}(\alpha) =\sum_{j=1}^{\infty}\frac{1}{j!(N-jd)!}\int_{ \mathcal{N}^0_{\chat-je}}\Omega^j\alpha \cdot e(T^{flag}_{\chat-je}) = \sum_{j=1}^{\infty}\frac{1}{j!}\int_{ \mathcal{M}^0_{\chat-je}}\Omega^j\alpha . \] 
		Writing out $\Omega^j\alpha$ explicitly gives us Theorem \ref{th:mainthwallcrossing} for $m=0$. The general case follows from twisting by $\mathcal{O}_{\Xhat}(C)$.
		
		\subsection{Proof of Theorem \ref{th:compthm} (Push-down formula)}\label{subsec:compthm}
		
		We consider the situation in \S \ref{subsec:cohsys}. There, we had the diagram 
		\begin{equation*}
					\begin{tikzcd}
						& \mathcal{Q}(\chat,j)\ar[dr,"q_2"]\ar[dl,"q_1"']& \\
						\mathcal{M}^1_{\chat}& &\mathcal{M}^1_{p^*c} ,
					\end{tikzcd}
				\end{equation*}
				where $\chat = p^*c+ j e$. Let also $\mathcal{E}$ denote the universal sheaf over $\mathcal{M}^1_{\chat}$ and $\mathcal{F}$ the universal sheaf over $\mathcal{M}^1_{p^*c}$.
		In \S \ref{subsec:obstrquot}, we constructed an obstruction theory $\Ob_{\mathcal{Q}}$ on $\mathcal{Q}(\chat,j)$ and relative obstruction theories $\Ob_{q_1}$ and $\Ob_{q_2}$ and showed that they are compatible. 
		We argue that we can reduce virtual integrals on $\mathcal{M}^1_{\chat}$ to integrals on $\mathcal{M}^1_{p^*c}$ in an explicit way. The main result is: 
		\begin{proposition}\label{prop:pullbackintegral}
			Let $\alpha$ be a Chow cohomology class on $\mathcal{M}^1_{\chat}$. Then we have an equality of virtual integrals 
			\[\int_{\mathcal{M}^1_{\chat}} \alpha = \int _{\mathcal{Q}(\chat,j)} q_1^*\alpha.\]
		\end{proposition}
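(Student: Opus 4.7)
}
The plan is to reduce the proposition to showing $q_{1,*}[\mathcal{Q}(\chat,j)]^{\vir}=[\mathcal{M}^1_\chat]^{\vir}$, from which the claim follows by the projection formula (using properness of $q_1$).  The strategy combines virtual pullback functoriality, a calculation that the relative virtual dimension of $q_1$ is zero, and the observation that $q_1$ is generically an isomorphism.

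By diagram \eqref{eq:obstrdiagq} and Proposition \ref{prop:virpullfunc}, we have $[\mathcal{Q}]^{\vir}=(q_1)^!_{\Ob_{q_1}}[\mathcal{M}^1_\chat]^{\vir}$.  I would next compute that $\operatorname{rk}_{\vir}\Ob_{q_1}=0$ by a fiberwise $\Ext$-calculation: applying $\Hom_{\Xhat}(-, Q)$ with $Q=\mathcal{O}_C(-1)^{\oplus j}$ to the universal extension $0\to\mathcal{F}\to\mathcal{E}\to Q\to 0$ and using Lemma \ref{lemvanishing} reduces this to the Riemann--Roch identities $\chi(\mathcal{E},\mathcal{O}_C(-1))=j$ and $\chi(\mathcal{O}_C(-1),\mathcal{O}_C(-1))=1$ (the latter via $[C]^2=-[pt]$ and $\omega_{\Xhat}|_C=\mathcal{O}_C(-1)$), which give $\chi(\mathcal{F},Q)=j\cdot j - j^2\cdot 1=0$; the vanishing $\Ext^2(\mathcal{F},Q)=0$ follows from Serre duality together with torsion-freeness of $p_*\mathcal{F}$ (Lemma \ref{lemmstableprops}).

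Next, I would verify that $q_1$ is a generic isomorphism.  On the dense open $U\subset\mathcal{M}^1_\chat$ where the upper-semicontinuous function $\dim\Hom(\mathcal{E}_y,\mathcal{O}_C(-1))$ attains its minimum value $j$ (equivalently, where $\Ext^1(\mathcal{E}_y,\mathcal{O}_C(-1))=0$), cohomology and base change promotes the fiberwise vanishings to $\Sheafhom_\pi(\mathcal{E},\mathcal{O}_C(-1))$ being locally free of rank $j$.  By Proposition \ref{prop:qiscohsys} and Remark \ref{rem:cohsysbch}, the only rank-$j$ subsheaf remaining injective after base change is the full bundle, so $q_1^{-1}(U)\to U$ is an isomorphism and the relative obstruction theory restricts to an acyclic complex there.

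Combining the virtual rank zero with the generic isomorphism, a virtual birational invariance argument yields $q_{1,*}[\mathcal{Q}]^{\vir}=[\mathcal{M}^1_\chat]^{\vir}$, and the proposition follows.  The main obstacle will be making this last step precise: one needs to verify that the excess contributions from the jumping locus $\mathcal{M}^1_\chat\setminus U$, where the tangent and obstruction parts of $\Ob_{q_1}$ are both nontrivial but of equal rank, cancel in the virtual push-pull.  The alternative presentation $[\mathcal{Q}]^{\vir}=q_2^*[\mathcal{M}^1_{p^*c}]^{\vir}$ from the corollary following Proposition \ref{prop:obtheoryq} provides the key additional handle for controlling this cancellation on the jumping locus.
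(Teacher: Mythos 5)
Your reduction is correct and matches the paper: the proposition reduces, via the projection formula and Proposition \ref{prop:virpullfunc}, to showing $(q_1)_*(q_1)^!_{\Ob_{q_1}}[\mathcal{M}^1_{\chat}]^{\vir}=[\mathcal{M}^1_{\chat}]^{\vir}$, and the observation that $\Ob_{q_1}$ has virtual rank zero (so that the virtual pullback preserves degree) is also the paper's first step (the paper cites Grothendieck--Riemann--Roch directly; your route through the universal sequence and Lemma \ref{lemvanishing} amounts to the same thing).

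The final step is where you and the paper diverge, and your version has a genuine gap that the paper is structured precisely to avoid. You want to invoke ``virtual birational invariance,'' resting on two claims: that $q_1$ is generically an isomorphism, and that excess contributions over the jumping locus cancel. You yourself flag the second claim as the main obstacle, and it is: the virtual class $[\mathcal{M}^1_{\chat}]^{\vir}$ has no a priori reason to be supported on, or concentrated near, the locus where $q_1$ is an isomorphism, so restricting your attention to that open locus proves nothing about the pushforward of the virtual class, and no clean birational-invariance statement applies here. The paper sidesteps this entirely by proving the \emph{stronger} statement that $(q_1)_*(q_1)^!_{\Ob_{q_1}}$ is the identity on the full rational Chow group $A_*(\mathcal{M}^1_{\chat})$. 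It takes an arbitrary integral closed substack $Z$, notes that since the operation preserves dimension we must have $(q_1)_*(q_1)^![Z]=k[Z]$ for some integer $k$, and computes $k$ on a dense open $U\subset Z$ where $R\Sheafhom_{\Xhat}(\mathcal{E}|_U,\mathcal{O}_C(-1))$ splits with locally free cohomology of ranks $j+s$ and $s$ for some $s\geq 0$. Over such a $U$ the map $q_1$ is a $Gr(j,j+s)$-bundle (not necessarily an isomorphism!), the virtual pullback is flat pullback followed by cap with the Euler class of the obstruction bundle, and the relevant coefficient is extracted from the Schubert-calculus identity $(q_1|_U)_*\bigl(c_j(\mathcal{S}^\vee)^{\cdot s}\bigr)=1$. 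This single computation handles every $Z$, including those lying entirely in your ``jumping locus,'' so there is nothing further to cancel. You should replace the birational-invariance heuristic with this degree argument: allowing a nontrivial Grassmannian fiber on the open and showing the obstruction Euler class integrates to $1$ is what makes the argument close.
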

		
		Once this is established, we are left with the problem of expressing integrals over $\mathcal{Q}(\chat,j)$ in terms of integrals over $\mathcal{M}^1_{\chat-je}$. By Proposition \ref{prop:cohsystem}, the map $q_2$ is identified with the relative Grassmann bundle $Gr(j,\Sheafext^1_{\Xhat}(\mathcal{F},\mathcal{O}_C(-1))$. The Chow cohomology ring $A^*(\mathcal{Q})$ is therefore generated by the Chern classes of $\mathcal{V}$ over the subring $A^*(\mathcal{M}^1_{p^*c})$ (with the inclusion given by pullback). An alternative set of generators is given by the Chern classes of $-\mathcal{V}$, and these two generating sets are explicitly related by $c(\mathcal{V}) c(-\mathcal{V}) = 1$. For a sequence of variables $x_1,x_2\ldots$ and $\lambda=(\lambda_1,\ldots,\lambda_d)$ a sequence of integers, set
		
		\[\Delta_{\lambda}(x_i) :=\Delta_{\lambda_1,\ldots,\lambda_d}(x_i):= \det (x_{\lambda_i+j-i})_{1\leq i,j\leq d},\]
		where we set $x_0=1$ and $x_i=0$ for $i<0$.
		For a $K$-theory class $W$ set $\Delta_{\lambda}(W):= \Delta_{\lambda}(c_i(W))$. Since in our case $\mathcal{V}$ has rank $j$, it follows that every product of Chern classes of $-\mathcal{V}$ can be written as a linear combination of classes of the form $\Delta_{\lambda}(-\mathcal{V})$, where $\lambda$ ranges over all partitions of size $j$ (cf. \cite[\S 14.5]{Fult}).
		 We then have the following identity (\cite[Example 14.2.2]{Fult}):
		\[\pi_*(\Delta_{\lambda_1,\ldots,\lambda_j}c(-\mathcal{V}))=\Delta_{\lambda_1-r+j,\ldots,\lambda_j-r+j}(c(-\Sheafext^1_{\Xhat}(\mathcal{F},\mathcal{O}_C(-1))).\]
		Thus, assuming that $q_1^*\alpha$ can be expressed in an explicit way as an polynomial in $c_i(\mathcal{V})$ with coefficients in $A^*(\mathcal{M}^1_{p^*c})$, one can explicitly determine a class $\overline{\alpha}$ on $\mathcal{M}^1_{p^*c}$ such that 
		\[\int_{ \mathcal{M}^1_{\chat}}\alpha = \int_{ \mathcal{M}^1}\overline{\alpha}.\]

		We now turn to the proof of Proposition \ref{prop:pullbackintegral}. The relative obstruction theory for the map $q_1$ is given by $\Ob_{q_1}=R(\pi_{\mathcal{M}})_*\left(R\Sheafhom(\mathcal{F},\mathcal{V}\otimes \mathcal{O}_C(-1))^{\vee}\otimes \omega_{\Xhat}\right)$. Since $\mathcal{F}$ is a family of sheaves of Chern character $\chat-je=p^*c$, one can use relative Grothendieck--Riemann--Roch to conclude that the rank of this perfect complex is zero. Therefore, the induced virtual pullback map $(q_1)^!_{\Ob_{q_1}}$ preserves dimension. 
		For a Chow cohomology class $\alpha$ on $ \mathcal{M}^1_{\chat}$, we have 
		\[\int_{\mathcal{Q}} q_1^*\alpha = \deg\left( (q_1)_*(q_1^*\alpha \cap[\mathcal{Q}]^{\vir}) \right)=\deg \left(\alpha\cap (q_1)_*[\mathcal{Q}]^{\vir}\right).\]
		By Proposition \ref{prop:virpullfunc}, the virtual class $[\mathcal{Q}]^{\vir}$ is equal to the virtual pullback of $[\mathcal{M}_{\chat}^1]^{\vir}$ along $q_1$. Therefore Proposition \ref{prop:pullbackintegral} follows from the following: 
		\begin{lemma}
			The map $(q_1)_*(q_1)^!_{\Ob_{q_1}}$ is the identity on $A_*(\mathcal{M}_{\chat}^1)$
		\end{lemma}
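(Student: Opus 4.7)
The plan is to establish the stated bivariant identity by first translating it into a concrete cycle-theoretic statement, and then evaluating it via the cohomology-system realization of $\mathcal{Q}(\chat,j)$ from Proposition~\ref{prop:qiscohsys}. Because $(q_1)^!_{\Ob_{q_1}}$ is a bivariant operation (in the sense of Manolache), the claim $(q_1)_*(q_1)^!_{\Ob_{q_1}} = \operatorname{id}$ on $A_*(\mathcal{M}^1_{\chat})$ amounts to showing that the bivariant class $(q_1)^!_{\Ob_{q_1}} \in A^0(q_1)$ pushes forward to $1 \in A^0(\operatorname{id}_{\mathcal{M}^1_{\chat}})$ under $q_1$. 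The key numerical input, already observed in the excerpt, is that $\Ob_{q_1}$ has virtual rank zero: at a geometric point $[\mathcal{F} \to \mathcal{V}\otimes \mathcal{O}_C(-1)]$ the rank is $\chi(\mathcal{F},\mathcal{V}\otimes \mathcal{O}_C(-1))$, and since $\mathcal{F}$ has Chern character $p^*c$, adjunction yields $R\Hom(\mathcal{F},\mathcal{O}_C(-1))=R\Hom(p_*\mathcal{F},Rp_*\mathcal{O}_C(-1))=0$ because $Rp_*\mathcal{O}_C(-1)=0$. In particular the virtual pullback preserves dimension, so it at least makes sense to compare $(q_1)_*(q_1)^!_{\Ob_{q_1}}\alpha$ with $\alpha$.

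Next, I would realize the bivariant identity via a global geometric model for $\Ob_{q_1}$. By Proposition~\ref{prop:qiscohsys} and the construction preceding it, if we choose a two-term global resolution $[W_0 \to W_1]$ of $R\Sheafhom_{\Xhat}(\mathcal{E},\mathcal{O}_C(-1))$ by vector bundles on $\mathcal{M}^1_{\chat}$, then $\mathcal{Q}(\chat,j)$ is cut out inside the relative Grassmannian $\pi_G:\operatorname{Gr}(j,W_0)\to\mathcal{M}^1_{\chat}$ as the vanishing locus of the tautological section of $\Sheafhom(\mathcal{S},\pi_G^*W_1)$. By the Riemann--Roch calculation above, $W_0$ and $W_1$ may be chosen of the same rank $n$, so the zero-locus description is of the ``correct'' virtual codimension. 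The point is that this embedding endows $\mathcal{Q}(\chat,j)$ with a second perfect obstruction theory, whose relative virtual class over $\mathcal{M}^1_{\chat}$ agrees with $c_{\mathrm{top}}(\mathcal{S}^{\vee}\otimes \pi_G^*W_1)\cap[\operatorname{Gr}(j,W_0)/\mathcal{M}^1_{\chat}]$ in the bivariant ring.

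I would then prove that this Grassmannian-vanishing-locus obstruction theory coincides with $\Ob_{q_1}$ up to a homotopy that preserves the virtual pullback. This is the step most parallel to the compatibility arguments already established in \S\ref{subsec:obstrquot} and appendix~\ref{app:atiyah}: both obstruction theories fit into a compatible system of distinguished triangles relating $\Ob(\mathcal{E}_{\mathcal{Q}})$, the tangent of the Grassmannian $T_{\pi_G/\mathcal{M}^1_{\chat}}\simeq \Sheafhom(\mathcal{S},\pi_G^*W_0/\mathcal{S})$, and the bundle $\Sheafhom(\mathcal{S},\pi_G^*W_1)$ whose section cuts out $\mathcal{Q}$. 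A diagram chase, analogous to Lemma~\ref{lemobfromrelob} and the proof of Proposition~\ref{prop:obtheoryq}, shows that the two relative obstruction theories become homotopic after restriction to $\mathcal{Q}(\chat,j)$, hence define the same virtual pullback bivariant class.

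Once this identification is in place, the lemma is reduced to the classical identity
\[
(\pi_G)_*\bigl(c_{\mathrm{top}}(\mathcal{S}^{\vee}\otimes \pi_G^*W_1)\bigr) \;=\; 1 \quad\text{in } A^0(\operatorname{id}_{\mathcal{M}^1_{\chat}}),
\]
valid whenever $\operatorname{rk}W_0 = \operatorname{rk}W_1$. This is a standard consequence of the splitting principle and the Kempf--Laksov formula on relative Grassmannians: writing both $W_0$ and $W_1$ as sums of formal Chern roots, one sees that $c_{\mathrm{top}}(\mathcal{S}^{\vee}\otimes \pi_G^*W_1)$ is the Schubert class whose $\pi_G$-pushforward is exactly the class of $\mathcal{M}^1_{\chat}$. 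I expect this last step to be essentially formal; the main obstacle is rather the preceding identification of the two obstruction theories on $\mathcal{Q}(\chat,j)$ at the level of bivariant virtual pullback, which requires carefully matching the Atiyah-class construction of $\Ob_{q_1}$ with the one arising from the embedding into $\operatorname{Gr}(j,W_0)$.
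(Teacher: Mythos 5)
Your route differs substantively from the paper's, and contains both a genuine gap and a concrete error. The paper does not attempt any global identification of $\Ob_{q_1}$ with a vanishing-locus obstruction theory. Instead it observes that since $(q_1)^!_{\Ob_{q_1}}$ preserves dimension, one has $(q_1)_*(q_1)^!_{\Ob_{q_1}}[Z]=k[Z]$ for some integer $k$ on each integral closed $Z\subset\mathcal{M}^1_{\chat}$, and then computes $k$ by restricting to a dense open $U\subset Z$ over which $R\Sheafhom_{\Xhat}(\mathcal{E}|_U,\mathcal{O}_C(-1))$ splits with locally free cohomology sheaves of ranks $j+s$ and $s$. Over such a $U$ the Quot-scheme becomes a \emph{smooth} relative Grassmannian of dimension $sj$, so the virtual pullback is simply flat pullback capped with the Euler class of the rank-$sj$ obstruction bundle, and the Schubert identity finishes. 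This sidesteps precisely the step you flag as the main obstacle, namely showing that the zero-locus obstruction theory on $\operatorname{Gr}(j,W_0)$ is homotopic to $\Ob_{q_1}$. That is not merely ``analogous'' to the compatibilities of \S\ref{subsec:obstrquot}: the Atiyah-class construction and a Kuranishi/zero-locus model are different mechanisms, and establishing the homotopy would itself be a non-trivial argument that you would have to carry out rather than cite.

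There is also an arithmetic slip. The Riemann--Roch computation you cite shows $\chi(\mathcal{F},\mathcal{O}_C(-1))=0$ for $\mathcal{F}$ of Chern character $p^*c$, but $W_0\to W_1$ resolves $R\Sheafhom_{\Xhat}(\mathcal{E},\mathcal{O}_C(-1))$ with $\ch(\mathcal{E})=p^*c+je$; since $\chi(\mathcal{O}_C(-1),\mathcal{O}_C(-1))=1$, one has $\operatorname{rk}W_0-\operatorname{rk}W_1=\chi(\mathcal{E},\mathcal{O}_C(-1))=j$, not $0$. Indeed the ``correct virtual codimension'' check $j(\operatorname{rk}W_0-j)-j\operatorname{rk}W_1=0$ \emph{requires} the ranks to differ by exactly $j$, contradicting your same-rank claim. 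Correspondingly the identity you should invoke is $(\pi_G)_*c_{\mathrm{top}}(\mathcal{S}^{\vee}\otimes\pi_G^*W_1)=1$ under the hypothesis $\operatorname{rk}W_0-\operatorname{rk}W_1=j$; this is still true (and is exactly what the paper uses, with $W_0=\Sheafhom(\mathcal{E}_U,\mathcal{O}_C(-1))$ of rank $j+s$ and $W_1=\Sheafext^1(\mathcal{E}_U,\mathcal{O}_C(-1))$ of rank $s$), but your stated hypothesis ``$\operatorname{rk}W_0=\operatorname{rk}W_1$'' is both false in this situation and not the hypothesis under which the Schubert identity yields $1$.
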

		
		\begin{proof}
			Since the stack $\mathcal{M}_{\chat}^1$ is a Deligne--Mumford stack, its rational Chow group is generated by the  cycle classes of integral closed substacks \cite[Theorem 2.1.12]{Kresch}. Let $Z\subset \mathcal{M}^1_{\chat}$ be an arbitrary integral closed substack. It is therefore enough to show that $[Z]=(q_1)_*(q_1)^!_{\Ob_{q_1}}[Z]$. Since $(q_1)^!_{\Ob_{q_1}}$ preserves degree, we have in any case 
			\[(q_1)_*(q_1)^!_{\Ob_{q_1}}[Z] =k[Z]\]
			for some integer $k$. To determine $k$, we may restrict to any dense open subset $U\subset Z$ and compute there. In particular, we may choose $U$ such that the object $R\Sheafhom_{\Xhat}(\mathcal{E}\mid_U, \mathcal{O}_C(-1))$ splits as the direct sum of its (shifted) cohomology sheaves $\Sheafext^i_{\Xhat}(\mathcal{E}\mid_U,\mathcal{O}_C(-1))$, and such that its cohomology sheaves are locally free. Note that $\Sheafext^i_{\Xhat}(\mathcal{E}\mid_U,\mathcal{O}_C(-1))$ vanishes for $i=2$, and therefore has rank ranks are $j+s$ for $i=0$, and $s$ for $i=1$, where $s$ is some nonnegative integer. 
			It follows from Proposition \ref{prop:qiscohsys} and Remark \ref{rem:cohsysbch} that the base change of $q_1$ to $U$ is isomorphic to the Grassmannian $Gr(j, \Sheafhom_{\Xhat}(\mathcal{E}_U, \mathcal{O}_C(-1))$ of relative dimension $s\cdot j$. In particular, $q_1\mid_U$ is smooth. For a smooth morphism, the virtual pullback defined by an obstruction theory is equal to the flat pullback followed by intersection with the top-Chern class of the obstruction bundle. Recall that we have the universal exact sequence 
			\[0\to q_2^*\mathcal{F}_U\to q_1^*\mathcal{E}_U\to \mathcal{O}_C(-1)\otimes \mathcal{V}_U\to 0.\]
			Then the obstruction bundle is $R\pi_{\mathcal{M},*}\left(\Sheafext^1( \mathcal{F}_U,\mathcal{O}_C(-1)\otimes \mathcal{V}_U)^{\vee}\otimes \omega_{\Xhat}\right)^{\vee}$. By relative Grothendieck--Riemann--Roch, its rank chern classes are equal to those of  \[\Sheafext^1_{\Xhat}(q_2^*\mathcal{F}_U,\mathcal{O}_C(-1))\otimes \mathcal{V}_U =\Sheafext^1_{\Xhat}(q_1^*\mathcal{E}_U,\mathcal{O}_C(-1))\otimes \mathcal{V}_U.\]
			Thus the rank of the obstruction bundle is $s\cdot j$, and we have for the top Chern class:
			\begin{multline*}	
			c_{sj}(q_1^*\Sheafext^1_{\Xhat}(\mathcal{E}_U,\mathcal{O}_C(-1))\otimes \mathcal{V}_U) =\\ c_j(\mathcal{V}_U)^{\cdot s}+\sum \left(\substack{\mbox{ terms in $c_i(\mathcal{V})$ }\\ \mbox{ of degree $< sj$ }}\right)\cdot \left(\substack{\mbox{ terms pulled }\\\mbox{ back along $q_1$ }}\right).
			\end{multline*}
	
			By the projection formula, and since $\mathcal{V}_U$ is the dual of the universal subsheaf $\mathcal{S}$ on the Grassmannian, we then have
			\[ (q_{1}|_U)_*c_{sj}(\pi_G^*\Sheafext^1_{\Xhat}(\mathcal{E}_U,\mathcal{O}_C(-1))\otimes \mathcal{V}_U) =(q_1\mid_U)_*(c_j(\mathcal{S}^{\vee})^{\cdot s})=1_{A_*(U)}. \]
			By the explicit description of $(q_1)^!_U$ as a flat pullback followed by intersection with the top Chern class of the obstruction bundle, the result follows.
		\end{proof}

		\section{Proofs of Theorems \ref{th:blowupdon} and \ref{thm:structhm}} \label{sec:MainRes}

		\subsection{Proof of Theorem \ref{th:blowupdon} (Blowup formula for Donaldson--Mochizuki invariants)}\label{subsec:donmoch}

		\paragraph{Integration of cohomology classes.}
		Let $\mathcal{M}$ be a proper Deligne-Mumford stack, $[Z]$ a Chow cycle on $\mathcal{M}$ (which for us will always be a virtual cycle) and $\mathcal{E}$ a coherent sheaf on $\mathcal{M}\times X$. Let also $\sigma_1,\ldots,\sigma_n\in H^*(X,\mathbb{Q})$ be  cohomology classes on $X$. In what follows, we would like to evaluate integrals of the form 
		\begin{equation}\label{eq:inteq}
			\int \pi_{\mathcal{M}*} (\ch_{i_1}^{j_1}(\mathcal{E})\pi_{X}^*\sigma_1)\cup\cdots \cup\pi_{\mathcal{M}*}(\ch_{i_n}^{j_n}(\mathcal{E})\pi_{X}^*\sigma_n)\cap [Z].
		\end{equation}
		Moreover, we would like the wall-crossing formula of Theorem \ref{th:mainthwallcrossing} to hold for such integrals when $Z$ is the virtual fundamental class. Since we worked on the level of Chow groups and Chow cohomology classes, it is not immediately clear why the formula would hold for rational cohomology groups. As remarked by Mochizuki \cite[Remark 7.1.3]{Moch}, this would follow if one had a good theory of cohomology groups on Deligne--Mumford stacks that has well-behaved cycle class maps from the Chow-groups and $\mathbb{G}_m$-localization. To circumvent this issue,  Mochizuki \cite[\S\S 7.1.1--7.1.3]{Moch} developed a formalism that allows one to make sense of expression \ref{eq:inteq} that agrees with the naive definition in case both are defined, and such that our wall-crossing result applies. Essentially, instead of trying to evaluate \eqref{eq:inteq} directly, he first considers the cycle on $\mathcal{M}\times X^n$ given by 
		\[ p_{12}^*\ch_{i_1}^{j_1}(\mathcal{E}) \cdots p_{1, n+1}^*\ch_{i_n}^{j_n}(\mathcal{E} p_{1}^*([Z]).\]
		Pushing forward to $X^n$ one obtains a Chow cycle on a smooth scheme, which yields an associated cycle in homology. One can then integrate the cohomology classes $p_i^* \sigma_i$ against the resulting homology class.
		  
		In what follows, we will work with integrals of the form \eqref{eq:inteq} (or that can be brought into this form in a straightforward way), and will always silently make use of Mochizuki's notation.   
		
		\paragraph{Definition of virtual Donaldson invariants.}

		For a cohomology class $\sigma$ on $X$ define 
		\[\mu(\sigma)=\mu_{\mathcal{E}}(\sigma):=(\pi_\mathcal{M})_*\left((c_2(\mathcal{E})-\frac{1}{4}c_1(\mathcal{E})^2)\cup \pi_{X}^*\sigma\right).\] 
		As outlined above, this is really a formal expression, but morally we have that if $\sigma\in H^i(X,\mathbb{Q})$, then $\mu(\sigma)\in H^i(\mathcal{M},\mathbb{Q})$. When $\mathcal{M}$ is a moduli space, we take $\mathcal{E}$ to be the universal sheaf on $\mathcal{M}\times X$ unless mentioned otherwise. 
		
		\begin{definition} \label{def:DonMochi}
			Suppose $c_1$ is chosen so that all semistable sheaves on $X$ with first Chern class $c_1$ are in fact stable. Let $L$ be any line bundle on $X$ with $c_1(L)=L$, and let $\mathcal{M}_X(2,L,k)\subset \mathcal{M}_X(2,c_1,k)$ denote the closed substack of sheaves whose determinant is isomorphic to $L$. 
			For any collection of cohomology classes $\sigma_{1}, \ldots,\sigma_{n}\in H^*(X,\mathbb{Q})$, we define the \emph{Donaldson--Mochizuki} or \emph{virtual Donaldson invariant} associated to $\sigma_1,\ldots,\sigma_n$ as
			\[D_{X,c_1}(\sigma_1\ldots\sigma_n):=\sum_{k}\int_{\mathcal{M}_X(2,L,k)} \mu(\sigma_1)\cdot \cdots \cdot \mu(\sigma_n).\]
			Here the argument of $D_{X,c_1}$ is understood to live in the graded ring $\operatorname{Sym}(H^*(X,\mathbb{Q}))$.
		\end{definition}
		
		\begin{remark}
			\begin{enumerate}[label=\roman*)]
				\item Each integral in the sum defining the invariant is independent of the choice of $L$. 
				\item One can allow more general invariants by allowing higher rank sheaves and insertions coming from higher Chern classes, and by working with non-fixed-determinant spaces and allowing classes coming from the Picard scheme in the integrand. Since we are interested in the concrete blowup formula for rank $2$ invariants, we do not pursue this here.
			\end{enumerate} 
		\end{remark}
		
		\paragraph{Proof of the blowup formula.}
		We first note that we can rewrite the expression defining $\mu$ as
		\[\mu(\sigma)=(\pi_\mathcal{M})_*\left((\frac{1}{4}c_1(\mathcal{E})^2- \ch_2(\mathcal{E}))\cap \pi_{X}^*\sigma\right).\]
		
		We use the following abbreviation: If $\sigma$ is a cohomology class on $X$ and $c$ a class on $U\times X$, their \emph{slant product} is
		\[c/\sigma:=(\pi_{\mathcal{M}})_*(c\cap \pi^*\sigma)\in H^*(U,\mathbb{Q}). \]
		
		\begin{lemma} \label{lem:mucrossing} We have the following equalities:
			\begin{enumerate}[label = \arabic*)]
				\item $ \mu_{\mathcal{E}'\oplus \mathcal{O}_C(-m)\otimes \mathfrak{t}^{\vee}}([C])=\mu_{\mathcal{E}'}([C])-t -\frac{1}{2}c_1(\mathcal{E}')/[pt] . $
				\item If $\alpha\in H^2(\Xhat,\mathbb{Q})$ with $(\alpha\cdot [C])=0$, then
				$ \mu_{\mathcal{E}'\oplus \mathcal{O}_{C}(-m)\otimes \mathfrak{t}^{\vee}}(\alpha) = \mu_{\mathcal{E}'}(\alpha). $
				\item 
				$\mu_{\mathcal{E}'\oplus \mathcal{O}_{C}(-m)\otimes \mathfrak{t}^{\vee}}([pt])=\mu_{\mathcal{E}'}([pt]).$ 
			\end{enumerate}
		\end{lemma}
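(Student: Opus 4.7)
The proof will be a direct Chern character computation. The plan is to expand $\tfrac{1}{4}c_1(\mathcal{E})^2 - \ch_2(\mathcal{E})$ for $\mathcal{E} = \mathcal{E}' \oplus \mathcal{O}_C(-m)\otimes \mathfrak{t}^{\vee}$, subtract the same expression for $\mathcal{E}'$, multiply by $\pi_X^*\sigma$ for each choice of $\sigma$, and push forward along $\pi_{\mathcal{M}}$. The only nontrivial input is the Chern character of $\mathcal{O}_C(-m)$ on $\Xhat$, which I would first compute as a preliminary step.

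First I would determine $\ch(\mathcal{O}_C(-m))$ by Grothendieck--Riemann--Roch on $\Xhat$. Writing $\ch(\mathcal{O}_C(-m)) = [C] + a\,[pt]$ for some rational $a$, and using $c_1(\Xhat) = p^*c_1(X) - [C]$ together with $[C]\cdot p^*c_1(X) = 0$ and $[C]^2 = -[pt]$, one finds $[C]\cdot c_1(\Xhat) = [pt]$, so $\chi(\mathcal{O}_C(-m)) = a + \tfrac{1}{2}$. Matching with the classical value $\chi_{\mathbb{P}^1}(\mathcal{O}(-m)) = 1-m$ gives $a = \tfrac{1}{2} - m$, hence $\ch(\mathcal{O}_C(-m)) = [C] + (\tfrac{1}{2}-m)[pt]$. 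Multiplying by $e^{-t}$ (for the $\mathfrak{t}^{\vee}$ factor) yields the equivariant expansion $\ch(\mathcal{O}_C(-m)\otimes\mathfrak{t}^{\vee}) = [C] - t\,[C] + (\tfrac{1}{2}-m)[pt] + O(t^2)$, whose only relevant parts are $c_1 = [C]$ and $\ch_2 = -t\,[C] + (\tfrac{1}{2}-m)[pt]$.

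Next I would combine with $\mathcal{E}'$. Writing $\mathcal{E} := \mathcal{E}' \oplus \mathcal{O}_C(-m)\otimes \mathfrak{t}^{\vee}$, additivity of $\ch$ gives $c_1(\mathcal{E}) = c_1(\mathcal{E}') + [C]$ and $\ch_2(\mathcal{E}) = \ch_2(\mathcal{E}') - t\,[C] + (\tfrac{1}{2}-m)[pt]$. A short calculation then yields
\[
\Delta := \bigl(\tfrac{1}{4}c_1(\mathcal{E})^2 - \ch_2(\mathcal{E})\bigr) - \bigl(\tfrac{1}{4}c_1(\mathcal{E}')^2 - \ch_2(\mathcal{E}')\bigr) = \tfrac{1}{2}c_1(\mathcal{E}')\cdot[C] + t\,[C] + \bigl(m - \tfrac{3}{4}\bigr)[pt],
\]
where $[C]$ and $[pt]$ denote their pullbacks from $\Xhat$ to $\mathcal{M}\times\Xhat$. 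All three statements then reduce to computing $\pi_{\mathcal{M}*}(\Delta\cdot \pi_X^*\sigma)$ for $\sigma = [C]$, for $\sigma = \alpha$ with $\alpha\cdot[C]=0$, and for $\sigma = [pt]$.

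Finally I would evaluate each case using the intersection relations $[C]^2 = -[pt]$, $[pt]\cdot[C] = 0$, and the vanishing of any class of degree $>4$ on $\Xhat$. For $\sigma=[C]$: $\Delta\cdot \pi_X^*[C] = -\tfrac{1}{2}c_1(\mathcal{E}')\cdot[pt] - t\,[pt]$, whose pushforward to $\mathcal{M}$ is $-\tfrac{1}{2}c_1(\mathcal{E}')/[pt] - t$, giving (1). For $\sigma=\alpha$ with $\alpha\cdot[C]=0$: both $[C]\cdot\alpha$ and $[pt]\cdot\alpha$ vanish on $\Xhat$, yielding (2). For $\sigma=[pt]$: every term in $\Delta\cdot \pi_X^*[pt]$ has cohomological degree $>4$ on $\Xhat$ and therefore vanishes, yielding (3). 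There is no substantive obstacle; the only point requiring care is the GRR computation in the first step and the bookkeeping of the equivariant parameter $t$ in the tensor product with $\mathfrak{t}^{\vee}$.
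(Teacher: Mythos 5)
Your proof is correct and follows the same approach as the paper: both compute the difference $\Delta$ of the integrands $\tfrac{1}{4}c_1^2-\ch_2$ for $\mathcal{E}'\oplus \mathcal{O}_C(-m)\otimes\mathfrak{t}^\vee$ and for $\mathcal{E}'$, arriving at $\Delta = \tfrac{1}{2}c_1(\mathcal{E}')\cdot[C] + t[C] + (m-\tfrac{3}{4})[pt]$ (the paper leaves the $\tfrac{1}{4}[C]^2$ unsimplified but this is the same quantity), and then take slant products. Your GRR detour to pin down $\ch(\mathcal{O}_C(-m)) = [C] + (\tfrac{1}{2}-m)[pt]$ is a fine alternative to deriving it from the sequence $0\to\mathcal{O}_{\Xhat}(-C)\to\mathcal{O}_{\Xhat}\to\mathcal{O}_C\to 0$ and twisting; either way the result and the remainder of the argument agree with the paper's.
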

		\begin{proof}
			We calculate
			\begin{equation*}
			\begin{split}
				& \frac{1}{4}c_1(\mathcal{E}'\oplus \mathcal{O}_C(-m)\otimes \mathfrak{t}^{\vee})^2- \ch_2(\mathcal{E}'\oplus \mathcal{O}_C(-m)\otimes \mathfrak{t}^{\vee})	\\
				& \qquad  =\frac{1}{4}c_1(\mathcal{E}')^2-\ch_2(\mathcal{E}')+\frac{1}{4}([C]^2+2c_1(\mathcal{E}')[C])+(m-\frac{1}{2})[pt]+t[C].
			\end{split}
			\end{equation*}
			Taking slant products gives the desired statements.
		\end{proof}
		
		\begin{proof}[Proof of Theorem \ref{th:blowupdon}]
			We will only address part \ref{th:blowupdon3} of the theorem, as it is the most interesting one, and the other cases are strictly easier.
			Without loss of generality, we may assume that $\sigma_1,\ldots,\sigma_n$ are homogeneous elements and that their degrees sum up to an even number $2d$, such that $d= 4k - c_1^2-3\chi(\mathcal{O}_X)$ for some integer $k$. Let $\alpha(\mathcal{E}):=\mu_{\mathcal{E}}(\sigma_1)\cdots\mu_{\mathcal{E}}(\sigma_n)$. Then
			\[D_{\Xhat,p^*c_1}(\sigma_1\cdots\sigma_n[C]^4) =
			\int_{\mathcal{M}_{\Xhat}(2,p^*c_1,k+1)}\alpha(\mathcal{E})\mu([C])^4.\]
			
			We may choose an integer $m\gg0$ such that $\mathcal{M}_{\Xhat}(2,p^*c_1,k+1)\simeq \mathcal{M}^{m}(p^*c)$ is isomorphic to the moduli space of $m$-stable sheaves for the appropriate Chern-character $c$ on $X$. We apply Theorem \ref{th:mainthwallcrossing} a number of $m$ times, to relate the integral over $\mathcal{M}^m(p^*c)$ to an integral over $\mathcal{M}^0(p^*c)$ and moduli spaces $\mathcal{M}^{m'}(p^*c-j\chexc_{m'+1})$ of lower dimension. By Lemma \ref{lem:mucrossing}, each wall-crossing term that appears will be of the form 
			\[\int_{\mathcal{M}^{m'}(p^*c-je_{m'+1})} \alpha\cdot \gamma_{m', j }\]
			for some $0\leq m'<m$ and some $j>0$, where $\gamma_{m',j}$ is some cohomology class. Note that we have  \[\deg \alpha = d = \vdim(\mathcal{M}^m)(p^*c)-4.\] 
			We also have 
			\[\vdim \mathcal{M}^{m'}(p^*c-je_{m'+1})=\vdim(\mathcal{M}^m(p^*c)) - (j^2+4j(m'+1/2)).\]
			This number is strictly smaller than $\vdim \mathcal{M}^m(p^*c)-4$ unless $j=1$ and $m'=0$. Therefore, the only nontrivial contribution to the wall-crossing term comes from $\mathcal{M}^0(p^*c-e)$. Using Lemma \ref{lem:mucrossing}, we can calculate the wall-crossing term explicitly.
			We get that the Donaldson invariant on the blowup is a sum of the following terms: 
			\begin{equation}\label{eq:muterm1}
				\int_{\mathcal{M}^0(p^*c)} \alpha \mu(C)^4=0; \text{ and} 
			\end{equation}
			\begin{equation}\label{eq:muterm2}
				\int_{\mathcal{M}^0(p^*c-\chexc)}\alpha\cdot (4\mu(C) -c_1(\mathcal{E})/[pt]+2\ch_2(\mathcal{E})/[C]).
			\end{equation}
			Here the vanishing of \eqref{eq:muterm1} first term follows from the projection formula in view of the fact that $\alpha$ and the universal sheaf on $\mathcal{M}^0(p^*c)\times \Xhat$ are pulled back via $p:\Xhat\to X$.
			It remains to compute \eqref{eq:muterm2}. We have the natural isomorphism $\mathcal{M}^0(p^*c-e)\simeq \mathcal{M}^1((p^*c-e)\cdot \ch(\mathcal{O}_{\Xhat}(C)))$ given by twisting the universal sheaf with $\mathcal{O}_{\Xhat}(C)$. Note that $(p^*c-e)\cdot \ch(\mathcal{O}_{\Xhat}(C)) = p^*c+e+[pt]$.
			Then we may rewrite \eqref{eq:muterm2} in terms of the universal sheaf on $\mathcal{M}^1(p^*c+e+[pt])$ as 
			\begin{gather}
				\int_{\mathcal{M}^1(p^*c+e+[pt])}\alpha\cdot (4\mu_{\mathcal{E}(-C)}([C])-c_1(\mathcal{E}(-C))/[pt]+2\ch_2(\mathcal{E}(-C))/[C])\nonumber\\
				=\int_{\mathcal{M}^1(p^*c+e+[pt])}\alpha\cdot (4\mu([C])+c_1(\mathcal{E})/[pt] + 2\ch_2(\mathcal{E})/[C]). \label{eq:muterm3}
			\end{gather}
		
		We now use the diagram 
		\begin{equation*}
			\begin{tikzcd}
				&\mathcal{Q}\ar[dr,"q_1"]\ar[dl,"q_2"']& \\
				\mathcal{M}^1(p^*c+e+[pt])&&\mathcal{M}^1(p^*c+[pt])
			\end{tikzcd}
		\end{equation*}
		of Theorem \ref{th:compthm}, where we write $\mathcal{Q}:=\mathcal{Q}(p^*c+e+[pt],1)$.  We now fix $\mathcal{E}$ to denote the universal sheaf over $\mathcal{M}^1(p^*c+e+[pt])$ and $\mathcal{F}$ to denote the one on $\mathcal{M}^1(p^*c+[pt])$.  On $\mathcal{Q}$, we have the exact sequence \[0\to q_2^*\mathcal{F}\to q_1^*\mathcal{E}\to \mathcal{O}_C(-1)\otimes \mathcal{L}\to 0,\] 
		where $\mathcal{L}$ is the universal subbundle of $q_2^*\Sheafext_{\pi_{\mathcal{M}}}^1(\mathcal{O}_C(-1),\mathcal{F})$, coming from the identification of $\mathcal{Q}$ with a relative Grassmannian over $\mathcal{M}^1(p^*c+[pt])$.
		By Theorem \ref{th:compthm}, we may evaluate a virtual integral over $\mathcal{M}^1(p^*c+e+[pt])$ after pulling the integrand back to $\mathcal{Q}$. We have 
		\[q_1^*\ch(\mathcal{E})=q_2^*\ch(\mathcal{F})+([C]-\frac{1}{2}[pt])e^{-\xi},\]
		where $\xi=-c_1(\mathcal{L})$, and therefore
		\begin{align*}
			q_1^*\mu_{\mathcal{E}}([C])&=q_2^*\mu_{\mathcal{F}}([C])-\frac{1}{2}q_2^*c_1(\mathcal{F})/[pt]-\xi;	\\
			q_1^*c_1(\mathcal{E})/[pt]&=q_2^*c_1(\mathcal{F})/[pt]; \text{ and} \\
			q_1^*\ch_2(\mathcal{E})/[C]&=q_2^*\ch_2(\mathcal{F})/[C]+\xi.
		\end{align*}

		Moreover, $\alpha(q_1^*\mathcal{E})=\alpha(q_2^*\mathcal{F})$. Therefore \eqref{eq:muterm3} is equal to 
		\[\int_{\mathcal{Q}}q_2^*\alpha(\mathcal{F}) \cdot[-2\xi+  q_2^*(4\mu_{\mathcal{F}}(C)- c_1(\mathcal{F})/[pt] +2\ch_2(\mathcal{F})/[C])].\]
		Due to the relative projective bundle structure of $q_2$, we have that \[(q_2)_*\left(\xi^k\cap[Q]^{\vir}\right)=c_{k-1}(\Ext(\mathcal{O}_C(-1), \mathcal{F})) \cap [\mathcal{M}^1(p^*c+[pt])]^{\vir} .\]
		So only the term involving $\xi$ survives, which gives:
		\begin{align*}
			-2\int_{ \mathcal{Q}}  q_2^*\alpha\cdot \xi&=-2 \int_{\mathcal{M}^1(p^*c+[pt])}\alpha = -2\int_{\mathcal{M}^0(p^*c+[pt])}\alpha =-2\int_{\mathcal{M}_X(2,c_1,k)}\alpha. \end{align*} 
		\end{proof}
		
		\subsection{Proof of Theorem \ref{thm:structhm} (Weak general structure theorem)}\label{subsec:structhm}
		\paragraph{Setup and examples.}
		As in the statement of the theorem, let $\Phi$ be a rule that given a smooth projective surface $X$, a finite type Artin stack $U$ and a $K$-theory class $\mathcal{E}$ (e.g. coming from a universal sheaf) on $X\times U$ assigns a Chow cohomology class $\Phi(\mathcal{E})$ on $U$ in a way compatible with pullback on $U$ and on $X$ and that depends on $\mathcal{E}$ only through the Chern classes of $\mathcal{E}$.

		We make the following assumption on $\Phi$:
		\begin{assumption}\label{assum:goodphi}
			
			\begin{enumerate}[label=\arabic*)]
			\item			For any choice of rank $r,\widehat{c}_1$ and $m$, there exist power series $P(\mathcal{E},t)$ in $t$ and terms of the form  
			\begin{enumerate}[label=\roman*)]
				\item slant products $\ch_i(\mathcal{E})/[pt]$; and \label{enum:slantpoint}
				\item slant products $\ch_i(\mathcal{E})/[C]$ \label{enum:slantC}
			\end{enumerate}
			such that for any finite type algebraic stack $U$ and any $U$-flat family $\mathcal{E}$ on $\Xhat\times U$ parametrizing sheaves on $\Xhat$ of rank $r$ and with first Chern class $\widehat{c}_1$, we have
	
				\[\Phi(\mathcal{E}\oplus \mathcal{O}_C(-m)\otimes \mathfrak{t}^{\vee})=\Phi(\mathcal{E})P(\mathcal{E},t),\]
				Moreover, $P$ only depends on $\Phi,m, (\widehat{c}_1,[C])$ and $r$.\label{assum:goodphi2}
			
			\item There exists a power series $Q$ in terms \ref{enum:slantpoint} and \ref{enum:slantC} depending only on $\Phi, (\widehat{c}_1,[C])$ and $r$ such that
			\[\Phi(\mathcal{E}(C))=\Phi(\mathcal{E})Q(\mathcal{E}).\] \label{assum:goodphi1}
			\item For any $s$, there exists a power series $R(\mathcal{E},(x_i)_{i=1}^s)$ in terms \ref{enum:slantpoint}, \ref{enum:slantC} and $(x_i)_{i=1}^s$, depending only on $\Phi,(\chat_1,[C]), r $ and $s$, such that for any $U$, $\mathcal{E}$ and any rank $s$ vector bundle $\mathcal{V}$ on $U$, we have 
			\[\Phi(\mathcal{E}\oplus \mathcal{O}_C(-1)\otimes \pi_U^* \mathcal{V})=\Phi(\mathcal{E})R(\mathcal{E},(c_i(\mathcal{V}))_{i=1}^s).\] \label{assum:goodphi3}
			\end{enumerate}
		\end{assumption}
		
		\begin{remark}\label{rem:rhomgood}
			The Chern character of the objects $R\Hom_{\Xhat}(\mathcal{E},\mathcal{O}_C(-m))$ and 
			$R\Hom_{\Xhat}(\mathcal{O}_C(-m),\mathcal{E})$ can be expressed using only terms of the form \ref{enum:slantpoint} and \ref{enum:slantC} given in Assumption \ref{assum:goodphi}. If $R(\mathcal{E})$ is a given polynomial in terms of the form \ref{enum:slantpoint} and \ref{enum:slantC}, then so is $R(\mathcal{E}\oplus\mathcal{O}_C(-m)\otimes \mathfrak{t}^{\vee})$.
		\end{remark}

		\begin{example} \label{ex:multclasstang}
			Suppose that $A$ is a multiplicative expression in Chern classes. This implies that for a vector bundle $F$, we have $A(F)=f(x_1)\cdots f(x_r)$, where $f$ is an invertible formal power series and the $x_i$ are the Chern roots of $F$. Then 
			\[\Phi(\mathcal{E})= A(-R\Hom_{X}(\mathcal{E},\mathcal{E}))\]
			satisfies Assumption \ref{assum:goodphi}.
			To see this, we observe that 
			\begin{gather*}
				\Phi(\mathcal{E}\oplus \mathcal{O}_C(-m)\otimes \mathfrak{t}^{\vee})=\\ \Phi(\mathcal{E})\cdot A(-R\Hom_{\Xhat}(\mathcal{E},\mathcal{O}_C(-m))\otimes \mathfrak{t})\cdot A(-R\Hom_{\Xhat}(\mathcal{O}_C(-m),\mathcal{E})\otimes \mathfrak{t}^{\vee})\cdot A(-\mathcal{O}_U),
			\end{gather*}
			and so the claimed statement follows from Remark \ref{rem:rhomgood}. 
		\end{example}

		We have the following special cases of Example \ref{ex:multclasstang}: 
		\begin{examples}\label{ex:insertions}
			\begin{enumerate} [label=\arabic*)]
				\item Let $A(E)=c(E)$ be the total Chern class. Denote  by $\mathcal{M}=\mathcal{M}_{X}(r,c_1,c_2)$ the moduli space of Gieseker semistable sheaves on $X$ with fixed determinant,  and let $\mathcal{E}$ be a universal sheaf. Then we have 
				\[\int_{[\mathcal{M}]^{\vir}}\Phi(\mathcal{E}) = \chi^{\vir}(\mathcal{M})\]
				the \emph{virtual Euler characteristic} of $\mathcal{M}$. 
				\item Let $A(E)=\operatorname{td}(E)$ be the Todd genus. Then for $\mathcal{M}$, $\mathcal{E}$ as in the first point, we have 
				\[\int_{[\mathcal{M}]^{\vir}}\Phi(\mathcal{E}) = \chi(\mathcal{O}^{\vir}_{\mathcal{M}}),\]
				the holomorphic Euler characteristic of the virtual structure sheaf. 
				\item As a common generalization, we take $A(E)=A_{-y}(E)$ to be the multiplicative class associated to the power series 
				\[f_y(x) = \frac{x(1-ye^{-x})}{1-e^{-x}},\]
				where $y$ is a formal variable. 
				We then have that $A_{1}(E)=c_{\operatorname{rk} E}(E)$ and $A_0(E)=\operatorname{td}(E)$. Then we have for $\Phi=\Phi_y$ that
				
				\[\int_{[\mathcal{M}]^{\vir}} \Phi_y(\mathcal{E})=\chi_y^{\vir}(\mathcal{M}),\]
				the virtual $\chi_y$-genus of $\mathcal{M}$ with its perfect obstruction theory. For more details, see \cite[\S 4]{FaGo}. \label{ex:insertions3}
			\end{enumerate}
		\end{examples}
		
		\paragraph{Proof of the structure theorem.}
		We begin by establishing a number of preliminary steps.

		We will use the following notation (more generally, one could replace $\Q$ by another field e.g. when working with the $\chi_y$-genus): 
		\begin{enumerate}[label=\arabic*)]
			\item For a power series $R\in \Lambda_{\Xhat} := \Q[[(\nu_i)_{i\geq 1},(\gamma_i)_{i\geq 1}]]$, and $\mathcal{E}$ a $K$-theory class on $\Xhat\times U$, let $R(\mathcal{E})$ denote the expression obtained by substituting $\nu_i$ with $\ch_i(\mathcal{E})/[pt]$ and $\gamma_i$ with $\ch_i(\mathcal{E})/[C]$.
			\item For $R\in \Lambda_X:=\Q[[(\nu_i)_{i\geq 1}]]$, and $\mathcal{E}$ a $K$-theory class on $X\times U$, let $R(\mathcal{E})$ denote the expression obtained by substituting $\nu_i$ with $\ch_i(\mathcal{E})/[pt]$.
	\end{enumerate}

		\begin{lemma}\label{lem:wallcrossseries}
			Let $\chat=p^*c-je$ be an admissible Chern class and $M\geq 1$. Let $Q\in \Lambda_{\Xhat}$. Then there exist power series $Q_{j_M}\in \Lambda_{\Xhat}$, which depend only on $Q, \Phi, j,M$ and $r$, and with $Q_0=Q$, such that
			\[\int_{\mathcal{M}^M(\chat)}\Phi(\mathcal{E})Q(\mathcal{E})=\sum_{j_M\geq 0}\int_{\mathcal{M}^{M-1}(\chat-j_M e_M) } \Phi(\mathcal{E})Q_{j_M}(\mathcal{E}).\]
		\end{lemma}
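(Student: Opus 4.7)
The plan is to apply Theorem \ref{th:mainthwallcrossing} directly with $m = M-1$, taking as the cohomology class the product $\Phi(-)\cdot Q(-)$, which is still a valid input to the wall-crossing formula since $Q(-)$ defines a Chow cohomology class on the moduli stack of coherent sheaves (it is a polynomial in slant products of Chern characters of the universal sheaf). This immediately gives
\begin{equation*}
\int_{\mathcal{M}^M(\chat)}\Phi(\mathcal{E})Q(\mathcal{E}) = \int_{\mathcal{M}^{M-1}(\chat)}\Phi(\mathcal{E})Q(\mathcal{E}) + \sum_{j_M\geq 1}\frac{1}{j_M!}\int_{\mathcal{M}^{M-1}(\chat - j_M e_{M-1})} W_{j_M}(\mathcal{E}),
\end{equation*}
where $W_{j_M}$ is the iterated-residue wall-crossing integrand from the theorem (after the appropriate index shift to match the lemma's conventions). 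The first term gives the $j_M = 0$ contribution with $Q_0 = Q$, as required.

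The core of the argument is then to show that each wall-crossing contribution can be rewritten as $\Phi(\mathcal{E})\,Q_{j_M}(\mathcal{E})$ for some $Q_{j_M}\in\Lambda_{\Xhat}$ depending only on the stated data. For this, I would analyze the integrand
$\Phi(\mathcal{E}\oplus \bigoplus_i C_{M-1}\otimes e^{-t_i})\, Q(\mathcal{E}\oplus \bigoplus_i C_{M-1}\otimes e^{-t_i})\,\Psi^{j_M}_{M-1}(\mathcal{E})$
summand by summand. By Assumption \ref{assum:goodphi} \ref{assum:goodphi2},
$\Phi(\mathcal{E}\oplus \bigoplus_i C_{M-1}\otimes e^{-t_i})$ factors as $\Phi(\mathcal{E})$ times a power series in the variables $t_i$ whose coefficients lie in $\Lambda_{\Xhat}$ (evaluated on $\mathcal{E}$), and the multiplicative constant is universal in $r$, $M$, and $(\widehat{c}_1,[C])$. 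The $Q$ factor behaves similarly: since $Q$ is a polynomial in slant products $\mathrm{ch}_i(-)/[pt]$ and $\mathrm{ch}_i(-)/[C]$, additivity of the Chern character under direct sums expresses $Q(\mathcal{E}\oplus \bigoplus_i C_{M-1}\otimes e^{-t_i})$ as $Q(\mathcal{E})$ plus corrections which are again power series in the $t_i$ with coefficients in $\Lambda_{\Xhat}$, using Remark \ref{rem:rhomgood}.

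For the factor $\Psi^{j_M}_{M-1}(\mathcal{E})$, the key observation is that it is built from equivariant Euler classes of the complexes $\mathfrak{N}(\mathcal{E},C_{M-1})\otimes e^{-t_i}$ and $\mathfrak{N}(C_{M-1},\mathcal{E})\otimes e^{t_i}$. By Remark \ref{rem:rhomgood}, the Chern characters of these complexes are themselves expressible in the slant product variables of $\Lambda_{\Xhat}$. Consequently, after clearing denominators in the $t_i$-variables, the entire integrand is a Laurent series in $(t_1,\ldots,t_{j_M})$ whose coefficients are elements of $\Lambda_{\Xhat}$ evaluated on $\mathcal{E}$, multiplied by $\Phi(\mathcal{E})$. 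Taking the iterated residue at $t_1 = \cdots = t_{j_M} = 0$ then extracts a specific coefficient, yielding an element of $\Lambda_{\Xhat}$ which we define to be $j_M!\cdot Q_{j_M}$ (divided by the $1/j_M!$ in the wall-crossing formula). By construction $Q_{j_M}$ depends only on $Q,\Phi,j,M,r$, and $(\widehat{c}_1,[C])$, giving the claimed formula.

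The main obstacle is purely bookkeeping: one must check that each of the three factors $\Phi$, $Q$, and $\Psi^{j_M}_{M-1}$ is compatible with the formalism of $\Lambda_{\Xhat}$-valued universal power series under the direct-sum operations appearing in the wall-crossing formula, and that the iterated residues respect this structure. No new geometric input is needed beyond Theorem \ref{th:mainthwallcrossing} and Assumption \ref{assum:goodphi} \ref{assum:goodphi2}; the rest is a careful accounting of how the residue operator interacts with power series in $\Lambda_{\Xhat}[[t_1,\ldots,t_{j_M}]][t_i^{-1}]$.
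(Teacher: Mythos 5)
Your proposal matches the paper's proof: the paper also applies Theorem \ref{th:mainthwallcrossing} with $m=M-1$ to the class $\Phi(\mathcal{E})Q(\mathcal{E})$ and then invokes Assumption \ref{assum:goodphi} together with Remark \ref{rem:rhomgood} to collect each wall-crossing contribution into the form $\Phi(\mathcal{E})Q_{j_M}(\mathcal{E})$; you simply spell out the bookkeeping that the paper leaves implicit. (You also correctly flagged the $e_M$ versus $e_{M-1}$ index mismatch between the lemma's statement and what the theorem literally produces with $m=M-1$.)
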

		\begin{proof}
			This follows by applying Theorem \ref{th:mainthwallcrossing} to 
			\[\int_{\mathcal{M}^M_{\chat}}\Phi(\mathcal{E})Q(\mathcal{E}),\]
			and by repeated application of Assumption \ref{assum:goodphi} and Remark \ref{rem:rhomgood} to each of the integrals on the moduli spaces $\mathcal{M}^{M-1}_{\chat-j_Me_M}$ appearing in Theorem \ref{th:mainthwallcrossing}.  
		\end{proof}
		
		\begin{lemma}\label{lem:powseriesreduce}
			Let $\chat=p^*c-j e$ be an  admissible Chern character on $\Xhat$ with $j\geq 0$ and let $Q\in \Lambda_{\Xhat}$. 
			\begin{enumerate}[label = \roman*)]
				\item Suppose that $j\geq r$.
				There are power series $P_{j_1}\in \Lambda_{\Xhat}$, such that 
				\[\int_{\mathcal{M}^0(\chat)}\Phi(\mathcal{E})Q(\mathcal{E})= \sum_{j_1\geq 0}\int_{\mathcal{M}^0 (p^*c -(j+j_1-r)e+j[pt])}\Phi(\mathcal{E})P_{j_1}(\mathcal{E}).\]
				The virtual dimension of the moduli spaces appearing in the sum on the right hand side is strictly decreasing in $j_1$ and equals $\vdim(\mathcal{M}^0_{\chat})$ when $j_1=0$.
				\label{lem:powseriesreduce1}
				\item Suppose that $0\leq j<r$. There are power series $Q_{j_1}\in \Lambda_{\Xhat}$, such that
				
				\[\int_{\mathcal{M}^0(\chat)}\Phi(\mathcal{E})Q(\mathcal{E})= \sum_{j_1\geq 0}\int_{\mathcal{M}^0 (p^*c-j_1 e+j[pt])}\Phi(\mathcal{E})Q_{j_1}(\mathcal{E}).\]
				The virtual dimension of the moduli spaces appearing in the sum on the right hand side is strictly decreasing in $j_1$ and strictly smaller than $\vdim(\mathcal{M}^0_{\chat})$ for all $j_1$.\label{lem:powseriesreduce2}
				
				\item Let $k$ be the least non negative integer such that $k r\geq j$. Then we have power series $R_{n}\in \Lambda_{\Xhat}$ in the same set of variables which only depend on $Q,\Phi, r$ and $j$, such that:
				\[\int_{\mathcal{M}^0(\chat)}\Phi(\mathcal{E})Q(\mathcal{E})= \sum_{n\geq kj-\binom{k}{2} r}\int_{\mathcal{M}^0({p^*c+n[pt]})}\Phi(\mathcal{E})R_{n}(\mathcal{E})\] \label{lem:powseriesreduce3}
			\end{enumerate}
			Moreover, in each part, the resulting set of series is given in an effective way and depends only on $\Phi, Q, r$ and $j$.
		\end{lemma}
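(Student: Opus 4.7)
The proof of all three parts rests on the twist isomorphism $\tau\colon\mathcal{M}^0(\chat)\xrightarrow{\sim}\mathcal{M}^1(\chat(C))$ given by $E\mapsto E(C)$. A direct calculation using $p^*c_1\cdot[C]=0$ and $[C]^2=-[pt]$ gives
\[\chat(C)=\chat\cdot e^{[C]}=p^*c+(r-j)[C]+\tfrac{3j-r}{2}[pt]=p^*\tilde c+(r-j)e=p^*c-(j-r)e+j[pt],\]
where $\tilde c:=c+j[pt]$. Under $\tau$ the universal sheaves differ by an $\mathcal{O}(C)$-twist, so Assumption \ref{assum:goodphi}(2) lets us rewrite $\int_{\mathcal{M}^0(\chat)}\Phi(\mathcal{E})Q(\mathcal{E})=\int_{\mathcal{M}^1(\chat(C))}\Phi(\mathcal{E})\tilde Q(\mathcal{E})$ for some universal $\tilde Q\in\Lambda_{\Xhat}$ depending only on $Q,\Phi,r,j$.

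For part (i), with $j\ge r$, Lemma \ref{lem:wallcrossseries} applied with $M=1$ expresses the right-hand side as $\sum_{j_1\ge 0}\int_{\mathcal{M}^0(\chat(C)-j_1 e)}\Phi(\mathcal{E})P_{j_1}(\mathcal{E})$, which matches the desired formula because $\chat(C)-j_1 e=p^*c-(j+j_1-r)e+j[pt]$. The dimension statement follows from the explicit formula
\[\vdim\mathcal{M}^0(p^*c-Je+N[pt])=V-(J^2+rJ+2rN)\]
with $V$ independent of $J,N$: at $j_1=0$ it equals $\vdim\mathcal{M}^0(\chat)$, and it strictly decreases with $j_1$.

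For part (ii), with $0\le j<r$, the above calculation puts $\chat(C)$ in the form $p^*\tilde c+(r-j)e$ with $r-j\in(0,r)$ (the case $j=0$ being trivial), so Theorem \ref{th:compthm} applies with its $j$-parameter equal to $r-j$. Applying its part (1) to the combined class $\Phi(\mathcal{E})\tilde Q(\mathcal{E})$ converts the integral on $\mathcal{M}^1(\chat(C))$ into one on $\mathcal{Q}(\chat(C),r-j)$ whose integrand involves the sheaf $\mathcal{G}:=q_2^*\mathcal{F}\oplus\mathcal{V}\otimes\mathcal{O}_C(-1)$, where $\mathcal{F}$ is the universal sheaf on $\mathcal{M}^1(p^*\tilde c)$. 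Assumption \ref{assum:goodphi}(3) applied to $\Phi$, together with direct Chern-character expansion for $\tilde Q(\mathcal{G})$, factors the integrand as $\Phi(q_2^*\mathcal{F})\cdot R(q_2^*\mathcal{F},c_i(\mathcal{V}))$ for some polynomial $R$ in the $c_i(\mathcal{V})$ whose coefficients lie in $\Lambda_{\Xhat}$ evaluated at $q_2^*\mathcal{F}$. By Theorem \ref{th:compthm}(2) and the Thom--Porteous-style pushforward along the relative Grassmann bundle $q_2$, this integral equals $\int_{\mathcal{M}^1(p^*\tilde c)}\Phi(\mathcal{F})Q''(\mathcal{F})$, where $Q''$ is built from the Chern classes of the vector bundle $\Sheafext^1_{\Xhat}(\mathcal{O}_C(-1),\mathcal{F})$ (a bundle by $1$-stability); relative Grothendieck--Riemann--Roch expresses these Chern classes in the slant-product variables of $\Lambda_{\Xhat}$, so $Q''\in\Lambda_{\Xhat}$. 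A last application of Lemma \ref{lem:wallcrossseries} with $M=1$ then produces $\sum_{j_1\ge 0}\int_{\mathcal{M}^0(p^*\tilde c-j_1 e)}\Phi(\mathcal{E})Q_{j_1}(\mathcal{E})$, which is the desired sum since $p^*\tilde c-j_1 e=p^*c-j_1 e+j[pt]$. The dimension inequality reduces to $j(j-r)-j_1(j_1+r)<0$ for $0<j<r$ and all $j_1\ge 0$, strictly decreasing in $j_1$.

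Part (iii) is obtained by iterating the procedure: starting from $\int_{\mathcal{M}^0(p^*c-je)}\Phi Q$, apply (i) while the current $e$-coefficient is $\ge r$, otherwise apply (ii), and terminate once the $e$-coefficient reaches $0$ (so that the moduli space becomes $\mathcal{M}^0(p^*c+n[pt])$). Every recursive step produces a finite sum, thanks to the finiteness in Lemma \ref{corkbound} underlying Lemma \ref{lem:wallcrossseries}, and every chain terminates in finitely many steps because the virtual dimension is non-increasing along chains and strictly decreases whenever a summation index $\ge 1$ is picked or (ii) is applied with $0<j<r$. The accumulated $[pt]$-coefficient along a chain equals $n=\sum_\ell j'_\ell$ over the intermediate $e$-coefficients; the minimum is realized by always selecting the zero summand, giving $n_{\min}=kj-\binom{k}{2}r$ for $k=\lceil j/r\rceil$, while any other choice strictly increases $n$. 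The main technical obstacle is in part (ii): verifying that every intermediate series stays in $\Lambda_{\Xhat}$ and depends only on $Q,\Phi,r,j$ requires systematic use of Remark \ref{rem:rhomgood} together with Assumption \ref{assum:goodphi}, and the relative Grothendieck--Riemann--Roch computation for $\Sheafext^1_{\Xhat}(\mathcal{O}_C(-1),\mathcal{F})$ must be tracked through the Thom--Porteous pushforward to confirm universality of the resulting $Q_{j_1}$.
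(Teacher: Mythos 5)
Your proposal is correct and follows the same route as the paper: twist by $\mathcal{O}_{\Xhat}(C)$ to identify $\mathcal{M}^0(\chat)\simeq\mathcal{M}^1(\chat(C))$ and absorb the twist into $\widetilde{Q}$ via Assumption~\ref{assum:goodphi}(2), apply Lemma~\ref{lem:wallcrossseries} for part~(i), insert Theorem~\ref{th:compthm} plus the Grassmann pushforward for part~(ii), and iterate with bookkeeping by virtual dimension for part~(iii). You additionally make explicit the Chern-character computation for $\chat(C)$ and the formula $\vdim\mathcal{M}^0(p^*c-Je+N[pt])=V-(J^2+rJ+2rN)$, which justifies the dimension claims, and you give a sharper account of the termination of the recursion and the minimum $n=kj-\binom{k}{2}r$; the paper leaves these computations implicit and argues termination by grouping integrals at each fixed virtual dimension.
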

		
		\begin{proof}
			Note that we have the isomorphism $\mathcal{M}_{\chat}^0\simeq \mathcal{M}^1_{p^*c-(j-r)e+j[pt]}$ given by $\mathcal{E}\mapsto \mathcal{E}(C)$. By Assumption \ref{assum:goodphi}, we have 
			
			\[\int_{\mathcal{M}^0_{\chat}}\Phi(\mathcal{E})Q(\mathcal{E})= \int_{\mathcal{M}^1_{p^*c-(j-r)e+j[pt]}} \Phi(\mathcal{E})\widetilde{Q}(\mathcal{E}).\]
			for some $\widetilde{Q}$.
			
			Then \ref{lem:powseriesreduce1} follows by applying Lemma \ref{lem:wallcrossseries} to the right hand side.
			
			In order to show \ref{lem:powseriesreduce2}, we use Theorem \ref{th:compthm}, which gives
			\begin{align*}
			\int_{\mathcal{M}^1_{p^*c-(j-r)e+j[pt]}} \Phi(\mathcal{E})\widetilde{Q}(\mathcal{E})&=\int_{\mathcal{Q}}\Phi(q_1^*\mathcal{E})\widetilde{Q}(q_1^*\mathcal{E})\\ &=\int_{\mathcal{Q}}\Phi(q_2^*\mathcal{F}\oplus \mathcal{O}_C(-1)\otimes \mathcal{V})\widetilde{Q}(q_2^*\mathcal{F}\oplus\mathcal{O}_C(-1)\otimes \mathcal{V}).
			\end{align*}
			Using Assumption \ref{assum:goodphi} \ref{assum:goodphi3}, this can be rewritten as an expression of the form 
			\[\int_{Q}q_2^*\Phi(\mathcal{F})\widetilde{\widetilde{Q}}(q_2^*\mathcal{F},c_i(\mathcal{V})).\]
			By Theorem \ref{th:compthm}, we can reduce this to an integral on
			${\mathcal{M}^1_{p^*c+j[pt]}}$. Then applying Lemma \ref{lem:wallcrossseries} gives \ref{lem:powseriesreduce2}.
			
			Finally we show \ref{lem:powseriesreduce3}. The series $R_n$ is obtained by successively applying either \ref{lem:powseriesreduce1} or \ref{lem:powseriesreduce2} to integrals over $\mathcal{M}^0_{p^*c-j'e+n'[pt]}$, where $j'>0$ starting with the integrals over those moduli spaces whose virtual dimension is highest, and grouping integrals over the same moduli spaces together after each step.  For each value of the virtual dimension, there will after each step only be a finite number of integrals over moduli spaces of that given dimension. Therefore this is a well-defined procedure and after finitely many steps, the only  integral corresponding to the virtual dimension of $\mathcal{M}^0_{p^*c+n[pt]}$ is over that moduli space itself, hence we may read off $R_n$.
		\end{proof}
		
		\begin{lemma}\label{lem:serieseliminate}
			Let $p^*c$ be an admissible Chern class. Let $Q$ be a power series in $\operatorname{ch}_i(\mathcal{E})/[C]$ and $\operatorname{ch}_i(\mathcal{E})/[pt]$. Then there exist power series $P_{n}\in \Q[[\nu_2,\ldots,\nu_r]]\subset\Lambda_X$, which depend only on $Q, \Phi$ and $r$, such that
			\[\int_{\mathcal{M}^0({p^*c})}\Phi(\mathcal{E})Q(\mathcal{E})=\sum_{n\geq 0}\int_{\mathcal{M}_X(c+n[pt])}\Phi(\mathcal{E})P_{n}(\mathcal{E}).\]
		\end{lemma}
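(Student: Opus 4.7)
The plan is to use the isomorphism $\rho\colon \mathcal{M}^0_{p^*c} \xrightarrow{\sim} \mathcal{M}_X(c)$ of Corollary~\ref{corpullbackiso} to transport the integral directly to $\mathcal{M}_X(c)$, so that in fact only the $n=0$ term in the stated sum is nontrivial: I would set $P_n = 0$ for $n \geq 1$, with all the work going into the construction of $P_0$.

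By the compatibility of obstruction theories established in \S\ref{subsecobstrbasic}, $\rho$ identifies $[\mathcal{M}^0_{p^*c}]^{\vir}$ with $[\mathcal{M}_X(c)]^{\vir}$ and carries the universal sheaf $\mathcal{E}$ to the pullback $p^*\mathcal{F}$ of the universal sheaf $\mathcal{F}$ on $\mathcal{M}_X(c)$. The hypothesis that $\Phi$ is compatible with pullback along $p$ (part of the setup of Theorem~\ref{thm:structhm}) then gives $\Phi(\mathcal{E}) = \Phi(\mathcal{F})$ after identification.

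The next step is to rewrite $Q(p^*\mathcal{F})$ as a series purely in slant products on $X$. The projection formula, together with the identities $p_*[C] = 0$ (as $C$ contracts to a point) and $p_*[pt]_{\Xhat} = [pt]_X$, gives
\[
\ch_i(p^*\mathcal{F})/[C] \;=\; \ch_i(\mathcal{F})/p_*[C] \;=\; 0,\qquad \ch_i(p^*\mathcal{F})/[pt]_{\Xhat} \;=\; \ch_i(\mathcal{F})/[pt]_X,
\]
so $Q(p^*\mathcal{F}) = \widetilde{Q}(\mathcal{F})$, where $\widetilde{Q}$ is the series obtained from $Q$ by formally setting every $/[C]$-variable to zero. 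A priori this places $\widetilde{Q}$ in $\Q[[\nu_i : i \geq 2]]$.

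The remaining point, which is the main technical subtlety, is to replace $\widetilde{Q}$ by an element of $\Q[[\nu_2, \ldots, \nu_r]]$ -- that is, to eliminate the $\nu_i$ for $i > r$. For this I would exploit the rank-$r$ condition on $\mathcal{F}$: since the sheaves parametrized by $\mathcal{M}_X(c)$ are torsion-free of rank $r$ on the smooth surface $X$, the double dual $\mathcal{F}^{**}$ is fiberwise locally free of rank $r$, and Newton's identities together with $c_i(\mathcal{F}^{**}) = 0$ for $i > r$ let one express each $\ch_i(\mathcal{F}^{**})$ as a polynomial in $\ch_1(\mathcal{F}^{**}), \ldots, \ch_r(\mathcal{F}^{**})$. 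The difference $\mathcal{F}^{**}/\mathcal{F}$ is supported in relative codimension two over the universal base, so after slanting with $[pt]_X$ it contributes only to the variable $\nu_2$, and one may then recursively solve for each $\nu_i$ with $i > r$ as a polynomial in $\nu_2, \ldots, \nu_r$. Carrying this out rigorously in families is the delicate part, since $\mathcal{F}^{**}$ need not be flat over the moduli space; one clean way to organize the argument is to do the reduction first on the open substack where $\mathcal{F}$ is locally free and then extend using that the complement is a proper closed substack whose virtual contribution can be absorbed into the same variables. The resulting element of $\Q[[\nu_2, \ldots, \nu_r]]$ is then the required $P_0$.
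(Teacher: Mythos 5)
Your first step — identifying $\mathcal{M}^0_{p^*c}$ with $\mathcal{M}_X(c)$, matching virtual classes, and observing that $\ch_i(p^*\mathcal{F})/[C]=0$ and $\ch_1(p^*\mathcal{F})/[pt]=0$ on this space — is correct and does occur in the paper's proof, but the rest of the argument has a genuine gap that cannot be repaired along the lines you sketch. The crucial problem is in the reduction of $\nu_i$ for $i>r$. Newton's identities express $\ch_i(\mathcal{F}^{**})$ for $i>r$ as a \emph{polynomial} in $\ch_1(\mathcal{F}^{**}),\ldots,\ch_r(\mathcal{F}^{**})$, but these are cohomology classes on the product $\mathcal{M}_X(c)\times X$, not on $\mathcal{M}_X(c)$. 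Slant product with $[pt]$ is linear but not multiplicative: a term like $(\ch_{j_1}\ch_{j_2})/[pt]$ expands via the K\"unneth decomposition into a sum of products $(\ch_{j_1}/\sigma)(\ch_{j_2}/\sigma')$ over all pairs $\sigma,\sigma'\in H^*(X)$ with $\sigma\cup\sigma'=[pt]$, and these are \emph{not} expressible in terms of the variables $\nu_j=\ch_j/[pt]$ alone. So the relation among $\ch_i(\mathcal{F}^{**})$ on the product does not descend to a relation among the $\nu_i$ on $\mathcal{M}_X(c)$, and your proposed $P_0$ does not exist as an element of $\Q[[\nu_2,\ldots,\nu_r]]$.

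This is precisely why the paper \emph{cannot} work directly on $\mathcal{M}^0_{p^*c}$ and why the answer is not a single $P_0$ with all other $P_n$ vanishing. Instead, the paper crosses the wall (via Lemma \ref{lem:wallcrossseries}) to $\mathcal{M}^1_{p^*c}$, where $\Sheafext^1_{\Xhat}(\mathcal{O}_C(-1),\mathcal{E})$ is a rank-$r$ vector bundle living on the moduli stack itself (not on the product). Its Chern characters are, by Grothendieck--Riemann--Roch, exactly the slant products $\nu_i+\gamma_{i+1}$, so Newton's identities for this bundle genuinely yield relations among the allowed variables: $\nu_i+\gamma_{i+1}$ for $i>r$ is a polynomial in $\nu_1+\gamma_2,\ldots,\nu_r+\gamma_{r+1}$. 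Substituting these relations into $Q$ and crossing back to $\mathcal{M}^0$ produces a term $\int_{\mathcal{M}^0_{p^*c}}\Phi(\mathcal{E})P_0(\mathcal{E})$ together with wall-crossing corrections over the strictly smaller-dimensional spaces $\mathcal{M}^0_{p^*c-je}$, which are converted to $\mathcal{M}^0_{p^*c+n[pt]}$ for $n\geq 1$ by Lemma \ref{lem:powseriesreduce} \ref{lem:powseriesreduce3} and then handled by iterating the whole procedure. The $P_n$ for $n\geq 1$ are therefore genuinely present and your claim that they vanish is incorrect.
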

		\begin{proof}
			Recall that we have 
			\[\ch_i(R\Hom_{\Xhat}(\mathcal{E},\mathcal{O}_C(-1)))=(-1)^{i+1}\ch_{i+1}(\mathcal{E})/[C], \]
			\[\ch_i(R\Hom_{\Xhat}(\mathcal{O}_C(-1),\mathcal{E}))= -\ch_{i+1}(\mathcal{E})/[C]-\ch_{i}(\mathcal{E})/[pt].\]
			From Lemma \ref{lem:wallcrossseries}, we have
			\[\int_{\mathcal{M}_{p^*c}^0}\Phi(\mathcal{E}){Q}(\mathcal{E})=\int_{\mathcal{M}_{p^*c}^1}\Phi(\mathcal{E}){Q}(\mathcal{E})-\sum_{j_1\geq 1}\int_{\mathcal{M}_{p^*c-j_1 e}^0}\Phi(\mathcal{E}){Q}_{j_1}(\mathcal{E}).\]
			On $\mathcal{M}^1_{p^*c}$, we have $R\Hom_{\Xhat}(\mathcal{O}_C(-1),\mathcal{E})=\Sheafext^1_{\Xhat}(\mathcal{O}_C(-1),\mathcal{E})[-1]$, which is the shift of a rank $r$ vector bundle. In particular, every class $\ch_i(\Sheafext^1_{\Xhat}(\mathcal{O}_C(-1),\mathcal{E}))$ for $i\geq 1$ can be canonically expressed in terms of just the classes for $1\leq i\leq r$. 
			Doing so gives
			\[ \ch_i(\mathcal{E})/[pt]= - \ch_{i+1}(\mathcal{E})/[C]+ \left(\substack{\mbox{a polynomial in $\ch_i(\mathcal{E})/[pt]$}\\\mbox{ and $\ch_{i+1}(\mathcal{E}/[C])$for $i>r$}}\right).\] 
			Let $\widetilde{Q}$ be the power series obtained from ${Q}$ by replacing every occurence of $\nu_i$ by the resulting expression only involving the $\gamma_i$ and $(\nu_i)_{i=1}^r$. 
			Then we have 
			\[\int_{\mathcal{M}_{p^*c}^1}\Phi(\mathcal{E}){Q}(\mathcal{E}) = \int_{\mathcal{M}_{p^*c}^1}\Phi(\mathcal{E})\widetilde{Q}(\mathcal{E})=\int_{\mathcal{M}_{p^*c}^0}\Phi(\mathcal{E})\widetilde{Q}(\mathcal{E})+\sum_{j_1\geq 1}\int_{\mathcal{M}^0_{p^*c-j_1e}}\widetilde{Q}_{j_1}(\mathcal{E}).\]
			
			However, on $\mathcal{M}^0_{p^*c}$, the object $R\Hom_{\Xhat}(\mathcal{O}_C(-1),\mathcal{E})$ vanishes identically, and therefore $ch_i(\mathcal{E})/[C]=0$ for all $i$. Note also, that since $\det(\mathcal{E})$ is pulled back from the Poincar\'e line bundle, we have $\ch_1(\mathcal{E})/[pt]=0$. Let $P_0$ be the series obtained by replacing all occurences of $\gamma_i, i\geq 1$ and $\nu_1$ in $\widetilde{Q}$ by zero. Then $P_0$ is a power series involving only $\nu_i$ for $2\leq i\leq r$ and we have
			\begin{align*}
				\int_{\mathcal{M}^0_{p^*c}}\Phi(\mathcal{E})Q(\mathcal{E}) &=\int_{\mathcal{M}^0_{p^*c}}\Phi(\mathcal{E})P_0(\mathcal{E})+\sum_{j_1\geq 1} \int_{\mathcal{M}^0_{p^*c-j_1e}}\Phi(\mathcal{E})(\widetilde{Q}_{j_1}-Q_{j_1})(\mathcal{E})  \\
				&=\int_{\mathcal{M}^0_{p^*c}}\Phi(\mathcal{E})P_0(\mathcal{E})+\sum_{n\geq 1} \int_{\mathcal{M}^0_{p^*c+n[pt]}} \Phi(\mathcal{E})Q_{1,n}(\mathcal{E}).
			\end{align*}
			Here, we invoke Lemma \ref{lem:powseriesreduce} \ref{lem:powseriesreduce3} to get the second equality, where $Q_{1,n}\in \Lambda_{\Xhat}$.
			By repeating this procedure with $Q_{1,n}$ in place of $Q$, we obtain $P_1$, and we can proceed in this way to determine all the $P_n$ inductively. 
		\end{proof}
		
		\begin{proof}[Proof of Theorem \ref{thm:structhm}]
			A repeated application of Lemma \ref{lem:wallcrossseries} shows the following: For a given positive integer $M$, let $\vec{j}=(j_1,\ldots,j_M)$ range over the tuples of nonnegative integers, and write $\vec{j}\cdot \vec{e}:=j_1e_1+\cdots+j_Me_M$. Then, there are natural power series $Q_{M,\vec{j}}$, such that 
			\[\int_{\mathcal{M}_{\chat}^M}\Phi(\mathcal{E})=\sum_{\vec{j}}\int_{\mathcal{M}^0_{\chat-\vec{j}\cdot \vec{e}}}\Phi(\mathcal{E})Q_{M,\vec{j}}(\mathcal{E}).\]
			We have for $M'>M$ and $\vec{j'}=(j_1,\ldots,j_M,0,\ldots,0)$ that $Q_{M',\vec{j'}}= Q_{M,\vec{j}}$. Note moreover, that the virtual dimension of $\mathcal{M}_{\chat-e_M}$ is strictly decreasing in $M$ and that the virtual dimension of $\mathcal{M}_{\chat-\vec{j}\cdot \vec{e}}$ is strictly decreasing in $\vec{j}$ (with respect to the partial ordering given by comparing each component). Therefore, in any given situation we can choose $M$ arbitrarily large and only finitely many terms will contribute for dimension reasons. We get that in fact
			\[\int_{\mathcal{M}_{\Xhat}(\chat)}\Phi(\mathcal{E})=\sum_{\vec{j}}\int_{\mathcal{M}^0_{\chat-\vec{j}\cdot\vec{e}}} \Phi(\mathcal{E})Q_{\vec{j}}, \]
			for some power series $Q_{\vec{j}}\in \Lambda_{\Xhat}$, where the sum now ranges over all infinite tuples of non negative integers with eventually vanishing entries. For each $\vec{j}$, we can rewrite $\vec{j}\cdot e=\abs{j}e-\sum_{i\geq1} j_i(i-1)[pt]$ and summarize the resulting integrals to get
			\[\int_{\mathcal{M}_{\Xhat}(\chat)}\Phi(\mathcal{E})=\sum_{j'\geq j,n\geq 0} \int_{\mathcal{M}^0_{p^*c-j'e}}\Phi(\mathcal{E})R_{j',n}(\mathcal{E}).\]
			Now apply Lemma \ref{lem:powseriesreduce} \ref{lem:powseriesreduce3} to each term on the right hand side, in descending order of the virtual dimension of the moduli spaces. Collecting all the terms gives
			\[\int_{\mathcal{M}_{\Xhat}(\chat)}\Phi(\mathcal{E})=\sum_{n\geq kj-\binom{k}{2}r}\int_{\mathcal{M}^0_{p^*c+n[pt]}}\Phi(\mathcal{E}) R_n(\mathcal{E}).\]
			Now apply Lemma \ref{lem:serieseliminate} to each term and collect integrals over identical moduli spaces, to get an expression
			\[\int_{\mathcal{M}_{\Xhat}(\chat)} \Phi(\mathcal{E}) = \sum_{n=0}^{\infty} \int_{\mathcal{M}_X(c+n[pt])} \Phi(\mathcal{E})P_n(\mathcal{E}),\]
			where now each $P_n$ is a power series in $\Lambda_X$ involving only the variables $\nu_2,\ldots,\nu_r$. 
		\end{proof}
		
		\begin{remark}
			In Theorem \ref{thm:structhm}, we can let the sum on the right hand side range only over $n\geq kj+\binom{k}{2}r$, where $k$ is chosen as in Lemma \ref{lem:powseriesreduce} \ref{lem:powseriesreduce3}.
		\end{remark}

		\appendix
	
		 \section{The Atiyah class on an algebraic stack}\label{app:atiyah}
		 
		 	We give a summary of the properties of the Atiyah class and related constructions that we use to construct perfect obstruction theories on the moduli stacks. Proofs of these statements will appear in \cite{Kuhn}. Throughout we consider stacks over an arbitrary base scheme $S$. We let $k$ denote a field.

		 	\subsection{The Atiyah class} \label{subsec:atatiyah}
		Let $f:\mathcal{X}\to \mathcal{Y}$ be a morphism of algebraic stacks and let $E\in D^-_{qcoh}(\mathcal{X})$. The {\it Atiyah class} of $E$ over $\mathcal{Y}$ is  a natural map 
		\[\at_E:E\to L_{\mathcal{X}/\mathcal{Y}}\otimes E[1].\]
		We also use the notation $\at_{E,\mathcal{X}/\mathcal{Y}}$ to emphasize the dependence on $f$. 
		If $E$ is dualizable in the derived category (equivalently, a perfect complex, see \cite{HaRy}), then the data of $\at_E$ is equivalent to that of a map
		\[\at'_E:E\otimes E^{\vee}[-1]\to L_{\mathcal{X}/\mathcal{Y}},\]
		which we also call the Atiyah class.
		
		The Atiyah class has the following fundamental properties: Let $E,F$ be objects of $D^-_{qcoh}(\mathcal{X})$.

		\paragraph{Functoriality.} Given a map $F\to E$ in $D^-_{qcoh}(\mathcal{X})$, the induced diagram 
			\begin{equation*}
				\begin{tikzcd}
					F\ar[r,"\at_F"]\ar[d]& L_{\mathcal{X}/\mathcal{Y}}\otimes F[1]\ar[d] \\
					E\ar[r,"\at_E"] & L_{\mathcal{X}/\mathcal{Y}}\otimes E [1]
				\end{tikzcd}
			\end{equation*}
			commutes.
		\paragraph{Pullback.} Given another morphism $f':\mathcal{X}'\to \mathcal{Y}'$ together with maps $A:\mathcal{X}'\to \mathcal{X}$ and $B:\mathcal{Y}'\to \mathcal{Y}$, and a $2$-isomorphism $B\circ f'\twoheadrightarrow f\circ A$, the induced diagram 
			
			\begin{equation*}
				\begin{tikzcd}
					LA^*E\ar[r,"LA^*\at_E"]\ar[d]& LA^*L_{\mathcal{X}/\mathcal{Y}}\otimes LA^*E[1]\ar[d] \\
					LA^*E\ar[r,"\at_{LA^*E}"] & L_{\mathcal{X}'/\mathcal{Y}'}\otimes LA^*E[1].
				\end{tikzcd}
			\end{equation*}
			commutes. If $E$ is perfect, then equivalently, the diagram
			\begin{equation*}
				\begin{tikzcd}
					E\otimes E^{\vee}[-1]\ar[r,"LA^*\at'_E"]\ar[dr,"\at'_{LA^*E}"']& LA^*L_{\mathcal{X}/\mathcal{Y}}\ar[d] \\
					&L_{\mathcal{X}'/\mathcal{Y}'}
				\end{tikzcd}
			\end{equation*} 
		commutes.
		\paragraph{Tensor products.} Identify $E\otimes L_{\mathcal{X}/\mathcal{Y}}[1]\otimes F\simeq L_{\mathcal{X}/\mathcal{Y}}[1]\otimes E\otimes F$ using the standard symmetry isomorphism of the derived tensor product. Then, up to this identification, we have an equality
		\[\at_{E\otimes F} = \at_E\otimes F +E\otimes {\at_F}.\]
		As a special case of this, if $E$ and $F$ are perfect and $\at_F$ is trivial (e.g. if $F$ is pulled back from $\mathcal{Y}$), then the following diagram commutes:
		\begin{equation*}
			\begin{tikzcd}
				E\otimes E^{\vee}[-1]\ar[r,"\at'_E"]\ar[d]& L_{\mathcal{X}/\mathcal{Y}}\ar[d,equals] \\
				E\otimes F\otimes E^{\vee}\otimes F^{\vee}[-1]\ar[r,"\at'_{E\otimes F}"] & L_{\mathcal{X}/\mathcal{Y}}.
			\end{tikzcd}
		\end{equation*} 
	Here the left vertical map is induced by the diagonal map $\mathcal{O}_X\to F\otimes F^{\vee}$ and the symmetry isomorphisms of the tensor product.
		\paragraph{Determinants.}
		Suppose that $E$ is perfect and consider the natural trace map $\operatorname{tr}:\Hom(E,L_{\mathcal{X}/\mathcal{Y}}[1]\otimes E)\to \Hom(\mathcal{O}_{\mathcal{X}}, L_{\mathcal{X}/\mathcal{Y}}[1])$. Then we have $\at_{\det(E)} = \operatorname{tr}(\at_E)\otimes \det(E)$ as morphisms $\det E\to L_{\mathcal{X}/\mathcal{Y}}\otimes \det E[1]$, at least when $E$ has a global resolution by locally free sheaves. In particular, if this condition holds, the following diagram commutes
		\begin{equation*}
			\begin{tikzcd}			\mathcal{O}_{\mathcal{X}}[-1]\ar[dr,"\at'_{\det E}"]\ar[d]& \\
				E\otimes E^{\vee}[-1]\ar[r,"\at'_E"] & L_{\mathcal{X}/\mathcal{Y}},
			\end{tikzcd}
		\end{equation*}
		where the left horizontal map is induced by the natural diagonal map $\mathcal{O}_{\mathcal{X}}\to E\otimes E^{\vee}$. 
		\paragraph{Pushforward.}
	
		Consider a commutative diagram
		\begin{equation*}
			\begin{tikzcd}
				\mathcal{X}'\ar[r]\ar[d,"p"]& \mathcal{Y}'\ar[d] \\
				\mathcal{X}\ar[r] & \mathcal{Y}
			\end{tikzcd}
		\end{equation*}
		and suppose that $p: \mathcal{X}'\to \mathcal{X}$ is {\it concentrated} in the sense of \cite[Definition 2.4]{HaRy} and has finite cohomological dimension. Let $E\in D^-_{qcoh}(\mathcal{X}')$.  Then we have a commutative diagram
		\begin{equation*}
			\begin{tikzcd}
				Rp_*E\ar[r,"\at_{Rp_*E}"]\ar[d]& L_{\mathcal{X}/\mathcal{Y}}[1]\otimes p_*E\ar[d] \\
				Rp_*E\ar[r,"Rp_*(\at_E)"] & Rp_*(L_{\mathcal{X}' /\mathcal{Y}'}[1]\otimes E).
			\end{tikzcd}
		\end{equation*} 
		Here the right vertical map is the natural one coming from the push-pull adjunction and the projection formula.
		Suppose that moreover the diagram is Cartesian and that the morphisms $\mathcal{X}\to \mathcal{Y}$ and $\mathcal{Y}'\to \mathcal{Y}$ are Tor-independent, and that $E$ is a perfect complex. Then we have a natural identification $p_*(\at_E)=\at_{p_*E}$ as morphisms $p_*E\to L_{\mathcal{X}/\mathcal{Y}}[1]\otimes p_*E$. If moreover $p_*E$ is perfect, this can be restated as commutativity of the following diagram 
		\begin{equation*}
			\begin{tikzcd}
				Rp_*E\otimes (Rp_*E)^{\vee}[-1]\ar[d]\ar[dr]&  \\
				R\Sheafhom_{\mathcal{X}'/\mathcal{X}}(\mathcal{E},\mathcal{E})^{\vee}[-1]\ar[r] &L_{\mathcal{X}/\mathcal{Y}}.
			\end{tikzcd}
		\end{equation*}

		\subsection{The reduced Atiyah class}
		Let $f:\mathcal{X}\to \mathcal{Y}$ be a morphism of algebraic stacks over a stack $Z$, and let now $E$ be a bounded above complex with quasi-coherent cohomology sheaves on $\mathcal{Y}$. We assume that all components are Tor-independent with respect to $f$ (for example if $E$ is a complex of flat $\mathcal{O}_{\mathcal{Y}}$-modules). Let $E_{\mathcal{X}}:=f^*E$, and let 
		\[0\to F\to E_{\mathcal{X}}\to G\to 0\]
		be an exact sequence of complexes with quasi-coherent cohomology sheaves on $\mathcal{X}$.
		
		Then the \emph{reduced Atiyah class} is a natural map
		\[\overline{\at}_{E,\mathcal{X}/\mathcal{Y},G}:F\to L_{\mathcal{X}/\mathcal{Y}}\otimes G.\]
		If $G$ is dualizable, we can rewrite this as a map
		\[\overline{\at}_{E,\mathcal{X}/\mathcal{Y},G}':F\otimes G^{\vee}\to L_{\mathcal{X}/\mathcal{Y}}.\]
		\begin{proposition}
			Assume that $E, F$ and $G$ are dualizable. 
			We have the following compatibility between the reduced Atiyah class and the ordinary Atiyah class of $E_{\mathcal{X}}$: The diagram
			\begin{equation*}
				\begin{tikzcd}
					F\otimes G^{\vee}\ar[r]\ar[d]& f^*E\otimes f^*E^{\vee}\ar[d] \\
					L_{\mathcal{X}/\mathcal{Y}}\ar[r] & f^*L_{\mathcal{Y}/\mathcal{Z}}[1]
				\end{tikzcd}
			\end{equation*}
			anticommutes. 
		\end{proposition}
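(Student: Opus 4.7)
My plan is to deduce the anticommutativity from a careful unpacking of the construction of $\overline{\at}'_{E,\mathcal{X}/\mathcal{Y},G}$, using as the main ingredient the pullback compatibility of the Atiyah class established earlier in the appendix. The key point is that the reduced Atiyah class exists precisely because the Atiyah class of a pulled-back complex carries extra structure: by the pullback formula, the map $\at'_{f^*E,\mathcal{X}/\mathcal{Z}}:f^*E\otimes f^*E^{\vee}[-1]\to L_{\mathcal{X}/\mathcal{Z}}$ factors canonically through $f^*L_{\mathcal{Y}/\mathcal{Z}}\to L_{\mathcal{X}/\mathcal{Z}}$ (namely as $f^*\at'_E$ composed with the natural map).

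First I would recall (or take as the construction) the definition of $\overline{\at}'_{E,\mathcal{X}/\mathcal{Y},G}$. Starting from the composition
\[
F\otimes G^{\vee}\longrightarrow f^*E\otimes f^*E^{\vee}[-1]\xrightarrow{\at'_{f^*E,\mathcal{X}/\mathcal{Z}}}L_{\mathcal{X}/\mathcal{Z}},
\]
one observes that it is in fact the image, under the natural map $f^*L_{\mathcal{Y}/\mathcal{Z}}\to L_{\mathcal{X}/\mathcal{Z}}$, of the pulled-back composite $F\otimes G^{\vee}\to f^*E\otimes f^*E^{\vee}[-1]\xrightarrow{f^*\at'_E}f^*L_{\mathcal{Y}/\mathcal{Z}}$. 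Next, one shows that this latter composite in $\Hom(F\otimes G^{\vee},f^*L_{\mathcal{Y}/\mathcal{Z}})$ is actually zero. The reason is that it factors through $F\otimes f^*E^{\vee}[-1]$ and the image lies in the piece $F\otimes F^{\vee}\subset f^*E\otimes f^*E^{\vee}$ after using the exact sequence, which pairs trivially with $G^{\vee}$; more precisely, the map $F\otimes G^{\vee}\to f^*E\otimes f^*E^{\vee}$ followed by the contraction of the $G^{\vee}$ with the quotient factor $f^*E\twoheadrightarrow G$ is the standard splitting relation coming from the short exact sequence. Because of this vanishing after passing to $f^*L_{\mathcal{Y}/\mathcal{Z}}$, the rotated exact triangle
\[
L_{\mathcal{X}/\mathcal{Y}}[-1]\to f^*L_{\mathcal{Y}/\mathcal{Z}}\to L_{\mathcal{X}/\mathcal{Z}}\to L_{\mathcal{X}/\mathcal{Y}}
\]
produces a unique (up to the usual ambiguity) lift $F\otimes G^{\vee}\to L_{\mathcal{X}/\mathcal{Y}}$ of the composite above, and this lift is by definition $\overline{\at}'_{E,\mathcal{X}/\mathcal{Y},G}$.

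Once this description is in place, the anticommutativity is essentially formal. Both the top-then-right path and the left-then-bottom path of the square land in $f^*L_{\mathcal{Y}/\mathcal{Z}}[1]$. The top-then-right path is exactly the composite
\(F\otimes G^{\vee}\to f^*E\otimes f^*E^{\vee}\xrightarrow{f^*\at'_{E}[1]}f^*L_{\mathcal{Y}/\mathcal{Z}}[1],\)
i.e.\ the shift of the vanishing composite discussed above. The left-then-bottom path is the reduced Atiyah class composed with the connecting map $L_{\mathcal{X}/\mathcal{Y}}\to f^*L_{\mathcal{Y}/\mathcal{Z}}[1]$ of the distinguished triangle of cotangent complexes. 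By construction of $\overline{\at}'_{E,\mathcal{X}/\mathcal{Y},G}$ as a lift across this same triangle, these two compositions differ by exactly the sign introduced when one writes the octahedral diagram for the triangle of cotangent complexes, and this is where the minus (anticommutativity rather than commutativity) originates.

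The main technical obstacle, and the step requiring the most care, is the sign bookkeeping: the reduced Atiyah class is defined as a lift across the rotated triangle, and triangulated-category conventions ($[1]$-shifts versus signs in the rotation of distinguished triangles) determine whether the resulting square commutes or anticommutes. The rest of the argument is a direct application of the pullback, tensor-product and functoriality properties of $\at$ recalled in Section~A.1, plus the observation that the exact sequence $0\to F\to f^*E\to G\to 0$ makes the restriction of $\at'_{f^*E,\mathcal{X}/\mathcal{Y}}$ to $F\otimes G^{\vee}$ come from the relative-over-$\mathcal{Y}$ piece of the cotangent complex only through the connecting morphism. Verifying that this sign is $-1$ (the anticommutative sign) amounts to chasing the standard sign in the octahedron relating the triangle of cotangent complexes to the Atiyah class, and this is the one place where a direct explicit computation, presumably in a local model or via an explicit representative of $L_{\mathcal{X}/\mathcal{Y}}$ as a cone, seems unavoidable.
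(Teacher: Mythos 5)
The paper does not reprove this proposition — it explicitly defers to~\cite{Kuhn} — so there is no internal proof to compare against; I have to evaluate your argument on its own terms, and there is a genuine error at its heart.

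The key step in your argument is the claim that the composite
\[
(F\otimes G^{\vee})[-1] \longrightarrow (f^*E\otimes f^*E^{\vee})[-1]
\xrightarrow{\ f^*\at'_{E}\ } f^*L_{\mathcal{Y}/\mathcal{Z}}
\]
vanishes. This cannot be right, because (after a shift) that composite \emph{is} the top-then-right path of the square in the proposition. If it vanished, the proposition would degenerate into the assertion that $\delta\circ\overline{\at}'_{E,\mathcal{X}/\mathcal{Y},G}$ also vanishes — and neither path is zero in general. For instance, take $\mathcal{Z}=\operatorname{Spec}\Bbb C$, $\mathcal{Y}$ an elliptic curve, $E$ the nonsplit self-extension of $\mathcal{O}_{\mathcal{Y}}$ (whose Atiyah class over $\mathcal{Y}/\mathcal{Z}$ is nonzero), and $\mathcal{X}=\mathbb{P}(E^{\vee})$ with $F\hookrightarrow f^*E$ the tautological subbundle; then both sides of the square are nonzero.

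The justification you give for the vanishing — that the map $F\otimes G^{\vee}\to f^*E\otimes f^*E^{\vee}$ followed by contraction of $G^{\vee}$ with the quotient $f^*E\twoheadrightarrow G$ is zero — does not imply what you want. That observation records $\pi\circ\iota_F=0$, which is the evaluation pairing between $F$ and $G^{\vee}$ inside $f^*E\otimes f^*E^{\vee}$. But in the composite above, $f^*\at_E$ is interposed \emph{before} the contraction. The Atiyah class mixes the $f^*E$-factor, so the resulting arrow $F\to f^*L_{\mathcal{Y}/\mathcal{Z}}\otimes f^*E[1]$ need not have image landing inside $f^*L_{\mathcal{Y}/\mathcal{Z}}\otimes F[1]$; hence post-composing with the projection to $G$ is not killed by $\pi\circ\iota_F=0$ alone.

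What \emph{is} true, and what the argument actually requires, is the weaker statement that this composite dies after further composing with the natural map $f^*L_{\mathcal{Y}/\mathcal{Z}}\to L_{\mathcal{X}/\mathcal{Z}}$. This follows from the \emph{functoriality} of the Atiyah class over $\mathcal{X}/\mathcal{Z}$ applied to $\iota_F\colon F\to f^*E$: one gets
\[
(\operatorname{id}\otimes\pi[1])\circ \at_{f^*E,\mathcal{X}/\mathcal{Z}}\circ\iota_F
= (\operatorname{id}\otimes\pi\circ\iota_F[1])\circ\at_{F,\mathcal{X}/\mathcal{Z}} = 0 ,
\]
and since $\at_{f^*E,\mathcal{X}/\mathcal{Z}}=(\iota_{\mathcal{Z}}\otimes\operatorname{id})\circ f^*\at_{E,\mathcal{Y}/\mathcal{Z}}$ by pullback compatibility, the restricted class dies after applying $\iota_{\mathcal{Z}}\colon f^*L_{\mathcal{Y}/\mathcal{Z}}\to L_{\mathcal{X}/\mathcal{Z}}$. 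By the long exact sequence of the cotangent triangle, this shows the top-then-right path lies in the image of $\delta_*\colon\Hom(F\otimes G^{\vee},L_{\mathcal{X}/\mathcal{Y}})\to\Hom^1(F\otimes G^{\vee},f^*L_{\mathcal{Y}/\mathcal{Z}})$. The genuine content of the proposition — and the step you still need — is the identification of a preimage with (minus) the reduced Atiyah class, which, as you correctly anticipate, has to be checked at the level of explicit representatives. But without replacing your claimed vanishing with the weaker one and invoking the functoriality property (rather than only pullback compatibility and the evaluation pairing), the argument does not get off the ground.
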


		\subsection{Atiyah class of an exact sequence}\label{subsec:atclexseq}
		Let $\mathcal{X}\to \mathcal{Z}$ be a morphism of algebraic stacks, and let 
		\[\underline{E}:= 0\to F\to E\to G\to 0\]
		be an exact sequence of bounded above complexes of $\mathcal{O}_{\mathcal{Z}}$ modules. Assume that the images of $F,E,G$ in the derived category of $\mathcal{X}$ are perfect complexes. 
		 Then,  there is a canonical way to complete the map $F\otimes G^{\vee}\to E\otimes E^{\vee}$ to a triangle 
		 \[F\otimes G^{\vee}\to E\otimes E^{\vee}\to E\otimes E^{\vee}/F\otimes G^{\vee}\xrightarrow{+1},\]
		 in particular the object $E\otimes E^{\vee}/F\otimes G^{\vee}$ is canonically defined up to unique isomorphism in $D(\mathcal{X})$. If $E$ has nonzero rank, this triangle respects the decomposition into diagonal and trace-free part, i.e., it has as a canonical direct summand the triangle
		 \[0\to \mathcal{O}_{\mathcal{X}}\to \mathcal{O}_{\mathcal{X}}\xrightarrow{+1}.\]
		 We also have natural maps 
		 \[F\otimes F^{\vee}\to \frac{E\otimes E^{\vee}}{F\otimes G^{\vee}}\,\mbox{ and }\, G\otimes G^{\vee}\to \frac{E\otimes E^{\vee}}{F\otimes G^{\vee}}.\]

 		There is a natural mophism
		 \[\at_{\underline{E}}:\frac{E\otimes E^{\vee}}{F\otimes G^{\vee}}[-1]\to L_{\mathcal{X}/\mathcal{Z}},\]
		 which we call the \emph{Atiyah class of the exact sequence $\underline{E}$}.
		 
		  \begin{proposition}\label{prop:atclexseqcomp2}
		  We have the following commutative diagram on $\mathcal{X}$: 
		     \begin{equation*}
		 		\begin{tikzcd}
		 			F\otimes F^{\vee}[-1] \ar[r]\ar[d]&\frac{E\otimes E^{\vee}}{F\otimes G^{\vee}}[-1]\ar[d] \\	L_{\mathcal{X}/\mathcal{Z}}\ar[r,equals]&L_{\mathcal{X}/\mathcal{Z}}
    	 		\end{tikzcd}
		 	\end{equation*}
		     
		 \end{proposition}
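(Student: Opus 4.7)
The plan is to combine the ordinary functoriality of the Atiyah class (applied to the inclusion $\alpha\colon F \hookrightarrow E$) with the compatibility statement of Proposition~\ref{prop:atclexseqcomp}, which relates $\at_{\underline{E}}$ to $\at_E$.

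First I would make the left vertical map of the diagram explicit. Tensoring the dualised triangle $G^{\vee} \to E^{\vee} \to F^{\vee}$ with $F$ yields a triangle
\[F \otimes G^{\vee} \to F \otimes E^{\vee} \to F \otimes F^{\vee} \xrightarrow{+1}.\]
Via the map $\alpha \otimes \mathrm{id}_{E^{\vee}}\colon F \otimes E^{\vee} \to E \otimes E^{\vee}$, whose restriction to the subobject $F \otimes G^{\vee}$ agrees with the canonical map $F \otimes G^{\vee} \to E \otimes E^{\vee}$ used to define the quotient (both are $\alpha \otimes \beta^{\vee}$, with $\beta\colon E \to G$ the projection), this triangle maps compatibly to the defining triangle $F \otimes G^{\vee} \to E \otimes E^{\vee} \to (E \otimes E^{\vee})/(F \otimes G^{\vee})$. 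The induced map on cofibres is the left vertical of the proposition.

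Next I would invoke Proposition~\ref{prop:atclexseqcomp}, which identifies the composition $E \otimes E^{\vee}[-1] \to (E \otimes E^{\vee})/(F \otimes G^{\vee})[-1] \xrightarrow{\at_{\underline{E}}} L_{\mathcal{X}/\mathcal{Z}}$ with $\at'_E$. By the first step, precomposing the left vertical arrow followed by $\at_{\underline{E}}$ with $\mathrm{id}_F \otimes \alpha^{\vee}\colon F \otimes E^{\vee}[-1] \to F \otimes F^{\vee}[-1]$ factors through $\alpha \otimes \mathrm{id}_{E^{\vee}}\colon F \otimes E^{\vee}[-1] \to E \otimes E^{\vee}[-1]$, and thus equals $\at'_E \circ (\alpha \otimes \mathrm{id}_{E^{\vee}})$. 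On the other hand, the functoriality of the ordinary Atiyah class applied to $\alpha$ yields the identity $\at'_E \circ (\alpha \otimes \mathrm{id}_{E^{\vee}}) = \at'_F \circ (\mathrm{id}_F \otimes \alpha^{\vee})$ as maps $F \otimes E^{\vee}[-1] \to L_{\mathcal{X}/\mathcal{Z}}$. Hence the two candidate maps $F \otimes F^{\vee}[-1] \rightrightarrows L_{\mathcal{X}/\mathcal{Z}}$ agree after precomposition with $\mathrm{id}_F \otimes \alpha^{\vee}$.

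The main obstacle is to upgrade this agreement after precomposition to equality on the nose. The cofibre of $\mathrm{id} \otimes \alpha^{\vee}\colon F \otimes E^{\vee}[-1] \to F \otimes F^{\vee}[-1]$ is $F \otimes G^{\vee}$, so the difference of the two maps lies a priori in $\Hom(F \otimes G^{\vee}, L_{\mathcal{X}/\mathcal{Z}})$, which need not vanish in general. To close this gap I would use the natural three-step filtration of $E \otimes E^{\vee}$ whose associated graded is $F \otimes G^{\vee}$, $F \otimes F^{\vee} \oplus G \otimes G^{\vee}$, and $G \otimes F^{\vee}$, together with the strictly dual compatibility statement for $G \otimes G^{\vee}[-1]$ (obtained by the same argument with the roles of the inclusion $F\to E$ and the projection $E\to G$ swapped). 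These two constraints together pin down $\at_{\underline{E}}$ on the middle piece of the filtration and force the desired equality. More cleanly, I expect that the construction of $\at_{\underline{E}}$ in the forthcoming \cite{Kuhn} is set up precisely so that both compatibilities with $\at_F$ and $\at_G$ are manifest from the definition, in which case the proposition becomes a direct consequence.
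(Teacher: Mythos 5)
The paper itself gives no proof of Proposition~\ref{prop:atclexseqcomp2}: Appendix~\ref{app:atiyah} is explicitly a summary of results whose proofs are deferred to the forthcoming reference~\cite{Kuhn}. So there is no paper proof to compare against, and I can only assess your attempt on its merits.

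Your strategy is reasonable as far as it goes, and you have honestly flagged the real difficulty. The morphism-of-triangles argument in your first step correctly identifies the top horizontal arrow $F\otimes F^{\vee}\to (E\otimes E^{\vee})/(F\otimes G^{\vee})$ (you call it the ``left vertical'' a couple of times, which is a slip in labelling but not in substance), and applying Proposition~\ref{prop:atclexseqcomp} in the degenerate case $\mathcal{Y}=\mathcal{X}$, $f=\mathrm{id}$ together with functoriality of the ordinary Atiyah class for the inclusion $\alpha\colon F\to E$ does give the agreement of $\at'_F$ with $\at_{\underline{E}}\circ(\text{top horizontal})$ after precomposing with $\mathrm{id}_F\otimes\alpha^{\vee}\colon F\otimes E^{\vee}[-1]\to F\otimes F^{\vee}[-1]$. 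You are also right that this is not enough: the cofibre of $\mathrm{id}_F\otimes\alpha^{\vee}$ is $F\otimes G^{\vee}$, so the discrepancy between the two maps is an a priori arbitrary element of $\Hom(F\otimes G^{\vee}, L_{\mathcal{X}/\mathcal{Z}})$, which is not forced to vanish. That is a genuine gap.

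The proposed fix does not close it. The dual compatibility for $G\otimes G^{\vee}$ constrains a \emph{different} pair of maps $G\otimes G^{\vee}[-1]\to L_{\mathcal{X}/\mathcal{Z}}$, and its discrepancy also lands in a copy of $\Hom(F\otimes G^{\vee}, L_{\mathcal{X}/\mathcal{Z}})$; two constraints of the form ``these maps agree after precomposition'' with overlapping indeterminacies do not in general determine the maps outright, and you give no argument that the two deviation classes are related so as to cancel. The three-step filtration of $E\otimes E^{\vee}$ you invoke is a natural thing to look at, but nothing is actually derived from it. What is really needed is to open up the construction of $\at_{\underline{E}}$ at the chain level (or in a DG or $\infty$-categorical model), where the quotient $(E\otimes E^{\vee})/(F\otimes G^{\vee})$ and the natural maps from $F\otimes F^{\vee}$ and $G\otimes G^{\vee}$ are strict and the compatibility is visible by inspection of representatives of the Atiyah class; your final remark that the construction in~\cite{Kuhn} is designed so that this is manifest is, in effect, what both the paper and you end up relying on. As a sketch of the shape of the argument the proposal is useful, but it is not a proof.
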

		 
		 \begin{proposition}\label{prop:atclexseqcomp}
		 	Let $\mathcal{X}\xrightarrow{f}\mathcal{Y} \to \mathcal{Z}$ be maps of algebraic stacks with $f$ flat, and let $E$ be a bounded above complex of $\mathcal{O}_{\mathcal{X}}$-modules with quasi-coherent cohomology. Let $E_{\mathcal{X}}:=f^*E$ and suppose we are given an exact sequence $\underline{E}_{\mathcal{X}}$ of the form 
		 	\[0\to F\to E_{\mathcal{X}}\to G\to 0.\]
		 	Then we have a natural morphism of distinguished triangles
		 	\begin{equation*}
		 		\begin{tikzcd}
		 			E_{\mathcal{X}}\otimes E_{\mathcal{X}}^{\vee}[-1] \ar[r]\ar[d]&\frac{E_{\mathcal{X}}\otimes E_{\mathcal{X}}^{\vee}}{F\otimes G^{\vee}}[-1]\ar[r]\ar[d]& F\otimes G^{\vee}\ar[d]\ar[r, "{+1}"]&\, \\
		 	    f^*L_{{\mathcal{Y}}/\mathcal{Z}}\ar[r]&L_{\mathcal{X}/\mathcal{Z}}\ar[r] & L_{\mathcal{X}/\mathcal{Y}}\ar[r,"{+1}"]&\,.
    	 		\end{tikzcd}
		 	\end{equation*}
		 	Here, the first vertical map is the pullback of the Atiyah class of $E$, and the last vertical map is the reduced Atiyah class of $E$ with respect to the quotient $E_{\mathcal{X}}\to G$. The connecting map in the upper row is taken to be \emph{minus} the natural morphism $F\otimes G^{\vee}\to E_{\mathcal{X}}\otimes E_{\mathcal{X}}^{\vee}[-1]$.
		 	\end{proposition}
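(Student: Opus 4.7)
The plan is to construct the middle vertical map $\at_{\underline{E}}$ so that both squares commute and verify that the rows are distinguished triangles. The top row is distinguished essentially by the definition of $\frac{E_{\mathcal{X}}\otimes E_{\mathcal{X}}^{\vee}}{F\otimes G^{\vee}}$ recalled in Subsection \ref{subsec:atclexseq}, with the sign convention on the connecting map built in; the bottom row is the standard cotangent-complex triangle for the composition $\mathcal{X}\xrightarrow{f}\mathcal{Y}\to\mathcal{Z}$. Thus the content of the proposition lies in the two squares.

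First, I would show that the composition
\[F\otimes G^{\vee}[-1]\xrightarrow{i[-1]} E_{\mathcal{X}}\otimes E_{\mathcal{X}}^{\vee}[-1]\xrightarrow{\at'_{E_{\mathcal{X}},\mathcal{X}/\mathcal{Z}}}L_{\mathcal{X}/\mathcal{Z}}\]
is the zero map in $D(\mathcal{X})$, where $i$ denotes the inclusion. This is the crucial vanishing that will allow us to factor $\at'_{E_{\mathcal{X}},\mathcal{X}/\mathcal{Z}}$ through the quotient $\frac{E_{\mathcal{X}}\otimes E_{\mathcal{X}}^{\vee}}{F\otimes G^{\vee}}[-1]$. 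To prove it, I would first apply the pullback compatibility of the Atiyah class to rewrite $\at'_{E_{\mathcal{X}},\mathcal{X}/\mathcal{Z}}$ as $f^*\at'_E$ composed with the natural map $f^*L_{\mathcal{Y}/\mathcal{Z}}\to L_{\mathcal{X}/\mathcal{Z}}$. Then the anticommutative square characterizing the reduced Atiyah class identifies $f^*\at'_E\circ i[-1]$ with $-\delta[-1]\circ\overline{\at}'_{E,\mathcal{X}/\mathcal{Y},G}[-1]$, where $\delta:L_{\mathcal{X}/\mathcal{Y}}\to f^*L_{\mathcal{Y}/\mathcal{Z}}[1]$ is the connecting map of the cotangent triangle. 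Since $-\delta[-1]$ and $f^*L_{\mathcal{Y}/\mathcal{Z}}\to L_{\mathcal{X}/\mathcal{Z}}$ are two consecutive arrows of the (leftward-rotated) cotangent triangle, their composition vanishes, yielding the claim.

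With this vanishing in hand, $\at'_{E_{\mathcal{X}},\mathcal{X}/\mathcal{Z}}$ factors through the quotient, and one obtains the left square from the pullback compatibility just used. The specific choice of factorization is pinned down by the explicit construction of $\at_{\underline{E}}$ from Subsection \ref{subsec:atclexseq}; I would then need to verify that this constructed map is the one making the diagram commute. For the right square, the strategy is dual: by rotating the top distinguished triangle and using the explicit anticommutative square for the reduced Atiyah class (without shifting), one checks that the composition $\frac{E_{\mathcal{X}}\otimes E_{\mathcal{X}}^{\vee}}{F\otimes G^{\vee}}[-1]\xrightarrow{\at_{\underline{E}}}L_{\mathcal{X}/\mathcal{Z}}\to L_{\mathcal{X}/\mathcal{Y}}$ agrees with $\overline{\at}'_{E,\mathcal{X}/\mathcal{Y},G}$ precomposed with the connecting map, the minus sign in the top row of the diagram being precisely what is required to absorb the anticommutativity.

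The main obstacle, I expect, will be ensuring that the lift $\at_{\underline{E}}$ obtained via the universal property of the quotient agrees with the one constructed intrinsically in Subsection \ref{subsec:atclexseq}: in a general triangulated category such a lift is only well-defined modulo $\Hom(F\otimes G^{\vee},L_{\mathcal{X}/\mathcal{Z}})$, and one must return to an explicit model of the Atiyah class (via first-order deformations or a simplicial resolution of the cotangent complex) to show that the two candidates agree on the nose. Tracking sign conventions through the rotations of the cotangent-complex triangle, and in particular reconciling the minus sign in the top row with the anticommutativity of the reduced Atiyah diagram, is routine but needs careful bookkeeping; this is essentially why the foundational results of Appendix \ref{app:atiyah} are deferred to \cite{Kuhn}.
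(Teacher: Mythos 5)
The paper contains no proof of this proposition: Appendix~\ref{app:atiyah} opens by stating that proofs of its results are deferred to the reference \cite{Kuhn}, which is in preparation, so there is no argument in this paper against which to measure your proposal.

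Your outline is the natural one, and the vanishing argument is correct: combining the pullback compatibility of the Atiyah class with the anticommutative square from \S\ref{subsec:atatiyah} does give $\at'_{E_{\mathcal{X}},\mathcal{X}/\mathcal{Z}}\circ i[-1]=0$, since the composition factors through two consecutive arrows of the rotated cotangent triangle; and you correctly trace the minus sign in the top row to this anticommutativity. But the issue you raise at the end is not a bookkeeping nuisance to be handled after the main argument — it is the entire content of the statement. The vanishing only guarantees that \emph{some} map out of the homotopy quotient restricts to $\at'_{E_{\mathcal{X}},\mathcal{X}/\mathcal{Z}}$; the set of such lifts is a torsor under $\Hom(F\otimes G^{\vee},L_{\mathcal{X}/\mathcal{Z}})$, and nothing in the derived-category argument selects the specific map $\at_{\underline{E}}$ constructed in \S\ref{subsec:atclexseq}, nor shows that this particular choice also makes the right square commute (a morphism of triangles completing the left square exists by TR3, but the proposition asserts a specific middle map). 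Both points have to be checked in a concrete model where $L_{\mathcal{X}/\mathcal{Y}}$, $L_{\mathcal{X}/\mathcal{Z}}$, the homotopy quotient, and $\at_{\underline{E}}$ are realized by actual maps of complexes. That verification is precisely what the paper defers to \cite{Kuhn}; your proposal reduces the proposition to it faithfully, but does not carry it out.
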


	 	\subsection{Deformation theoretic properties}\label{subsec:atdef}
	 	We demonstrate two important examples of how the Atiyah class can be used to construct obstruction theories. 	We let $V$ be a smooth projective variety of dimension $d$ over a base field $k$ with dualizing object $\omega_V$ (concentrated in degree $-d$).
	 	\paragraph{Obstruction theory on moduli spaces of sheaves.}
	  Let $\mathcal{X}$ be a stack over $\Spec k$, and let $E\in D^-_{qcoh}(V\times \mathcal{X})$ be perfect. Consider the Atiyah class map of $E$ relative to $V$: 
	 	\[\at_{E}':E\otimes E^{\vee}[-1]\to L_{V\times \mathcal{X}/V}.\]
	 	
	 	We have $L_{V\times \mathcal{X}/V}\simeq \pi_{\mathcal{X}}^*L_{\mathcal{X}}$. We consider the composition
	 	\[R\pi_{\mathcal{X}*}(E\otimes E^{\vee}\otimes \omega_V)[-1] \xrightarrow{R\pi_{\mathcal{X}*}(\at'_E\otimes \omega_V)}\to L_{\mathcal{X}}\otimes R\pi_{\mathcal{X}_*}(\omega_V)\to L_{\mathcal{X}},\]
	 		where we used the projection formula for the first map and the second map is induced from the trace map $R\Gamma(\omega_V)\to k$.
	 	We denote the resulting morphism by 
	 	\[\At_E:R\pi_{\mathcal{X}*}(E\otimes E^{\vee}\otimes \omega_V)[-1]\to L_{\mathcal{X}}.\]
	 	
	 	\begin{proposition}
	 		Suppose that $\mathcal{X}$ is an open substack of the moduli stack of coherent sheaves on $V$. Then $\At_E$ is an obstruction theory.
	 	\end{proposition}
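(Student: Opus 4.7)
The plan is to verify the three required cohomological conditions on $\At_E$ pointwise. Since the defining conditions of an obstruction theory concern isomorphisms/surjectivity on cohomology sheaves, and both the formation of $\At_E$ and of $L_{\mathcal{X}}$ are compatible with open immersions, I may assume $\mathcal{X}$ is the full moduli stack of coherent sheaves on $V$ and check the conditions at each geometric point $x=[F]\in\mathcal{X}(k)$.

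At such a point, flat base change for $\pi_{\mathcal{X}}$ combined with Serre duality on $V$ gives
\[
Li_x^*\bigl(R\pi_{\mathcal{X}*}(E\otimes E^\vee\otimes \omega_V)\bigr)[-1]\;\simeq\;R\Hom_V(F,F)^\vee[-1],
\]
whose cohomology in degrees $-1,0,1$ is $\Ext^2(F,F)^\vee$, $\Ext^1(F,F)^\vee$, $\Hom(F,F)^\vee$ respectively. The standard deformation-theoretic calculation of the cotangent complex of the moduli stack of coherent sheaves on $V$ (Illusie, Olsson) yields $\mathcal{H}^1(L_\mathcal{X})|_x\simeq \Hom(F,F)^\vee$, $\mathcal{H}^0(L_\mathcal{X})|_x\simeq \Ext^1(F,F)^\vee$, and a canonical surjection $\mathcal{H}^{-1}(L_\mathcal{X})|_x\twoheadrightarrow \Ext^2(F,F)^\vee$. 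So the source and target of $\At_E$ have matching cohomology in the required range.

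The remaining step is to identify the maps induced by $\At_E$ on cohomology with these natural maps. By the pullback compatibility of the Atiyah class recalled in \ref{subsec:atatiyah}, $Li_x^*\at'_E$ is the absolute Atiyah class $\at'_F$ of $F$ on $V$ paired with the evaluation at $x$; after tensoring with $\omega_V$, taking global sections, and composing with the Serre trace, $Li_x^*\At_E$ becomes the standard Kodaira--Spencer / obstruction map built out of $\at'_F$. This map realizes the classical isomorphisms $\Hom(F,F)^\vee\xrightarrow{\sim}\mathcal{H}^1(L_\mathcal{X})|_x$ and $\Ext^1(F,F)^\vee\xrightarrow{\sim}\mathcal{H}^0(L_\mathcal{X})|_x$, and surjects onto $\Ext^2(F,F)^\vee$ by Illusie's theorem identifying $\at'_F$ as the universal obstruction class for deforming the perfect complex $F$.

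The main obstacle is precisely this last identification: showing that the morphism built out of $\at'_E$, the trace on $R\Gamma(\omega_V)$, and the projection formula really does realize the Kodaira--Spencer map on $\mathcal{H}^0,\mathcal{H}^1$ and the obstruction map on $\mathcal{H}^{-1}$. For DM moduli spaces this is essentially in the literature (Huybrechts--Thomas, Behrend--Fantechi), but the extension to arbitrary algebraic stacks in the form used throughout the paper, together with the compatibilities with pullbacks, tensor products and determinants invoked elsewhere, is part of the Atiyah-class formalism announced in \cite{Kuhn} and summarized in this appendix.
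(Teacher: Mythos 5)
The paper does not actually contain a proof of this proposition: the entire Appendix \ref{app:atiyah} is explicitly a summary of statements whose proofs are deferred to the forthcoming reference \cite{Kuhn}. So there is no paper-internal proof to compare against, and your attempt has to be assessed on its own merits.

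Your overall strategy is reasonable, and the reduction to geometric points is in fact legitimate, though you should justify it: the three required conditions are equivalent to the vanishing of $\operatorname{Cone}(\At_E)$ in cohomological degrees $\geq -1$, and since that cone has coherent cohomology bounded above (degrees $\leq 1$, using $\Ob(E)\in D^{[-1,1]}$ and $L_{\mathcal{X}}\in D^{\leq 1}$), vanishing above a given degree can be tested on derived fibers by Nakayama and the hyper-Tor spectral sequence. Two notational/technical slips should be fixed: you must compare $\mathcal{H}^i(Li_x^*L_{\mathcal{X}})$, not $(\mathcal{H}^iL_{\mathcal{X}})|_x$ — these differ because $L_{\mathcal{X}}$ is not perfect — and it is only the former that lines up with your Serre-duality computation of $Li_x^*\Ob(E)$.

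The genuine gap is exactly the one you flag at the end, and it is not a side issue — it is the whole content of the proposition. Having established that source and target have isomorphic fibers in degrees $0,1$ does not show that the particular map $Li_x^*\At_E$ is an isomorphism there; you must show it coincides with the Kodaira–Spencer isomorphism on $\mathcal{H}^0$ and the co-unit identifying infinitesimal automorphisms on $\mathcal{H}^1$, and that in degree $-1$ it hits the obstruction class. That is precisely the ``Atiyah class is the universal obstruction class'' theorem, and the cited DM-stack references (Huybrechts--Thomas, Behrend--Fantechi) do not cover the $\mathcal{H}^1$ layer that is present for an Artin stack with positive-dimensional automorphism groups. As written, your proposal reduces the proposition to the statement that the Atiyah class realizes the deformation/obstruction/automorphism theory of sheaves over an algebraic stack — which is exactly the statement \cite{Kuhn} is being cited to supply. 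You have made the dependency explicit rather than eliminated it, which is honest but leaves the proof incomplete.
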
	
	 		
	 	\paragraph{Obstruction theory on Quot-schemes.}
	 	Let $\mathcal{Y}$ be an algebraic stack, and let $E$ be a $\mathcal{Y}$-flat coherent sheaf on $V\times \mathcal{Y}$. Let $f:\mathcal{X}\to \mathcal{Y}$ be an open substack of the relative Quot-scheme of $E$ over $\mathcal{Y}$, and let 
	 	\[0\to F\to E_{\mathcal{X}}\to G\to 0\]
	 	be the universal exact sequence on $V\times \mathcal{X}$. We consider the associated reduced Atiyah class $\overline{\at}'_{E}:=\overline{\at}'_{E,V\times \mathcal{X}/V\times \mathcal{Y},G}$ as a map $\overline{\at}'_E:F\otimes G^{\vee}\to \pi_{\mathcal{X}}^* L_{\mathcal{X}/\mathcal{Y}}$ in the derived category. We again consider the composition 
	 	\[\overline{\At}_E:\pi_{\mathcal{X}*}(F\otimes G^{\vee}\otimes \omega_V) \to L_{\mathcal{X}/\mathcal{Y}}\otimes R\pi_{\mathcal{X}*}\omega_V\to L_{\mathcal{X}/\mathcal{Y}}.\]
	 	
	 	\begin{proposition}\label{prop:atrelobquot}
	 		The map $\overline{\At}_E$ is a relative obstruction theory for the Quot-scheme $f:\mathcal{X}\to \mathcal{Y}$.
	 	\end{proposition}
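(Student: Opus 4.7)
My plan is to show that $\overline{\At}_E$ induces Grothendieck's classical obstruction theory for the Quot-scheme, whose relative tangent sheaf at a point corresponding to a quotient $E \twoheadrightarrow G$ with kernel $F$ is $\Sheafhom_{\pi_{\mathcal{X}}}(F,G)$ and whose obstructions live in $\Sheafext^1_{\pi_{\mathcal{X}}}(F,G)$. Since we only need the defining properties of an obstruction theory, it suffices to verify that $h^0(\overline{\At}_E)$ is an isomorphism and $h^{-1}(\overline{\At}_E)$ is surjective.

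The first step is to identify both sides. On the source, relative Serre duality for $\pi_{\mathcal{X}}: V \times \mathcal{X} \to \mathcal{X}$ furnishes a canonical isomorphism
\[
R\pi_{\mathcal{X}*}(F \otimes G^\vee \otimes \omega_V) \simeq R\Sheafhom_{\pi_{\mathcal{X}}}(F,G)^\vee,
\]
whose cohomology sheaves are $\Sheafhom_{\pi_{\mathcal{X}}}(F,G)^\vee$ and $\Sheafext^1_{\pi_{\mathcal{X}}}(F,G)^\vee$ in the appropriate degrees. On the target, classical deformation theory of Quot-schemes gives $\Omega_{\mathcal{X}/\mathcal{Y}} \simeq \Sheafhom_{\pi_{\mathcal{X}}}(F,G)^\vee$ together with a canonical surjection $\Sheafext^1_{\pi_{\mathcal{X}}}(F,G)^\vee \twoheadrightarrow h^{-1}(L_{\mathcal{X}/\mathcal{Y}})$ encoding the obstruction space. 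The crucial third step is to check that $\overline{\At}_E$ induces these canonical maps, and not merely some map between sheaves of the correct shape. For this I would invoke Proposition~\ref{prop:atclexseqcomp}, which identifies $\overline{\at}'_E$, up to sign, with a natural summand of the ordinary Atiyah class $\at'_{E_{\mathcal{X}}, V\times\mathcal{X}/V\times\mathcal{Y}}$; combined with the familiar interpretation of the Atiyah class as the universal Kodaira--Spencer class (the same interpretation that underlies the obstruction theory on moduli of sheaves given by $\At_E$ earlier in this appendix), the statement reduces to unwinding the composition with the trace pairing $R\pi_{\mathcal{X}*}\omega_V \to \mathcal{O}_{\mathcal{X}}$.

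The main obstacle will be precisely this last unwinding: tracking the signs and identifications in the derived category as one passes between the Atiyah class, the projection formula, and Serre duality, so that the resulting map really is the one governing deformations of quotients in the classical sense. A practical shortcut is to argue locally on $\mathcal{Y}$, reducing to the case of an affine base where $\mathcal{X}$ is an open subscheme of a concrete Grothendieck Quot-scheme, and then to appeal to Illusie's functorial description of its cotangent complex. The hypotheses needed to apply the pushforward-compatibility of the Atiyah class from Appendix~A are satisfied here, since $E$ is $\mathcal{Y}$-flat and the diagram $V\times\mathcal{X} \to V\times\mathcal{Y}$ is a pullback of the representable morphism $\mathcal{X}\to\mathcal{Y}$, ensuring the required Tor-independence.
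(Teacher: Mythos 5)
The paper does not actually contain a proof of this proposition: Appendix~\ref{app:atiyah} opens by stating that the proofs of all its assertions, this one included, ``will appear in \cite{Kuhn}.'' So there is no argument in the text to compare yours against, and any evaluation has to be of the proposal on its own terms.

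Your outline follows the standard Huybrechts--Lehn-style route and correctly identifies the three ingredients: relative Serre duality for $\pi_{\mathcal{X}}$, the classical description of the tangent and obstruction spaces of a Quot-functor, and the identification of the reduced Atiyah class with the Kodaira--Spencer datum (via its compatibility with $\at'_{E_{\mathcal{X}}, V\times \mathcal{X}/V\times \mathcal{Y}}$ in Proposition~\ref{prop:atclexseqcomp}). One small wrinkle in how you phrase step two: you assert as a ``given'' from classical deformation theory that there is a canonical surjection $\Sheafext^1_{\pi_{\mathcal{X}}}(F,G)^{\vee}\twoheadrightarrow h^{-1}(L_{\mathcal{X}/\mathcal{Y}})$, but this surjection is not a prior fact that your map is then compared against --- producing it (as $h^{-1}(\overline{\At}_E)$) is precisely the content of the proposition. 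The argument should instead show directly that $h^0(\overline{\At}_E)$ and $h^{-1}(\overline{\At}_E)$ are respectively an isomorphism and a surjection, which one can do pointwise after reducing to the classical Quot-scheme over an affine base as you suggest, using the interpretation of the (reduced) Atiyah class as a universal deformation class and Illusie's fundamental exact triangle. You do flag this ``unwinding'' as the main obstacle and you are right that it is: as written your submission is a proof plan whose central verification --- that the map induced by the trace pairing really is the Kodaira--Spencer map, with the right signs, compatibly with Serre duality --- is deferred rather than carried out. Since that step is exactly where the genuine content lies, the proposal is a credible outline rather than a complete proof.
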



		\medskip

\noindent{\small\sc Department of Mathematics, Stanford University, Stanford, CA 94305, U.S.A. 

\noindent E-mail: {\tt ntkuhn@posteo.net}}

\vskip 8pt

\noindent{\small\sc Department of Mathematics, Faculty of Science, Kyoto University, Kitashirakawa Oiwake-cho, Sakyo-ku, Kyoto 606-8502, Japan

\noindent E-mail: {\tt y-tanka@math.kyoto-u.ac.jp}}
		
		

\begin{thebibliography}{99}
			\addcontentsline{toc}{section}{References}
			
			\bibitem{AlHaRy} J. Alper, J. Hall, and D. Rydh, {\it A Luna \'etale slice theorem for algebraic stacks}, Ann. of Math. 191 (2020), 675--738.  \href{https://arxiv.org/abs/1504.06467}{arXiv:1504.06467}.  
			
			\bibitem{AlKl} A. Altman and S. Kleiman, {\it Introduction to Grothendieck Duality Theory}, Berlin, Springer, 1970. 
			
			\bibitem{BeFa} K. Behrend and B. Fantechi, {\it The intrinsic normal cone}, Invent. Math. 128 (1997), 45--88. \href{https://arxiv.org/abs/alg-geom/9601010}{alg-geom/9601010}.
			
			\bibitem{Brid} T. Bridgeland, {\it Flops and derived categories},
			Invent. Math. 147 (2002), 613--632. \href{https://arxiv.org/abs/math/0009053}{arXiv:0009053}.  
	
			\bibitem{CiKa}
			I. Ciocan-Fontanine and M. Kapranov, {\it Virtual fundamental classes via dg-manifolds}, Geom. Topol. 13 (2009), 1779--1804. \href{https://arxiv.org/abs/math/0703214}{arXiv0703214}. 
			
			\bibitem{ChKiLi}
			H-L. Chang, Y-H. Kiem, and J. Li, {\it Torus localization and wall crossing for cosection localized virtual cycles}, Adv. Math. 308 (2017), 964--986. \href{https://arxiv.org/abs/1502.00078}{arXiv:1502.00078}.
			
			\bibitem{Dona} S. K. Donaldson, {\it Polynomial invariants for smooth four-manifolds}, Topology 25 (1990), 257--315. 
			
			\bibitem{DoKr} S. K. Donaldson and P.B. Kronheimer, {\it The geometry of four-manifolds}, Oxford Mathematical Monographs, Oxford University Press, 1990. 
			
			\bibitem{ElGoLe} G. Ellingsrud, L. G\"ottsche, and M.Lehn, {\it On the cobordism class of the Hilbert scheme of a surface.} J. Algebraic Geom. 10 (2001), 81--100.
			
			\bibitem{FaGo}
			B. Fantechi and L. G\"{o}ttsche, 
			{\it Riemann--Roch theorems and elliptic genus for virtually smooth schemes}, 
			Geom. Topol. {\bf 14} (2010), 83--115.
			\href{https://arxiv.org/abs/0706.0988}{arXiv:0706.0988}. 
			
			\bibitem{FGA05}
			B. Fantechi, L. G\"{o}ttsche, L. Illusie, S. L. Kleiman, N. Nitsure, and A. Vistoli, 
			{\it Fundamental Algebraic Geometry: Grothendieck's FGA Explained}, Mathematical Surveys and Monographs, 123, American Mathematical Society, Providence, RI, 2005. 
			
			\bibitem{FiSt} R. Fintushel and R. Stern, {\it Donaldson invariants of 4-manifolds with simple type}, 
			J. Differential Geom. 42 (1995), 577--633. 
		
			\bibitem{FrMo} R. Friedman and J.W. Morgan, {\it Smooth four-manifolds and complex surfaces}, Springer-Verlag, Berlin, 1994. 
			
			\bibitem{FrMo2} R. Friedman and J. W. Morgan, {\it Gauge theory and the topology of four-manifolds}, Vol. 4. American Mathematical Soc., 1998.
			
			\bibitem{FrQi} R. Friedman and Z. Qin, {\it Flips of moduli spaces and transition formulas for Donaldson polynomial invariants of rational surfaces}, Comm. Anal. Geom. 3 (1995), 11--83.  
			
			\bibitem{Fult} W. Fulton, {\it Intersection theory. Vol. 2.}, Springer Science \& Business Media, 2013.
			
			\bibitem{GiLi} D. Gieseker and J. Li, {\it Moduli of high rank vector bundles over surfaces}, J. Amer. Math. Soc. 9 (1996), 107--151. 
			
			\bibitem{Gott} L. G\"{o}ttsche, {\it Modular forms and Donaldson invariants for $4$-manifolds with $b^{+}=1$}, J. Amer. Math. Soc. 9 (1996), 827--843. \href{https://arxiv.org/abs/alg-geom/9506018}{alg-geom/9506018}. 
			
			\bibitem{Gott2} L. G\"{o}ttsche, {\it The Betti numbers of the Hilbert scheme of points on a smooth projective surface}, Math. Ann. 286 (1990), 193--207. 
			
			\bibitem{GoKo} L. G\"{o}ttsche and M. Kool, 
			{\it Virtual refinements of the Vafa--Witten formula}, Comm. Math. Phys. 376 (2020), 1--49. \href{https://arxiv.org/abs/1703.07196}{arXiv:1703.07196}. 
			
			\bibitem{GoKo2} L. G\"{o}ttsche and M. Kool, 
			{\it Sheaves on surfaces and virtual invariants}, \href{https://arxiv.org/abs/2007.12730}{arXiv:2007.12730}. 
			
			\bibitem{GoNaYo1} L. G\"{o}ttsche, H. Nakajima, and K. Yoshioka, {\it Instanton counting and Donaldson invariants}, J. Differential Geom. 80 (2008), 343--390.
		
			\bibitem{GoNaYo3}
			L. G\"{o}ttsche, H. Nakajima, and K. Yoshioka,
			{\it Donaldson = Seiberg-Witten from Mochizuki's formula and instanton counting},  Publ. Res. Inst. Math. Sci. 47 (2011), 307--359.  \href{https://arxiv.org/abs/1001.5024}{arXiv:1001.5024}.  
			
			\bibitem{GrPa} T. Graber and R. Pandharipande, {Localization of virtual classes}, 
			Invent. Math. 135 (1999), 487--518. \href{https://arxiv.org/abs/alg-geom/9708001}{alg-geom/9708001}. 
			
			\bibitem{HaRy} J. Hall and D. Rydh { \it Perfect complexes on algebraic stacks.}, Comp. Math. 153.11 (2017), 2318-2367.
			
			\bibitem{Hart} R. Hartshorne {\it Coherent Functors.}, Adv. Math. 140 (1998), 44--94. 
			\bibitem{HuLe} D. Huybrechts and M. Lehn, 
			{\it The geometry of moduli spaces of sheaves},  
			Second edition. Cambridge Mathematical Library, Cambridge University Press, Cambridge, 2010. 
			
			\bibitem{KiKrPa} B. Kim, A. Kresch, and T. Pantev, {\it Functoriality in intersection theory and a conjecture of Cox, Katz, and Lee}, J. Pure Appl. Algebra 179 (2003), 127--136. 
			
			\bibitem{KiLi} Y-H. Kiem and J. Li, {\it A wall crossing formula of Donaldson--Thomas invariants without Chern--Simons functional}, Asian J. Math.  {\bf 17}  (2013), 63--94. \href{https://arxiv.org/abs/0905.4770}{arXiv:0905.4770}. 
			
			\bibitem{Kose} N. Koseki, {\it Perverse coherent sheaves on blow-ups at codimension 2 loci}, International Mathematics Research Notices (2019), \href{https://arxiv.org/abs/1710.01915}{arXiv:1710.01915}.
			
			\bibitem{Kresch} A. Kresch, {\it Cycle groups for Artin stacks.}, Invent. Math. 138.3 (1999), 495--536. 
			
			\bibitem{KrMr} P. B. Kronheimer and T.S. Mrowka, {\it Embedded surfaces and the structure of Donaldson's polynomial invariants}, J. Differential Geom. 41 (1995), 573--734.
			
			\bibitem{Kuhn} N. Kuhn, {\it The Atiyah Class on Algebraic Stacks}, in preparation. 
			
 			\bibitem{Laar} T. Laarakker, {\it Monopole contributions to refined Vafa--Witten invariants}, Geom. Topol. 24 (2020), 2781--2828. \href{https://arxiv.org/abs/1810.00385}{arXiv:1810.00385}.
			
			\bibitem{Lang} S. G. Langton, {\it Valuative Criteria for Families of Vector Bundles on Algebraic Varieties}, Ann. of Math. {\bf 101}, (1975): 88--110.  
			
			\bibitem{Liju} J. Li, {\it Algebraic geometric interpretation of Donaldson's polynomial invariants}, J. Differential Geom. 37 (1993), 417--466. 
			
			\bibitem{LiTi} J. Li and G. Tian, {\it Virtual moduli cycles and Gromov-Witten invariants of algebraic varieties}, J. Amer. Math. Soc. 11  (1998), 119--174.
			
			\bibitem{LiQi1} W-P. Li and Z. Qin, {\it On blowup formulae for the S-duality conjecture of Vafa and Witten}, Invent. Math. 136 (1999), 451--482. 
			
			\bibitem{LiQi2} W-P. Li and Z. Qin, {\it On blowup formulae for the S-duality conjecture of Vafa and Witten. II. The universal functions}, Math. Res. Lett. 5 (1998),  439--453. 
			
			\bibitem{Mano} C. Manolache, {\it Virtual pull-backs},  J. Algebraic Geom. 21 (2012), 201--245. \href{https://arxiv.org/abs/0805.2065}{arXiv:0805.2065} 
			
			\bibitem{Moch} T. Mochizuki, {\it Donaldson type invariants for algebraic surfaces: Transition of moduli stacks}, Lecture Notes in Mathematics, 1972. Springer-Verlag, Berlin, 2009. 
			
			\bibitem{Morg} J. W. Morgan, {\it Comparison of the Donaldson polynomial invariants with their algebro-geometric analogues}, Topology 32 (1993), 449--488. 
			
			\bibitem{NaYo1} H. Nakajima and K. Yoshioka, {\it Perverse coherent sheaves on blow-up. I. A quiver description}, Adv. Stud. Pure Math., 61 (2011), 349--386. \href{https://arxiv.org/abs/0802.3120}{arXiv:0802.3120}. 
			
			\bibitem{NaYo2} H. Nakajima and K. Yoshioka,
			{\it Perverse coherent sheaves on blow-up. II. Wall-crossing and Betti numbers formula.} 
			J. Algebraic Geom., 20 (2011), no. 1, 47--100. \href{https://arxiv.org/abs/0806.0463}{arXiv:0806.0463}. 
			
			\bibitem{NaYo3} H. Nakajima and K. Yoshioka, {\it Perverse coherent sheaves on blowup, III: Blow-up formula from wall-crossing}, Kyoto J. Math.  51  (2011), 263--335. \href{https://arxiv.org/abs/0911.1773}{arXiv:0911.1773}. 
			
			\bibitem{Ogra} K. G. O'Grady, {\it Algebro-geometric analogues of Donaldson's polynomials}, Invent. Math. 107 (1992), 351--395. 
			
			\bibitem{Olso1} M. Olsson, {\it Sheaves on Artin stacks},  J. Reine Angew. Math. 603 (2007), 55--112.  
			
			\bibitem{Olso2} M. Olsson, {\it Algebraic spaces and stacks},  American Mathematical Society Colloquium Publications, 62. American Mathematical Society, Providence, RI, 2016. 
			
			\bibitem{Roma} M. Romagny, {\it Group actions on stacks and applications}, Michigan Math. J. {\bf 53} (2005), no. 1, 209--236.
			
			\bibitem{St20} {\it The Stacks project}, {\url{https://stacks.math.columbia.edu}}, (2020).
			
			\bibitem{TaTh1} Y. Tanaka and R. P. Thomas, {\it Vafa--Witten invariants for projective surfaces I: stable case}, J. Algebraic Geom. 29 (2020), 603--668.   \href{https://arxiv.org/abs/1702.08487}{arXiv:1702.08487}.
			
			\bibitem{TaTh2} Y. Tanaka and R. P. Thomas, {\it Vafa--Witten invariants for projective surfaces II: semistable case}, Pure Appl. Math. Q. 13 (2017), 517--562. \href{https://arxiv.org/abs/1702.08488}{arXiv:1702.08488}.
			
			\bibitem{Thad} M. Thaddeus, {\it Geometric invariant theory and flips}, J. Amer. Math. Soc. 9 (1996), 691--723. \href{https://arxiv.org/abs/alg-geom/9405004}{alg-geom/9405004}. 
			
			\bibitem{Thom} R. P. Thomas, {\it Equivariant K-theory and refined Vafa--Witten invariants}, Comm. Math. Phys. 378 (2020), 1451--1500.  \href{https://arxiv.org/abs/1810.00078}{arXiv:1810.00078}. 
			
			\bibitem{Tota} B. Totaro, {\it The resolution property for schemes and stacks}, J. Reine Angew. Math. 577 (2004), 1--22. \href{https://arxiv.org/abs/math/0207210}{arXiv:math/0207210}
			
			\bibitem{VaWi} C. Vafa and E. Witten, {\it A strong coupling test of $S$-duality}, 
			Nucl. Phys. B, 431 (1994), 3--77. \href{https://arxiv.org/abs/hep-th/9408074}{hep-th/9408074}. 
			
			\bibitem{Yosh1} K. Yoshioka, {\it The Betti numbers of the moduli space of stable sheaves of rank 2  on $\mathbb{P}^2$}, J. Reine Angew. Math. 453 (1994), 193--220.
			
			\bibitem{Yosh2} K. Yoshioka, {\it The Betti numbers of the moduli space of stable sheaves of rank 2 on a ruled surface}, Math. Ann.  302, no. 1 (1995), 519-540.
			
			
		\end{thebibliography}
	\end{document}